\pdfoutput=1

\documentclass{amsart}

\usepackage[margin=1 in]{geometry}

\usepackage{verbatim}
\usepackage{xcolor}
\usepackage{tikz}
\usepackage{tikz-cd}
\usepackage{arydshln}
\usepackage{caption}
\usepackage{subcaption}
\usepackage{graphicx}
\captionsetup[subfigure]{labelfont=rm}
\usepackage[hyphens]{url}
\usepackage{amsmath}
\usepackage{bbm}
\usepackage{hyperref}
\usepackage{mathrsfs}

\newtheorem{theorem}{Theorem}[section]
\newtheorem{proposition}[theorem]{Proposition}
\newtheorem{lemma}[theorem]{Lemma}
\newtheorem{corollary}[theorem]{Corollary}

\theoremstyle{definition}

\theoremstyle{remark}
\newtheorem{remark}[theorem]{Remark}

\numberwithin{equation}{section}

\newcommand{\mcg}{\mathrm{Mod}_g}
\newcommand{\mc}{\mathbf{g}}
\renewcommand{\tt}{\mathcal{T}_g}
\newcommand{\mm}{\mathcal{M}_g}
\newcommand{\qt}{\mathcal{Q}\mathcal{T}_g}

\newcommand{\qut}{\mathcal{Q}^1\mathcal{T}_g}
\newcommand{\qutp}{\mathcal{Q}^1\mathcal{T}_g(\mathbf{1})}
\newcommand{\qum}{\mathcal{Q}^1\mathcal{M}_g}

\newcommand{\mf}{\mathcal{MF}_g}
\newcommand{\pmf}{\mathcal{PMF}_g}

\newcommand{\PC}{\mathcal{PC}}
\newcommand{\RR}{\mathbf{R}}

\newcommand{\VV}{\overline{V}}
\newcommand{\m}{\mathbf{m}}

\newcommand{\esupp}{\mathrm{ess \thinspace supp}}
\newcommand{\odd}{\mathrm{odd}}

\newcommand{\up}{\underline{\smash{p}}}


\newcounter{count}

\newcounter{counterk} 
\newcommand{\newconk}[1]{\refstepcounter{counterk}\label{#1}} 
\newcommand{\useconk}[1]{\kappa_{\ref{#1}}}

\newcounter{counterc} 
\newcommand{\newconc}[1]{\refstepcounter{counterc}\label{#1}} 
\newcommand{\useconc}[1]{c_{\ref{#1}}}

\newcounter{countercc} 
\newcommand{\newconcc}[1]{\refstepcounter{countercc}\label{#1}} 
\newcommand{\useconcc}[1]{C_{\ref{#1}}}

\newcounter{countere} 
\newcommand{\newcone}[1]{\refstepcounter{countere}\label{#1}} 
\newcommand{\usecone}[1]{\epsilon_{\ref{#1}}}

\newcounter{counterd} 
\newcommand{\newcond}[1]{\refstepcounter{counterd}\label{#1}} 
\newcommand{\usecond}[1]{\delta_{\ref{#1}}}

\newcounter{countern} 
\newcommand{\newconn}[1]{\refstepcounter{countern}\label{#1}} 
\newcommand{\useconn}[1]{N_{\ref{#1}}}

\newcounter{counterr}
\newcommand{\newconr}[1]{\refstepcounter{counterr}\label{#1}} 
\newcommand{\useconr}[1]{r_{\ref{#1}}}

\setcounter{tocdepth}{1}

\begin{document}

\title[Effective mapping class group dynamics I]{Effective mapping class group dynamics I:\\ Counting lattice points in Teichmüller space}

\author{Francisco Arana--Herrera}
\address{Department of Mathematics, Stanford University, 450 Jane Stanford Way, Stanford, CA 94305, USA.}
\email{farana@stanford.edu}

\begin{abstract}
	We prove a quantitative estimate with a power saving error term for the number of points in a mapping class group orbit of Teichmüller space that lie within a Teichmüller metric ball of given center and large radius. Estimates of the same kind are also proved for sector and bisector counts. These estimates effectivize asymptotic counting results of Athreya, Bufetov, Eskin, and Mirzakhani.
\end{abstract}

\maketitle


\thispagestyle{empty}

\tableofcontents

\section{Introduction}

In his thesis \cite{Mar04}, Margulis proved a precise asymptotic formula for the number of points in a $\pi_1(M)$-orbit of the universal cover of a compact, negatively curved Riemannian manifold $M$ that lie within a ball of given center and large radius. The techniques introduced by Margulis in this work have proved to be quite robust. For instance, Eskin, McMullen, Gorodnik, Oh, and Shah \cite{EM93, GO07, GOS10} applied these techniques to prove analogous counting results for general locally symmetric spaces.

In \cite{ABEM12}, Athreya, Bufetov, Eskin, and Mirzakhani adapted Margulis's techniques to prove an asymptotic formula for the number of points in a mapping class group orbit of Teichmüller space that lie within a Teichmüller metric ball of given center and large radius. Furthermore, analogous asymptotic formulas for sector and bisector counts are also proved in the same work. In this paper we effectivize these results by proving quantitative estimates with a power saving error term for the same quantities.

The quantitative bisector counting results proved in this paper can be used to tackle a wide variety of related effective counting problems. For instance, in \cite{Ara21b} we combine these results with the main theorem of \cite{Ara21a} to prove a quantitative estimate with a power saving error term for the number of filling closed geodesics of a given topological type and length $\leq L$ on an arbitrary closed, negatively curved surface. This estimate complements a recent theorem of Eskin, Mirzakhani, and Mohammadi for countings of simple closed geodesics \cite{EMM19}, effectivizes asymptotic counting results of Mirzakhani, Erlandsson, and Souto \cite{Mir16,ES16}, and solves an open problem advertised by Wright \cite[Problem 18.2]{Wri19} for a generic class of closed curves. In the same work we also prove a quantitative estimate with a power saving error term for the number of points in a mapping class group orbit of Teichmüller space that lie within a Thurston metric ball of given center and large radius, effectivizing an asymptotic counting result of Rafi and Souto \cite{RS19}.

The proofs of the main theorems of this paper exploit the strong connection shared between the mapping class group and the Teichmüller geodesic flow. Indeed, the main source of effective estimates employed in the proofs is the exponential mixing property of the Teichmüller geodesic flow on the principal stratum of quadratic differentials \cite{AR12}. The proofs also rely on a solid quantitative understanding of the dynamics of the Teichmüller geodesic flow on the thin part of the principal stratum, which we develop using new analytic estimates of Kahn and Wright \cite{KW20}. In order to prove the effective sector and bisector counting results, a good understanding of the relation between the piecewise linear structures of the principal stratum and the space of singular measured foliations is required. To this end, we show that these structures are related in a Lipschitz way, and that regular test functions on the space of singular measured foliations induce regular test functions, in the sense of Ratner \cite{Ra87}, on the principal stratum. The proofs in this paper illustrate the rich interplay between the different analytic, geometric, and dynamical perspectives one can consider when studying Teichmüller space.

\subsection*{Statements of the main theorems.} For the rest of this paper we fix an integer $g \geq 2$ and a connected, oriented, closed surface $S_g$ of genus $g$. Denote by $\tt$ the Teichmüller space of marked complex structures on $S_g$, by $\mcg$ the mapping class group of $S_g$, and by $\mm := \tt/\mcg$ the moduli space of complex structures on $S_g$. Let $h := 6g-6$. Denote by $B_R(X) \subseteq \mathcal{T}_g$ be the ball of radius $R > 0$ centered at $X \in \mathcal{T}_g$ with respect to the Teichmüller metric. The following theorem, which corresponds to an effective version of \cite[Theorem 1.2]{ABEM12}, is one of the main results of this paper.

\begin{theorem}
	\label{theo:count_intro}
	Let $\mathcal{K} \subseteq \tt$ be a compact subset. Then, for every $X,Y \in \mathcal{K}$ and every $R>0$,
	\[
	\#\left(\mcg\cdot Y \cap B_R(X) \right) =  C \cdot e^{hR} + O_\mathcal{K}\left(e^{(h-\kappa)R}\right),
	\]
	where $C,\kappa > 0$ are constants depending only on $g$ and where the constant implicit in $O_\mathcal{K}$ depends only on $\mathcal{K}$.
\end{theorem}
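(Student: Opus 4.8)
The plan is to follow the Margulis-type strategy of \cite{ABEM12}, replacing each ergodic-theoretic input with its effective counterpart. The starting point is the standard unfolding: the lattice point count $\#(\mcg \cdot Y \cap B_R(X))$ is reinterpreted as an integral over $\tt / \mcg = \mm$ of the counting function $F_R(X, \cdot)$ that records how many translates of $Y$ land in $B_R(X)$, and then as an integral against the Teichmüller geodesic flow. Concretely, one decomposes $B_R(X)$ into spherical shells, parametrizes the unit cotangent bundle $\qut$ over $\tt$ by a pair (horizontal foliation, vertical foliation), and expresses the shell count in terms of the flow applied to an indicator of a small ball near $Y$. The key point is that this integral is governed by the behavior of $g_t$-orbits on the principal stratum $\qum$ of the moduli space of quadratic differentials.

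The main analytic engine is the exponential mixing of the Teichmüller geodesic flow on the principal stratum \cite{AR12}: for suitable (Ratner-)regular test functions $\varphi, \psi$ on $\qum$, one has $\int \varphi \cdot (\psi \circ g_t) \, d\m = \left(\int \varphi \, d\m\right)\left(\int \psi \, d\m\right) + O\left(e^{-\kappa_0 t} \|\varphi\|_{*} \|\psi\|_{*}\right)$ for some $\kappa_0 > 0$ and an appropriate Sobolev-type norm $\|\cdot\|_{*}$. First I would replace the sharp indicators of balls around $X$ and $Y$ by smooth bump functions with controlled norms (paying a controlled error for the smoothing, using that the boundaries of metric balls have small measure at a polynomial rate), push these functions through the correspondence between the PL structure on the principal stratum and the train-track charts on $\mf$, and verify — using the Lipschitz comparison of these structures and the statement that regular functions on $\mf$ induce Ratner-regular functions on the stratum, both promised in the introduction — that the resulting test functions have norms polynomial in the relevant parameters. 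Applying exponential mixing to each shell and summing over the shells (a geometric sum of length $\sim R$) then produces the main term $C \cdot e^{hR}$, where $C$ arises from the product of the masses of the bump functions together with the Hubbard–Masur / ABEM normalization constants, and an error of size $e^{(h - \kappa_1)R}$ for some $\kappa_1 > 0$.

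The main obstacle is the thin part: the test functions built from balls around $X, Y \in \mathcal{K}$ are compactly supported, but the flow pushes mass into the non-compact part of $\qum$, where exponential mixing does not directly control things and where the regular-function norms can blow up. This is where the quantitative recurrence estimates built from the Kahn–Wright analytic estimates \cite{KW20} enter: one needs an effective bound showing that the proportion of time (equivalently, the measure of the relevant sets of geodesics) spent at depth $\geq \rho$ in the thin part of the principal stratum decays at a definite exponential rate in $\rho$, uniformly in $R$. Combining such a recurrence estimate with the exponential mixing on the compact part — via a truncation argument in which the cutoff parameter is optimized against $R$ — yields the power saving. I expect the bookkeeping for this truncation, and the verification that all implied constants depend only on $\mathcal{K}$ (and the structural constants only on $g$), to be the most delicate part; the final constant $\kappa$ emerges as an explicit function of the mixing rate $\kappa_0$, the recurrence rate, and the smoothing exponents, and $C > 0$ is identified with the ABEM constant by comparing with the asymptotic statement of \cite[Theorem 1.2]{ABEM12}.
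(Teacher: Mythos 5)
Your overall strategy is the same one the paper follows (effectivizing the ABEM argument by averaging over small balls, unfolding, and feeding the result into exponential mixing with a truncation near the thin part optimized against $R$), but there is a genuine gap at the center of the sketch: you never explain how the shell/ball decomposition of $B_R(X)\subseteq\tt$ is converted, \emph{with a power-saving error}, into correlation integrals of the flow $a_t$ on $\qum$. This is precisely the content of the effective polar-coordinates estimate (Theorem \ref{theo:polar_coordinates_estimate}): one must show that the density $\Delta(q,t)$ of $\mathbf{m}$ in the coordinates $\Phi_X(q,t)=\pi(a_tq)$ equals $\lambda(a_tq)\,\lambda(q)\,e^{ht}$ up to an error $O(\ell_{\min}(a_tq)^{-(h-1)}\ell_{\min}(q)^{-(h-1)}e^{(h-\kappa)t})$, and this requires Forni's spectral gap for the Kontsevich--Zorich cocycle together with the Kahn--Wright bound $\|d\pi_q\|\preceq \ell_{\min}(q)^{-1}$. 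One also needs the regularity of the Hubbard--Masur function $\lambda$ itself: its $\mathrm{SO}(2)$-invariance (so that $\mathbbm{1}_{K_\delta(\mathbf{1})}\cdot\lambda\cdot(\phi\circ\pi)$ is a legitimate Ratner observable), the bound $\lambda(q)\preceq\ell_{\min}(q)^{-(h-1)}$ controlling the blow-up of the Ratner norms under truncation, and the bound $\int_{S(X)\setminus p^{-1}(K_\delta(\mathbf{1}))}\lambda\,ds_X\preceq\delta$ so the truncation can be reabsorbed into the main term. Without these inputs the smoothing-plus-mixing step you describe does not produce any quantitative error term, because the observables arising from Teichmüller balls are not a priori in the Ratner class and their norms are not controlled.

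A second, related issue is that you assign the Kahn--Wright estimates the role of controlling recurrence to the thin part; in the paper they play no such role. The thin-part contributions are handled by three different tools: the Eskin--Mirzakhani--Rafi bound on the number of geodesic segments spending half their time in the thin part (Theorem \ref{theo:thin_trajectories_sum}), the contraction of the modified Hodge metric along stable leaves from ABEM, which bounds the measure of thin sectors (Theorem \ref{theo:thin_sector_sum}), and Athreya's large deviations for $a_t$ (Theorem \ref{theo:large_deviations_sum}). Your "exponential recurrence at depth $\rho$" heuristic would not by itself bound the measure of the set of directions at $X$ whose geodesics enter the multiple zero locus, which is what the sector estimate provides. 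Finally, the train-track/PL-structure machinery you invoke is not needed for this theorem (it enters only for the sector and bisector counts), and the constant $C$ need not be identified by comparison with ABEM: the argument produces it directly as $\Lambda_g^2/(h\cdot\widehat{\mathbf{m}}(\mm))$.
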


A more explicit description of the constant $C$ in Theorem \ref{theo:count_intro} will be provided in Theorem \ref{theo:count}. In this paper we also prove effective versions of the asymptotic sector and bisector counting results \cite[Theorems 2.9, 2.10]{ABEM12}. See Theorems \ref{theo:sector_count_box} and \ref{theo:sector_count_smooth}, and Theorems \ref{theo:bisector_count_box} and \ref{theo:bisector_count_smooth}, respectively, for precise statements. 

\subsection*{Main ideas of the proof of Theorem \ref{theo:count_intro}.} To prove Theorem \ref{theo:count_intro} we follow the general approach of \cite{ABEM12}, incorporating a wide variety of new quantitative estimates throughout. Averaging and unfolding arguments reduce the proof of Theorem \ref{theo:count_intro} to an effective mean equidistribution theorem for Teichmüller metric balls, which we now describe.

Denote by $\qut$ the Teichmüller space of marked, unit area holomorphic quadratic differentials on $S_g$. Consider the natural projection $\pi \colon \qut \to \tt$. Let $\mu$ be the Masur-Veech measure on $\qut$ and $\mathbf{m} := \pi_* \mu$ be its pushfoward to $\tt$. Denote by $\widehat{\mathbf{m}}$ the pushforward of $\mathbf{m}$ to $\mm$. Every Teichmüller metric ball $B_R(X) \subseteq \tt$ carries a natural measure $\mathbf{m}_X^R$ defined as the restriction of $\mathbf{m}$ to it. Denote by $\widehat{\mathbf{m}}_X^R$ the pushforward of $\mathbf{m}_X^R$ to $\mathcal{M}_g$. This measure, which only depends on $X \in \mm$ and not on the marking, keeps track of how the image of $B_R(X)$ wraps around $\mm$. The main tool used in the proof of Theorem \ref{theo:count_intro}, also of independent interest, is the following effective mean equidistribution theorem for Teichmüller metric balls, which corresponds to an effective version of \cite[Theorem 2.8]{ABEM12}.

\begin{theorem}
	\label{theo:ball_equidistribution_intro}
	Let $\mathcal{K} \subseteq \mathcal{M}_g$ be a compact subset and $\phi_1,\phi_2 \in  L^\infty(\mm,\widehat{\m})$ be essentially bounded functions with  $\esupp(\phi_1), \esupp(\phi_2) \subseteq \mathcal{K}$. Then, for every $R > 0$,
	\begin{gather*}
	\int_{\mm} \phi_1(X) \left( \int_{\mm} \phi_2(Y) \ d\widehat{\mathbf{m}}_X^R(Y) \right) d\widehat{\mathbf{m}}(X) \\
	=  C\cdot \left(\int_{\mm} \phi_1(X) \ d\widehat{\mathbf{m}}(X)\right) \cdot \left(\int_{\mm} \phi_2(Y) \ d\widehat{\mathbf{m}}(Y)\right) \cdot e^{hR} \\
	+ O_\mathcal{K}\left(\|\phi_1\|_\infty \cdot \|\phi_2\|_\infty \cdot e^{(h-\kappa)R}\right),
	\end{gather*}
	where $C,\kappa > 0$ are constants depending only on $g$ and where the constant implicit in $O_\mathcal{K}$ depends only on $\mathcal{K}$.
\end{theorem}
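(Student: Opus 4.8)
The plan is to lift the problem from $\mm$ to the unit cotangent bundle $\qum = \qut/\mcg$ and exploit exponential mixing of the Teichmüller geodesic flow $g_t$ on the principal stratum. First I would unfold the measure $\widehat{\m}_X^R$: a point $Y \in B_R(X)$ is joined to $X$ by a unique Teichmüller geodesic segment of length $\leq R$, so integrating $\phi_2$ against $\widehat{\m}_X^R$ can be rewritten, via the polar-coordinate description of $\mathbf{m}$ around $X$ (the disintegration of Masur–Veech measure over the sphere of unit-area quadratic differentials at $X$), as an integral of $\phi_2 \circ \pi \circ g_t$ over $0 \le t \le R$ and over the fiber $S_X \subseteq \qut$ of unit quadratic differentials at $X$. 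The key geometric input is that the Jacobian of the exponential map in the Teichmüller metric grows like $e^{ht}$ up to controlled multiplicative error, which is exactly what produces the leading factor $e^{hR}$; this uses the relation between the Hubbard–Masur / Thurston measures and the Masur–Veech measure. After this unfolding, the left-hand side becomes a double integral over $\mm$ (against $\widehat{\m}$) and over $\qum$, roughly of the shape $\int_0^R \left( \int_{\qum} \Phi_1(q) \, \Phi_2(g_t q) \, d\mu(q) \right) e^{ht} \, dt$ for suitable lifts $\Phi_1, \Phi_2$ built from $\phi_1$, $\phi_2$ together with the geometry of the sphere bundle.

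Next I would apply the exponential mixing theorem of Avila–Gouëzel–Yoccoz / Avila–Resende \cite{AR12} for $g_t$ on the principal stratum: for Hölder (or sufficiently regular) observables, $\int \Phi_1 \cdot (\Phi_2 \circ g_t) \, d\mu = \left(\int \Phi_1 \, d\mu\right)\left(\int \Phi_2 \, d\mu\right) + O(e^{-\kappa_0 t} \|\Phi_1\|\,\|\Phi_2\|)$. Substituting this into the integral over $t$, the main term integrates to a constant multiple of $e^{hR}$ (with the constant $C$ depending only on $g$, and the two $\mm$-integrals of $\phi_1$ and $\phi_2$ factoring out as claimed), while the error term contributes $\int_0^R e^{(h-\kappa_0)t}\, dt = O(e^{(h-\kappa)R})$ for any $\kappa < \min(\kappa_0, h)$. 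The essentially bounded functions $\phi_i$ are not regular, so before invoking mixing I would approximate $\phi_1, \phi_2$ from above and below by smooth functions with comparable $L^\infty$ norms and supports in a fixed neighborhood of $\mathcal{K}$, at a cost controlled by the Masur–Veech measure of a thin annular region; choosing the smoothing scale as a small power of $e^{-R}$ balances the smoothing error against the mixing error and preserves the power saving.

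The main obstacle is the interaction between the cusp of $\qum$ and the unfolding step. The fiber $S_X$ and its flow translates $g_t S_X$ are not contained in a fixed compact set of $\qum$ — for large $t$ the geodesic spends time in the thin part of the principal stratum — so neither the Jacobian estimate $e^{ht}(1+o(1))$ nor a naive application of mixing (which requires compactly supported or rapidly decaying observables) applies directly. Handling this is where the quantitative control of the Teichmüller flow on the thin part of the principal stratum, developed from the Kahn–Wright estimates \cite{KW20}, is essential: one must show that the portion of $S_X$ whose $g_t$-orbit enters deep into the cusp has exponentially small measure and contributes a negligible, power-saving error, so that the effective count is governed entirely by the compact-part dynamics where exponential mixing is available. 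I would isolate this as a separate quantitative non-escape / thin-part estimate and then feed it, together with a smooth cutoff at a slowly growing height in the cusp, into the unfolding-plus-mixing scheme above.
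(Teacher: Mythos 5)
Your overall strategy coincides with the paper's: unfold the inner integral in polar coordinates about $X$, identify the Jacobian growth $e^{ht}$, feed the resulting correlation integral into the exponential mixing of the Teichmüller flow on $\qum$, and quarantine the thin part by a separate quantitative estimate. However, there is a genuine gap in your treatment of the observables. You assume the mixing theorem requires smooth (or Hölder) test functions and propose to sandwich $\phi_1,\phi_2$ between smooth approximations. For arbitrary essentially bounded $\phi_i$ this cannot produce the stated error term: the theorem's error depends only on $\|\phi_i\|_\infty$ and $\mathcal{K}$, uniformly, whereas mollification of a general $L^\infty$ function admits no quantitative $L^1$-rate (and ``approximation from above and below'' only makes sense for indicator-type data), so the smoothing cost cannot be balanced against the mixing error at a power-saving scale uniformly in $\phi_i$. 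The paper's mechanism is different and is the crux of why the theorem holds for arbitrary $L^\infty$ data: the Avila--Resende input only requires Ratner-class regularity, i.e.\ Lipschitz dependence of $\varphi\circ r_\theta$ in $L^2$ along the $\mathrm{SO}(2)$-direction, and the observables actually fed into mixing, namely $(\phi_i\circ\pi)\cdot\mathbbm{1}_{K_\delta(\mathbf{1})}\cdot\lambda$, are $\mathrm{SO}(2)$-invariant (this uses Proposition \ref{prop:hm_inv_sum}, the $\mathrm{SO}(2)$-invariance of the Hubbard--Masur function), hence lie in the Ratner class for free, with norm $\preceq \delta^{-(h-1)}\|\phi_i\|_\infty$ by Proposition \ref{prop:HM_bound_sum}. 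No regularization of $\phi_1,\phi_2$ is performed or needed.

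Relatedly, your sketch treats the Jacobian as ``$e^{ht}$ up to controlled multiplicative error,'' but the precise leading term is $\Delta(q,t)=\lambda(q)\,\lambda(a_tq)\,e^{ht}+O(\ell_{\min}(q)^{-(h-1)}\ell_{\min}(a_tq)^{-(h-1)}e^{(h-\useconc{polar})t})$ (Theorem \ref{theo:polar_coordinates_estimate}); the unbounded density $\lambda$ must be cut off near the multiple zero locus ($\mathbbm{1}_{K_\delta(\mathbf{1})}$, with the excluded mass bounded by $\delta$ via Proposition \ref{prop:small_HM_integral_sum}), and it is the product structure $\lambda(q)\lambda(a_tq)$ together with the Hubbard--Masur constant $\Lambda_g=\int_{S(X)}\lambda\,ds_X$ that yields the constant $C=\Lambda_g^2/(h\cdot\widehat{\mathbf{m}}(\mathcal{M}_g))$ and the factorization into the two $\widehat{\mathbf{m}}$-integrals. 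Finally, a minor attribution point: the thin-part control does not come from Kahn--Wright (which enters in the Jacobian estimate and the bound on $\lambda$), but from the Eskin--Mirzakhani--Rafi count of thin geodesic segments, the Hodge-norm contraction of \cite{ABEM12} giving the thin-sector bound, and Athreya's large deviations; your proposed ``non-escape'' estimate plays the role of these three statements, and without the sector version (Theorem \ref{theo:thin_sector_sum}) the insertion of the cutoffs at both endpoints $q$ and $a_tq$ cannot be justified.
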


A more explicit description of the constant $C$ in Theorem \ref{theo:ball_equidistribution_intro} will be provided in Theorem \ref{theo:ball_equidistribution}.  In this paper we also prove effective mean equidistribution theorems for sectors of $\tt$ and $\qut$; see Theorems \ref{theo:sector_equidistribution} and \ref{theo:sector_equidistribution_box}, and Theorems \ref{theo:bisector_equidistribution} and \ref{theo:bisector_equidistribution_box}, respectively, for precise statements. These theorems are one of the main tools used in the proofs of the effective sector and bisector counting results, Theorems \ref{theo:sector_count_box}, \ref{theo:sector_count_smooth},  \ref{theo:bisector_count_box}, and \ref{theo:bisector_count_smooth}, but a lot more machinery is required in this case.

Denote by $\qum := \qut/\mcg$ the moduli space of unit area holomorphic quadratic differentials on $S_g$ and by $\widehat{\mu}$ the Masur-Veech measure on $\qum$. The main driving force behing the proof of Theorem \ref{theo:ball_equidistribution_intro} is the exponential mixing property of the Teichmüller geodesic flow on $\qum$ with respect $\widehat{\mu}$. This property was proved by Avila and Resende \cite{AR12}, building on previous work of Avila, Gouëzel, and Yoccoz \cite{AGY06}. Another important tool used in the proof of Theorem \ref{theo:ball_equidistribution_intro} is a description of the measure $\mathbf{m}$ in \textit{polar coordinates}. To explain the nature of this description we first introduce some notation.  

Denote by $S(X) := \pi^{-1}(X) \subseteq \qut$ the fiber of the projection $\pi \colon \qut \to \tt$ above $X \in \mathcal{T}_g$. Let $\{s_X\}_{X \in \mathcal{T}_g}$ be the measures obtained by disintegrating the Masur-Veech measure $\mu$ on $\qut$ along these fibers. Denote by $\qutp$ the principal stratum of $\qut$. The measures $s_X$ give zero mass to the multiple zero locus $\qut \backslash \qutp$. Denote by $a_t \colon \qut \to \qut$, $t \in \mathbf{R}$, the Teichmüller geodesic flow.

Fix $X \in \tt$. Consider the \textit{polar coordinates} map $\Phi_X \colon S(X) \times \mathbf{R}_{>0} \to \mathcal{T}_g$ given by $\Phi_X(q,t) := \pi(a_tq)$. This map is a homeomorphism onto $\tt \backslash \{X\}$ and a diffeomorphism onto its image when restricted to $(S(X) \cap \qutp) \times \mathbf{R}_{>0}$. In particular, for every $q \in S(X) \cap \qutp$ and every $t > 0$ we can write
\[
|\Phi_X^*(\mathbf{m})(q,t)| = \Delta(q,t) \cdot |s_X(q) \wedge dt|,
\]
where $\Delta \colon (S(X) \cap \qutp) \times \mathbf{R}_{>0} \to \mathbf{R}_{>0}$ is a smooth, positive function. Theorem \ref{theo:polar_coordinates_estimate}, which corresponds to a strengthening of \cite[Proposition 2.5]{ABEM12}, provides a quantitative estimate for the function $\Delta$. The leading term of this estimate is described using the Hubbard-Masur functions $\lambda^-,\lambda^+ \colon \qutp \to \mathbf{R}_{>0}$ introduced in \cite[\S 2.3]{ABEM12}. The main tools used in the proof of Theorem \ref{theo:polar_coordinates_estimate} are:
\begin{itemize}
	\item The bounds of Forni on the spectral gap of the Kontsevich-Zorich cocycle on $\qutp$ \cite{F02}. 
	\item The analysis of the projection $\pi \colon \qut \to \tt$ in forthcoming work of Kahn and Wright \cite{KW20}.
\end{itemize}

To use Theorem \ref{theo:polar_coordinates_estimate} in combination with the exponential mixing rate of the Teichmüller geodesic flow on $\qum$, one needs to have a reasonable understanding of the regularity of the Hubbard-Masur functions $\lambda^-,\lambda^+ \colon \qutp \to \mathbf{R}_{>0}$. Using work of Dumas \cite{Du15}, we show these functions are $\mathrm{SO}(2)$-invariant; see Proposition \ref{prop:hm_inv_sum}. Using forthcoming work of Kahn and Wright \cite{KW20}, we prove bounds for these functions in terms of the length of shortest saddle connection of $q \in \qutp$; see Proposition \ref{prop:HM_bound_sum}.

The quantitative estimate for the function $\Delta$ in Theorem \ref{theo:polar_coordinates_estimate}, although valid everywhere on the principal stratum, gets worse as one considers quadratic differentials approaching the multiple zero locus. Completely different arguments are introduced to control the corresponding contributions. More concretely, we prove effective versions of \cite[Theorems 2.6 and 2.7]{ABEM12} as well as other closely related results; see Proposition \ref{prop:small_HM_integral_sum} and Theorems \ref{theo:thin_trajectories_sum}, \ref{theo:thin_sector_sum}, and \ref{theo:large_deviations_sum}. The main sources of effective estimates in this case are:
\begin{itemize}
	\item The quantitative estimates of Eskin, Mirzakhani, and Rafi for the number of Teichmüller geodesic segments that spend a definite fraction of their time in the thin part of a stratum \cite{EMR19}.
	\item The metric hyperbolicity properties of the Teichmüller geodesic flow on $\qut$ proved in \cite{ABEM12}.
	\item The large deviation estimates of Athreya for the Teichmüller geodesic flow on $\qum$ \cite{A06}.
\end{itemize} 

\subsection*{Finite index subgroups of the mapping class group.} The results in this paper, in particular Theorems \ref{theo:count_intro}, \ref{theo:count}, \ref{theo:sector_count_box}, \ref{theo:sector_count_smooth}, \ref{theo:bisector_count_box}, \ref{theo:bisector_count_smooth},
and \ref{theo:bisector_equidistribution_box}, hold when $\mcg$ is replaced with a finite index subgroup $\Gamma \subseteq \mcg$. The crucial observation which explains this remark is the fact that the Teichmüller geodesic flow on $\qut / \Gamma$ is exponentially mixing with respect to the local pushforward of the Masur-Veech measure $\mu$ on $\qut$.

\subsection*{Organization of the paper.} In \S 2 we develop the preliminaries necessary to understand the proofs of the main theorems. In \S 3 we present the proof of Theorem \ref{theo:ball_equidistribution_intro} in complete detail. In \S 4 we use Theorem \ref{theo:ball_equidistribution_intro} to prove Theorem \ref{theo:count_intro}. In \S5 we study the regularity of the Hubbard-Masur functions. In \S6 we prove the crucial estimate for the function $\Delta$. In \S7 we prove the estimates needed to control the contributions near the multiple zero locus. In \S8 we strengthen the arguments used in the proof of Theorem \ref{theo:ball_equidistribution_intro} to prove effective mean equidistribution theorems for sectors of Teichmüller space. In \S9 we use these results to extend Theorem \ref{theo:count_intro} to sector counts. In \S10 we further strengthen these results to bisector counts.

\subsection*{Acknowledgments.} The author is very grateful to Alex Wright and Steve Kerckhoff for their invaluable advice, patience, and encouragement. The author would also like to thank Alex Eskin, Amir Mohammadi, Ian Frankel, Jayadev Athreya, and Carlos Matheus for very helpful and enlightening conversations. This work got started while the author was participating in the \textit{Dynamics: Topology and Numbers} trimester program at the Hausdorff Research Institute for Mathematics (HIM). The author is very grateful for the HIM's hospitality and for the hard work of the organizers of the trimester program.

\section{Preliminaries}

\subsection*{Outline of this section.} In this section we cover the background material needed to understand the proofs of the main theorems of this paper. Of particular importance for later will be the definitions (\ref{eq:hm_1}), (\ref{eq:hm_2}), (\ref{eq:hm_3}), and (\ref{eq:hm_4}) of the Hubbard-Masur functions.

\subsection*{Abelian and quadratic differentials.} Let $X$ be a Riemann surface and $K$ be its canonical bundle. Holomorphic sections of $K$ correspond to holomorphic $1$-forms on $X$. An Abelian differential $\omega$ on $X$ is a holomorphic section of $K$. In local coordinates, $\omega = f(z) dz$ for some holomorphic function $f(z)$. A quadratic differential $q$ on $X$ is a holomorphic section of the symmetric square $K \vee K$. In local coordinates, $q = f(z) dz^2$ for some holomorphic function $f(z)$. If $X$ has genus $g$, the number of zeroes of $q$ counted with multiplicity is $4g-4$. We will sometimes denote quadratic differentials by $(X,q)$, keeping track of the Riemann surface they are defined on. The area of a quadratic differential is $\text{Area}(X,q) := \int_X |q|$. We denote by $Q(X)$ the complex vector space of quadratic differentials on $X$ and by $S(X) \subseteq Q(X)$ its unit area locus. 

\subsection*{Teichm\"uller and moduli spaces of quadratic differentials.} Denote by $\mathcal{Q}\mathcal{T}_{g}$ the Teichm\"uller space of marked, non-zero quadratic differentials on $S_g$ and by $\qut \subseteq \mathcal{Q}\mathcal{T}_g$ its unit area locus. The mapping class group $\mcg$ acts properly discontinously on $\qt$ by changing the markings. This action preserves the unit area locus $\qut \subseteq \qt$. The quotient $\mathcal{Q}^1\mathcal{M}_g := \qut /\text{Mod}_g$ is the moduli space of unit area quadratic differentials on $S_g$. The following diagram summarizes the notation to be used in the rest of this paper for the natural quotient and projection maps:

\begin{center}
\begin{tikzcd}
	\qut \arrow[r, "p"] \arrow[d, "\pi" ] & \qum \arrow[d, "\underline{\pi}"] \\ 
	\tt \arrow[r, "\underline{\smash{p}}"] & \mm 
\end{tikzcd}.
\end{center}

\subsection*{The Masur-Veech volume form.} The projection $Q\mathcal{T}_{g} \to \mathcal{T}_{g}$ makes $Q\mathcal{T}_{g}$ into a bundle over $\mathcal{T}_{g}$. This bundle can be naturally identified with the complement of the zero section of the cotangent bundle of $\mathcal{T}_g$. In particular, $Q\mathcal{T}_{g}$ supports a canonical volume form $\mu'$, called the Masur-Veech volume form. The corresponding smooth measure on $\qt$ is called the Masur-Veech measure. Contracting $\mu'$ by any vector field $V$ satisfying $d\sqrt{\text{Area}}(V) \equiv 1$ yields a volume form $\mu$ on $\qut$, also called the Masur-Veech volume form. The corresponding smooth measure on $\qut$ is also called the Masur-Veech measure. This measure is invariant under the action of $\mcg$ on $\qut$. Its local pushforward to $\mathcal{Q}^1\mathcal{M}_g$, which we denote by $\widehat{\mu}$, is also called the Masur-Veech measure. These measures were originally introduced by Masur \cite{Ma82} and Veech \cite{Ve82} to study the prevalence of unique ergodicity among interval exchange transformations. As part of this work, Masur and Veech independently showed that the measure $\widehat{\mu}$ on $\qum$ is finite.

\subsection*{Singular measured foliations.} Denote by $\mf$ the space of singular measured foliations on $S_g$ up to isotopy and Whitehead moves. The set of weighted simple closed curves on $S_g$ embeds densely into $\mf$. The geometric intersection number of weighted simple closed curves extends continuously to a pairing $i \colon \mathcal{MF}_g \times \mathcal{MF}_g \to \mathbf{R}_{\geq 0}$, also called geometric intersection number. Train track coodinates induce a natural piecewise integral linear structure on $\mf$. In particular, $\mf$ carries a natural Lebesgue class measure called the Thurston measure. We denote by $\nu$ the normalization of the Thurston measure induced by the symplectic structure described in \cite[\S 3.2]{PH92}. This measure gives zero mass to the subset of singular measured foliations having a singularity with more than three prongs \cite[Lemma 2.4]{Mir08a}. The space $\mathcal{PMF}_g$ of projective singular measures foliations on $S_g$ is the quotient of $\mathcal{MF}_g$ by the natural scaling action of $\mathbf{R}_{>0}$ on transverse measures. Denote by $[\eta] \in \pmf$ the equivalence class of $\eta \in \mf$. We refer the reader to \cite[\S 11.2]{FM11} and \cite[\S 5]{FLP12} for further details on the theory of singular measured foliations.

\subsection*{The Hubbard-Masur map.} Every $q \in \qt$ gives rise to singular measured foliations $\Re(q),\Im(q) \in \mf$. If $q = dz^2$ in local coordinates $z = x + iy$, the $1$-forms $dx$ and $dy$ induce the singular measured foliations $\Re(q)$ and $\Im(q)$, respectively. Denote by $\Re,\Im \colon \qt \to \mf$ the corresponding maps. Let $\Delta \subseteq \mathcal{MF}_g \times \mathcal{MF}_g$ the closed set of non transverse pairs of singular measured foliations on $S_g$. More precisely,
\[
\Delta := \{(\eta_1,\eta_2) \in \mf \times \mf \ | \ \exists \eta_3 \in \mf\colon i(\eta_1,\eta_3) = i(\eta_2,\eta_3) = 0 \}.
\]
The map $\qt \to \mf \times \mf - \Delta$ given by $q \mapsto (\Re(q),\Im(q))$ is a homeomorphism sending area of quadratic differentials to geometric intersection number of singular measured foliations. Following the convention in \cite{LM08}, we refer to this map as the Hubbard-Masur map. The pullback of the product of Thurston measures $\nu \times \nu$ on $\mathcal{MF}_g \times \mathcal{MF}_g - \Delta$ is equal to the Masur-Veech measure $\mu'$ on $\qt$ up to a multiplicative constant depending only on $g$ \cite[Lemma 4.3]{Mir08a}. 

\subsection*{Half-translation structures.} A half-translation structure on a surface $S$ corresponds to an atlas of charts to $\mathbf{C}$ on the complement of a finite set of points $\Sigma \subseteq S$ whose transition functions are of the form $z \mapsto \pm z + c$ with $c \in \mathbf{C}$. Every quadratic differential $q$ gives rise to a half-translation structure on the Riemann surface it is defined on by considering local coordinates on the complement of the zeroes of $q$ for which $q = dz^2$. Viceversa, every half-translation structure induces a quadratic differential on its underlying surface by pulling back the differential $dz^2$ on the corresponding charts.

Pulling back the standard Euclidean metric on $\mathbf{C}$ using the charts of a half-translation structure induces a singular Euclidean metric on the underlying surface. In particular, every quadratic differential $q$ gives rise to a singular Euclidean metric on the Riemann surface it is defined on. This metric has a cone point of angle $(k+2) \pi$ at every zero of $q$ of order $k$. A saddle connection of $q$ is a Euclidean geodesic segment joining two zeroes of $q$ and having no zeroes in its interior. Denote by $\ell_{\gamma}(q) > 0$ the Euclidean length of a saddle connection $\gamma$ of $q$ and by $\ell_{\min}(q) > 0$ the Euclidean length of the shortest saddle connection of $q$. Denote by $\mathrm{diam}(q) > 0$ the Euclidean diameter of $q$. The total Euclidean area of $q$ is precisely $\mathrm{Area}(q) > 0$.

\subsection*{The $\mathbf{SL(2,\mathbf{R})}$ action and the Teichmüller geodesic flow.} The group $\mathrm{SL}(2,\mathbf{R})$ acts naturally on half-translation structures by postcomposing the corresponding charts with the linear action on $\mathbf{C} =\mathbf{R}^2$ of the elements of this group. In particular, $\mathrm{SL}(2,\mathbf{R})$ acts naturally on $\qt$, preserving $\qut$. 

For every $t \in \mathbf{R}$ and every $\theta \in [0,2\pi]$ denote
\[
a_t := \left( 
\begin{array}{c c}
e^t & 0 \\
0 & e^{-t}
\end{array} 
 \right), \quad 
r_\theta := \left(
\begin{array}{c c}
\cos \theta & -\sin \theta \\
\sin \theta & \cos \theta
\end{array}
\right).
\]
The flow induced by the action of the subgroup $\{a_t\}_{t \in \mathbf{R}} \subseteq \mathrm{SL}(2,\mathbf{R})$ on $\qt$ is the Teichmüller geodesic flow. For every $\theta \in [0,2\pi]$ and every $q \in \qt$, $r_\theta q = e^{2\theta i}q$. In particular, the action of $\mathrm{SO}(2)$ preserves the fibers $Q(X)\backslash\{0\}$ of $\qt$ and $S(X)$ of $\qut$.

The $\mathrm{SL}(2,\mathbf{R})$ and $\text{Mod}_g$ actions on $\qut$ commute. In particular, there is a well defined $\mathrm{SL}(2,\mathbf{R})$ action and a well defined Teichmüller geodesic flow on $\qum$. 

\subsection*{The dynamical foliations of $\boldsymbol{\mathbf{\qut}}$.} The unit area locus $\qut \subseteq \qt$ can be foliated in several dynamically meaningful ways. For every $q_0 \in \qut$, its strongly stable, central, and strongly unstable leaves $\alpha^{ss}(q_0), \alpha^c(q_0), \allowbreak\alpha^{uu}(q_0) \subseteq \qut$ are given by
\begin{align*}
\alpha^{ss}(q_0) &:= \{ q \in \qut \ | \ \Re(q) = \Re(q_0) \}, \\
\alpha^c(q_0) &:= \{a_t q_0 \ | \ t \in \mathbf{R}\},\\
\alpha^{uu}(q_0) &:= \{ q \in \qut \ | \ \Im(q) = \Im(q_0) \}. 
\end{align*}
These leaves give rise to topological foliations $\mathcal{F}^{ss}$, $\mathcal{F}^c$, and $\mathcal{F}^{uu}$ called the strongly stable, central, and strongly unstable foliations of $\qut$. The stable and unstable leaves $\alpha^{s}(q_0), \allowbreak \alpha^{u}(q_0) \subseteq \qut$ of a quadratic differential $q_0 \in \qut$ are given by
\begin{align*}
\alpha^s(q_0) &:= \{ q \in \qut \ | \ [\Re(q)] = [\Re(q_0)] \} = \bigcup_{t \in \mathbf{R}} a_t \alpha^{ss}(q_0), \\
\alpha^u(q_0) &:= \{ q \in \qut \ | \ [\Im(q)] = [\Im(q_0)] \} = \bigcup_{t \in \mathbf{R}} a_t \alpha^{uu}(q_0). 
\end{align*}
These leaves give rise to topological foliations $\mathcal{F}^{s}$ and $\mathcal{F}^{u}$ called the stable and unstable foliations of $\qut$. The foliations $\mathcal{F}^{ss}$, $\mathcal{F}^{uu}$, $\mathcal{F}^{s}$, and $\mathcal{F}^{u}$ are, respectively, the strongly stable, strongly unstable, stable, and unstable foliations of the Teichmüller geodesic flow in the sense of Veech \cite{Ve86} and Forni \cite{F02}.

\subsection*{The leafwise measures of $\boldsymbol{\qut}$.} Given $\eta \in \mathcal{MF}_g$, denote by $\mathcal{MF}_g(\eta) \subseteq \mathcal{MF}_g$ the open, dense, full measure subset of singular measured foliations on $S_g$ transverse to $\eta$. Let $q_0 \in \qut$. The map $\alpha^{s}(q_0) \to \mathcal{MF}_g(\Re(q_0))$ given by $q \mapsto \Im(q)$ is a homeomorphism. Denote by $\mu_{\alpha^{s}(q_0)}$ the pullback to $\alpha^{s}(q_0)$ of the Thurston measure $\nu$ on $\mathcal{MF}_g(\Re(q_0))$. Analogously, the map $\alpha^{u}(q_0) \to \mathcal{MF}_g(\Im(q_0))$ given by $q \mapsto \Re(q)$ is a homeomorphism. Denote by $\mu_{\alpha^{u}(q_0)}$ the pullback to $\alpha^{u}(q_0)$ of the Thurston measure $\nu$ on $\mathcal{MF}_g(\Im(q_0))$.

For every $\eta \in \mathcal{MF}_g$ denote 
\[
\mathcal{MF}_g^1(\eta) := \{\eta' \in \mathcal{MF}_g(\eta) \ | \ i(\eta,\eta') = 1\}.
\]
This subset carries a natural measure $\overline{\nu}_\eta$ obtained by conning-off the Thurston measure $\nu$ on $\mathcal{MF}_g(\eta)$. More precisely, for every measurable subset $A \subseteq \mathcal{MF}_g^1(\eta)$, 
$
\overline{\nu}_\eta(A) := \nu([0,1] \cdot A).
$

Let $q_0 \in \qut$. The map $\alpha^{ss}(q_0) \to \mathcal{MF}_g^1(\Re(q_0))$ given by $q \mapsto \Im(q)$ is a homeomorphism. Denote by $\mu_{\alpha^{ss}(q_0)}$ the pullback to $\alpha^{ss}(q_0)$ of the measure $\overline{\nu}_{\Re(q_0)}$ on $\mathcal{MF}_g^1(\Re(q_0))$. Analogously, the map $\alpha^{uu}(q_0) \to \mathcal{MF}_g(\Im(q_0))$ given by $q \mapsto \Re(q)$ is a homeomorphism. Denote by $\mu_{\alpha^{uu}(q_0)}$ the pullback to $\alpha^{uu}(q_0)$ of the measure $\overline{\nu}_{\Im(q_0)}$ on $\mathcal{MF}_g^1(\Im(q_0))$.

\subsection*{Strata of quadratic differentials.} The space $\qt$ can be stratified according to the orders of the zeroes. More concretely, given an integer partition $\mathbf{k} := (k_1,\dots,k_n)$ of $4g-4$ and a boolean parameter $\epsilon \in \{0,1\}$, $\qt(\mathbf{k},\epsilon) \subseteq \qt$ denotes the strata of marked quadratic differentials on $S_g$ whose zeroes have orders given by $\mathbf{k}$ and which are the square of an Abelian differential if and only if $\epsilon = 1$. The natural $\mathrm{SL}(2,\RR)$ action on $\qt$ preserves this stratification. The unit area locus $\qut \subseteq \qt$ can be stratified in the same way. The corresponding strata will be denoted by $\qut(\mathbf{k},\epsilon) \subseteq \qut$. The $\mcg$ action on $\qut$ preserves this stratification and so $\qum$ naturally inherits it. The corresponding strata will be denoted by $\qum(\mathbf{k},\epsilon) \subseteq \qum$. 

The principal strata of $\qt$, $\qut$, and $\qum$, corresponding to $\mathbf{k} = (1,\dots, 1)$ and $\epsilon = 0$, will be denoted by $\qt(\mathbf{1})$, $\qut(\mathbf{1})$, and $\qum(\mathbf{1})$, respectively. These are the unique top-dimensional strata of $\qt$, $\qut$, and $\qum$. Their complements in $\qt$, $\qut$, and $\qum$, the multiple zero loci, are zero measure subsets of the respective Lebesgue measure classes \cite[Theorem 2.2]{ABEM12}.

\subsection*{Masur's compactness criterion.}  For every $\delta> 0$ consider the subset of $\qum$ given by
\begin{equation*}
\label{eq:compact}
K_\delta := \{q \in \qum \ | \ \ell_{\min}(q) \geq \epsilon \}.
\end{equation*}
By Masur's compactness criterion \cite[Proposition 3.6]{MT02}, this subset is compact. Moreover, the intersection of this subset with any stratum of $\qum$ is compact. Denote by $K_\delta(\mathbf{1}) \subseteq \qum(\mathbf{1})$ its intersection with the principal stratum.

\subsection*{Period coordinates of strata.} Every quadratic differential $q$ gives rise to a canonical double cover $\widetilde{X}$ of its underlying Riemann surface $X$, branched over the odd order zeroes of $q$, onto which $q$ pulls back to the square of an Abelian differential $\omega$. This double cover supports a canonical involution $\sigma \colon \widetilde{X} \to \widetilde{X}$ satisfying $\sigma^* \omega = - \omega$ and $X = \widetilde{X}/\langle\sigma\rangle$. Denote by $\Sigma \subseteq X$ the singularities of $q$ and by $\widetilde{\Sigma}$ the lifts of these singularities to $\widetilde{X}$. Let $H_1^\odd(\widetilde{X},\widetilde{\Sigma};\mathbf{Z})$, $H^1_\odd(\widetilde{X},\widetilde{\Sigma};\mathbf{R})$, and $H^1_\odd(\widetilde{X},\widetilde{\Sigma};\mathbf{C})$ be the $-1$ eigenspaces of the linear involutions induced by $\sigma$ on the respective homology and cohomology groups. Consider the maps $\Re, \Im \colon H^1_\odd(\widetilde{X},\widetilde{\Sigma};\mathbf{C}) \to H^1_\odd(\widetilde{X},\widetilde{\Sigma};\mathbf{R})$. Denote by $\cup$ the cup intersection form on $H_\odd^1(\widetilde{X};\mathbf{C})$. We normalize $\cup$ so that the area of $q$ is given by $\mathrm{Area}(q) = \Re([\omega]) \cup \Im([\omega])$. 

Fix a stratum $\qt(\mathbf{k},\epsilon) \subseteq \qt$. The Gauss-Manin connection locally identifies the cohomology groups $H^1_\odd(\widetilde{X},\widetilde{\Sigma};\mathbf{C})$ coming from canonical double covers of quadratic differentials in $\qt(\mathbf{k},\epsilon)$. Using these local identifications one can define maps on open subsets of the stratum which take $q \in \qt(\mathbf{k},\epsilon)$ to $[\omega] \in H^1_\odd(\widetilde{X},\widetilde{\Sigma};\mathbf{C})$. Every class in $H^1_\odd(\widetilde{X},\widetilde{\Sigma};\mathbf{C})$ is determined by its action on $H_1^\odd(\widetilde{X},\widetilde{\Sigma};\mathbf{Z})$. Thus, choosing a basis of the $\mathbf{Z}$-module $H_1^\odd(\widetilde{X},\widetilde{\Sigma};\mathbf{Z})$ yields  maps from open subsets of the stratum to $\mathbf{C}^d$, where $d:= 2g-2 + |\mathbf{k}| + \epsilon$ is the dimension of $\qt(\mathbf{k},\epsilon)$. These maps provide local coordinates whose transition functions are integral linear transformations. These coordinates are known as period coordinates. They identify the tangent space of $q \in \qt(\mathbf{k},\epsilon)$ with $H^1_\odd(\widetilde{X},\widetilde{\Sigma};\mathbf{C})$. For more details see \cite[\S 2]{Lan04}.

In the case of the principal stratum $\qt(\mathbf{1})$, the forgetful maps $H^1_\odd(\widetilde{X},\widetilde{\Sigma};\mathbf{C}) \to H^1_\odd(\widetilde{X};\mathbf{C})$ are isomorphisms. In particular, one can identify the tangent space of $q \in \qt(\mathbf{1})$ with $H^1_\odd(\widetilde{X};\mathbf{C})$. Under this identification, for every $X \in \tt$ and every $q \in Q(X) \cap \qt(\mathbf{1})$, $T_qQ(X) = H^{1,0}_\odd(\widetilde{X};\mathbf{C})$, the subspace of odd holomorphic $1$-forms on $\widetilde{X}$ \cite[Lemma 1]{DH75}. 

The restriction of the $\mathrm{SL}(2,\mathbf{R})$ action to any stratum of $\qt$ coincides with the product linear action of this group on $\mathbf{C}^d = (\mathbf{R}^2)^d$ after local identification using period coordinates. 

\subsection*{The Masur-Veech measure revisited.} Pulling back the Lebesgue measure on $\mathbf{C}^{6g-6}$ through period coordinates yields a smooth measure on the principal stratum $\qt(\mathbf{1}) \subseteq \qt$. By work of Masur \cite{M95} and Bertola, Korotkin, and Norton \cite{BKN17}, this measure is equal to the Masur-Veech measure $\mu'$ on $\qt$ up to a multiplicative constant depending only on $g$. In particular, the  $\mathrm{SL}(2,\mathbf{R})$ action preserves the Masur-Veech measure $\mu$ on $\qut$.

\subsection*{The local product structure of $\boldsymbol{\qutp}$.} The intersections of the foliations $\mathcal{F}^{ss}$, $\mathcal{F}^c$, $\mathcal{F}^{uu}$, $\mathcal{F}^{s}$, and $\mathcal{F}^{u}$ of $\qut$ with the principal stratum $\qutp$ can be described explicitely in period coordinates. Given a quadratic differential $q \in \qutp$, identify $T_q\qutp$ with $H_\odd^1(\widetilde{X};\mathbf{C})$ using the canonical double cover $\widetilde{X}$ of $q$. Consider the linear subspaces $E^{ss}(q), E^c(q),E^{uu}(q) \subseteq H_\odd^1(\widetilde{X};\mathbf{C})$ given by
\begin{align*}
E^{ss}(q) &:= \{v \in H_\odd^1(\widetilde{X};i\mathbf{R}) \ | \ v \cup \Re([\omega]) = 0 \},\\
E^c(q) &:= \mathbf{R} \cdot [\overline{\omega}],\\
E^{uu}(q) &:= \{v \in H_\odd^1(\widetilde{X};\mathbf{R}) \ | \ v \cup \Im([\omega]) = 0 \}.
\end{align*}
The tangent space of $\qutp$ at $q$ can be decomposed as
\[
T_q \qutp = E^{uu}(q) \oplus E^c(q) \oplus E^{ss}(q).
\]
We refer to the subspaces $E^{ss}(q)$, $E^c(q)$, $E^{uu}(q)$, $E^{s}(q) := E^{ss}(q) \oplus E^c(q)$, and $E^{u}(q) := E^c(q) \oplus E^{uu}(q)$, as the strongly stable, central, strongly unstable, stable, and unstable subspaces of $q$. These subspaces correspond to the tangent spaces at $q$ of the intersections of $\qutp$ with the foliations $\mathcal{F}^{ss}$, $\mathcal{F}^c$, $\mathcal{F}^{uu}$, $\mathcal{F}^{s}$, and $\mathcal{F}^{u}$ of $\qut$. We refer to the corresponding subbundles $E^{ss}$, $E^c$, $E^{uu}$, $E^{s}$, and $E^{u}$ as the strongly stable, central, strongly unstable, stable, and unstable subbundles of $\qutp$. 

\subsection*{The leafwise measures of $\boldsymbol{\qut}$ revisited.} For every $q \in \qut(\mathbf{1})$, the strongly stable subspace $E^{ss}(q) \subseteq H^1_\odd(\widetilde{X};i\mathbf{R})$ supports a canonical volume form defined as follows. Consider the volume form on $H^1_\odd(\widetilde{X};i\mathbf{R})$ induced by the restriction of the cup intersection form. Contract this volume form by any vector $V \in H^1_\odd(\widetilde{X};i\mathbf{R})$ such that $d\text{Area}_q(V) = 1$. For example, take $V := i \Im([\omega])$. Denote by  $\mu_{E^{ss}(q)}$ the restriction of this contraction to $E^{ss}(q)$. These volume forms integrate to smooth measures on the intersections $\alpha^{ss}(q) \cap \qut(\mathbf{1})$. One can check these measures are equal to the leafwise measures $\mu_{\alpha^{ss}(q)}$ up to a multiplicative constant depending only on $g$. An analogous description can be provided for the leafwise measures $\mu_{\alpha^{uu}(q)}$. 

Given $q \in \qut(\mathbf{1})$, denote by $t$ the $\mathbf{R}$-coordinate of the central subspace $E^c(q) := \mathbf{R} \cdot [\overline{\omega}]$. This subspace supports a canonical volume form $\mu_{\alpha^c(q)} := dt$. The canonical volume forms $\mu_{E^{s}(q)} := \mu_{E^{ss}(q)} \wedge dt$ on the stable subspaces $E^{s}(q)$ integrate to smooth measures on the intersections $\alpha^{s}(q) \cap \qut(\mathbf{1})$. One can check these measures are equal to the leafwise measures $\mu_{\alpha^{s}(q)}$ up to a multiplicative constant depending only on $g$. An analogous description can be provided for the leafwise measures $\mu_{\alpha^{u}(q)}$.

Given $q \in \qut(\mathbf{1})$, denote by $\mu_q$ the Masur-Veech volume form on the tangent space $T_q\qutp$. We normalize the Masur-Veech volume form so that it admits the following decomposition \cite[\S 4]{AG13}:
\[
\mu_q = \mu_{E^{ss}(q)} \wedge \mu_{E^c(q)} \wedge \mu_{E^{uu}(q)}.
\]

\subsection*{The Hubbard-Masur functions.} In \cite{HM79}, Hubbard and Masur proved the following theorem: For every $\eta \in \mf$, the projection $\pi \colon \qt \to \tt$ is a homeomorphism onto $\tt$ when restricted to $\Re^{-1}(\eta)$ and a diffeomorphism onto its image when restricted to $\Re^{-1}(\eta) \cap \qt(\mathbf{1})$. In particular, for every $q \in \qut$, the projection $\pi \colon \qut \to \tt$ is a homeomorphism onto $\tt$ when restricted to $\alpha^{s}(q)$ and a diffeomorphism onto its image when restricted to $\alpha^{s}(q) \cap \qut(\mathbf{1})$. Analogous results hold for unstable leaves of $\qut$. Recall that $\mathbf{m} := \pi_* \mu$ denotes the pushforward to $\tt$ of the Masur-Veech measure $\mu$ on $\qut$. This measure is $\mcg$-invariant and smooth. Given $X \in \tt$, denote by $\mathbf{m}_X$ the corresponding volume form on $T_X\tt$. The Hubbard-Masur functions, introduced in \cite[\S 2.3]{ABEM12}, are the unique smooth, positive functions $\lambda^-,\lambda^+ \colon \qut(\mathbf{1}) \to \mathbf{R}_{>0}$ such that, for every $q \in \qutp$,
\begin{align}
|\mathbf{m}_{\pi(q)}| &= \lambda^{-}(q) \cdot |\pi_*(\mu_{E^{s}(q)})|, \label{eq:hm_1}\\
|\mathbf{m}_{\pi(q)}| &= \lambda^{+}(q) \cdot |\pi_*(\mu_{E^{u}(q)})|. \label{eq:hm_2}
\end{align}
Directly from these definitions, one can check that the $\lambda^-$ and $\lambda^+$ are $\mcg$-invariant.

\subsection*{The fiberwise measures of $\boldsymbol{\qut}$.} General measure theory considerations allow one to disintegrate the Masur-Veech measure $\mu$  on $\qut$ along the fibers of the projection $\pi \colon \mathcal{Q}^1\mathcal{T}_g \to \mathcal{T}_g$. More precisely, there exists a unique family of fiberwise probability measures $\{s_X\}_{X \in \mathcal{T}_g}$, with $s_X$ supported on $S(X) := \pi^{-1}(X)$, such that the following disintegration formula holds:
\begin{equation}
\label{eq:fiberwise_measures}
d\mu(X,q) = ds_X(q) \thinspace d\mathbf{m}(X).
\end{equation}
The fiberwise measures $\{s_X\}_{X \in \mathcal{T}_g}$ are smooth and one can make sense of (\ref{eq:fiberwise_measures}) at the level of volume forms. See \S 5 for a construction of fiberwise volume forms inducing these measures. By \cite[Theorem 2.2]{ABEM12}, the fiberwise measures $\{s_X\}_{X \in \mathcal{T}_g}$ give zero mass to the multiple zero locus.

Denote by $\widehat{\mathbf{m}}$ the local pushforward to $\mm$ of the measure $\mathbf{m}$ on $\tt$. If $g = 2$, the quotient map $p \colon \qut \to \qum$ is an isomorphism on fibers. If $g > 2$, the quotient map $p \colon \qut \to \qum$ is an isomorphism on fibers lying above Riemann surfaces without automorphisms. As this is a full measure subset of the Lebesgue measure class, the following disintegration formula holds:
\begin{equation}
\label{eq:disintegrate}
d\widehat{\mu}(X,q) = ds_X(q) \thinspace d\widehat{\mathbf{m}}(X).
\end{equation}

\subsection*{The Hubbard-Masur functions revisited.} The Hubbard-Masur functions $\lambda^-,\lambda^+ \colon \qut(\mathbf{1}) \to \mathbf{R}_{>0}$ were defined in (\ref{eq:hm_1}) and (\ref{eq:hm_2}) using leafwise measures. These functions can also be defined using the fiberwise measures $\{s_X\}_{X \in \mathcal{T}_g}$, as we now explain. Fix $X \in \tt$. By work of Hubbard and Masur \cite{HM79}, the maps $\Re,\Im \colon \qt\to \mf$ are homeomorphisms onto $\mf$ when restricted to $Q(X)$. Moreover, these maps are piecewise $\mathcal{C}^1$ isomorphisms onto their image when restricted to $Q(X) \cap \qt(\mathbf{1})$; see \cite[Lemma 4.3]{Mir08a} or \cite[Theorem 5.8]{Du15} for a proof. These restrictions map area of quadratic differentials to extremal length with respect to $X$ of singular measured foliations \cite{Ker80}. Denote by $\text{Ext}_\eta(X) $ the extremal length of $\eta \in \mf$ with respect to $X \in \tt$. Consider the subset $E_X \subseteq \mathcal{MF}_g$ given by
\[
E_X := \{\eta \in \mathcal{MF}_g \ | \ \text{Ext}_\eta(X) =  1 \}.
\]
Denote by $\overline{\nu}_X$ the conned-off version of the Thurston measure on $E_X$. The pullback measures $(\Re|_{S(X)})^*\overline{\nu}_X$ and $(\Im|_{S(X)})^*\overline{\nu}_X$ are smooth on $S(X) \cap \mathcal{Q}^1\mathcal{T}_g(\mathbf{1})$ and give zero mass to the multiple-zero locus \cite[Lemma 4.3]{Mir08a}. By \cite[Proposition 2.3]{ABEM12}, the Hubbard-Masur functions $\lambda^-,\lambda^+ \colon \qutp \to \mathbf{R}_{>0}$ are the unique smooth, positive functions such that for every $q \in \qutp$,
\begin{align}
|((\Re|_{S(X)})^*\overline{\nu}_X)(q)| &= \lambda^-(q) \cdot  |s_X(q)|, \label{eq:hm_3}\\
|((\Im|_{S(X)})^*\overline{\nu}_X)(q)| &= \lambda^+(q) \cdot |s_X(q)|. \label{eq:hm_4}
\end{align}

\subsection*{The Hubbard-Masur constant.} Consider the function $\Lambda \colon \mathcal{T}_g \to \mathbf{R}_{>0}$ which maps every $X \in \tt$ to
\[
\Lambda(X) := \nu(\{\eta \in \mathcal{MF}_g \ | \ \mathrm{Ext}_\eta(X) \leq 1 \}).
\]
Directly from (\ref{eq:hm_3}) and (\ref{eq:hm_4}) we see that, for every $X \in \tt$, 
\begin{align*}
\Lambda(X) &= \int_{S(X)} d((\Re|_{S(X)})^*\overline{\nu}_X)(q) = \int_{S(X)} \lambda^-(q) \ ds_X(q)\\
&= \int_{S(X)} d((\Im|_{S(X)})^*\overline{\nu}_X)(q) = \int_{S(X)} \lambda^+(q) \ ds_X(q).
\end{align*}
Combining results of Dumas \cite{Du15} with Gardiner's variational formula \cite{Ga84}, Mirzakhani showed the function $\Lambda \colon \tt \to \mathbf{R}_{>0}$ is constant \cite[Theorem 5.10]{Du15}. We denote its value by $\Lambda_g > 0$ and refer to it as the Hubbard-Masur constant. 

\subsection*{The Teichmüller metric.} Denote by $d_\mathcal{T}$ the Teichmüller metric on $\mathcal{T}_g$. This metric is Finsler. Denote by $\|\cdot \|_{\mathcal{T}}$ the corresponding family of fiberwise norms on $T\mathcal{T}_g$. The unit speed geodesics of this metric are precisely the paths $t \mapsto \pi(a_t q)$ with $q \in \qut$ arbitrary. The Teichmüller metric is complete. Indeed, by Teichmüller's theorem \cite[Theorem 11.19]{FM11}, any two points $X,Y \in \tt$ can be joined by a unique Teichmüller geodesic segment. Denote by $B_R(X) \subseteq \tt$ the ball of radius $R>0$ centered at $X \in \tt$ with respect to the Teichmüller metric. Kerckhoff formula \cite[Theorem 4]{Ker80} provides a convenient tool for estimating Teichmüller distances: For every $X,Y \in \mathcal{T}_g$,
\begin{equation}
\label{eq:ker}
d_\mathcal{T}(X,Y) = \sup_{\eta \in \mf} \frac{\sqrt{\mathrm{Ext}_{\eta}(Y)}}{\sqrt{\mathrm{Ext}_{\eta}(X)}}.
\end{equation}

\subsection*{The Hodge inner product.} Let $X$ be a closed Riemann surface.  Consider the Hodge decomposition of its complex cohomology group
$
H^1(X;\mathbf{C}) = H^{1,0}(X) \oplus H^{0,1}(X)
$
into holomorphic and anti-holomorphic $1$-forms. On $H^1(X;\mathbf{C})$ consider the Hodge intersection pairing
$
(\alpha, \beta)_X:= \frac{i}{2} \ \int_X \alpha \wedge \overline{\beta}.
$
This pairing is Hermitian, positive definite on $H^{1,0}(X)$, and negative definite on $H^{0,1}(X)$. 

The Hodge inner product $\langle \cdot , \cdot \rangle_X$ on $H^1(X;\mathbf{C})$ is the unique Hermitian inner product given by the Hodge intersection pairing on $H^{1,0}(X)$, the negative of the Hodge intersection pairing on $H^{0,1}(X)$, and which makes $H^{1,0}(X)$ and $H^{0,1}(X)$ orthogonal. A direct computation shows that, for every $\omega \in H^{1,0}(X)$,
$\langle \omega, \omega \rangle_X = \text{Area}(\omega^2)$. The Hodge inner product  restricts to a real inner product on $H^1(X;\mathbf{R})$.

\subsection*{The Hodge star operator.} Let $X$ be a closed Riemann surface. The Hodge star operator
$
\star \colon H^1(X;\mathbf{C}) \to H^1(X;\mathbf{C}) 
$
is the unique complex linear operator acting as $(-i)$ on $H^{1,0}(X)$ and as $i$ on $H^{0,1}(X)$. Notice that $\star^2 = -1$ and that $\star$ commutes with complex conjugation of $1$-forms. The Hodge star operator preserves the subspaces $H^1(X;\mathbf{R})$ and $H^1(X;i\mathbf{R})$. For every $\omega \in H^{1,0}(X)$, $\star \Re(\omega) = \Im(\omega)$ and $\star \Im(\omega) = - \Re(\omega)$.

Recall that we normalize the cup intersection pairing on $H^1(X;\mathbf{C})$ so that
$
\alpha \cup \beta = \frac{1}{2} \ \int_X \alpha \wedge \beta.
$
The composition of the Hodge star operator with complex conjugation of $1$-forms conjugates the Hodge inner product, defined using the complex structure of $X$, to the cup intersection pairing, defined using only the topology of $X$. More precisely, 
$
\langle \alpha, \beta \rangle_X = \alpha \cup \overline{\star \beta}
$
on $H^1(X;\mathbf{C})$.
In particular, 
$
\langle \alpha, \beta \rangle_X = \alpha \cup \star \beta
$
on $H^1(X;\mathbf{R})$.

From this property we see that the notions of simplectic orthogonality with respect to the cup intersection pairing and orthogonality with respect to the Hodge inner product on $H^1(X;\mathbf{R})$ coincide. 
Using the same property, one can check that the Hodge star operator on $H^1(X;\mathbf{R})$ is antisymmetric and orthogonal with respect to the Hodge inner product. In particular, the volume forms induced by the cup intersection pairing and the Hodge inner product on $H^1(X;\mathbf{R})$ coincide.

In this paper we will specifically consider the restriction of the Hodge inner product to odd subspaces $H^1_\text{odd}(\widetilde{X};\mathbf{R}) \subseteq H^1(\widetilde{X};\mathbf{R})$ arising from canonical double covers $\widetilde{X}$ of quadratic differentials in $\qutp$. The properties introduced above remain true in this setting. Given a subspace $V \subseteq H^1_\text{odd}(\widetilde{X};\mathbf{R})$ we will denote by $V^\perp \subseteq H^1_\text{odd}(\widetilde{X};\mathbf{R})$ its orthogonal complement.

\subsection*{Constants.} Throughout this paper we will need to keep track of several constants. Constants will always be denoted with a subindex to help the reader keep track of when and where they are introduced. We will always make the dependencies of the constants explicit. 

Let $A,B \in \mathbf{R}$ be real quantities and $*$ be a set of parameters. We write $A \preceq_* B$ if there exists a constant $c= c(*) > 0$ depending only on $*$ such that $A \leq c \cdot B$. We write $A \asymp_* B$ if $A \preceq_* B $ and $B \preceq_* A$. We write $A = O_*(B)$ if there exists a constant $c = c(*) > 0$ depending only on $*$ such that $|A| \leq c \cdot B$.

\section{Effective mean equidistribution of Teichmüller metric balls}

\subsection*{Outline of this section.} The goal of this section is to prove Theorem \ref{theo:ball_equidistribution_intro}. We actually prove a slightly more precise version, which we introduce below as Theorem \ref{theo:ball_equidistribution}. The exponential mixing property of the Teichmüller geodesic flow is the main tool used in the proof. The proof also uses results from \S 5, \S 6, and \S 7. We summarize these results in this section an defer their proofs to \S 5, \S 6, and \S 7.

\subsection*{Statement of the main theorem.} Recall that $\mathbf{m} := \pi_* \mu$ denotes the pushforward to $\tt$ of the Masur-Veech measure $\mu$ on $\qut$ under the projection $\pi \colon \qut \to \tt$. Recall that $\widehat{\mathbf{m}}$ denotes the local pushforward of $\mathbf{m}$ to $\mm$. Recall that $B_R(X) \subseteq \mathcal{T}_g$ denotes the  ball of radius $R>0$ centered at $X \in \tt$ with respect to the Teichmüller metric. Recall that  $\mathbf{m}_X^R$ denotes the restriction of $\mathbf{m}$ to $B_R(X)$ and that $\widehat{\mathbf{m}}_X^R$ denotes the pushforward of $\mathbf{m}_X^R$ to $\mathcal{M}_g$. Recall that $\widehat{\mathbf{m}}_X^R$ depends only on $X \in \mm$ and not on the marking. Recall that $h := 6g-6$ and that $\Lambda_g > 0$ denotes the Hubbard-Masur constant introduced in \S 2. In this section we prove the following version of Theorem \ref{theo:ball_equidistribution_intro}. 

\newconk{ball_equid}
\begin{theorem}
	\label{theo:ball_equidistribution}
	Let $\mathcal{K} \subseteq \mathcal{M}_g$ be a compact subset and $\phi_1,\phi_2 \in  L^\infty(\mm,\widehat{\m})$ be essentially bounded functions with  $\esupp(\phi_1), \esupp(\phi_2) \subseteq \mathcal{K}$. Then, for every $R > 0$,
	\begin{gather*}
	\int_{\mm} \phi_1(X) \left( \int_{\mm} \phi_2(Y) \ d\widehat{\mathbf{m}}_X^R(Y) \right) d\widehat{\mathbf{m}}(X) \\
	=  \frac{\Lambda_g^2}{h \cdot \widehat{\mathbf{m}}(\mathcal{M}_g)} \cdot \left(\int_{\mm} \phi_1(X) \ d\widehat{\mathbf{m}}(X)\right) \cdot \left(\int_{\mm} \phi_2(Y) \ d\widehat{\mathbf{m}}(Y) \right) \cdot e^{hR} \\
	+ O_\mathcal{K}\left(\|\phi_1\|_\infty \cdot \|\phi_2\|_\infty \cdot e^{(h-\useconk{ball_equid})R}\right), 
	\end{gather*}
	where $\useconk{ball_equid} =\useconk{ball_equid}(g) > 0$ is a constant depending only on $g$.
\end{theorem}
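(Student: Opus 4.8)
The plan is to follow the classical Margulis approach adapted to Teichmüller space, as in \cite{ABEM12}, but replacing every use of mixing with the effective (exponential) mixing of the Teichmüller geodesic flow on $(\qum,\widehat\mu)$ due to Avila--Resende \cite{AR12}. The starting point is the polar-coordinates description of $\mathbf{m}$: using the map $\Phi_X(q,t) = \pi(a_t q)$ and the disintegration \eqref{eq:fiberwise_measures}, the inner integral $\int_{\mm}\phi_2(Y)\,d\widehat{\mathbf m}_X^R(Y)$ unfolds, on the cover $\tt$, into an integral over $S(X)\times(0,R]$ of $\phi_2(\pi(a_tq))$ against $\Delta(q,t)\,ds_X(q)\,dt$, where $\Delta$ is the density function discussed before the statement. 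Combining this with the outer integral against $d\widehat{\mathbf m}(X)$ and the disintegration \eqref{eq:disintegrate} should turn the left-hand side into an integral over $\qum\times(0,R]$ of the form $\int_0^R\!\!\int_{\qum}\widetilde\phi_1(q)\,\phi_2(p(\pi(a_tq)))\,\Delta\text{-weight}\,d\widehat\mu(q)\,dt$, i.e.\ a correlation integral for the geodesic flow $a_t$ with a $t$-dependent weight coming from $\Delta$.

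The second step is to understand $\Delta(q,t)$ well enough to pull an explicit $e^{ht}$ out of it. Here Theorem \ref{theo:polar_coordinates_estimate} (the strengthening of \cite[Prop.~2.5]{ABEM12}) is the key input: it gives $\Delta(q,t) = \lambda^-(a_tq)\,\lambda^+(q)\,e^{ht}$ up to a controlled multiplicative error that decays like $e^{-\kappa t}$ away from the multiple-zero locus, with the leading coefficient expressed through the Hubbard-Masur functions $\lambda^\pm$. Substituting this and using that $\lambda^\pm$ are $\mathrm{SO}(2)$-invariant (Proposition \ref{prop:hm_inv_sum}) and $\mcg$-invariant, the main term becomes $e^{hR}/h$ times a correlation $\int_{\qum}\Psi_1(q)\,(\phi_2\circ\upi\circ\pi)(a_tq)\,d\widehat\mu(q)$ where $\Psi_1$ packages $\phi_1$ together with the $\lambda^-\lambda^+$ weight; exponential mixing then replaces this by $\widehat\mu(\qum)^{-1}\big(\int\Psi_1\,d\widehat\mu\big)\big(\int\phi_2\circ\upi\circ\pi\,d\widehat\mu\big)$ with error $O(e^{-\kappa t})$, which integrates against $e^{ht}\,dt$ to give the stated main term plus an $O(e^{(h-\kappa)R})$ error. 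One must check that the two product integrals assemble into $\Lambda_g^2/(h\,\widehat{\mathbf m}(\mm))$ times $\int\phi_1\,d\widehat{\mathbf m}\cdot\int\phi_2\,d\widehat{\mathbf m}$: this is exactly where the identities $\int_{S(X)}\lambda^\pm\,ds_X = \Lambda_g$ from the definition of the Hubbard-Masur constant, together with \eqref{eq:hm_3}--\eqref{eq:hm_4} and \eqref{eq:disintegrate}, do the bookkeeping.

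The main obstacle is that the estimate for $\Delta$ in Theorem \ref{theo:polar_coordinates_estimate} degrades as $a_tq$ approaches the multiple-zero locus $\qum\setminus\qum(\mathbf 1)$, and also that the Hubbard-Masur functions $\lambda^\pm$ are unbounded there; moreover the test function $\Psi_1$ built from $\lambda^-\lambda^+$ need not be a legitimate (say, Hölder or $\mathcal C^1$) observable to which the quantitative mixing rate applies. To handle this I would partition the time integral $(0,R]$ and the phase space according to the size of $\ell_{\min}$, using a threshold $\ell_{\min}\ge e^{-\beta R}$ (or a fixed small $\delta$, plus a cutoff moving with $R$): on the "thick" part one applies exponential mixing to a smoothed, compactly supported truncation of $\Psi_1$ (controlling the smoothing error via the regularity of $\lambda^\pm$ away from the boundary, Proposition \ref{prop:HM_bound_sum} and the bounds of Forni \cite{F02}), while on the "thin" part one shows the contribution is negligible using the effective recurrence estimates assembled in \S 7 — Proposition \ref{prop:small_HM_integral_sum}, Theorems \ref{theo:thin_trajectories_sum}, \ref{theo:thin_sector_sum}, \ref{theo:large_deviations_sum}, which bound the measure of trajectories spending a definite fraction of time in the thin part (Eskin--Mirzakhani--Rafi \cite{EMR19}) and the integrals of $\lambda^\pm$ over shrinking neighborhoods of the multiple-zero locus. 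Balancing the threshold $\beta$ against the mixing rate, the smoothing scale, and the power saving from the thin-part estimates yields the final exponent $\useconk{ball_equid}=\useconk{ball_equid}(g)>0$. The remaining pieces — justifying the unfolding rigorously (injectivity of $\Phi_X$, behavior of the $g>2$ automorphism locus via \eqref{eq:disintegrate}), and checking all constants are $g$-dependent only — are routine given the preliminaries.
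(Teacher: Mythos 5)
Your proposal follows essentially the same route as the paper: polar coordinates plus Theorem \ref{theo:polar_coordinates_estimate}, the thin-part estimates of \S 7 with a cutoff $K_\delta(\mathbf{1})$ shrinking like $\delta = e^{-\eta R}$, exponential mixing from Theorem \ref{theo:exp_mixing}, and the $\Lambda_g$ bookkeeping via (\ref{eq:hm_3}), (\ref{eq:hm_4}), and (\ref{eq:disintegrate}). The only deviation is that your proposed smoothing of the truncated observable is unnecessary: since $\lambda$, $\ell_{\min}$, and $\phi_i \circ \pi$ are all $\mathrm{SO}(2)$-invariant, the truncated observable $(\phi_i \circ \pi) \cdot \mathbbm{1}_{K_\delta(\mathbf{1})} \cdot \lambda$ already lies in the Ratner class with vanishing Lipschitz part, so only the $L^2$ bound $\preceq \delta^{-(h-1)} \|\phi_i\|_\infty$ from Proposition \ref{prop:HM_bound_sum} is needed, which is exactly how the paper proceeds.
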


\subsection*{The exponential mixing property of the Teichmüller geodesic flow.} The backbone of the proof of Theorem \ref{theo:ball_equidistribution} is the exponential mixing property of the Teichmüller geodesic flow on $\qum$. Denote by $\mathbf{S}^1 \subseteq \mathbf{C}$ the unit circle. Recall that $\widehat{\mu}$ denotes the Masur-Veech measure on $\qum$. The Ratner class of observables $\mathcal{R}(\qum,\widehat{\mu}) \subseteq L^2(\qum,\widehat{\mu})$ is the set of functions $\varphi \in L^2(\qum,\widehat{\mu})$ such that the map $e^{i\theta} \in \mathbf{S}^1 \mapsto \varphi \circ r_\theta \in L^2(\qum,\widehat{\mu})$ is Lipschitz. Denote by $\|\varphi\|_{\mathrm{Lip}(\widehat{\mu})}$ the minimal Lipschitz constant of such map. On $\mathcal{R}(\qum,\widehat{\mu})$ consider the norm
\[
\|\varphi\|_{\mathcal{R}(\widehat{\mu})} := \|\varphi\|_{L^2(\widehat{\mu})} + \| \varphi \|_{\mathrm{Lip}(\widehat{\mu})}.
\]

The following theorem was proved by Avila and Resende \cite[Theorem 1.1]{AR12}, building on previous work of Avila, Gouëzel, and Yocozz \cite{AGY06}. More general results were later proved by Avila and Gouëzel \cite{AG13}. Ratner's connection between the spectral gap of a unitary $\mathrm{SL}(2,\mathbf{R})$ representation and the exponential mixing rate of its diagonal flows \cite{Ra87} is one of the key ideas behind these results. See \cite{Ma13} for a detailed treatment of Ratner's work. See \cite[Proof of Corollary 1]{Do98} for an explanation on how to reduce from $\mathrm{SO}(2)$-smooth observables, as in Ratner's work, to $\mathrm{SO}(2)$-Lipschitz observables, as in the following statement.

\newconk{exp_mix}
\begin{theorem}[\cite{AR12}]
	\label{theo:exp_mixing}
	Let $\varphi_1,\varphi_2 \in\mathcal{R}(\qum,\widehat{\mu})$ be a pair of Ratner observables. Then, for every $t > 0$,
	\begin{align*}
	\int_{\qum} \varphi_2(a_t q) \thinspace \varphi_1(q) \thinspace d\widehat{\mu}(q) &= \frac{1}{\widehat{\mu}(\qum)} \cdot \left(\int_{\qum} \varphi_1(q) \thinspace d\widehat{\mu}(q)\right) \cdot \left(\int_{\qum} \varphi_2(q) \thinspace d\widehat{\mu}(q)\right) \\
	&\phantom{= } + O_g\left(\|\varphi_1\|_{\mathcal{R}(\widehat{\mu})} \cdot \| \varphi_2\|_{\mathcal{R}(\widehat{\mu})} \cdot e^{-\useconk{exp_mix} t}\right),
	\end{align*}
	where $\useconk{exp_mix} = \useconk{exp_mix}(g) > 0$ is a constant depending only on $g$.
\end{theorem}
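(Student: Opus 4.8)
Up to the reduction from $\mathrm{SO}(2)$-smooth to $\mathrm{SO}(2)$-Lipschitz observables and routine bookkeeping, this is the main theorem of \cite{AR12} (building on \cite{AGY06}), so a proof amounts to reassembling the standard pieces. First I would reduce to mean-zero observables. Writing $\varphi_i = c_i + \varphi_i^0$ with $c_i := \widehat{\mu}(\qum)^{-1}\int_{\qum}\varphi_i\,d\widehat{\mu}$ and $\int_{\qum}\varphi_i^0\,d\widehat{\mu} = 0$, the $a_t$-invariance of $\widehat{\mu}$ annihilates every cross term and leaves exactly the asserted main term together with the correlation $\int_{\qum}\varphi_2^0(a_tq)\,\varphi_1^0(q)\,d\widehat{\mu}(q)$; moreover $\|\varphi_i^0\|_{\mathcal{R}(\widehat{\mu})}\leq \|\varphi_i\|_{\mathcal{R}(\widehat{\mu})}$, since orthogonal projection off the constants does not increase the $L^2$-norm and leaves the $\mathrm{SO}(2)$-Lipschitz seminorm unchanged. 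It thus suffices to establish exponential decay in $t$ of this correlation, which is the matrix coefficient $\langle \rho(a_t)\varphi_2^0,\varphi_1^0\rangle$ of the unitary representation $\rho$ of $\mathrm{SL}(2,\mathbf{R})$ on $H := L^2_0(\qum,\widehat{\mu})$, the orthogonal complement of the constants.

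The analytic core is the \emph{spectral gap} for $\rho$: the $\mathrm{SL}(2,\mathbf{R})$-representation on $L^2_0$ of $\qum$, equipped with the Masur--Veech measure, is isolated from the trivial representation, with a gap depending only on $g$. I would import this from \cite{AGY06, AR12}, where it is obtained through a delicate analysis of the Rauzy--Veech--Zorich renormalization dynamics — coded as a countable-alphabet subshift and studied via transfer-operator and renewal-type estimates together with large deviations for the return-time function — which yields exponential mixing of the Teichmüller flow on each connected component of each stratum and hence the gap, the quadratic-differential case being supplied by \cite{AR12}. Since $\widehat{\mu}$ is carried by the principal stratum $\qum(\mathbf{1})$, which for fixed $g$ consists of finitely many $\mathrm{SL}(2,\mathbf{R})$-invariant ergodic components, the resulting gap is a function of $g$ alone. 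This is by far the deepest ingredient, and I would treat it as a black box.

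Granting the gap, Ratner's theory \cite{Ra87} converts it into quantitative decay of the matrix coefficients of $a_t$ between $\mathrm{SO}(2)$-isotypic vectors, with exponential rate governed by the gap and a polynomial dependence on the $\mathrm{SO}(2)$-weights. To pass from this to the $\mathrm{SO}(2)$-Lipschitz observables of the statement — the reduction carried out in \cite[Proof of Corollary 1]{Do98} — I would decompose $\varphi_i^0 = \sum_{n}\varphi_{i,n}$ into $\mathrm{SO}(2)$-isotypic components, observe that $\|\varphi_i^0\|_{\mathcal{R}(\widehat{\mu})}$ controls $(\sum_n(1+n^2)\|\varphi_{i,n}\|_{L^2(\widehat{\mu})}^2)^{1/2}$ and hence, by Bernstein's theorem, also $\sum_n\|\varphi_{i,n}\|_{L^2(\widehat{\mu})}$, apply the isotypic decay estimates term by term, and sum over the two weights, splitting according to whether they are small or large compared with $e^{\useconk{exp_mix}t}$. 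Combined with the first step this yields the bound $O_g(\|\varphi_1\|_{\mathcal{R}(\widehat{\mu})}\cdot\|\varphi_2\|_{\mathcal{R}(\widehat{\mu})}\cdot e^{-\useconk{exp_mix}t})$ and hence the theorem. The main obstacle is the spectral gap: once it is available, the remainder is soft representation theory and harmonic analysis on $\mathrm{SO}(2)$, which is precisely why the result is credited to \cite{AGY06, AR12}.
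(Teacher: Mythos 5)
Your proposal is correct and follows exactly the route the paper itself indicates: the result is imported from \cite{AR12} (building on \cite{AGY06}), with Ratner's spectral-gap-to-mixing correspondence \cite{Ra87} and the reduction from $\mathrm{SO}(2)$-isotypic decay to $\mathrm{SO}(2)$-Lipschitz observables as in \cite[Proof of Corollary 1]{Do98}; the paper offers no independent proof beyond these citations. Your added details (mean-zero reduction, the estimate $\|\varphi_i^0\|_{\mathcal{R}(\widehat{\mu})} \leq \|\varphi_i\|_{\mathcal{R}(\widehat{\mu})}$, and the small/large weight splitting in the isotypic sum) are accurate fillings-in of that same argument.
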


\subsection*{Regularity of the Hubbard-Masur functions.} The Hubbard-Masur functions $\lambda^-,\lambda^+ \colon \qutp \to \mathbf{R}_{>0}$ introduced in \S 2 play a crucial role in the proof of Theorem \ref{theo:ball_equidistribution}. In particular, it will be important to understand the regularity of these functions. The results we now summarize will be proved in \S5 

Using work of Dumas \cite{Du15} we prove the following invariance property. This property will be crucial to apply Theorem \ref{theo:exp_mixing} in the proof of Theorem \ref{theo:ball_equidistribution}.

\begin{proposition}
	\label{prop:hm_inv_sum}
	The functions $\lambda^-,\lambda^+ \colon \mathcal{Q}^1\mathcal{T}_g(\mathbf{1}) \to \mathbf{R}_{>0}$ are $\mathrm{SO}(2)$-invariant.
\end{proposition}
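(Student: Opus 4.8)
The plan is to unwind the definitions (\ref{eq:hm_3}) and (\ref{eq:hm_4}) of the Hubbard-Masur functions in terms of fiberwise measures and to exploit the fact that the $\mathrm{SO}(2)$-action $r_\theta$ preserves each fiber $S(X) = \pi^{-1}(X)$ together with the behavior of the maps $\Re,\Im$ under this action. Recall that for $q \in \qt$ one has $r_\theta q = e^{2\theta i} q$, which in terms of the associated measured foliations means that rotating $q$ rotates the pair $(\Re(q),\Im(q))$: concretely, for $\theta = \pi/2$ one gets $r_{\pi/2} q = -q$, whose associated horizontal foliation is $\Im(q)$ and whose vertical foliation is $\Re(q)$, so $\Re \circ r_{\pi/2} = \Im$ and $\Im \circ r_{\pi/2} = \Re$ (up to the sign, which is irrelevant for the underlying measured foliation). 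For general $\theta$ the map $r_\theta$ acts on the period coordinates $[\omega] \in H^1_\odd(\widetilde X;\CC) = H^1_\odd(\widetilde X;\RR) \oplus i H^1_\odd(\widetilde X;\RR)$ as rotation by $2\theta$ in the $\RR^2$-factor, hence $\Re(r_\theta q)$ and $\Im(r_\theta q)$ are linear combinations $\cos(2\theta)\Re(q) \pm \sin(2\theta)\Im(q)$ of the original real and imaginary parts.

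First I would fix $X \in \tt$ and $\theta \in [0,2\pi]$ and consider the restriction $r_\theta \colon S(X) \to S(X)$, which is a diffeomorphism preserving $S(X) \cap \qutp$. The key geometric input is that $r_\theta$ preserves the Masur-Veech measure $\mu$ on $\qut$ (as stated in the excerpt, the $\mathrm{SL}(2,\RR)$-action preserves $\mu$), and since it also preserves $\mathbf m$ trivially — it fixes $X$, hence acts as the identity on the base — the disintegration formula (\ref{eq:fiberwise_measures}) forces $(r_\theta)_* s_X = s_X$, i.e. the fiberwise measure $s_X$ is $\mathrm{SO}(2)$-invariant. The next step is to understand how the numerator measures $(\Re|_{S(X)})^*\overline\nu_X$ transform under $r_\theta$. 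Here I would use the observation above that $\Re \circ r_\theta$, as a map $S(X) \to \mf$, differs from $\Re|_{S(X)}$ by post-composing with a linear automorphism of the period coordinates; more invariantly, by Dumas's work \cite{Du15} (cited in the excerpt for the smoothness of $\Re|_{Q(X)}, \Im|_{Q(X)}$ and for the constancy of $\Lambda$), the level set $E_X = \{\eta : \mathrm{Ext}_\eta(X) = 1\}$ is intrinsic to $X$ and is carried to itself, and the conned-off Thurston measure $\overline\nu_X$ — being built from the Thurston measure, which is $\mathrm{Mod}_g$-invariant and Lebesgue-class, together with the extremal-length sphere — is mapped correctly so that $(r_\theta)^* \big((\Re|_{S(X)})^* \overline\nu_X\big) = (\Re|_{S(X)} \circ r_\theta)^* \overline\nu_X = (\Re \circ r_\theta)^*\overline\nu_X$, and this equals $(\Re|_{S(X)})^*\overline\nu_X$ because the change of variables in $\mf$ induced by $r_\theta$ preserves $\overline\nu_X$.

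Granting these two invariances, the argument concludes quickly: applying $(r_\theta)^*$ to both sides of (\ref{eq:hm_3}) gives
\[
|((\Re|_{S(X)})^*\overline\nu_X)(q)| = (\lambda^- \circ r_\theta)(q) \cdot |s_X(q)|
\]
on $S(X) \cap \qutp$, and comparing with (\ref{eq:hm_3}) itself together with the uniqueness clause in \cite[Proposition 2.3]{ABEM12} (the Hubbard-Masur functions are the \emph{unique} smooth positive functions satisfying these relations) yields $\lambda^- \circ r_\theta = \lambda^-$; the identical argument with (\ref{eq:hm_4}) gives $\lambda^+ \circ r_\theta = \lambda^+$. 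Since $X$ and $\theta$ were arbitrary, both functions are $\mathrm{SO}(2)$-invariant on $\qutp$.

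The main obstacle I anticipate is the second invariance — showing that the pullback of the conned-off measure $\overline\nu_X$ on the extremal-length sphere $E_X$ is preserved under the reparametrization of $S(X)$ by $r_\theta$. This is really a statement about how the Thurston symplectic/measure structure on $\mf$ interacts with the identification of $Q(X)$ with $\mf$ via $\Re$ (equivalently via $\Im$), and with the rotation action; the cleanest route is probably to work in period coordinates on $Q(X) \cap \qtp$, where $r_\theta$ acts linearly (as $\mathrm{SO}(2)$ acting diagonally on $(\RR^2)^{3g-3}$), where the Masur-Veech volume form is Lebesgue, and where both $\Re$ and $\Im$ become the two projections to $H^1_\odd(\widetilde X;\RR)$; the $\mathrm{SO}(2)$-invariance of the relevant measures then reduces to linear algebra, namely that a single $\mathrm{SO}(2)$ acting diagonally preserves Lebesgue measure and carries the "$\Re$-projection" data to the "general angle" data compatibly. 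Dumas's identification of $\overline\nu_X$ with the contraction of the Masur-Veech form along the extremal-length sphere — which is exactly what is invoked for the constancy of $\Lambda$ — is what makes this last reduction work, and I would lean on it rather than re-deriving it.
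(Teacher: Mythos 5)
Your proof is correct and takes essentially the same route as the paper: it rests on (a) $\mathrm{SO}(2)$-invariance of the fiberwise measures $s_X$ — which the paper proves by contracting the Masur--Veech volume form along lifts of a basis of $T_X\tt$, and which your disintegration-uniqueness argument also delivers — and (b) $\mathrm{SO}(2)$-invariance of $(\Re|_{S(X)})^*\overline{\nu}_X$ and $(\Im|_{S(X)})^*\overline{\nu}_X$, which the paper, exactly like you, imports from Dumas \cite[Corollary 5.9]{Du15}, followed by comparing both sides of (\ref{eq:hm_3}) and (\ref{eq:hm_4}). The one caveat is that your intermediate heuristic — that $r_\theta$ induces a simple linear change of variables on $\mf$ which visibly preserves $\overline{\nu}_X$ — would not stand on its own (the induced self-map of the extremal-length sphere mixes $\Re(q)$ and $\Im(q)$ and is not piecewise linear in train-track coordinates), but since you explicitly defer to Dumas for precisely this invariance, your argument coincides with the paper's.
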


The following closely related property, although not strictly needed in the proof of Theorem \ref{theo:ball_equidistribution}, conveniently simplifies the notation for the rest of this paper.

\begin{proposition}
	\label{prop:one_hm_function_sum}
	The functions $\lambda^-,\lambda^+ \colon \mathcal{Q}^1\mathcal{T}_g(\mathbf{1}) \to \mathbf{R}_{>0}$ coincide everywhere.
\end{proposition}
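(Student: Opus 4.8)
The plan is to compare the two characterizations of $\lambda^-$ and $\lambda^+$ given in (\ref{eq:hm_3}) and (\ref{eq:hm_4}), using the symmetry of the setup under the Hodge star operator (equivalently, under rotation by $r_{\pi/2}$). Fix $X \in \tt$ and $q \in S(X) \cap \qutp$. From (\ref{eq:hm_3}) and (\ref{eq:hm_4}) the equality $\lambda^-(q) = \lambda^+(q)$ is equivalent to the equality of the two pullback measures $(\Re|_{S(X)})^*\overline{\nu}_X$ and $(\Im|_{S(X)})^*\overline{\nu}_X$ at the point $q$, i.e.\ of the two fiberwise volume forms obtained by pulling back the conned-off Thurston measure $\overline{\nu}_X$ on $E_X \subseteq \mf$ via the real and imaginary part maps restricted to $S(X)$. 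So it suffices to show these two volume forms on $S(X) \cap \qutp$ agree.

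The key observation is that the rotation $r_{\pi/2} \colon \qt \to \qt$, which sends $q = dz^2$ to $-dz^2$ and hence interchanges $\Re(q)$ and $\Im(q)$ (up to the sign, which is irrelevant for measured foliations), preserves each fiber $S(X)$ and preserves the extremal-length-one locus $E_X$; moreover $\Re \circ r_{\pi/2} = \Im$ and $\Im \circ r_{\pi/2} = \pm\Re$ as maps on $S(X)$. Therefore $(\Im|_{S(X)})^*\overline{\nu}_X = (r_{\pi/2})^* (\Re|_{S(X)})^*\overline{\nu}_X$. Thus $\lambda^- = \lambda^+$ will follow once we know that the fiberwise volume form $(\Re|_{S(X)})^*\overline{\nu}_X$, equivalently the fiberwise Masur-Veech measure $s_X$ itself (they differ by the positive function $\lambda^-$), is invariant under $r_{\pi/2}$ — and in fact under all of $\mathrm{SO}(2)$, which is precisely the content already needed for Proposition \ref{prop:hm_inv_sum}. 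Concretely: the Masur-Veech measure $\mu$ on $\qut$ is $\mathrm{SL}(2,\RR)$-invariant (as recorded in the subsection ``The Masur-Veech measure revisited''), in particular $\mathrm{SO}(2)$-invariant; since $\mathrm{SO}(2)$ acts fiberwise on $\pi \colon \qut \to \tt$ and fixes the base, the disintegration (\ref{eq:fiberwise_measures}) is unique, so each $s_X$ is $\mathrm{SO}(2)$-invariant; combined with Proposition \ref{prop:hm_inv_sum} (the $\mathrm{SO}(2)$-invariance of $\lambda^-$), the measure $(\Re|_{S(X)})^*\overline{\nu}_X = \lambda^- \cdot s_X$ is $\mathrm{SO}(2)$-invariant, hence $r_{\pi/2}$-invariant.

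Putting the pieces together: for $q \in S(X) \cap \qutp$,
\[
\lambda^+(q) \cdot |s_X(q)| = |((\Im|_{S(X)})^*\overline{\nu}_X)(q)| = |((r_{\pi/2})^*(\Re|_{S(X)})^*\overline{\nu}_X)(q)| = |((\Re|_{S(X)})^*\overline{\nu}_X)(r_{\pi/2}^{-1}q)|,
\]
and by $\mathrm{SO}(2)$-invariance of both $(\Re|_{S(X)})^*\overline{\nu}_X$ and $s_X$ this equals $\lambda^-(r_{\pi/2}^{-1}q)\cdot |s_X(r_{\pi/2}^{-1}q)| = \lambda^-(q)\cdot |s_X(q)|$, whence $\lambda^+(q) = \lambda^-(q)$ since $s_X$ is a positive smooth measure on the principal stratum. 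I expect the only genuinely delicate point to be bookkeeping the precise relationship $\Re \circ r_{\pi/2} = \Im$, $\Im \circ r_{\pi/2} = -\Re$ at the level of the maps $S(X) \to \mf$ and checking that it carries $\overline{\nu}_X$ to $\overline{\nu}_X$ — this uses that $r_\theta q = e^{2\theta i} q$ acts on $(\Re(q),\Im(q))$ by the rotation of angle $2\theta$ in $\mf \times \mf$, together with the fact that $E_X$ is the full extremal-length-one locus and is preserved by this rotation. Everything else reduces to the uniqueness of disintegrations and the already-established Proposition \ref{prop:hm_inv_sum}.
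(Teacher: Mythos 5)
Your proposal is correct and follows essentially the same route as the paper: the paper's proof also exploits that multiplication by $e^{i\pi}$ (i.e.\ $r_{\pi/2}$) interchanges $\Re$ and $\Im$ on $\qutp$, and combines this with the $\mathrm{SO}(2)$-invariance of the fiberwise volume forms $s_X$ (Lemma \ref{lem:rot_inv}) and of the Hubbard--Masur functions (Proposition \ref{prop:hm_inv_sum}) to equate the two sides of (\ref{eq:hm_3}) and (\ref{eq:hm_4}). The only cosmetic differences are that you rederive the invariance of $s_X$ from uniqueness of the disintegration instead of citing Lemma \ref{lem:rot_inv}, and invoke the invariance of $\lambda^-$ where the paper uses that of $\lambda^+$.
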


Following Proposition \ref{prop:one_hm_function_sum}, we denote the functions $\lambda^-,\lambda^+$ by $\lambda \colon \qut(\mathbf{1}) \to \mathbf{R}_{>0}$ and refer to $\lambda$ as the Hubbard-Masur function. Using work of Kahn and Wright \cite{KW20} we prove estimates for $\lambda(q)$ in terms of $\ell_{\min}(q)$, the length of the shortest saddle connection of $q \in \qutp$. 

\begin{proposition}
	\label{prop:HM_bound_sum}
	Let $\mathcal{K} \subseteq \mathcal{T}_g$ be a compact subset. Then, for every $q \in \mathcal{Q}^1\mathcal{T}_g(\mathbf{1}) \cap \pi^{-1}(\mathcal{K})$,
	\[
	\lambda(q) \preceq_{\mathcal{K}} \ell_{\min}(q)^{-(h-1)}.
	\]
\end{proposition}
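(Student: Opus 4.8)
The plan is to bound $\lambda(q)$ by comparing the two measures appearing in the defining relation \eqref{eq:hm_3}, namely $(\Re|_{S(X)})^*\overline{\nu}_X$ and $s_X$, at the point $q$, where $X = \pi(q)$. Since both are smooth volume forms on $S(X) \cap \qutp$ and $s_X$ is a probability measure on the compact fiber $S(X)$, controlling $\lambda(q)$ amounts to controlling the Jacobian of the Hubbard-Masur parametrization $\Re|_{S(X)}\colon S(X) \to E_X$ relative to the fixed reference volume $s_X$. Concretely, $\lambda(q) = |((\Re|_{S(X)})^*\overline{\nu}_X)(q)| / |s_X(q)|$, so I need an upper bound on the Thurston-measure density of the image times a lower bound on the Masur-Veech fiber density. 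Both densities degenerate as $q$ approaches the multiple zero locus, but the quantitative rate is governed by $\ell_{\min}(q)$, and the claimed exponent $h-1 = 6g-7$ is exactly the codimension-type count one expects from the number of period coordinates that can become small when a saddle connection shrinks.

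The key input is the forthcoming work of Kahn and Wright \cite{KW20}, which I would use to analyze the projection $\pi\colon \qut \to \tt$ near the boundary of the principal stratum. The strategy: fix $q \in \qutp \cap \pi^{-1}(\mathcal{K})$ with a short saddle connection $\gamma$ of length $\ell := \ell_{\min}(q)$. In period coordinates on $\qutp$ built from a basis of $H_1^\odd(\widetilde X; \mathbf{Z})$ adapted to $\gamma$, the relative cycle dual to $\gamma$ contributes a coordinate of size $\asymp \ell$, while the complementary coordinates remain bounded above and below on $\pi^{-1}(\mathcal{K})$ by Masur's compactness criterion applied on the $\tt$ side (the base stays in a compact set, even though the fiber does not). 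The stable leaf $\alpha^s(q)$ maps diffeomorphically to $\tt$ under $\pi$, and $E_X = (\Re|_{S(X)})(S(X))$ is the ``unit extremal length'' hypersurface; I would compare the Thurston volume $\overline{\nu}_X$ on $E_X$ with the Masur-Veech fiber volume $s_X$ by tracking how the Hubbard-Masur map distorts the $\gamma$-direction. Using Gardiner's formula for the derivative of extremal length (already invoked in \S 2 for the constancy of $\Lambda$) together with the Kahn-Wright estimates, the distortion in the bad direction is of order $\ell^{-1}$ relative to a ``balanced'' normalization, and when combined with the conning-off in the definition of $\overline{\nu}_X$ one picks up a total factor $\ell^{-(h-1)}$; the remaining $h-1$ directions each contribute a bounded factor because they see only the compact part of the data.

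In more detail, I expect the argument to run: (1) reduce to a single short saddle connection — if several saddle connections are short they are all comparable in length up to constants on $\pi^{-1}(\mathcal{K})$, or one handles the thin part by a covering argument; (2) choose good period coordinates and invoke \cite{KW20} to get uniform (over $\mathcal{K}$) upper and lower bounds on the relevant sub-Jacobians, with the only unbounded factor being the one associated to the collapsing cycle; (3) express $\lambda(q)$ via \eqref{eq:hm_3} as a ratio of these volume forms and collect the powers of $\ell_{\min}(q)$; (4) check the exponent bookkeeping yields precisely $-(h-1)$, using that $\dim_\mathbf{R} S(X) = h-1$ and $\dim_\mathbf{R} \qutp = h$. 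The main obstacle is step (2): making the comparison between the piecewise-linear Thurston structure on $\mf$ (hence on $E_X$) and the linear period-coordinate structure on $\qutp$ sufficiently quantitative near the multiple zero locus. This is exactly the content the paper extracts from the Kahn-Wright analysis of $\pi$, so I would lean on \cite{KW20} for the uniform estimate on how the fiberwise Jacobian of $\pi$ — equivalently, the density of $s_X$ in period coordinates — behaves as a function of $\ell_{\min}$, and then the Hubbard-Masur side follows from Gardiner's variational formula together with the boundedness of extremal lengths of short curves on a compact set of base surfaces.
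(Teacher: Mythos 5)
Your framing—express $\lambda(q)$ as a density via (\ref{eq:hm_3}) and trace its blow-up to $\ell_{\min}(q)$ through the Kahn--Wright analysis of $\pi$—is in the right spirit, but the quantitative core of your argument does not close, and your exponent bookkeeping does not produce $-(h-1)$. You claim a single ``bad'' direction distorted by $\ell^{-1}$, bounded contributions from the remaining $h-1$ directions, and then an upgrade to $\ell^{-(h-1)}$ coming from the conning-off; but the conning-off in the definition of $\overline{\nu}_X$ is just scaling by $[0,1]$ and contributes no powers of $\ell$, so the accounting you describe yields $\ell^{-1}$, not $\ell^{-(h-1)}$. The mechanism in the paper is different (and cruder): one uses the leafwise definition (\ref{eq:hm_2}), $|\mathbf{m}_X| = \lambda(q)\cdot|(d\pi_q)_*(\mu_{E^{u}(q)})|$, evaluates both sides on $d\pi_q u_1\wedge\dots\wedge d\pi_q u_{h-1}\wedge d\pi_q[\overline{\omega}]$ for a Hodge-orthonormal basis $u_1=[\Im(\omega)],u_2,\dots,u_{h-1}$ of $E^{uu}(q)$, observes that the right-hand side equals exactly $1$ by the cup-form normalization, and bounds the left-hand side by the Hadamard-type inequality of Lemma \ref{lem:determinant_new} together with the single operator-norm estimate $\|d\pi_q\|\preceq_g \ell_{\min}(q)^{-1}$ of Proposition \ref{prop:KW_new}, applied to each of the $h-1$ vectors $u_i$ (the central vector $d\pi_q[\overline{\omega}]$ has Teichmüller norm $1$). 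The power $h-1$ thus arises from $h-1$ applications of the same norm bound to a full unstable frame, not from a refined analysis of one collapsing period coordinate; no attempt is made to isolate the short saddle connection, and your auxiliary claim that the coordinates complementary to $\gamma$ stay bounded below on $\pi^{-1}(\mathcal{K})$ is false in general (several saddle connections may be short simultaneously).

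A second, related gap is what you ask of \cite{KW20}. You want uniform two-sided bounds on sub-Jacobians of $\pi$ in adapted period coordinates and, implicitly, a lower bound on the density of $s_X$ near the multiple zero locus; neither is the estimate the paper imports, and the lower bound on $s_X$ is precisely the kind of statement you would have to prove from scratch, since the fiberwise forms $s_X$ are defined only after normalizing by the total fiber mass and their behavior near the multiple zero locus is not controlled a priori. The leafwise route avoids any lower bound entirely: $\mu_{E^{u}(q)}$ is explicitly normalized by the cup pairing, so one only needs an upper bound on $|\mathbf{m}_X|$ evaluated on the pushed-forward frame, and that is supplied by compactness of $\mathcal{K}$ plus Proposition \ref{prop:KW_new}. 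If you want to salvage the route through (\ref{eq:hm_3}), you must separately establish the $s_X$ lower bound and the Thurston-density upper bound with explicit dependence on $\ell_{\min}(q)$, which is substantially more than the argument you sketch.
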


Recall that, for every $\delta > 0$, $K_\delta(\mathbf{1}) \subseteq \qum(\mathbf{1})$ denotes the intersection of the principal stratum with
\[
K_\delta := \{q \in \qum \ | \ \ell_{\min}(q) \geq \delta \}.
\]
Recall that $p \colon \qut \to \qum$ denotes the quotient map. A computation in train track coordinates will yield the following important estimate.

\begin{proposition}
	\label{prop:small_HM_integral_sum}
	Let $\mathcal{K} \subseteq \mathcal{T}_g$ be a compact subset. Then, for every $X \in \mathcal{K}$ and every $\delta > 0$,
	\[
	\int_{S(X) \backslash p^{-1}(K_\delta(\mathbf{1}))} \lambda(q) \thinspace ds_X(q) \preceq_{\mathcal{K}} \delta.
	\]
\end{proposition}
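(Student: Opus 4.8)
The plan is to reduce the statement to a computation in train track coordinates on the space of measured foliations. By the Hubbard-Masur description of the fiberwise measure recorded in (\ref{eq:hm_3}), for fixed $X$ the map $\Re|_{S(X)}$ pushes $\lambda(q)\,ds_X(q)$ forward to the conned-off Thurston measure $\overline{\nu}_X$ on $E_X = \{\eta \in \mf \colon \mathrm{Ext}_\eta(X) = 1\}$; equivalently, coning off, the integral $\int_{S(X)} \lambda(q)\,ds_X(q)$ over a region equals the Thurston measure $\nu$ of the corresponding cone $[0,1]\cdot(\text{region})$ inside $\{\eta \colon \mathrm{Ext}_\eta(X) \le 1\}$. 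So I would first translate the domain $S(X)\setminus p^{-1}(K_\delta(\mathbf{1}))$ under $\Re|_{S(X)}$: a quadratic differential $q \in S(X)$ lies outside $p^{-1}(K_\delta(\mathbf{1}))$ precisely when $\ell_{\min}(q) < \delta$ (or $q$ is in the multiple zero locus, which is $s_X$-null and may be ignored). The short saddle connection produces, via the flat metric, a short curve or arc; one wants to say that this forces one of the foliations $\Re(q)$ to have an annular or train-track "thin" region of definite combinatorial type with small weight.

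The key step is therefore: \emph{a uniform (in $X$ ranging over the compact set $\mathcal{K}$, via Kerckhoff's formula (\ref{eq:ker}) comparing extremal lengths on $\mathcal{K}$ to those at a basepoint) estimate showing that the set of $\eta$ with $\mathrm{Ext}_\eta(X)\le 1$ such that the "dual" foliation $q$ has $\ell_{\min}(q) < \delta$ has Thurston measure $\preceq_{\mathcal{K}} \delta$.} The natural way to see this is to cover $\mf$ by finitely many train track charts (finitely many maximal birecurrent train tracks suffice, up to the action, but since we are in $\tt$ not moduli space one works inside the $\mcg$-fundamental domain picking up $X\in\mathcal K$, or equally well one pushes down to $\qum$ and uses compactness there). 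In each such chart, having a short saddle connection of the flat structure corresponds to one of the train track weight coordinates (a "branch weight" measuring the transverse measure of the relevant annulus) being $\preceq \delta$ — this is the standard comparison between flat-geometry saddle connections and hyperbolic/train-track short curves, and can also be extracted from the comparison between $\ell_{\min}(q)$ and extremal length of core curves of the flat cylinders, bounded in turn by $\mathrm{Ext}(\cdot)(X)$ on $\mathcal K$. Then the region in question is contained in a finite union of slabs $\{w_i \le c_\mathcal{K}\,\delta\}$ inside the simplex $\{\mathrm{Ext}\le 1\}$, and the Thurston (Lebesgue-class) measure of such a slab is $O_\mathcal{K}(\delta)$ by a direct Fubini computation, since the Thurston measure is a fixed smooth multiple of Lebesgue in each chart and the domain $\{\mathrm{Ext}_\eta(X)\le 1\}$ is a bounded (uniformly over $\mathcal K$, again by Kerckhoff) region.

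So the concrete sequence of steps is: (1) use (\ref{eq:hm_3}) to rewrite the integral as $\overline\nu_X$-measure of $\Re(S(X)\setminus p^{-1}(K_\delta(\mathbf 1)))$, equivalently the $\nu$-measure of a cone; (2) show $\ell_{\min}(q) < \delta$ forces some flat cylinder or saddle-connection-based curve $\gamma$ with $\mathrm{Ext}_\gamma(X) \preceq_{\mathcal K}\delta^{2}$ — actually one needs the right power, and here one should be careful: what we really need is a \emph{linear} bound in $\delta$ on the Thurston measure, and since Thurston measure scales like the top power $h$ of transverse-measure coordinates, a single small coordinate $\preceq\delta$ gives an $O(\delta)$ slab, so it suffices to get one train-track weight $\preceq_{\mathcal K}\delta$; (3) pass to finitely many train track charts (compactness of $\mathcal K$, or of the relevant piece of $\qum$), bound the region by a finite union of coordinate slabs $\{w_i \preceq_{\mathcal K}\delta\}$; (4) compute the Lebesgue/Thurston measure of each slab inside the bounded region $\{\mathrm{Ext}_\eta(X)\le 1\}$ by Fubini, getting $O_\mathcal{K}(\delta)$; (5) sum the finitely many contributions. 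The main obstacle is step (2)–(3): making the passage from "short saddle connection of the flat structure $q$" to "small weight on a specific branch of a train track carrying $\Re(q)$" quantitatively uniform over $X \in \mathcal K$. This requires either invoking the standard dictionary between flat cylinders, extremal length of core curves, and train-track coordinates (e.g.\ via Minsky's product regions / the comparison $\mathrm{Ext}_\gamma(X) \asymp 1/\mathrm{Mod}(\text{flat cylinder})$ together with $\ell_{\min}(q)^2 \asymp \mathrm{Area of thin cylinder}$), or arguing directly with the homeomorphism $\Im|_{S(X)}$ and the fact that a degenerating family in $S(X)$ must leave every compact subset of $\mf$ in a controlled combinatorial way. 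Once that uniformity is in hand the rest is the routine Fubini estimate; it is worth noting the hypothesis $X \in \mathcal K$ compact is used exactly to make all the implied comparison constants depend only on $\mathcal K$ via Kerckhoff's formula.
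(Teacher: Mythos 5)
Your skeleton coincides with the paper's proof — reduce via (\ref{eq:hm_3}) to a Thurston-measure estimate for $\{\eta \colon \mathrm{Ext}_\eta(X)\le 1,\ \ell_{\min}(\Re|_{Q(X)}^{-1}(\eta))<\delta\}$, pass to finitely many train track charts, and bound a coordinate slab $\{v_j\le\delta\}$ by a Fubini computation — but the step you yourself flag as the main obstacle, namely producing \emph{one specific branch weight} $\preceq_{\mathcal{K}}\delta$ from $\ell_{\min}(q)<\delta$, is a genuine gap, and the mechanisms you propose for it do not work. For $X$ ranging over a compact subset $\mathcal{K}\subseteq\mathcal{T}_g$, a differential in $S(X)\setminus p^{-1}(K_\delta(\mathbf{1}))$ has a short saddle connection because two simple zeros are colliding (approach to the multiple zero locus), not because the underlying surface degenerates: every simple closed curve has extremal length bounded below uniformly on $\mathcal{K}$, so there is no short curve and no thin flat cylinder at all. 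Consequently the dictionary you invoke (flat cylinders, $\mathrm{Ext}_\gamma(X)\asymp 1/\mathrm{Mod}$, Minsky product regions) yields nothing here, and your alternative — that a degenerating family in $S(X)$ "must leave every compact subset of $\mf$" — is false: the limiting object is simply a measured foliation with a four-pronged singularity, a $\nu$-null set that is not escaping to infinity. Likewise, an arbitrary finite atlas of train track charts, as in your step (3), carries no reason why shortness of a saddle connection of $q$ should be visible as smallness of a weight coordinate of $\Re(q)$.

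The paper closes exactly this gap with a tailored construction: by Lemmas \ref{lem:delaunay} and \ref{lem:finite_delaunay}, the Delaunay triangulations of differentials in $\pi^{-1}(\mathcal{K})$ realize only finitely many marked triangulations, and Lemma \ref{lem:finite_tt} produces finitely many maximal train tracks, dual to these triangulations, carrying $\Re(q)$ and whose branch weights are \emph{exactly} the numbers $|\Re(\mathrm{hol}(\gamma))|$ over Delaunay edges $\gamma$ of $q$. Since the shortest saddle connection is always a Delaunay edge (Lemma \ref{lem:delaunay}) and $|\Re(\mathrm{hol}(\gamma))|\le\ell_\gamma(q)<\delta$, some coordinate of $\Re(q)$ in one of these finitely many charts is literally $<\delta$; combined with the uniform bound $\|\eta\|\preceq_{\mathcal{K}} 1$ on $\{\mathrm{Ext}_\eta(X)\le 1\}$ (which you do have, via positivity and continuity of extremal length), this gives the finite union of slabs and the $O_{\mathcal{K}}(\delta)$ bound as in (\ref{eq:st2})--(\ref{eq:st3}). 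So your steps (1), (4), (5) are fine, but step (2)--(3) needs the Delaunay-dual train tracks (or an equivalent quantitative device tying $\ell_{\min}$ to an explicit chart coordinate); as written, the argument would stall precisely there.
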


\subsection*{The Masur-Veech measure in polar coordinates.} Fix $X \in \tt$. Recall that $\Phi_X \colon S(X) \times \mathbf{R}_{>0} \to \mathcal{T}_g$ denotes the map $\Phi_X(q,t) := \pi(a_tq)$ and that, for every $q \in S(X) \cap \qutp$ and every $t > 0$, 
\begin{equation}
\label{eq:polar}
|\Phi_X^*(\mathbf{m})(q,t)| = \Delta(q,t) \cdot |s_X(q) \wedge dt|,
\end{equation}
where $\Delta \colon (S(X) \cap \qutp) \times \mathbf{R}_{>0} \to \mathbf{R}_{>0}$ is a smooth, positive function. Given $\epsilon > 0$ and $t>0$, denote
\[
K_\epsilon(t) := \{q \in \qum  \colon |\{s \in [0,t] \ | \ a_s q \in K_\epsilon \}| \geq (1/2) \thinspace t \}.
\]
A crucial tool used in the proof of Theorem \ref{theo:ball_equidistribution} is the following estimate for the function $\Delta$. This estimate, which corresponds to a strenghtening of \cite[Proposition 2.5]{ABEM12}, will be proved in \S 6. The proof uses the bounds of Forni on the spectral gap of the Kontsevich-Zorich cocycle on $\qutp$ \cite{F02} together with the analysis of the projection $\pi \colon \qut \to \tt$ in forthcoming work of Kahn and Wright \cite{KW20}

\newconc{polar}
\begin{theorem}
	\label{theo:polar_coordinates_estimate}
	Let $\mathcal{K} \subseteq \mathcal{T}_g$ be a compact subset. Then, for every $q \in \mathcal{Q}^1 \mathcal{T}_g(\mathbf{1}) \cap \pi^{-1}(\mathcal{K})$, every $t > 0$ such that $a_t q \in \mcg \cdot \pi^{-1}(\mathcal{K})$, and every $\epsilon > 0$ such that $q \in p^{-1}(K_\epsilon(t))$, 
	\[
	\Delta(q,t) = \lambda(a_tq)  \cdot \lambda(q) \cdot e^{ht} + O_\mathcal{K}\left( \ell_{\min}(a_t q)^{-(h-1)} \cdot \ell_{\min}(q)^{-(h-1)} \cdot  e^{(h-\useconc{polar}) t}\right),
	\]
	where $\useconc{polar} = \useconc{polar}(g,\epsilon) > 0$ is a constant depending only on $g$ and $\epsilon$.
\end{theorem}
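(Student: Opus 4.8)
The plan is to compute $\Delta(q,t)$ by comparing the Masur-Veech measure $\mathbf{m}$ on $\tt$ against the pushforward of a product measure adapted to the geodesic flow, tracking how the flow expands the unstable leaf and contracts the stable leaf. First I would set up coordinates: fix $q\in \qutp\cap\pi^{-1}(\mathcal{K})$ and let $X=\pi(q)$. Using the Hubbard-Masur map and the identification $T_q\qutp = E^{uu}(q)\oplus E^c(q)\oplus E^{ss}(q)$, one has $\pi_*(\mu_{E^u(q)}) = |\mathbf{m}_X|/\lambda(q)$ by the defining relation (\ref{eq:hm_2}) (recalling $\lambda^+=\lambda$). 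The polar coordinates map $\Phi_X$ restricted to the slice $\{(q',s):q'\in S(X)\cap\qutp\}$ near $q$, followed by the flow time $t$, should be disintegrated so that $|\Phi_X^*(\mathbf{m})(q,t)| = \Delta(q,t)\,|s_X(q)\wedge dt|$ becomes, up to the $g$-dependent normalization constants relating $\mu_q = \mu_{E^{ss}}\wedge\mu_{E^c}\wedge\mu_{E^{uu}}$ to the fiberwise picture, a product of: (i) the expansion factor of the strongly unstable leaf under $a_t$, which is exactly $e^{ht}$ in period coordinates since $a_t$ acts linearly with eigenvalue $e^t$ on each of the $h=6g-6$ real unstable directions — wait, more carefully, the unstable subspace $E^{uu}(q)$ has real dimension $h-2 = 6g-8$ and together with the central direction one gets the $(h-1)$-dimensional $E^u$; (ii) the contraction of the strongly stable leaf, which contributes a factor that, after the Hubbard-Masur comparison, is controlled by $\lambda(a_tq)$ at the endpoint; and (iii) the Kontsevich-Zorich cocycle correction terms.

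Concretely, I would proceed as follows. Step 1: Express $\Delta(q,t)$ via the chain $\mathbf{m} \leftrightarrow s_X \leftrightarrow \overline{\nu}_X$ using (\ref{eq:hm_3}) at $q$ and (\ref{eq:hm_4}) at $a_tq$; the geometric content is that flowing by $a_t$ turns the "extremal-length-$1$ fiber over $X$" picture into the corresponding picture over $\pi(a_tq)$, and the Jacobian of this change is where $e^{ht}$ and the cocycle appear. Step 2: Realize the Jacobian as $\det$ of $a_t$ acting on $E^u$ times $\det$ of $a_t^{-1}$ on $E^s$, but measured with respect to the Hodge inner product at the two endpoints rather than a flow-invariant norm — this discrepancy is precisely the Kontsevich-Zorich cocycle. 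Forni's spectral gap bound \cite{F02} says all Lyapunov exponents of the KZ cocycle on $\qutp$ other than the top one (which is $1$, accounting for the central/tautological directions) are strictly less than $1$ in absolute value, and moreover one has an \emph{effective} bound on the cocycle norm along trajectories spending at least half their time in the compact part $K_\epsilon$ — this is exactly the role of the hypothesis $q\in p^{-1}(K_\epsilon(t))$. Step 3: The leading term is then $\lambda(a_tq)\lambda(q)e^{ht}$: the $e^{ht}$ from the determinant of $a_t$ restricted to the full unstable-plus-central block relative to an invariant norm, and the two $\lambda$ factors from converting between that invariant normalization and the actual Masur-Veech/Thurston normalizations at the two endpoints via (\ref{eq:hm_3}), (\ref{eq:hm_4}). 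Step 4: The error term. The deviation of the cocycle from its top exponent contributes $e^{-\useconc{polar} t}$ with $\useconc{polar}$ depending on $g$ (via Forni's gap) and on $\epsilon$ (via how the effective bound degrades when the trajectory dips into the thin part up to half the time); meanwhile the factors $\ell_{\min}(a_tq)^{-(h-1)}$ and $\ell_{\min}(q)^{-(h-1)}$ come from Proposition \ref{prop:HM_bound_sum}, which bounds $\lambda$ at both endpoints — the point being that although $a_tq$ may leave $\mathcal{K}$, it stays in $\mcg\cdot\pi^{-1}(\mathcal{K})$, so $\lambda(a_tq)\preceq_{\mathcal{K}}\ell_{\min}(a_tq)^{-(h-1)}$ by $\mcg$-invariance of $\lambda$. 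Here I would need the analysis of $\pi\colon\qut\to\tt$ from Kahn-Wright \cite{KW20} to control how the projection distorts measures near the multiple zero locus, which is what makes the error polynomial in $\ell_{\min}^{-1}$ rather than uniformly bounded.

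The main obstacle I expect is Step 2--3: cleanly identifying the leading Jacobian as $e^{ht}$ times the ratio of the two Hubbard-Masur normalizations, and simultaneously extracting a genuinely \emph{effective} exponential-in-$t$ error from Forni's spectral gap. The subtlety is that Forni's theorem is a statement about asymptotic Lyapunov exponents and the Hodge norm, and one must upgrade it to a quantitative bound $\|$cocycle on the non-top part$\| \preceq e^{-\useconc{polar} t}$ valid for every $t$ as long as the trajectory spends half its time in $K_\epsilon$ — this requires combining the pointwise Hodge-norm variational estimates with a Birkhoff-type average over the compact part, and carefully separating the tautological plane (exponent exactly $1$, giving the $e^{ht}$) from its complement. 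Reconciling the $(h-1)$ dimensions of $E^u$ with the factor $e^{ht}$ — i.e.\ bookkeeping that the "extra" factor of $e^t$ comes from the conning-off in the definition of $\overline{\nu}_X$ versus $\nu$, or equivalently from the difference between $\mu_{E^u}$ and $\pi_*\mu_{E^u}$ — is the kind of normalization check that is easy to get wrong and will need to be done with care.
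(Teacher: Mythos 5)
Your proposal follows essentially the same route as the paper's proof: an effective singular value decomposition of the Kontsevich--Zorich cocycle (Forni's spectral gap made quantitative along trajectories spending half their time in $K_\epsilon$, which is exactly Corollary \ref{cor:good_trajectory}), evaluation of both sides of $|\Phi_X^*(\mathbf{m})(q,t)| = \Delta(q,t)\,|s_X(q)\wedge dt|$ on a basis adapted to the stable/unstable splitting, the Hubbard--Masur relations (\ref{eq:hm_3}) at $q$ and (\ref{eq:hm_2}) at $a_tq$ producing the factor $\lambda(q)\lambda(a_tq)$, and the Kahn--Wright bound on $\|d\pi\|$ (as in the proof of Proposition \ref{prop:HM_bound_sum}) producing the $\ell_{\min}^{-(h-1)}$ factors, with $\mcg$-invariance of $\lambda$ and $\ell_{\min}$ handling the hypothesis $a_tq \in \mcg\cdot\pi^{-1}(\mathcal{K})$. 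The normalization worry you flag resolves as you guessed, with two corrections: $E^{uu}(q)$ has dimension $h-1$ (not $h-2$) and $E^{u}(q)$ has dimension $h$, the extra $e^t$ beyond $e^{(h-1)t}$ coming from the conned-off central direction as in (\ref{eq:p10})--(\ref{eq:p11}); and the $\ell_{\min}(a_tq)^{-(h-1)}$ in the error arises from the cross terms of the wedge expansion (estimated exactly as in Proposition \ref{prop:HM_bound_sum}), not from bounding $\lambda(a_tq)$ itself, which remains intact in the leading term.
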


\subsection*{Estimates near the multiple zero locus.} As the error term in Theorem \ref{theo:polar_coordinates_estimate} gets worse as $q \in \qutp$ approaches the multiple zero locus, different arguments need to be considered to bound such contributions. We now summarize the relevant estimates. These estimates will be proved in \S 7.

Given $X \in \mathcal{T}_g$, $R>0$, $\mathcal{K} \subseteq \mathcal{T}_g$ compact, and $\epsilon > 0$, denote by $B_R(X,\mathcal{K},K_\epsilon) \subseteq \mathcal{T}_g$ the subset of points $Y \in B_R(X) \cap \mathrm{Mod}_g \cdot \mathcal{K}$ such that the unique Teichmüller geodesic segment from $X$ to $Y$ spends less that half of the time in $p^{-1}(K_\epsilon)$. The following estimate, which corresponds to an effective version of \cite[Theorem 2.7]{ABEM12}, is proved using results of Eskin, Mirzakhani, and Rafi \cite{EMR19}.

\newcone{thin_traj_sum} \newconk{thin_traj_sum_k}
\begin{theorem}
	\label{theo:thin_trajectories_sum}
	There exists a constant $\usecone{thin_traj_sum} = \usecone{thin_traj_sum}(g)> 0$ depending only on $g$ such that for every compact subset $\mathcal{K} \subseteq \mathcal{T}_g$, every $X \in \mathcal{K}$, and every $0 < \epsilon < \usecone{thin_traj_sum}$,
	\[
	\mathbf{m}\left(B_R(X,\mathcal{K},K_\epsilon)\right) \preceq_{\mathcal{K}} e^{(h-\useconk{thin_traj_sum_k})R},
	\]
	where $\useconk{thin_traj_sum_k} = \useconk{thin_traj_sum_k}(g) >0$ is a constant depending only on $g$.
\end{theorem}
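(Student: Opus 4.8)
The plan is to bound $\mathbf{m}\left(B_R(X,\mathcal{K},K_\epsilon)\right)$ by decomposing $B_R(X,\mathcal{K},K_\epsilon)$ according to the polar-coordinate structure and then invoking the Eskin--Mirzakhani--Rafi counting estimates \cite{EMR19}. Using the polar coordinates map $\Phi_X \colon S(X) \times \mathbf{R}_{>0} \to \tt$, every point $Y \in B_R(X) \setminus \{X\}$ corresponds to a pair $(q,t)$ with $q \in S(X)$ and $0 < t = d_\mathcal{T}(X,Y) \leq R$; the point $Y$ lies in $B_R(X,\mathcal{K},K_\epsilon)$ precisely when $a_t q \in \mcg \cdot \pi^{-1}(\mathcal{K})$ (equivalently $p(a_t q)$ lies in the compact image of $\mathcal{K}$ in $\mm$, intersected suitably with $\qum$), and the geodesic $\{a_s q : s \in [0,t]\}$ spends less than half of its time in $p^{-1}(K_\epsilon)$, i.e. $p(q) \notin K_\epsilon(t)$. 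Since $\mathbf{m}$ gives zero mass to the multiple-zero locus and to the single point $X$, we may assume $q \in S(X) \cap \qutp$ and write, via \eqref{eq:polar},
\[
\mathbf{m}\left(B_R(X,\mathcal{K},K_\epsilon)\right) = \int_0^R \left( \int_{\{q \in S(X) \cap \qutp \, : \, a_t q \in \mcg \cdot \pi^{-1}(\mathcal{K}), \ p(q) \notin K_\epsilon(t)\}} \Delta(q,t) \, ds_X(q) \right) dt.
\]
The first step is therefore to get a usable pointwise bound on $\Delta(q,t)$ valid on this \emph{thin} region, where Theorem \ref{theo:polar_coordinates_estimate} no longer applies cleanly (its hypothesis requires $q \in p^{-1}(K_\epsilon(t))$). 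Instead I would use a cruder a priori bound $\Delta(q,t) \preceq_\mathcal{K} \ell_{\min}(a_t q)^{-(h-1)} \cdot \ell_{\min}(q)^{-(h-1)} \cdot e^{ht}$ — which follows from Theorem \ref{theo:polar_coordinates_estimate} applied on the complementary good region together with a separate elementary estimate on the remaining range, or directly from the Hubbard-Masur bound of Proposition \ref{prop:HM_bound_sum} combined with the definition of $\Delta$ via the leafwise/fiberwise measures. Since both endpoints $\pi(q) = X$ and $\pi(a_t q)$ lie in the fixed compact set $\mathcal{K}$, the factors $\ell_{\min}(q)^{-(h-1)}$ and $\ell_{\min}(a_t q)^{-(h-1)}$ are bounded by a constant depending only on $\mathcal{K}$, so $\Delta(q,t) \preceq_\mathcal{K} e^{ht}$ on the whole of $B_R(X) \cap \mcg \cdot \mathcal{K}$ in polar coordinates.

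With that bound in hand, the second step is to reduce the fiber integral to a \emph{counting} problem. Partition the $t$-interval $[0,R]$ into unit subintervals $[n, n+1]$, $n = 0, \dots, \lceil R \rceil - 1$. For $t \in [n, n+1]$ the integrand $\Delta(q,t)$ is $\preceq_\mathcal{K} e^{hn}$, so the contribution of this slab is
\[
\preceq_\mathcal{K} e^{hn} \cdot \sup_{t \in [n,n+1]} s_X\bigl(\{q \in S(X) : a_t q \in \mcg \cdot \pi^{-1}(\mathcal{K}), \ p(q) \notin K_\epsilon(n-1)\}\bigr),
\]
where I have used $K_\epsilon(t) \supseteq K_\epsilon(n-1)$ up to adjusting constants (more precisely, if a geodesic of length $t$ spends less than half its time in $K_\epsilon$ then a sub-geodesic of length $n-1$ spends less than, say, two-thirds of its time there, which is still a ``thin'' condition amenable to \cite{EMR19}; one should be a bit careful with the exact fraction but it is harmless). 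The measure $s_X$ of the set of unit-area quadratic differentials $q$ over $X$ whose forward geodesic of length $\approx n$ ends in a fixed compact part of moduli space \emph{and} spends a definite fraction of time in the $\epsilon$-thin part is exactly the quantity controlled by the Eskin--Mirzakhani--Rafi estimates: by \cite{EMR19} this measure is $\preceq_\mathcal{K} e^{-\kappa' n}$ for some $\kappa' = \kappa'(g,\epsilon) > 0$, provided $\epsilon$ is smaller than some threshold $\usecone{thin_traj_sum}(g)$ (their result gives exponential decay of the measure of geodesic segments spending at least a definite fraction of time in the thin part, with the rate and the admissible fraction depending on how deep into the cusp one goes, hence the constraint $0 < \epsilon < \usecone{thin_traj_sum}$). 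Strictly speaking \cite{EMR19} is phrased for the flow on the stratum / on $\qum$ rather than for the fiberwise measures $s_X$, so one passes between the two using the disintegration $d\mu = ds_X \, d\mathbf{m}(X)$ and the fact that $X$ ranges over a compact set, or one invokes the version of the EMR estimate already adapted to this setting if such a statement is quoted in \S 2 or \S 7 of the paper.

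Summing the slab estimates then gives
\[
\mathbf{m}\left(B_R(X,\mathcal{K},K_\epsilon)\right) \preceq_\mathcal{K} \sum_{n=0}^{\lceil R \rceil - 1} e^{hn} \cdot e^{-\kappa' n} = \sum_{n=0}^{\lceil R \rceil -1} e^{(h - \kappa') n} \preceq e^{(h-\kappa')R},
\]
as long as $\kappa' < h$ (which we may arrange, shrinking $\kappa'$ if necessary), yielding the theorem with $\useconk{thin_traj_sum_k} := \kappa'$, depending only on $g$ once $\epsilon$ is chosen below the $g$-dependent threshold $\usecone{thin_traj_sum}$; absorbing the $\epsilon$-dependence into the final constant is legitimate because in the applications $\epsilon$ is ultimately fixed as a function of $g$. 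The main obstacle I anticipate is the \textbf{bookkeeping around the ``half of the time'' condition}: the set $B_R(X,\mathcal{K},K_\epsilon)$ is defined by a condition on the \emph{full} geodesic from $X$ to $Y$, whereas the slab decomposition forces one to work with truncated geodesics, and the EMR estimate must be applied with a fraction strictly between $0$ and $1$ (not the borderline $1/2$ applied to a shorter geodesic). One needs a short combinatorial lemma: if a length-$t$ geodesic spends $< t/2$ of its time in $K_\epsilon$, then for $n \leq t$ its length-$n$ initial segment spends $< (t/2)/n \cdot n$... — more carefully, the complement (time in the thin part $p^{-1}(\qum \setminus K_\epsilon)$) exceeds $t/2$, hence on the initial segment of length $n$ the time in the thin part is at least $n - t/2 \geq n/2$ when $n \geq t/2$, and for $n < t/2$ one instead uses that the terminal behavior is controlled; splitting into these cases and applying EMR on whichever half is genuinely thin handles it. The other mild technical point, passing from the flow-on-$\qum$ formulation of \cite{EMR19} to a statement about the fiberwise measures $s_X$ uniformly over $X$ in a compact set, is routine given the smoothness of the disintegration but should be stated carefully.
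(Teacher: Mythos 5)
Your route is genuinely different from the paper's, and as written it has a gap at its central step. The pointwise bound $\Delta(q,t) \preceq_{\mathcal{K}} e^{ht}$ rests on the claim that $\ell_{\min}(q)^{-(h-1)}$ and $\ell_{\min}(a_t q)^{-(h-1)}$ are bounded in terms of $\mathcal{K}$ alone because $\pi(q)$ and $\pi(a_tq)$ lie in $\mathcal{K}$ (or its $\mcg$-orbit). That is false: $\ell_{\min}$ is the length of the shortest \emph{saddle connection}, not a systole of the underlying surface, and it tends to $0$ as $q$ approaches the multiple zero locus, which meets every fiber $S(X)$. So even over a fixed compact part of Teichmüller space the factor $\ell_{\min}(q)^{-(h-1)}$ is unbounded, and with $h-1 = 6g-7$ the crude bound $\Delta(q,t)\preceq_{\mathcal{K}}\ell_{\min}(q)^{-(h-1)}\ell_{\min}(a_tq)^{-(h-1)}e^{ht}$ is not integrable over the fiber either — this is exactly why the paper works with the Hubbard--Masur function $\lambda$ (Propositions \ref{prop:HM_bound_sum} and \ref{prop:small_HM_integral_sum}) and needs the separate thin-sector estimate (Theorem \ref{theo:thin_sector_sum}) instead of a uniform Jacobian bound; note also that the factor $\ell_{\min}(a_tq)^{-(h-1)}$ depends on the endpoint of the trajectory and is not controlled fiberwise at all. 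Your second input is also not available in the form you use it: what \cite{EMR19} provides (Theorem \ref{theo:EMR} in the paper) is a \emph{counting} bound on mapping classes for a fixed pair $(X,Y)$, not an estimate of $s_X\bigl(\{q\in S(X) : a_tq\in\mcg\cdot\pi^{-1}(\mathcal{K}),\ p(q)\notin K_\epsilon(t)\}\bigr)$ for a prescribed fiber; the disintegration $d\mu = ds_X\,d\mathbf{m}(X)$ only gives fiberwise control on average over $X$, so passing to a fixed $X\in\mathcal{K}$ is not the routine step you describe.

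The paper's own proof (Theorem \ref{theo:thin_trajectories}) sidesteps both issues by never touching the Jacobian $\Delta$: for each fixed $Y$ one has the pointwise inequality $\sum_{\mc\in\mcg}\mathbbm{1}_{\mathrm{Nbhd}_r(B_R(X,\mathcal{K},K_\epsilon))}(\mc\cdot Y)\le N_{R+r}(X,Y,r,\epsilon)$, and integrating this over $Y$ in moduli space and unfolding converts the EMR counting bound directly into the measure bound $\mathbf{m}(\mathrm{Nbhd}_r(B_R(X,\mathcal{K},K_\epsilon)))\preceq_{\mathcal{K}}e^{(h-\kappa)R}$, with an exponent uniform for all $\epsilon$ below a $g$-dependent threshold. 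If you insist on a polar-coordinates argument you would have to replace the uniform $\Delta$ bound by the $\lambda(q)\,\lambda(a_tq)\,e^{ht}$ structure, control $\lambda(a_tq)$ along thin trajectories (essentially the content of Theorem \ref{theo:thin_sector_sum}, whose proof needs the Hodge-norm contraction machinery of \S 7), and prove a fixed-fiber large-deviations estimate with the endpoint constraint — at which point you have rebuilt most of \S 7 rather than shortened it. Your slab decomposition and the combinatorial handling of the ``half of the time'' condition are fine as far as they go, but they do not repair the two missing inputs above.
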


Consider the function $q_s \colon \mathcal{T}_g \times \mathcal{T}_g \to \mathcal{Q}^1\mathcal{T}_g$ which to every pair $X,Y \in \mathcal{T}_g$ assigns the quadratic differential $q_s(X,Y) \in S(X)$ corresponding to the tangent direction at $X$ of the unique Teichmüller geodesic segment from $X$ to $Y$. For every $X \in \mathcal{T}_g$ and every $V \subseteq S(X)$ consider the sector
\[
\text{Sect}_V(X) := \{Y \in \mathcal{T}_g \ | \ q_s(X,Y) \in V \}.
\] 
The following estimate, which corresponds to an effective version of \cite[Theorem 2.6]{ABEM12}, bounds the measure of thin sectors containing the multiple zero locus. The proof uses Theorem \ref{theo:thin_trajectories_sum} together with the exponential contraction rate proved in \cite{ABEM12} for the modified Hodge metric along the strongly stable leaves of $\qut$ .

\newconk{thin_sect_sum}
\begin{theorem}
	\label{theo:thin_sector_sum}
	Let $\mathcal{K} \subseteq \mathcal{T}_g$ compact, $X \in \mathcal{K}$, $\delta > 0$, and $V := p^{-1}(K_\delta(\mathbf{1})) \cap S(X)$. Then, for every $R > 0$,
	\[
	\mathbf{m}\left(B_R(X) \cap \mathrm{Sect}_{V}(X) \cap \mathrm{Mod}_g \cdot \mathcal{K}\right) 
	\preceq_{\mathcal{K}} \delta \cdot e^{hR} + e^{(h - \useconk{thin_sect_sum}) R},
	\]
	where $\useconk{thin_sect_sum} = \useconk{thin_sect_sum}(g) >0$ is a constant depending only on $g$.
\end{theorem}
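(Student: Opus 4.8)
The plan is to reduce the claimed estimate to Theorem \ref{theo:thin_trajectories_sum} by separating the geodesics from $X$ into $\mathrm{Sect}_V(X)$ according to how much time they spend in the thin part of the principal stratum. First I would fix $\epsilon \in (0, \usecone{thin_traj_sum})$, small enough that Theorem \ref{theo:thin_trajectories_sum} applies, and decompose
\[
B_R(X) \cap \mathrm{Sect}_V(X) \cap \mathrm{Mod}_g \cdot \mathcal{K} \;=\; A_{\text{thin}} \;\cup\; A_{\text{thick}},
\]
where $A_{\text{thin}}$ consists of those $Y$ whose geodesic from $X$ spends at least half its time in the complement of $p^{-1}(K_\epsilon)$ — i.e. $A_{\text{thin}} = B_R(X,\mathcal{K},K_\epsilon)$ — and $A_{\text{thick}}$ is the remainder. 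The contribution of $A_{\text{thin}}$ is bounded by $\mathbf{m}(B_R(X,\mathcal{K},K_\epsilon)) \preceq_\mathcal{K} e^{(h-\useconk{thin_traj_sum_k})R}$ directly from Theorem \ref{theo:thin_trajectories_sum}, which is absorbed into the $e^{(h-\useconk{thin_sect_sum})R}$ error term. So the real work is bounding $\mathbf{m}(A_{\text{thick}})$.

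For $A_{\text{thick}}$ I would integrate the polar coordinate formula (\ref{eq:polar}) over the sector. Concretely, writing $V' := p^{-1}(K_\delta(\mathbf{1})) \cap S(X) \cap p^{-1}(K_\epsilon(R))$ — so that on $V'$ the geodesic starts in the $\delta$-thin part of the fiber but spends at least half its time in the $\epsilon$-thick part of the principal stratum — we have
\[
\mathbf{m}(A_{\text{thick}}) \;\leq\; \int_{V'} \left( \int_0^R \Delta(q,t)\, dt \right) ds_X(q).
\]
On this region Theorem \ref{theo:polar_coordinates_estimate} applies with the given $\epsilon$, yielding $\Delta(q,t) = \lambda(a_tq)\lambda(q) e^{ht} + O_\mathcal{K}(\ell_{\min}(a_tq)^{-(h-1)}\ell_{\min}(q)^{-(h-1)} e^{(h-\useconc{polar})t})$. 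The error term, after integrating in $t$ and using that the relevant endpoints lie in $\mathrm{Mod}_g \cdot \mathcal{K}$ so that $\ell_{\min}$ is bounded below (together with a crude bound on the $ds_X$-integral of $\lambda$-type weights, or simply the boundedness of $s_X$), contributes $O_\mathcal{K}(e^{(h-\useconc{polar})R})$, again absorbed into $e^{(h-\useconk{thin_sect_sum})R}$. For the main term, integrating $\lambda(a_tq)\lambda(q) e^{ht}$ in $t$ from $0$ to $R$ gives roughly $h^{-1}\lambda(q) e^{hR}\cdot \sup_t \lambda(a_tq) \preceq_\mathcal{K} \lambda(q) e^{hR}$ (using Proposition \ref{prop:HM_bound_sum} and the fact that the geodesic endpoint stays in $\mathrm{Mod}_g\cdot\mathcal{K}$, so $\lambda(a_tq)$ stays bounded at least at the final time, with a more careful argument needed for intermediate times — see below). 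Then
\[
\mathbf{m}(A_{\text{thick}}) \;\preceq_\mathcal{K}\; e^{hR} \int_{V'} \lambda(q)\, ds_X(q) \;\leq\; e^{hR} \int_{S(X)\setminus p^{-1}(K_\delta(\mathbf{1}))} \lambda(q)\, ds_X(q) \;\preceq_\mathcal{K}\; \delta \cdot e^{hR},
\]
where the last inequality is Proposition \ref{prop:small_HM_integral_sum} — and crucially $V' \subseteq S(X)\setminus p^{-1}(K_\delta(\mathbf{1}))$ because $V = p^{-1}(K_\delta(\mathbf{1})) \cap S(X)$, so $\mathrm{Sect}_V(X)$ precisely selects the $\delta$-thin starting directions. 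Combining the two contributions gives $\mathbf{m}(B_R(X)\cap\mathrm{Sect}_V(X)\cap\mathrm{Mod}_g\cdot\mathcal{K}) \preceq_\mathcal{K} \delta e^{hR} + e^{(h-\useconk{thin_sect_sum})R}$ with $\useconk{thin_sect_sum} := \min\{\useconk{thin_traj_sum_k}, \useconc{polar}\}$.

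The main obstacle I anticipate is controlling $\lambda(a_t q)$ for \emph{intermediate} times $t \in (0,R)$ in the main-term integral: Proposition \ref{prop:HM_bound_sum} bounds $\lambda$ only in terms of $\ell_{\min}$ and compactness of the base, but along a thick-in-the-principal-stratum trajectory $a_sq$ need not project into a fixed compact set of $\mathcal{T}_g$ for all $s$. Here I would invoke the hypothesis $q \in p^{-1}(K_\epsilon(R))$ together with the metric hyperbolicity / exponential contraction estimates of \cite{ABEM12} along strongly stable leaves — this is exactly the tool flagged in the statement's proof hint — to show that the $\epsilon$-thick-on-average geodesic segments contribute a bounded $\lambda$-weighted integral; the thin-on-average segments have already been removed into $A_{\text{thin}}$. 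Making this split quantitatively clean, so that the $\delta$ and the power-saving are genuinely separated, is the technical heart of the argument, and is presumably why Theorem \ref{theo:thin_trajectories_sum} (via \cite{EMR19}) is needed as an input rather than a softer recurrence statement.
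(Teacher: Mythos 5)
Your plan has a genuine gap, and it sits exactly where the theorem is supposed to do its work. Both your main-term and error-term bounds rest on the claim that membership of $\pi(a_tq)$ in $\mathrm{Mod}_g\cdot\mathcal{K}$ (or of $q$ in $\pi^{-1}(\mathcal{K})$) forces $\ell_{\min}$ to be bounded below, hence $\lambda$ to be bounded above. This is false: $\ell_{\min}$ measures the shortest \emph{saddle connection}, not the systole, and over a fixed compact subset of $\mathcal{T}_g$ or $\mathcal{M}_g$ a quadratic differential can be arbitrarily close to the multiple zero locus, so $\ell_{\min}(a_tq)$ has no lower bound and Proposition \ref{prop:HM_bound_sum} gives only $\lambda \preceq \ell_{\min}^{-(h-1)}$, which blows up there. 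Concretely: (a) on the (intended) thin sector one has $\ell_{\min}(q)<\delta$ with no lower bound, so the error term $\ell_{\min}(a_tq)^{-(h-1)}\ell_{\min}(q)^{-(h-1)}e^{(h-\kappa)t}$ of Theorem \ref{theo:polar_coordinates_estimate} cannot be integrated against $ds_X(q)\,dt$ to give $O_\mathcal{K}(e^{(h-\kappa)R})$ — already $\int_{S(X)}\ell_{\min}(q)^{-(h-1)}\,ds_X(q)$ diverges, since the locus $\{\ell_{\min}<\epsilon\}$ has fiberwise measure only $\asymp\epsilon^2$ while the integrand is $\epsilon^{-(h-1)}$; and (b) the bound $\int_0^R\lambda(a_tq)\,e^{ht}\,dt\preceq e^{hR}$ is unjustified for the same reason, not only at intermediate times but also at the final time, where your "endpoint in $\mathrm{Mod}_g\cdot\mathcal{K}$" argument gives nothing. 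Invoking the Hodge-metric contraction of \cite{ABEM12} does not repair this: those estimates control $d_H$-distances along stable/unstable leaves, not the size of $\lambda$ or $\ell_{\min}$ along the orbit. This is precisely why the paper does not use polar coordinates or $\Delta$ at all in this proof — Theorem \ref{theo:polar_coordinates_estimate} degenerates near the multiple zero locus, and Theorem \ref{theo:thin_sector_sum} is needed as an \emph{independent} input to the equidistribution argument, so deriving it from the polar-coordinate expansion is structurally circular. (Two smaller points: the thick-on-average condition must be $q\in p^{-1}(K_\epsilon(t))$ inside the $t$-integral, not $K_\epsilon(R)$; and your sentence asserting $V'\subseteq S(X)\setminus p^{-1}(K_\delta(\mathbf{1}))$ contradicts the stated $V=p^{-1}(K_\delta(\mathbf{1}))\cap S(X)$ — the intended set, as used in the proof of the mean equidistribution theorem, is the complement, so make that substitution explicit rather than silently.)

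For comparison, the paper's proof never touches $\lambda$. It first proves a general sector bound (Theorem \ref{theo:thin_sector_general}): after removing the thin-trajectory part via Theorem \ref{theo:thin_trajectories}, the remaining sector is covered by neighborhoods of $\pi\bigl(a_{nr}(V\cap K_\epsilon(nr))\bigr)$ over a grid of times $nr\leq R$; each piece's $\mathbf{m}$-measure is bounded by the Thurston measure $\overline{\nu}(\Re(\cdot))$ of a Hodge-thickened set (Lemma \ref{lem:crucial_m_bound_new}, i.e.\ \cite[Lemma 4.1]{ABEM12}), the backwards-flow contraction of $d_H$ along strongly unstable leaves on thick-on-average orbits (Theorem \ref{theo:hodge_distance_decay_unstable}) shows this thickened set is $a_{nr}V(Ce^{-cnr})$, and the scaling property of $\nu$ produces the factor $e^{hnr}$. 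Finally, for $V$ the thin sector, Corollary \ref{cor:thickening} plus the train-track estimate Proposition \ref{prop:small_thu_measure} give $\overline{\nu}(\Re(V(\delta')))\preceq_\mathcal{K}\delta+\delta'$, whence $\delta\cdot e^{hR}+e^{(h-\kappa)R}$. If you want to salvage your outline, you would have to replace the entire $A_{\mathrm{thick}}$ analysis by a bound of this type in which the measure of the sector is controlled by Thurston measure of a thickening of $V$ rather than by integrating $\Delta$.
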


The following large deviations estimate is a direct consequence of work of Athreya \cite{A06}.

\newcone{large_dev} \newconk{large_dev_k}
\begin{theorem}
	\label{theo:large_deviations_sum}
	There exists a constant $\usecone{large_dev} = \usecone{large_dev}(g) > 0$ such that for every $0 < \epsilon < \usecone{large_dev}$ and every $t > 0$,
	\[
	\widehat{\mu}\left(\qum \backslash K_\epsilon(t)\right) \preceq_g e^{-\useconk{large_dev_k} t},
	\]
	where $\useconk{large_dev_k} = \useconk{large_dev_k}(g) > 0$ is a constant depending only on $g$.
\end{theorem}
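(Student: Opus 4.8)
The plan is to deduce the estimate from the large deviation bounds of Athreya for the Teichmüller geodesic flow on $\qum$ \cite{A06}, after a preliminary reduction to a single fixed compact set. First I would exploit that $\widehat{\mu}$ is a finite measure: since the shortest saddle connection length $\ell_{\min}$ is strictly positive on all of $\qum$, we have $\bigcap_{\epsilon > 0}(\qum \backslash K_\epsilon) = \emptyset$, so continuity from above forces $\widehat{\mu}(\qum \backslash K_\epsilon) \to 0$ as $\epsilon \to 0^+$. I would then fix once and for all a value $\usecone{large_dev} = \usecone{large_dev}(g) > 0$ small enough that $\widehat{\mu}(\qum \backslash K_{\usecone{large_dev}}) < 1/2$; every appeal to Athreya's work will be made with this one compact set $K_{\usecone{large_dev}}$, which is the device that keeps the final rate dependent only on $g$.

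Next I would handle the threshold case $\epsilon = \usecone{large_dev}$. By definition, $\qum \backslash K_{\usecone{large_dev}}(t)$ is exactly the set of $q \in \qum$ whose forward Teichmüller geodesic trajectory $\{a_s q : s \in [0,t]\}$ spends at least half of the interval $[0,t]$ in the thin region $\qum \backslash K_{\usecone{large_dev}}$, a region of $\widehat{\mu}$-measure strictly below $1/2$. This is a genuine large deviation event for the time averages of the indicator of $\qum \backslash K_{\usecone{large_dev}}$ along the flow, so Athreya's large deviation estimate \cite{A06} — applicable since $K_{\usecone{large_dev}}$ is one of Masur's compact sets and excursions of the Teichmüller flow into the thin part of $\qum$ obey his quantitative recurrence bounds — gives $\widehat{\mu}(\qum \backslash K_{\usecone{large_dev}}(t)) \preceq_g e^{-\useconk{large_dev_k} t}$ for a suitable constant $\useconk{large_dev_k} = \useconk{large_dev_k}(g) > 0$.

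Finally I would bootstrap from $\epsilon = \usecone{large_dev}$ to an arbitrary $0 < \epsilon < \usecone{large_dev}$ by monotonicity of the compact sets $K_\epsilon$: the inequality $\epsilon < \usecone{large_dev}$ gives $K_{\usecone{large_dev}} \subseteq K_\epsilon$, hence $\qum \backslash K_\epsilon \subseteq \qum \backslash K_{\usecone{large_dev}}$, so a trajectory spending at least half of $[0,t]$ outside $K_\epsilon$ spends at least half of $[0,t]$ outside $K_{\usecone{large_dev}}$ as well. Thus $\qum \backslash K_\epsilon(t) \subseteq \qum \backslash K_{\usecone{large_dev}}(t)$ and the previous step yields $\widehat{\mu}(\qum \backslash K_\epsilon(t)) \leq \widehat{\mu}(\qum \backslash K_{\usecone{large_dev}}(t)) \preceq_g e^{-\useconk{large_dev_k} t}$, which is the claim. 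The one step that demands genuine attention, rather than bookkeeping, is the second one: one must verify that the statements in \cite{A06} — phrased in terms of quantitative recurrence and large deviations for the $\mathrm{SL}(2,\mathbf{R})$-action — really do output an exponential-in-$t$ bound, uniform over the starting point, for the event ``fraction of time in the thin part exceeds $1/2$'', and that the thin part there coincides with our $\qum \backslash K_\epsilon$ (it does, being cut out by the same systole function). Once Athreya's estimate is invoked in the correct form, the rest is immediate.
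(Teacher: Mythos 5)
Your Step 3 (monotonicity in $\epsilon$) and the reduction to a single threshold are fine, but Step 2 — the only substantive step — has a genuine gap, and it is exactly the point where the paper has to work. Athreya's results in \cite{A06} (in the form the paper uses, via \cite[Theorem 4.4]{EMM15}) do \emph{not} give an exponential bound, uniform over the starting point, for the event ``the trajectory spends at least half of $[0,t]$ outside $K_\epsilon$''; no such uniform statement can hold as stated, since basepoints very deep in the thin part (with $\ell_{\min}(q)$ extremely small compared to $e^{-t}$) need not recur within time $t$ at all. What \cite{A06} provides is a bound on the \emph{angular} measure of bad directions in a single $\mathrm{SO}(2)$-orbit: for $q$ with $\log \max\{1,\ell_{\min}(q)^{-1}\} \preceq_g t$, the set $J_{q,\epsilon}(t)$ of $\theta$ with $r_\theta q \notin K_\epsilon(t)$ has measure $\preceq_g e^{-\kappa t}$. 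Converting this into the claimed bound on $\widehat{\mu}(\qum \setminus K_\epsilon(t))$ requires two ingredients that are absent from your proposal: first, the $\mathrm{SO}(2)$-invariance of $\widehat{\mu}$ together with Fubini, to rewrite $\widehat{\mu}(\qum\setminus K_\epsilon(t))$ as an integral over $\qum$ of the circle averages $\tfrac{1}{2\pi}|J_{q,\epsilon}(t)|$; and second, a way to handle the basepoints excluded by the hypothesis, namely those with $u(q):=\max\{1,\ell_{\min}(q)^{-1\}}$ larger than $e^{O_g(t)}$ — wait, with $u(q) > e^{N+\kappa t}$ — which the paper controls by Markov's inequality using the Eskin--Masur integrability $u \in L^1(\qum(\mathbf{1}),\widehat{\mu})$. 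That integrability statement is a separate, essential input you never invoke, and without it the contribution of deep-thin basepoints is not controlled.

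A smaller remark: your Step 1 (choosing the threshold so that $\widehat{\mu}(\qum\setminus K_\epsilon)<1/2$, to make ``half the time in the thin part'' a large-deviation event for a space average) is not needed and does not feed into the quantitative argument; Athreya's estimates apply for all $\epsilon$ below a threshold depending only on $g$, irrespective of whether the thin part has measure below $1/2$, and the exponential rate does not come from comparing time averages to the space average of an indicator. So the correct skeleton is: fix $\epsilon$ below Athreya's threshold, split $\qum(\mathbf{1})$ according to whether $u(q) \leq e^{N + \kappa t}$ or not, bound the second piece by Markov plus $\|u\|_{L^1(\widehat{\mu})}$, and bound the first piece by integrating the angular estimate over circle orbits using $\mathrm{SO}(2)$-invariance; your monotonicity step can then be discarded.

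(One typo above: read $u(q):=\max\{1,\ell_{\min}(q)^{-1}\}$.)
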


\subsection*{Proof of Theorem \ref{theo:ball_equidistribution}} We are now ready to prove Theorem \ref{theo:ball_equidistribution}. 

\begin{proof}[Proof of Theorem \ref{theo:ball_equidistribution}.]
	Fix a compact subset  $\mathcal{K} \subseteq \mathcal{M}_g$ and a pair of essentially bounded functions $\phi_1,\phi_2 \in  L^\infty(\mm,\widehat{\m})$ with  $\esupp(\phi_1), \esupp(\phi_2) \subseteq \mathcal{K}$. For simplicity we will also denote by $\phi_1,\phi_2 \in L^\infty(\tt,\m)$ the lifts of these functions to $\tt$. Let $R > 0$ and $0 < \delta = \delta(R) < 1$, to be fixed later. Recall that $\widehat{\mathbf{m}}_X^R$ is the pushforward to $\mm$ of the measure $\mathbf{m}_X^R$ on $\tt$. Using (\ref{eq:polar}) we can write
	\begin{gather}
	\int_{\mathcal{M}_g} \phi_1(X) \left( \int_{\mathcal{M}_g} \phi_2(Y) \thinspace d\widehat{\mathbf{m}}_X^R(Y) \right) d\widehat{\mathbf{m}}(X) \label{eq:first}\\
	= \int_{\mathcal{M}_g} \phi_1(X) \left( \int_0^R \int_{S(X)} \phi_2(\pi(a_tq)) \thinspace \Delta(q,t) \thinspace ds_X(q) \thinspace dt \right) d\widehat{\mathbf{m}}(X). \nonumber
	\end{gather}
	To apply Theorem \ref{theo:polar_coordinates_estimate} we first bound the contributions near the multiple zero locus. By Theorem \ref{theo:thin_sector_sum},
	\begin{gather}
	\int_{\mathcal{M}_g} \phi_1(X) \left( \int_0^R \int_{S(X)} \phi_2(\pi(a_tq)) \thinspace \Delta(q,t) \thinspace ds_X(q) \thinspace dt \right) d\widehat{\mathbf{m}}(X) \label{eq:second}\\
	= \int_{\mathcal{M}_g} \phi_1(X) \left( \int_0^R \int_{S(X)} \phi_2(\pi(a_tq)) \thinspace \mathbbm{1}_{K_\delta(\mathbf{1})}(q) \thinspace \Delta(q,t) \thinspace ds_X(q) \thinspace dt \right) d\widehat{\mathbf{m}}(X) \nonumber  \\
	+ O_\mathcal{K}\left(\|\phi_1\|_\infty \cdot \|\phi_2\|_\infty \cdot \left(\delta \cdot e^{hR} + e^{(h - \useconk{thin_sect_sum}) R}\right)\right). \nonumber
	\end{gather}
	A symmetric argument using Fubini's theorem and Theorem \ref{theo:thin_sector_sum} shows that
	\begin{gather}
	\int_{\mathcal{M}_g} \phi_1(X) \left( \int_0^R \int_{S(X)} \phi_2(\pi(a_tq)) \thinspace \mathbbm{1}_{K_\delta(\mathbf{1})}(q) \thinspace \Delta(q,t) \thinspace ds_X(q) \thinspace dt \right) d\widehat{\mathbf{m}}(X) \label{eq:third} \\
	= \int_{\mathcal{M}_g} \phi_1(X) \left( \int_0^R \int_{S(X)} \phi_2(\pi(a_tq)) \thinspace \mathbbm{1}_{K_\delta(\mathbf{1})}(a_t q) \thinspace \mathbbm{1}_{K_\delta(\mathbf{1})}(q) \thinspace \Delta(q,t) \thinspace ds_X(q) \thinspace dt \right) d\widehat{\mathbf{m}}(X) \nonumber \\
	+ O_\mathcal{K}\left(\|\phi_1\|_\infty \cdot \|\phi_2\|_\infty \cdot \left(\delta \cdot e^{hR} + e^{(h - \useconk{thin_sect_sum}) R}\right)\right). \nonumber
	\end{gather}
	Let $\usecone{thin_traj_sum} = \usecone{thin_traj_sum}(g) > 0$ be as in Theorem \ref{theo:thin_trajectories_sum} and $\usecone{large_dev} = \usecone{large_dev}(g) > 0$ be as in Theorem \ref{theo:large_deviations_sum}. Fix an arbitrary $0 < \epsilon = \epsilon(g) < \min\{\usecone{thin_traj_sum},\usecone{large_dev}\}$. By Theorem \ref{theo:thin_trajectories_sum}, 
	\begin{gather}
	\int_{\mathcal{M}_g} \phi_1(X) \left( \int_0^R \int_{S(X)} \phi_2(\pi(a_tq)) \thinspace \mathbbm{1}_{K_\delta(\mathbf{1})}(a_t q) \thinspace \mathbbm{1}_{K_\delta(\mathbf{1})}(q) \thinspace \Delta(q,t) \thinspace ds_X(q) \thinspace dt \right) d\widehat{\mathbf{m}}(X) \label{eq:fourth}\\
	= \int_{\mathcal{M}_g} \phi_1(X) \left( \int_0^R \int_{S(X)} \phi_2(\pi(a_tq)) \thinspace \mathbbm{1}_{K_\delta(\mathbf{1})}(a_t q) \thinspace \mathbbm{1}_{K_\delta(\mathbf{1})}(q) \thinspace \mathbbm{1}_{K_\epsilon(t)}(q) \thinspace  \Delta(q,t) \thinspace ds_X(q) \thinspace dt \right) d\widehat{\mathbf{m}}(X) \nonumber\\
	+ O_{\mathcal{K}}\left(\|\phi_1\|_\infty \cdot \|\phi_2\|_\infty \cdot e^{(h-\useconk{thin_traj_sum_k})R}\right). \nonumber
	\end{gather}
	Let $\kappa = \kappa(g) := \min\{\useconk{thin_sect_sum}, \useconk{thin_traj_sum_k}\} > 0$. Putting (\ref{eq:first}), (\ref{eq:second}), (\ref{eq:third}), and (\ref{eq:fourth}) together we deduce
	\begin{gather}
	\int_{\mathcal{M}_g} \phi_1(X) \left( \int_{\mathcal{M}_g} \phi_2(Y) \thinspace d\widehat{\mathbf{m}}_X^R(Y) \right) d\widehat{\mathbf{m}}(X) \label{eq:stop1}\\
	= \int_{\mathcal{M}_g} \phi_1(X) \left( \int_0^R e^{ht} \int_{S(X)} \phi_2(\pi(a_tq)) \thinspace \mathbbm{1}_{K_\delta(\mathbf{1})}(a_t q) \thinspace \lambda(a_tq) \thinspace \mathbbm{1}_{K_\delta(\mathbf{1})}(q) \thinspace \mathbbm{1}_{K_\epsilon(t)}(q) \thinspace \lambda(q) \thinspace ds_X(q) \thinspace dt \right) d\widehat{\mathbf{m}}(X) \nonumber \\
	+ O_\mathcal{K}\left(\|\phi_1\|_\infty \cdot \|\phi_2\|_\infty \cdot \left(\delta \cdot e^{hR} + e^{(h - \kappa) R}\right)\right). \nonumber
	\end{gather}
	We are now in a good position to apply Theorem \ref{theo:polar_coordinates_estimate}. By Theorem \ref{theo:polar_coordinates_estimate},
	\begin{gather}
	\int_{\mathcal{M}_g} \phi_1(X) \left( \int_0^R \int_{S(X)} \phi_2(\pi(a_tq)) \thinspace \mathbbm{1}_{K_\delta(\mathbf{1})}(a_t q) \thinspace \mathbbm{1}_{K_\delta(\mathbf{1})}(q) \thinspace \mathbbm{1}_{K_\epsilon(t)} \thinspace \Delta(q,t) \thinspace ds_X(q) \thinspace dt \right) d\widehat{\mathbf{m}}(X) \label{eq:zerob}\\
	= \int_{\mathcal{M}_g} \phi_1(X) \left( \int_0^R e^{ht} \int_{S(X)} \phi_2(\pi(a_tq)) \thinspace \mathbbm{1}_{K_\delta(\mathbf{1})}(a_t q) \thinspace \lambda(a_tq) \thinspace \mathbbm{1}_{K_\delta(\mathbf{1})}(q) \thinspace \mathbbm{1}_{K_\epsilon(t)}(q) \thinspace \lambda(q) \thinspace ds_X(q) \thinspace dt \right) d\widehat{\mathbf{m}}(X) \nonumber \\
	+ O_\mathcal{K}\left( \|\phi_1\|_\infty \cdot \|\phi_2\|_{\infty} \cdot \delta^{-2(h-1)} \cdot e^{(h-\useconc{polar}) R}\right). \nonumber
	\end{gather}
	Using Fubini's theorem and (\ref{eq:disintegrate}) we can write
	\begin{gather}
		\int_{\mathcal{M}_g} \phi_1(X) \left( \int_0^R e^{ht} \int_{S(X)} \phi_2(\pi(a_tq)) \thinspace \mathbbm{1}_{K_\delta(\mathbf{1})}(a_t q) \thinspace \lambda(a_tq) \thinspace \mathbbm{1}_{K_\delta(\mathbf{1})}(q) \thinspace \mathbbm{1}_{K_\epsilon(t)}(q) \thinspace \lambda(q) \thinspace ds_X(q) \thinspace dt \right) d\widehat{\mathbf{m}}(X) \label{eq:firstb}\\
		=\int_0^R e^{ht} \left(\int_{\mathcal{M}_g} \int_{S(X)} \phi_2(\pi(a_tq)) \thinspace \mathbbm{1}_{K_\delta(\mathbf{1})}(a_t q) \thinspace \lambda(a_tq) \thinspace \phi_1(X) \thinspace \mathbbm{1}_{K_\delta(\mathbf{1})}(q) \thinspace \mathbbm{1}_{K_\epsilon(t)}(q) \thinspace \lambda(q) \thinspace ds_X(q) \thinspace d\widehat{\mathbf{m}}(X)	\right) dt \nonumber\\
		=\int_0^R e^{ht} \left(\int_{\mathcal{Q}^1\mathcal{M}_g} \phi_2(\pi(a_tq)) \thinspace \mathbbm{1}_{K_\delta(\mathbf{1})}(a_t q) \thinspace \lambda(a_tq) \thinspace \phi_1(\pi(q)) \thinspace \mathbbm{1}_{K_\delta(\mathbf{1})}(q) \thinspace \mathbbm{1}_{K_\epsilon(t)}(q) \thinspace \lambda(q) \thinspace d\widehat{\mu}(q)	\right) dt. \nonumber
	\end{gather}
	To apply Theorem \ref{theo:exp_mixing} we first use Theorem \ref{theo:large_deviations_sum} to write
	\begin{gather}
	\int_0^R e^{ht} \left(\int_{\mathcal{Q}^1\mathcal{M}_g} \phi_2(\pi(a_tq)) \thinspace \mathbbm{1}_{K_\delta(\mathbf{1})}(a_t q) \thinspace \lambda(a_tq) \thinspace \phi_1(\pi(q)) \thinspace \mathbbm{1}_{K_\delta(\mathbf{1})}(q) \thinspace \mathbbm{1}_{K_\epsilon(t)}(q) \thinspace \lambda(q) \thinspace d\widehat{\mu}(q)	\right) dt \label{eq:secondb}\\
	= \int_0^R e^{ht} \left(\int_{\mathcal{Q}^1\mathcal{M}_g} \phi_2(\pi(a_tq)) \thinspace \mathbbm{1}_{K_\delta(\mathbf{1})}(a_t q) \thinspace \lambda(a_tq) \thinspace \phi_1(\pi(q)) \thinspace \mathbbm{1}_{K_\delta(\mathbf{1})}(q) \thinspace \lambda(q) \thinspace d\widehat{\mu}(q)	\right) dt \nonumber\\
	+ O_g\left(\|\phi_1\|_\infty \cdot \| \phi_2\|_\infty \cdot e^{(h-\useconk{large_dev_k})R}\right). \nonumber
	\end{gather}
	By Theorem \ref{theo:exp_mixing}, 
	\begin{gather}
	\int_{\mathcal{Q}^1\mathcal{M}_g} \phi_2(\pi(a_tq)) \thinspace \mathbbm{1}_{K_\delta(\mathbf{1})}(a_t q) \thinspace \lambda(a_tq) \thinspace \phi_1(\pi(q)) \thinspace \mathbbm{1}_{K_\delta(\mathbf{1})}(q) \thinspace d\widehat{\mu}(q) \label{eq:thirdb}\\
	= \frac{1}{\widehat{\mathbf{m}}(\mathcal{M}_g)} \cdot \left( \int_{\mathcal{Q}^1\mathcal{M}_g} \phi_2(\pi(q)) \thinspace \mathbbm{1}_{K_\delta(\mathbf{1})}(q) \thinspace \lambda(q) \thinspace d\widehat{\mu}(q) \right) \cdot \left( \int_{\mathcal{Q}^1\mathcal{M}_g} \phi_1(\pi(q)) \thinspace \mathbbm{1}_{K_\delta(\mathbf{1})}(q) \thinspace \lambda(q) \thinspace d\widehat{\mu}(q)\right) \nonumber\\
	+  O_g\left(\| (\phi_1 \circ \pi) \cdot \mathbbm{1}_{K_\delta(\mathbf{1})} \cdot \lambda\|_{\mathcal{R}} \cdot \|(\phi_2 \circ \pi) \cdot \mathbbm{1}_{K_\delta(\mathbf{1})} \cdot \lambda\|_{\mathcal{R}} \cdot e^{-\useconk{exp_mix}t}\right). \nonumber
	\end{gather}
	Using Propositions \ref{prop:hm_inv_sum} and \ref{prop:HM_bound_sum} we deduce
	\begin{align}
		\| (\phi_1 \circ \pi) \cdot \mathbbm{1}_{K_\delta(\mathbf{1})} \cdot \lambda\|_{\mathcal{R}} &\preceq_{g} \delta^{-(h-1)} \cdot \|\phi_1 \|_{L^\infty}, \label{eq:fourthb}\\
		\| (\phi_2 \circ \pi) \cdot \mathbbm{1}_{K_\delta(\mathbf{1})} \cdot \lambda\|_{\mathcal{R}} &\preceq_{g} \delta^{-(h-1)} \cdot \|\phi_2 \|_{L^\infty}. \label{eq:fifthb}
	\end{align}
	Let $\kappa' =  \kappa'(g) := \min\{\kappa,\useconk{large_dev_k},\useconk{exp_mix}\} > 0$. Using (\ref{eq:stop1}), (\ref{eq:zerob}), (\ref{eq:firstb}), (\ref{eq:secondb}), (\ref{eq:thirdb}), (\ref{eq:fourthb}), and (\ref{eq:fifthb}) we deduce
	\begin{gather}
	\int_{\mathcal{M}_g} \phi_1(X) \left( \int_{\mathcal{M}_g} \phi_2(Y) \thinspace d\widehat{\mathbf{m}}_X^R(Y) \right) d\widehat{\mathbf{m}}(X)  \label{eq:stop_2}\\
	= \frac{1}{h \cdot \widehat{\mathbf{m}}(\mathcal{M}_g)} \cdot \left( \int_{\mathcal{Q}^1\mathcal{M}_g} \phi_2(\pi(q)) \thinspace \mathbbm{1}_{K_\delta(\mathbf{1})}(q) \thinspace \lambda(q) \thinspace d\widehat{\mu}(q) \right) \cdot \left( \int_{\mathcal{Q}^1\mathcal{M}_g} \phi_1(\pi(q)) \thinspace \mathbbm{1}_{K_\delta(\mathbf{1})}(q) \thinspace \lambda(q) \thinspace d\widehat{\mu}(q)\right) \cdot e^{hR} \nonumber\\
	+  O_\mathcal{K}\left(\|\phi_1\|_\infty \cdot \|\phi_2\|_\infty \cdot \left( \delta \cdot e^{hR} + \delta^{-2(h-1)}  \cdot e^{(h - \kappa') R}\right)\right). \nonumber
	\end{gather}
	We now incorporate the contributions near the multiple zero locus to the leading term in (\ref{eq:stop_2}). By Proposition \ref{prop:small_HM_integral_sum}, for $i \in \{1,2\}$,
	\begin{align}
	\int_{\mathcal{Q}^1\mathcal{M}_g} \phi_i(\pi(q)) \thinspace \mathbbm{1}_{K_\delta(\mathbf{1})}(q) \thinspace\lambda(q) \thinspace d\widehat{\mu}(q) 
	=  \int_{\mathcal{Q}^1\mathcal{M}_g} \phi_i(\pi(q)) \thinspace \lambda(q) \thinspace d\widehat{\mu}(q) 
	+O_g \left(\|\phi_i\|_\infty \cdot \delta\right). \label{eq:firstc}
	\end{align}
	Using (\ref{eq:disintegrate}) and the definition of the Hubbard-Masur constant $\Lambda_g > 0$ we can write
	\begin{gather}
		\int_{\mathcal{Q}^1\mathcal{M}_g} \phi_i(\pi(q)) \thinspace \lambda(q) \thinspace d\widehat{\mu}(q) 
		= \Lambda_g \cdot \left(\int_{\mathcal{M}_g} \phi_1(X) \thinspace d\widehat{\mathbf{m}}(X)\right). \label{eq:secondc}
	\end{gather}
	Putting (\ref{eq:stop_2}), (\ref{eq:firstc}), and (\ref{eq:secondc}) together we deduce
	\begin{gather}
	\int_{\mathcal{M}_g} \phi_1(X) \left( \int_{\mathcal{M}_g} \phi_2(Y) \ d\widehat{\mathbf{m}}_X^R(Y) \right) d\widehat{\mathbf{m}}(X) \label{eq:stop_3}\\
	= \frac{\Lambda_g^2}{h \cdot \widehat{\mathbf{m}}(\mathcal{M}_g)} \cdot \left( \int_{\mathcal{M}_g} \phi_1(X) \thinspace d\widehat{\mathbf{m}}(X) \right) \cdot \left( \int_{\mathcal{M}_g} \phi_2(Y) \thinspace d\widehat{\mathbf{m}}(Y) \right) \cdot e^{hR} \nonumber\\
	+O_\mathcal{K}\left(\|\phi_1\|_\infty \cdot \|\phi_2\|_\infty \cdot \left(\delta \cdot e^{hR} + \delta^{-2(h-1)}  \cdot e^{(h - \kappa') R}\right)\right). \nonumber
	\end{gather}
	Let $\delta = \delta(R) := e^{-\eta R}$ with $\eta = \eta(g) >0$ small enough so that $2(h-1)\eta < \kappa'$. It follows from (\ref{eq:stop_3}) that, for $\useconk{ball_equid} = \useconk{ball_equid}(g) := \min\{\eta, \kappa' - 2(h-1)\eta\} > 0$,
	\begin{gather*}
	\int_{\mm} \phi_1(X) \left( \int_{\mm} \phi_2(Y) \ d\widehat{\mathbf{m}}_X^R(Y) \right) d\widehat{\mathbf{m}}(X) \\
	=  \frac{\Lambda_g^2}{h \cdot \widehat{\mathbf{m}}(\mathcal{M}_g)} \cdot \left(\int_{\mm} \phi_1(X) \ d\widehat{\mathbf{m}}(X)\right) \cdot \left(\int_{\mm} \phi_2(Y) \ d\widehat{\mathbf{m}}(Y) \right) \cdot e^{hR} \\
	+ O_\mathcal{K}\left(\|\phi_1\|_\infty \cdot \|\phi_2\|_\infty \cdot e^{(h-\useconk{ball_equid})R}\right). \qedhere
	\end{gather*}
\end{proof}

Let $\mathcal{D}_g \subseteq \mathcal{T}_g$ be a measurable fundamental domain for the $\mcg$ action on $\tt$. Denote by $s \colon \mathcal{D}_g \to \mathcal{M}_g$ the restriction to $\mathcal{D}_g$ of the quotient map $\underline{\smash{p}} \colon \tt \to \mm$. The following result is an immediate consequence of Theorem \ref{theo:ball_equidistribution} and the proper discontinuity of the $\mcg$ action on $\tt$.

\newconk{ball_equid_extra}
\begin{theorem}
	\label{theo:ball_equidistribution_extra}
	Let $\mathcal{K} \subseteq \mathcal{T}_g$ be a compact subset and $\phi_1,\phi_2 \in  L^\infty(\tt,\m)$ be essentially bounded functions with  $\esupp(\phi_1), \esupp(\phi_2) \subseteq \mathcal{K}$. Then, for every $R > 0$,
	\begin{gather*}
	\sum_{\mc \in \mcg} \int_{\tt} \phi_1(X) \left( \int_{\mm} \phi_2(\mc.s^{-1}(Y)) \ d\widehat{\mathbf{m}}_X^R(Y) \right) d\mathbf{m}(X) \\
	=  \frac{\Lambda_g^2}{h \cdot \widehat{\mathbf{m}}(\mathcal{M}_g)} \cdot \left(\int_{\tt} \phi_1(X) \ d\mathbf{m}(X)\right) \cdot \left(\int_{\tt} \phi_2(Y) \ d\mathbf{m}(Y) \right) \cdot e^{hR} \\
	+ O_\mathcal{K}\left(\|\phi_1\|_\infty \cdot \|\phi_2\|_\infty \cdot e^{(h-\useconk{ball_equid_extra})R}\right), 
	\end{gather*}
	where $\useconk{ball_equid_extra} =\useconk{ball_equid_extra}(g) > 0$ is a constant depending only on $g$.
\end{theorem}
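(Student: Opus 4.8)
The plan is to deduce Theorem~\ref{theo:ball_equidistribution_extra} from Theorem~\ref{theo:ball_equidistribution} by ``unfolding'' the left-hand side over the fundamental domain $\mathcal{D}_g$. First I would recall that since $\mathbf{m}$ is $\mcg$-invariant and $\widehat{\mathbf{m}} = \up_*\mathbf{m}$ with $\mathcal{D}_g$ a fundamental domain, for any $\psi \in L^1(\mm,\widehat{\mathbf{m}})$ we have $\int_{\mm}\psi \, d\widehat{\mathbf{m}} = \int_{\mathcal{D}_g} \psi(s(X))\, d\mathbf{m}(X)$, and more generally $\int_{\tt} f\, d\mathbf{m} = \sum_{\mc \in \mcg}\int_{\mathcal{D}_g} f(\mc. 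X)\, d\mathbf{m}(X)$ for $f \in L^1(\tt,\mathbf{m})$. The key point is that the inner integral $\int_{\mm}\phi_2(Y)\, d\widehat{\mathbf{m}}_X^R(Y)$ depends on $X \in \tt$ only through its image $\up(X) \in \mm$, because $\widehat{\mathbf{m}}_X^R$ is the pushforward to $\mm$ of the restriction of the $\mcg$-invariant measure $\mathbf{m}$ to the ball $B_R(X)$, and $B_R(\mc.X) = \mc.B_R(X)$. Thus the function $X \mapsto \int_{\mm}\phi_2(Y)\, d\widehat{\mathbf{m}}_X^R(Y)$ on $\tt$ descends to a function on $\mm$.

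Next I would unfold the $\tt$-integral in the left-hand side of the claimed identity. Fix $\mc \in \mcg$. Using the change of variables $X \mapsto \mc.X$ together with $\mcg$-invariance of $\mathbf{m}$, the summand
\[
\int_{\tt} \phi_1(X)\left(\int_{\mm}\phi_2(\mc.s^{-1}(Y))\, d\widehat{\mathbf{m}}_X^R(Y)\right) d\mathbf{m}(X)
\]
can be rewritten; summing over $\mc$ and reorganizing, the combination $\sum_{\mc}\phi_1(\cdot)\,\phi_2(\mc.s^{-1}(\cdot))$ collapses, via the fundamental-domain identity applied in both variables, to a double integral over $\mm \times \mm$. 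Concretely, since $\esupp(\phi_1) \subseteq \mathcal{K}$ is compact and the $\mcg$ action on $\tt$ is properly discontinuous, only finitely many translates $\mc.\mathcal{D}_g$ meet $\mathcal{K}$, so all sums and integrals converge absolutely and Fubini/Tonelli applies freely; one checks that
\[
\sum_{\mc \in \mcg}\int_{\tt}\phi_1(X)\Big(\int_{\mm}\phi_2(\mc.s^{-1}(Y))\, d\widehat{\mathbf{m}}_X^R(Y)\Big)d\mathbf{m}(X)
= \int_{\mm}\widetilde{\phi}_1(X)\Big(\int_{\mm}\widetilde{\phi}_2(Y)\, d\widehat{\mathbf{m}}_X^R(Y)\Big)d\widehat{\mathbf{m}}(X),
\]
where $\widetilde{\phi}_i$ is the $\mcg$-invariant function on $\tt$ descending to $\mm$ whose lift is $\phi_i$ — that is, $\widetilde{\phi}_i(X) := \sum_{\mc}\phi_i(\mc.X)$ restricted appropriately; care is needed to match this with the descent $\phi_i \in L^\infty(\mm,\widehat{\mathbf{m}})$ used in Theorem~\ref{theo:ball_equidistribution}, but the support condition guarantees the sum is locally finite and the identification is clean.

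Then I would apply Theorem~\ref{theo:ball_equidistribution} directly to the pair of descended functions on $\mm$, whose $L^\infty$ norms equal $\|\phi_1\|_\infty$, $\|\phi_2\|_\infty$ and whose essential supports lie in the compact image $\up(\mathcal{K}) \subseteq \mm$. This yields the main term $\frac{\Lambda_g^2}{h\cdot\widehat{\mathbf{m}}(\mm)}\cdot(\int_{\mm}\phi_1\, d\widehat{\mathbf{m}})(\int_{\mm}\phi_2\, d\widehat{\mathbf{m}})\cdot e^{hR}$ plus the error $O_{\up(\mathcal{K})}(\|\phi_1\|_\infty\|\phi_2\|_\infty e^{(h-\useconk{ball_equid})R})$. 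Finally I would translate the $\mm$-integrals back to $\tt$-integrals via $\int_{\mm}\phi_i\, d\widehat{\mathbf{m}} = \int_{\mathcal{D}_g}\phi_i(X)\, d\mathbf{m}(X) = \int_{\tt}\phi_i(X)\, d\mathbf{m}(X)$ (the last equality again using that $\esupp(\phi_i) \subseteq \mathcal{K}$ meets only finitely many fundamental-domain translates, so the $\tt$-integral of the lift equals the $\mathcal{D}_g$-integral — here one must be slightly careful: the lift of $\phi_i$ to $\tt$ has support meeting infinitely many translates in general, but the statement of Theorem~\ref{theo:ball_equidistribution_extra} as written treats $\phi_i$ as a function on $\tt$ with compact essential support, so the integral $\int_{\tt}\phi_i\, d\mathbf{m}$ is literally the integral of that compactly supported function). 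Setting $\useconk{ball_equid_extra} := \useconk{ball_equid}$ completes the proof. The main obstacle is bookkeeping: correctly matching the two meanings of ``$\phi_i$'' (a compactly supported function on $\tt$ versus a function on $\mm$), verifying that proper discontinuity makes every sum locally finite so Fubini is legitimate, and confirming that the inner integral genuinely descends to $\mm$; once these are pinned down the computation is a direct unfolding.
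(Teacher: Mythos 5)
Your proof is correct and is exactly the paper's intended argument: the paper states this theorem as an immediate consequence of Theorem \ref{theo:ball_equidistribution} and proper discontinuity, and your unfolding — noting that the inner integral depends on $X$ only through $\underline{\smash{p}}(X)$ because $\widehat{\mathbf{m}}_X^R$ is marking-independent, periodizing to $\widetilde{\phi}_i(Y)=\sum_{\mc}\phi_i(\mc.s^{-1}(Y))$ with $\esupp(\widetilde{\phi}_i)\subseteq \underline{\smash{p}}(\mathcal{K})$, applying Theorem \ref{theo:ball_equidistribution}, and folding the main-term integrals back via $\int_{\mm}\widetilde{\phi}_i\,d\widehat{\mathbf{m}}=\int_{\tt}\phi_i\,d\mathbf{m}$ — is precisely that argument. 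Two harmless bookkeeping slips (harmless because the implicit constant may depend on $\mathcal{K}$): $\|\widetilde{\phi}_i\|_\infty$ need not equal $\|\phi_i\|_\infty$ but is only bounded by $\#\{\mc\in\mcg \ | \ \mc.\mathcal{K}\cap\mathcal{K}\neq\emptyset\}\cdot\|\phi_i\|_\infty$, a finite factor by proper discontinuity, and your final chain of equalities should be written for $\widetilde{\phi}_i$ rather than $\phi_i$, since $\int_{\mathcal{D}_g}\phi_i\,d\mathbf{m}=\int_{\tt}\phi_i\,d\mathbf{m}$ is false in general.
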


\section{Effective lattice point count in Teichmüller metric balls}

\subsection*{Outline of this section.} The goal of this section is to prove Theorem \ref{theo:count_intro}. We actually prove a slightly more precise version, which we introduce below as Theorem \ref{theo:count}. Averaging and unfolding arguments will reduce the proof of Theorem \ref{theo:count} to an application of Theorem \ref{theo:ball_equidistribution}, more specifically, of Theorem \ref{theo:ball_equidistribution_extra}. 

\subsection*{Statement of the main theorem.} Recall that $B_R(X) \subseteq \tt$ denotes the ball of radius $R > 0$ centered at $X \in \tt$ with respect to the Teichmüller metric. For every $X,Y \in \tt$ and every $R > 0$ denote
\[	
F_R(X,Y) := \#\{\mc \in \mcg \ | \ \mc. Y \in B_R(X)\}.
\]
Recall that $\mathbf{m} := \pi_* \mu$ denotes the pushforward to $\tt$ of the Masur-Veech measure $\mu$ on $\qut$ under the projection $\pi \colon \qut \to \tt$ and that $\widehat{\m}$ denote the local pushforward of $\mathbf{m}$ to $\mm$.  Recall that $h := 6g-6$ and that $\Lambda_g > 0$ denotes the Hubbard-Masur constant introduced in \S 2. In this section we prove the following version of Theorem \ref{theo:count_intro}.

\newconk{count}
\begin{theorem}
	\label{theo:count}
	Let $\mathcal{K} \subseteq \tt$ be a compact subset. For every $X,Y \in \mathcal{K}$ and every $R>0$,
	\[
	F_R(X,Y)=  \frac{\Lambda_g^2}{h \cdot \widehat{\mathbf{m}}(\mathcal{M}_g)}\cdot e^{hR} + O_\mathcal{K}\left(e^{(h-\useconk{count})R}\right),
	\]
	where $\useconk{count} = \useconk{count}(g) > 0 $ is a constant depending only on $g$.
\end{theorem}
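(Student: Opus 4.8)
The plan is to deduce Theorem \ref{theo:count} from the effective mean equidistribution statement Theorem \ref{theo:ball_equidistribution_extra} via a standard averaging-and-unfolding argument, being careful to track the dependence of all error terms on the compact set $\mathcal{K}$ and on $R$. The key point is that the counting function $F_R(X,Y)$ is not literally an integral against $\widehat{\m}_X^R$, so one must smooth it out: for $X,Y\in\mathcal{K}$ and a small parameter $\rho=\rho(R)>0$ to be chosen, one compares $F_R(X,Y)$ with the integral of $F_R(X',Y')$ over $X'$ in a Teichmüller ball $B_\rho(X)$ and $Y'$ in a Teichmüller ball $B_\rho(Y)$, using the triangle inequality $F_{R-2\rho}(X',Y')\leq F_R(X,Y)\leq F_{R+2\rho}(X',Y')$ together with the fact that $\mathbf{m}$ gives a ball $B_\rho(X)$ a mass that is bounded above and below by constants times $\rho^{h}$ (with constants depending on $\mathcal{K}$), since $\mathbf{m}$ is a smooth positive measure. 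This reduces matters to estimating $\int_{B_\rho(X)}\int_{B_\rho(Y)} F_{R\pm2\rho}(X',Y')\,d\mathbf{m}(Y')\,d\mathbf{m}(X')$.

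The next step is the unfolding. Writing $\phi_1$ for (a normalized multiple of) the indicator of $B_\rho(X)$ and $\phi_2$ for that of $B_\rho(Y)$ — both supported in a fixed compact neighborhood of $\mathcal{K}$ once $\rho<1$, say — one has
\[
\int_{\tt}\phi_1(X')\!\left(\sum_{\mc\in\mcg}\phi_2(\mc^{-1}.Y')\right)\!d\mathbf{m}(X')
=\sum_{\mc\in\mcg}\int_{\tt}\phi_1(X')\,\phi_2(\mc^{-1}.Y')\,d\mathbf{m}(X'),
\]
and the sum over $\mc$ counts exactly those $\mc$ with $\mc.Y'\in\esupp(\phi_1)$-translate, which after folding $\phi_2$ down to $\mm$ matches the left-hand side of Theorem \ref{theo:ball_equidistribution_extra} applied to the radius $R\pm2\rho$ (one uses here that $\widehat{\m}_X^R$ is marking-independent, exactly as recorded in the excerpt, so the folded integrand $\phi_2(\mc.s^{-1}(Y))$ makes sense). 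Thus Theorem \ref{theo:ball_equidistribution_extra} gives
\[
\int_{B_\rho(X)}\!\!\int_{B_\rho(Y)}\!F_{R}(X',Y')\,d\mathbf{m}(Y')\,d\mathbf{m}(X')
=\frac{\Lambda_g^2}{h\cdot\widehat{\m}(\mm)}\,\mathbf{m}(B_\rho(X))\,\mathbf{m}(B_\rho(Y))\,e^{hR}
+O_\mathcal{K}\!\big(\mathbf{m}(B_\rho(X))\,\mathbf{m}(B_\rho(Y))\,e^{(h-\useconk{ball_equid_extra})R}\big),
\]
after bounding $\|\phi_i\|_\infty\asymp_\mathcal{K}\rho^{-h}$; dividing by $\mathbf{m}(B_\rho(X))\mathbf{m}(B_\rho(Y))$ then yields, for the smoothed quantity, the main term $\tfrac{\Lambda_g^2}{h\,\widehat{\m}(\mm)}e^{hR}$ with error $O_\mathcal{K}(e^{(h-\useconk{ball_equid_extra})R})$.

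It remains to control the discrepancy between $F_R(X,Y)$ and its $\rho$-smoothing, i.e.\ the difference between $F_{R+2\rho}$ and $F_{R-2\rho}$ on average: using the main term just obtained for both radii $R\pm2\rho$ one gets $\tfrac{\Lambda_g^2}{h\,\widehat{\m}(\mm)}(e^{h(R+2\rho)}-e^{h(R-2\rho)})e^{?}\preceq_\mathcal{K}\rho\,e^{hR}$ for $\rho$ bounded, plus the equidistribution error. Hence
\[
F_R(X,Y)=\frac{\Lambda_g^2}{h\cdot\widehat{\m}(\mm)}\,e^{hR}+O_\mathcal{K}\!\big(\rho\,e^{hR}+\rho^{-\,?}\,e^{(h-\useconk{ball_equid_extra})R}\big),
\]
where the negative power of $\rho$ comes only through the (bounded, $\mathcal{K}$-dependent) ratios $\mathbf{m}(B_\rho(X))/\mathbf{m}(B_\rho(X'))$ for $X'\in B_\rho(X)$, which are in fact $1+O_\mathcal{K}(\rho)$ by smoothness of $\mathbf{m}$, so effectively there is no negative power and the second error is simply $O_\mathcal{K}(e^{(h-\useconk{ball_equid_extra})R})$. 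Choosing $\rho=\rho(R):=e^{-\useconk{ball_equid_extra}R}$ balances the two error terms and gives $F_R(X,Y)=\tfrac{\Lambda_g^2}{h\,\widehat{\m}(\mm)}e^{hR}+O_\mathcal{K}(e^{(h-\useconk{count})R})$ with $\useconk{count}:=\useconk{ball_equid_extra}>0$.

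I expect the main obstacle to be the bookkeeping in the smoothing step: one must verify that shrinking $\mathcal{K}$-dependent constants (the comparability of $\mathbf{m}(B_\rho(X))$ across nearby centers, and the uniformity of these bounds as $X$ ranges over $\mathcal{K}$) really do not introduce a negative power of $\rho$ that would spoil the balancing, and that the compact set in which $\esupp(\phi_i)$ lands — a fixed neighborhood of $\mathcal{K}$ of radius $\leq 1$ — can be used uniformly. A secondary technical point is making the unfolding rigorous for $L^\infty$ indicator functions rather than continuous ones, which is routine given that $\widehat{\m}_X^R$ is marking-independent and $\mathbf{m}$ is smooth, so that boundary effects have measure zero.
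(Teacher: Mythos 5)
Your overall strategy — sandwiching $F_R(X,Y)$ between $F_{R\mp2\rho}(X',Y')$ for $X'\in B_\rho(X)$, $Y'\in B_\rho(Y)$, averaging against $\mathbf{m}\times\mathbf{m}$, unfolding over a fundamental domain to invoke Theorem \ref{theo:ball_equidistribution_extra}, and then dividing by $\mathbf{m}(B_\rho(X))\,\mathbf{m}(B_\rho(Y))\asymp_\mathcal{K}\rho^h\cdot\rho^h$ — is exactly the paper's argument. But there is a genuine error in your bookkeeping of the equidistribution error term. The error in Theorem \ref{theo:ball_equidistribution_extra} is $O_\mathcal{K}\bigl(\|\phi_1\|_\infty\cdot\|\phi_2\|_\infty\cdot e^{(h-\useconk{ball_equid_extra})R}\bigr)$: it scales with the sup norms of the test functions, \emph{not} with the measures of their supports. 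Taking $\phi_i$ to be unnormalized indicators ($\|\phi_i\|_\infty=1$), the error in the averaged identity is $e^{(h-\useconk{ball_equid_extra})R}$, independent of $\rho$; your displayed formula with an error $O_\mathcal{K}\bigl(\mathbf{m}(B_\rho(X))\,\mathbf{m}(B_\rho(Y))\,e^{(h-\useconk{ball_equid_extra})R}\bigr)$ is not what the theorem gives. Consequently, after dividing by $\mathbf{m}(B_\rho(X))\,\mathbf{m}(B_\rho(Y))$ you unavoidably pick up a factor $\rho^{-2h}$ (via Lemma \ref{lem:teich_ball_bound}), and the smoothed estimate reads
\[
F_R(X,Y)=\frac{\Lambda_g^2}{h\cdot\widehat{\mathbf{m}}(\mathcal{M}_g)}\,e^{hR}
+O_\mathcal{K}\bigl(\rho\cdot e^{hR}+\rho^{-2h}\cdot e^{(h-\useconk{ball_equid_extra})R}\bigr).
\]
Your claim that the negative power of $\rho$ arises only from comparing ball masses at nearby centers, and hence "effectively there is no negative power," misdiagnoses where it comes from: it comes from dividing an additive error that does not shrink with $\rho$ by the quantity $\rho^{2h}$.

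This matters for the final step. With the correct error, your choice $\rho=e^{-\useconk{ball_equid_extra}R}$ makes the second term $e^{(h+(2h-1)\useconk{ball_equid_extra})R}$, which overwhelms the main term; and the power saving $\useconk{count}=\useconk{ball_equid_extra}$ you claim is not attainable by this route. The correct balancing (as in the paper) is $\rho=\delta_0 e^{-\eta R}$ with $\eta>0$ chosen so that $2h\eta<\useconk{ball_equid_extra}$, giving $\useconk{count}=\min\{\eta,\ \useconk{ball_equid_extra}-2h\eta\}$, which is strictly smaller than $\useconk{ball_equid_extra}$. The remaining ingredients you cite (marking-independence of $\widehat{\mathbf{m}}_X^R$, measurability of the unfolding with indicator test functions, uniformity of Lemma \ref{lem:teich_ball_bound} over $\mathcal{K}$) are fine and are used the same way in the paper; the only substantive fix needed is the error analysis above.
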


\subsection*{Proof of Theorem \ref{theo:count}.} The proof of Theorem \ref{theo:count} will use Theorem \ref{theo:ball_equidistribution_extra} and the following estimate for the measure of small Teichmüller metric balls. This estimate can be proved using a compactness argument, the fact that the Teichmüller metric on $\tt$ is Finsler, and the smoothness of the measure $\mathbf{m}$.

\newcond{d:vol}
\begin{lemma}
	\label{lem:teich_ball_bound}
	Let $\mathcal{K} \subseteq \mathcal{T}_g$ be a compact subset. There exists a constant $\usecond{d:vol} = \usecond{d:vol} (\mathcal{K}) > 0$ such that for every $X \in \mathcal{K}$ and every $0 < \delta < \usecond{d:vol}$,
	\[
	\mathbf{m}(B_\delta(X)) \asymp_{\mathcal{K}} \delta^{h}.
	\]
\end{lemma}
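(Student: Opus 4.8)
The plan is to reduce the estimate to the elementary statement that a Euclidean ball of radius $r$ in $\mathbf{R}^h$ has Lebesgue measure proportional to $r^h$ (recall $h = 6g-6 = \dim_{\mathbf{R}} \tt$). The two inputs that make this reduction work are: the Teichmüller metric is Finsler with a norm $\|\cdot\|_{\mathcal{T}}$ that depends \emph{continuously} on the base point, and the measure $\mathbf{m}$ is smooth, hence given in any local chart by a smooth, strictly positive density times Lebesgue measure.

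First I would fix a finite atlas of charts $\psi_j \colon U_j \to \mathbf{R}^h$ covering $\mathcal{K}$, together with a compact set $\mathcal{K}' \subseteq \bigcup_j U_j$ containing a neighborhood of $\mathcal{K}$, chosen so that each $X \in \mathcal{K}$ lies in some $U_j$ with a Euclidean ball of a fixed radius around $\psi_j(X)$ contained in $\psi_j(\mathcal{K}')$. Since $(X,v) \mapsto \|v\|_{\mathcal{T}}$ is continuous and positive on the unit sphere bundle over the compact set $\psi_j(\mathcal{K}')$, there are constants $0 < c \leq C$, depending only on $\mathcal{K}$, with $c\,|v|_{\mathrm{eucl}} \leq \|v\|_{\mathcal{T}} \leq C\,|v|_{\mathrm{eucl}}$ at every point of $\psi_j(\mathcal{K}')$. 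Taking $\usecond{d:vol}$ smaller than the Teichmüller distance from $\mathcal{K}$ to the complement of the interior of $\mathcal{K}'$, for $X \in \mathcal{K}$ and $0 < \delta < \usecond{d:vol}$ the ball $B_\delta(X)$ is contained in $\mathcal{K}'$; since the geodesic from $X$ to any point of $B_\delta(X)$ stays inside $B_\delta(X)$, integrating the lower norm bound along it gives $|\psi_j(X) - \psi_j(Y)|_{\mathrm{eucl}} \leq \delta/c$, so $\psi_j(B_\delta(X))$ lies in the Euclidean ball of radius $\delta/c$ around $\psi_j(X)$. Conversely, integrating the upper norm bound along Euclidean segments (which remain in $\psi_j(\mathcal{K}')$ once $\usecond{d:vol}$ is small enough) shows that the Euclidean ball of radius $\delta/C$ around $\psi_j(X)$ maps into $B_\delta(X)$.

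Next I would invoke the smoothness of $\mathbf{m}$: in the chart $\psi_j$ it equals $f_j \, d\mathrm{Leb}$ with $f_j$ smooth and positive, hence bounded above and below by positive constants (depending only on $\mathcal{K}$) on the compact set $\psi_j(\mathcal{K}')$. Combining this with the two inclusions of the previous paragraph and the formula for the Lebesgue measure of a Euclidean ball yields $\mathbf{m}(B_\delta(X)) \asymp_{\mathcal{K}} \delta^h$ for all $X \in \mathcal{K}$ and $0 < \delta < \usecond{d:vol}$, which is the assertion.

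The only point needing care is the uniform-in-$X$ passage from the pointwise comparison of $\|\cdot\|_{\mathcal{T}}$ with the Euclidean norm to the comparison of the metric balls $B_\delta(X)$ with coordinate balls; this is handled by shrinking $\usecond{d:vol}$ so that all the geodesic and Euclidean segments involved stay in the region $\psi_j(\mathcal{K}')$ where the comparison (and the density bounds) are valid. The non-smoothness of the Finsler norm is irrelevant here, since only its continuity and positivity off the zero section enter.
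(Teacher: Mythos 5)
Your proposal is correct and follows exactly the route the paper indicates for this lemma: a compactness argument in charts, using that the Teichmüller metric is Finsler with a continuous positive norm (hence comparable to a Euclidean norm on a compact neighborhood of $\mathcal{K}$) and that $\mathbf{m}$ is smooth with positive density, so that $B_\delta(X)$ is sandwiched between coordinate balls of radii $\asymp_{\mathcal{K}} \delta$. The paper gives only this one-sentence sketch, and your write-up fills in the same argument with the appropriate care about keeping geodesic and Euclidean segments inside the region where the comparisons hold.
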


We now prove Theorem \ref{theo:count}.

\begin{proof}[Proof of Theorem \ref{theo:count}.]
	Fix $\mathcal{K} \subseteq \tt$ compact. Let $\usecond{d:vol} = \usecond{d:vol}(\mathcal{K}) > 0$ be as in Lemma \ref{lem:teich_ball_bound} and $\delta_0 = \delta_0(\mathcal{K}) := \min\{\usecond{d:vol},1\} > 0$. Fix $X,Y \in \mathcal{K}$. Let $R > 2$ and $0 < \delta = \delta(\mathcal{K},R) < \delta_0$, to be fixed later. By the triangle inequality and the $\mcg$ invariance of the Teichmüller metric $d_\mathcal{T}$, if $X',Y' \in \mathcal{T}_g$ are such that $d_\mathcal{T}(X,X') < \delta$ and $d_\mathcal{T}(Y,Y') < \delta$, then 
	\begin{equation}
	\label{eq:count_comparison}
	F_{R-2\delta}(X',Y') \leq F_R(X,Y) \leq F_{R+2\delta}(X',Y').
	\end{equation}
	Multiplying (\ref{eq:count_comparison}) by $\mathbbm{1}_{B_\delta(X)}(X') \cdot \mathbbm{1}_{B_\delta(Y)}(Y')$ we obtain the following inequalities, valid for every $X',Y' \in \mathcal{T}_g$,
	\begin{gather}
	\mathbbm{1}_{B_\delta(X)}(X') \cdot \mathbbm{1}_{B_\delta(Y)}(Y') \cdot F_{R-2\delta}(X',Y') \leq \mathbbm{1}_{B_\delta(X)}(X') \cdot \mathbbm{1}_{B_\delta(Y)}(Y') \cdot F_R(X,Y) \label{eq:coun_ineq_1},\\
	\mathbbm{1}_{B_\delta(X)}(X') \cdot \mathbbm{1}_{B_\delta(Y)}(Y') \cdot F_R(X,Y) \leq \mathbbm{1}_{B_\delta(X)}(X') \cdot \mathbbm{1}_{B_\delta(Y)}(Y') \cdot F_{R+2\delta}(X',Y'). \label{eq:coun_ineq_2}
	\end{gather}
	We use (\ref{eq:coun_ineq_2}) together with Theorem \ref{theo:ball_equidistribution} and Lemma \ref{lem:teich_ball_bound} to show that 
	\begin{gather}
	F_R(X,Y)
	- \frac{\Lambda_g^2}{h \cdot \widehat{\mathbf{m}}(\mathcal{M}_g)} \cdot e^{hR}
	\preceq_{\mathcal{K}} \delta \cdot e^{hR} + \delta^{-2h} \cdot e^{(h-\useconk{ball_equid_extra})R}. \label{eq:gg1}
	\end{gather}
	Integrating (\ref{eq:coun_ineq_2}) with respect to $d\mathbf{m}(Y') \thinspace d\mathbf{m}(X')$ we deduce
	\begin{gather}
	\mathbf{m}(B_\delta(X)) \cdot \mathbf{m}(B_\delta(Y)) \cdot F_R(X,Y) \label{eq:count_ineq_1} \\
	\leq \int_{\mathcal{T}_g} \mathbbm{1}_{B_\delta(X)}(X') \left( \int_{\mathcal{T}_g} \mathbbm{1}_{B_\delta(Y)}(Y') \thinspace F_{R+2\delta}(X',Y') \thinspace d\mathbf{m}(Y')\right) d\mathbf{m}(X'). \nonumber
	\end{gather}
	Fix a measurable fundamental domain $\mathcal{D}_g \subseteq \mathcal{T}_g$ for the action of $\text{Mod}_g$ on $\mathcal{T}_g$. Denote by $s\colon \mathcal{D}_g \to \mathcal{M}_g$ the restriction to $\mathcal{D}_g$ of the quotient map $\underline{\smash{p}} \colon \mathcal{T}_g \to \mathcal{M}_g$. Using this map we can write
	\begin{gather}
	\int_{\mathcal{T}_g} \mathbbm{1}_{B_\delta(X)}(X') \left( \int_{\mathcal{T}_g} \mathbbm{1}_{B_\delta(Y)}(Y') \thinspace F_{R+2\delta}(X',Y') \thinspace d\mathbf{m}(Y')\right) d\mathbf{m}(X') \label{eq:pro_1}	\\
	= \sum_{\mc\in \mcg} \int_{\mathcal{T}_g} \mathbbm{1}_{B_\delta(X)}(X') \left( \int_{\mathcal{M}_g} \mathbbm{1}_{B_\delta(Y)}(\mc'.s^{-1}(Y')) \thinspace  F_{R+2\delta}(X',\mc.s^{-1}(Y')) \thinspace d\widehat{\mathbf{m}}(Y')\right) d\mathbf{m}(X').	\nonumber
	\end{gather}
	Fix $\mc \in \mcg$. An unfolding argument shows that, for every $X' \in \mathcal{M}_g$,
	\begin{gather}
	\int_{\mm} \mathbbm{1}_{B_\delta(Y)}(\mc.s^{-1}(Y')) \thinspace F_{R+2\delta}(X',\mc.s^{-1}(Y')) \thinspace d\widehat{\mathbf{m}}(Y') \label{eq:pro_2}\\
	= \int_{\tt} \sum_{\mathbf{h} \in \mcg} \mathbbm{1}_{\mathcal{D}_g}(\mathbf{h}.Y') \thinspace \mathbbm{1}_{B_\delta(Y)}(\mc.\mathbf{h}.Y') \thinspace \mathbbm{1}_{B_R(X')}(Y')\thinspace d\mathbf{m}(Y') \nonumber\\
	= \int_{\tt} \sum_{\mathbf{h} \in \mcg} \mathbbm{1}_{\mathcal{D}_g}(\mathbf{h}.Y') \thinspace \mathbbm{1}_{B_\delta(Y)}(\mc.\mathbf{h}.Y') \thinspace d\mathbf{m}_{X}^{R+2\delta}(Y') \nonumber.
	\end{gather}
	As $\widehat{\mathbf{m}}_{X}^{R+2\delta}$ is the pushforward to $\mathcal{M}_g$ of the measure $\mathbf{m}_{X}^{R+2\delta}$ on $\tt$,
	\begin{equation}
	\int_{\tt} \sum_{\mathbf{h} \in \mcg} \mathbbm{1}_{\mathcal{D}_g}(\mathbf{h}.Y') \thinspace \mathbbm{1}_{B_\delta(Y)}(\mc.\mathbf{h}.Y') \thinspace d\mathbf{m}_{X}^{R+2\delta}(Y') = \int_{\mathcal{M}_g} \mathbbm{1}_{B_\delta(Y)}(\mathbf{g}. s^{-1}(Y')) \thinspace d \widehat{\mathbf{m}}_{X}^{R+2\delta}(Y'). \label{eq:pro_3}
	\end{equation}
	Putting together (\ref{eq:pro_1}), (\ref{eq:pro_2}), and (\ref{eq:pro_3}) we deduce
	\begin{gather}
	\int_{\tt} \mathbbm{1}_{B_\delta(X)}(X') \left(\int_{\tt} \mathbbm{1}_{B_\delta(Y)}(Y') \thinspace F_{R+2\delta}(X',Y') \thinspace d\mathbf{m}(Y') \right) d\mathbf{m}(X') \label{eq:pro_4}  \\
	= \sum_{\mc \in \mcg} \int_{\tt} \mathbbm{1}_{B_\delta(X)}(X') \left(\int_{\mathcal{M}_g} \mathbbm{1}_{B_\delta(Y)}(\mathbf{g}. s^{-1}(Y')) \thinspace d \widehat{\mathbf{m}}_{X}^{R+2\delta}\right) d\mathbf{m}(X') \nonumber.
	\end{gather}
	By Theorem \ref{theo:ball_equidistribution_extra},
	\begin{gather}
	\sum_{\mc \in \mcg} \int_{\tt} \mathbbm{1}_{B_\delta(X)}(X') \left(\int_{\mathcal{M}_g} \mathbbm{1}_{B_\delta(Y)}(\mathbf{g}. s^{-1}(Y')) \thinspace d \widehat{\mathbf{m}}_{X}^{R+2\delta}\right) d\mathbf{m}(X') \label{eq:prog_5}\\
	=  \frac{\Lambda_g^2}{h \cdot \widehat{\mathbf{m}}(\mathcal{M}_g)} \cdot \mathbf{m}(B_\delta(X)) \cdot \mathbf{m}(B_\delta(Y)) \cdot e^{h(R+2\delta)} 
	+ O_\mathcal{K}\left(e^{(h-\useconk{ball_equid_extra})(R+2\delta)}\right). \nonumber
	\end{gather}	
	As $0 < \delta < 1$, the mean value theorem ensures
	\begin{equation}
	\label{eq:exp_approx}
	e^{h\delta} = 1 + O_g(\delta).
	\end{equation}
	Putting together (\ref{eq:pro_4}), (\ref{eq:prog_5}), and (\ref{eq:exp_approx}) we deduce
	\begin{gather}
	\int_{\tt} \mathbbm{1}_{B_\delta(X)}(X') \left(\int_{\tt} \mathbbm{1}_{B_\delta(Y)}(Y') \thinspace F_{R+2\delta}(X',Y') \thinspace d\mathbf{m}(Y') \right) d\mathbf{m}(X') \label{eq:prog55}\\
	= \frac{\Lambda_g^2}{h \cdot \widehat{\mathbf{m}}(\mathcal{M}_g)} \cdot \mathbf{m}(B_\delta(X)) \cdot \mathbf{m}(B_\delta(Y)) \cdot e^{hR} \nonumber\\
	+ O_\mathcal{K}\left(\mathbf{m}(B_\delta(X)) \cdot \mathbf{m}(B_\delta(Y)) \cdot \delta \cdot e^{hR} + e^{(h-\useconk{ball_equid_extra})(R+2\delta)}\right). \nonumber
	\end{gather}
	Combining (\ref{eq:count_ineq_1}) with (\ref{eq:prog55}) and dividing by $\mathbf{m}(B_\delta(X)) \cdot \mathbf{m}(B_\delta(Y))$ we deduce
	\begin{gather}
	F_R(X,Y) - \frac{\Lambda_g^2}{h \cdot \widehat{\mathbf{m}}(\mathcal{M}_g)} \cdot e^{hR}  \preceq_{\mathcal{K}} \delta \cdot e^{hR} + \mathbf{m}(B_\delta(X))^{-1} \cdot \mathbf{m}(B_\delta(Y))^{-1} \cdot e^{(h-\useconk{ball_equid_extra})R}. \label{eq:prog_6}
	\end{gather}
	As $0 < \delta < \usecond{d:vol}$, (\ref{eq:prog_6}) and Lemma \ref{lem:teich_ball_bound} imply (\ref{eq:gg1}) holds, that is,
	\begin{gather}
	F_R(X,Y) - \frac{\Lambda_g^2}{h \cdot \widehat{\mathbf{m}}(\mathcal{M}_g)} \cdot e^{hR}  \preceq_{\mathcal{K}} \delta \cdot e^{hR} + \delta^{-2h} \cdot e^{(h-\useconk{ball_equid_extra})R}. \label{eq:prog_7}
	\end{gather}
	Analogous arguments using (\ref{eq:coun_ineq_1}) instead of (\ref{eq:coun_ineq_2}) give the lower bound
	\begin{gather}
	F_R(X,Y) - \frac{\Lambda_g^2}{h \cdot \widehat{\mathbf{m}}(\mathcal{M}_g)} \cdot e^{hR}  \succeq_{\mathcal{K}} \delta \cdot e^{hR} + \delta^{-2h} \cdot e^{(h-\useconk{ball_equid_extra})R} \label{eq:prog_8}.
	\end{gather}
	Combining (\ref{eq:prog_7}) with (\ref{eq:prog_8}) we deduce
	\begin{gather}
	F_R(X,Y) = \frac{\Lambda_g^2}{h \cdot \widehat{\mathbf{m}}(\mathcal{M}_g)} \cdot e^{hR} + O_\mathcal{K}\left( \delta \cdot e^{hR} + \delta^{-2h} \cdot e^{(h-\useconk{ball_equid_extra})R} \right). \label{eq:prog_9}
	\end{gather}
	Let $\delta = \delta(\mathcal{K},R) := \delta_0 e^{-\eta R}$ with $\eta = \eta(g) >0$ small enough so that $2h\eta < \useconk{ball_equid_extra}$. Denote $\useconk{count} = \useconk{count}(g) := \min\{\eta, \useconk{ball_equid_extra} - 2h\eta\} > 0$. It follows from (\ref{eq:prog_9}) that, for every $R > 2$,
	\[
	F_R(X,Y)
	= e^{hR} \cdot \frac{\Lambda_g^2}{h \cdot \widehat{\mathbf{m}}(\mathcal{M}_g)} + O_{\mathcal{K}}\left(e^{(h-\useconk{count})R}\right).
	\]
	The same equality holds for every $R > 0$ by increasing the implicit constant in the error term.
\end{proof}

\section{The Hubbard-Masur functions}

\subsection*{Outline of this section.} In this section we study the regularity of the Hubbard-Masur functions $\lambda^-,\lambda^+ \colon\allowbreak \qut(\mathbf{1}) \to \mathbf{R}_{>0}$ introduced in \S 2. We begin by showing these functions are $\mathrm{SO}(2)$-invariant; see Proposition \ref{prop:hm_inv_sum}. Using similar ideas we  show the functions $\lambda^-$ and $\lambda^+$ coincide everywhere; see Proposition \ref{prop:one_hm_function_sum}. We then give a bound for these functions in terms of the length of the shortest saddle connection; see Proposition \ref{prop:HM_bound_sum}. Finally, we bound the integral of these functions near the multiple zero locus; see Proposition \ref{prop:small_HM_integral_sum}. 

\subsection*{The smooth fiberwise measures of $\boldsymbol{\qut}$.} We begin by introducing a construction that will be used in the proof of Proposition \ref{prop:hm_inv_sum}. Recall that $\mathbf{m} := \pi_* \mu$ denotes the pushforward to $\tt$ of the Masur-Veech measure $\mu$ on $\qut$ under the projection $\pi \colon \qut \to \tt$. Recall that the Masur-Veech measure $\mu$ on $\qut$ can be disintegrated along the fibers of the projection $\pi \colon \mathcal{Q}^1\mathcal{T}_g \to \mathcal{T}_g$ in the following sense: There exists a unique family of fiberwise probability measures $\{s_X\}_{X \in \mathcal{T}_g}$, with $s_X$ supported on $S(X) := \pi^{-1}(X)$, such that the following disintegration formula holds:
\begin{equation}
\label{eq:fiberwise_measures_smooth}
d\mu(X,q) = ds_X(q) \thinspace d\mathbf{m}(X).
\end{equation}

The following procedure constructs the fiberwise measures $\{s_X\}_{X \in \mathcal{T}_g}$ in a smooth way. Fix $X \in \mathcal{T}_g$. Consider an arbitrary basis $v_1,\dots,v_h$ of $T_X\mathcal{T}_g$. Given $q \in S(X)$, let $\widetilde{v}_1, \dots, \widetilde{v}_h$ be arbitrary lifts of $v_1,\dots,v_h$ to $T_q \mathcal{Q}^1\mathcal{T}_g$. Contract the Masur-Veech volume form $\mu_q $ on $T_q \mathcal{Q}^1\mathcal{T}_g$ by the vectors $\widetilde{v}_1, \dots, \widetilde{v}_h$. As these vectors are linearly independent and do not belong to $T_qS(X)$, this construction yields a non-zero volume form $s_{X}'(q)$ on $T_qS(X)$. This volume form is independent of the choice of lifts $\widetilde{v}_1, \dots, \widetilde{v}_h$.

Using a local trivialization of the sphere bundle $\pi \colon \mathcal{Q}^1\mathcal{T}_g \to \mathcal{T}_g$, one can show that the volume forms $s_{X}'(q)$ give rise to a smooth volume form $s_X'$ on $S(X)$. As $S(X)$ is compact, its total mass $m_X'$ with respect to this volume form is finite. Consider the volume form $s_X$ on $S(X)$ given by $s_X := s_X'/m_X'$. This volume form is independent of the choice of basis $v_1,\dots,v_h$ of $T_X \mathcal{T}_g$. The smooth measures induced by the volume forms $\{s_X\}_{X \in \mathcal{T}_g}$ constructed this way satisfy (\ref{eq:fiberwise_measures_smooth}).

\subsection*{$\boldsymbol{\mathrm{SO}(2)}$-invariance of the Hubbard-Masur functions.} As a first step towards proving Proposition \ref{prop:hm_inv_sum}, we prove the following invariance property for the smooth fiberwise measures $\{s_X\}_{X \in \mathcal{T}_g}$. The proof will use the notation introduced above.

\begin{lemma}
	\label{lem:rot_inv}
	For every $X \in \mathcal{T}_g$, the volume form $s_X$ on $S(X)$ is $\mathrm{SO}(2)$-invariant.
\end{lemma}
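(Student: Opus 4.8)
The plan is to exploit the fact that the Masur--Veech volume form $\mu$ on $\qut$ is $\mathrm{SL}(2,\RR)$-invariant, in particular $\mathrm{SO}(2)$-invariant, and that the $\mathrm{SO}(2)$ action preserves each fiber $S(X) = \pi^{-1}(X)$ while the pushforward measure $\m := \pi_*\mu$ on $\tt$ is of course fixed by any symmetry that acts trivially on $\tt$. Since $r_\theta$ acts as the identity on $\tt$ (the rotation $r_\theta q = e^{2\theta i} q$ lives entirely inside the fiber $Q(X)$), the disintegration formula \eqref{eq:fiberwise_measures_smooth} is preserved by $r_\theta$: pushing forward $d\mu(X,q) = ds_X(q)\, d\m(X)$ under $r_\theta$ gives $d\mu(X,q) = d((r_\theta)_* s_X)(q)\, d\m(X)$, and by uniqueness of the disintegration $(r_\theta)_* s_X = s_X$ as \emph{measures}. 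However, Lemma~\ref{lem:rot_inv} asserts invariance at the level of the \emph{volume form} $s_X$, not merely the induced measure, so a little more care is needed; this is where the explicit construction of $s_X$ above is used.

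Concretely, I would argue as follows. Fix $X \in \tt$, fix $\theta$, and fix a basis $v_1,\dots,v_h$ of $T_X\tt$. For $q \in S(X)$, the form $s_X'(q) \in \Lambda^h T_q^* S(X)$ was defined by contracting $\mu_q$ by arbitrary lifts $\widetilde v_1,\dots,\widetilde v_h \in T_q\qut$ of $v_1,\dots,v_h$; this is independent of the lifts because any two lifts differ by a vector in $T_q S(X)$, and such vectors get killed after contracting down to a top form on $T_q S(X)$. Now apply the differential $(r_\theta)_* \colon T_q\qut \to T_{r_\theta q}\qut$. Since $\pi \circ r_\theta = \pi$, the differential $(r_\theta)_*$ maps $T_q S(X)$ to $T_{r_\theta q} S(X)$ and sends a lift of $v_i$ at $q$ to a lift of $v_i$ at $r_\theta q$ (because $\pi_* \circ (r_\theta)_* = \pi_*$). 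Because $\mu$ is $\mathrm{SL}(2,\RR)$-invariant, $(r_\theta)^* \mu_{r_\theta q} = \mu_q$ as volume forms on the respective tangent spaces. Contracting the identity $(r_\theta)^*\mu_{r_\theta q} = \mu_q$ by $\widetilde v_1,\dots,\widetilde v_h$ and restricting to $T_q S(X)$ therefore yields $(r_\theta|_{S(X)})^* s_X'(r_\theta q) = s_X'(q)$; i.e.\ the \emph{unnormalized} fiberwise volume form $s_X'$ is already $\mathrm{SO}(2)$-invariant. Taking total masses gives $m_X' = \int_{S(X)} s_X' = \int_{S(X)} (r_\theta|_{S(X)})^* s_X' = m_X'$, trivially, so the normalization constant is unaffected and $s_X = s_X'/m_X'$ is $\mathrm{SO}(2)$-invariant as a volume form, which is the claim.

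The main point to be careful about — the step I'd expect to need the most attention — is verifying that $(r_\theta)_*$ genuinely carries ``lifts of $v_i$ at $q$'' to ``lifts of $v_i$ at $r_\theta q$'' in the sense needed: this is exactly the statement that $\pi \circ r_\theta = \pi$ as maps $\qut \to \tt$, so that $d\pi_{r_\theta q} \circ d(r_\theta)_q = d\pi_q$, hence $d\pi_{r_\theta q}\bigl((r_\theta)_* \widetilde v_i\bigr) = d\pi_q(\widetilde v_i) = v_i$. Once this is in hand, the argument is purely formal manipulation of contractions of top-degree forms, using only (i) $\mathrm{SL}(2,\RR)$-invariance of the Masur--Veech volume form $\mu$ on $\qut$ (recorded in the subsection ``The Masur-Veech measure revisited''), (ii) $\pi \circ r_\theta = \pi$, and (iii) the coordinate-free characterization of $s_X'(q)$ by contraction. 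Proposition~\ref{prop:hm_inv_sum} will then follow in the next step by combining Lemma~\ref{lem:rot_inv} with the characterizations \eqref{eq:hm_3}--\eqref{eq:hm_4} of $\lambda^\pm$ and the $\mathrm{SO}(2)$-equivariance of the maps $\Re,\Im$ together with the rotational symmetry of extremal length; but that is beyond the present statement.
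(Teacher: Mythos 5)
Your argument is correct and is essentially the paper's own proof: both use that $r_\theta$ preserves the fibers of $\pi$ (so $dr_\theta$ carries lifts of the $v_i$ at $q$ to lifts at $r_\theta q$), that $r_\theta$ preserves the Masur--Veech volume form, and that contracting $\mu_q$ by the lifts is independent of their choice, giving $r_\theta^* s_X' = s_X'$ before normalizing by $m_X'$. The only addition is your explicit remark that the total mass $m_X'$ is unchanged, which the paper leaves implicit.
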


\begin{proof}
	Fix $X \in \mathcal{T}_g$. Let $v_1,\dots,v_h$ be an arbitrary basis of $T_X\mathcal{T}_g$. Given $q \in S(X)$, consider arbitrary lifts $\widetilde{v}_1,\dots,\widetilde{v}_h$ of $v_1,\dots,v_h$ to $T_q \mathcal{Q}^1\mathcal{T}_g$. Let $\theta \in [0,2\pi]$ be arbitrary. As the action of $\mathrm{SO}(2)$ on $\qut$ preserves the fibers of the projection $\pi \colon \qut \to \tt$, the tangent vectors $dr_\theta \widetilde{v}_1,\dots, dr_\theta \widetilde{v}_h$ on $T_{r_\theta q}\mathcal{Q}^1\mathcal{T}_g$ project to $v_1,\dots,v_h$ on $T_X \tt$. As the Masur-Veech volume form $\mu$ on $\qut$  is preserved by $r_\theta$, 
	\begin{align*}
	(r_\theta^* s_X')(q) &= r_\theta^* (s_X'(r_\theta q)) 
	= r_\theta^* (\iota_{dr_\theta \widetilde{v}_1,\dots,dr_\theta \widetilde{v}_h}\mu_{r_\theta q} ) \\
	&= \iota_{\widetilde{v}_1,\dots,\widetilde{v}_h} \mu_q 
	= s_X'(q).
	\end{align*}
	Normalizing the volume forms $r_\theta^* s_X'(q)$ and $s_X'(q)$ by the total fiberwise measure $m_X'$ finishes the proof.
\end{proof}

To prove Proposition \ref{prop:hm_inv_sum} we will also need the following result, which is a direct consequence of \cite[Corollary 5.9]{Du15}; we refer the reader to \S 2 for the notation used in this statement.

\begin{proposition}[\cite{Du15}]
	\label{prop:dum_inv}
	For every $X \in \mathcal{T}_g$, the volume forms $(\Re|_{S(X)})^*\overline{\nu}_X$ and $(\Im|_{S(X)})^*\overline{\nu}_X$ on $S(X) \cap \qutp$ are $\text{SO}(2)$-invariant.
\end{proposition}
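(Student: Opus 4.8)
The plan is to deduce Proposition~\ref{prop:dum_inv} from the corresponding statement in Dumas's paper \cite[Corollary 5.9]{Du15} by unwinding the definitions of $\overline{\nu}_X$ and of the Thurston measure $\nu$ in terms of the data Dumas works with. First I would recall the setup from \S2: for fixed $X \in \tt$, the map $\Re|_{S(X)} \colon S(X) \to \mf$ restricts to a piecewise $\mathcal{C}^1$ isomorphism onto its image when restricted to $S(X) \cap \qutp$, sending area to extremal length with respect to $X$; the conned-off measure $\overline{\nu}_X$ on $E_X = \{\eta : \mathrm{Ext}_\eta(X) = 1\}$ is defined by $\overline{\nu}_X(A) = \nu([0,1]\cdot A)$, and $(\Re|_{S(X)})^*\overline{\nu}_X$ is a smooth volume form on $S(X)\cap\qutp$ that gives zero mass to the multiple-zero locus. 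Since $S(X)$ is the unit-area locus and $\Re$ intertwines the $\mathrm{SO}(2)$-action on $S(X)$ with $q \mapsto e^{2\theta i}q$, what needs to be shown is that the pullback volume form is invariant under this action.

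The key step is to identify $(\Re|_{S(X)})^*\overline{\nu}_X$ with the geometric object whose rotational invariance Dumas establishes. Concretely, Dumas studies the map $Q(X) \to \mf$, $q \mapsto \Re(\sqrt{q})$ or equivalently the "horizontal foliation" map, and shows (his Corollary~5.9, building on the fact that extremal length is the square of the Teichmüller-metric Finsler norm and on Gardiner's formula) that the natural volume form on $Q(X)\cap\qt(\mathbf{1})$ obtained by pulling back the Thurston measure is compatible with the complex structure on $Q(X)$ in the relevant sense. I would spell out that the pullback of $\nu$ under $\Re \colon Q(X) \to \mf$ is, up to the $g$-dependent constant from \cite[Lemma 4.3]{Mir08a}, the Masur-Veech (period-coordinate Lebesgue) volume form on $Q(X)\cap\qt(\mathbf{1})$, and that this full-space volume form is manifestly $\mathrm{SO}(2)$-invariant because $\mathrm{SO}(2)$ acts on $Q(X) \cong \CC^{3g-3}$ by the scalar $e^{2\theta i}$, which is volume-preserving on a complex vector space. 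The conning-off operation that produces $\overline{\nu}_X$ from $\nu$ is then just the restriction of this full volume form to the unit-extremal-length sphere via contraction by the radial (Euler) vector field, and since the $\mathrm{SO}(2)$-action commutes with scaling and preserves both the ambient volume form and the extremal-length function $q \mapsto \mathrm{Ext}_{\Re(q)}(X) = \mathrm{Area}(q)$, it preserves the coned-off form $(\Re|_{S(X)})^*\overline{\nu}_X$ as well. The argument for $\Im$ is identical, using that $r_{\pi/2}$ interchanges the roles of $\Re$ and $\Im$ up to sign.

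I expect the main obstacle to be bookkeeping rather than anything deep: precisely matching the normalization conventions between Dumas's statement and the conned-off measure $\overline{\nu}_X$ used here, and checking that the conning-off construction genuinely commutes with the $\mathrm{SO}(2)$-action on the nose (not merely up to a constant) — the point being that $\mathrm{SO}(2)$ acts trivially on the "radial" coordinate $r = \sqrt{\mathrm{Area}}$ used in the cone decomposition $\mu' = r^{h-1}\,dr\wedge\overline{\nu}_X$-type formula, so the invariance of $\mu'$ descends to invariance of $\overline{\nu}_X$. An alternative, perhaps cleaner route would be to cite \cite[Corollary 5.9]{Du15} as directly asserting that the horizontal-foliation map from $Q(X)$ is $\mathrm{SO}(2)$-equivariant in a measure-preserving way and simply note that conning-off a smooth $\mathrm{SO}(2)$-invariant Lebesgue-class measure on $Q(X)\cap\qt(\mathbf{1})$ along the $\mathrm{SO}(2)$-invariant sphere $S(X)\cap\qutp$ produces an $\mathrm{SO}(2)$-invariant volume form; this would make the proof only a few lines. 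Either way, no new analytic input beyond Dumas's work is required.
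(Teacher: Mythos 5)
Your fallback route at the end --- cite \cite[Corollary 5.9]{Du15} directly for the rotational invariance of the pulled-back Thurston measure on the fiber, and note that the conning-off commutes with the $\mathrm{SO}(2)$-action because $\mathrm{Ext}_{\Re(q)}(X)=\mathrm{Area}(q)$ is rotation-invariant --- is exactly what the paper does: Proposition \ref{prop:dum_inv} is stated there as a direct consequence of \cite[Corollary 5.9]{Du15}, with no further argument. That part of your proposal is fine and coincides with the paper's treatment.

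Your primary route, however, has a genuine gap at its key step. \cite[Lemma 4.3]{Mir08a} concerns the pullback of the product measure $\nu\times\nu$ under the full Hubbard--Masur homeomorphism $\qt\to\mf\times\mf-\Delta$ on the $(12g-12)$-dimensional total space; it says nothing about the pullback of a single factor $\nu$ under $\Re$ restricted to one fiber $Q(X)$, and there is no canonical ``period-coordinate Lebesgue'' form on $Q(X)\cap\qt(\mathbf{1})$ to identify it with: $Q(X)$ is a curved, half-dimensional submanifold in period coordinates, with $T_qQ(X)=H^{1,0}_\odd(\widetilde X)$ varying with $q$. Nor is $(\Re|_{Q(X)})^*\nu$ translation-invariant for the vector-space structure of $Q(X)$ (in the elementary one-dimensional model $c\mapsto\sqrt{c}$ the density is proportional to $|c|^{-1}$), so $\mathrm{SO}(2)$-invariance is not ``manifest'' from the fact that $r_\theta$ acts by the unit scalar $e^{2i\theta}$: the whole content of \cite[Corollary 5.9]{Du15} is precisely that the density of this pullback is rotation-invariant. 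Note also that on the unit-area sphere the ratio of $(\Re|_{S(X)})^*\overline{\nu}_X$ to the fiberwise Masur--Veech measure $s_X$ is by definition the Hubbard--Masur function $\lambda^-$ of (\ref{eq:hm_3}); identifying the pullback with a fiberwise Masur--Veech/Lebesgue form would force $\lambda^-$ to be constant along fibers, which is far stronger than anything the cited lemma gives, and its $\mathrm{SO}(2)$-invariance (Proposition \ref{prop:hm_inv_sum}) is exactly what the paper deduces \emph{from} Proposition \ref{prop:dum_inv} together with Lemma \ref{lem:rot_inv} --- so assuming it here is circular. If you want an actual proof rather than a citation, the pointwise description in \S 6 reduces it to linear algebra: $d\Re$ at $r_\theta q$ differs from $d\Re$ at $q$ by the operator $\cos\theta-\sin\theta\,\star$ on $H^1_\odd(\widetilde X;\mathbf{R})$, and one must check that this operator preserves the cup-pairing volume form (it does, being an isometry for the associated complex structure); that determinant computation, or Dumas's theorem, is the missing ingredient --- it is not bookkeeping.
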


Proposition \ref{prop:hm_inv_sum} follows directly from Lemma \ref{lem:rot_inv}, Proposition \ref{prop:dum_inv}, and definitions (\ref{eq:hm_3}) and (\ref{eq:hm_4}) of the Hubbard-Masur functions $\lambda^-,\lambda^+ \colon \qut(\mathbf{1}) \to \mathbf{R}_{>0}$. 

\subsection*{The Hubbard-Masur function.} We now prove Proposition \ref{prop:one_hm_function_sum}, i.e., that the Hubbard-Masur functions $\lambda^-,\lambda^+ \colon \mathcal{Q}^1\mathcal{T}_g(\mathbf{1}) \to \mathbf{R}_{>0}$ coincide everywhere. We will denote by $\lambda \colon \qut(\mathbf{1}) \to \mathbf{R}_{>0}$ the common value of these functions and refer to it as the Hubbard-Masur function.

\begin{proof}[Proof of Proposition \ref{prop:one_hm_function_sum}]
	Fix $q \in \mathcal{Q}^1\mathcal{T}_g(\mathbf{1})$ and let $X :=\pi(q) \in \tt$. By Lemma \ref{lem:rot_inv}, the volume form $s_X$ on $S(X)$ is $\mathrm{SO}(2)$-invariant. By Proposition \ref{prop:hm_inv_sum}, the function $\lambda^+ \colon \mathcal{Q}^1\mathcal{T}_g(\mathbf{1}) \to \mathbf{R}_{>0}$ is $\mathrm{SO}(2)$-invariant. Consider multiplication by $e^{i\pi}$ as an automorphism of $\qut(\mathbf{1})$. Using the fact that $\Re = \Im \circ e^{i\pi}$ on $\qut(\mathbf{1})$ and definition (\ref{eq:hm_4}) of $\lambda^+$ we deduce
	\begin{align*}
	|((\Re|_{S(X)})^*\overline{\nu}_X)(q)| &= |((e^{i\pi})^*(\Im|_{S(X)})^*\overline{\nu}_X)(q) |
	= |(e^{i\pi})^*((\Im|_{S(X)})^*\overline{\nu}_X)(e^{i\pi}q))|\\
	&= |(e^{i\pi})^*(\lambda^+(e^{i\pi}q) \cdot |s_X(e^{i\pi}q)|)|
	= \lambda^+(e^{i\pi}q) \cdot |(e^{i\pi})^*( |s_X(e^{i\pi}q)|)|\\
	&= \lambda^+(q) \cdot |s_X(q)|.
	\end{align*}
	At the same time, definition (\ref{eq:hm_3}) of $\lambda^-$ ensures
	\[
	|((\Re|_{S(X)})^*\overline{\nu}_X)(q)| = \lambda^-(q) \cdot |s_X(q)|.
	\]
	It follows that $\lambda^+(q) = \lambda^{-}(q)$.
\end{proof}


\subsection*{Bounding the Hubbard-Masur function.} We now prove Proposition \ref{prop:HM_bound_sum}. Let $q \in \qutp$ and denote by $\widetilde{X}$ the canonical double cover of $X:= \pi(q) \in \tt$ induced by $q$. Recall that we can identify $T_q \qt(\mathbf{1})$ with $H^1_\text{odd}(\widetilde{X};\mathbf{C})$. Endow  $T_q \qt(\mathbf{1}) =  H^1_\text{odd}(\widetilde{X};\mathbf{C})$ with the Hodge inner product  and $T_X\tt$ with the Teichmüller norm. Denote by $\| d\pi_q \|$ the norm of the linear operator $d\pi_q \colon T_q \qutp \to T_X \tt$. Recall that $\ell_{\min}(q) > 0$ denotes the length of the shortest saddle connection of $q$. The following estimate due to Kahn and Wright \cite{KW20} will play a crucial role in the proof of Proposition \ref{prop:HM_bound_sum}.

\begin{proposition}[\cite{KW20}]
	\label{prop:KW_new}
	For every $q \in \qutp$,
	\[
	\| d\pi_q \| \preceq_g \ell_{\min}(q)^{-1}.
	\]
\end{proposition}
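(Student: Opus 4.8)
The plan is to prove Proposition \ref{prop:KW_new}, the bound $\| d\pi_q \| \preceq_g \ell_{\min}(q)^{-1}$ for $q \in \qutp$, where the source space $T_q\qutp = H^1_\odd(\widetilde X;\CC)$ carries the Hodge inner product and the target $T_X\tt$ carries the Teichmüller (= extremal length) norm. Since this is attributed to forthcoming work of Kahn and Wright \cite{KW20}, the realistic role of this section is to \emph{extract} the statement from their analysis and record a self-contained argument where possible; I will sketch the argument I would write.

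\emph{The plan} is to establish this bound --- which is part of the forthcoming work of Kahn and Wright \cite{KW20} --- by passing to the co-differential. Recall that $T_X^*\tt$ is canonically $Q(X)$, the co-norm dual to the Teichmüller norm being the $L^1$-norm $\|\phi\|_{L^1}=\int_X|\phi|$, and that period coordinates identify $T_q^*\qutp$ (up to the area-normalization hyperplane) with $H^1_\odd(\widetilde{X};\CC)$ equipped with the Hodge inner product. Under these identifications one has, for a universal constant depending only on $g$,
\[
\| d\pi_q^*(\phi) \|_{\mathrm{Hodge}} \asymp_g \| \widetilde{\phi}/\omega \|_{\mathrm{Hodge}} \qquad \text{for all } \phi \in Q(X),
\]
where $\omega$ is the abelian differential with $\omega^2=q$ on $\widetilde{X}$ and $\widetilde{\phi}$ is the pullback of $\phi$; this is the standard description of the cotangent space of a stratum together with the formula for $d\pi$. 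A direct computation in flat coordinates gives $\|\widetilde{\phi}/\omega\|_{\mathrm{Hodge}}^2 = 2\int_X |\phi|^2/|q|$, and since $\|d\pi_q\| = \|d\pi_q^*\|$ the proposition becomes the flat-geometric inequality
\[
\int_X \frac{|\phi|^2}{|q|} \preceq_g \ell_{\min}(q)^{-2}\left(\int_X |\phi|\right)^2 \qquad \text{for all } \phi\in Q(X).
\]

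\emph{For the thick part} of $X$ --- say the set of points at flat distance $\geq \epsilon\,\ell_{\min}(q)$ from the zeros of $q$, for a small $\epsilon=\epsilon(g)$ --- every point carries an embedded Euclidean disk of radius $\asymp_g \ell_{\min}(q)$ in flat coordinates, and on it $|\phi/q|^2$ is subharmonic (the simple poles of $\phi/q$ sit at the zeros of $q$, outside the thick part, and in any case are harmless after the flat-area weighting). The sub-mean-value inequality then gives $\sup_{\mathrm{thick}}|\phi/q|\preceq_g\ell_{\min}(q)^{-1}\big(\int_X|\phi|^2/|q|\big)^{1/2}$, and combining this with $\int_{\mathrm{thick}}|\phi|^2/|q|\le\big(\sup_{\mathrm{thick}}|\phi/q|\big)\int_X|\phi|$ and absorbing yields the claimed bound restricted to the thick part. (Near the zeros of $q$ it is cleanest to work on $\widetilde{X}$, where $\omega$ has even order zeros and $\widetilde{\phi}/\omega$ is an honest holomorphic $1$-form.)

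\emph{The hard part} --- and the reason we invoke \cite{KW20} rather than reproving the estimate --- is the thin part: the neighborhoods of the short saddle connections and the thin flat annuli, where the flat metric degenerates. Here one must show that a holomorphic quadratic differential cannot concentrate too strongly near a short saddle connection, the admissible concentration being governed precisely by the flat width $\asymp\ell_{\min}(q)$; this is what produces the exponent $\ell_{\min}(q)^{-2}$, and the bound is sharp. Unlike the thick-part estimate, this uses the global rigidity of $\phi$ (holomorphicity across the thin part), not merely local subharmonicity, and more precisely a quantitative comparison of the conformal and flat structures near the boundary of the principal stratum --- which is exactly the content of Kahn and Wright's analysis of the projection $\pi\colon\qut\to\tt$. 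I expect this thin-part estimate to be the main obstacle; the thick-part argument above is routine.
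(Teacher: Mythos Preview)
The paper does not prove this proposition at all: it is stated with attribution to the forthcoming work \cite{KW20} and used as a black box. There is therefore no ``paper's proof'' to compare against; your sketch is already more than the paper provides.

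As for the sketch itself, the dualization strategy and the reduction to the flat-geometric inequality
\[
\int_X \frac{|\phi|^2}{|q|} \preceq_g \ell_{\min}(q)^{-2}\Big(\int_X |\phi|\Big)^2
\]
are reasonable, and your thick-part argument via the sub-mean-value inequality is standard. But you correctly identify that the thin-part estimate is the entire difficulty, and you do not supply it; you defer it back to \cite{KW20}. So what you have written is not a proof but an outline of where the content lies, which is honest and appropriate given that the result is cited rather than proved in the paper. One small caution: the identification $\|d\pi_q^*(\phi)\|_{\mathrm{Hodge}} \asymp_g \|\widetilde{\phi}/\omega\|_{\mathrm{Hodge}}$ requires care, since $T_q\qutp$ is the tangent to the \emph{unit area} locus and the co-differential of $\pi$ involves the Beltrami pairing; the formula is correct in spirit but the constants and the precise normalization deserve a line of justification if you intend this as more than a heuristic.
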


Let $V$ be an $n$-dimensional inner product space. Denote by $\langle \cdot, \cdot \rangle$ the inner product of $V$ and by $\|\cdot \|$ the corresponding norm. On the $1$-dimensional vector space $V^{\wedge n}$ consider the inner product $\langle \cdot , \cdot \rangle$ specified by
\[
\langle v_1 \wedge \dots \wedge v_n, u_1 \wedge \dots \wedge u_n \rangle := \det\left(\langle v_i,u_j\rangle_{i,j=1}^n\right).
\]
Let $\| \cdot \|$ be the induced norm on $V^{\wedge n}$. The proof of Proposition \ref{prop:HM_bound_sum} will use the following fact.

\begin{lemma}
	\label{lem:determinant_new}
	Let $V$ be an $n$-dimensional inner product space. Then, for every $v_1,\dots,v_n \in V$,
	\[
	\| v_1 \wedge \dots \wedge v_n \| \preceq_n \prod_{i=1}^n \| v_i \|.
	\]
\end{lemma}

We are now ready to prove Proposition \ref{prop:HM_bound_sum}; recall that $h := 6g-6$.

\begin{proof}[Proof of Proposition \ref{prop:HM_bound_sum}]
	Fix $\mathcal{K} \subseteq \mathcal{T}_g$ compact and $q \in \qutp \cap \pi^{-1}(\mathcal{K})$. Denote by $\widetilde{X}$ the canonical double cover of $X := \pi(q)$ induced by $q$ and by $\omega$ the canonical square root of the pullback of $q$ to $\widetilde{X}$. Let $u_1 := [\Im(\omega)] \in E^{uu}(q)$. Denote by $\text{span}(\Re(\omega), \Im(\omega))^\perp \subseteq H^1_\text{odd}(\widetilde{X};\mathbf{R})$ the orthogonal complement of $\text{span}(\Re(\omega), \Im(\omega))$ in $H^1_\text{odd}(\widetilde{X};\mathbf{R})$ with respect to the Hodge inner product. Equivalently, one can consider the symplectic complement with respect to the cup intersection pairing. It follows that $\text{span}(\Re(\omega), \Im(\omega))^\perp \subseteq E^{uu}(q)$. Let $u_2,\dots,u_{h-1}$ be an orthonormal basis of $\text{span}(\Re(\omega), \Im(\omega))^{\perp}$ with respect to the Hodge inner product. Recall definition (\ref{eq:hm_2}) of the Hubbard-Masur function:
	\begin{equation}
	\label{eq:piece0}
	|\mathbf{m}_{X}| = \lambda(q) \cdot |(d\pi_q)_*(\mu_{E^{u}(q)})|.
	\end{equation}
	
	We evaluate both sides of (\ref{eq:piece0}) on $d\pi_q u_1 \wedge \dots \wedge d\pi_q u_{h-1} \wedge d\pi_q [\overline{\omega}]$. We first evaluate the right hand side. Recall that $\mu_{E^{u}(q)} = \mu_{E^{uu}(q)} \wedge dt$ when parametrizing $E^{u}(q) = E^{uu}(q) \oplus \mathbf{R} \cdot [\overline{\omega}]$. It follows that
	\begin{equation}
	\label{eq:step1}
	|(d\pi_q)_*(\mu_{E^{u}(q)})|(d\pi_q u_1 \wedge \dots \wedge d\pi_q u_{h-1} \wedge d\pi_q [\overline{\omega}]) = |\mu_{E^{uu}(q)}|(u_1 \wedge \dots \wedge u_{h-1}).
	\end{equation}
	Denote by $\mu_{H_\odd^1(\widetilde{X};\mathbf{R})}$ the volume form induced by the cup intersection pairing on $H_\odd^1(\widetilde{X};\mathbf{R})$. By definition,
	\begin{equation}
	\label{eq:step2}
	|\mu_{E^{uu}(q)}|(u_1 \wedge \dots \wedge u_{h-1}) = |\mu_{H_\odd^1(\widetilde{X};\mathbf{R})}|(u_1 \wedge \dots \wedge u_{h-1} \wedge [\Re(\omega)]).
	\end{equation}
	Recall that the volume form $\mu_{H_\odd^1(\widetilde{X};\mathbf{R})}$ coincides with the volume form induces by the Hodge inner product on $H^1_\text{odd}(\widetilde{X};\mathbf{R})$. As $u_1,\dots,u_{h-1}, [\Re(\omega)]$ is an orthonormal basis of $H^1_\text{odd}(\widetilde{X};\mathbf{R})$,
	\begin{equation}
	\label{eq:step3}
	|\mu_{H_\odd^1(\widetilde{X};\mathbf{R})}|(u_1 \wedge \dots \wedge u_{h-1} \wedge [\Re(\omega)]) = 1.
	\end{equation}
	Putting together (\ref{eq:step1}), (\ref{eq:step2}), and (\ref{eq:step3}) we deduce
	\begin{equation}
	\label{eq:piece1}
	|(d\pi_q)_*(\mu_{E^{u}(q)})|(d\pi_q u_1 \wedge \dots \wedge d\pi_q u_{h-1} \wedge d\pi_q [\overline{\omega}]) = 1.
	\end{equation}
	
	We now evaluate the left hand side of (\ref{eq:piece0}) on $d\pi_q u_1 \wedge \dots \wedge d\pi_q u_{h-1} \wedge d\pi_q [\overline{\omega}]$. Fix a complete Riemannian metric $\langle \cdot , \cdot \rangle$ on $\mathcal{T}_g$. Denote its induced norm by $\| \cdot \|$ and its induced volume form by $\text{vol}$. Consider the operator $\mathrm{vol}_X \colon (T_X\mathcal{T}_g)^{\wedge h} \to \mathbf{R}$. Denote its norm by $\| \mathrm{vol}_X \|$. As $\mathcal{K} \subseteq \tt$ is compact,
	\begin{align}
	|\mathbf{m}_X|(d\pi_q u_1 \wedge \dots \wedge d\pi_q u_{h-1} \wedge d\pi_q [\overline{\omega}])
	&\preceq_\mathcal{K}|\mathrm{vol_X}|(d\pi_q u_1 \wedge \dots \wedge d\pi_q u_{h-1} \wedge d\pi_q [\overline{\omega}]) \label{eq:step4}\\
	&\preceq_\mathcal{K} \| \mathrm{vol}_X \| \cdot \| d\pi_q u_1 \wedge \dots \wedge d\pi_q u_{h-1} \wedge d\pi_q [\overline{\omega}] \| \nonumber\\
	&\preceq_\mathcal{K} \| d\pi_q u_1 \wedge \dots \wedge d\pi_q u_{h-1} \wedge d\pi_q [\overline{\omega}] \|. \nonumber
	\end{align}
	By Lemma \ref{lem:determinant_new},
	\begin{equation}
	\| d\pi_q u_1 \wedge \dots \wedge d\pi_q u_{h-1} \wedge d\pi_q [\overline{\omega}] \| \preceq_g \left(\prod_{i=1}^{h-1} \| d\pi_q u_i \|\right) \cdot \| d\pi_q [\overline{\omega}] \|. \label{eq:step5}
	\end{equation}
	Recall that  $\| \cdot \|_\mathcal{T}$ denotes the Teichmüller norm on $\mathcal{T}_g$. As $\mathcal{K} \subseteq \mathcal{T}_g$ is compact, 
	\begin{equation}
	\left(\prod_{i=1}^{h-1} \| d\pi_q u_i \|\right) \cdot \| d\pi_q [\overline{\omega}] \| \preceq_{\mathcal{K}} \left(\prod_{i=1}^{h-1} \| d\pi_q u_i \|_\mathcal{T} \right) \cdot \| d\pi_q [\overline{\omega}] \|_\mathcal{T} = \prod_{i=1}^{h-1} \| d\pi_q u_i \|_\mathcal{T}. \label{eq:step6}
	\end{equation}
	Denote by $\| \cdot \|_H$ the norm induced by the Hodge inner product on $H^1_\text{odd}(\widetilde{X};\mathbf{R})$. Proposition \ref{prop:KW_new} implies
	\begin{equation}
	\prod_{i=1}^{h-1} \| d\pi_q u_i \|_\mathcal{T}  \preceq_\mathcal{K} \ell_{\min}(q)^{-(h-1)} \cdot \left(\prod_{i=1}^{h-1} \|u_i\|_H \right) . \label{eq:step7}
	\end{equation}
	As $u_1,\dots,u_{h-1}$ is an orthonormal basis of $E^{uu}(q)$,
	\begin{equation}
	\prod_{i=1}^{h-1} \|u_i\|_H = 1. \label{eq:step8}
	\end{equation}
	Putting together (\ref{eq:step4}), (\ref{eq:step5}), (\ref{eq:step6}), (\ref{eq:step7}), and (\ref{eq:step8}) we deduce
	\begin{equation}
	|\mathbf{m}_X|(d\pi_q u_1 \wedge \dots \wedge d\pi_q u_{h-1} \wedge d\pi_q [\overline{\omega}]) \preceq_{\mathcal{K}} \ell_{\min}(q)^{-(h-1)}. \label{eq:piece2}
	\end{equation}
	
	From (\ref{eq:piece0}), (\ref{eq:piece1}), and (\ref{eq:piece2}) we conclude
	\[
	\lambda(q) \preceq_\mathcal{K}  \ell_{\min}(q)^{-(h-1)}. \qedhere
	\]
\end{proof}

\subsection*{Train tracks coordinates of $\boldsymbol{\mf}$.} The rest of this section is devoted to the proof of Proposition \ref{prop:small_HM_integral_sum}. Using a convenient geometric construction, we will reduce this proof to an estimate in train track coordinates. We now introduce the basics of the theory of train tracks on $S_g$ and of the coordinates they induce on $\mf$. For more details we refer the reader to \cite{PH92}.

A train track $\tau$ on $S_g$ is an embedded $1$-complex satisfying the following conditions:
\begin{enumerate}
	\item Each edge of $\tau$ is a smooth path with a well defined tangent vector at each endpoint. All edges at a given vertex are tangent.
	\item For each component $R$ of $S_g \backslash \tau$, the double of $R$ along the smooth part of $\partial R$ has negative Euler characteristic.
\end{enumerate}
The vertices of $\tau$ where three or more edges meet are called switches. By considering the inward pointing tangent vectors of the edges incident to a vertex, one can divide these edges into incoming and outgoing edges. A train track $\tau$ on $S_g$ is said to be maximal if all the components of $S_g \backslash \tau$ are trigons, i.e., the interior of a disc with three non-smooth points on the boundary. 

A singular measured foliation $\eta \in \mf$ is said to be carried by a train track $\tau$ on $S_g$ if it can be obtained by collapsing the complementary regions in $S_g$ of a measured foliation of a tubular neighborhood of $\tau$ whose leaves run parallel to the edges of $\tau$. In this situation, the invariant transverse measure of $\eta$ corresponds to a counting measure $v$ on the edges of $\tau$ satisfying the switch conditions: at every switch of $\tau$ the sum of the measures of the incoming edges equals the sum of the measures of the outgoing edges. Every $\eta \in \mf$ is carried by some maximal train track $\tau$ on $S_g$.

Given a maximal train track $\tau$ on $S_g$, denote by $V(\tau) \subseteq (\mathbf{R}_{\geq0})^{18g-18}$ the $6g-6$ dimensional closed cone of non-negative counting measures on the edges of $\tau$ satisfying the switch conditions. The set $V(\tau)$ can be identified with the closed cone $U(\tau) \subseteq \mf$ of singular measured foliations carried by $\tau$. These identifications give rise to charts on $\mf$ called train track charts. The transition maps between these charts are piecewise integral linear. This allows one to endow $\mf$ with a $6g-6$ dimensional piecewise integral linear structure. The Thurston measure is the unique, up to scaling, piecewise integral linear measure on $\mf$. Recall that $\nu$ denotes the normalization of the Thurston measure induced by the symplectic structure described in \cite[\S 3.2]{PH92}. The Thurston measure has the following scaling property: for every measurable subset $A \subseteq \mf$ and every $t > 0$,
$
\nu(t \cdot A) = t^{6g-6} \cdot \nu(A).
$

\subsection*{Delaunay triangulations of quadratic differentials.} Delaunay triangulations provide a canonically defined family of triangulations with nice geometric properties. Our proof of Proposition \ref{prop:small_HM_integral_sum} will make use of these triangulations in a crucial way. We now summarize the properties that will be relevant for our purposes. We refer the reader to \cite[\S 4]{MS91} for more details.

Every quadratic differential $q$ gives rise to finitely many Delaunay triangulations on its underlying Riemann surface $X$, the edges of which are saddle connections of $q$. The number of possible Delaunay triangulations that $q$ can induce on $X$ is bounded uniformly in terms of the genus of $X$. By a marked triangulation on $S_g$ we mean an isotopy class of triangulations on such surface. Given a marked quadratic differential $q \in \qt$, pulling back any of the Delaunay triangulations induced by $q$ via the marking yields a marked triangulation on $S_g$. These marked triangulations are invariant under complex scaling of quadratic differentials. They induce period coordinates on the stratum of $\qt$ that $q$ belongs to. 

The following lemma highlights two important geometric properties of these triangulations. Recall that, given $q \in \qut$, $\ell_\gamma(q) > 0$ denotes the length of a saddle connection $\gamma$ of $q$ and $\mathrm{diam}(q) > 0$ denotes the diameter of $q$. The proofs of these facts can be found in \cite[\S 4]{MS91} and \cite[Lemma 3.11]{ABEM12}. 

\begin{lemma}
	\label{lem:delaunay}
	Let $q \in \qut$ and $\Delta$ be a Delaunay triangulation of $q$. Then, the following properties hold:
	\begin{enumerate}
		\item If $\gamma$ is a saddle connection of $q$ which belongs to $\Delta$, then $\ell_{\gamma}(q) \preceq_g \mathrm{diam}(q)$.
		\item If $\gamma$ is a saddle connection of $q$ such that $\ell_{\gamma}(q) \leq \sqrt{2} \cdot \ell_{\min}(q)$, then $\gamma$ belongs to $\Delta$.
	\end{enumerate}
\end{lemma}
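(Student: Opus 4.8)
Let $q \in \qut$ and $\Delta$ be a Delaunay triangulation of $q$. Then (1) every edge $\gamma$ of $\Delta$ satisfies $\ell_\gamma(q) \preceq_g \mathrm{diam}(q)$, and (2) every saddle connection $\gamma$ with $\ell_\gamma(q) \leq \sqrt 2 \cdot \ell_{\min}(q)$ belongs to $\Delta$.

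Let me think about how I would prove this.

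The plan is to prove both statements by recalling the construction of Delaunay triangulations via the Voronoi diagram of the set of singularities of $q$, and then running two short Euclidean‑geometry arguments carried out inside isometrically immersed round disks; the underlying machinery is that of \cite[\S 4]{MS91}.

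\emph{Setup.} Let $\Sigma$ denote the set of singularities of $q$ on its underlying Riemann surface, and note $\Sigma \neq \emptyset$ since $g \geq 2$. Write $d_q$ for the distance function of the singular flat metric of $q$. Consider the Voronoi diagram of $\Sigma$ with respect to $d_q$. To each Voronoi vertex $v$ associate the radius $r(v) := d_q(v,\Sigma)$; by construction at least three singularities lie at $d_q$‑distance exactly $r(v)$ from $v$, and the round disk $D_v$ of radius $r(v)$ centered at $v$, isometrically immersed in the surface, has no singularity in its interior (any point of the open immersed disk lies at $d_q$‑distance $< r(v)$ from $v$, hence cannot be a singularity). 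The Delaunay polygon dual to $v$ is the convex hull, inside $D_v$, of the singularities lying on $\partial D_v$, and every Delaunay triangulation $\Delta$ of $q$ is obtained by triangulating these polygons. Consequently each edge $\gamma$ of $\Delta$ joins two singularities that lie together on the boundary circle of a common immersed disk $D_v$, and $\gamma$ is exactly the chord of $D_v$ between them — its relative interior lies in the interior of $D_v$, hence contains no singularity, so $\gamma$ is indeed a saddle connection.

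\emph{Proof of (1).} Let $\gamma$ be an edge of $\Delta$ with endpoints on $\partial D_v$ as above. Being a chord of the round disk $D_v$ of radius $r(v)$, we get $\ell_\gamma(q) \leq 2\,r(v)$. Since $\Sigma \neq \emptyset$, every point of the surface lies at $d_q$‑distance at most $\mathrm{diam}(q)$ from $\Sigma$; in particular $r(v) = d_q(v,\Sigma) \leq \mathrm{diam}(q)$. Hence $\ell_\gamma(q) \leq 2\,\mathrm{diam}(q) \preceq_g \mathrm{diam}(q)$.

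\emph{Proof of (2).} Recall that a saddle connection is an edge of some Delaunay triangulation of $q$ precisely when the closed round disk having it as a diameter, isometrically immersed, contains no singularity in its interior, and that when the interior is singularity‑free it is an edge of \emph{every} Delaunay triangulation of $q$ (\cite[\S 4]{MS91}). Let $\gamma$ be a saddle connection with endpoints $p_1,p_2 \in \Sigma$ and $\ell_\gamma(q) \leq \sqrt{2}\,\ell_{\min}(q)$, and suppose toward a contradiction that a singularity $p$ lies in the interior of the immersed disk $D$ with diameter $\gamma$. Lifting to $D$, the three Euclidean segments $p_1p_2$ (whose length is $\ell_\gamma(q)$), $pp_1$, and $pp_2$ form a Euclidean triangle, and since $p$ lies strictly inside the Thales circle on the diameter $p_1p_2$ its angle at $p$ exceeds $\pi/2$. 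Each of $|pp_1|$, $|pp_2|$ is at least $d_q(p,p_1)$, $d_q(p,p_2)$ respectively, and since $p \notin \{p_1,p_2\}$ the shortest geodesics from $p$ to $p_1$ and to $p_2$ are saddle connections, so $d_q(p,p_i) \geq \ell_{\min}(q)$. The Euclidean law of cosines with an obtuse angle at $p$ then gives $\ell_\gamma(q)^2 > |pp_1|^2 + |pp_2|^2 \geq 2\,\ell_{\min}(q)^2$, contradicting the hypothesis. Therefore $D$ has singularity‑free interior, so $\gamma$ is an edge of every Delaunay triangulation of $q$, in particular of $\Delta$. The one delicate point throughout is that these round disks need not be embedded, so one must argue consistently with isometric immersions of Euclidean disks — justifying that the chord‑length and inscribed‑angle facts transfer verbatim from the plane and that straight chords descend to honest saddle connections; this is exactly what is set up in \cite[\S 4]{MS91}, and with it in hand the two arguments above are immediate.
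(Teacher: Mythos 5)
The paper does not actually prove this lemma: it is quoted with a pointer to \cite[\S 4]{MS91} and \cite[Lemma 3.11]{ABEM12}, and your Voronoi/empty-disk argument is precisely the standard proof found in those references, so in substance you are reconstructing the cited proof rather than taking a new route. Part (1) is correct as written: every edge of a Delaunay triangulation is a chord of an immersed circumscribing disk of radius $r(v)=d_q(v,\Sigma)\leq \mathrm{diam}(q)$ (using $\Sigma\neq\emptyset$ since $4g-4>0$), giving the bound $2\,\mathrm{diam}(q)$.

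In part (2) there is one step that fails as literally written, and it is exactly at the non-embeddedness issue you flag but then wave off. You bound $|pp_i|\geq d_q(p,p_i)\geq \ell_{\min}(q)$ ``since $p\notin\{p_1,p_2\}$''; but $p$, $p_1$, $p_2$ are points of the lifted disk, and since the disk is only immersed, the interior point $p$ may map to the \emph{same} singularity of the surface as $p_1$ or $p_2$, in which case $d_q(p,p_i)=0$ and your chain of inequalities collapses (you would only get $\ell_\gamma>\ell_{\min}$, no contradiction). The fix stays inside your framework: bound $|pp_i|$ directly rather than through $d_q$ --- the straight segment from $p$ to $p_i$ in the disk maps to a geodesic segment of the flat metric joining singular points, hence a concatenation of saddle connections, so its Euclidean length is $\geq\ell_{\min}(q)$ regardless of whether the images coincide. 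Two smaller remarks: your ``precisely when'' is an overstatement (an edge can be Delaunay without its diametral disk being empty; only the sufficient direction holds, and that is all you use), and the passage from ``empty open diametral disk'' to ``edge of \emph{every} Delaunay triangulation'' is delicate in degenerate cocircular configurations, which can occur exactly at the equality $\ell_\gamma=\sqrt{2}\,\ell_{\min}$; this boundary-case convention is inherited from the cited sources and is not something your argument resolves.
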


\subsection*{Integrating the Hubbard-Masur function near the multiple zero locus.} We now prove Proposition \ref{prop:small_HM_integral_sum}. We will need the following  fact about Delaunay triangulations.

\begin{lemma}
	\label{lem:finite_delaunay}
	For every compact subset $\mathcal{K} \subseteq \mathcal{T}_g$, the Delaunay triangulations of marked quadratic differentials in $\pi^{-1}(\mathcal{K})$ induce finitely many marked triangulations on $S_g$.
\end{lemma}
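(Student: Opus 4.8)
The plan is to exploit the two geometric properties of Delaunay triangulations recorded in Lemma \ref{lem:delaunay} together with the compactness of $\mathcal{K}$, reducing the statement to a uniform bound on the lengths of saddle connections that can appear as edges. First I would observe that since $\mathcal{K} \subseteq \mathcal{T}_g$ is compact, its image $\underline{p}(\mathcal{K}) \subseteq \mathcal{M}_g$ is compact, and therefore $p^{-1}(\underline{p}(\mathcal{K})) \cap \mathcal{Q}^1\mathcal{M}_g$ — the set of unit-area quadratic differentials lying over $\mathcal{K}$, viewed in the moduli space — is a compact subset of $\mathcal{Q}^1\mathcal{M}_g$. By Masur's compactness criterion this forces a uniform lower bound $\ell_{\min}(q) \geq \varepsilon_0 = \varepsilon_0(\mathcal{K}) > 0$ and a uniform upper bound $\mathrm{diam}(q) \leq D_0 = D_0(\mathcal{K})$ for all $q \in \pi^{-1}(\mathcal{K})$ (diameter and shortest saddle connection length are $\mathrm{Mod}_g$-invariant, so these bounds descend to and lift from moduli space).

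Next, given $q \in \pi^{-1}(\mathcal{K})$ and any Delaunay triangulation $\Delta$ of $q$, part (1) of Lemma \ref{lem:delaunay} tells us every edge $\gamma$ of $\Delta$ satisfies $\ell_\gamma(q) \preceq_g \mathrm{diam}(q) \leq D_0$, while $\ell_\gamma(q) \geq \ell_{\min}(q) \geq \varepsilon_0$. So every edge of every Delaunay triangulation arising from $\pi^{-1}(\mathcal{K})$ is a saddle connection whose $q$-length lies in a fixed interval $[\varepsilon_0, C_g D_0]$. The key point is then a standard counting estimate: on a unit-area half-translation structure with $\ell_{\min}(q) \geq \varepsilon_0$, the number of saddle connections of length at most $L$ is bounded by a constant depending only on $g$, $\varepsilon_0$, and $L$ (this follows from the fact that disjoint embedded metric balls of radius $\varepsilon_0/2$ around the midpoints of such saddle connections, or more simply a Margulis-type packing argument, have total area at most $1$; alternatively one invokes the quadratic upper bound for saddle connection counts). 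Hence there is a number $N = N(\mathcal{K})$ such that any $q \in \pi^{-1}(\mathcal{K})$ has at most $N$ saddle connections that could possibly serve as a Delaunay edge, and any Delaunay triangulation uses exactly $h = 6g-6$ of them.

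Finally I would convert this count of saddle connections into a count of \emph{marked} triangulations. A marked triangulation of $S_g$ is determined by the isotopy classes of its edges; since the triangulation has a fixed number of edges ($6g-6$, by an Euler characteristic count) and each edge, pulled back via the marking, is a simple closed arc of bounded $q$-length, a compactness/finiteness argument shows the marking identifies these with finitely many isotopy classes of arcs on $S_g$ — more precisely, two marked quadratic differentials in $\pi^{-1}(\mathcal{K})$ that are close in $\mathcal{Q}^1\mathcal{T}_g$ have Delaunay edges that are isotopic, so the map $q \mapsto \{\text{marked Delaunay triangulations of } q\}$ is locally constant; covering the compact set $\pi^{-1}(\mathcal{K}) \cap \mathcal{Q}^1\mathcal{T}_g$ by finitely many such neighborhoods gives the result. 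I expect the main technical obstacle to be this last step: carefully justifying that the assignment of marked triangulations is locally constant on $\pi^{-1}(\mathcal{K})$ (equivalently, that edges vary continuously and a continuously varying family of saddle connections of bounded length stays in a fixed isotopy class), and making sure the count is genuinely uniform over all of $\mathcal{K}$ rather than just locally — but the compactness of $\pi^{-1}(\mathcal{K}) \cap \mathcal{Q}^1\mathcal{T}_g$ (which follows from proper discontinuity of the $\mathrm{Mod}_g$ action together with compactness of the quotient slice) handles this.
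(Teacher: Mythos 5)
Your proposal has two genuine problems, one factual and one structural. The factual one: the claimed uniform lower bound $\ell_{\min}(q) \geq \varepsilon_0(\mathcal{K}) > 0$ on $\pi^{-1}(\mathcal{K})$ is false, and Masur's criterion does not give it. Masur's criterion says that the sets $\{ \ell_{\min} \geq \delta\}$ are compact; it does not say that compact sets have $\ell_{\min}$ bounded below, and indeed $\ell_{\min}$ is not lower semicontinuous across strata: already over a single $X \in \mathcal{K}$, the fiber $S(X)$ contains principal-stratum differentials $q_n$ converging to a differential with a double zero, and the saddle connection joining the two colliding simple zeros forces $\ell_{\min}(q_n) \to 0$ while the limit has $\ell_{\min} > 0$. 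So compactness of $\pi^{-1}(\mathcal{K})$ yields the diameter bound (hence, via Lemma \ref{lem:delaunay}(1), the uniform upper bound on edge lengths) but no lower bound, and your packing argument bounding the number of candidate Delaunay edges collapses. (Minor point: a triangulation with at most $4g-4$ vertices has up to $18g-18$ edges by the Euler characteristic count, not $6g-6$.)

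The structural problem is the final step, which you yourself flag: the assignment $q \mapsto \{\text{marked Delaunay triangulations of } q\}$ is not locally constant. Delaunay triangulations change by edge flips under arbitrarily small perturbations, and near the multiple zero locus nearby principal-stratum differentials acquire new short saddle connections which, by Lemma \ref{lem:delaunay}(2), must be Delaunay edges yet do not converge to saddle connections of the limit; controlling their isotopy classes (they can a priori wind as one circles the multiple zero locus) is essentially the whole content of the lemma, so leaving it as "a compactness/finiteness argument shows" is a gap rather than a detail. The paper's proof avoids per-surface saddle-connection counts entirely: since Delaunay triangulations have at most $4g-4$ vertices there are only finitely many combinatorial types; since their edges have uniformly bounded flat length over $\mcg \cdot \pi^{-1}(\mathcal{K})$, the set of marked differentials realizing a fixed marked triangulation $\Delta$ as a Delaunay triangulation has compact closure; and proper discontinuity of the $\mcg$-action on $\mathcal{T}_g$ then shows that only finitely many marked triangulations of each combinatorial type can arise from $\pi^{-1}(\mathcal{K})$. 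If you want to salvage your covering-style argument, you would need to prove a genuine local finiteness statement (including at the multiple zero locus) together with a bounded-winding statement for bounded-length arcs, which in effect reproduces the proper-discontinuity argument; it is cleaner to run that argument directly.
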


\begin{proof}
	Fix a compact subset $\mathcal{K} \subseteq \tt$. Delaunay triangulations of marked quadratic differentials on $S_g$ have at most $4g-4$ vertices. In particular, these triangulations give rise to finitely many combinatorial types of triangulations on $S_g$. By Lemma \ref{lem:delaunay}, the edges of the Delaunay triangulations of marked quadratic differentials in $\mcg \cdot \pi^{-1}(\mathcal{K})$ have uniformly bounded length. In particular, given a marked triangulation $\Delta$ on $S_g$, the subset of marked quadratic differentials in $\mcg \cdot \pi^{-1}(\mathcal{K})$ having $\Delta$ as a Delaunay triangulation has compact closure. It follows that, as the action of $\mcg$ on $\mathcal{T}_g$ is properly discontinuous, the number of marked triangulations on $S_g$ of a given combinatorial type that can arise as a Delaunay triangulation of a marked quadratic differentials in $\pi^{-1}(\mathcal{K})$ is finite. This finishes the proof.
\end{proof}

The following result combines Lemma \ref{lem:finite_delaunay} with a construction of train tracks dual to triangulations of quadratic differentials \cite[\S4.4]{Mir08a}. We will take advantage of this result and of the explicit nature of the construction used in its proof several times throughout this paper. 

\begin{lemma}
	\label{lem:finite_tt}
	For every $\mathcal{K} \subseteq \mathcal{T}_g$ compact, there exists a finite collection of maximal train tracks $\{\tau_i\}_{i=1}^n$ on $S_g$ with the following property: For every $q \in \pi^{-1}(\mathcal{K}) \cap \qutp$, the singular measured foliation $\Re(q) \in \mf$ is carried by some $\tau_i$, and, for every edge $e$ of $\tau_i$, the corresponding counting measure $v(e)$ is equal to the absolute value of the real part of the holonomy of a saddle connection of a Delaunay triangulation of $q$.
\end{lemma}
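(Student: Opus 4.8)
The plan is to feed the finiteness statement of Lemma~\ref{lem:finite_delaunay} into the construction of a train track dual to a triangulation of a quadratic differential carried out by Mirzakhani in~\cite[\S4.4]{Mir08a}.

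First, by Lemma~\ref{lem:finite_delaunay} the Delaunay triangulations of marked quadratic differentials in $\pi^{-1}(\mathcal{K})$ realize only finitely many marked triangulations $\Delta_1,\dots,\Delta_m$ on $S_g$. Since every $q \in \qutp$ has exactly $4g-4$ simple zeroes, an Euler characteristic count shows each $\Delta_j$ has $4g-4$ vertices, $18g-18$ edges, and $12g-12$ triangular faces, the number of edges matching the number of coordinates of a train track chart of $\mf$.

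Second, I would fix $\Delta_j$ and a quadratic differential $q \in \pi^{-1}(\mathcal{K}) \cap \qutp$ realizing $\Delta_j$ as a Delaunay triangulation. In each triangle $T$ of $\Delta_j$, cyclically orienting the three edges and using that $T$ closes up gives holonomies $z_1,z_2,z_3 \in \mathbf{C}$ with $z_1+z_2+z_3=0$, hence $\Re(z_1)+\Re(z_2)+\Re(z_3)=0$; since each edge is a straight Euclidean saddle connection, the transverse measure $\Re(q)$ assigns to it equals $|\Re(z_i)|$, so among $|\Re(z_1)|,|\Re(z_2)|,|\Re(z_3)|$ one equals the sum of the other two. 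Recording in each triangle a choice of such a dominant edge (breaking ties by any fixed convention) assigns to $q$ a combinatorial \emph{dominance pattern} $\sigma$ in the finite set $\Sigma_j$ of all such assignments. Applying the construction of~\cite[\S4.4]{Mir08a} to the pair $(\Delta_j,\sigma)$ produces a maximal train track $\tau_{j,\sigma}$ on $S_g$, with one trivalent switch inside each triangle of $\Delta_j$ set up according to $\sigma$ and one branch crossing each edge of $\Delta_j$, with the property that any measured foliation assigning to the edges of $\Delta_j$ a non-negative transverse measure compatible with $\sigma$ --- equivalently, satisfying the switch conditions of $\tau_{j,\sigma}$ --- is carried by $\tau_{j,\sigma}$, the counting measure on a branch being exactly the transverse measure assigned to the dual edge. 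In particular $\Re(q)$ is carried by $\tau_{j,\sigma}$, and $v(e)=|\Re(\mathrm{hol}_\gamma(q))|$ for the saddle connection $\gamma$ of $\Delta_j$ dual to the branch $e$.

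Finally, I would take $\{\tau_i\}_{i=1}^n$ to be the finite collection of all $\tau_{j,\sigma}$ with $1\le j\le m$ and $\sigma\in\Sigma_j$: given any $q \in \pi^{-1}(\mathcal{K})\cap\qutp$, choosing a Delaunay triangulation of $q$ identifies its marked isotopy class with some $\Delta_j$ and its dominance pattern with some $\sigma\in\Sigma_j$, whence $\Re(q)$ is carried by $\tau_i:=\tau_{j,\sigma}$ with the asserted identification of counting measures. The part requiring care --- and the part genuinely supplied by~\cite[\S4.4]{Mir08a} rather than by this bookkeeping --- is checking that each $\tau_{j,\sigma}$ is actually a maximal train track (its complementary regions are trigons), that ``compatible with $\sigma$'' really is the system of switch conditions, and that the carrying statement together with the branch--weight identification holds uniformly, including the degenerate cases where some horizontal holonomies vanish or two of the $|\Re(z_i)|$ in a triangle coincide.
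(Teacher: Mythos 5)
Your proposal is correct and follows essentially the same route as the paper: finiteness of marked Delaunay triangulations over $\pi^{-1}(\mathcal{K})$ (Lemma \ref{lem:finite_delaunay}) combined with the dual train track construction of \cite[\S4.4]{Mir08a}, where your ``dominance pattern'' is exactly the paper's choice of labelling $a,b,c$ with $|\Re(a)| = |\Re(b)| + |\Re(c)|$ and deletion of the branch opposite $a$, and finiteness follows from finitely many triangulations times finitely many such patterns. The only difference is presentational: the paper spells out the dual $1$-complex construction explicitly (with the same caveat about vertical edges), while you defer those verifications to the cited reference.
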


\begin{proof}
	Fix $q \in \qut(\mathbf{1})$ and let $\Delta'$ be a Delaunay triangulation of $q$. Denote by $\Delta$ the corresponding marked triangulation on $S_g$.  We first describe a procedure for constructing a train track $\tau$ on $S_g$ dual to $\Delta$ and which carries $\Re(q) \in \mf$. Let $T'$ be a triangle of $\Delta'$. Represent $T'$ as a triangle in the complex plane carrying the diferential $dz^2$. Label the edges of $T'$ by $a,b,c$ so that
	\[
	|\Re(a)| = |\Re(b)| + |\Re(c)|.
	\]
	If one of the edges of $T'$ is vertical, more than one such labelling exists. On $T'$ consider the dual $1$-complex described in Figure \ref{fig:dual_complex}. Delete the edge opposite to $a$ as in Figure \ref{fig:dual_tt}. Consider the corresponding $1$-complexes on all the triangle of $\Delta'$. Joining these complexes along the edges of $\Delta'$ gives rise to a maximal train track $\tau'$ on the underlying Riemann surface. The corresponding maximal train track $\tau$ on $S_g$ carries $\Re(q) \in \mf$. For every edge $e'$ of $\Delta'$, the counting measure of the coresponding edge of $\tau$ is equal to $|\Re(e')|$, the absolute value of the real part of the holonomy of $e'$ with respect to $q$.
	
	\begin{figure}[h]
		\centering
		\begin{subfigure}[b]{0.4\textwidth}
			\centering
			\includegraphics[width=0.6\textwidth]{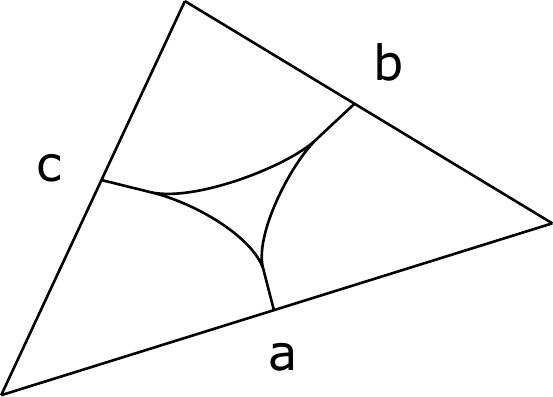}
			\caption{The dual $1$-complex.}
			\label{fig:dual_complex}
		\end{subfigure}
		\quad \quad \quad
		~ 
		\begin{subfigure}[b]{0.4\textwidth}
			\centering
			\includegraphics[width=0.6\textwidth]{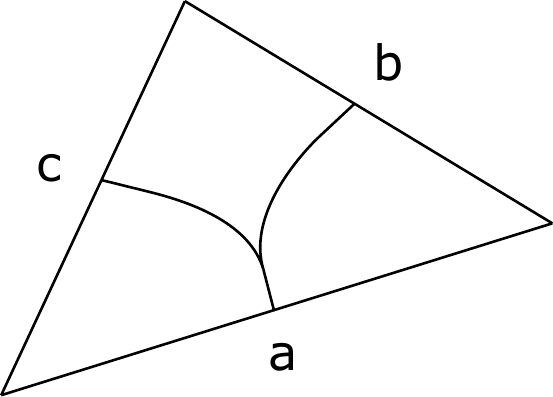}
			\caption{Deleting an edge.}
			\label{fig:dual_tt}
		\end{subfigure}
		\caption{The train track dual to a triangle of a Delaunay triangulation.} \label{fig:dual}
	\end{figure}
	
	Now fix a compact subset $\mathcal{K} \subseteq \tt$. By Lemma \ref{lem:finite_delaunay}, the Delaunay triangulations of marked quadratic differentials in $\pi^{-1}(\mathcal{K})$ induce finitely many marked triangulations on $S_g$. The procedure described above produces finitely many train tracks on $S_g$ for each one of these marked triangulations.
\end{proof}

We are now ready to prove Proposition \ref{prop:small_HM_integral_sum}.

\begin{proof}[Proof of Proposition \ref{prop:small_HM_integral_sum}]
	Fix a compact subset $\mathcal{K} \subseteq \mathcal{T}_g$, $X \in \mathcal{K}$, and $\delta > 0$. Recall that the Thurston measure $\nu$ on $\mf$ gives zero mass to the subset of singular measured foliations having a singularity with more than three prongs. Notice that, for every $q \in \qut \backslash \qut(\mathbf{1})$, $\Re(q) \in \mf$ belongs to this subset. This together with definition (\ref{eq:hm_3}) of the Hubbard-Masur function $\lambda$ implies
	\begin{equation}
	\label{eq:st1}
	\int_{S(X) \backslash p^{-1}(K_\delta(\mathbf{1}))} \lambda(q) \ ds_X(q) \leq  \nu(\{\eta \in \mf \ | \ \mathrm{Ext}_\eta(X) \leq 1, \ \ell_{\min}(\Re|_{Q(X)}^{-1}(\eta)) < \delta\}).
	\end{equation}
	
	Consider the finite collection of maximal train tracks $\{\tau_i\}_{i=1}^n$ on $S_g$ provided by Lemma \ref{lem:finite_tt}. Fix $i \in \{1,\dots,n\}$. Recall that train track coordinates identify the $6g-6$ dimensional closed cone $V(\tau_i) \subseteq (\mathbf{R}_{\geq0})^{18g-18}$ of non-negative counting measures on the edges of $\tau_i$ satisfying the switch conditions with the closed subset $U(\tau_i) \subseteq \mf$ of singular measured foliations carried by $\tau_i$. Denote by $\text{Leb}_{\tau_i}$ the Lebesgue measure on $V(\tau_i)$ and by $\| \cdot\|$ the Euclidean norm on $\mathbf{R}^{18g-18}$. As the function $\mathrm{Ext} \colon \mf \times \tt \to \mathbf{R}_{>0}$ given by $\mathrm{Ext}(\eta,X) := \mathrm{Ext}_\eta(X)$ is positive and continuous, and as the subset $U(\tau_i) \subseteq \mf$ is projectively compact, $\| \eta \| \preceq_{\tau_i} \mathrm{Ext}_\eta(X)$ for every $\eta \in U(\tau_i) =  V(\tau_i)$. Using this fact together with Lemmas \ref{lem:delaunay} and \ref{lem:finite_tt}, we deduce that, for some constant $C = C(\mathcal{K})> 0$ depending only on $\mathcal{K}$,
	\begin{gather}
	\nu(\{\eta \in \mf \ | \ \mathrm{Ext}_\eta(X) \leq 1, \ \ell_{\min}(\Re|_{Q(X)}^{-1}(\eta)) < \delta\}) \label{eq:st2}\\
	\leq \sum_{i=1}^n \sum_{j=1}^{18g-18} \mathrm{Leb}_{\tau_i}(\{v \in V(\tau_i) \ | \ \| v \| \leq C, \ v_j \leq \delta\}). \nonumber
	\end{gather}
	An explicit computation shows that, for every $i \in \{1,\dots,n\}$ and every $j \in \{1,\dots,18g-18\}$,
	\begin{equation}
	\mathrm{Leb}_{\tau_i}(\{v \in V(\tau_i) \ | \ \| v \| \leq C, \ v_j \leq \delta\}) \preceq_{\mathcal{K}} \delta. \label{eq:st3}
	\end{equation}

	Putting together (\ref{eq:st1}), (\ref{eq:st2}), and (\ref{eq:st3}) finishes the proof.
\end{proof}

\section{The Masur-Veech measure in polar coordinates}

\subsection*{Outline of this section.} The goal of this section is to prove Theorem \ref{theo:polar_coordinates_estimate}. The bounds of Forni on the spectral gap of the Kontsevich-Zorich cocycle on $\qutp$ \cite{F02} and the bound of Kahn and Wright \cite{KW20} introduced in Proposition \ref{prop:KW_new} will play a crucial role in the proof.

\subsection*{The spectral gap of the Kontsevich-Zorich cocycle on $\boldsymbol{\qutp}$.} In \cite{F02}, Forni proved variational formulas for the Hodge norm along the Kontsevich-Zorich cocycle on strata of Abelian differentials. By considering canonical double covers, these formulas can be used to prove bounds on the spectral gap of the Kontsevich-Zorich cocycle on $\qutp$. We now give a brief account of these bounds.

Let $q \in \qut(\mathbf{1})$. Recall that $\pi \colon \qut \to \tt$ denotes the standard projection. Denote by $\widetilde{X}$ the canonical double cover of $X := \pi(q)$ induced by $q$ and by $\omega$ the canonical square root of the pullback of $q$ to $\widetilde{X}$. Every marked quadratic differential $a_t q \in \qut(\mathbf{1})$ along the Teichmüller geodesic flow orbit of $q$ induces a norm $\|\cdot\|_{a_t q}$ on $H^1_\odd(\widetilde{X};\mathbf{R})$ by restricting the Hodge inner product coming from the canonical double cover of $a_tq$. Denote by $\text{span}(\Re(\omega), \Im(\omega))^\perp \subseteq H^1_\text{odd}(\widetilde{X};\mathbf{R})$ the symplectic complement of $\text{span}(\Re(\omega), \Im(\omega))$ in $H^1_\text{odd}(\widetilde{X};\mathbf{R})$ with respect to cup intersection form. The following result was proved by Forni in \cite{F02}; a more explicit statement can be found in \cite[Lemma 4.2]{AF08}.

\begin{theorem}[\cite{F02}]
	\label{theo:spectral_gap}
	There exists a continuous function $E \colon \qutp \to [0,1)$ with the following property. Let $q \in \qut(\mathbf{1})$. Denote by $\widetilde{X}$ the canonical double cover of $X := \pi(q)$ induced by $q$ and by $\omega$ the canonical square root of the pullback of $q$ to $\widetilde{X}$. Let $\beta \in \text{span}(\Re(\omega),\Im(\omega))^\perp \subseteq H^1_{\text{odd}}(\widetilde{X};\mathbf{R})$. Then,
	\[
	\bigg\vert \frac{d}{dt}\bigg\vert_{t =0}  \left(\log(\| \beta \|_{a_tq})\right) \bigg\vert \leq E(q).
	\]
\end{theorem}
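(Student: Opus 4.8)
The plan is to deduce the statement from Forni's variational formula for the Hodge norm along the Teichm\"uller geodesic flow; the result is essentially a reformulation of \cite{F02}, made explicit in \cite[Lemma 4.2]{AF08}. The first step is to pass to the canonical double cover. The orbit $t \mapsto a_t q$ of $q \in \qutp$ lifts to a family of Abelian differentials $\omega_t$ on the (topologically fixed) surface $\widetilde{X}$, each satisfying $\sigma^* \omega_t = -\omega_t$, and by definition the norm $\| \cdot \|_{a_t q}$ on $H^1_\odd(\widetilde{X};\RR)$ is the restriction to this odd subspace of the Hodge inner product on $H^1(\widetilde{X};\RR)$ determined by the complex structure underlying $\omega_t$. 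Since $\sigma^* \omega = -\omega$, the tautological plane $\mathrm{span}(\Re(\omega),\Im(\omega))$ is contained in $H^1_\odd(\widetilde{X};\RR)$, and so is its Hodge-orthogonal complement; the theorem thus reduces to a uniform upper bound, over $q \in \qutp$, for the logarithmic derivative of the Hodge norm of nonzero classes in that complement.

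The second step is to invoke Forni's computation of this derivative. Given a nonzero $\beta \in H^1(\widetilde{X};\RR)$, let $\beta^+ \in H^{1,0}(\widetilde{X})$ be the holomorphic part of its harmonic representative with respect to the conformal structure underlying $\omega = \omega_0$, and set $m := \beta^+ / \omega$, a meromorphic function on $\widetilde{X}$ whose poles lie at the zeros of $\omega$. Writing $d\mathrm{Area}_\omega$ for the flat area measure of $(\widetilde{X},\omega)$, Forni's variational formula \cite{F02} gives
\[
\frac{d}{dt}\bigg\vert_{t=0} \log\big( \| \beta \|_{a_t q} \big) = -\,\Re\!\left( \frac{\int_{\widetilde{X}} m^2 \, d\mathrm{Area}_\omega}{\int_{\widetilde{X}} |m|^2 \, d\mathrm{Area}_\omega} \right);
\]
one checks directly that this reproduces the values $\mp 1$ when $\beta \in \mathrm{span}(\Re(\omega),\Im(\omega))$, for which $m$ is constant. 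Hence the absolute value of the left-hand side is at most
\[
\Lambda_\omega(\beta) := \frac{\big| \int_{\widetilde{X}} m^2 \, d\mathrm{Area}_\omega \big|}{\int_{\widetilde{X}} |m|^2 \, d\mathrm{Area}_\omega} \,\leq\, 1,
\]
the last inequality being Cauchy--Schwarz applied to $m$ and $\overline{m}$ in $L^2(d\mathrm{Area}_\omega)$. I would then set
\[
E(q) := \sup\Big\{ \Lambda_\omega(\beta) \ : \ \beta \in \mathrm{span}(\Re(\omega),\Im(\omega))^\perp \subseteq H^1_\odd(\widetilde{X};\RR), \ \beta \neq 0 \Big\},
\]
which immediately yields both the stated inequality and its lower counterpart, hence the bound on the absolute value.

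It remains to see that $E(q) < 1$ and that $q \mapsto E(q)$ is continuous. For strictness: the supremum defining $E(q)$ ranges over the unit sphere of a finite-dimensional space, so it is attained at some $\beta_0 \neq 0$; if $E(q) = 1$ then equality in Cauchy--Schwarz forces the corresponding meromorphic function $m_0$ to have constant argument, hence to be constant, while the condition $\beta_0 \perp \mathrm{span}(\Re(\omega),\Im(\omega))$ is precisely $\int_{\widetilde{X}} m_0 \, d\mathrm{Area}_\omega = 0$. A nonzero constant has nonzero average, so $m_0 \equiv 0$, i.e.\ $\beta_0 = 0$, a contradiction; thus $E(q) < 1$. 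For continuity: over $\qutp$ the double cover $\widetilde{X}$, the Abelian differential $\omega$, the Hodge filtration of $H^1_\odd(\widetilde{X};\CC)$, and consequently the projection to holomorphic parts and the Hodge-orthogonal complement of the tautological plane all vary continuously (in fact real-analytically); therefore $(q,\beta) \mapsto \Lambda_\omega(\beta)$ is continuous on the unit sphere bundle of that complement, whose fibers are compact, and a standard maximum argument gives continuity of $E$.

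The genuinely substantial input is Forni's variational formula together with the pointwise estimate $\Lambda_\omega \leq 1$, which I would simply quote from \cite{F02}; given it, the remaining work --- the reduction via canonical double covers, the Cauchy--Schwarz equality analysis yielding strictness, and the continuity argument --- is routine. The one point deserving a little care is that all the Hodge-theoretic data above genuinely depend continuously on $q$ over the non-compact stratum $\qutp$, but this is standard for families of compact Riemann surfaces.
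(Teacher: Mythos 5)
The paper does not actually prove this statement---it is quoted as known, attributed to Forni \cite{F02} with the explicit formulation in \cite[Lemma 4.2]{AF08}---and your proposal is a correct reconstruction of precisely that argument: Forni's variational formula for the Hodge norm on the canonical double cover, the Cauchy--Schwarz bound $\Lambda_\omega(\beta)\leq 1$ with the equality analysis (constant argument forces $m$ constant, and orthogonality to the tautological plane, i.e.\ $\int m\, d\mathrm{Area}_\omega = 0$, then forces $m\equiv 0$), and continuity of the resulting supremum over the compact unit-sphere fibers. So your approach coincides with that of the cited source and is sound; nothing needs to be added beyond the citation the paper already makes.
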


Recall that, for $q \in \qum$, $\ell_{\min}(q) > 0$ denotes the length of the shortest saddle connections of $q$. Recall that, for every $\epsilon > 0$ and every $t > 0$,
\begin{gather*}
K_\epsilon := \{ q \in \qum \ | \ \ell_{\min}(q) \geq \epsilon\},\\
K_\epsilon(t) := \{q \in \qum  \colon |\{s \in [0,t] \ | \ a_s q \in K_\epsilon \}| \geq (1/2) \thinspace t \}.
\end{gather*}
Recall that $p \colon \qut \to \qum$ denotes the quotient map. The following result is a direct consequence of Theorem \ref{theo:spectral_gap} and the fact that the subsets $K_\epsilon \subseteq \qum$ are compact. 

\newconc{c:good_traj}
\begin{corollary}
	\label{cor:good_trajectory}
	Let $q \in \qut(\mathbf{1})$. Denote by $\widetilde{X}$ the canonical double cover of $X := \pi(q)$ induced by $q$ and by $\omega$ the canonical square root of the pullback of $q$ to $\widetilde{X}$. Suppose $\epsilon > 0$ and $t>0$ are such that $q \in p^{-1}(K_\epsilon(t))$. Then, for every $\beta \in \text{span}(\Re(\omega),\Im(\omega))^\perp \subseteq H^1_{\text{odd}}(\widetilde{X};\mathbf{R})$,
	\[
	e^{-(1 - \useconc{c:good_traj})t} \cdot \| \beta \|_{q} \leq \| \beta \|_{a_t q} \leq e^{(1 - \useconc{c:good_traj})t} \cdot \| \beta \|_{q},
	\]
	where $\useconc{c:good_traj} = \useconc{c:good_traj}(g,\epsilon) > 0$ is a constant depending only on $g$ and $\epsilon$.
\end{corollary}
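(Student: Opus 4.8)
The plan is to derive Corollary~\ref{cor:good_trajectory} from Theorem~\ref{theo:spectral_gap} by integrating the differential inequality along the Teichmüller geodesic, with a boost coming from the time spent in the compact part $K_\epsilon$. First I would fix $q \in \qut(\mathbf{1})$ with $q \in p^{-1}(K_\epsilon(t))$, fix $\beta \in \mathrm{span}(\Re(\omega),\Im(\omega))^\perp$, and observe that along the geodesic flow the class $\beta$ stays in the corresponding symplectic complement for $a_sq$ (this is preserved by the Teichmüller flow, since $\{a_s\}$ fixes the $\mathrm{SL}(2,\RR)$-orbit directions $\Re(\omega), \Im(\omega)$ up to the linear action, and hence preserves their symplectic complement). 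Thus the function $f(s) := \log \|\beta\|_{a_sq}$ is differentiable, and applying Theorem~\ref{theo:spectral_gap} at the point $a_sq$ gives $|f'(s)| \le E(a_sq)$ for every $s \in [0,t]$.

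The key point is that $E$ is continuous on $\qutp$ with values in $[0,1)$, and the sets $K_\epsilon \subseteq \qum$ are compact (Masur's compactness criterion). Hence $E$ attains a maximum value $1 - \useconc{c:good_traj}' < 1$ on the compact set $p^{-1}(K_\epsilon) \cap \qutp$, where $\useconc{c:good_traj}' = \useconc{c:good_traj}'(g,\epsilon) > 0$; I should note here that $E$ descends to $\qum$ since it is $\mcg$-invariant (the Hodge norms are), so evaluating on $K_\epsilon$ makes sense. On the complement, we only have the trivial bound $E < 1$. Now I integrate:
\[
|f(t) - f(0)| \le \int_0^t E(a_sq)\, ds = \int_{\{s \in [0,t] : a_sq \in K_\epsilon\}} E(a_sq)\, ds + \int_{\{s \in [0,t] : a_sq \notin K_\epsilon\}} E(a_sq)\, ds.
\]
On the first set, of measure $\ge t/2$ by the definition of $K_\epsilon(t)$, the integrand is $\le 1 - \useconc{c:good_traj}'$; on the second set, of measure $\le t/2$, the integrand is $< 1$. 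This yields $|f(t) - f(0)| \le (1 - \useconc{c:good_traj}'/2)\, t$, so setting $\useconc{c:good_traj} := \useconc{c:good_traj}'/2 > 0$ gives $|\log\|\beta\|_{a_tq} - \log\|\beta\|_q| \le (1 - \useconc{c:good_traj})t$, which exponentiates to the claimed two-sided bound.

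The only genuine subtlety — and the step I would be most careful about — is making sure the geometry is set up correctly: that Theorem~\ref{theo:spectral_gap} can legitimately be applied \emph{at every point $a_sq$ of the orbit} to the same cohomology class $\beta$, which requires the identification of $H^1_\odd(\widetilde{X};\RR)$ along the orbit via the Gauss–Manin connection and the observation that $\mathrm{span}(\Re(\omega),\Im(\omega))^\perp$ is flow-invariant. The integral estimate itself and the compactness argument extracting the uniform constant $\useconc{c:good_traj}'$ are routine. One should also record that the resulting constant depends only on $g$ and $\epsilon$, which is immediate since both $\max_{p^{-1}(K_\epsilon) \cap \qutp} E$ and the factor $1/2$ depend only on these.
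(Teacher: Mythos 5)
Your proof is correct and is essentially the paper's intended argument: the paper gives no more than ``direct consequence of Theorem \ref{theo:spectral_gap} and compactness of $K_\epsilon$,'' and your integration of Forni's pointwise derivative bound along the orbit, split according to the time spent in $K_\epsilon$ (at least $t/2$, where $E \leq 1-\kappa'$ by compactness and continuity) versus outside (where $E < 1$), together with the flow-invariance of $\mathrm{span}(\Re(\omega),\Im(\omega))^\perp$ under the Gauss--Manin identification, is exactly that argument. The only nitpick is that the compact set on which the maximum of $E$ is taken should be $K_\epsilon(\mathbf{1}) = K_\epsilon \cap \qum(\mathbf{1})$ rather than $p^{-1}(K_\epsilon)\cap\qutp$ (which is not compact), but your observation that $E$ is $\mcg$-invariant and that the orbit of $q \in \qut(\mathbf{1})$ stays in the principal stratum already resolves this.
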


\begin{remark}
	The spectral gap estimate in Theorem \ref{theo:spectral_gap} does not give information about what happens as we consider quadratic differentials in $\qutp$ approaching the multiple zero locus. A recent result of Frankel \cite[Theorem 1.2]{F20} can be used to quantify this. More precisely, using Frankel's result, one can show that the constant $\useconc{c:good_traj} = \useconc{c:good_traj}(g,\epsilon) > 0$ in Corollary \ref{cor:good_trajectory} can be assumed to satisfy $\useconc{c:good_traj} \succeq_g \ell_{\min}(q)^2$.
\end{remark}

\subsection*{The pullback of the Thurston measure to $\boldsymbol{S(X)}$.} The proof of  Theorem \ref{theo:polar_coordinates_estimate} will use an alternative description of the volume forms $(\Re|_{S(X)})^*\overline{\nu}_X$ and $(\Im|_{S(X)})^*\overline{\nu}_X$ on $S(X) \cap \qutp$ introduced in \S 2. Fix $X \in \tt$. Let $q \in \pi^{-1}(X) \cap \qutp$. Denote by $\widetilde{X}$ the canonical double cover of $X$ induced by $q$. Recall that period coordinates identify $T_q Q(X)$ with $H^{1,0}_\odd(\widetilde{X};\mathbf{C}) \subseteq H^{1}_\odd(\widetilde{X};\mathbf{C})$. Consider the isomorphism $\Re \colon H^{1,0}_\odd(\widetilde{X};\mathbf{C}) \to H^{1}_\odd(\widetilde{X};\mathbf{R})$. Pull back the volume form induced by the cup intersection pairing on  $H^{1}_\odd(\widetilde{X};\mathbf{R})$ to $H^{1,0}_\odd(\widetilde{X};\mathbf{C})$. Contract the restriction of this volume form to $T_qS(X)$ by any vector $V \in T_qQ(X)$ satisfying $d\mathrm{Area}_q(V) = 1$. For instance, take $V := [\omega] \in H^{1,0}_\odd(\widetilde{X};\mathbf{C})$. These volume forms integrate to the smooth volume form $(\Re|_{S(X)})^*\overline{\nu}_X$ on $S(X) \cap \qutp$ \cite[Proof of Theorem 5.8]{Du15}. An analogous description can be provided for the volume form $(\Im|_{S(X)})^*\overline{\nu}_X$ on $S(X) \cap \qutp$.

\subsection*{The Masur-Veech measure in polar coordinates} We are now ready to prove Theorem \ref{theo:polar_coordinates_estimate}. Recall that $h :=6g-6$. Recall that, for every $X \in \mathcal{T}_g$, $\Phi_X \colon S(X) \times \mathbf{R}_{>0} \to \mathcal{T}_g$ denotes the map $\Phi_X(q,t) := \pi(a_tq)$ and that, for every $q \in S(X) \cap \qutp$ and every $t > 0$,
\begin{equation}
\label{eq:polar2}
|\Phi_X^*(\mathbf{m})(q,t)| = \Delta(q,t) \cdot |s_X(q) \wedge dt|,
\end{equation}
where $\Delta \colon (S(X) \cap \qutp) \times \mathbf{R}_{>0} \to \mathbf{R}_{>0}$ is the smooth, positive function we want to bound.

\begin{proof}[Proof of Theorem \ref{theo:polar_coordinates_estimate}]
	Fix $\mathcal{K} \subseteq \mathcal{T}_g$ compact. Let $q \in \mathcal{Q}^1 \mathcal{T}_g(\mathbf{1})$, $t > 0$, and $\epsilon > 0$ be such that $q, a_t q \in \pi^{-1}(\mathcal{K})$ and $q \in p^{-1}(K_\epsilon(t))$. Denote by $\widetilde{X}$ the canonical double cover of $X := \pi(q)$ induced by $q$ and by $\omega$ the canonical square root of the pullback of $q$ to $\widetilde{X}$. Let $A_t \colon H^1_\odd(\widetilde{X};\mathbf{R}) \to H_\odd^1(\widetilde{X};\mathbf{R})$ be the identity map. Endow the source of this map with the Hodge inner product $\langle \cdot, \cdot  \rangle_q$ induced by $q$ and the target with the Hodge inner product $\langle \cdot, \cdot  \rangle_{a_t q}$ induced by $a_tq$. This map corresponds precisely to the Kontsevich-Zorich cocycle on $\qutp$. To prove the desired estimate for $\Delta(q,t)$ we evaluate both sides of (\ref{eq:polar2}) on a convenient basis of $T_{(q,t)}(S(X) \times \mathbf{R}_{>0})$ constructed using a singular value decomposition of $A_t$.
	
	We begin by constructing a convenient basis of $T_{(q,t)}(S(X) \times \mathbf{R}_{>0})$. Recall that $\text{span}(\Re(\omega),\Im(\omega))^\perp \subseteq H^1_{\text{odd}}(\widetilde{X};\mathbf{R})$ is not only a symplectic complement but also an orthogonal complement with respect to either of the Hodge inner products $\langle \cdot, \cdot \rangle_{q}$ or $\langle \cdot, \cdot \rangle_{a_tq}$. Consider the restriction of $A_t$ to $\text{span}(\Re(\omega),\Im(\omega))^\perp$. Denote by $\lambda_2 \geq \dots \geq \lambda_{h-1}> 0$ the singular values of this restriction. As $q \in p^{-1}(K_\epsilon(\mathbf{1}))$, Corollary \ref{cor:good_trajectory} ensures 
	\begin{equation}
	\label{eq:lyapunov}
	e^{-(1 - \useconc{c:good_traj})t} \leq \lambda_{h-1} \leq \lambda_2 \leq e^{(1 - \useconc{c:good_traj})t}.
	\end{equation}
	Consider an orthonormal basis $u_2,\dots,u_{h-1}$ of $\text{span}(\Re(\omega),\allowbreak\Im(\omega))^\perp \subseteq H^1_\odd(\widetilde{X};\mathbf{R})$ with respect to $\langle \cdot, \cdot \rangle_q$ such that $A_t u_2,\dots, A_t u_{h-1}$ are orthogonal vectors with respect to $\langle \cdot, \cdot \rangle_{a_tq}$ and satisfy $\| A_t u_j \|_{a_tq} = \lambda_j$. In other words, consider a singular value decomposition of the restriction of $A_t$ to $\text{span}(\Re(\omega),\allowbreak\Im(\omega))^\perp$. Notice that $\|[\Im(\omega)]\|_q = 1$,  $\|[\Re(\omega)]\|_q = 1$, $\| A_t [\Im(\omega)]\|_{a_tq} = e^t$, and $\| A_t [\Re(\omega)] \|_{a_t q} = e^{-t}$. Let $\lambda_1 := e_1$ and $\lambda_h := e^{-t}$. Then, $\lambda_1 \geq \dots \geq \lambda_h$ are the singular values of $A_t$ and $[\Im(\omega)],u_2,\dots,u_{h-1}, [\Re(\omega)] \in H^1_\odd(\widetilde{X};\mathbf{R})$ is an orthonormal basis with respect to $\langle \cdot, \cdot \rangle_q$ of singular vectors of $A_t$.
	
	Recall that the volume forms on $H^1_\odd(\widetilde{X};\mathbf{R})$ induced by the cup intersection pairing and the Hodge inner products $\langle \cdot, \cdot \rangle_{q}$ and $\langle \cdot, \cdot \rangle_{a_tq}$ coincide. Clearly $A_t$ preserves the cup intersection pairing. It follows that
	\begin{equation}
		\label{eq:det}
		\prod_{j=1}^h \lambda_j = 1.
	\end{equation}
	
	We now use the singular value decomposition of $A_t$ introduced above to construct a convenient basis of $T_qS(X)$. Recall that period coordinates identify $T_q Q(X)$ with $H^{1,0}_\odd(\widetilde{X};\mathbf{C}) \subseteq H^{1}_\odd(\widetilde{X};\mathbf{C})$. A vector $v \in T_q Q(X)$ belongs to $T_qS(X)$ if and only if $d\mathrm{Area}_q(v) = 0$. Let $u_1 := [\Im(\omega)] \in E^u(q)$. Notice that $u_1,\dots,u_{h-1}$ is an orthonormal basis of $E^{uu}(q)$. Recall that $\star$ denotes the Hodge star operator on $H^1(\widetilde{X};\mathbf{C})$. For every $j \in \{1,\dots,h-1\}$, let $s_j := i \thinspace (\star u_j) \in H^1_\text{odd}(\widetilde{X};i\mathbf{R})$. In particular, $s_1 := [i\Re(\omega)]$. As $\star$ and multiplication by $i$ are orthogonal operators with respect to the Hodge inner product $\langle \cdot, \cdot \rangle_q$ on $H^1(\widetilde{X};\mathbf{C})$, $s_1,\dots,s_{h-1}$ is an orthonormal basis of $E^{ss}(q)$. For every $j \in \{1,\dots,h-1\}$, let $e_j := u_j + s_j\in H^{1,0}_\text{odd}(\widetilde{X}) = T_qQ(X)$. A direct computation shows that $d \text{Area}_q(e_j) = 0$. It follows that $e_1,\dots,e_{h-1}$ is a basis of $T_qS(X)$.\\
	
	We evaluate the right hand side of (\ref{eq:polar2}) on the basis $e_1,\dots,e_{h-1},\partial t$ of $T_{(q,t)}(S(X) \times \mathbf{R}_{>0})$. Notice that
	\begin{equation}
	|s_X(q) \wedge dt|(e_1,\dots,e_{h-1},\partial t) = |s_X(q)|(e_1,\dots,e_{h-1}). \label{eq:p1}
	\end{equation}
	By definition (\ref{eq:hm_3}) of the Hubbard-Masur function $\lambda$,
	\begin{equation}
	|s_X(q)|(e_1,\dots,e_{h-1}) = \lambda(q)^{-1} \cdot |((\Re|_{S(X)})^* \overline{\nu}_X)(q)|(e_1 \wedge \dots \wedge e_{h-1}). \label{eq:p2}
	\end{equation}
	As in the proof of Proposition \ref{prop:HM_bound_sum}, denote by $\mu_{H_\odd^1(\widetilde{X};\mathbf{R})}$ the volume form induced by the cup intersection pairing on $H_\odd^1(\widetilde{X};\mathbf{R})$. The alternative definition of the volume form $(\Re|_{S(X)})^*\overline{\nu}_X$ introduced above ensures
	\begin{equation}
	|((\Re|_{S(X)})^* \overline{\nu}_X)(q)|(e_1 \wedge \dots \wedge e_{h-1}) = |\mu_{H^1_\odd(\widetilde{X};\mathbf{R})}|(u_1\wedge \dots \wedge u_{h-1} \wedge [\Re(\omega)]). \label{eq:p3}
	\end{equation}
	As $u_1, \dots u_{h-1},[\Re(\omega)]$ is an orthonormal basis of $H^1_\odd(\widetilde{X};\mathbf{R})$, 
	\begin{equation}
	|\mu_{H^1_\odd(\widetilde{X};\mathbf{R})}|(u_1 \wedge \dots \wedge u_{h-1} \wedge [\Re(\omega)]) = 1. \label{eq:p4}
	\end{equation}
	Putting together (\ref{eq:p1}), (\ref{eq:p2}), (\ref{eq:p3}), and (\ref{eq:p4}) we deduce
	\begin{equation}
	|s_X(q) \wedge dt|(e_1,\dots,e_{h-1},\partial t) = 1. \label{eq:end1}
	\end{equation}
	
	We now evaluate the left hand side of (\ref{eq:polar2}) on the basis $e_1,\dots,e_{h-1},\partial t$ of $T_{(q,t)}(S(X) \times \mathbf{R}_{>0})$. Notice that $\Phi_X =  \pi \circ A_X$, where $A_X \colon S(X) \times \mathbf{R}_{>0} \to \qut$ is the map given by $A_X(q',s) := a_s q'$. Identify the tangent space $T_{a_t q} \qt$ with $H^1_\odd(\widetilde{X};\mathbf{C})$ using the Gauss-Manin connection. Under this identification, $d (A_X)_{(q,t)} e_j = e^t u_j + e^{-t} s_j$ for every $j \in \{1,\dots,h-1\}$ and $d (A_X)_{(q,t)} \partial t = [a_t\overline{\omega}] := e^t [\Re(\omega)] - e^{-t} [\Im(\omega)]$, the tangent direction of the Teichmüller geodesic flow at $a_tq \in \qut$. Consider the decomposition
	\[
	\bigwedge_{j=1}^{h-1} e_j = \sum_{P} \left( \bigwedge_{j \in P} u_j \right) \wedge \left( \bigwedge_{j \notin P} s_j \right),
	\]
	where the sum runs over all subsets $P \subseteq \{1,\dots,h-1\}$. Using this decomposition and the derivatives of $A_X$ computed above we can write
	\begin{gather}
	|\Phi_X^*(\mathbf{m})(q,t)|(e_1\wedge \dots \wedge e_{h-1} \wedge \partial t) \label{eq:p5}\\
	= \sum_{P} |\mathbf{m}_{\pi(a_tq)}| \left(\left( \bigwedge_{j \in P} e^t d\pi_{a_tq} u_j \right) \wedge \left( \bigwedge_{j \notin P} e^{-t} d\pi_{a_tq} s_j \right) \wedge d\pi_{a_tq} [a_t\overline{\omega}]\right). \nonumber
	\end{gather}
	The same estimates used in the proof of Proposition \ref{prop:HM_bound_sum} show that, for every subset $P \subseteq \{1,\dots,h-1\}$,
	\begin{gather}
	|\mathbf{m}_{\pi(a_tq)}| \left(\left( \bigwedge_{j \in P} e^t d\pi_{a_tq} u_j \right) \wedge \left( \bigwedge_{j \notin P} e^{-t} d\pi_{a_tq} s_j \right) \wedge d\pi_{a_tq} [a_t \overline{\omega}]\right)  \preceq_{\mathcal{K}} \ell_{\min}(a_tq)^{-(h-1)} \cdot \left(\prod_{j\in P} e^t \lambda_j\right). \label{eq:p6}
	\end{gather}
	For $P \neq \{1,\dots,h-1\}$, (\ref{eq:lyapunov}) and (\ref{eq:det}) ensure
	\begin{equation}
	\prod_{j\in P} e^t \lambda_j \leq e^{(h- \useconc{c:good_traj})t}. \label{eq:p7}
	\end{equation}
	For $P = \{1,\dots,h-1\}$, definition $(\ref{eq:hm_2})$ of the Hubbard-Masur function $\lambda$ ensures
	\begin{equation}
	 |\mathbf{m}_{\pi(a_tq)}|(e^t d\pi_{a_tq} u_1 \wedge \dots \wedge e^t d\pi_{a_tq}  u_{h-1} \wedge d\pi_{a_t q} [a_t \overline{\omega}]) =  e^{(h-1)t} \cdot\lambda(a_tq)  \cdot |\mu_{\alpha^{u}(a_tq)}|(u_1\wedge\dots\wedge u_{h-1} \wedge  [a_t \overline{\omega}]). \label{eq:p8}
	\end{equation}
	Directly from the definition of the volume form  $\mu_{\alpha^{u}(a_tq)}$ we see that
	\begin{equation}
	|\mu_{\alpha^{u}(a_tq)}|(u_1 \wedge \dots \wedge u_{h-1} \wedge da_t [\overline{\omega}]) = |\mu_{\alpha^{uu}(a_tq)}|(u_1 \wedge \dots \wedge u_{h-1}). \label{eq:p9}
	\end{equation}
	Notice that $d\mathrm{Area}_{a_t q}([e^t \Re(\omega)]) = 1$. It follows from the definition of the volume form $\mu_{\alpha^{u}(a_tq)}$ that
	\begin{equation}
	 |\mu_{\alpha^{u}(a_tq)}|(u_1 \wedge \dots \wedge u_{h-1}) =   |\mu_{H^1_\odd(\widetilde{X};\mathbf{R})}|(u_1 \wedge \dots \wedge u_{h-1} \wedge [e^{t} \Re(\omega)]). \label{eq:p10}
	\end{equation}
	By (\ref{eq:p4}),
	\begin{equation}
	|\mu_{H^1_\odd(\widetilde{X};\mathbf{R})}|(u_1 \wedge \dots \wedge u_{h-1} \wedge [e^{t} \Re(\omega)])
	= e^t. \label{eq:p11}
	\end{equation}
	Putting together (\ref{eq:p5}), (\ref{eq:p6}), (\ref{eq:p7}), (\ref{eq:p8}), (\ref{eq:p9}), (\ref{eq:p10}), and (\ref{eq:p11}) we see that
	\begin{equation}
	|\Phi_X^*(\mathbf{m})(q,t)|(e_1\wedge \dots \wedge e_{h-1} \wedge \partial t) = \lambda(a_tq) \cdot e^{ht} + O_\mathcal{K}(\ell_{\min}(a_tq)^{-(h-1)} \cdot e^{(h-\useconc{c:good_traj}) t}). \label{eq:end2}
	\end{equation}
	
	From (\ref{eq:polar2}), (\ref{eq:end1}), and (\ref{eq:end2}) we deduce
	\[
	\Delta(q,t) = \lambda (q) \cdot \lambda(a_tq) \cdot e^{ht} + O_\mathcal{K}(\lambda(q)   \cdot \ell_{\min}(a_tq)^{-(h-1)} \cdot e^{(h-\useconc{c:good_traj}) t}).
	\]
	Using Proposition \ref{prop:HM_bound_sum} to bound $\lambda(q)$ in the error term finishes the proof.
\end{proof}

\section{Estimates near the multiple zero locus}

\subsection*{Outline of this section.} In this section we prove Theorems \ref{theo:thin_trajectories_sum}, \ref{theo:thin_sector_sum}, and \ref{theo:large_deviations_sum}. These theorems bound the contributions near the multiple zero locus that show up in the proof of Theorem \ref{theo:ball_equidistribution} and which cannot be controlled using Theorem \ref{theo:polar_coordinates_estimate}. The proofs use results from \cite{EMR19}, \cite{ABEM12}, and \cite{A06}.

\subsection*{Thin Teichmüller geodesic segments.} We use work of Eskin, Mirzakhani, and Rafi \cite{EMR19} to bound the measure of the set of Teichmüller geodesic segments that spend half of their time outside a large compact subset. We begin with a brief account of the results in \cite{EMR19} that are relevant for our purposes.

Recall $h:= 6g-6$. Recall that $p \colon \qut \to \qum$ denotes the quotient map, and that, for every $\epsilon> 0$, 
\[
K_\epsilon := \{ q \in \qum \ | \ \ell_{\min}(q) \geq \epsilon\}.
\]
Recall that $B_R(X) \subseteq \tt$ denotes the ball of radius $R > 0$ centered at $X \in \tt$ with respect to the Teichmüller metric. Let $X,Y \in \mathcal{T}_g$, $r > 0$, and $\epsilon > 0$. For every $R > 0$ denote by $N_R(X,Y,r,\epsilon)$ the number of mapping classes $\mc \in \mcg$ such that $\mc.Y \in B_R(X)$ and such that there exist a Teichmüller geodesic segment joining $X$ to $Z \in B_{r}(\mc.Y)$ which spends more than half of the time outside $p^{-1}(K_\epsilon)$. Notice that $N_R(X,Y,r,\epsilon)$ is independent of the markings of $X,Y \in \tt$. The following theorem is a direct consequence of \cite[Theorem 1.7 and Lemma 3.9]{EMR19}.

\newcone{e:EMR} \newconk{k:EMR} \newconr{r:EMR}
\begin{theorem}
	\label{theo:EMR}
	There exist constants $\useconr{r:EMR} = \useconr{r:EMR}(g) > 0$ and $\usecone{e:EMR} = \usecone{e:EMR}(g) > 0$ such that for every compact subset $\mathcal{K} \subseteq \tt$, every $X,Y \in \mathcal{K}$, every $0 < r < \useconr{r:EMR}$, every $0 < \epsilon < \usecone{e:EMR}$, and every $R > 0$,
	\[
	N_R(X,Y,r,\epsilon) \preceq_{\mathcal{K}} e^{(h-\useconk{k:EMR})R},
	\]
	where $\useconk{k:EMR} = \useconk{k:EMR}(g) > 0$ is a constant depending only on $g$.
\end{theorem}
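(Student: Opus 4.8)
The plan is to obtain Theorem~\ref{theo:EMR} as a more or less direct consequence of the work of Eskin, Mirzakhani, and Rafi. Recall that, roughly speaking, \cite[Theorem 1.7]{EMR19} provides an exponential saving over the main growth rate $e^{hR}$ for the number of lattice points $\mc.Y \in B_R(X)$ whose connecting Teichmüller geodesic segment $[X,\mc.Y]$ spends at least a definite fraction of its length in the thin part of $\qum$, the size of the saving and the admissible thresholds depending only on $g$ and on the chosen fraction. The first thing to do is to record the precise version of this statement with the implied constant uniform as the endpoints range over a compact set $\mathcal{K} \subseteq \tt$: since $N_R$ is marking-independent, this follows from the fixed-basepoint version of \cite[Theorem 1.7]{EMR19} together with the triangle inequality — comparing $B_R(X)$ with $B_{R+\mathrm{diam}(\mathcal{K})}(X_0)$ for a fixed basepoint $X_0$ — and the elementary fact that a Teichmüller geodesic segment whose endpoints are moved by a bounded amount fellow-travels the original one up to bounded distance, hence exhibits comparable thin-part behaviour after mildly enlarging the thin part and shrinking the fraction.

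The main step is the passage from the condition defining $N_R(X,Y,r,\epsilon)$ — that there \emph{exists} $Z \in B_r(\mc.Y)$ with $[X,Z]$ spending more than half of its length outside $p^{-1}(K_\epsilon)$ — to a condition expressed directly in terms of the geodesic $[X,\mc.Y]$. This is what \cite[Lemma 3.9]{EMR19} is for: for $r = \useconr{r:EMR}(g)$ small enough and $\epsilon = \usecone{e:EMR}(g)$ small enough, if $d_{\mathcal{T}}(Z,\mc.Y) < r$ and $[X,Z]$ stays $\epsilon$-thin along at least half of its length, then $[X,\mc.Y]$ stays $\epsilon'$-thin along at least, say, one third of its length, for some $\epsilon' = \epsilon'(g) > 0$. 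The thresholds $\useconr{r:EMR}$ and $\usecone{e:EMR}$ are then chosen small enough that this reduction applies and, simultaneously, $\epsilon'$ falls in the range where \cite[Theorem 1.7]{EMR19} is applicable.

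Putting the two steps together, every mapping class $\mc \in \mcg$ counted by $N_R(X,Y,r,\epsilon)$ has $\mc.Y \in B_R(X)$ and a connecting geodesic $[X,\mc.Y]$ that spends a definite, $g$-dependent fraction of its length outside the compact set $p^{-1}(K_{\epsilon'})$; hence $N_R(X,Y,r,\epsilon)$ is bounded above by the quantity estimated in \cite[Theorem 1.7]{EMR19}, which yields $N_R(X,Y,r,\epsilon) \preceq_{\mathcal{K}} e^{(h - \useconk{k:EMR})R}$ with $\useconk{k:EMR} = \useconk{k:EMR}(g) > 0$. I expect the only delicate part to be the bookkeeping in the middle step — tracking precisely how the thin-part threshold and the time fraction degrade under an $r$-perturbation of the endpoint, and checking via \cite[Lemma 3.9]{EMR19} that the resulting losses are uniform in $g$ and, crucially, independent of $R$ — while the remaining ingredients are direct invocations of \cite{EMR19}.
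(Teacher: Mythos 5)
Your proposal follows exactly the route the paper takes: the paper offers no independent argument, stating that the theorem "is a direct consequence of \cite[Theorem 1.7 and Lemma 3.9]{EMR19}," i.e.\ the main counting bound of Eskin--Mirzakhani--Rafi combined with their perturbation lemma to transfer the thin-part condition from the segment $[X,Z]$, $Z \in B_r(\mc.Y)$, to the segment $[X,\mc.Y]$ itself, with constants chosen depending only on $g$ and uniformity over $\mathcal{K}$ handled by a bounded-distance comparison. Your write-up simply makes explicit the bookkeeping the paper leaves implicit, so it is correct and essentially identical in approach.
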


Let $\mathcal{K} \subseteq \tt$ be a compact subset, $X \in \mathcal{K}$, $R>0$, and $\epsilon > 0$. Recall that $B_R(X,\mathcal{K},K_\epsilon) \subseteq \mathcal{T}_g$ denotes the set of all $Y \in B_R(X) \cap \mathrm{Mod}_g \cdot \mathcal{K}$ such that the Teichmüller geodesic segment joining $X$ to $Y$ spends more than half of the time outside $p^{-1}(K_\epsilon)$. Given $A \subseteq \mathcal{T}_g$ and $r > 0$, $\text{Nbhd}_r(A) \subseteq \tt$ will denote the set of points in $\mathcal{T}_g$ at Teichmüller distance at most $r$ from $A$. Recall that $\mathbf{m} := \pi_* \mu$ denotes the pushforward to $\tt$ of the Masur-Veech measure $\mu$ on $\qut$ under the projection $\pi \colon \qut \to \tt$ and that $\widehat{\mathbf{m}}$ denotes the local pushforward of $\mathbf{m}$ to $\mm$. The following result is a stonger version of Theorem \ref{theo:thin_trajectories_sum} that will be used in the proof of Theorem \ref{theo:thin_sector_sum}.

\newcone{e:thint} \newconk{k:thint} \newconr{r:thint}
\begin{theorem}
	\label{theo:thin_trajectories}
	There exist constants $\useconr{r:thint} = \useconr{r:thint}(g) > 0$ and $\usecone{e:thint} = \usecone{e:thint}(g) > 0$ such that for every compact subset $\mathcal{K} \subseteq \tt$, every $X,Y \in \mathcal{K}$, every $0 < r < \useconr{r:EMR}$, every $0 < \epsilon < \usecone{e:thint}$, and every $R > 0$,
	\[
	\mathbf{m}(\mathrm{Nbhd}_{r}(B_R(X,\mathcal{K},K_\epsilon))) \preceq_{\mathcal{K}}  e^{(h-\useconk{k:thint})R},
	\]
	where $\useconk{k:thint} = \useconk{k:thint}(g) > 0$ is a constant depending only on $g$.
\end{theorem}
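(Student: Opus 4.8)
The plan is to convert the volume bound into a lattice‑point count controlled by Theorem~\ref{theo:EMR}. Let $\useconr{r:EMR} = \useconr{r:EMR}(g)$, $\usecone{e:EMR} = \usecone{e:EMR}(g)$, $\useconk{k:EMR} = \useconk{k:EMR}(g)$ be the constants of Theorem~\ref{theo:EMR}, and set $\useconr{r:thint} := \useconr{r:EMR}/3$ and $\usecone{e:thint} := \usecone{e:EMR}$. Fix $\mathcal{K} \subseteq \tt$ compact, $X \in \mathcal{K}$, $0 < r < \useconr{r:thint}$, $0 < \epsilon < \usecone{e:thint}$, and $R > 0$; the point $Y$ plays no role, since the left‑hand side does not depend on it. Put $\mathcal{K}_1 := \mathrm{Nbhd}_{\useconr{r:EMR}/3}(\mathcal{K})$, which is compact because the Teichmüller metric is complete and proper, and which depends only on $\mathcal{K}$. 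Since $B_R(X,\mathcal{K},K_\epsilon) \subseteq \mcg \cdot \mathcal{K}$ and $r < \useconr{r:EMR}/3$, the $\mcg$‑invariance of $d_\mathcal{T}$ gives $\mathrm{Nbhd}_r(B_R(X,\mathcal{K},K_\epsilon)) \subseteq \mcg \cdot \mathcal{K}_1$. Writing $G := \{\mathbf{g} \in \mcg \ : \ \mathbf{g}.\mathcal{K}_1 \cap \mathrm{Nbhd}_r(B_R(X,\mathcal{K},K_\epsilon)) \neq \emptyset\}$, countable subadditivity together with the $\mcg$‑invariance and local finiteness of $\mathbf{m}$ yields
\[
\mathbf{m}\bigl(\mathrm{Nbhd}_r(B_R(X,\mathcal{K},K_\epsilon))\bigr) \leq \sum_{\mathbf{g} \in G} \mathbf{m}(\mathbf{g}.\mathcal{K}_1) = \#G \cdot \mathbf{m}(\mathcal{K}_1) \preceq_\mathcal{K} \#G,
\]
so it suffices to prove $\#G \preceq_\mathcal{K} e^{(h - \useconk{k:EMR})R}$.

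To bound $\#G$, I would cover the compact set $\mathcal{K}_1$ by finitely many Teichmüller balls $B_{\useconr{r:EMR}/3}(Y_1), \dots, B_{\useconr{r:EMR}/3}(Y_m)$ with $Y_1,\dots,Y_m \in \mathcal{K}_1$; the number $m = m(\mathcal{K})$ and the compact set $\mathcal{K}_2 := \mathcal{K}_1$ containing $X, Y_1, \dots, Y_m$ depend only on $\mathcal{K}$. Given $\mathbf{g} \in G$, choose $W \in \mathbf{g}.\mathcal{K}_1 \cap \mathrm{Nbhd}_r(B_R(X,\mathcal{K},K_\epsilon))$, write $W = \mathbf{g}.Z_0$ with $Z_0 \in \mathcal{K}_1$, pick $j$ with $d_\mathcal{T}(Y_j, Z_0) < \useconr{r:EMR}/3$, and pick $Y' \in B_R(X,\mathcal{K},K_\epsilon)$ with $d_\mathcal{T}(W, Y') < r$. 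Then $d_\mathcal{T}(\mathbf{g}.Y_j, W) = d_\mathcal{T}(Y_j, Z_0) < \useconr{r:EMR}/3$, hence $Y' \in B_{2\useconr{r:EMR}/3}(\mathbf{g}.Y_j)$ and $\mathbf{g}.Y_j \in B_{R + \useconr{r:EMR}}(X)$. Since $Y' \in B_R(X,\mathcal{K},K_\epsilon)$, the unique Teichmüller geodesic from $X$ to $Y'$ spends more than half of its time outside $p^{-1}(K_\epsilon)$; as $2\useconr{r:EMR}/3 < \useconr{r:EMR}$, this exhibits $\mathbf{g}$ as one of the mapping classes counted by $N_{R + \useconr{r:EMR}}(X, Y_j, 2\useconr{r:EMR}/3, \epsilon)$. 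Partitioning $G$ according to the resulting index $j$ gives $\#G \leq \sum_{j=1}^{m} N_{R + \useconr{r:EMR}}(X, Y_j, 2\useconr{r:EMR}/3, \epsilon)$, and Theorem~\ref{theo:EMR} applied with the compact set $\mathcal{K}_2$, radius $2\useconr{r:EMR}/3 < \useconr{r:EMR}$, and $\epsilon < \usecone{e:EMR}$ bounds each summand by $\preceq_\mathcal{K} e^{(h - \useconk{k:EMR})(R + \useconr{r:EMR})} \preceq_\mathcal{K} e^{(h - \useconk{k:EMR})R}$. Since $m$ depends only on $\mathcal{K}$, this gives $\#G \preceq_\mathcal{K} e^{(h - \useconk{k:EMR})R}$, and the theorem follows with $\useconk{k:thint} := \useconk{k:EMR}$.

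The main obstacle is the radius bookkeeping. Theorem~\ref{theo:EMR} is only available for geodesic‑segment radii strictly below the threshold $\useconr{r:EMR}$, so one must ensure that the radius passed to it — which accumulates the neighborhood radius $r$ and the mesh of the finite cover of $\mathcal{K}_1$ — stays below $\useconr{r:EMR}$, while simultaneously arranging that the number of cover balls and the ambient compact set depend only on $\mathcal{K}$ and not on $r$. This is precisely what dictates the choice $\useconr{r:thint} = \useconr{r:EMR}/3$ and forces one to thicken $\mathcal{K}$ by a fixed amount $\useconr{r:EMR}/3$ rather than by the variable amount $r$. All remaining ingredients — $\mcg$‑invariance of $\mathbf{m}$ and of $d_\mathcal{T}$, properness of the Teichmüller metric, finiteness of $\mathbf{m}$ on compacta, and countable subadditivity — are routine.
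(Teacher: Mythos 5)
Your proof is correct, but it reaches the bound by a different mechanism than the paper. The paper's argument is an averaging/unfolding one: for every $Y$ it bounds the orbit-counting function $\sum_{\mc \in \mcg} \mathbbm{1}_{\mathrm{Nbhd}_r(B_R(X,\mathcal{K},K_\epsilon))}(\mc.Y)$ pointwise by $N_{R+r}(X,Y,r,\epsilon)$, integrates this inequality over the image of $\mathrm{Nbhd}_r(\mathcal{K})$ in $\mm$ against $\widehat{\mathbf{m}}$, identifies the resulting left-hand side with $\mathbf{m}(\mathrm{Nbhd}_r(B_R(X,\mathcal{K},K_\epsilon)))$ by unfolding, and then bounds the integrand uniformly via Theorem \ref{theo:EMR}. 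You instead discretize: you cover the set by $\mcg$-translates of the fixed compact set $\mathcal{K}_1$, reduce the volume bound to counting how many translates are needed, and dominate that count by $\sum_{j} N_{R+\useconr{r:EMR}}(X,Y_j,2\useconr{r:EMR}/3,\epsilon)$ over a finite $\useconr{r:EMR}/3$-net $\{Y_j\}$ of $\mathcal{K}_1$. Both routes are reductions to Theorem \ref{theo:EMR} and yield the same exponent $\useconk{k:thint} = \useconk{k:EMR}$; yours avoids the unfolding identity over the orbifold $\mm$ (and hence any orbifold bookkeeping), while the paper's avoids choosing a net and, because the radius fed into $N$ is exactly $r$, it proves the estimate for the full range $0 < r < \useconr{r:EMR}$ (the paper simply takes $\useconr{r:thint} := \useconr{r:EMR}$), whereas your radius bookkeeping forces $\useconr{r:thint} := \useconr{r:EMR}/3$. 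That loss is immaterial for the downstream applications, which only need some positive $g$-dependent threshold, but note that your statement is nominally weaker than the printed one, whose hypothesis reads $0 < r < \useconr{r:EMR}$; the only other (harmless) imprecision is that, with $\mathrm{Nbhd}_r$ defined via distance at most $r$, your point $Y'$ should be taken with $d_\mathcal{T}(W,Y') < \useconr{r:EMR}/3$ rather than $< r$, which the strict inequality $r < \useconr{r:EMR}/3$ permits.
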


\begin{proof}
	Let $\useconr{r:thint} = \useconr{r:thint}(g) > 0$ and $\usecone{e:thint} = \usecone{e:thint}(g) > 0$ be as in Theorem \ref{theo:EMR}. Fix $\mathcal{K} \subseteq \tt$ compact, $X \in \mathcal{K}$, $0 < r < \useconr{r:thint}$ and $0 < \epsilon < \usecone{e:thint}$.  Directly from the definitions we see that, for every $Y \in \mathcal{M}_g$,
	\begin{equation*}
	\sum_{\mc \in \text{Mod}_g} \mathbbm{1}_{\text{Nbhd}_{r}(B_R(X,\mathcal{K},K_\epsilon))}(\mc \cdot Y) \leq	N_{R+r}(X,Y,r,\epsilon). 
	\end{equation*}
	Integrating this inequality over $\pi(\text{Nbhd}_{r}(\mathcal{K})) \subseteq \mathcal{M}_g$ with respect to $d\widehat{\mathbf{m}}(Y)$ we get
	\begin{align}
	\int_{\pi(\text{Nbhd}_{r}(\mathcal{K}))} \sum_{\mc \in \text{Mod}_g} \mathbbm{1}_{\text{Nbhd}_{r}(B_R(X,\mathcal{K},K_\epsilon))}(\mc \cdot Y) \thinspace d\widehat{\mathbf{m}}(Y) 
	\leq \int_{\pi(\text{Nbhd}_{r}(\mathcal{K}))} N_{R+r}(X,Y,r,\epsilon) \thinspace d\widehat{\mathbf{m}}(Y). \label{eq:z2}
	\end{align}
	An unfolding argument shows that
	\begin{align}
	\int_{\pi(\text{Nbhd}_{r}(\mathcal{K}))} \sum_{\mc \in \text{Mod}_g} \mathbbm{1}_{\text{Nbhd}_{r}(B_R(X,\mathcal{K},K_\epsilon))}(\mc \cdot Y) \thinspace d\widehat{\mathbf{m}}(Y)  
	&= \int_{\mathcal{T}_g}  \mathbbm{1}_{\text{Nbhd}_{r}(B_R(X,\mathcal{K},K_\epsilon))}(Y) \thinspace d\mathbf{m}(Y) \label{eq:z3}\\
	&= \mathbf{m}(\text{Nbhd}_{r}(B_R(X,\mathcal{K},K_\epsilon))). \nonumber
	\end{align}
	Putting together (\ref{eq:z2}) and (\ref{eq:z3}) we deduce
	\begin{equation}
	\mathbf{m}(\text{Nbhd}_{r}(B_R(X,\mathcal{K},K_\epsilon))) \leq \int_{\pi(\text{Nbhd}_{r}(\mathcal{K}))} N_{R+r}(X,Y,r,\epsilon) \thinspace d\widehat{\mathbf{m}}(Y). \label{eq:z4}
	\end{equation}
	Theorem \ref{theo:EMR} ensures that, for every $Y \in \text{Nbhd}_{r}(\mathcal{K})$,
	\[
	N_{R+r}(X,Y,r,\epsilon) \preceq_{\mathcal{K}} e^{(h-\useconk{k:EMR})(R+r)} \preceq_g e^{(h-\useconk{k:EMR})R}
	\]
	Integrating this inequality over $\pi(\text{Nbhd}_{r}(\mathcal{K})) \subseteq \mathcal{M}_g$ with respect to $d\widehat{\mathbf{m}}(Y)$ we deduce
	\begin{align}
	\int_{\pi(\text{Nbhd}_{r}(\mathcal{K}))} N_{R+r}(X,Y,\epsilon) \thinspace d\widehat{\mathbf{m}}(Y)
	\preceq_\mathcal{K} \widehat{\mathbf{m}}(\pi(\text{Nbhd}_{r}(\mathcal{K}))) \cdot  e^{(h-\useconk{k:EMR})R} 
	\preceq_\mathcal{K} e^{(h-\useconk{k:EMR})R}. \label{eq:z5}
	\end{align}
	Putting together (\ref{eq:z4}) and (\ref{eq:z5}) we conclude
	\[
	\mathbf{m}(\text{Nbhd}_{r}(B_R(X,\mathcal{K},K_\epsilon))) \preceq_{\mathcal{K}} e^{(h-\useconk{k:EMR})R}. \qedhere
	\]
\end{proof}

\subsection*{The Euclidean metric.} The proof of Theorem \ref{theo:thin_sector_sum} will use  a couple of metrics on $\qut$ that interact nicely with period coordinates and the Teichmüller geodesic flow. We now introduce the first of these metrics. Period coordinates give rise to a local Lipschitz class of metrics on $\qut$. This class contains $\mcg$-invariant metrics induced by $\mcg$-invariant families of norms on the fibers of the tangent bundle of $\qut$. Explicit constructions of these metrics can be found in \cite[\S3.5]{ABEM12} and \cite[Definition 3.2]{F17}. For the rest of this paper we fix one such metric, denote it by $d_E$, and refer to it as the Euclidean metric of $\qut$. We denote by $\| \cdot \|_E$ the corresponding norms on the fibers of the tangent bundle of $\qut$. On $\mathbf{C}^{6g-6}$ consider the standard Euclidean norm $\| \cdot \|$. The Euclidean metric satisfies the following fundamental property.

\begin{lemma}
	\label{lem:euc_fund_prop}
	Let $\mathcal{K} \subseteq \tt$ be a compact subset. Consider a period coordinate chart $\Phi \colon U \to \mathbf{C}^{6g-6}$ defined on an open subset $U \subseteq \qut(\mathbf{1})$. Then, for any $q \in U \cap \pi^{-1}(\mathcal{K})$ and any $v \in T_q \qut$,
	\[
	\| v \|_E \asymp_{\Phi,\mathcal{K}} \|d\Phi_q v\|.
	\]
\end{lemma}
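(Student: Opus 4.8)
The plan is to unwind the construction of the Euclidean metric $d_E$ and reduce the stated comparison to a uniform bound on the integral linear maps that relate the fixed chart $\Phi$ to the period coordinate charts used to define $d_E$. Recall from \cite[\S3.5]{ABEM12} and \cite[Definition 3.2]{F17} that $d_E$ is built from a $\mcg$-invariant, locally finite atlas $\{\Phi_\alpha \colon U_\alpha \to \mathbf{C}^{6g-6}\}_\alpha$ of period coordinate charts of $\qut(\mathbf{1})$ together with a subordinate smooth partition of unity $\{\rho_\alpha\}_\alpha$, the associated norm being $\|v\|_E^2 = \sum_\alpha \rho_\alpha(q)\,\|d(\Phi_\alpha)_q v\|^2$ for $v \in T_q\qut$, where $\|\cdot\|$ is the standard Euclidean norm on $\mathbf{C}^{6g-6}$. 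The key structural fact is that on every overlap the transition map $\Phi_\alpha \circ \Phi^{-1}$ is the restriction of an element of $\mathrm{GL}(6g-6,\mathbf{Z})$ acting linearly on $\mathbf{C}^{6g-6}$; hence its derivative $M_\alpha(q) := d(\Phi_\alpha \circ \Phi^{-1})_{\Phi(q)}$ is a locally constant, $\mathrm{GL}(6g-6,\mathbf{Z})$-valued function of $q \in U \cap U_\alpha$, and $d(\Phi_\alpha)_q v = M_\alpha(q)\,(d\Phi_q v)$. Substituting, for all $q \in U$ and $v \in T_q\qut$ one obtains
\[
\|v\|_E^2 \;=\; \sum_{\alpha} \rho_\alpha(q)\,\big\| M_\alpha(q)\,(d\Phi_q v) \big\|^2 .
\]

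From this identity the lemma reduces to a single estimate. Since $\sum_\alpha \rho_\alpha(q) = 1$ and each $M_\alpha(q)$ is invertible, the identity yields, at every $q \in U$,
\[
\Big(\textstyle\max_{\alpha:\,\rho_\alpha(q)>0}\|M_\alpha(q)^{-1}\|\Big)^{-1}\,\|d\Phi_q v\| \;\le\; \|v\|_E \;\le\; \Big(\textstyle\max_{\alpha:\,\rho_\alpha(q)>0}\|M_\alpha(q)\|\Big)\,\|d\Phi_q v\| ,
\]
so it suffices to bound $\|M_\alpha(q)\|$ and $\|M_\alpha(q)^{-1}\|$ by a constant $c = c(\mathcal{K},\Phi,g)$, uniformly over all $q \in U \cap \pi^{-1}(\mathcal{K})$ and all $\alpha$ with $\rho_\alpha(q) > 0$. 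To see this I would use that $\pi \colon \qut \to \tt$ is proper with compact spherical fibers, so that $\pi^{-1}(\mathcal{K})$ is compact; by local finiteness of $\{U_\alpha\}$ and continuity of the $M_\alpha$, the discrete-valued functions $M_\alpha$ stay within a bounded subset of $\mathrm{GL}(6g-6,\mathbf{Z})$ along the portion of each $U \cap U_\alpha$ relevant to $\pi^{-1}(\mathcal{K})$, which produces the required $c$.

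I expect the main obstacle to be precisely this last uniformity statement, because $U \cap \pi^{-1}(\mathcal{K})$ is in general not compact: a sequence in it may converge to a point of $\partial U$ or to the multiple zero locus $\qut \setminus \qut(\mathbf{1})$. Controlling the transition matrices $M_\alpha$ near such limits is where one must invoke the properties built into the construction of $d_E$ in \cite{ABEM12, F17} — the integral (hence discrete) nature of period coordinate transitions together with properness of $\pi$ and local finiteness of the defining atlas — rather than merely appealing to "local Lipschitz equivalence of the two metrics". Everything else is a formal computation with the definition of $d_E$.
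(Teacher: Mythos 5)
Your overall shape is right---reduce the comparison to a uniform bound on the integral matrices relating the fixed chart $\Phi$ to the period-coordinate data defining $\|\cdot\|_E$---but the decisive step is exactly the one you leave open, so the proposal does not yet prove the lemma. The appeal to local finiteness of the defining atlas plus compactness of $\pi^{-1}(\mathcal{K})$ does not close it: $\pi^{-1}(\mathcal{K})$ is compact in $\qut$, but the atlas of period charts lives on the principal stratum, and $U\cap\pi^{-1}(\mathcal{K})$ (more generally $\qutp\cap\pi^{-1}(\mathcal{K})$) is not compact there, so infinitely many charts may accumulate along the multiple zero locus and discreteness of $\mathrm{GL}(6g-6,\mathbf{Z})$ alone gives no bound on the matrices $M_\alpha(q)$ or their inverses. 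Since the entire content of the lemma is uniformity of the comparison as $q$ degenerates inside $\pi^{-1}(\mathcal{K})$, deferring that point to ``properties built into the construction'' leaves the statement unproved. Note also that your ansatz for $\|\cdot\|_E$ (a partition-of-unity average over an abstract atlas) is not the construction of \cite[\S 3.5]{ABEM12} or \cite[Definition 3.2]{F17}: there the norm at $q$ is defined from the flat geometry of $q$ itself, via the periods of a distinguished spanning family of saddle connections (Delaunay/short edges), which is also what makes the norm defined on all of $\qut$ across strata; a proof of the lemma has to engage with that definition rather than a surrogate.

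The missing ingredient is the finiteness mechanism this paper sets up in \S 5. For $q$ with $\pi(q)\in\mathcal{K}$ the flat diameter is bounded in terms of $\mathcal{K}$, so by Lemma \ref{lem:delaunay} the Delaunay edges of such $q$ have uniformly bounded length; by proper discontinuity of the $\mcg$-action this forces the Delaunay triangulations arising over $\pi^{-1}(\mathcal{K})$ to represent only finitely many marked triangulations of $S_g$ (Lemma \ref{lem:finite_delaunay}). Each such marked triangulation determines a fixed integral matrix expressing its edge classes in the basis of $H_1^\odd(\widetilde{X},\widetilde{\Sigma};\mathbf{Z})$ underlying $\Phi$ (constant on components of $U$), and conversely the chart classes are fixed integral combinations of those edges, since the edges of a triangulation generate relative homology. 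With only finitely many matrices occurring over $U\cap\pi^{-1}(\mathcal{K})$, both inequalities $\|v\|_E\preceq\|d\Phi_q v\|$ and $\|d\Phi_q v\|\preceq\|v\|_E$ follow with constants depending only on $\Phi$ and $\mathcal{K}$, uniformly up to the multiple zero locus---which is the point. For comparison: the paper does not reprove this lemma but imports it as a property of the metrics constructed in \cite{ABEM12,F17}; if you want a self-contained argument, the Delaunay-finiteness route above is the one consistent with how the lemma is used later (e.g.\ in the proof of Proposition \ref{prop:systole_euclidean}).
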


Recall that $d_\mathcal{T}$ denotes the Teichmüller metric on $\tt$. The following result, which corresponds to \cite[Lemma 3.9]{ABEM12}, can be proved using a compactness argument. See \cite[Theorem 1.2]{F17} for a related estimate.

\begin{lemma}
	\label{lem:euclidean_teichmuller_1}
	Let $\mathcal{K} \subseteq \tt$ be a compact subset. There exists a continuous function $f_1 \colon \mathbf{R}_{\geq 0} \to \mathbf{R}_{\geq 0}$ with $f_1(r) \to 0$ as $r \to 0$ such that for any pair $q_1,q_2 \in \pi^{-1}(\mathcal{K})$,
	\[
	d_\mathcal{T}(\pi(q_1),\pi(q_2)) \leq f_1(d_E(q_1,q_2)).
	\]
	Moreover, there exists a continuous function $f_2 \colon \mathbf{R}_{\geq 0} \to \mathbf{R}_{\geq 0}$ with $f_2(r) \to 0$ as $r \to 0$ such that for any pair $q_1, q_2 \in \pi^{-1}(\mathcal{K})$ on the same leaf of $\mathcal{F}^{u}$ or $\mathcal{F}^s$,
	\[
	d_E(q_1,q_2) \leq f_2(d_\mathcal{T}(\pi(q_1),\pi(q_2))).
	\]
\end{lemma}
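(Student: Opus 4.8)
The plan is to prove both halves by a compactness argument, exploiting the fact that $\pi^{-1}(\mathcal{K})$ is a compact subset of $\qut$ (by Masur's compactness criterion, once we note that $\pi^{-1}(\mathcal{K})$ maps into a compact subset of $\qum$) together with the uniform continuity of $\pi$ and of the identity map comparing $d_E$ and $d_\mathcal{T}$ on the relevant compact sets. For the first estimate, I would argue as follows. Since $\pi \colon \qut \to \tt$ is continuous and $\pi^{-1}(\mathcal{K})$ is compact, the map $\pi$ is uniformly continuous on $\pi^{-1}(\mathcal{K})$ with respect to $d_E$ on the source and $d_\mathcal{T}$ on the target. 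Define
\[
f_1(r) := \sup\{ d_\mathcal{T}(\pi(q_1),\pi(q_2)) \ : \ q_1,q_2 \in \pi^{-1}(\mathcal{K}), \ d_E(q_1,q_2) \leq r \}.
\]
Uniform continuity gives $f_1(r) \to 0$ as $r \to 0$; replacing $f_1$ by a continuous majorant (e.g.\ its upper-semicontinuous regularization, or simply $r \mapsto \sup_{s \le r} f_1(s)$ smoothed) produces the desired continuous function, and the inequality $d_\mathcal{T}(\pi(q_1),\pi(q_2)) \leq f_1(d_E(q_1,q_2))$ holds by construction whenever $q_1, q_2 \in \pi^{-1}(\mathcal{K})$.

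For the second estimate, the key point is that on a single leaf of $\mathcal{F}^u$ or $\mathcal{F}^s$ the map $\pi$ is injective (indeed a diffeomorphism onto its image on the principal stratum, by Hubbard--Masur), so there is no loss of information when passing from $\pi(q_1),\pi(q_2)$ back to $q_1,q_2$. Concretely, I would define
\[
f_2(r) := \sup\{ d_E(q_1,q_2) \ : \ q_1,q_2 \in \pi^{-1}(\mathcal{K}) \text{ on a common leaf of } \mathcal{F}^u \text{ or } \mathcal{F}^s, \ d_\mathcal{T}(\pi(q_1),\pi(q_2)) \leq r \}
\]
and again replace it by a continuous majorant. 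The content is that $f_2(r) \to 0$ as $r \to 0$, which requires showing that the "inverse" of $\pi$ restricted to $\mathcal{F}^u$- or $\mathcal{F}^s$-leaves, as a partially defined map on $\pi^{-1}(\mathcal{K}) \times \pi^{-1}(\mathcal{K})$, is uniformly continuous. This follows from a standard contradiction/compactness argument: if not, there would be sequences $q_1^{(n)}, q_2^{(n)} \in \pi^{-1}(\mathcal{K})$ on common stable or unstable leaves with $d_\mathcal{T}(\pi(q_1^{(n)}),\pi(q_2^{(n)})) \to 0$ but $d_E(q_1^{(n)}, q_2^{(n)}) \geq \epsilon_0 > 0$; passing to convergent subsequences (using compactness of $\pi^{-1}(\mathcal{K})$) yields limit points $q_1, q_2$ with $\pi(q_1) = \pi(q_2)$, on a common leaf of $\mathcal{F}^u$ or $\mathcal{F}^s$ (the foliations being closed, and the leaf relation passing to limits), yet $d_E(q_1,q_2) \geq \epsilon_0$, contradicting the injectivity of $\pi$ on such leaves.

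The main obstacle is the second half: one must be careful that the property "lie on a common leaf of $\mathcal{F}^u$ (or $\mathcal{F}^s$)" is preserved under taking limits of pairs in $\pi^{-1}(\mathcal{K})$, and one must handle the stable and unstable cases, as well as the fact that the a priori relevant leaves may pass through the multiple-zero locus. The cleanest route is to observe that "$q_1$ and $q_2$ lie on a common leaf of $\mathcal{F}^u$" is equivalent to $[\Im(q_1)] = [\Im(q_2)]$ in $\pmf$ (and similarly $[\Re(q_1)] = [\Re(q_2)]$ for $\mathcal{F}^s$), which is a closed condition since the maps $\Re, \Im \colon \qt \to \mf$ and the projection $\mf \setminus \{0\} \to \pmf$ are continuous; injectivity of $\pi$ on such leaves is exactly the Hubbard--Masur homeomorphism statement recalled in \S 2, which applies on all of $\qut$, not just the principal stratum. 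With these observations the contradiction argument goes through verbatim, and replacing $f_1, f_2$ by continuous majorants with the same limiting behaviour at $0$ completes the proof.
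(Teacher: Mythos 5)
Your proof takes essentially the route the paper intends: the paper offers no argument of its own, citing only Lemma 3.9 of \cite{ABEM12} and remarking that the statement follows from a compactness argument, which is exactly what you carry out. Your treatment of the delicate second half is sound: in this paper's conventions, lying on a common leaf of $\mathcal{F}^u$ (resp.\ $\mathcal{F}^s$) is by definition the condition $[\Im(q_1)]=[\Im(q_2)]$ (resp.\ $[\Re(q_1)]=[\Re(q_2)]$), which is closed since $\Re,\Im$ are continuous and nonvanishing on $\qut$, and injectivity of $\pi$ on stable and unstable leaves is precisely the Hubbard--Masur statement recalled in \S 2, valid on all of $\qut$; so the limiting contradiction argument and the sup-definitions of $f_1,f_2$ with continuous monotone majorants work.

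One concrete flaw to fix: your justification of the compactness of $\pi^{-1}(\mathcal{K})$ via Masur's compactness criterion does not work. The set $\pi^{-1}(\mathcal{K})$ is not contained in $p^{-1}(K_\delta)$ for any $\delta>0$, because $\ell_{\min}$ is not bounded below on it: even over a single fixed $X \in \mathcal{K}$, a unit-area quadratic differential can have two zeros arbitrarily close together, so $\ell_{\min}(q)\to 0$ along the fiber $S(X)$. The correct (and simpler) reason is that $\pi \colon \qut \to \tt$ is proper, being a fiber bundle with compact fibers $S(X)$ (the unit-area spheres in the finite-dimensional spaces $Q(X)$); hence $\pi^{-1}(\mathcal{K})$ is compact. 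With this substitution, the rest of your argument --- uniform continuity of $\pi$ on $\pi^{-1}(\mathcal{K})$, boundedness of $d_\mathcal{T}$ on $\mathcal{K}\times\mathcal{K}$ and of $d_E$ on $\pi^{-1}(\mathcal{K})\times\pi^{-1}(\mathcal{K})$, and continuity of $d_E$ in the limit --- goes through as written.
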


Directly from Lemma \ref{lem:euclidean_teichmuller_1} we deduce the following corollary.

\newcond{euc_teich}
\begin{corollary}
	\label{cor:euclidean_teichmuller_1}
	Let $\mathcal{K} \subseteq \mathcal{T}_g$ compact. There exists a constant $\usecond{euc_teich} = \usecond{euc_teich}(\mathcal{K})>0$ such that if $q_1,q_2 \in \mathcal{Q}^1\mathcal{T}_g$ satisfy $q_1 \in \pi^{-1}(\mathcal{K})$ and $d_E(q_1,q_2) \leq \usecond{euc_teich}$, then $d_\mathcal{T}(\pi(q_1),\pi(q_2)) \leq 1$. In particular, $q_2 \in \pi^{-1}(\mathrm{Nbhd}_1(\mathcal{K}))$.
\end{corollary}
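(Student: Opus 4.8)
The statement to prove is Corollary~\ref{cor:euclidean_teichmuller_1}, which asserts the existence of a constant $\usecond{euc_teich} = \usecond{euc_teich}(\mathcal{K}) > 0$ such that $q_1 \in \pi^{-1}(\mathcal{K})$ and $d_E(q_1, q_2) \leq \usecond{euc_teich}$ together imply $d_\mathcal{T}(\pi(q_1), \pi(q_2)) \leq 1$.

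\textbf{Plan of proof.} The plan is to derive this immediately from the first half of Lemma~\ref{lem:euclidean_teichmuller_1}, which provides a continuous function $f_1 \colon \mathbf{R}_{\geq 0} \to \mathbf{R}_{\geq 0}$ with $f_1(r) \to 0$ as $r \to 0$ such that $d_\mathcal{T}(\pi(q_1), \pi(q_2)) \leq f_1(d_E(q_1, q_2))$ for all $q_1, q_2 \in \pi^{-1}(\mathcal{K})$. First I would note that, since $f_1$ is continuous with $f_1(r) \to 0$ as $r \to 0$, there exists a constant $\usecond{euc_teich} = \usecond{euc_teich}(\mathcal{K}) > 0$ (depending on $\mathcal{K}$ only through the function $f_1$, which in turn depends only on $\mathcal{K}$) such that $f_1(r) \leq 1$ for all $0 \leq r \leq \usecond{euc_teich}$. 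Then, given $q_1 \in \pi^{-1}(\mathcal{K})$ and $q_2 \in \mathcal{Q}^1\mathcal{T}_g$ with $d_E(q_1, q_2) \leq \usecond{euc_teich}$, one first needs to know that $q_2 \in \pi^{-1}(\mathcal{K}')$ for some compact set in order to apply the lemma — but in fact the lemma as stated only requires $q_1, q_2 \in \pi^{-1}(\mathcal{K})$; since this is slightly delicate I would instead argue directly: apply Lemma~\ref{lem:euclidean_teichmuller_1} with the slightly larger compact set $\mathrm{Nbhd}_1(\mathcal{K})$ in place of $\mathcal{K}$ after first establishing, by shrinking $\usecond{euc_teich}$ if necessary, that $q_2$ lies in $\pi^{-1}(\mathrm{Nbhd}_1(\mathcal{K}))$.

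\textbf{Bootstrapping the containment.} The one subtlety — and the only place requiring care — is the circularity between ``$q_2 \in \pi^{-1}(\mathcal{K}')$'' and ``$d_\mathcal{T}(\pi(q_1),\pi(q_2)) \leq 1$'': to apply Lemma~\ref{lem:euclidean_teichmuller_1} to the pair $(q_1,q_2)$ one wants both points in a fixed compact set, yet the conclusion about $q_2$ is what we are trying to prove. The clean way around this is to apply the lemma with compact set $\mathcal{K}_1 := \mathrm{Nbhd}_1(\mathcal{K})$, obtaining a function $f_1 = f_{1,\mathcal{K}_1}$ with $f_1(r)\to 0$, and to use a connectedness/continuity argument along the $d_E$-geodesic (or along a path of $d_E$-length close to $d_E(q_1,q_2)$) from $q_1$ to $q_2$: let $t^* \in [0,1]$ be the supremum of times $t$ such that the point $\gamma(s)$ on this path satisfies $d_\mathcal{T}(\pi(q_1),\pi(\gamma(s))) \le 1$ (hence $\gamma(s) \in \pi^{-1}(\mathcal{K}_1)$) for all $s \le t$. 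If $\usecond{euc_teich}$ is chosen so that $f_1(\usecond{euc_teich}) < 1$, then Lemma~\ref{lem:euclidean_teichmuller_1} applied on $\pi^{-1}(\mathcal{K}_1)$ gives a strict inequality $d_\mathcal{T}(\pi(q_1),\pi(\gamma(t^*))) \le f_1(d_E(q_1,\gamma(t^*))) \le f_1(\usecond{euc_teich}) < 1$, so by continuity $t^*$ cannot be an interior point and must equal $1$; thus $\gamma(1) = q_2 \in \pi^{-1}(\mathcal{K}_1) = \pi^{-1}(\mathrm{Nbhd}_1(\mathcal{K}))$ and $d_\mathcal{T}(\pi(q_1),\pi(q_2)) \le 1$.

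\textbf{Main obstacle.} There is no real obstacle here — this is a genuinely routine deduction — but the cleanest writeup should make explicit the open/closed bootstrap used to upgrade Lemma~\ref{lem:euclidean_teichmuller_1} (whose quantitative content is stated for pairs both lying in a fixed compact set) to a statement where only $q_1$ is a priori controlled. Alternatively, if one is willing to read Lemma~\ref{lem:euclidean_teichmuller_1} as already allowing $q_2$ to be arbitrary with only $q_1 \in \pi^{-1}(\mathcal{K})$ (which is how the corollary is phrased), the proof collapses to a single line: pick $\usecond{euc_teich}$ with $f_1(\usecond{euc_teich}) \le 1$ and invoke the lemma directly, then note $\pi(q_2) \in B_1(\pi(q_1)) \subseteq \mathrm{Nbhd}_1(\mathcal{K})$, whence $q_2 \in \pi^{-1}(\mathrm{Nbhd}_1(\mathcal{K}))$. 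Either way, the dependence of $\usecond{euc_teich}$ on $\mathcal{K}$ alone is inherited directly from the dependence of $f_1$ on $\mathcal{K}$ alone.
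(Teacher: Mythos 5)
Your proposal is correct and matches the paper, which indeed deduces the corollary directly from the first half of Lemma \ref{lem:euclidean_teichmuller_1} by choosing $\usecond{euc_teich}$ with $f_1 \leq 1$ on $[0,\usecond{euc_teich}]$; your additional open/closed bootstrap along a path into $\pi^{-1}(\mathrm{Nbhd}_1(\mathcal{K}))$ is a valid (and welcome) way to handle the fact that the lemma is stated for pairs with both points over a fixed compact set, a point the paper leaves implicit. Only cosmetic remark: in the bootstrap you write $f_1(d_E(q_1,\gamma(t^*))) \leq f_1(\usecond{euc_teich})$, which tacitly assumes monotonicity of $f_1$; this is repaired by your own earlier choice of $\usecond{euc_teich}$ so that $f_1(r) < 1$ for all $r \in [0,\usecond{euc_teich}]$, which is all the argument needs.
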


\subsection*{The modified Hodge metric.} The Hodge norm is a convenient norm to consider on the fibers of the tangent bundle of $\qut(\mathbf{1})$ due to how nicely it behaves with respect to the Teichmüller geodesic flow; see Theorem \ref{theo:spectral_gap}, for instance. Issues arise when trying to use the Hodge norm  to define a metric on all of $\qut$. Indeed, the Hodge norm in some directions tangent to the multiple zero locus might vanish. 

In \cite[\S 3.3.2]{ABEM12}, a modification of the Hodge norm is introduced to circumvent these issues. The modified Hodge norm can be used to define a $\mcg$-invariant metric along the leaves of the strongly stable and stongly unstable foliations $\mathcal{F}^{ss}$ and $\mathcal{F}^{uu}$ of $\qut$. We denote this metric by $d_H$ and refer to it as the modified Hodge metric. For the rest of this paper, we denote by $\epsilon_\mathrm{m} > 0$ the Margulis constant of the hyperbolic plane. The construction of the modified Hodge norm, and so the definition of the modified modified Hodge metric, depends on the choice of a parameter $0 < \epsilon_{\mathrm{sb}} \label{e:sb} < \epsilon_{\mathrm{m}}$ used to define \textit{short bases}. 

The following important theorem, corresponding to \cite[Theorem 3.10]{ABEM12}, shows that the modified Hodge metric is comparable to the Euclidean metric. In particular, by Lemma \ref{lem:euclidean_teichmuller_1}, the modified Hodge metric is absolutely continuous with respect to the Teichmüller metric.

\begin{theorem}
	\label{theo:euclidean_hodge_1}
	Let $\mathcal{K} \subseteq \mathcal{T}_g$ compact. For every pair $q_1,q_2 \in \pi^{-1}(\mathcal{K})$ on the same leaf of $\mathcal{F}^{uu}$ or $\mathcal{F}^{ss}$,
	\[
	d_E(q_1,q_2) \preceq_\mathcal{K} d_H(q_1,q_2).
	\]
	Moreover, if  $q_1,q_2 \in \pi^{-1}(\mathcal{K})$ are on the same leaf of $\mathcal{F}^{uu}$ or $\mathcal{F}^{ss}$ and satisfy $d_E(q_1,q_2) < 1$, then
	\[
	d_H(q_1,q_2) \preceq_{\mathcal{K}} d_E(q_1,d_2) \cdot | \log d_E(q_1,q_2)|^{1/2}.
	\]
\end{theorem}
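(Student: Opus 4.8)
The plan is to follow the strategy of \cite[\S 3.3]{ABEM12}: reduce the comparison of the two length metrics to a pointwise comparison of the corresponding fiberwise norms along the strong leaves, and then integrate along efficient paths. First I would recall the construction of the modified Hodge norm in detail. For $q$ near the multiple zero locus the canonical double cover $\widetilde X$ of $X=\pi(q)$ develops nodes (its branch points collide), so the ordinary Hodge norm on $H^1_\odd(\widetilde X;\mathbf R)$ degenerates: in the directions associated with short saddle connections it blows up like $|\log\ell_{\min}(q)|^{1/2}$ relative to the flat (period) norm. The modified Hodge norm is defined by choosing a short basis, with threshold $\epsilon_{\mathrm{sb}}<\epsilon_\mathrm{m}$, and renormalizing the Hodge norm on the span of the short classes so that it stays within a bounded factor of the flat norm there, while leaving it essentially unchanged on a complementary subspace. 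The two pointwise facts I would isolate, whose proofs are the analytic heart and rest on the standard estimates for Hodge norms near the boundary of moduli space together with the short-basis bookkeeping of \cite[\S 3.3.2]{ABEM12}, are: (i) $\|v\|_{E,q}\preceq_g\|v\|_{H,q}$ for every $q\in\qut$ and every $v$ tangent to $\mathcal F^{uu}$ or $\mathcal F^{ss}$; and (ii) for $q\in\pi^{-1}(\mathcal K')$ with $\mathcal K'\subseteq\tt$ compact, $\|v\|_{H,q}\preceq_{\mathcal K'}\|v\|_{E,q}\cdot\bigl(1+|\log\ell_{\min}(q)|^{1/2}\bigr)$, with $\|v\|_{H,q}\asymp_{\mathcal K'}\|v\|_{E,q}$ away from the multiple zero locus.

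Granting (i), the lower bound $d_E(q_1,q_2)\preceq_\mathcal{K} d_H(q_1,q_2)$ is immediate: any path joining $q_1$ to $q_2$ inside a leaf of $\mathcal F^{uu}$ or $\mathcal F^{ss}$ has $d_E$-length at most a constant times its $d_H$-length, so one takes the infimum. For the upper bound, fix $q_1\in\pi^{-1}(\mathcal K)$ and $q_2$ on the same strong leaf with $\rho:=d_E(q_1,q_2)<1$. By Corollary \ref{cor:euclidean_teichmuller_1}, the whole $d_E$-ball of radius $1$ around $q_1$ inside the leaf lies in $\pi^{-1}(\mathrm{Nbhd}_1(\mathcal K))=\pi^{-1}(\mathcal K')$ for a compact $\mathcal K'$, so (ii) applies along any leafwise path from $q_1$ to $q_2$ of $d_E$-length at most $1$. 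Since the leaves of $\mathcal F^{uu}$ and $\mathcal F^{ss}$ are affine in period coordinates, up to the bounded distortion of Lemma \ref{lem:euc_fund_prop} a $d_E$-geodesic segment $\gamma(s)$, $0\le s\le\rho$, between them is a straight segment; parametrizing by Euclidean arclength, $d_H(q_1,q_2)\le\int_0^\rho\|\dot\gamma(s)\|_{H,\gamma(s)}\,ds\preceq_{\mathcal K}\int_0^\rho\bigl(1+|\log\ell_{\min}(\gamma(s))|^{1/2}\bigr)\,ds$, so it suffices to bound $\ell_{\min}(\gamma(s))$ from below along the segment. By Masur's compactness criterion only finitely many saddle connections have bounded length on $\pi^{-1}(\mathcal K')$, so on the chart $\ell_{\min}$ coincides, up to a constant depending on $\mathcal K'$, with the minimum of finitely many moduli $|z_{\gamma_j}|$ of linear period coordinates, hence is Lipschitz with respect to $d_E$-arclength along the leaf. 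Consequently $\ell_{\min}(\gamma(s))\ge\ell_{\min}(q_1)-Cs$, so on the portion of the segment where $\ell_{\min}$ is small one has $\ell_{\min}(\gamma(s))\succeq_{\mathcal K}s$, whence $\int_0^\rho|\log\ell_{\min}(\gamma(s))|^{1/2}\,ds\preceq_{\mathcal K}\int_0^\rho|\log s|^{1/2}\,ds\asymp\rho\,|\log\rho|^{1/2}$, giving $d_H(q_1,q_2)\preceq_\mathcal{K} d_E(q_1,q_2)\cdot|\log d_E(q_1,q_2)|^{1/2}$ as desired.

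The main obstacle, and the step I expect to require the most care, is the pointwise estimate (ii): controlling the modified Hodge norm by the Euclidean norm up to exactly a square root of a logarithm requires tracking, through the short-basis construction, how each renormalized block of the Hodge norm compares to the corresponding block of the flat norm, and this is where one genuinely uses the asymptotics of harmonic forms on a surface with short loops. The second delicate point is the localization estimate just described — that along a Euclidean-geodesic segment of length $\rho$ one cannot linger where $\ell_{\min}$ is much smaller than the distance already travelled from $q_1$ — which is what converts the divergent integrand $|\log\ell_{\min}|^{1/2}$ into the clean bound $\rho\,|\log\rho|^{1/2}$; this relies crucially on the affineness of the strong leaves in period coordinates (Lemma \ref{lem:euc_fund_prop}) and on the Lipschitz dependence of the finitely many relevant saddle-connection lengths. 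Everything else is routine: assembling (i), (ii), the reduction to leafwise paths, and the passage from fiberwise norms to length metrics.
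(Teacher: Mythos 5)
First, note that the paper does not prove this statement at all: it is imported verbatim as \cite[Theorem 3.10]{ABEM12}, so the only comparison available is with the argument in \cite[\S 3.3]{ABEM12}, whose general scheme (pointwise comparison of the leafwise norms, then integration along a leafwise segment that stays over $\mathrm{Nbhd}_1(\mathcal{K})$ by Corollary \ref{cor:euclidean_teichmuller_1}) your outline does follow. The lower bound via the pointwise domination of the Euclidean norm by the \emph{modified} Hodge norm is fine, and your use of Lemma \ref{lem:euc_fund_prop} and of the Lipschitz behaviour of saddle-connection lengths (essentially Proposition \ref{prop:systole_euclidean}) is in the right spirit.

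The genuine gap is in the upper bound, at the localization step. From the Lipschitz estimate you only get $\ell_{\min}(\gamma(s))\ge\ell_{\min}(q_1)-Cs$, which is vacuous once $s\gtrsim\ell_{\min}(q_1)$; it does not imply $\ell_{\min}(\gamma(s))\succeq_{\mathcal K}s$, and that inequality is simply false in general. The theorem allows $q_1$ to lie arbitrarily close to (or on) the multiple zero locus: if $\ell_{\min}(q_1)=\epsilon\ll\rho=d_E(q_1,q_2)$ and the segment moves in leaf directions that leave the periods of the short saddle connections essentially unchanged, then $\ell_{\min}(\gamma(s))\asymp\epsilon$ along the whole segment, and your bound produces $\rho\cdot(1+|\log\epsilon|^{1/2})$, which is not $O_{\mathcal K}(\rho\,|\log\rho|^{1/2})$ since $\epsilon$ is independent of $\rho$. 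In other words, the scalar pointwise estimate (ii), $\|v\|_{H}\preceq\|v\|_{E}(1+|\log\ell_{\min}|^{1/2})$, is too coarse to close the argument: the $|\log|^{1/2}$ loss is confined to the few directions that pair nontrivially with the cycles on the double cover that become short as zeroes collide, and it is precisely motion in those directions that increases $\ell_{\min}$ at unit speed. The proof in \cite{ABEM12} exploits this through the short-basis decomposition built into the definition of the modified Hodge norm, estimating the components $\int_{\alpha_i}v$ separately from the rest; with only the scalar bound plus the (false) claim that $\ell_{\min}$ grows linearly along any Euclidean segment, the conclusion does not follow. (A further, smaller point: the straight segment in period coordinates may meet the multiple zero locus, where the chart and the estimate (ii) are not available, so even the refined argument needs an approximation or a case analysis there.)
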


Directly from Lemma \ref{lem:euclidean_teichmuller_1} and Theorem \ref{theo:euclidean_hodge_1} we deduce the following corollary.

\newcond{d:hodge_teich}
\begin{corollary}
	\label{cor:euclidean_hodge_teichmuller_1}
	Let $\mathcal{K} \subseteq \mathcal{T}_g$ be a compact subset. There exists a constant $\usecond{d:hodge_teich}:= \usecond{d:hodge_teich}(\mathcal{K})>0$ such that if $q_1,q_2 \in \mathcal{Q}^1\mathcal{T}_g$ are on the same leaf of $\mathcal{F}^{uu}$ or $\mathcal{F}^{ss}$ and satisfy $q_1 \in \pi^{-1}(\mathcal{K})$ and $d_H(q_1,q_2) \leq \usecond{d:hodge_teich}$, then $d_\mathcal{T}(\pi(q_1),\pi(q_2)) \leq 1$. In particular, $q_2 \in \pi^{-1}(\mathrm{Nbhd}_1(\mathcal{K}))$.
\end{corollary}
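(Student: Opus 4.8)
\textbf{Proof plan for Corollary \ref{cor:euclidean_hodge_teichmuller_1}.} The plan is to chain together the two preceding results, Lemma \ref{lem:euclidean_teichmuller_1} and Theorem \ref{theo:euclidean_hodge_1}, in the obvious way, being careful about the intermediate thresholds. First I would fix a compact subset $\mathcal{K} \subseteq \mathcal{T}_g$ and let $f_1 \colon \mathbf{R}_{\geq 0} \to \mathbf{R}_{\geq 0}$ be the continuous function with $f_1(r) \to 0$ as $r \to 0$ associated to the slightly larger compact set $\mathrm{Nbhd}_1(\mathcal{K})$ by Lemma \ref{lem:euclidean_teichmuller_1} (one needs the enlargement because $q_2$ is only known a priori to lie near $\mathcal{K}$, not in it; alternatively one applies the lemma to $\mathcal{K}$ after first arranging $q_2 \in \pi^{-1}(\mathrm{Nbhd}_1(\mathcal{K}))$, which is exactly what Corollary \ref{cor:euclidean_teichmuller_1} gives us). Since $f_1$ is continuous and vanishes at $0$, there is $r_0 = r_0(\mathcal{K}) > 0$ such that $f_1(r) \leq 1$ for all $0 \leq r \leq r_0$.

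Next I would use Theorem \ref{theo:euclidean_hodge_1} to convert a bound on $d_H(q_1,q_2)$ into a bound on $d_E(q_1,q_2)$. The function $t \mapsto t \cdot |\log t|^{1/2}$ tends to $0$ as $t \to 0^+$, so the second estimate of Theorem \ref{theo:euclidean_hodge_1} — valid once $d_E(q_1,q_2) < 1$ — shows that $d_E(q_1,q_2) < 1$ together with $q_1, q_2$ on a common strong leaf forces $d_H(q_1,q_2)$ to be small; conversely, I want to run this the other direction. The cleanest route: let $\usecone{d:hodge_teich}$ be small enough that whenever $q_1 \in \pi^{-1}(\mathcal{K})$, $q_1,q_2$ lie on the same leaf of $\mathcal{F}^{uu}$ or $\mathcal{F}^{ss}$, and $d_H(q_1,q_2) \leq \usecone{d:hodge_teich}$, the first inequality of Theorem \ref{theo:euclidean_hodge_1}, namely $d_E(q_1,q_2) \preceq_\mathcal{K} d_H(q_1,q_2)$, yields $d_E(q_1,q_2) \leq \min\{r_0, \usecond{euc_teich}, 1\}$, where $\usecond{euc_teich} = \usecond{euc_teich}(\mathcal{K})$ is the constant from Corollary \ref{cor:euclidean_teichmuller_1}. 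Concretely, if $d_E(q_1,q_2) \leq \usecond{euc_teich}$ and $q_1 \in \pi^{-1}(\mathcal{K})$ then Corollary \ref{cor:euclidean_teichmuller_1} already gives $d_\mathcal{T}(\pi(q_1),\pi(q_2)) \leq 1$, so in fact invoking that corollary short-circuits the argument and I do not even need to separately track $f_1$; I only need $\usecone{d:hodge_teich}$ small enough, in terms of the implicit constant in $d_E \preceq_\mathcal{K} d_H$, that $d_H(q_1,q_2) \leq \usecone{d:hodge_teich}$ implies $d_E(q_1,q_2) \leq \usecond{euc_teich}$.

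Putting these together: set $\usecone{d:hodge_teich} = \usecone{d:hodge_teich}(\mathcal{K}) := \usecond{euc_teich}(\mathcal{K}) / C_\mathcal{K}$, where $C_\mathcal{K}$ is the implicit constant in the first inequality of Theorem \ref{theo:euclidean_hodge_1} for the compact set $\mathcal{K}$ (enlarging if necessary so that $C_\mathcal{K} \geq 1$). Then if $q_1,q_2 \in \mathcal{Q}^1\mathcal{T}_g$ lie on the same leaf of $\mathcal{F}^{uu}$ or $\mathcal{F}^{ss}$, $q_1 \in \pi^{-1}(\mathcal{K})$, and $d_H(q_1,q_2) \leq \usecone{d:hodge_teich}$, Theorem \ref{theo:euclidean_hodge_1} gives $d_E(q_1,q_2) \leq C_\mathcal{K} \cdot d_H(q_1,q_2) \leq \usecond{euc_teich}$, and then Corollary \ref{cor:euclidean_teichmuller_1} gives $d_\mathcal{T}(\pi(q_1),\pi(q_2)) \leq 1$, hence $q_2 \in \pi^{-1}(\mathrm{Nbhd}_1(\mathcal{K}))$. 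I expect no real obstacle here; the only thing to be mildly careful about is the logical direction of Theorem \ref{theo:euclidean_hodge_1} — its \emph{first} inequality ($d_E \preceq d_H$) is the one we need, and it holds unconditionally for points on a common strong leaf in $\pi^{-1}(\mathcal{K})$, so no smallness hypothesis on $d_E$ is required to get started; the delicate second inequality of that theorem is not needed at all for this corollary.
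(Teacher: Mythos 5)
Your overall route is the one the paper intends: the paper gives no written argument for this corollary, asserting it follows directly from Lemma \ref{lem:euclidean_teichmuller_1} and Theorem \ref{theo:euclidean_hodge_1}, and your chaining of the first inequality of Theorem \ref{theo:euclidean_hodge_1} with Corollary \ref{cor:euclidean_teichmuller_1}, with threshold of size $\usecond{euc_teich}/C_\mathcal{K}$, is exactly that deduction; you are also right that only the inequality $d_E \preceq_\mathcal{K} d_H$ is needed and not the delicate reverse estimate.

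One step, however, is not literally licensed as you wrote it, and it is the only genuine content of the asymmetric hypothesis. The first inequality of Theorem \ref{theo:euclidean_hodge_1} is stated for pairs with \emph{both} $q_1,q_2 \in \pi^{-1}(\mathcal{K})$, whereas the corollary assumes only $q_1 \in \pi^{-1}(\mathcal{K})$; the location of $q_2$ is precisely what is being established. You noticed this issue for the Euclidean-to-Teichm\"uller step and discharged it via Corollary \ref{cor:euclidean_teichmuller_1}, but the same issue occurs one step earlier: writing $d_E(q_1,q_2) \leq C_\mathcal{K} \cdot d_H(q_1,q_2)$ with a constant depending only on $\mathcal{K}$ presupposes knowledge of where $q_2$ lies, and your closing remark that the inequality ``holds unconditionally for points on a common strong leaf in $\pi^{-1}(\mathcal{K})$'' is exactly where the hypothesis fails. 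The repair is routine: take the constants of Theorem \ref{theo:euclidean_hodge_1} and Lemma \ref{lem:euclidean_teichmuller_1} for the enlarged compact set $\mathcal{K}' := \mathrm{Nbhd}_1(\mathcal{K})$; since $d_H$ is a path metric along the leaf, pick a leafwise path $\rho$ from $q_1$ to $q_2$ of $d_H$-length at most $2\,d_H(q_1,q_2)$, and let $T$ be the supremum of times $t$ with $\rho([0,t]) \subseteq \pi^{-1}(\mathcal{K}')$. For $s \leq T$ both $q_1$ and $\rho(s)$ lie over $\mathcal{K}'$, so $d_E(q_1,\rho(s)) \leq 2\,C_{\mathcal{K}'}\,d_H(q_1,q_2)$, and Lemma \ref{lem:euclidean_teichmuller_1} applied to $\mathcal{K}'$ gives $d_\mathcal{T}(\pi(q_1),\pi(\rho(s))) \leq f_1\bigl(2\,C_{\mathcal{K}'}\,\usecond{d:hodge_teich}\bigr) \leq 1/2$ once $\usecond{d:hodge_teich}$ is chosen small; this strict margin forces $T=1$, and the bound at $s=1$ is the corollary. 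With this continuity argument inserted, your choice of threshold goes through essentially verbatim.
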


The modified Hodge metric behaves nicely with respect to the Teichmüller geodesic flow. Recall that, for every $\epsilon > 0$ and every $t>0$, 
\[
K_\epsilon(t) := \{q \in \qum  \colon |\{s \in [0,t] \ | \ a_s q \in K_\epsilon \}| \geq (1/2)t \}.
\]
The following theorem corresponds to \cite[Theorem 3.15]{ABEM12}. 

\newconcc{cc:hodge_decay_stable} \newconcc{cc:hds2} \newconc{c:hds}
\begin{theorem}
	\label{theo:hodge_distance_decay_stable}
	Let $0 < \epsilon_{\mathrm{sb}} < \epsilon_{\mathrm{m}}$ be the parameter used to define short bases. There exists a constant $\useconcc{cc:hodge_decay_stable} = \useconcc{cc:hodge_decay_stable}(g,\epsilon_\mathrm{sb}) > 0$ such that for every $q_1,q_2 \in \qut$ on the same leaf of $\mathcal{F}^{ss}$ and every $t \geq 0$,
	\[
	d_H(a_{t} q_1, a_{t} q_2) \leq \useconcc{cc:hodge_decay_stable} \cdot d_H(q_1,q_2). 
	\]
	Let $\epsilon > 2  \epsilon_{\mathrm{sb}}$. Consider $q_1,q_2 \in \mathcal{Q}^1\mathcal{T}_g$ on the same leaf of $\mathcal{F}^{ss}$ with $d_H(q_1,q_2) < 1$. Suppose that $t >0$ is such that $q \in p^{-1}(K_{\epsilon}(t))$. Then, for every $0< s < t$,
	\begin{equation*}
	\label{eq:hodge_contraction_stable}
	d_H(a_{s} q_1, a_{s} q_2) \leq \useconcc{cc:hds2} \cdot e^{-\useconc{c:hds}s} \cdot d_H(q_1,q_2),
	\end{equation*}
	where $\useconcc{cc:hds2} = \useconcc{cc:hds2}(g,\epsilon) > 0$ and $\useconc{c:hds} = \useconc{c:hds} (g,\epsilon) > 0$ are constants depending only on $g$ and $\epsilon$.
\end{theorem}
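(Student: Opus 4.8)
The plan is to reduce both assertions to a cocycle estimate for the derivative of the Teichm\"uller geodesic flow restricted to the strongly stable subbundle, measured in the modified Hodge norm, and then to combine two inputs: the contraction properties of the modified Hodge norm constructed in \cite[\S 3.3.2]{ABEM12}, which keep the flow non-expanding along $\mathcal{F}^{ss}$ even near the multiple zero locus, and Forni's spectral gap bound (Theorem \ref{theo:spectral_gap}, in the form of Corollary \ref{cor:good_trajectory}), which upgrades non-expansion to genuine exponential decay over the compact sets $K_\epsilon$.

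First I would set up the reduction. Since $a_t$ carries leaves of $\mathcal{F}^{ss}$ to leaves of $\mathcal{F}^{ss}$, a path $\gamma$ in $\alpha^{ss}(q_1)$ from $q_1$ to $q_2$ is sent by $a_t$ to a path in $\alpha^{ss}(a_tq_1)$ from $a_tq_1$ to $a_tq_2$; choosing $\gamma$ so that its modified Hodge length nearly realizes $d_H(q_1,q_2)$ gives $d_H(a_tq_1,a_tq_2)\le\int\|d(a_t)_{\gamma(u)}\dot\gamma(u)\|_{H,a_t\gamma(u)}\,du$, so it suffices to bound the operator norm of $d(a_t)_q$ on $E^{ss}(q)$ between the modified Hodge norms at $q$ and $a_tq$, uniformly in $q$ and $t$. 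I would then use the splitting $E^{ss}(q)=\mathbf{R}\cdot[i\Re(\omega)]\oplus\big(i\cdot\mathrm{span}(\Re(\omega),\Im(\omega))^\perp\big)$ together with the fact that in period coordinates $d(a_t)$ scales imaginary parts by $e^{-t}$: a direct computation shows the flow contracts exactly like $e^{-2t}$ on the line $\mathbf{R}\cdot[i\Re(\omega)]$, while on the complementary subspace $d(a_t)$ acts as $e^{-t}$ times the Kontsevich--Zorich cocycle on the $a_t$-invariant subspace $\mathrm{span}(\Re(\omega),\Im(\omega))^\perp$, whose Hodge-norm growth is at most $e^{\int_0^t E(a_uq)\,du}<e^{t}$ by Theorem \ref{theo:spectral_gap}; hence the net factor is $<1$. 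In the directions where the plain Hodge norm degenerates (those of short saddle connections) I would invoke the design of the modified Hodge norm in \cite[\S 3.3.2]{ABEM12}, which is tailored precisely so that the flow remains non-expanding along $\mathcal{F}^{ss}$ there as well. Assembling these cases gives the first assertion with a uniform constant $\useconcc{cc:hodge_decay_stable}$.

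For the second assertion I would run the same analysis at the infinitesimal level along the orbit $u\mapsto a_uq$, where $q:=q_1$ and $p(q)\in K_\epsilon(t)$. On a time interval with $a_uq\in K_\epsilon$ the orbit is $\epsilon$-thick, so by the choice $\epsilon>2\epsilon_{\mathrm{sb}}$ no saddle connection entering a short basis is short, the modified Hodge norm agrees with the plain Hodge norm up to uniform constants, and Corollary \ref{cor:good_trajectory} yields $E(a_uq)\le 1-\useconc{c:good_traj}$; combined with the $e^{-u}$ coordinate contraction this makes $\tfrac{d}{du}\log\|v(u)\|_{H,a_uq}\le-\useconc{c:good_traj}$ for $v$ tangent to $\mathcal{F}^{ss}$. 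On the remaining times the first assertion gives $\tfrac{d}{du}\log\|v(u)\|_{H,a_uq}\le 0$. Integrating over $[0,s]$ and comparing the lengths of $\gamma$ and $a_s\circ\gamma$ produces
\[
d_H(a_sq_1,a_sq_2)\le\useconcc{cc:hds2}\cdot e^{-\useconc{c:hds}\cdot\big|\{u\in[0,s]\,:\,a_uq\in K_\epsilon\}\big|}\cdot d_H(q_1,q_2),
\]
and I would then use that $p(q)\in K_\epsilon(t)$ bounds the $K_\epsilon$-sojourn time along $[0,s]$ from below --- together with the hypothesis $d_H(q_1,q_2)<1$, which keeps the orbit in the regime where Theorem \ref{theo:euclidean_hodge_1} and the short-basis analysis of \cite[\S 3.3]{ABEM12} apply --- to conclude with $\useconc{c:hds}\asymp_{g,\epsilon}\useconc{c:good_traj}$.

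The hard part will be the interface between the short-basis modification of the Hodge norm and the thick--thin decomposition of the orbit: one must verify that $\epsilon>2\epsilon_{\mathrm{sb}}$ is exactly the threshold below which the modification is inactive on the $\epsilon$-thick set, so that Forni's gap applies verbatim there, and then control the accumulation of the harmless non-expanding factors over the thin excursions against the exponential gain on the thick part. Making the reduction rigorous despite the modified Hodge norm being only Lipschitz is a secondary, routine point, handled by comparing lengths of the paths $\gamma$ and $a_t\circ\gamma$ rather than differentiating the distance function. Throughout, the argument closely follows that of \cite[Theorem 3.15]{ABEM12}.
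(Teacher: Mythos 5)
The first thing to say is that the paper itself contains no proof of this statement: it is imported wholesale from \cite{ABEM12} (it is Theorem 3.15 there), so there is no internal argument to compare yours against. Your sketch follows exactly the strategy of that source — reduce to an operator bound for $d(a_t)$ on $E^{ss}$ between modified Hodge norms, split $E^{ss}(q)$ into $\mathbf{R}\cdot[i\Re(\omega)]$ (contracting like $e^{-2t}$) and $i\cdot\mathrm{span}(\Re(\omega),\Im(\omega))^{\perp}$ (where $d(a_t)$ is $e^{-t}$ times the Kontsevich--Zorich cocycle, controlled by Forni's bound, i.e.\ by Corollary \ref{cor:good_trajectory} on the thick part), and handle the degenerate directions by the design of the modified norm. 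That is the right skeleton, and your orthogonal splitting and the rate computations on it are correct. Be aware, though, that the two items you defer to ``the design of the modified Hodge norm'' and flag as the hard part are precisely where the content of \cite{ABEM12}, \S 3.3--3.4 lives; invoking them is not lighter than invoking the theorem itself, which is all this paper does.

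There is, however, one concrete gap in your final integration step. You assert that $p(q_1)\in K_\epsilon(t)$, i.e.\ $|\{u\in[0,t] : a_uq_1\in K_\epsilon\}|\ge t/2$, ``bounds the $K_\epsilon$-sojourn time along $[0,s]$ from below'' for every $0<s<t$. It does not: all the thick time may lie in $[t/2,t]$, so $|\{u\in[0,s] : a_uq_1\in K_\epsilon\}|$ can be zero for $s\le t/2$, and the lower bound $s-t/2$ is not comparable to $s$ uniformly in $s<t$. What your argument actually yields is
\[
d_H(a_sq_1,a_sq_2)\ \preceq_{g,\epsilon}\ e^{-c\,|\{u\in[0,s]\,:\,a_uq_1\in K_\epsilon\}|}\cdot d_H(q_1,q_2),
\]
which gives the stated exponent $e^{-cs}$ only when $s$ is a definite fraction of $t$ (in particular at the endpoint $s=t$, which is in fact the only case in which such contraction statements are applied later in the paper, via the unstable analogue Theorem \ref{theo:hodge_distance_decay_unstable} in Lemma \ref{lem:thin_annular_sectors}). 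To obtain the inequality for all intermediate $0<s<t$ as stated, you need either an additional source of decay during the thin excursions (this is part of what the modified-norm bookkeeping in \cite{ABEM12} is for) or a hypothesis on the interval $[0,s]$ itself; the deduction as you wrote it does not close.
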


For every $\epsilon > 0$ and every  $t>0$ denote
\[
K_\epsilon(-t) := \{q \in \qum  \colon |\{s \in [-t,0] \ | \ a_s q \in K_\epsilon \}| \geq (1/2)t \}.
\]
For future reference we also state the following version of Theorem \ref{theo:hodge_distance_decay_stable} for strongly unstable leaves.

\newconcc{cc:hdu1}  \newconcc{cc:hdu2}  \newconc{c:hdu}
\begin{theorem}
	\label{theo:hodge_distance_decay_unstable}
	Let $0 < \epsilon_{\mathrm{sb}} < \epsilon_{\mathrm{m}}$ be the parameter used to define short bases. There exists a constant $\useconcc{cc:hdu1} = \useconcc{cc:hdu1}(g,\epsilon_\mathrm{sb}) > 0$ such that for every $q_1,q_2 \in \qut$ on the same leaf of $\mathcal{F}^{uu}$ and every $t \geq 0$,
	\[
	d_H(a_{-t} q_1, a_{-t} q_2) \leq \useconcc{cc:hdu1} \cdot d_H(q_1,q_2). 
	\]
	Let $\epsilon > 2 \epsilon_\mathrm{sb}$. Consider $q_1,q_2 \in \mathcal{Q}^1\mathcal{T}_g$ on the same leaf of $\mathcal{F}^{uu}$ with $d_H(q_1,q_2) < 1$. Suppose that $t >0$ is such that $q \in p^{-1}(K_{\epsilon}(-t))$. Then, for every $0< s < t$,
	\begin{equation*}
	\label{eq:hodge_contraction}
	d_H(a_{-s} q_1, a_{-s} q_2) \leq \useconcc{cc:hdu2} \cdot e^{-\useconc{c:hdu}s} \cdot d_H(q_1,q_2),
	\end{equation*}
	where $\useconcc{cc:hdu2} := \useconcc{cc:hdu2}(g,\epsilon) > 0$ and $\useconc{c:hdu} := \useconc{c:hdu}(g,\epsilon) > 0$ are constants depending only on $g$ and $\epsilon$.
\end{theorem}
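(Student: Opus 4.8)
The plan is to deduce the unstable statement from the already-established stable statement (Theorem \ref{theo:hodge_distance_decay_stable}) by means of the time-reversal symmetry of the Teichmüller geodesic flow. Recall that the map $\iota\colon \qut \to \qut$ sending a quadratic differential $q$ to $-q$ (equivalently, acting by $r_{\pi/2}$, i.e. multiplication by $i$, or by the appropriate element of $\mathrm{SL}(2,\mathbf{R})$ that conjugates $a_t$ to $a_{-t}$) interchanges the roles of $\Re(q)$ and $\Im(q)$, hence interchanges the foliations $\mathcal{F}^{ss}$ and $\mathcal{F}^{uu}$, and satisfies $\iota \circ a_t = a_{-t} \circ \iota$. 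First I would record that this involution is an isometry for the modified Hodge metric $d_H$: indeed $d_H$ is built from the Hodge norm on $H^1_\odd(\widetilde{X};\mathbf{R})$, which only depends on the underlying Riemann surface and not on the choice of square root of $q$, together with the short-basis modification, which is also insensitive to replacing $q$ by $-q$; and $\iota$ fixes the projection $\pi(q)$, so it carries $\pi^{-1}(\mathcal{K})$ to itself. Likewise $\iota$ descends to $\qum$ and carries $K_\epsilon$ to $K_\epsilon$ and $K_\epsilon(t)$ to $K_\epsilon(-t)$, since $\ell_{\min}(-q) = \ell_{\min}(q)$ and $\{s \in [-t,0] : a_s q \in K_\epsilon\} = -\{s \in [0,t]: a_s(\iota q) \in K_\epsilon\}$.

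Granting these bookkeeping facts, the argument is a direct transport. Given $q_1, q_2 \in \qut$ on the same leaf of $\mathcal{F}^{uu}$, the points $\iota q_1, \iota q_2$ lie on the same leaf of $\mathcal{F}^{ss}$, with $d_H(\iota q_1, \iota q_2) = d_H(q_1, q_2)$. Applying the uniform bound from Theorem \ref{theo:hodge_distance_decay_stable} to the pair $\iota q_1, \iota q_2$ at time $t$ gives
\[
d_H(a_{-t} q_1, a_{-t} q_2) = d_H(\iota^{-1} a_t \iota q_1, \iota^{-1} a_t \iota q_2) = d_H(a_t \iota q_1, a_t \iota q_2) \leq \useconcc{cc:hodge_decay_stable} \cdot d_H(\iota q_1, \iota q_2) = \useconcc{cc:hodge_decay_stable} \cdot d_H(q_1, q_2),
\]
so one may take $\useconcc{cc:hdu1} := \useconcc{cc:hodge_decay_stable}$. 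For the contraction statement, if $q \in p^{-1}(K_\epsilon(-t))$ then $\iota q \in p^{-1}(K_\epsilon(t))$, and the hypotheses $d_H(q_1,q_2) < 1$, $\epsilon > 2\epsilon_{\mathrm{sb}}$ are preserved; applying the second half of Theorem \ref{theo:hodge_distance_decay_stable} to $\iota q_1, \iota q_2$ at interior times $0 < s < t$ yields $d_H(a_{-s} q_1, a_{-s} q_2) = d_H(a_s \iota q_1, a_s \iota q_2) \leq \useconcc{cc:hds2} \cdot e^{-\useconc{c:hds} s} \cdot d_H(q_1, q_2)$, so we may set $\useconcc{cc:hdu2} := \useconcc{cc:hds2}$ and $\useconc{c:hdu} := \useconc{c:hds}$, each depending only on $g$ and $\epsilon$ (resp. $g$ and $\epsilon_{\mathrm{sb}}$) as required.

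The only genuinely substantive point — and the step I expect to require the most care — is verifying that the time-reversal involution $\iota$ is an \emph{isometry} for the modified Hodge metric $d_H$, not merely that it swaps the two foliations. This requires unwinding the construction of the modified Hodge norm in \cite[\S 3.3.2]{ABEM12}: one must check that the short-basis data, the thin-part corrections, and the resulting norm on the tangent bundle along $\mathcal{F}^{uu}$ are pulled back by $\iota$ to the corresponding objects along $\mathcal{F}^{ss}$. Since $\iota$ acts on period coordinates simply by negation (an integral linear, hence volume- and lattice-preserving, map) and fixes the conformal structure $\pi(q)$, the Hodge norm is literally invariant; the subtlety is only in confirming that the combinatorial choice of short basis is compatible under $q \mapsto -q$, which holds because $\ell_\gamma(-q) = \ell_\gamma(q)$ for every saddle connection $\gamma$. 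With that in hand, everything else is the formal conjugation above, and in fact the theorem can simply be stated with $\useconcc{cc:hdu1} = \useconcc{cc:hodge_decay_stable}$, $\useconcc{cc:hdu2} = \useconcc{cc:hds2}$, $\useconc{c:hdu} = \useconc{c:hds}$.
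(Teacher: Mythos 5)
Your proposal is correct, and it is essentially the justification the paper leaves implicit: the paper states Theorem \ref{theo:hodge_distance_decay_unstable} without proof as the strongly unstable counterpart of Theorem \ref{theo:hodge_distance_decay_stable} (both going back to \cite[Theorem 3.15]{ABEM12}), and your conjugation by the time-reversal involution $q \mapsto -q$, which swaps $\Re$ and $\Im$, swaps $\mathcal{F}^{ss}$ and $\mathcal{F}^{uu}$, conjugates $a_t$ to $a_{-t}$, fixes $\pi(q)$ and $\ell_{\min}$, and hence sends $K_\epsilon(-t)$ to $K_\epsilon(t)$, is exactly the symmetry that makes the two statements equivalent with the same constants. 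One small correction: on period coordinates $q \mapsto -q$ acts by multiplication by $i$ (it exchanges $\Re([\omega])$ and $\Im([\omega])$ up to sign), not by negation; since the Hodge inner product is Hermitian, multiplication by $i$ is still a Hodge isometry carrying $E^{uu}(q)$ to $E^{ss}(-q)$, and the short-basis modification is unaffected because the flat metric of $-q$ equals that of $q$, so your argument goes through unchanged.
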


\subsection*{Bounding the measure of sectors in Teichmüller space.} We prove a general bound for the measure of a sector in Teichmüller space. This bound will be the main tool used in the proof of Theorem \ref{theo:thin_sector_sum}. Recall that $q_s \colon \mathcal{T}_g \times \mathcal{T}_g \to \mathcal{Q}^1\mathcal{T}_g$ denotes the map which to every pair $X,Y \in \mathcal{T}_g$ assigns the quadratic differential $q_s(X,Y) \in S(X)$ corresponding to the tangent direction at $X$ of the unique Teichmüller geodesic segment from $X$ to $Y$ and that, given $X \in \mathcal{T}_g$ and $V \subseteq S(X)$, 
\[
\text{Sect}_V(X) := \{Y \in \mathcal{T}_g \ | \ q_s(X,Y) \in V \}.
\] 
Recall that $\nu$ denotes the Thurston measure on $\mf$. Denote by $\overline{\nu}$ the function which to every measurable subset $A \subseteq \mf$ assigns the value $\overline{\nu}(A) = \nu([0,1] \cdot A)$. Consider the map $\Re \colon \qut \to \mf$. Given $W \subseteq \mathcal{Q}^1\mathcal{T}_g$ and $s > 0$, denote by $W(s) \subseteq \mathcal{Q}^1\mathcal{T}_g$ the set of $q_1 \in \mathcal{Q}^1\mathcal{T}_g$ such that there exists $q_2 \in W$ on the same leaf of $\mathcal{F}^{uu}$ as $q_1$ satisfying $d_H(q_1,q_2)<s$.  We prove the following general bound. 

\newconk{k:thin_sect_gen} \newconn{n:thin_sect_gen} \newconr{r:thin_sect_gen}
\begin{theorem}
	\label{theo:thin_sector_general}
	There exists a constant $\useconr{r:thin_sect_gen} = \useconr{r:thin_sect_gen}(g) > 0$ such that for every compact subset $\mathcal{K} \subseteq \tt$, every $X \in \mathcal{K}$, every measurable subset $V \subseteq S(X)$, every $0 < r < \useconr{r:thin_sect_gen}$, and every $R > 0$,
	\begin{align*}
	\mathbf{m}(\text{Nbhd}_{r}(B_R(X) \cap \text{Sect}_{V}(X) \cap \mathrm{Mod}_g \cdot \mathcal{K})) 
	\preceq_\mathcal{K} \overline{\nu}(\Re(V( \useconn{n:thin_sect_gen} e^{-\useconk{k:thin_sect_gen}  R}))) \cdot e^{hR} + e^{(h-\useconk{k:thin_sect_gen})R},
	\end{align*}
	where $\useconn{n:thin_sect_gen} = \useconn{n:thin_sect_gen}(g) > 0$ and $\useconk{k:thin_sect_gen} = \newconk{k:thin_sect_gen}(g) > 0$ are constants depending only on $g$.
\end{theorem}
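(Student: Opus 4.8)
The plan is to parametrize the sector $B_R(X) \cap \mathrm{Sect}_V(X)$ using the polar coordinates map $\Phi_X$ of $\S 3$, split the contribution according to whether the Teichmüller geodesic from $X$ to the target point spends at least half of its time in a fixed large compact set, and bound the two pieces separately. First I would fix the short-basis parameter $\epsilon_{\mathrm{sb}}$ and a threshold $\epsilon > 2\epsilon_{\mathrm{sb}}$ small enough (below the constants of Theorems~\ref{theo:thin_trajectories} and~\ref{theo:large_deviations_sum}), and write
\[
\mathrm{Nbhd}_r\!\big(B_R(X) \cap \mathrm{Sect}_V(X) \cap \mathrm{Mod}_g\cdot\mathcal{K}\big)
\subseteq \mathrm{Nbhd}_r\!\big(B_R(X,\mathcal{K},K_\epsilon)\big) \;\cup\; \mathrm{Nbhd}_r\!\big(B_R(X) \cap \mathrm{Sect}_V(X) \cap \mathrm{Mod}_g\cdot\mathcal{K} \cap p^{-1}(K_\epsilon(\cdot))\text{-part}\big),
\]
where the first set absorbs every target point whose geodesic is ``thin'' in the sense of $\S 7$. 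By Theorem~\ref{theo:thin_trajectories}, the $\mathbf{m}$-measure of the first set is $\preceq_{\mathcal{K}} e^{(h-\useconk{k:thint})R}$, which is of the claimed error type. So the heart of the matter is the ``thick'' contribution, i.e.\ the set of points $Y = \pi(a_T q)$ with $q = q_s(X,Y) \in V$, $T \le R$, and the geodesic $\{a_s q\}_{0 \le s \le T}$ spending at least half its time in $p^{-1}(K_\epsilon)$.

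For the thick part I would integrate in polar coordinates: $\mathbf{m}(\text{thick part})$ is bounded by $\int_0^R \int_{V \cap p^{-1}(K_\epsilon(t))} \Delta(q,t)\, ds_X(q)\, dt$ up to enlarging $\mathcal{K}$ to $\mathrm{Nbhd}_1(\mathcal{K})$ to absorb the $r$-neighborhood (using Corollaries~\ref{cor:euclidean_teichmuller_1} and~\ref{cor:euclidean_hodge_teichmuller_1} to control how the neighborhood spreads). On this domain Theorem~\ref{theo:polar_coordinates_estimate} gives $\Delta(q,t) \preceq_{\mathcal{K}} \lambda(a_t q)\lambda(q) e^{ht}$, and since $a_t q$ and $q$ lie in $\pi^{-1}(\mathrm{Nbhd}_1(\mathcal{K}))$ whenever the geodesic is thick enough near its endpoints, Proposition~\ref{prop:HM_bound_sum} together with the compactness coming from the $K_\epsilon$-condition bounds both Hubbard–Masur factors uniformly; so $\Delta(q,t) \preceq_{\mathcal{K}} e^{ht}$. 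Hence the thick integral is $\preceq_{\mathcal{K}} \int_0^R e^{ht}\, s_X(V \cap (\text{thick set at time }t))\, dt \preceq_{\mathcal{K}} e^{hR}\, s_X(V')$ for a slightly fattened $V'$. The point of the statement is to replace $s_X(V')$ by $\overline{\nu}(\Re(V(\useconn{n:thin_sect_gen} e^{-\useconk{k:thin_sect_gen} R})))$: by definitions~(\ref{eq:hm_3}), the fiberwise measure $s_X$ on $S(X) \cap \qut(\mathbf{1})$ pushes forward under $\Re|_{S(X)}$ to $\lambda^{-1}\,\overline{\nu}_X$, and comparing $\overline{\nu}_X$ (conned-off extremal-length-one measure) with the conned-off Thurston measure $\overline{\nu}$ on the relevant cones via a compactness argument on $\mathrm{Nbhd}_1(\mathcal{K})$ yields $s_X(V') \preceq_{\mathcal{K}} \overline{\nu}(\Re(V'))$. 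The fattening $V' = V(\useconn{n:thin_sect_gen} e^{-\useconk{k:thin_sect_gen}R})$ in the modified Hodge metric along $\mathcal{F}^{uu}$ is exactly what is needed to accommodate the $r$-neighborhood after flowing: a point within Teichmüller distance $r$ of $\pi(a_T q)$ corresponds, after flowing back by $a_{-T}$, to a quadratic differential within modified-Hodge distance $O(e^{-\useconc{c:hdu}T})$ of $q$ on the strongly unstable leaf, by Theorem~\ref{theo:hodge_distance_decay_unstable} (applied to the time-reversed thick geodesic, which satisfies the $K_\epsilon(-t)$ condition) combined with Lemma~\ref{lem:euclidean_teichmuller_1} to convert Teichmüller distance to $d_E$ and then Theorem~\ref{theo:euclidean_hodge_1} to convert $d_E$ to $d_H$. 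Since $T$ ranges over $[0,R]$, the worst case $T \approx$ small is problematic — but for small $T$ the ball $B_R(X)$ contributes negligibly, so one only needs $T \gtrsim \kappa' R$ for the main term, giving the exponent $\useconk{k:thin_sect_gen}$ in $e^{-\useconk{k:thin_sect_gen}R}$; the remaining range $T \lesssim \kappa' R$ contributes at most $e^{h\kappa' R}$, absorbed into the error term.

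The main obstacle I expect is the careful bookkeeping of how the $r$-neighborhood in $\tt$ propagates back to a neighborhood of $V$ inside the strongly unstable leaf: one must check that flowing a Teichmüller $r$-ball around $\pi(a_T q)$ backward along the geodesic stays in a neighborhood of the right shape (a $d_H$-neighborhood along $\mathcal{F}^{uu}$, transported homeomorphically to a $\overline{\nu}$-neighborhood of $\Re(V)$), uniformly over all thick geodesics, and that the exponential contraction rate from Theorem~\ref{theo:hodge_distance_decay_unstable} really kicks in — this requires the geodesic to be thick in the \emph{backward} sense, which one gets from the forward thickness by a time-reversal symmetry of the $K_\epsilon(t)$ condition, but only after discarding an initial and terminal segment, which is the source of the loss in the exponent. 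A secondary technical point is ensuring all the ``enlarge $\mathcal{K}$ to $\mathrm{Nbhd}_1(\mathcal{K})$'' steps are consistent and that the constants $\useconn{n:thin_sect_gen}$, $\useconk{k:thin_sect_gen}$, and $\useconr{r:thin_sect_gen}$ depend only on $g$ and not on $\mathcal{K}$ — this follows because the contraction constant $\useconc{c:hdu}$, the EMR exponent $\useconk{k:thint}$, and Forni's gap are all $g$-dependent only, while the $\mathcal{K}$-dependence is confined to the multiplicative constants hidden in $\preceq_{\mathcal{K}}$.
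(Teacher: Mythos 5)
Your thin/thick decomposition via Theorem \ref{theo:thin_trajectories} and your identification of the backward Hodge contraction (Theorem \ref{theo:hodge_distance_decay_unstable}) as the source of the fattening scale $e^{-\kappa R}$ both match the actual argument, but the central step of your thick-part estimate has a genuine gap. You propose to bound the measure of the thick part by integrating in polar coordinates and invoking Theorem \ref{theo:polar_coordinates_estimate} together with Proposition \ref{prop:HM_bound_sum} to conclude $\Delta(q,t)\preceq_{\mathcal K}e^{ht}$ uniformly. This is false in exactly the regime where the theorem is needed: the statement is applied (in the proof of Theorem \ref{theo:ball_equidistribution}) with $V=S(X)\setminus p^{-1}(K_\delta(\mathbf 1))$, i.e.\ with $q\in V$ arbitrarily close to the multiple zero locus, so $\ell_{\min}(q)$ has no lower bound. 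The average-thickness condition $q\in p^{-1}(K_\epsilon(t))$ controls the time spent in $K_\epsilon$ but says nothing about the endpoints, and $\pi(a_tq)\in\mcg\cdot\mathcal K$ constrains only the underlying Riemann surface, not the flat systole; hence neither $\ell_{\min}(q)$ nor $\ell_{\min}(a_tq)$ is bounded below. Proposition \ref{prop:HM_bound_sum} gives only the upper bound $\lambda\preceq\ell_{\min}^{-(h-1)}$, and the error term of Theorem \ref{theo:polar_coordinates_estimate} degenerates in the same way, so your route produces at best a bound with a factor $\ell_{\min}^{-2(h-1)}$ that blows up on the sets $V$ of interest -- which is precisely why the paper introduces this theorem as a separate device rather than deriving it from the polar-coordinates estimate. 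A secondary unjustified step is the replacement $s_X(V')\preceq\overline{\nu}(\Re(V'))$: since $d\overline{\nu}_X=\lambda\,ds_X$ by (\ref{eq:hm_3}), this requires a uniform \emph{lower} bound on $\lambda$ over $\pi^{-1}(\mathcal K)\cap\qutp$, which is neither proved in the paper nor supplied by your argument.

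The paper's proof avoids $\Delta$ and the Hubbard--Masur function altogether. It discretizes time into shells of width $r$, covers the thick part by $\bigcup_n \mathrm{Nbhd}_{2r}\bigl(\pi(a_{nr}(V\cap K_\epsilon(nr))\cap\mcg\cdot\pi^{-1}(\mathrm{Nbhd}_r(\mathcal K)))\bigr)$, and applies Lemma \ref{lem:crucial_m_bound_new} (a special case of \cite[Lemma 4.1]{ABEM12}): $\mathbf m(\mathrm{Nbhd}_2(\pi(W)))\preceq_{\mathcal K}\overline{\nu}(\Re(W(1)))$, which converts the $\mathbf m$-measure of a Teichm\"uller neighborhood of a flowed set directly into a conned-off Thurston measure of a unit Hodge-neighborhood along $\mathcal F^{uu}$, with no appeal to $\lambda$ or $\ell_{\min}$. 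The factor $e^{hnr}$ then comes from the scaling property of the Thurston measure under $a_{nr}$ (Lemma \ref{lem:thin_annular_sectors}), not from an estimate on $\Delta$, and the backward contraction shrinks $W(1)$ to $a_{nr}V(\useconcc{cc:tas}e^{-\useconc{c:tas}nr})$. Finally the sum over $n$ is split into three regimes (bounded time, intermediate time up to $h^\dagger R$ with $h^\dagger=(h-1)/h$, and late time), the first two going into the error terms and the last producing the leading term; this plays the role of your small-$T$/large-$T$ split. If you want to salvage your write-up, the fix is to replace the polar-coordinates step by this measure-theoretic comparison along unstable leaves plus Thurston-measure scaling.
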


To prove Theorem \ref{theo:thin_sector_general} we will use the following lemma, which corresponds to a special case of \cite[Lemma 4.1]{ABEM12}. This lemma is a consequence of Lemma \ref{lem:euclidean_teichmuller_1} and Theorems \ref{theo:euclidean_hodge_1} and \ref{theo:hodge_distance_decay_stable}.

\begin{lemma}
	\label{lem:crucial_m_bound_new}
	Let $\mathcal{K} \subseteq \mathcal{T}_g$ be a compact subset, $U \subseteq \pi^{-1}(\mathcal{K})$ be a measurable subset, and $t > 0$. Denote $W := a_t U \cap \mathrm{Mod}_g \cdot \pi^{-1}(\mathcal{K})$. Then,
	\[
	\mathbf{m}(\text{Nbhd}_2(\pi(W))) \preceq_\mathcal{K} \overline{\nu}(\Re(W(1))).
	\]
\end{lemma}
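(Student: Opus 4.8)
The plan is to estimate $\mathbf{m}(\mathrm{Nbhd}_2(\pi(W))) = \mu(\pi^{-1}(\mathrm{Nbhd}_2(\pi(W))))$ by a covering argument: cover $W$ by Euclidean-metric balls of a fixed bounded radius $\rho_0 = \rho_0(\mathcal{K})$, observe that each such ball contributes only a bounded amount to $\mathbf{m}(\mathrm{Nbhd}_2(\pi(W)))$, and show that the number of balls needed is $\preceq_\mathcal{K} \overline{\nu}(\Re(W(1)))$. Throughout one works on the principal stratum (which carries all of $\mu$ and $\mathbf{m}$) and $\mathrm{Mod}_g$-equivariantly: since $d_E$, $d_H$, $d_{\mathcal{T}}$, $\pi$, $\Re$, the foliations, and the Teichmüller flow are all $\mathrm{Mod}_g$-invariant, and since $\pi^{-1}(\mathcal{K}')$ is compact for every compact $\mathcal{K}' \subseteq \mathcal{T}_g$ (each fibre $S(X)$ is compact and varies continuously with $X$), every constant below may be taken to depend only on $\mathcal{K}$.

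\textbf{Step 1: $W$ has bounded Euclidean extent along its plaques.} The first task is to show that, because $W = a_tU \cap \mathrm{Mod}_g\cdot\pi^{-1}(\mathcal{K})$ with $U \subseteq \pi^{-1}(\mathcal{K})$, the set $W$ is covered by $d_E$-balls $\{B_E(q_i,\rho_0)\}_{i\in I}$ of a fixed radius with $|I| \asymp_\mathcal{K}$ (number of plaques of $W$), where each plaque is $d_E$-bounded along each of $\mathcal{F}^{ss}$, $\mathcal{F}^c$, and $\mathcal{F}^{uu}$. Along $\mathcal{F}^c$ this is immediate: $U$ is contained in a compact set and $a_t$ only translates the flow parameter. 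Along $\mathcal{F}^{ss}$ this is where Theorem \ref{theo:hodge_distance_decay_stable} enters: two points of $U$ on a common strongly stable leaf are at bounded $d_H$-distance (Theorem \ref{theo:euclidean_hodge_1} plus compactness of $U$), applying $a_t$ does not increase this $d_H$-distance by the first inequality of Theorem \ref{theo:hodge_distance_decay_stable}, and then $d_E \preceq_\mathcal{K} d_H$ on a compact set again by Theorem \ref{theo:euclidean_hodge_1}; so the $\mathcal{F}^{ss}$-extent of $a_tU$ inside the window is bounded even though the $d_E$-contraction rate of $a_t$ along $\mathcal{F}^{ss}$ is not uniform. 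Along $\mathcal{F}^{uu}$, a connected arc of an unstable leaf that stays inside a fixed compact subset of $\qut$ has bounded $d_E$-length, by the second half of Lemma \ref{lem:euclidean_teichmuller_1} compared against the bounded $d_{\mathcal{T}}$-diameter of $\pi$ of the window. Combining these, $W$ decomposes into plaques of $d_E$-diameter $\preceq_\mathcal{K} 1$, whence the covering $\{B_E(q_i,\rho_0)\}_{i\in I}$.

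\textbf{Step 2: converting $|I|$ into $\overline{\nu}(\Re(W(1)))$, and concluding.} For each $i$, the portion of $W(1)$ lying over $B_E(q_i,\rho_0)$ contains, inside the strongly unstable leaf $\alpha^{uu}(q_i)$, a $d_H$-ball of radius $\asymp_\mathcal{K} 1$; under $\Re$, which is a homeomorphism from $\alpha^{uu}(q_i)$ onto the slice $\{\eta\in\mf : i(\eta,\Im(q_i))=1\}$ pulling the leafwise measure back from $\overline{\nu}_{\Im(q_i)}$, this maps to a subset of $\Re(W(1))\subseteq\mf$ of $\overline{\nu}$-mass $\succeq_\mathcal{K} 1$, by positivity and continuity of the relevant measures over the compact window. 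Summing over $i$ and using that these subsets have bounded overlap multiplicity (because distinct plaques of $W$ are $d_E$-separated and $\Re$ is locally bi-Lipschitz between $d_E$ and the Thurston metric on the principal stratum over a compact set) gives $|I| \preceq_\mathcal{K} \overline{\nu}(\Re(W(1)))$. Finally, $\mathbf{m}(\mathrm{Nbhd}_2(\pi(W))) \le \sum_{i\in I}\mathbf{m}\big(B_{d_{\mathcal{T}}}(\pi(q_i),\,2+f_1(\rho_0))\big)$ by Lemma \ref{lem:euclidean_teichmuller_1}, and each such Teichmüller ball has $\mathbf{m}$-measure $\preceq_\mathcal{K} 1$ by the reasoning behind Lemma \ref{lem:teich_ball_bound} ($\mathrm{Mod}_g$-invariance, compactness, the Finsler nature of $d_{\mathcal{T}}$, and smoothness of $\mathbf{m}$); hence $\mathbf{m}(\mathrm{Nbhd}_2(\pi(W))) \preceq_\mathcal{K} |I| \preceq_\mathcal{K} \overline{\nu}(\Re(W(1)))$.

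\textbf{Main obstacle.} The delicate point is Step 2: honestly bounding $\sum_i \overline{\nu}$ of the $\Re$-images of the plaque-thickenings by $\overline{\nu}(\Re(W(1)))$, i.e.\ controlling how much $\Re$ can fold $W(1)$ onto itself across plaques. This is precisely where the specific structure $W = a_tU$ and the non-expansion along $\mathcal{F}^{ss}$ (Theorem \ref{theo:hodge_distance_decay_stable}) must be used: they keep the stable-plus-central "thickness" of $W$ bounded, so that over each plaque the map $\Re$ stays finite-to-one with multiplicity controlled only in terms of $\mathcal{K}$. The related bookkeeping needed to make the explicit constants $2$ and $1$ of the statement mutually consistent (as opposed to the general pair of parameters in \cite[Lemma 4.1]{ABEM12}) is absorbed into the same step, via the $\preceq_\mathcal{K}$ in the conclusion.
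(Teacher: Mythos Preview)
Your overall architecture is sound and you correctly identify the three ingredients the paper points to: the comparison Lemma~\ref{lem:euclidean_teichmuller_1}, the $d_E$--$d_H$ comparison Theorem~\ref{theo:euclidean_hodge_1}, and the non-expansion Theorem~\ref{theo:hodge_distance_decay_stable}. You also isolate the right structural fact, namely that $W=a_tU$ with $U\subseteq\pi^{-1}(\mathcal{K})$ forces the $\mathcal{F}^{s}$-extent of $W$ (and not its $\mathcal{F}^{uu}$-extent) to be bounded independently of $t$. That is exactly what drives the lemma.

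However, Step~2 contains a genuine gap. Your justification for bounded overlap of the $\Re$-images is that ``$\Re$ is locally bi-Lipschitz between $d_E$ and the Thurston metric on the principal stratum over a compact set.'' This is false: $\Re$ is constant along every strongly stable leaf $\alpha^{ss}(q)$, so it collapses an entire $(h-1)$-dimensional foliation and cannot be bi-Lipschitz on any open set of $\qut$. What \emph{is} true is that $\Re$ restricted to a single leaf $\alpha^{uu}(q)$ is a homeomorphism whose Jacobian is controlled over compacta; different leaves can have heavily overlapping $\Re$-images. The correct reason the overlap is bounded is precisely the bounded $\mathcal{F}^{s}$-thickness from Step~1: two points of $W$ with the same $[\Re]$ lie on the same $\mathcal{F}^{s}$-leaf, and hence at $d_E$-distance $\preceq_{\mathcal{K}}1$ by your Step~1 argument, so only $O_{\mathcal{K}}(1)$ of your covering balls can share a given $\Re$-image. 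You allude to this in the ``Main obstacle'' paragraph but never actually carry it out; as written, the argument is circular (the obstacle is presented as resolved by the very step that needs justifying). A cleaner route, closer to \cite[Lemma~4.1]{ABEM12}, bypasses the ball-covering entirely: for $Y\in\mathrm{Nbhd}_2(\pi(W))$ pick $q\in W$ with $d_{\mathcal{T}}(\pi(q),Y)<2$, use the Hubbard--Masur homeomorphism $\pi\colon\alpha^{u}(q)\to\mathcal{T}_g$ to lift $Y$ to a point of $\alpha^{u}(q)$ at bounded $d_E$-distance from $q$ (Lemma~\ref{lem:euclidean_teichmuller_1}), and then use the bounded $\mathcal{F}^{s}$-extent of $W$ to show that this lift lies in a bounded flow-translate of $W(1)$; the measure comparison then goes directly through the identification $\mu_{\alpha^{u}(q)}\leftrightarrow\nu$ rather than through a counting argument. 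Finally, note that your argument as it stands would naturally yield $W(C)$ for some $C=C(\mathcal{K})$ rather than $W(1)$; making the specific constants $2$ and $1$ compatible requires an additional step (or the general version in \cite{ABEM12}) that you have not supplied.
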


The following lemma will also be used in the proof Theorem \ref{theo:thin_sector_general}.

\newconcc{cc:tas} \newconc{c:tas}
\begin{lemma}
	\label{lem:thin_annular_sectors}
	Let $0 < \epsilon_\mathrm{sb} < \epsilon_\mathrm{m}$ be the parameter used in the definition of short bases and $\usecone{e:thint} = \usecone{e:thint}(g) > 0$ be as in Theorem \ref{theo:thin_trajectories}. Assume $2 \epsilon_{\mathrm{sb}} < \usecone{e:thint}$. Let $\mathcal{K} \subseteq \tt$ compact, $X \in \mathcal{K}$, $V \subseteq S(X)$ measurable, $2 \epsilon_{\mathrm{sb}} < \epsilon < \usecone{e:thint}$, and $t > 0$. Denote $W := a_t (V \cap K_\epsilon(t)) \cap \mathrm{Mod}_g \cdot \pi^{-1}(\mathcal{K})$. Then, 
	\[
	\mathbf{m}(\mathrm{Nbhd}_2(\pi(W))) \preceq_\mathcal{K}  \overline{\nu}(\Re(V(\useconcc{cc:tas}e^{-\useconc{c:tas} t}))) \cdot e^{ht},
	\]
	where $\useconcc{cc:tas} = \useconcc{cc:tas}(g,\epsilon) > 0$ and $\useconc{c:tas} = \useconc{c:tas}(g,\epsilon) > 0$ are constants depending only on $g$ and $\epsilon$.
\end{lemma}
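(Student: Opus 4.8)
The plan is to deduce this refined estimate from Lemma \ref{lem:crucial_m_bound_new} by controlling the size of the Hodge-neighborhood $W(1)$ more carefully when the base differentials are required to spend at least half of their time in $K_\epsilon$. Set $U := V \cap K_\epsilon(t) \subseteq S(X) \subseteq \pi^{-1}(\mathcal{K})$ and $W := a_t U \cap \mathrm{Mod}_g \cdot \pi^{-1}(\mathcal{K})$ as in the statement. Lemma \ref{lem:crucial_m_bound_new} immediately gives $\mathbf{m}(\mathrm{Nbhd}_2(\pi(W))) \preceq_\mathcal{K} \overline{\nu}(\Re(W(1)))$, so the whole problem reduces to bounding $\overline{\nu}(\Re(W(1)))$ by $\overline{\nu}(\Re(V(\useconcc{cc:tas}e^{-\useconc{c:tas}t}))) \cdot e^{ht}$. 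The key point is that a point of $W(1)$ is, by definition, a differential $q_1$ lying on the same strongly unstable leaf as some $q_2 = a_t q$ with $q \in U$ and $d_H(q_1,q_2) < 1$. Flowing backwards by time $t$ along this strongly unstable leaf, Theorem \ref{theo:hodge_distance_decay_unstable} applies: since $q = a_{-t}q_2$ has the property that $q \in p^{-1}(K_\epsilon(t))$, i.e. $a_{-t}(a_t q)$ spends at least half of the backward time in $K_\epsilon$, and since $\epsilon > 2\epsilon_{\mathrm{sb}}$, we get $d_H(a_{-t}q_1, a_{-t}q_2) \leq \useconcc{cc:hdu2} \cdot e^{-\useconc{c:hdu}t} \cdot d_H(q_1,q_2) < \useconcc{cc:hdu2}e^{-\useconc{c:hdu}t}$ (after first shrinking $t$-independently so that $d_H(q_1,q_2)<1$ really does fall in the regime of the theorem — or splitting off the trivial large-distance contribution). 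Hence $a_{-t}W(1) \subseteq U(\useconcc{cc:hdu2}e^{-\useconc{c:hdu}t}) \subseteq V(\useconcc{cc:hdu2}e^{-\useconc{c:hdu}t})$, where the last containment just drops the $K_\epsilon(t)$-restriction.

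The next step is to transfer this back-and-forth between the $\mathcal{F}^{uu}$-neighborhood description and the measure $\overline{\nu} \circ \Re$. The map $\Re$ is constant along strongly unstable leaves (since $\alpha^{uu}(q_0) = \{q : \Im(q) = \Im(q_0)\}$... wait — $\Re$ is constant along $\mathcal{F}^{ss}$, while it is $\Im$ that is constant along $\mathcal{F}^{uu}$). So the correct book-keeping is: on the strongly unstable leaf the relevant transverse coordinate is $\Re$, and the Teichmüller geodesic flow acts on $\Re$ by scaling by $e^{t}$; thus $\Re(W(1)) = e^{t}\cdot\Re(a_{-t}W(1)) \subseteq e^{t}\cdot \Re(V(\useconcc{cc:hdu2}e^{-\useconc{c:hdu}t}))$. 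Applying $\overline{\nu}$ and using the scaling property $\nu(t\cdot A) = t^{6g-6}\nu(A)$ of the Thurston measure (and the analogous homogeneity of $\overline{\nu}$, which is $\nu$ of a cone over $A$), we obtain $\overline{\nu}(\Re(W(1))) \leq e^{ht}\cdot\overline{\nu}(\Re(V(\useconcc{cc:hdu2}e^{-\useconc{c:hdu}t})))$. Setting $\useconcc{cc:tas} := \useconcc{cc:hdu2}(g,\epsilon)$ and $\useconc{c:tas} := \useconc{c:hdu}(g,\epsilon)$ and combining with the bound from Lemma \ref{lem:crucial_m_bound_new} finishes the proof.

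I expect the main obstacle to be getting the direction of the dynamical foliations, and hence the scaling factor, exactly right: one must be careful that $\Re$ is the coordinate transverse to $\mathcal{F}^{ss}$ and $\Im$ transverse to $\mathcal{F}^{uu}$, and that flowing by $a_{-t}$ on a strongly unstable leaf contracts the $\Re$-holonomies by $e^{-t}$ while the Hodge-distance estimate of Theorem \ref{theo:hodge_distance_decay_unstable} is what controls the \emph{image} $a_{-t}W(1)$ as a subset of an $\mathcal{F}^{uu}$-neighborhood of $U$ — I will need to double-check that the Hodge metric $d_H$ is indeed defined along $\mathcal{F}^{uu}$-leaves (it is, per \S\ref{...} on the modified Hodge metric) and that the backward flow hypothesis $q\in p^{-1}(K_\epsilon(t))$ translates correctly into the $K_\epsilon(-t)$-type hypothesis of Theorem \ref{theo:hodge_distance_decay_unstable} when we relabel $a_t q$ as the basepoint. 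A secondary technical point is handling the pairs $q_1, q_2$ with $d_H(q_1,q_2)$ not small enough to invoke the exponential-contraction clause directly; this is dealt with using the first clause of Theorem \ref{theo:hodge_distance_decay_unstable} (uniform boundedness of $d_H$ under backward flow on $\mathcal{F}^{uu}$) on a bounded initial segment, which only costs a multiplicative constant and does not affect the exponential gain.
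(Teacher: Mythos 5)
Your proposal is correct and follows essentially the same route as the paper's proof: Lemma \ref{lem:crucial_m_bound_new} reduces everything to bounding $\overline{\nu}(\Re(W(1)))$, the backward contraction along $\mathcal{F}^{uu}$ from Theorem \ref{theo:hodge_distance_decay_unstable} gives $W(1) \subseteq a_t V(\useconcc{cc:hdu2}\,e^{-\useconc{c:hdu}t})$ (and your translation of $q \in p^{-1}(K_\epsilon(t))$ into the hypothesis $a_t q \in p^{-1}(K_\epsilon(-t))$ when relabeling the basepoint is exactly the right check), after which $\Re(a_t q) = e^t \Re(q)$ and the homogeneity of the Thurston measure produce the factor $e^{ht}$, with $\useconcc{cc:tas} := \useconcc{cc:hdu2}$ and $\useconc{c:tas} := \useconc{c:hdu}$. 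The worries you flagged are non-issues: $W(1)$ is by definition the set of points at $d_H$-distance strictly less than $1$ along $\mathcal{F}^{uu}$-leaves, which is precisely the hypothesis of the exponential-contraction clause, so no preliminary shrinking or splitting-off of a large-distance regime is needed (and, terminology aside, $\Re$ is the leafwise, not transverse, coordinate on $\mathcal{F}^{uu}$-leaves, which is exactly what your scaling computation uses).
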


\begin{proof}
	Fix $2 \epsilon_{\mathrm{sb}} < \epsilon < \usecone{e:thint}$. Let $\useconcc{cc:hdu2} = \useconcc{cc:hdu2}(g,\epsilon) > 0$ and $\useconc{c:hdu} = \useconc{c:hdu}(\epsilon,g) > 0$ be as in Theorem \ref{theo:hodge_distance_decay_unstable}. Fix $\mathcal{K} \subseteq \tt$ compact, $X \in \mathcal{K}$, $V \subseteq S(X)$ measurable, and $t > 0$. Denote $W := a_t (V \cap K_\epsilon(t)) \cap \text{Mod}_g \cdot \pi^{-1}(\mathcal{K})$. Lemma \ref{lem:crucial_m_bound_new} ensures
	\begin{equation}
	\label{eq:x1}
	\mathbf{m}(\text{Nbhd}_2(\pi(W))) \preceq_\mathcal{K} \overline{\nu}(\Re(W(1))). 
	\end{equation}
	We claim $W(1) \subseteq a_t V(\useconcc{cc:hdu2} e^{-\useconc{c:hdu}t})$. Let $a_t q_1 \in W(1)$ be arbitrary. By the definition of $W(1)$, there exists $q_2 \in V \cap K_\epsilon(t)$ in the same leaf of $\mathcal{F}^{uu}$ as $q_1$ such that $d_H(a_tq_1,a_tq_2)<1$. It follows from Theorem \ref{theo:hodge_distance_decay_unstable} that
	\begin{align*}
	d_H(q_1,q_2) &= d_H(a_{-t} a_t q_1 , a_{-t} a_t q_2) 
	\leq \useconcc{cc:hdu2} \cdot e^{-\useconc{c:hdu}t} \cdot d_H(a_tq_1, a_tq_2)
	< \useconcc{cc:hdu2} \cdot e^{-\useconc{c:hdu}t}.
	\end{align*}
	This proves the claim. In particular, 
	\begin{equation}
	\overline{\nu}(\Re(W(1))) \leq \overline{\nu}(\Re(a_t V(\useconcc{cc:hdu2}e^{-\useconcc{c:hdu}t}))). \label{eq:x2}
	\end{equation}
	By the scaling property of the Thurston measure, 
	\begin{equation}
	\overline{\nu}(\Re(a_t V(\useconcc{cc:hdu2} e^{-\useconc{c:hdu}t}))) = e^{ht} \cdot \overline{\nu}(\Re(V(\useconcc{cc:hdu2} e^{-\useconc{c:hdu}t}))). \label{eq:x3}
	\end{equation}
	Putting together (\ref{eq:x1}), (\ref{eq:x2}), and (\ref{eq:x3}) we conclude
	\[
	\mathbf{m}(\text{Nbhd}_2(\pi(W))) \preceq_\mathcal{K} \overline{\nu}(\Re(V(\useconcc{cc:hdu2} e^{\useconc{c:hdu}t}))) \cdot e^{ht}. \qedhere
	\]
\end{proof}

We are now ready to prove Theorem \ref{theo:thin_sector_general}. 

\begin{proof}[Proof of Theorem \ref{theo:thin_sector_general}.]
	Let $\useconr{r:thint} =  \useconr{r:thint}(g) > 0$ be as in Theorem \ref{theo:thin_trajectories}. Consider $\useconr{r:thin_sect_gen} = \useconr{r:thin_sect_gen}(g) := \min\{\useconr{r:thint}/2,1\} >0$. Fix $0 < r < \useconr{r:thin_sect_gen}$. Let $0 < \epsilon_\mathrm{sb} < \epsilon_\mathrm{m}$ be the parameter used in the definition of short bases and $\usecone{e:thint} = \usecone{e:thint}(g) > 0$ be as in Theorem \ref{theo:thin_trajectories}. Assume $2 \epsilon_{\mathrm{sb}} < \usecone{e:thint}$. Fix $2 \epsilon_{\mathrm{sb}} < \epsilon = \epsilon(g) < \usecone{e:thint}$. Let $\useconcc{cc:tas} = \useconcc{cc:tas}(g,\epsilon) > 0$ and $\useconc{c:tas} = \useconc{c:tas}(\epsilon,g) > 0$ be as in Theorem \ref{theo:hodge_distance_decay_unstable}. Fix $\mathcal{K} \subseteq \tt$ compact. Let $\usecond{d:hodge_teich} = \usecond{d:hodge_teich}(\mathcal{K}) > 0$ be as in Corollary \ref{cor:euclidean_hodge_teichmuller_1}. Fix $X \in \mathcal{K}$, $V \subseteq S(X)$ measurable, and $R > 0$.  Consider the following decomposition
	\begin{gather}
	\text{Nbhd}_{r}(B_R(X) \cap \text{Sect}_{V}(X) \cap \text{Mod}_g \cdot \mathcal{K}) \label{eq:y1}\\
	= \ \text{Nbhd}_{r}(B_R(X) \cap \text{Sect}_{V}(X) \cap \text{Mod}_g \cdot \mathcal{K}) \backslash \text{Nbhd}_{2r}(B_R(X,\mathcal{K},K_\epsilon)) \nonumber\\ 
	\cup \thinspace \text{Nbhd}_{2r}(B_R(X,\mathcal{K},K_\epsilon)) \nonumber .
	\end{gather}
	As $0 < r < \useconr{r:thint}/2$, Theorem \ref{theo:thin_trajectories} ensures
	\begin{equation}
	\mathbf{m}(\text{Nbhd}_{2r}(B_R(X,\mathcal{K},K_\epsilon))) \preceq_\mathcal{K}  e^{(h-\useconk{k:thint})R}. \label{eq:y2}
	\end{equation}
	Notice that
	\begin{gather}
	\text{Nbhd}_{r}(B_R(X) \cap \text{Sect}_{V}(X) \cap \text{Mod}_g \cdot \mathcal{K}) \backslash \text{Nbhd}_{2r}(B_R(X,\mathcal{K},K_\epsilon)) \label{eq:y3}\\
	\subseteq \bigcup_{n=0}^{\lceil R/r \rceil} \text{Nbhd}_{2r}(\pi(a_{nr}(V \cap K_\epsilon(nr)) \cap \text{Mod}_g \cdot \pi^{-1}(\text{Nbhd}_{r}(\mathcal{K})))). \nonumber
	\end{gather}
	See Figure \ref{fig:annuli}. We decompose this union into three regimes. Let $h^\dagger := (h-1)/h$. Consider the integers
	\[
	n_1 = n_1(\mathcal{K},r) := \max \left\lbrace\left\lceil -\frac{1}{\useconc{c:tas} r} \log \left( \frac{\usecond{d:hodge_teich}}{\useconcc{cc:tas}}\right) \right\rceil, 0 \right\rbrace , \quad
	n_2 = n_2(g,r,R) := \left \lceil \frac{h^\dagger R}{r} \right \rceil,	
	\]
	so that $\useconcc{cc:tas} e^{-\useconc{c:tas}n r} \leq \usecond{d:hodge_teich}$ for every $n \geq n_1$ and $n r \geq h^\dagger R$ for every $n \geq n_2$. Write
	\begin{gather}
	\bigcup_{n=0}^{\lceil R/r \rceil} \text{Nbhd}_{2r}(\pi(a_{nr_0} (V \cap K_\epsilon(nr)) \cap \text{Mod}_g \cdot \pi^{-1}(\text{Nbhd}_{r}(\mathcal{K})))) \label{eq:y4}\\
	\subseteq \bigcup_{n=0}^{n_1} \text{Nbhd}_{2r}(\pi(a_{nr} (V  \cap K_\epsilon(nr)) \cap \text{Mod}_g \cdot \pi^{-1}(\text{Nbhd}_{r}(\mathcal{K})))) \nonumber\\
	\cup \bigcup_{n= n_1}^{n_2} \text{Nbhd}_{2r}(\pi(a_{nr} (V  \cap K_\epsilon(nr)) \cap \text{Mod}_g \cdot \pi^{-1}(\text{Nbhd}_{r}(\mathcal{K})))) \nonumber\\
	\ \cup \bigcup_{n= n_2}^{\lceil R/r \rceil} \text{Nbhd}_{2r}(\pi(a_{nr} (V \cap K_\epsilon(nr)) \cap \text{Mod}_g \cdot \pi^{-1}(\text{Nbhd}_{r}(\mathcal{K})))). \nonumber
	\end{gather}
	We bound the measure of each term in this decomposition separately. To control the first term notice that
	\begin{align*}
	\bigcup_{n=0}^{n_1} \text{Nbhd}_{2r}(\pi(a_{nr} (V \cap K_\epsilon(nr)) \cap \text{Mod}_g \cdot \pi^{-1}(\text{Nbhd}_{r}(\mathcal{K}))))
	\subseteq \text{Nbhd}_{(n_1+2)r}(\mathcal{K}).
	\end{align*}
	In particular, using the definition of $n_1$ and the fact that $0 < r < 1$ we deduce
	\begin{align}
	\mathbf{m} \left(\bigcup_{n=0}^{n_1} \text{Nbhd}_{2r}(\pi(a_{nr} (V  \cap K_\epsilon(nr)) \cap \text{Mod}_g \cdot \pi^{-1}(\text{Nbhd}_{r}(\mathcal{K}))))\right)
	\leq \mathbf{m} \left(\text{Nbhd}_{(n_1+2)r}(\mathcal{K})\right)
	\preceq_{\mathcal{K}} 1. \label{eq:y5}
	\end{align}
	To control the second term of the decomposition in (\ref{eq:y4}) we write
	\begin{gather}
	\mathbf{m}\left( \bigcup_{n=n_1}^{n_2} \text{Nbhd}_{2r}(\pi(a_{nr} (V \cap K_\epsilon(nr)) \cap \text{Mod}_g \cdot \pi^{-1}(\text{Nbhd}_{r}(\mathcal{K}))))\right) \label{eq:y6} \\
	\leq  \sum_{n=n_1}^{n_2} \mathbf{m} \left(\text{Nbhd}_{2r}(\pi(a_{nr} (V \cap K_\epsilon(nr)) \cap \text{Mod}_g \cdot \pi^{-1}(\text{Nbhd}_{r}(\mathcal{K}))))\right). \nonumber
	\end{gather}
	As $0 < r <1$, Lemma \ref{lem:thin_annular_sectors} ensures
	\begin{gather}
	\sum_{n=n_1}^{n_2} \mathbf{m} \left(\text{Nbhd}_{2r}(\pi(a_{nr} (V \cap K_\epsilon(nr)) \cap \text{Mod}_g \cdot \pi^{-1}(\text{Nbhd}_{r}(\mathcal{K}))))\right) \label{eq:y7} \\
	\preceq_\mathcal{K} \sum_{n=n_1}^{n_2} \overline{\nu}(\Re(V(\useconcc{cc:tas}e^{-\useconc{c:tas}nr}))) \cdot e^{hnr}. \nonumber
	\end{gather}
	Using Lemma \ref{cor:euclidean_hodge_teichmuller_1} and the definition of $n_1$ we bound
	\begin{align}
	\sum_{n=n_1}^{n_2} \overline{\nu}(\Re(V(\useconcc{cc:tas}e^{-\useconc{c:tas}nr}))) \cdot e^{hnr} \leq \overline{\nu}(\Re(\pi^{-1}(\text{Nbhd}_1(\mathcal{K}))))  \cdot \sum_{n=n_1}^{n_2} e^{hnr} \preceq_{\mathcal{K}} e^{(h-1)R}. \label{eq:y8}
	\end{align}
	Putting together (\ref{eq:y6}), (\ref{eq:y7}), and (\ref{eq:y8}) we deduce
	\begin{equation}
	\mathbf{m}\left( \bigcup_{n=n_1}^{n_2} \text{Nbhd}_{2r}(\pi(a_{nr} (V \cap K_\epsilon(nr)) \cap \text{Mod}_g \cdot \pi^{-1}(\text{Nbhd}_{r}(\mathcal{K}))))\right) \preceq_{\mathcal{K}} e^{(h-1)R}. \label{eq:y9}
	\end{equation}
	To control the third term of the decomposition in (\ref{eq:y4}) we write
	\begin{gather}
	\mathbf{m}\left( \bigcup_{n=n_2}^{\lceil R/r \rceil} \text{Nbhd}_{2r}(\pi(a_{nr} (V \cap K_\epsilon(nr)) \cap \text{Mod}_g \cdot \pi^{-1}(\text{Nbhd}_{r}(\mathcal{K}))))\right) \label{eq:y10}\\
	\leq \sum_{n=n_2}^{\lceil R/r \rceil} \mathbf{m} \left(\text{Nbhd}_{2r}(\pi(a_{nr} (V \cap K_\epsilon(nr)) \cap \text{Mod}_g \cdot \pi^{-1}(\text{Nbhd}_{r}(\mathcal{K}))))\right). \nonumber
	\end{gather}
	As $0 < r < 1$, Lemma \ref{lem:thin_annular_sectors} ensures
	\begin{gather}
	\sum_{n=n_2}^{\lceil R/r \rceil} \mathbf{m} \left(\text{Nbhd}_{2r}(\pi(a_{nr} (V\cap K_\epsilon(nr)) \cap \text{Mod}_g \cdot \pi^{-1}(\text{Nbhd}_{r}(\mathcal{K}))))\right) \label{eq:y11} \\
	\preceq_\mathcal{K} \sum_{n=n_2}^{\lceil R/r \rceil}  \overline{\nu}(\Re(V(\useconcc{cc:tas}e^{-\useconc{c:tas}nr}))) \cdot e^{hnr}. \nonumber
	\end{gather}
	Using the definition of $n_2$ we bound
	\begin{align}
	\sum_{n=n_2}^{\lceil R/r \rceil}  \overline{\nu}(\Re(V(\useconcc{cc:tas}e^{-\useconc{c:tas}nr}))) \cdot e^{hnr} &\leq \overline{\nu}(\Re(V(\useconcc{cc:tas}e^{-\useconc{c:tas}h^\dagger R}))) \cdot\sum_{n=n_2}^{\lceil R/r \rceil}  e^{hnr} \label{eq:y12}\\
	&\preceq_g \overline{\nu}(\Re(V(\useconcc{cc:tas}e^{-\useconc{c:tas}h^\dagger R}))) \cdot e^{hR}. \nonumber
	\end{align}
	Putting together (\ref{eq:y10}), (\ref{eq:y11}), and (\ref{eq:y12}) we deduce
	\begin{equation}
	\mathbf{m}\left( \bigcup_{n=n_2}^{\lceil R/r \rceil} \text{Nbhd}_{2r}(\pi(a_{nr} (V \cap K_\epsilon(nr)) \cap \text{Mod}_g \cdot \pi^{-1}(\text{Nbhd}_{r}(\mathcal{K}))))\right) \preceq_{\mathcal{K}} \overline{\nu}(\Re(V(\useconcc{cc:tas}e^{-\useconc{c:tas}h^\dagger R}))) \cdot e^{hR}. \label{eq:y13}
	\end{equation}
	Let $\useconn{n:thin_sect_gen} = \useconn{n:thin_sect_gen}(g) := \useconcc{cc:tas} > 0$ and $\useconk{k:thin_sect_gen} = \useconk{k:thin_sect_gen}(g)  := \min\{\useconk{k:thint}, 1, \useconc{c:tas}h^\dagger\} > 0$. Putting together (\ref{eq:y1}), (\ref{eq:y2}), (\ref{eq:y3}), (\ref{eq:y4}), (\ref{eq:y5}), (\ref{eq:y9}), and (\ref{eq:y13}) we conclude
	\[
	\mathbf{m}(\text{Nbhd}_{r}(B_R(X) \cap \text{Sect}_{V}(X) \cap \text{Mod}_g \cdot \mathcal{K})) 
	\preceq_\mathcal{K} \overline{\nu}(\Re(V( \useconn{n:thin_sect_gen} e^{-\useconk{k:thin_sect_gen}  R}))) \cdot e^{hR} + e^{(h-\useconk{k:thin_sect_gen})R}. \qedhere
	\]
\end{proof}

\begin{figure}[h!]
	\centering
	\includegraphics[width=.25\textwidth]{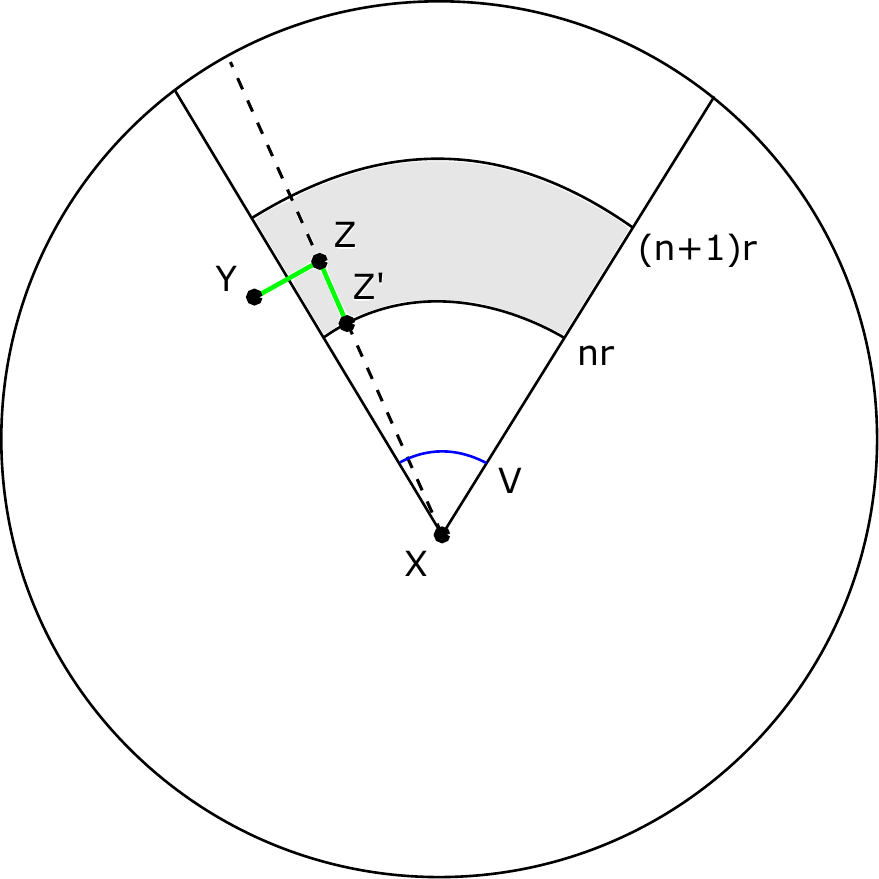}
	\caption{Picture of (\ref{eq:y3}). Straight lines represent Teichmüller geodesics. The green segment represents a piecewise smooth path of Teichmüller length $< 2r$ joining a point $Y \in \text{Nbhd}_{r}(B_R(X) \cap \text{Sect}_{V}(X) \cap \text{Mod}_g \cdot \mathcal{K}) \backslash \text{Nbhd}_{2r}(B_R(X,\mathcal{K},K_\epsilon))$ to a point $Z' \in \pi(a_{nr}(V \cap K_\epsilon(nr)) \cap \text{Mod}_g \cdot \pi^{-1}(\text{Nbhd}_{r}(\mathcal{K})))$.} \label{fig:annuli} 
\end{figure}

\subsection*{The function $\boldsymbol{\ell_{\min}(q)}$ and the Euclidean metric.} As an important step towards proving Theorem \ref{theo:thin_sector_sum}, we study how the function $\ell_{\min}(q)$ varies along short paths of $\qutp$ with respect to the Euclidean metric. Given a piecewise smooth path $\rho \colon [0,1] \to \qut$, denote by $\ell_E(\rho) = \int_0^1 \| \rho'(t) \|_E \thinspace dt$ its length with respect to the Euclidean metric.

\newcond{d:sys_euc} \newconcc{cc:sys_euc}
\begin{proposition}
	\label{prop:systole_euclidean}
	Let $\mathcal{K} \subseteq \mathcal{T}_g$ be a compact subset. There exist constants $\usecond{d:sys_euc} = \usecond{d:sys_euc}(\mathcal{K}) > 0$ and $\useconcc{cc:sys_euc} = \useconcc{cc:sys_euc}(\mathcal{K}) > 0$ such that for every piecewise smooth path $\rho \colon [0,1] \to \qutp$ with $\ell_E(\rho) < \usecond{d:sys_euc}$ and $\rho(0) \in \pi^{-1}(\mathcal{K})$, 
	\[
	\ell_{\min}(\rho(1)) \leq \ell_{\min}(\rho(0)) +  \useconcc{cc:sys_euc} \cdot \ell_E(\rho).
	\]
\end{proposition}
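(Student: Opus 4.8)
The plan is to bound the variation of $\ell_{\min}$ along a short Euclidean path by a uniform multiple of the Euclidean length, using the identification of the Euclidean metric with the standard Euclidean metric in period coordinates (Lemma \ref{lem:euc_fund_prop}) together with the fact that, on a compact set, the relevant saddle connections come from a finite list of Delaunay triangulations. First I would set up the local picture: fix $q_0 := \rho(0) \in \pi^{-1}(\mathcal{K}) \cap \qutp$. By Lemma \ref{lem:delaunay}(2), every saddle connection $\gamma$ of $q_0$ with $\ell_\gamma(q_0) \leq \sqrt{2}\,\ell_{\min}(q_0)$ — in particular a shortest one — belongs to some Delaunay triangulation $\Delta$ of $q_0$, and hence appears as a basis element (an edge) of a period coordinate chart $\Phi$ adapted to $\Delta$. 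In such a chart, the holonomy of any edge $\gamma$ of $\Delta$ is a fixed coordinate function $z_\gamma$, so $\ell_\gamma(q) = |z_\gamma(q)| = |\Phi(q)_\gamma|$ as long as $q$ stays in the chart and $\Delta$ persists as a triangulation of $q$. The $1$-Lipschitz property of $z \mapsto |z|$ combined with Lemma \ref{lem:euc_fund_prop} gives, for any piecewise smooth path $\rho$ contained in such a chart,
\[
\big| \ell_\gamma(\rho(1)) - \ell_\gamma(\rho(0)) \big| \leq \int_0^1 \big| \tfrac{d}{dt} z_\gamma(\rho(t)) \big|\, dt \leq \int_0^1 \| d\Phi_{\rho(t)} \rho'(t) \|\, dt \preceq_{\Phi,\mathcal{K}'} \ell_E(\rho),
\]
where $\mathcal{K}'$ is a slightly enlarged compact set containing the relevant portion of the chart. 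Taking $\gamma$ to be a shortest saddle connection of $\rho(0)$ then yields $\ell_{\min}(\rho(1)) \leq \ell_\gamma(\rho(1)) \leq \ell_{\min}(\rho(0)) + \useconcc{cc:sys_euc} \cdot \ell_E(\rho)$, which is exactly the claimed inequality.

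The two points requiring care are uniformity of the implicit constant and the requirement that the path stay inside a single good chart. For uniformity: by Lemma \ref{lem:finite_delaunay} (or directly Lemma \ref{lem:delaunay}(1) plus proper discontinuity) the Delaunay triangulations arising from $\pi^{-1}(\mathcal{K})$ fall into finitely many $\mcg$-orbits, hence one gets a \emph{finite} atlas of period coordinate charts $\{\Phi_1,\dots,\Phi_N\}$ — each defined on an open set of the form $\mcg \cdot U_j$ with $U_j$ relatively compact and $\pi^{-1}(\mathcal{K}) \subseteq \bigcup_j \mcg \cdot U_j$ — such that for $q \in \pi^{-1}(\mathcal{K})$ a shortest saddle connection is an edge of the triangulation adapted to $\Phi_j$ for some $j$ with $q \in \mcg \cdot U_j$. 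Shrinking slightly, one may assume each $U_j$ has a compact neighborhood $U_j'$ still inside a common chart domain, and one lets $\usecond{d:sys_euc}$ be smaller than the Euclidean distance from $\bigcup_j U_j$ to the complement of $\bigcup_j U_j'$, and $\useconcc{cc:sys_euc}$ the maximum of the finitely many comparison constants from Lemma \ref{lem:euc_fund_prop} applied to the $\Phi_j$ on $\overline{U_j'}$. Since $\ell_{\min}$, $\ell_E$, and the $z_\gamma$ are all $\mcg$-invariant (resp. invariant in the appropriate sense), we may replace $q_0$ by a translate lying in some $U_j$, so the whole path $\rho$ — having Euclidean length $< \usecond{d:sys_euc}$ — stays inside $U_j'$ and inside the chart domain of $\Phi_j$; there the argument of the previous paragraph applies with constants depending only on $\mathcal{K}$.

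The main obstacle I anticipate is the combinatorial persistence issue: a priori, as $q$ moves, the Delaunay triangulation $\Delta$ of $\rho(0)$ may cease to be an embedded triangulation of $\rho(t)$ (an edge may develop a zero in its interior, or a triangle may degenerate). However, this does not actually break the argument, because period coordinates associated to $\Delta$ are valid on a full open neighborhood of $\rho(0)$ in the stratum $\qutp$ regardless of whether $\Delta$ remains Delaunay or even embedded — the coordinate functions $z_\gamma$ are just the period integrals over the homology classes of the edges of $\Delta$, and these vary holomorphically on the chart. What one genuinely needs is only that $z_\gamma(\rho(0)) = \ell_\gamma(\rho(0))$ for the shortest saddle connection $\gamma$ (true by construction since $\gamma$ is realized as a straight Euclidean segment) and that $|z_\gamma(\rho(1))| \geq \ell_{\min}(\rho(1))$, i.e. that $|z_\gamma(\rho(1))|$ is an \emph{upper} bound for the length of the shortest saddle connection of $\rho(1)$; this holds because $z_\gamma(\rho(1))$ is still the holonomy of \emph{some} relative homology class, whose geodesic representative is a saddle connection (or concatenation thereof) of length $\leq |z_\gamma(\rho(1))|$. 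So the inequality is in fact robust, and the proof reduces to the Lipschitz estimate above once the finite chart atlas is in place.
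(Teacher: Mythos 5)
Your first half — realizing a shortest saddle connection as an edge of a Delaunay triangulation, viewing its holonomy as a period coordinate, and bounding its variation by $\ell_E(\rho)$ via Lemma \ref{lem:euc_fund_prop} over the finite atlas supplied by Lemma \ref{lem:finite_delaunay} — is exactly the mechanism the paper uses. The gap is in your ``robustness'' paragraph, i.e.\ in the endpoint comparison $\ell_{\min}(\rho(1)) \leq |z_\gamma(\rho(1))|$. Your justification inverts the triangle inequality: the flat geodesic representative of a relative homology class has length \emph{at least} the modulus of its period (the period is the vector sum of the holonomies of the constituent segments), not at most. Consequently, once the segment $\gamma$ ceases to be a saddle connection along the path, $|z_\gamma(\rho(1))|$ is only a \emph{lower} bound for the length of the representative, and in general the period of a relative class can be far smaller than $\ell_{\min}$ (two saddle connections of moderate length with nearly opposite holonomies give a class of tiny period). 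So $\ell_{\min}(\rho(1)) \leq |z_\gamma(\rho(1))|$ does not follow from what you wrote; to salvage it you would have to track how $[\gamma]$ decomposes into saddle connections at each breakup and control the accumulated errors, which is precisely the combinatorial persistence work you claimed to avoid.

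The paper sidesteps this by arguing locally in time rather than globally along the path: for each $t_0$ there is a small interval $I \ni t_0$ and finitely many saddle connections $\gamma_1,\dots,\gamma_n$ of $\rho(t_0)$ with $\ell_{\min}(\rho(t)) = \min_i \ell_{\gamma_i}(\rho(t))$ and $\ell_{\gamma_i}(\rho(t)) \leq \sqrt{2}\,\ell_{\min}(\rho(t_0))$ for all $t \in I$; by Lemma \ref{lem:delaunay}(2) these $\gamma_i$ are edges of \emph{every} Delaunay triangulation of $\rho(t)$ for $t \in I$, so on $I$ each $\ell_{\gamma_i}(\rho(t))$ genuinely equals the modulus of a period coordinate in one of the finitely many charts, and its derivative is $\preceq_{\mathcal K} \|\rho'(t)\|_E$ (after using Corollary \ref{cor:euclidean_teichmuller_1} to keep the whole path inside $\pi^{-1}(\mathrm{Nbhd}_1(\mathcal K))$). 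The fundamental theorem of calculus applied to the resulting locally Lipschitz minimum then gives the one-sided bound with a uniform constant. If you replace your global homology-class argument with this local-in-$t$ minimum argument, the rest of your write-up (finite atlas, uniform comparison constants, choice of $\usecond{d:sys_euc}$) goes through.
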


\begin{proof}
	Fix $\mathcal{K} \subseteq \tt$ compact. By Lemma \ref{lem:finite_delaunay}, the Delaunay triangulations of marked quadratic differentials in $\pi^{-1}(\mathrm{Nhbd}_1(\mathcal{K})) \cap \qutp$ induce finitely many marked triangulations on $S_g$. Each one of these triangulations gives rise to finitely many period coordinate charts $\Phi \colon U \to \mathbf{C}^{6g-6}$ defined on open subsets $U \subseteq \qutp$, one per homologically independent set of edges of the triangulation on the corresponding canonical double cover. By Lemma \ref{lem:euc_fund_prop}, for any such period coordinate chart, any $q \in U \cap \pi^{-1}(\mathrm{Nbhd}_1(\mathcal{K}))$, and any $v \in T_q \qut$, 
	\begin{equation}
	\label{eq:euc_comparison}
	\| v \|_E \asymp_{K} \|d\Phi_q v\|.
	\end{equation}
	
	 Let $\usecond{euc_teich} = \usecond{euc_teich}(\mathcal{K}) > 0$ be as in Corollary \ref{cor:euclidean_teichmuller_1}. Consider a piecewise smooth path $\rho \colon [0,1] \to \qutp$ with $\ell_E(\rho) < \usecond{euc_teich}$ and $\rho(0) \in \pi^{-1}(\mathcal{K})$. By Corollary \ref{cor:euclidean_teichmuller_1}, $\rho([0,1]) \subseteq \pi^{-1}(\mathrm{Nbhd}_1(\mathcal{K}))$. Let $t_0 \in [0,1]$ be arbitrary. There exists a relatively open interval $I \subseteq [0,1]$ containing $t_0$ and a finite collection $\gamma_1,\dots,\gamma_n$ of saddle connections of $\rho(t_0)$ such that $\ell_{\min}(\rho(t)) = \min_{i=1,\dots,n} \ell_{\gamma_i}(\rho(t))$ for every $t \in I$ and $\ell_{\gamma_i}(\rho(t)) \leq \sqrt{2} \cdot \ell_{\min}(\rho(t_0))$ for every $i \in \{1,\dots,n\}$ and every $t \in I$. In particular, by Lemma \ref{lem:delaunay}, the saddle connections $\gamma_1,\dots,\gamma_n$ are edges of every Delaunay triangulation of $\rho(t)$ for every $t \in I$. 
	
	Fix $i \in \{1,\dots,n\}$. Consider the canonical double cover induced by $\rho(t_0)$ on its underlying Riemann surface and denote by $H_1^\odd(\widetilde{X},\widetilde{\Sigma};\mathbf{Z})$ the odd part of the corresponding homology group. Using $\gamma_i$ and the Delaunay triangulation of $\rho(t_0)$ to choose an appropriate basis of $H_1^\odd(\widetilde{X},\widetilde{\Sigma};\mathbf{Z})$, one obtains a period coordinate chart $\Phi \colon U \to \mathbf{C}^{6g-6}$ among the finitely many described above such that
	\[
	\frac{d}{dt} \bigg \vert_{t = t_0}  \ell_{\gamma_i}(\rho(t)) \preceq \| d\Phi_{\rho(t_0)} \rho'(t_0) \|.
	\]
	By (\ref{eq:euc_comparison}),
	\[
	\| d\Phi_{\rho(t_0)} \rho'(t_0) \| \preceq_\mathcal{K} \| \rho'(t_0) \|_E.
	\]
	A direct application of the fundamental theorem of calculus finishes the proof.
\end{proof}

Recall that, for every $\epsilon > 0$, $K_\epsilon(\mathbf{1})$ denote the intersection of $K_\epsilon \subseteq \qum$ with the principal stratum $\qum(\mathbf{1})$. Directly from Proposition \ref{prop:systole_euclidean}, Theorem \ref{theo:euclidean_hodge_1}, and Corollary \ref{cor:euclidean_hodge_teichmuller_1}, we deduce the following result.

\newcond{d:thick} \newconcc{cc:thick}
\begin{corollary}
	\label{cor:thickening}
	Let $\mathcal{K} \subseteq \mathcal{T}_g$ be a compact subset. There exist constants $\usecond{d:thick} := \usecond{d:thick}(\mathcal{K}) > 0$ and $\useconcc{cc:thick} := \useconcc{cc:thick}(\mathcal{K}) > 0$ such that for every $X \in \mathcal{K}$, every $\epsilon > 0$, and every $0 < \delta < \usecond{d:thick}$, if $V := S(X) \backslash p^{-1}(K_\epsilon(\mathbf{1}))$, then
	\[
	V(\delta) \subseteq \pi^{-1}(B_1(X)) \backslash p^{-1}(K_{\epsilon+\useconcc{cc:thick} \delta}(\mathbf{1})).
	\]
\end{corollary}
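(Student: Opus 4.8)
### Proof proposal for Corollary~\ref{cor:thickening}

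The plan is to chain together the three results invoked in the statement. Fix a compact set $\mathcal{K} \subseteq \tt$. Let $\usecond{d:sys_euc} = \usecond{d:sys_euc}(\mathcal{K}) > 0$ and $\useconcc{cc:sys_euc} = \useconcc{cc:sys_euc}(\mathcal{K}) > 0$ be the constants provided by Proposition~\ref{prop:systole_euclidean} applied to the compact set $\mathrm{Nbhd}_1(\mathcal{K})$, let $\usecond{d:hodge_teich} = \usecond{d:hodge_teich}(\mathcal{K}) > 0$ be as in Corollary~\ref{cor:euclidean_hodge_teichmuller_1}, and let the implicit constant in the first inequality of Theorem~\ref{theo:euclidean_hodge_1} (the bound $d_E \preceq_\mathcal{K} d_H$ along $\mathcal{F}^{uu}$-leaves) be called $\useconcc{cc:thick}'$. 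I would then set $\usecond{d:thick} := \usecond{d:thick}(\mathcal{K})$ to be a small enough positive number so that $\useconcc{cc:thick}' \cdot \usecond{d:thick} < \min\{\usecond{d:sys_euc}, \usecond{d:hodge_teich}\}$, and declare $\useconcc{cc:thick} := \useconcc{cc:sys_euc} \cdot \useconcc{cc:thick}'$.

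Now fix $X \in \mathcal{K}$, $\epsilon > 0$, $0 < \delta < \usecond{d:thick}$, and set $V := S(X) \backslash p^{-1}(K_\epsilon(\mathbf{1}))$. Take an arbitrary $q_1 \in V(\delta)$. By definition of $V(\delta)$ there is $q_2 \in V$ on the same leaf of $\mathcal{F}^{uu}$ as $q_1$ with $d_H(q_1, q_2) < \delta$. Since $q_2 \in S(X) \subseteq \pi^{-1}(\mathcal{K})$, Theorem~\ref{theo:euclidean_hodge_1} gives
\[
d_E(q_1, q_2) \leq \useconcc{cc:thick}' \cdot d_H(q_1, q_2) < \useconcc{cc:thick}' \cdot \delta < \usecond{d:hodge_teich}.
\]
Applying Corollary~\ref{cor:euclidean_hodge_teichmuller_1} with the roles $q_1 \leftrightarrow q_2$ (here $q_2 \in \pi^{-1}(\mathcal{K})$ and $q_1, q_2$ lie on a common $\mathcal{F}^{uu}$-leaf and satisfy $d_H \le \usecond{d:hodge_teich}$) yields $d_\mathcal{T}(\pi(q_1), \pi(q_2)) \leq 1$; since $\pi(q_2) = X$, this shows $q_1 \in \pi^{-1}(B_1(X))$, as desired for the first containment.

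It remains to show $q_1 \notin p^{-1}(K_{\epsilon + \useconcc{cc:thick}\delta}(\mathbf{1}))$, i.e.\ that $\ell_{\min}(q_1) < \epsilon + \useconcc{cc:thick}\delta$. Join $q_2$ to $q_1$ along the $\mathcal{F}^{uu}$-leaf by a piecewise smooth path $\rho$ of Euclidean length $\ell_E(\rho) < \usecond{d:sys_euc}$ realizing (up to an arbitrarily small error) the distance $d_E(q_1,q_2) < \useconcc{cc:thick}' \delta$; one may take $\rho$ inside $\qutp$ since both endpoints are principal and the principal stratum is open and $\mathcal{F}^{uu}$-saturated within a neighborhood. (If one prefers to avoid the infimum subtlety, take $\rho$ with $\ell_E(\rho) < \useconcc{cc:thick}'\delta \cdot (1+\varepsilon')$ for small $\varepsilon'$ and let $\varepsilon' \to 0$ at the end, or simply absorb a factor $2$ into $\useconcc{cc:thick}$.) Since $\rho(0) = q_2 \in \pi^{-1}(\mathcal{K})$ and $\ell_E(\rho) < \usecond{d:sys_euc}$, Proposition~\ref{prop:systole_euclidean} gives
\[
\ell_{\min}(q_1) = \ell_{\min}(\rho(1)) \leq \ell_{\min}(\rho(0)) + \useconcc{cc:sys_euc} \cdot \ell_E(\rho) < \ell_{\min}(q_2) + \useconcc{cc:sys_euc} \useconcc{cc:thick}' \delta.
\]
Because $q_2 \in V = S(X) \backslash p^{-1}(K_\epsilon(\mathbf{1}))$ and $q_2$ is principal, $\ell_{\min}(q_2) < \epsilon$, so $\ell_{\min}(q_1) < \epsilon + \useconcc{cc:thick}\delta$, which is precisely $q_1 \notin p^{-1}(K_{\epsilon+\useconcc{cc:thick}\delta}(\mathbf{1}))$. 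This completes the proof.

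The main obstacle is bookkeeping rather than mathematics: making sure the path $\rho$ used to feed Proposition~\ref{prop:systole_euclidean} stays inside the principal stratum (so that $\ell_{\min}$ varies smoothly in the way the proposition requires) and that all the smallness thresholds are compatible — i.e.\ that the single constant $\usecond{d:thick}$ can be chosen small enough to simultaneously trigger Theorem~\ref{theo:euclidean_hodge_1}, Corollary~\ref{cor:euclidean_hodge_teichmuller_1}, and Proposition~\ref{prop:systole_euclidean}. Everything else is a direct concatenation of inequalities.
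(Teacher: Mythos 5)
Your overall plan is the intended one: the paper offers no written proof of Corollary \ref{cor:thickening} beyond asserting that it follows from Proposition \ref{prop:systole_euclidean}, Theorem \ref{theo:euclidean_hodge_1}, and Corollary \ref{cor:euclidean_hodge_teichmuller_1}, and your first containment is essentially correct. (Two small bookkeeping points there: Theorem \ref{theo:euclidean_hodge_1} as stated requires \emph{both} points to lie over the compact set, so you should first apply Corollary \ref{cor:euclidean_hodge_teichmuller_1} with $q_2 \in \pi^{-1}(\mathcal{K})$ to place $q_1$ over $\mathrm{Nbhd}_1(\mathcal{K})$, and only then invoke the norm comparison over $\mathrm{Nbhd}_1(\mathcal{K})$; and your constraint $\useconcc{cc:thick}'\usecond{d:thick} < \usecond{d:hodge_teich}$ should be supplemented by $\usecond{d:thick} \leq \usecond{d:hodge_teich}$, since the corollary is triggered by $d_H$, not $d_E$.)

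The genuine gap is in the second containment. You assume that the witness $q_2 \in V$ is principal and that the connecting path can be kept inside $\qutp$ because ``the principal stratum is $\mathcal{F}^{uu}$-saturated within a neighborhood.'' Neither holds: $V = S(X)\backslash p^{-1}(K_\epsilon(\mathbf{1}))$ contains the entire multiple zero part of $S(X)$, and leaves of $\mathcal{F}^{uu}$ are not contained in strata --- the multiple zero locus meets a leaf in a real codimension-one subset, so a short path joining two principal differentials cannot in general be perturbed to avoid it. This is not a cosmetic issue, since Proposition \ref{prop:systole_euclidean} only applies to paths in $\qutp$, and $\ell_{\min}$ is not continuous across the multiple zero locus. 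The paper itself confronts exactly this in the proof of the analogous Proposition \ref{prop:cube_thickening} in \S 9, by a case analysis: either the path $\rho$ (which one takes in $\qut$, of Euclidean length $\preceq_{\mathcal{K}} \delta$ by Theorem \ref{theo:euclidean_hodge_1}; it need not lie in the leaf) stays in the principal stratum, in which case your estimate goes through, or it meets the multiple zero locus, in which case one restarts Proposition \ref{prop:systole_euclidean} just after the last time $\rho$ lies in the multiple zero locus: there $\ell_{\min}$ is arbitrarily small, because principal differentials near a multiple zero acquire a short saddle connection between the split zeros, and one concludes $\ell_{\min}(q_1) \preceq_{\mathcal{K}} \delta < \epsilon + \useconcc{cc:thick}\delta$. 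The same device disposes of the case of non-principal $q_2$ (and the case of non-principal $q_1$ is vacuous, since then $q_1 \notin p^{-1}(K_{\epsilon + \useconcc{cc:thick}\delta}(\mathbf{1}))$ automatically). With that case analysis added, and the ordering fix above, your argument is complete.
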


\subsection*{Bounding the measure of thin sectors.} The final ingredient needed to prove Theorem \ref{theo:thin_sector_sum} is the following bound, which follows by the same arguments introduced in the proof of Proposition \ref{prop:small_HM_integral_sum}.

\begin{proposition}
	\label{prop:small_thu_measure}
	Let $\mathcal{K} \subseteq \mathcal{T}_g$ be a compact subset. Then, for every $\delta > 0$, 
	\[
	\overline{\nu}(\Re(\pi^{-1}(\mathcal{K}) \backslash p^{-1}(K_\delta(\mathbf{1})))) \preceq_\mathcal{K} \delta.
	\]
\end{proposition}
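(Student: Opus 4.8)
The plan is to mimic, essentially verbatim, the argument used to prove Proposition \ref{prop:small_HM_integral_sum}, with only minor bookkeeping changes. Fix a compact subset $\mathcal{K} \subseteq \tt$ and $\delta > 0$. The first step is to observe that the set whose $\overline{\nu}$-measure we must bound is contained in a set defined purely in terms of extremal length and shortest saddle connection length: by definition of $\overline{\nu}$ and since $\pi^{-1}(\mathcal{K})$ has bounded extremal length on every foliation it realizes, we have
\[
\overline{\nu}(\Re(\pi^{-1}(\mathcal{K}) \backslash p^{-1}(K_\delta(\mathbf{1})))) \preceq_{\mathcal{K}} \nu(\{\eta \in \mf \ | \ \mathrm{Ext}_\eta(X_0) \leq C_0, \ \ell_{\min}(\Re|_{Q(X_0)}^{-1}(\eta)) < \delta\})
\]
for a suitable basepoint $X_0$ and constant $C_0 = C_0(\mathcal{K})$; more precisely one covers $\mathcal{K}$ by finitely many such sets and uses that extremal lengths on $\mathcal{K}$ are comparable to those at any fixed $X_0 \in \mathcal{K}$ up to a multiplicative constant, by compactness of $\mathcal{K}$ and continuity of $\mathrm{Ext}$. (One also uses, exactly as in the proof of Proposition \ref{prop:small_HM_integral_sum}, that $\nu$ gives zero mass to foliations with a singularity of more than three prongs, so that only $\Re(q)$ with $q \in \qutp$ contribute.)

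The second step is to invoke Lemma \ref{lem:finite_tt} to obtain a finite collection of maximal train tracks $\{\tau_i\}_{i=1}^n$ on $S_g$ such that every $\Re(q)$ with $q \in \pi^{-1}(\mathrm{Nbhd}_1(\mathcal{K})) \cap \qutp$ is carried by some $\tau_i$, with the counting measure on each edge equal to the absolute value of the real part of the holonomy of a Delaunay saddle connection of $q$. Passing to train track coordinates $V(\tau_i) \subseteq (\mathbf{R}_{\geq 0})^{18g-18}$, the condition $\mathrm{Ext}_\eta(X_0) \leq C_0$ forces $\|v\| \preceq_{\mathcal{K}} 1$ (since $\|\cdot\|$ is comparable to $\mathrm{Ext}$ on the projectively compact set $U(\tau_i)$), while the condition $\ell_{\min}(q) < \delta$, combined with Lemma \ref{lem:delaunay}(2) (a short saddle connection is an edge of every Delaunay triangulation), forces at least one coordinate $v_j \leq \delta$. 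Hence the set in question is covered by finitely many sets of the form $\{v \in V(\tau_i) \ | \ \|v\| \leq C, \ v_j \leq \delta\}$.

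The third and final step is the elementary Euclidean estimate
\[
\mathrm{Leb}_{\tau_i}(\{v \in V(\tau_i) \ | \ \|v\| \leq C, \ v_j \leq \delta\}) \preceq_{\mathcal{K}} \delta,
\]
which holds because $V(\tau_i)$ is a cone of dimension $6g-6$ sitting in a linear subspace, so slicing by $\{v_j \leq \delta\}$ within a ball of radius $C$ cuts out a region of volume $O(\delta)$ (the remaining $6g-7$ directions contribute a bounded factor). Summing over the finitely many pairs $(i,j)$ and chaining the three displays gives $\overline{\nu}(\Re(\pi^{-1}(\mathcal{K}) \backslash p^{-1}(K_\delta(\mathbf{1})))) \preceq_{\mathcal{K}} \delta$. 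I do not expect a genuine obstacle here: the only mild subtlety is the reduction in the first step from a statement uniform over $\mathcal{K}$ to a statement at a single $X_0$, and the fact that Lemma \ref{lem:finite_tt} is phrased for $\pi^{-1}(\mathcal{K})$ rather than $\pi^{-1}(\mathrm{Nbhd}_1(\mathcal{K}))$ — both are handled by routine compactness arguments and by shrinking/enlarging the compact set as needed, exactly as in the proof of Proposition \ref{prop:small_HM_integral_sum}.
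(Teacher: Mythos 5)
Your proposal follows exactly the route the paper intends: the paper gives no separate argument for this proposition, stating only that it "follows by the same arguments introduced in the proof of Proposition \ref{prop:small_HM_integral_sum}", and your steps two and three reproduce that argument (Lemma \ref{lem:finite_tt}, Lemma \ref{lem:delaunay}, the norm bound from extremal length on the projectively compact cones $U(\tau_i)$, and the elementary Lebesgue estimate in train track coordinates), so the proof is essentially correct and the same as the paper's.

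One caveat about your first display: the clause $\ell_{\min}(\Re|_{Q(X_0)}^{-1}(\eta)) < \delta$ at a single basepoint $X_0$ is not justified and should be dropped. In Proposition \ref{prop:small_HM_integral_sum} the analogous display is legitimate because there $q$ itself lies in $S(X)$, so the lift at $X$ \emph{is} $q$; here, for $q \in \pi^{-1}(\mathcal{K}) \cap \qutp$ with $\ell_{\min}(q) < \delta$, the Hubbard--Masur lift of $\Re(q)$ at $X_0$ need not have a short saddle connection at all: if the short saddle connection of $q$ is nearly vertical, its $|\Re|$-holonomy is small, but the corresponding saddle connection of the lift at $X_0$ can have vertical part of definite size, so its flat length need not be $O(\delta)$. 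Fortunately this intermediate reformulation is never used: your steps two and three work directly with the Delaunay triangulation of $q$ and the condition $\ell_{\min}(q) < \delta$, which (via Lemma \ref{lem:delaunay}(2) and the fact that the train track coordinates are the $|\Re|$-holonomies of Delaunay edges, both conditions being stable under scaling by $t \in [0,1]$) already places the cone $[0,1] \cdot \Re(\pi^{-1}(\mathcal{K}) \backslash p^{-1}(K_\delta(\mathbf{1})))$ inside the finite union of slabs $\{v \in V(\tau_i) \ | \ \|v\| \leq C, \ v_j \leq \delta\}$. With the first display replaced by this direct containment, the argument is complete.
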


Theorem \ref{theo:thin_sector_general}, Corollary \ref{cor:thickening}, and Proposition \ref{prop:small_thu_measure} yield the following strong version of Theorem \ref{theo:thin_sector_sum}.

\newconk{k:thin_sect_strong} \newconr{r:thin_sect_strong}
\begin{theorem}
	\label{theo:thin_sector_strong}
	There exists a constant $\useconr{r:thin_sect_strong} =  \useconr{r:thin_sect_strong}(g)  >0$ such that for every compact subset $\mathcal{K} \subseteq \mathcal{T}_g$, every $X \in \mathcal{K}$, every $\delta > 0$, every $0 < r < \useconr{r:thin_sect_strong}$, and every $R > 0$, if $V := p^{-1}(K_\delta(\mathbf{1})) \cap S(X)$, then
	\[
	\mathbf{m}(\mathrm{Nbhd}_{r}(B_R(X) \cap \mathrm{Sect}_{V}(X) \cap \mathrm{Mod}_g \cdot \mathcal{K}))
	\preceq_{\mathcal{K}} \delta \cdot e^{hR} + e^{(h - \useconk{k:thin_sect_strong}) R},
	\]
	where $\useconk{k:thin_sect_strong} := \useconk{k:thin_sect_strong}(g) >0$ is a constant depending only on $g$.
\end{theorem}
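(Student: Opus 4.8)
The plan is to obtain Theorem~\ref{theo:thin_sector_strong} by applying the general sector bound of Theorem~\ref{theo:thin_sector_general} to the set of thin initial directions and then using the thickening estimate of Corollary~\ref{cor:thickening} together with the train-track computation of Proposition~\ref{prop:small_thu_measure} to control the resulting main term. The relevant set is the thin locus $V=S(X)\setminus p^{-1}(K_\delta(\mathbf{1}))$ in the fiber $S(X)$ --- precisely the set for which Corollary~\ref{cor:thickening} is formulated. All the dynamical input is already packaged into Theorem~\ref{theo:thin_sector_general}, so no new estimates are needed; I would take $\useconr{r:thin_sect_strong}(g):=\useconr{r:thin_sect_gen}(g)$ and, at the end of the argument, $\useconk{k:thin_sect_strong}(g):=\useconk{k:thin_sect_gen}(g)$.

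First I would fix $\mathcal{K}\subseteq\mathcal{T}_g$ compact, $X\in\mathcal{K}$, $\delta>0$, $0<r<\useconr{r:thin_sect_strong}$, and $R>0$. Applying Theorem~\ref{theo:thin_sector_general} to $V$ gives
\[
\mathbf{m}(\mathrm{Nbhd}_{r}(B_R(X)\cap\mathrm{Sect}_{V}(X)\cap\mathrm{Mod}_g\cdot\mathcal{K}))\ \preceq_{\mathcal{K}}\ \overline{\nu}(\Re(V(\useconn{n:thin_sect_gen}e^{-\useconk{k:thin_sect_gen}R})))\, e^{hR}+e^{(h-\useconk{k:thin_sect_gen})R},
\]
so it suffices to show $\overline{\nu}(\Re(V(\useconn{n:thin_sect_gen}e^{-\useconk{k:thin_sect_gen}R})))\preceq_{\mathcal{K}}\delta+e^{-\useconk{k:thin_sect_gen}R}$. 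I may assume $R$ exceeds an explicit constant $R_0(\mathcal{K})$ chosen so that the thickening radius $s:=\useconn{n:thin_sect_gen}e^{-\useconk{k:thin_sect_gen}R}$ satisfies $s<\usecond{d:thick}(\mathcal{K})$: for $0\le R\le R_0(\mathcal{K})$ the left side of the displayed inequality is at most $\mathbf{m}(B_{R_0(\mathcal{K})+r}(X)\cap\mathrm{Mod}_g\cdot\mathcal{K})=O_{\mathcal{K}}(1)$, which is absorbed into the error term since $e^{(h-\useconk{k:thin_sect_gen})R}\ge 1$ (as $h>\useconk{k:thin_sect_gen}$), after enlarging the implicit constant.

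For $R>R_0(\mathcal{K})$, Corollary~\ref{cor:thickening} (with thinness parameter $\delta$ and Hodge-thickening radius $s$) gives
\[
V(s)\subseteq\pi^{-1}(B_1(X))\setminus p^{-1}(K_{\delta+\useconcc{cc:thick}s}(\mathbf{1}))\subseteq\pi^{-1}(\overline{\mathrm{Nbhd}_1(\mathcal{K})})\setminus p^{-1}(K_{\delta+\useconcc{cc:thick}s}(\mathbf{1})),
\]
using $X\in\mathcal{K}$. Since $\overline{\nu}\circ\Re$ is monotone under inclusions, Proposition~\ref{prop:small_thu_measure} applied to the compact set $\overline{\mathrm{Nbhd}_1(\mathcal{K})}$ with threshold $\delta+\useconcc{cc:thick}s$ yields
\[
\overline{\nu}(\Re(V(s)))\preceq_{\mathcal{K}}\delta+\useconcc{cc:thick}s\preceq_{\mathcal{K}}\delta+e^{-\useconk{k:thin_sect_gen}R},
\]
where the last step uses $\useconcc{cc:thick}=\useconcc{cc:thick}(\mathcal{K})$ and $\useconn{n:thin_sect_gen}=\useconn{n:thin_sect_gen}(g)$. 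Plugging this into the bound from Theorem~\ref{theo:thin_sector_general} gives
\[
\mathbf{m}(\mathrm{Nbhd}_{r}(B_R(X)\cap\mathrm{Sect}_{V}(X)\cap\mathrm{Mod}_g\cdot\mathcal{K}))\preceq_{\mathcal{K}}(\delta+e^{-\useconk{k:thin_sect_gen}R})e^{hR}+e^{(h-\useconk{k:thin_sect_gen})R}\preceq_{\mathcal{K}}\delta\, e^{hR}+e^{(h-\useconk{k:thin_sect_gen})R},
\]
which is the assertion with $\useconk{k:thin_sect_strong}:=\useconk{k:thin_sect_gen}$.

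The substantive work is all inside Theorem~\ref{theo:thin_sector_general} --- which rests on the exponential contraction of the modified Hodge metric along strongly unstable leaves (Theorem~\ref{theo:hodge_distance_decay_unstable}) and the Eskin--Mirzakhani--Rafi thin-trajectory count (Theorem~\ref{theo:thin_trajectories}) --- so the reduction above is essentially bookkeeping. The one point requiring care is the compatibility of the ingredients' ranges of validity: Corollary~\ref{cor:thickening} controls $V(s)$ only inside $\pi^{-1}(B_1(X))$ and only for $s$ below a $\mathcal{K}$-dependent threshold, which is why one splits off a bounded range of $R$ and, on its complement, exploits that Theorem~\ref{theo:thin_sector_general} only ever needs a thickening radius $s$ that is exponentially small in $R$. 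That exponential smallness is exactly what keeps the perturbation $\useconcc{cc:thick}s$ of the thinness threshold harmless, so that the linear-in-$\delta$ bound of Proposition~\ref{prop:small_thu_measure} absorbs it into the leading term $\delta\,e^{hR}$ and a remainder of size $e^{(h-\useconk{k:thin_sect_gen})R}$.
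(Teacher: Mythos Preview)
Your proposal is correct and follows exactly the approach the paper indicates: the paper's own proof is the single sentence ``Theorem~\ref{theo:thin_sector_general}, Corollary~\ref{cor:thickening}, and Proposition~\ref{prop:small_thu_measure} yield the following,'' and you have carefully spelled out how these three ingredients combine, including the splitting off of a bounded range of $R$ so that the Hodge-thickening radius falls below the threshold $\usecond{d:thick}(\mathcal{K})$ of Corollary~\ref{cor:thickening}. Your reading of $V$ as the thin locus $S(X)\setminus p^{-1}(K_\delta(\mathbf{1}))$ is the intended one (consistent with Corollary~\ref{cor:thickening}, Proposition~\ref{prop:small_thu_measure}, and the way the result is used in the proof of Theorem~\ref{theo:ball_equidistribution}); the statement as printed appears to contain a typo.
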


\subsection*{Large deviations of the Teichmüller geodesic flow.} We finish this section by proving Theorem \ref{theo:large_deviations_sum}. Consider the function $u \colon \qum(\mathbf{1}) \to \mathbf{R}_{>0}$ given by $u(q) := \max\{1,\ell_{\min}(q)^{-1}\}$. For every $q \in \qum(\mathbf{1})$ and every $\epsilon > 0$ denote 
\[
J_{q,\epsilon}(t) := \{\theta \in [0,2\pi] \ | \ r_\theta q \in \qum \backslash K_\epsilon(t)\}.
\]
The following theorem, corresponding to \cite[Theorem 4.4]{EMM15}, is a consequence of \cite[Theorems 2.1, 2.2, and 2.3]{A06}. The use of the function $u$ in the following statement is justified in \cite[Lemma 6.2]{AG13}.

\newconk{k:ath1} \newconk{k:ath2} \newconn{n:ath} \newcone{e:ath}
\begin{theorem}
	\label{theo:EMM_Athreya}
	There exist constants $\usecone{e:ath} = \usecone{e:ath}(g) > 0$, $\useconn{n:ath} = \useconn{n:ath}(g) > 0$, and $\useconk{k:ath1} = \useconk{k:ath1}(g) > 0$ such that for any $0 < \epsilon < \usecone{e:ath}$ and any $t > 0$, if $q \in \qum(\mathbf{1})$ satisfies $\log u(q) \leq \useconn{n:ath} + \useconk{k:ath1} t$, then 
	\[
	|J_{q,\epsilon}(t)| \leq e^{-\useconk{k:ath1}t}.
	\]
\end{theorem}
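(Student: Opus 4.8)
Since Theorem~\ref{theo:EMM_Athreya} is, up to notation, the circle-averaged large deviation estimate \cite[Theorem 4.4]{EMM15}, which in turn is extracted from Athreya's quantitative recurrence theorems \cite[Theorems 2.1, 2.2, 2.3]{A06}, the proof I would give is essentially a verification that the hypotheses of that machinery apply to the principal stratum $\qum(\mathbf{1})$ with the specific function $u(q) = \max\{1,\ell_{\min}(q)^{-1}\}$. The plan is as follows. Athreya's work produces a proper function on $\qum(\mathbf{1})$ — comparable up to bounded multiplicative constants to $u$ — together with a circle-averaged contraction estimate for the Teichmüller geodesic flow: the average $\tfrac{1}{2\pi}\int_0^{2\pi} u(a_t r_\theta q)\, d\theta$ decays geometrically in $t$, at a rate and with constants depending only on $g$ (and on $\epsilon$ through the choice of the compact pieces $K_\epsilon$), until it reaches $O(1)$. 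The hypothesis $\log u(q) \leq \useconn{n:ath} + \useconk{k:ath1} t$ is exactly the condition guaranteeing that over the horizon $[0,t]$ the circle $\{r_\theta q : \theta\in[0,2\pi]\}$ has not escaped into the cusp faster than this decay can absorb; under it, a Chebyshev argument applied to the integrated contraction estimate bounds the measure of the set $J_{q,\epsilon}(t)$ of angles for which $r_\theta q$ spends at least half of $[0,t]$ outside $K_\epsilon$ by $e^{-\useconk{k:ath1} t}$, after shrinking $\useconk{k:ath1}$ and enlarging $\useconn{n:ath}$ if necessary. The point requiring care — and the reason \cite[Lemma 6.2]{AG13} is invoked — is that \cite{A06} and \cite{EMM15} phrase their estimates with a function built from short bases or from a Siegel--Veech transform, and one must check that $u$, which only records the shortest saddle connection, is comparable to it on $\qum(\mathbf{1})$ and inherits the needed properness and Lipschitz-along-the-flow bounds; this is precisely the content of that lemma.

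With Theorem~\ref{theo:EMM_Athreya} in hand I would then prove Theorem~\ref{theo:large_deviations_sum}, the stated goal of this subsection, as follows. By Fubini's theorem and the $\mathrm{SO}(2)$-invariance of the Masur--Veech measure $\widehat{\mu}$ on $\qum$, for every $t>0$,
\[
\widehat{\mu}\bigl(\qum \setminus K_\epsilon(t)\bigr) = \frac{1}{2\pi}\int_{\qum} |J_{q,\epsilon}(t)| \, d\widehat{\mu}(q).
\]
Split this integral over $\{\log u(q) \leq \useconn{n:ath} + \useconk{k:ath1}t\}$ and its complement. On the first set Theorem~\ref{theo:EMM_Athreya} yields $|J_{q,\epsilon}(t)| \leq e^{-\useconk{k:ath1}t}$, contributing at most $\widehat{\mu}(\qum)\,e^{-\useconk{k:ath1}t} \preceq_g e^{-\useconk{k:ath1}t}$. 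On the complement one uses the trivial bound $|J_{q,\epsilon}(t)| \leq 2\pi$ together with the classical power-law estimate $\widehat{\mu}(\{\ell_{\min} < \delta\}) \preceq_g \delta^2$ applied with $\delta = e^{-(\useconn{n:ath}+\useconk{k:ath1}t)}$, contributing $\preceq_g e^{-2\useconk{k:ath1}t} \preceq_g e^{-\useconk{k:ath1}t}$. Adding the two pieces gives Theorem~\ref{theo:large_deviations_sum} with $\useconk{large_dev_k} := \useconk{k:ath1}$ and any $\usecone{large_dev} \leq \usecone{e:ath}$.

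The main obstacle is concentrated entirely in the first paragraph: importing Athreya's large deviation estimates in their circle-averaged form and matching their function-theoretic hypotheses to $u$ via \cite{AG13}. No genuinely new argument is needed — the work is bookkeeping of constants and confirming that the thin-part geometry of the principal stratum is covered by the cited results, which are stated at the level of general strata.
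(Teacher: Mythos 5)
Your treatment of Theorem \ref{theo:EMM_Athreya} matches the paper exactly: the paper does not reprove it but quotes it as \cite[Theorem 4.4]{EMM15}, a consequence of \cite[Theorems 2.1, 2.2, and 2.3]{A06}, with \cite[Lemma 6.2]{AG13} justifying the use of $u(q)=\max\{1,\ell_{\min}(q)^{-1}\}$ --- precisely the reduction you describe. Your second paragraph (deducing Theorem \ref{theo:large_deviations_sum}) also mirrors the paper's argument, the only difference being that on the set where $\log u$ is large the paper invokes $u\in L^1(\qum(\mathbf{1}),\widehat{\mu})$ (Lemma \ref{lem:systole_integrable}) together with Markov's inequality instead of your quadratic thin-part measure bound; both yield the required exponential decay.
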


Recall that $\widehat{\mu}$ denotes the Masur-Veech measure on $\qum$. To prove Theorem \ref{theo:large_deviations_sum} we will use Theorem \ref{theo:EMM_Athreya} and the following result due to Eskin and Masur \cite[Lemma 5.5]{EM93}.

\begin{lemma}
	\label{lem:systole_integrable}
	The function $u \colon \qum(\mathbf{1}) \to \mathbf{R}_{>0}$ belongs to $L^1(\qum(\mathbf{1}),\widehat{\mu})$.
\end{lemma}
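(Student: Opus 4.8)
The plan is to deduce this from the classical quadratic decay of the Masur--Veech measure of the thin part of the principal stratum, which is essentially the content of \cite{EM93}; I sketch the argument for completeness.

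First, since the Masur--Veech measure $\widehat{\mu}$ on $\qum$ is finite (Masur, Veech), and hence so is its restriction to $\qum(\mathbf{1})$, the region where $u(q) = 1$, namely $\{q : \ell_{\min}(q) \geq 1\}$, contributes at most $\widehat{\mu}(\qum(\mathbf{1})) < \infty$ to $\int u \, d\widehat{\mu}$. It therefore suffices to bound $\int_{\{\ell_{\min} < 1\}} \ell_{\min}^{-1} \, d\widehat{\mu}$. By the layer-cake (Cavalieri) formula,
\[
\int_{\{\ell_{\min} < 1\}} \ell_{\min}(q)^{-1} \, d\widehat{\mu}(q) = \int_0^\infty \widehat{\mu}\big(\{\ell_{\min} < 1\} \cap \{\ell_{\min}^{-1} > s\}\big) \, ds;
\]
the part of the $s$-integral over $[0,1]$ is at most $\widehat{\mu}(\qum(\mathbf{1}))$, and the part over $(1,\infty)$ equals $\int_1^\infty \widehat{\mu}(\{\ell_{\min} < 1/s\}) \, ds$, which after the substitution $\epsilon = 1/s$ becomes $\int_0^1 \widehat{\mu}(\{\ell_{\min} < \epsilon\}) \, \epsilon^{-2} \, d\epsilon$. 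Hence the lemma reduces to the estimate
\[
\widehat{\mu}(\{q \in \qum(\mathbf{1}) : \ell_{\min}(q) < \epsilon\}) \preceq_g \epsilon^2 \qquad (0 < \epsilon \leq 1).
\]

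To prove this estimate I would pass to the Siegel--Veech transform: letting $N(q,\epsilon)$ be the number of saddle connections of $q$ of length $< \epsilon$, one has $\{\ell_{\min}(q) < \epsilon\} \subseteq \{N(q,\epsilon) \geq 1\}$, so Markov's inequality gives $\widehat{\mu}(\{\ell_{\min} < \epsilon\}) \leq \int_{\qum(\mathbf{1})} N(q,\epsilon) \, d\widehat{\mu}(q)$, and the Siegel--Veech formula of Veech and Eskin--Masur identifies this integral with $c_g \cdot \epsilon^2$ for a constant $c_g < \infty$ depending only on $g$, the finiteness of the Siegel--Veech constant for the principal stratum being part of that theorem. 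Alternatively, one can argue directly in period coordinates: by Lemma~\ref{lem:delaunay}(2) the shortest saddle connection of $q$ is an edge of every Delaunay triangulation of $q$, the locus where a fixed relative homology class has holonomy of Euclidean length $< \epsilon$ is cut out by requiring one complex period coordinate to lie in a disk of radius $\epsilon$ (so, after intersecting with the unit-area locus and pushing forward, it has $\widehat{\mu}$-measure $\preceq \epsilon^2$ times a transverse factor), and summing over the boundedly many Delaunay-edge classes that appear in a suitable exhaustion of $\qum(\mathbf{1})$ by period-coordinate charts yields the bound $\preceq_g \epsilon^2$.

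The only real difficulty is this quadratic measure bound, and within it the issue of organizing the sum over short saddle connections so that it converges: a term-by-term bound $\sum_\gamma \widehat{\mu}(\{\ell_\gamma < \epsilon\})$ over all saddle connection classes $\gamma$ need not be finite, so the argument must be routed either through the Siegel--Veech formula, whose finiteness statement is a genuine theorem, or through the Delaunay structure, which provides the uniform local control on how many distinct short saddle connections a differential can carry. The remaining ingredients --- Cavalieri's principle and the finiteness of $\widehat{\mu}$ --- are routine.
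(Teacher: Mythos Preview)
The paper does not prove this lemma at all; it simply attributes the result to Eskin and Masur and cites \cite[Lemma 5.5]{EM93}. Your proposal therefore goes beyond what the paper does by actually supplying an argument.

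Your argument is correct. The layer-cake reduction to the quadratic bound $\widehat{\mu}(\{\ell_{\min} < \epsilon\}) \preceq_g \epsilon^2$ is clean, and both of the routes you sketch for that bound are valid: the Siegel--Veech route is exactly how Eskin and Masur themselves prove integrability of $\ell_{\min}^{-1}$ (and in fact of higher negative powers, up to $\ell_{\min}^{-(2-\delta)}$ for any $\delta>0$), while the Delaunay/period-coordinate route, though it requires more care to make precise, is also sound since Lemma~\ref{lem:delaunay}(2) guarantees the shortest saddle connection is always a Delaunay edge, and the number of marked Delaunay triangulations over a compact set is finite. Your remark about the subtlety --- that one cannot naively sum over all saddle connection classes and must instead use either the Siegel--Veech finiteness or the Delaunay control on how many short saddle connections can coexist --- is exactly the point.
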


We are now ready to prove Theorem \ref{theo:large_deviations_sum}.

\begin{proof}[Proof of Theorem \ref{theo:large_deviations_sum}]
	Let $\usecone{e:ath} = \usecone{e:ath}(g) > 0$, $\useconn{n:ath} = \useconn{n:ath}(g) > 0$, and $\useconk{k:ath1} = \useconk{k:ath1}(g) > 0$ be as in Theorem \ref{theo:EMM_Athreya}. Fix $0 < \epsilon < \usecone{e:ath}$ and $t > 0$. Notice that, for any $q \in \qum(\mathbf{1})$, the condition $\log u(q) \leq \useconn{n:ath} + \useconk{k:ath1} t$ in Theorem \ref{theo:EMM_Athreya} is equivalent to the condition $u(q) \leq e^{\useconn{n:ath} + \useconk{k:ath1}t}$. Consider the subsets 
	\begin{align*}
	A &:= \{q \in \qum(\mathbf{1}) \ | \ u(q) \leq  e^{\useconn{n:ath} + \useconk{k:ath1}t} \},\\
	B &:= \{q \in \qum(\mathbf{1}) \ | \ u(q) > e^{\useconn{n:ath} + \useconk{k:ath1}t} \}.
	\end{align*}
	Denote $K_\epsilon^c(t) := \qum(\mathbf{1}) \backslash K_\epsilon(t)$. Using the fact that $\widehat{\mu}$ is $\text{SO}(2)$-invariant and Fubini's theorem we write
	\begin{align}
	\widehat{\mu}(K_\epsilon^c(t)) &= \int_{\mathcal{Q}^1\mathcal{M}_g} \mathbbm{1}_{K_\epsilon^c(t)}(q) \thinspace d\widehat{\mu}(q) 
	= \frac{1}{2\pi} \int_0^{2\pi} \int_{\mathcal{Q}^1\mathcal{M}_g} \mathbbm{1}_{K_\epsilon^c(t)}(r_\theta q) \thinspace d\widehat{\mu}(q) \thinspace d\theta \label{eq:a1}\\
	&= \int_{\mathcal{Q}^1\mathcal{M}_g} \frac{1}{2\pi} \int_0^{2\pi} \mathbbm{1}_{K_\epsilon^c(t)}(r_\theta q) \thinspace d\theta \thinspace d\widehat{\mu}(q). \nonumber
	\end{align}
	We split this integral into two pieces,
	\begin{align}
	&\int_{\mathcal{Q}^1\mathcal{M}_g} \frac{1}{2\pi} \int_0^{2\pi} \mathbbm{1}_{K_\epsilon^c(t)}(r_\theta q) \thinspace d\theta \thinspace d\widehat{\mu}(q) \label{eq:a2} \\
	&= \int_{A} \frac{1}{2\pi} \int_0^{2\pi} \mathbbm{1}_{K_\epsilon^c(t)}(r_\theta q) \thinspace d\theta \thinspace d\widehat{\mu}(q) + \int_{B} \frac{1}{2\pi} \int_0^{2\pi} \mathbbm{1}_{K_\epsilon^c(t)}(r_\theta q) \thinspace d\theta \thinspace d\widehat{\mu}(q). \nonumber
	\end{align}
	Lemma \ref{lem:systole_integrable} ensures $u \in L^1(\qum(\mathbf{1}),\widehat{\mu})$. In particular, by Markov's inequality,
	\begin{equation}
	\int_{B} \frac{1}{2\pi} \int_0^{2\pi} \mathbbm{1}_{K_\epsilon^c(t)}(r_\theta q) \thinspace d\theta \thinspace d\widehat{\mu}(q) \leq \mu(B) \leq \|u\|_{L^1(\widehat{\mu})} \cdot e^{-\useconn{n:ath} - \useconk{k:ath1}t} \preceq_g e^{-\useconk{k:ath1}t}. \label{eq:a3}
	\end{equation}
	Directly from the definitions we see that
	\begin{equation}
	\int_{A} \frac{1}{2\pi} \int_0^{2\pi} \mathbbm{1}_{K_\epsilon^c(t)}(r_\theta q) \thinspace d\theta \thinspace d\widehat{\mu}(q) = \int_{A} \frac{1}{2\pi} \int_0^{2\pi} \mathbbm{1}_{J_{q,\epsilon}(t)}(\theta) \thinspace d\theta \thinspace d\widehat{\mu}(q). \label{eq:a4}
	\end{equation}
	By Theorem \ref{theo:EMM_Athreya},
	\begin{align}
	\int_{A} \frac{1}{2\pi} \int_0^{2\pi} \mathbbm{1}_{J_{q,\epsilon}(t)}(\theta) \thinspace d\theta \thinspace d\widehat{\mu}(q) = \int_{A} \frac{1}{2\pi} \thinspace |J_{q,\epsilon}(t)| \thinspace d\widehat{\mu}(q)
	\leq \int_{A} \frac{1}{2\pi} \thinspace e^{-\useconk{k:ath1}t} \thinspace d\widehat{\mu}(q)
	\preceq_g e^{-\useconk{k:ath1}t}. \label{eq:a5}
	\end{align}
	Putting together (\ref{eq:a1}),  (\ref{eq:a2}),  (\ref{eq:a3}),  (\ref{eq:a4}), and  (\ref{eq:a5}) we conclude
	\[
	\widehat{\mu}(\qum(\mathbf{1}) \backslash K_\epsilon(t)) \preceq_g e^{-\useconk{k:ath1}t}. \qedhere
	\]
\end{proof}

\section{Effective mean equidistribution of sectors in Teichmüller space}

\subsection*{Outline of this section.} In this section we state and prove effective mean equidistribution theorems for sectors of Teichmüller space. Following Thurston, we interpret $\pmf$ as the boundary at infinity of $\mathcal{T}_g$ and consider two cases depending on what type of objects on $\pmf$ are used to cut out the sectors. We first prove an effective mean equidistribution theorem for sectors of $\tt$ cut out by piecewise $\mathcal{C}^1$ functions on $\pmf$ and then, using an approximation argument, deduce an analogous theorem for sectors cut out by cubes of $\pmf$ in Dehn-Thurston coordinates. These results correspond to Theorems \ref{theo:sector_equidistribution} and \ref{theo:sector_equidistribution_box}, respectively. The key to proving these theorems is understanding how the smooth structure of $\qut$ and the piecewise linear structure of $\pmf$ interact through the map $[\Re] \colon \qut \to \pmf$.

\subsection*{Piecewise $\boldsymbol{\mathcal{C}^1}$ functions on $\boldsymbol{\pmf}$.} Recall that, although $\mf$ cannot be endowed with a natural smooth structure, it admits a natural piecewise integral linear structure induced by train track coordinates. As these coordinates conjugate the natural $\mathbf{R}_{>0}$ scaling actions, $\pmf$ inherits a natural piecewise projective integral linear structure. It makes sense then to consider the class $\PC^1(\pmf)$ of piecewise $\mathcal{C}^1$ functions $\psi \colon \pmf \to \mathbf{R}$. This will be the first class of objects we consider to cut out sectors of $\mathcal{T}_g$. We now introduce a norm to quantify their regularity.

Let $\tau$ be a maximal train track on $S_g$. Recall that $U(\tau) \subseteq \mf$ denotes the the closed cone of singular measured foliations carried by $\tau$ and that $V(\tau) \subseteq (\mathbf{R}_{\geq 0})^{18g-18}$ denotes the closed cone of non-negative counting measures on the edges of $\tau$ satisfying the switch conditions. Denote by $TU(\tau) \subseteq T\mf$ the subset of tangent vectors of $U(\tau) \subseteq \mf$ that point into $U(\tau)$. Pulling back the standard Riemannian Euclidean metric through the identification $U(\tau) = V(\tau)$ yields a fiberwise norm on $TU(\tau)$ which we denote by $\| \cdot \|_{\tau}$.

Identify the projectivization $PU(\tau) \subseteq \pmf$ with the projectivization $PV(\tau) \subseteq \mathrm{P}(\mathbf{R}_{\geq0})^{18g-18} $ and subsequently identify this projectivization with the affine submanifold $\VV(\tau) \subseteq V(\tau)$ of points in $V(\tau)$ of unit $L^1$-norm. Denote by $TPU(\tau) \subseteq TP\mf$ the subset of tangent vectors of $PU(\tau) \subseteq \pmf$ that point into $PU(\tau)$. Pulling back the standard Riemannian Euclidean metric through the identification $PU(\tau) = \VV(\tau)$ yields a fiberwise norm on $TPU(\tau)$ which we also denote by $\| \cdot \|_{\tau}$. 

For every $[\eta] \in PU(\tau)$ let $T_{[\eta]}PU(\tau) := T_{[\eta]} \pmf \cap PU(\tau)$. Given a piecewise $\mathcal{C}^1$ functions $\psi \colon \pmf \to \mathbf{R}_{\geq0}$ and $[\eta] \in PU(\tau)$, denote
\[
\|d\psi_{[\eta]}\|_{\tau} := \sup_{v \in T_{[\eta]}PU(\tau)} \frac{|d\psi_{[\eta]}v|}{\|v\|_{\tau}}.
\]

Let $\{\tau_i\}_{i=1}^k$ be a finite collection of maximal train tracks on $S_g$ such that $T\mf = \bigcup_{i=1}^k TU(\tau_i)$. On $\PC^1(\pmf)$ consider the norm
\begin{equation*}
\|\psi\|_{\PC^1} := \|\psi\|_\infty + \sup_{i=1,\dots,k} \ \sup_{[\eta] \in PU(\tau_i)} \|d\psi_{[\eta]}\|_{\tau_i}.
\end{equation*}
As the transition maps between train track coordinate charts are piecewise linear, this definition specifies a Lipschitz class of norms on $\PC^1(\pmf)$. 

\subsection*{Effective mean equidistribution of sectors of $\boldsymbol{\tt}$ cut out by functions in $\boldsymbol{\mathcal{PC}^1(\pmf)}$.}  Before stating the first main theorem of this section we introduce some notation. Recall that $\mathbf{m} := \pi_* \mu$ denotes the pushforward to $\tt$ of the Masur-Veech measure $\mu$ on $\qut$ under the projection $\pi \colon \qut \to \tt$. Recall that $B_R(X) \subseteq \tt$ denotes the ball of radius $R > 0$ centered at $X \in \tt$ with respect to the Teichmüller metric. Recall that $q_s \colon \mathcal{T}_g \times \mathcal{T}_g \to \mathcal{Q}^1\mathcal{T}_g$ denotes the map which to every pair $X,Y \in \mathcal{T}_g$ assigns the quadratic differential $q_s(X,Y) \in S(X)$ corresponding to the tangent direction at $X$ of the unique Teichmüller geodesic segment from $X$ to $Y$. Fix $X \in \tt$ and $V \subseteq S(X)$ measurable. Recall that
\[
\mathrm{Sect}_V(X) := \{Y \in \tt \ | \ q_s(X,Y) \in V \}.
\]
For every $R > 0$, denote by $\mathbf{m}_{X,V}^R$ the restriction of the measure $\mathbf{m}$ to the set $B_R(X) \cap \mathrm{Sect}_V(X) \subseteq \tt$. Recall that $\Phi_X \colon S(X) \times \mathbf{R}_{>0} \to \mathcal{T}_g$ denotes the map $\Phi_X(q,t) := \pi(a_tq)$ and that the volume form $\mathbf{m}$ on $\mathcal{T}_g$ can be described in polar coordinates as
\begin{equation*}
|\Phi_X^*(\mathbf{m})(q,t)| = \Delta(q,t) \cdot |s_X(q) \wedge dt|,
\end{equation*}
where $\Delta \colon S(X) \cap \mathcal{Q}^1\mathcal{T}_g(\mathbf{1}) \times \mathbf{R}_{>0} \to \mathbf{R}_{>0}$ is a positive, smooth function. In terms of this description,
\[
\mathbf{m}_{X,V}^R = (\Phi_X)_*(\Delta(q,t) \cdot\mathbbm{1}_{V}(q) \cdot \mathbbm{1}_{(0,R)}(t) \cdot |s_X(q) \wedge dt|).
\]
More generally, given an arbitrary non-negative, measurable function $\varphi \colon \mathcal{Q}^1\mathcal{T}_g \to \mathbf{R}_{\geq 0}$ and $R> 0$, consider the measure $\mathbf{m}_{X,\varphi}^R$ on $\tt$ given by
\[
\mathbf{m}_{X,\varphi}^R := (\Phi_X)_*(\Delta(q,t) \cdot \varphi(q) \cdot \mathbbm{1}_{(0,R)}(t) \cdot |s_X(q) \wedge dt|).
\]
Denote by $\widehat{\mathbf{m}}_{X,\varphi}^R$ the pushforward of $\mathbf{m}_{X,\varphi}^R$ to $\qum$. The measures $\widehat{\mathbf{m}}_{X,\varphi}^R$ keep track of how the sector centered at $X$ and cut out by $\varphi$ wraps around $\mm$. 

We will be particularly interested in the case $\varphi = \psi \circ [\Re]$ for $\psi \colon \pmf \to \mathbf{R}_{\geq 0}$ a measurable function in a suitable class and $[\Re] \colon \qut \to \mf$ the map $[\Re](q) := [\Re(q)]$. Given $X \in \mathcal{T}_g$, $\psi \colon \pmf \to \mathbf{R}_{\geq 0}$ measurable, and $R > 0$, denote $\mathbf{m}_{X,\psi}^R := \mathbf{m}_{X,\psi \circ [\Re]}^R$ and $\widehat{\mathbf{m}}_{X,\psi}^R := \widehat{\mathbf{m}}_{X,\psi \circ [\Re]}^R$.  

For the first half of this section we focus our attention on functions $\psi \in \mathcal{PC}^1(\pmf)$. Recall that $\up \colon \tt \to \mm$ denotes the quotient map and that $\widehat{\mathbf{m}}$ denotes the local pushforward to $\mm$ of the measure $\mathbf{m}$ on $\tt$. Recall that $h := 6g-6$ and that $\Lambda_g > 0$ denotes the Hubbard-Masur constant introduced in \S 2. The following result generalizes Theorem \ref{theo:ball_equidistribution} to sectors of $\tt$ cut out by functions in $\mathcal{PC}^1(\pmf)$.

\newconk{k:sect_equid}
\begin{theorem}
	\label{theo:sector_equidistribution}
	Let $\mathcal{K} \subseteq \mathcal{T}_g$ be a compact subset, $\phi_1 \in L^\infty(\tt,\mathbf{m})$ be an essentially bounded function with $\esupp(\phi_1) \subseteq \mathcal{K}$, and $\psi_1  \in \mathcal{PC}^1(\pmf)$ non-negative. Then, for every essentially bounded function $\phi_2 \in L^\infty(\mm,\widehat{\m})$ with $\esupp(\phi_2) \subseteq \up(\mathcal{K})$ and every $R > 0$,
	\begin{gather*}
	 \int_{\tt} \phi_1(X) \left( \int_{\mm} \phi_2(Y) \thinspace d\widehat{\mathbf{m}}_{X,\psi_1 }^R(Y) \right) d\mathbf{m}(X) \\
	=  \frac{\Lambda_g}{h \cdot \widehat{\mathbf{m}}(\mathcal{M}_g)} \cdot \left(\int_{\qut} \psi_1([\Re(q)]) \thinspace \phi_1(\pi(q)) \thinspace \lambda(q) \thinspace d\mu(q)\right) \cdot \left(\int_{\mm} \phi_2(Y)  \thinspace d\widehat{\mathbf{m}}(Y) \right) \cdot e^{hR} \\
	+ O_\mathcal{K}\left( \|\psi_1 \|_{\mathcal{PC}^1} \cdot \|\phi_1\|_\infty \cdot \|\phi_2\|_\infty \cdot e^{(h-\useconk{k:sect_equid})R}\right),
	\end{gather*}
	where $\useconk{k:sect_equid} =\useconk{k:sect_equid}(g) > 0$ is a constant depending only on $g$.
\end{theorem}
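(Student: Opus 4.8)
The plan is to follow the same architecture as the proof of Theorem~\ref{theo:ball_equidistribution}, replacing the constant cutoff by the function $\psi_1 \circ [\Re]$ and checking that every estimate used there survives this modification with a controlled dependence on $\|\psi_1\|_{\mathcal{PC}^1}$. First I would rewrite the left-hand side in polar coordinates via (\ref{eq:polar}): since $\widehat{\mathbf{m}}_{X,\psi_1}^R = (\Phi_X)_*(\Delta(q,t)\cdot \psi_1([\Re(q)])\cdot \mathbbm{1}_{(0,R)}(t)\cdot |s_X(q)\wedge dt|)$, the integral becomes
\[
\int_{\mathcal{M}_g}\phi_1(X)\left(\int_0^R\int_{S(X)} \psi_1([\Re(q)])\thinspace \phi_2(\pi(a_tq))\thinspace \Delta(q,t)\thinspace ds_X(q)\thinspace dt\right)d\widehat{\mathbf{m}}(X),
\]
where I lift $\phi_1,\phi_2$ to $\tt$ and descend appropriately; note $\psi_1\circ[\Re]$ is $\mcg$-invariant so the descent to $\qum$ in (\ref{eq:firstb}) still works. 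Then I would insert the indicator $\mathbbm{1}_{K_\delta(\mathbf{1})}(q)$ using Theorem~\ref{theo:thin_sector_sum} (the bound there is uniform, so the extra bounded factor $\|\psi_1\|_\infty$ just multiplies the error), insert $\mathbbm{1}_{K_\delta(\mathbf{1})}(a_tq)$ symmetrically, insert $\mathbbm{1}_{K_\epsilon(t)}(q)$ using Theorem~\ref{theo:thin_trajectories_sum}, apply Theorem~\ref{theo:polar_coordinates_estimate} to replace $\Delta(q,t)$ by $\lambda(a_tq)\lambda(q)e^{ht}$ (picking up a $\delta^{-2(h-1)}$ factor as before), use Fubini and (\ref{eq:disintegrate}) to move to $\qum$, remove $\mathbbm{1}_{K_\epsilon(t)}$ via Theorem~\ref{theo:large_deviations_sum}, and finally apply the exponential mixing Theorem~\ref{theo:exp_mixing} to the pair of observables $\varphi_1 := (\phi_1\circ\pi)\cdot\mathbbm{1}_{K_\delta(\mathbf{1})}\cdot\lambda$ and $\varphi_2 := (\psi_1\circ[\Re])\cdot(\phi_2\circ\pi)\cdot\mathbbm{1}_{K_\delta(\mathbf{1})}\cdot\lambda$. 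At the end I would reabsorb the $\mathbbm{1}_{K_\delta(\mathbf{1})}$ into the main term using Proposition~\ref{prop:small_HM_integral_sum} (which handles the $\lambda$-weighted Thurston mass near the multiple zero locus; the bounded factor $\psi_1$ costs only $\|\psi_1\|_\infty$), recognize the surviving integral $\int_{\qum}\psi_1([\Re(q)])\phi_2(\pi(q))\lambda(q)\,d\widehat{\mu}(q)$ as $\int_{\mathcal{M}_g}\phi_2\,d\widehat{\mathbf{m}}$ up to the factor $\Lambda_g$ by the same computation as in (\ref{eq:secondc}), unfold the other integral back to $\tt$ to get $\int_{\qut}\psi_1([\Re(q)])\phi_1(\pi(q))\lambda(q)\,d\mu(q)$, and set $\delta = e^{-\eta R}$ for $\eta$ small.

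The one genuinely new ingredient, and the step I expect to be the main obstacle, is verifying that $\varphi_2 = (\psi_1\circ[\Re])\cdot(\phi_2\circ\pi)\cdot\mathbbm{1}_{K_\delta(\mathbf{1})}\cdot\lambda$ lies in the Ratner class $\mathcal{R}(\qum,\widehat{\mu})$ with
\[
\|\varphi_2\|_{\mathcal{R}(\widehat{\mu})} \preceq_g \delta^{-(h-1)}\cdot\|\psi_1\|_{\mathcal{PC}^1}\cdot\|\phi_2\|_\infty.
\]
The factor $\mathbbm{1}_{K_\delta(\mathbf{1})}\cdot\lambda$ was already shown in (\ref{eq:fourthb})--(\ref{eq:fifthb}) to be an $\mathrm{SO}(2)$-Lipschitz function (using $\mathrm{SO}(2)$-invariance of $\lambda$ from Proposition~\ref{prop:hm_inv_sum}, so the circle action only moves $\phi_2\circ\pi$), and $\phi_2\circ\pi$ is bounded and $\mathrm{SO}(2)$-invariant. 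So the new content is showing that $\theta\mapsto \psi_1([\Re(r_\theta q)]) = \psi_1([\Im(r_{\theta-\pi/2}q)])$ is Lipschitz into $L^2(\widehat{\mu})$ with constant controlled by $\|\psi_1\|_{\mathcal{PC}^1}$. I would prove this by covering $\qum$ (or a full-measure part of it meeting each stratum) by finitely many charts on which $[\Re]$ composed with the $\mathrm{SO}(2)$-orbit map, read through the dual-train-track coordinates of Lemma~\ref{lem:finite_tt}, is Lipschitz from the circle into the affine chart $\VV(\tau_i)$ of $\pmf$ with a uniform constant; since $\psi_1$ is $\mathcal{C}^1$ on each $PU(\tau_i)$ with derivative bounded by $\|d\psi_1\|_{\tau_i}\le\|\psi_1\|_{\mathcal{PC}^1}$, the chain rule gives the pointwise Lipschitz bound, and integrating over $\qum$ (finite measure) gives the $L^2$ bound. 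The harmless subtlety is that $[\Re]$ is only piecewise smooth across train-track boundaries, but the Thurston-null set of multi-prong foliations (which is where the chart boundaries can concentrate badly) carries no $\widehat{\mu}$-mass, and on the principal stratum the coordinates are genuinely $\mathcal{C}^1$, so the Lipschitz-in-$L^2$ estimate is unaffected.

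A few bookkeeping points I would be careful about. First, the main-term integral $\int_{\qut}\psi_1([\Re(q)])\phi_1(\pi(q))\lambda(q)\,d\mu(q)$ is genuinely an integral over $\qut$ (not $\qum$): it arises because the $\phi_1$-factor comes from the ``source'' side of the mixing formula, which after unfolding lives on $\tt$, and $\int_{S(X)}\psi_1([\Re(q)])\lambda(q)\,ds_X(q)$ need not equal a constant times anything simpler --- this is why the statement keeps it in that form rather than as $\Lambda_g\int\phi_1\,d\mathbf{m}$. I should double-check the asymmetry: the $\psi_1$ cutoff only decorates the fiber over the center $X$ of the sector, hence appears with $\phi_1$; the $\phi_2$-side integrates to $\Lambda_g\int_{\mathcal{M}_g}\phi_2\,d\widehat{\mathbf{m}}$ exactly as in the ball case. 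Second, in applying Theorem~\ref{theo:thin_sector_sum} to insert $\mathbbm{1}_{K_\delta(\mathbf{1})}$, the sector $\mathrm{Sect}_V(X)$ appearing there is the \emph{full} sector in the fiber direction $V = p^{-1}(K_\delta(\mathbf{1}))\cap S(X)$; since $0\le\psi_1\le\|\psi_1\|_\infty$, bounding the $\psi_1$-weighted contribution by $\|\psi_1\|_\infty$ times the unweighted one is legitimate and loses nothing essential. Third, the constant $\useconk{k:sect_equid}$ is assembled exactly as $\useconk{ball_equid}$ was: $\kappa' = \min\{\useconk{thin_sect_sum},\useconk{thin_traj_sum_k},\useconk{large_dev_k},\useconk{exp_mix}\}$ and then $\useconk{k:sect_equid} = \min\{\eta,\kappa'-2(h-1)\eta\}$ for suitable small $\eta=\eta(g)$, depending only on $g$ since the new Ratner-norm estimate introduces no new compact-set dependence (the train tracks $\{\tau_i\}$ covering all of $T\mathcal{MF}_g$ are a fixed finite collection depending only on $g$).
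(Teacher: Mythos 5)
Your overall plan coincides with the paper's: rerun the proof of Theorem \ref{theo:ball_equidistribution} with the extra weight, the one genuinely new ingredient being the $\mathrm{SO}(2)$-Lipschitz (Ratner) regularity of $\psi_1\circ[\Re]$ with norm controlled by $\|\psi_1\|_{\mathcal{PC}^1}$, proved through the dual train tracks of Lemma \ref{lem:finite_tt} and the Delaunay bounds of Lemma \ref{lem:delaunay}; this is exactly the content of Proposition \ref{prop:derivative bound}, Corollary \ref{cor:orbit_functions} and Proposition \ref{prop:ratner_observable_new}, fed into the intermediate Theorem \ref{theo:sector_equidistribution_general}. However, there is a genuine gap in how you pass from the Teichmüller-space integral to an integral over $\qum$, where mixing lives: you justify this by asserting that $\psi_1\circ[\Re]$ is $\mcg$-invariant. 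It is not. The map $[\Re]$ is only $\mcg$-equivariant, $[\Re(\mc\cdot q)]=[\mc\cdot\Re(q)]$, and a general $\psi_1\in\mathcal{PC}^1(\pmf)$ has no invariance under the $\mcg$-action on $\pmf$; this non-invariance is precisely why the main term of the theorem is an integral over $\qut$ rather than $\qum$, and why $\phi_1$ is taken in $L^\infty(\tt,\mathbf{m})$ rather than on $\mm$. Consequently the disintegration step (\ref{eq:disintegrate}) cannot be applied verbatim as in (\ref{eq:firstb}). The repair is to exploit the compact essential support of $\phi_1$: the $q$-side observable is supported in $\pi^{-1}(\mathcal{K})$, and by proper discontinuity of the $\mcg$-action it can be folded under $p\colon\qut\to\qum$ (summing over the orbit, with boundedly many nonzero terms above any compact set) to an observable on $\qum$ whose $\mathcal{R}(\widehat{\mu})$-norm is $\preceq_{\mathcal{K}}$ the $\mathcal{R}(\mu)$-norm of its restriction to $\pi^{-1}(\mathcal{K})$ and whose $\widehat{\mu}$-integral equals the corresponding $\mu$-integral over $\qut$. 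This folding is exactly what the formulation of Theorem \ref{theo:sector_equidistribution_general}, with local Ratner observables on $\qut$ and the error term involving $\|\varphi_1\cdot\mathbbm{1}_{\pi^{-1}(\mathcal{K})}\|_{\mathcal{R}(\mu)}$, is designed to encapsulate; without it (or an equivalent unfolding argument) the mixing step has no well-defined observable on $\qum$ to act on.

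Two smaller points. In the mixing step you attach $\psi_1\circ[\Re]$ to $\varphi_2$, the observable evaluated at $a_tq$; this contradicts your own polar-coordinate formula, where $\psi_1$ is evaluated at the base point $q\in S(X)$, and it would produce a leading term pairing $\psi_1$ with $\phi_2$ rather than the stated one. It must go into the $q$-side observable together with $\phi_1$; your bookkeeping paragraph states the correct asymmetry, so this is presumably a labeling slip, but as written the mixing computation yields the wrong main term. Finally, the claim that the new Ratner-norm estimate introduces no compact-set dependence is inaccurate: the train-track collection of Lemma \ref{lem:finite_tt} and the derivative bound of Proposition \ref{prop:derivative bound} both depend on $\mathcal{K}$, so the correct bound is $\preceq_{\mathcal{K}}\|\psi_1\|_{\mathcal{PC}^1}$. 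This is harmless for the conclusion, since only the exponent must depend on $g$ alone while multiplicative constants may depend on $\mathcal{K}$, but the justification you give for the uniformity is not the right one.
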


\subsection*{A preliminary effective mean equidistribution theorem for sectors of $\boldsymbol{\tt}$.} To prove Theorem \ref{theo:sector_equidistribution} we first introduce a preliminary effective mean equidistribution theorem for sectors of $\tt$ cut out by observables $\varphi \colon \qut \to \mathbf{R}_{>0}$ in a suitable class and then show that functions in $\mathcal{PC}^1(\pmf)$ give rise to observables in such class when precomposed with the map $[\Re] \colon \qut \to \pmf$. 

Recall that $\mathbf{S}^1 \subseteq \mathbf{C}$ denotes the unit circle. Denote by $\mathcal{R}(\qut,\mu) \subseteq L^2(\qut,\mu)$ the subset of functions $\varphi \in L^2(\qut,\mu)$ such that the map $e^{i\theta} \in \mathbf{S}^1 \mapsto \varphi_\circ r_\theta \in L^2(\qut,\mu)$ is Lipschitz. Denote by $\|\varphi\|_{\mathrm{Lip}(\mu)}$ the minimal Lipschitz constant of such map. On $\mathcal{R}(\qut,\mu)$ consider the norm
\[
\|\varphi\|_{\mathcal{R}(\mu)} := \|\varphi\|_{L^2(\mu)} + \| \varphi \|_{\mathrm{Lip}(\mu)}.
\]
The class of local Ratner observables $\mathcal{R}_\mathrm{loc}(\qut,\mu) \subseteq L^2(\qut,\mu)$ is the set of functions $\varphi \in L^2(\qut,\mu)$ such that $\varphi \cdot \mathbbm{1}_{\pi^{-1}(\mathcal{K})} \in \mathcal{R}(\qut,\mu)$ for every compact subset $\mathcal{K} \subseteq \tt$. 

Carefully following the proof of Theorem \ref{theo:ball_equidistribution} and making suitable modifications yields the following result.

\newconk{k:sect_equid_gen}
\begin{theorem}
	\label{theo:sector_equidistribution_general}
	Let $\mathcal{K} \subseteq \mathcal{T}_g$ be a compact subset, $\phi_1 \in L^\infty(\tt,\mathbf{m})$ be an essentially bounded function with $\esupp(\phi_1) \subseteq \mathcal{K}$, and $\varphi_1  \in \mathcal{R}_{\mathrm{loc}}(\qut,\mu)$ non-negative. Then, for every essentially bounded function $\phi_2 \in L^\infty(\mm,\widehat{\m})$ with $\esupp(\phi_2) \subseteq \up(\mathcal{K})$ and every $R > 0$,
	\begin{gather*}
	\int_{\tt} \phi_1(X) \left( \int_{\mm} \phi_2(Y) \thinspace d\widehat{\mathbf{m}}_{X,\varphi_1}^R(Y) \right) d\mathbf{m}(X) \\
	=  \frac{\Lambda_g}{h \cdot \widehat{\mathbf{m}}(\mathcal{M}_g)} \cdot \left(\int_{\qut} \varphi_1(q) \thinspace \phi_1(\pi(q)) \thinspace \lambda(q) \thinspace d\mu(q)\right) \cdot \left(\int_{\mm} \phi_2(Y)  \thinspace d\widehat{\mathbf{m}}(Y) \right) \cdot e^{hR} \\
	+ O_\mathcal{K}\left( \|\varphi_1 \cdot \mathbbm{1}_{\pi^{-1}(\mathcal{K})} \|_{\mathcal{R}(\mu)} \cdot \|\phi_1\|_\infty \cdot \|\phi_2\|_\infty \cdot e^{(h-\useconk{k:sect_equid_gen})R}\right),
	\end{gather*}
	where $\useconk{k:sect_equid_gen} =\useconk{k:sect_equid_gen}(g) > 0$ is a constant depending only on $g$.
\end{theorem}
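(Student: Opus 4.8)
The plan is to revisit the proof of Theorem \ref{theo:ball_equidistribution} step by step, keeping careful track of where the hypothesis $\phi_2 \in L^\infty$ was used symmetrically on both factors and replacing the role of $\phi_1$ (the ``first'' factor, the center of the ball) by the more general weight $\varphi_1 \cdot \phi_1 \circ \pi$. First I would expand $\widehat{\mathbf{m}}_{X,\varphi_1}^R$ in polar coordinates via the defining formula
\[
\widehat{\mathbf{m}}_{X,\varphi_1}^R = (\underline{\smash{p}} \circ \Phi_X)_*\!\left(\Delta(q,t)\cdot \varphi_1(q) \cdot \mathbbm{1}_{(0,R)}(t)\cdot |s_X(q)\wedge dt|\right),
\]
so that the left-hand side becomes an integral over $\mathcal{T}_g$ in the $X$ variable, then over $S(X)$ and $t\in(0,R)$, of $\phi_1(X)\,\varphi_1(q)\,\phi_2(\pi(a_tq))\,\Delta(q,t)$. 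From here the argument follows the exact skeleton of the proof of Theorem \ref{theo:ball_equidistribution}: first cut down to $q\in p^{-1}(K_\delta(\mathbf{1}))$ and $a_tq\in p^{-1}(K_\delta(\mathbf{1}))$ using Theorem \ref{theo:thin_sector_sum} (this is where a little care is needed, see below), then restrict to $q\in p^{-1}(K_\epsilon(t))$ using Theorem \ref{theo:thin_trajectories_sum}, apply Theorem \ref{theo:polar_coordinates_estimate} to replace $\Delta(q,t)$ by $\lambda(a_tq)\lambda(q)e^{ht}$ with the stated error, unfold via (\ref{eq:disintegrate}) and Fubini to pass from $\mathcal{T}_g\times S(X)$ to $\mathcal{Q}^1\mathcal{T}_g$, drop the $K_\epsilon(t)$ indicator using Theorem \ref{theo:large_deviations_sum}, apply Theorem \ref{theo:exp_mixing} to the pair of Ratner observables $\varphi_1\cdot(\phi_1\circ\pi)\cdot\mathbbm{1}_{K_\delta(\mathbf{1})}\cdot\lambda$ and $(\phi_2\circ\underline{\pi})\cdot\mathbbm{1}_{K_\delta(\mathbf{1})}\cdot\lambda$, and finally remove the $\mathbbm{1}_{K_\delta(\mathbf{1})}$ cutoffs via Proposition \ref{prop:small_HM_integral_sum} and collapse $\int \lambda \,d\widehat\mu$ using the definition of $\Lambda_g$ on the $\phi_2$ factor. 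Choosing $\delta = e^{-\eta R}$ with $\eta$ small balances the error terms.

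The three places where the argument genuinely differs from the ball case, and which I would handle explicitly, are: (i) the Ratner-norm bound for the first observable. Instead of (\ref{eq:fourthb}), I need
\[
\| \varphi_1 \cdot (\phi_1\circ\pi) \cdot \mathbbm{1}_{K_\delta(\mathbf{1})}\cdot\lambda\|_{\mathcal{R}(\widehat\mu)} \preceq_g \delta^{-(h-1)}\cdot \|\varphi_1\cdot\mathbbm{1}_{\pi^{-1}(\mathcal{K})}\|_{\mathcal{R}(\mu)} \cdot \|\phi_1\|_\infty,
\]
which follows because $\varphi_1$ restricted to $\pi^{-1}(\mathcal{K})$ is Ratner-class by the hypothesis $\varphi_1\in\mathcal{R}_{\mathrm{loc}}(\qut,\mu)$, $\mathbbm{1}_{K_\delta(\mathbf{1})}$ and $\phi_1\circ\pi$ are $\mathrm{SO}(2)$-invariant (they descend from the base), $\lambda$ is $\mathrm{SO}(2)$-invariant with $L^\infty$ norm $\preceq_\mathcal{K}\delta^{-(h-1)}$ on $K_\delta(\mathbf{1})$ by Proposition \ref{prop:hm_inv_sum} and Proposition \ref{prop:HM_bound_sum}, and the Ratner algebra is closed under multiplication by bounded $\mathrm{SO}(2)$-invariant functions with control on the product norm. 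Here one uses that $\esupp\phi_1\subseteq\mathcal{K}$ forces all relevant base points into $\mathcal{K}$, so the $\mathcal{K}$-local Ratner bound on $\varphi_1$ is what is needed. (ii) The reduction to the thick part in the $q$ variable: $\varphi_1$ is merely measurable and possibly unbounded near the multiple zero locus, so the bound from Theorem \ref{theo:thin_sector_sum} as stated (which controls $\mathbf{m}$-measure of the thin sector) must be combined with the fact that $\varphi_1$ enters only through $\|\varphi_1\cdot\mathbbm{1}_{\pi^{-1}(\mathcal{K})}\|_{\mathcal{R}(\mu)}$, which dominates its $L^2$ and hence $L^1$ norm; alternatively one first reduces to the thick part via the $\mathbbm{1}_{K_\delta(\mathbf{1})}$ cutoff on $q$ where $\varphi_1$ is controlled, exactly as in (\ref{eq:second}), (\ref{eq:third}). (iii) The leading term is now asymmetric: the $\varphi_1$ factor produces $\int_{\qut}\varphi_1(q)\phi_1(\pi(q))\lambda(q)\,d\mu(q)$ (which does \emph{not} simplify), while the $\phi_2$ factor produces $\Lambda_g\int_{\mm}\phi_2\,d\widehat{\mathbf{m}}$ after applying the $\Lambda_g$-collapse, yielding a single power of $\Lambda_g$ rather than $\Lambda_g^2$.

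I expect the main obstacle to be item (i): establishing that $\mathcal{R}_{\mathrm{loc}}(\qut,\mu)$ together with the $\mathrm{SO}(2)$-invariant factors $\mathbbm{1}_{K_\delta(\mathbf{1})}$, $\lambda$, $\phi_1\circ\pi$ really does produce a genuine Ratner-class observable on $\qum$ with the claimed norm bound, and in particular that passing from the local Ratner condition on $\qut$ to a global Ratner condition on $\qum$ is legitimate once one has multiplied by the compactly-supported cutoffs $\phi_1$ and $\mathbbm{1}_{K_\delta(\mathbf{1})}$. The key points are that multiplication by a bounded $\mathrm{SO}(2)$-invariant function $f$ satisfies $\|f\varphi\|_{\mathrm{Lip}}\le\|f\|_\infty\|\varphi\|_{\mathrm{Lip}}$ (because $r_\theta$ commutes past $f$) and $\|f\varphi\|_{L^2}\le\|f\|_\infty\|\varphi\|_{L^2}$, so the product norm is controlled by $\|f\|_\infty$ times the norm of $\varphi$; applying this with $f = \mathbbm{1}_{K_\delta(\mathbf{1})}\cdot\lambda\cdot(\phi_1\circ\pi)$, whose $L^\infty$ norm is $\preceq_\mathcal{K}\delta^{-(h-1)}\|\phi_1\|_\infty$, gives the bound. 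Everything else is a bookkeeping exercise faithfully mirroring Section \ref{sec:?} — I mean the proof of Theorem \ref{theo:ball_equidistribution} — with $\phi_1$ replaced by the pair $(\varphi_1,\phi_1)$ throughout, and with the final $\Lambda_g$-collapse applied only to the $\phi_2$ factor. Set $\useconk{k:sect_equid_gen} := \min\{\useconk{ball_equid},\,\eta\}$ for the appropriate small $\eta$ and the theorem follows.
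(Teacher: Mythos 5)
Your route is the one the paper intends: Theorem \ref{theo:sector_equidistribution_general} is presented there with only the remark that one carefully follows the proof of Theorem \ref{theo:ball_equidistribution} and makes suitable modifications, and your items (i) and (iii) are exactly the right modifications. In particular the Ratner-norm bound in (i) is correct: $\mathbbm{1}_{K_\delta(\mathbf{1})}$, $\lambda$, and $\phi_1\circ\pi$ are $\mathrm{SO}(2)$-invariant (Proposition \ref{prop:hm_inv_sum}, rotation-invariance of $\ell_{\min}$, and invariance of the fibers of $\pi$), so multiplying by them costs only their sup-norm, which is $\preceq_{\mathcal{K}}\delta^{-(h-1)}\|\phi_1\|_\infty$ on the relevant support by Proposition \ref{prop:HM_bound_sum}; and the passage from $\qut$ to $\qum$ is harmless because $\esupp(\phi_1)\subseteq\mathcal{K}$ together with proper discontinuity gives bounded multiplicity when folding, exactly as in Theorem \ref{theo:ball_equidistribution_extra}. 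The single power of $\Lambda_g$ in (iii) is also as it should be.

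The soft spot is your item (ii), i.e.\ the two reductions to the thick part (the analogues of (\ref{eq:second})--(\ref{eq:fourth})). The discarded contribution there has the form $\int\!\!\int \varphi_1(q)\,\phi_1(X)\,G_X(q)\,ds_X(q)\,d\mathbf{m}(X)$ with $G_X(q)=\int_0^R\phi_2(\up(\pi(a_tq)))\,\Delta(q,t)\,dt$, integrated over the thin set in $q$ (or with the thin constraint on $a_tq$). Theorems \ref{theo:thin_sector_sum} and \ref{theo:thin_trajectories_sum} give only an $L^1$-type bound on $G_X$ over that set, while $G_X$ is not uniformly bounded there (by Theorem \ref{theo:polar_coordinates_estimate} and Proposition \ref{prop:HM_bound_sum} it can blow up like $\ell_{\min}(q)^{-(h-1)}$), so replacing $\|\varphi_1\|_\infty$ by the $L^1$ or $L^2$ norm of $\varphi_1\cdot\mathbbm{1}_{\pi^{-1}(\mathcal{K})}$, as you propose, does not close these steps by a plain H\"older argument; and your ``alternative'' --- first reduce to the thick part via the $\mathbbm{1}_{K_\delta(\mathbf{1})}$ cutoff --- is circular, since that reduction is precisely the step being justified (and even on $p^{-1}(K_\delta(\mathbf{1}))$ the hypothesis $\varphi_1\in\mathcal{R}_{\mathrm{loc}}(\qut,\mu)$ gives no pointwise control). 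The clean fix is to use an $L^\infty$ bound on $\varphi_1$ as well: then (\ref{eq:second})--(\ref{eq:fourth}) go through verbatim with $\|\phi_1\|_\infty$ replaced by $\|\varphi_1\|_\infty\cdot\|\phi_1\|_\infty$, which is exactly what the paper does in the parallel bisector statement (Theorem \ref{theo:bisector_equidistribution}, whose error carries $\|\varphi_2\|_\infty+\|\varphi_2\|_{\mathcal{R}(\widehat{\mu})}$), and it is harmless for the intended application, since $\varphi_1=\psi_1\circ[\Re]$ satisfies $\|\varphi_1\|_\infty\le\|\psi_1\|_{\mathcal{PC}^1}$. If you insist on an error term involving only the Ratner norm, you need a genuinely new estimate for the thin-sector mass weighted by $\varphi_1$ (say, Cauchy--Schwarz against an $L^2$ bound for $G_X$ on the thin set), which neither your sketch nor the argument of \S 3 supplies.
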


\subsection*{$\boldsymbol{\mathcal{R}_\mathrm{loc}(\qut,\mu)}$ observables from $\boldsymbol{\mathcal{PC}^1(\pmf)}$ functions.} Theorem \ref{theo:sector_equidistribution} will follow directly from Theorem \ref{theo:sector_equidistribution_general} once we show that functions in $\mathcal{PC}^1(\pmf)$ give rise to functions in $\mathcal{R}_\mathrm{loc}(\qut,\mu)$ when precomposed with the map $[\Re] \colon \qut \to \pmf$. The following proposition addresses this matter.

\begin{proposition}
	\label{prop:ratner_observable_new}
	Let $\psi \in \mathcal{PC}^1(\pmf)$. Then, $\psi \circ [\Re] \in \mathcal{R}_\mathrm{loc}(\qut,\mu)$ and, for every $\mathcal{K} \subseteq \tt$ compact,
	\[
	\|(\psi \circ [\Re]) \cdot \mathbbm{1}_{\pi^{-1}(\mathcal{K})}\|_{\mathcal{R}(\mu)} \preceq_{\mathcal{K}} \|\psi\|_{\mathcal{PC}^1}.
	\]
\end{proposition}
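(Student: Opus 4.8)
The goal is to show that $\psi \circ [\Re]$ is a local Ratner observable with the stated norm bound. The key point is that the map $e^{i\theta} \mapsto (\psi \circ [\Re]) \circ r_\theta$, restricted to $\pi^{-1}(\mathcal{K})$, is Lipschitz into $L^2(\mu)$, and that its Lipschitz constant is controlled by $\|\psi\|_{\mathcal{PC}^1}$. Since $\psi$ is bounded and $\mu$ restricted to $\pi^{-1}(\mathcal{K})$ is a finite measure (as $\pi^{-1}(\mathcal{K})$ has compact closure in $\qut$, being a sphere bundle over a compact set), the $L^2$-norm bound $\|(\psi \circ [\Re]) \cdot \mathbbm{1}_{\pi^{-1}(\mathcal{K})}\|_{L^2(\mu)} \preceq_{\mathcal{K}} \|\psi\|_\infty$ is immediate. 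So the whole content is the Lipschitz estimate, and the heart of the matter is to understand how $[\Re(r_\theta q)]$ moves in $\pmf$ as $\theta$ varies, expressed in train track coordinates.

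\textbf{Key steps.} First I would fix a compact $\mathcal{K} \subseteq \tt$ and, using Lemma \ref{lem:finite_tt}, produce a finite collection of maximal train tracks $\{\tau_i\}_{i=1}^n$ on $S_g$ such that for every $q \in \pi^{-1}(\mathcal{K}) \cap \qutp$, the foliation $\Re(q)$ is carried by some $\tau_i$, with the counting measures on edges given by absolute values of real parts of holonomies of Delaunay saddle connections. (Since $\qut \backslash \qutp$ has measure zero, it suffices to work on the principal stratum.) Slightly enlarging $\mathcal{K}$ to $\mathrm{Nbhd}_\varepsilon(\mathcal{K})$ ensures that the same collection of train tracks carries $\Re(r_\theta q)$ for all small $\theta$ and all $q \in \pi^{-1}(\mathcal{K})$; here one uses that $r_\theta$ fixes the fibers of $\pi$, so $\pi^{-1}(\mathcal{K})$ is $\mathrm{SO}(2)$-invariant, and that Delaunay triangulations vary upper-semicontinuously. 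Second, on the region of $\pi^{-1}(\mathcal{K})$ where a fixed $\tau_i$ works, the edge-coordinates of $\Re(q)$ are given by $e \mapsto |\Re(\mathrm{hol}_\gamma(q))|$ for Delaunay saddle connections $\gamma$; in period coordinates these are piecewise-linear (in fact piecewise-$\mathcal{C}^1$) functions of $q$ with derivatives bounded on $\pi^{-1}(\mathcal{K})$ in terms of $\mathcal{K}$ (the bound follows from Lemma \ref{lem:euc_fund_prop} and compactness, exactly as in the proof of Proposition \ref{prop:systole_euclidean}). Consequently the map $\theta \mapsto $ (coordinates of $\Re(r_\theta q)$ in $V(\tau_i)$) is Lipschitz in $\theta$, uniformly over $q \in \pi^{-1}(\mathcal{K})$, and after projectivizing, $\theta \mapsto [\Re(r_\theta q)] \in \VV(\tau_i)$ is Lipschitz with constant $\preceq_{\mathcal{K}} 1$ with respect to the norm $\|\cdot\|_{\tau_i}$. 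Third, the mean value theorem applied to $\psi$ along this path gives, for $\theta$ small and $q$ in the relevant region,
\[
|\psi([\Re(r_\theta q)]) - \psi([\Re(q)])| \leq \|d\psi\|_{\tau_i} \cdot \mathrm{dist}_{\tau_i}([\Re(r_\theta q)], [\Re(q)]) \preceq_{\mathcal{K}} \|\psi\|_{\mathcal{PC}^1} \cdot |\theta|,
\]
where $\|d\psi\|_{\tau_i}$ denotes the supremum of $\|d\psi_{[\eta]}\|_{\tau_i}$ over $PU(\tau_i)$, bounded by $\|\psi\|_{\mathcal{PC}^1}$. Fourth, I would patch these local estimates together: cover $\pi^{-1}(\mathcal{K})$ by finitely many regions each handled by a single $\tau_i$, split $[0,2\pi]$ into finitely many small arcs adapted to these regions, telescope across consecutive arcs, and integrate the pointwise bound over $\pi^{-1}(\mathcal{K})$ against $\mu$ (finite there) to obtain $\|((\psi \circ [\Re]) \cdot \mathbbm{1}_{\pi^{-1}(\mathcal{K})}) \circ r_\theta - (\psi \circ [\Re]) \cdot \mathbbm{1}_{\pi^{-1}(\mathcal{K})}\|_{L^2(\mu)} \preceq_{\mathcal{K}} \|\psi\|_{\mathcal{PC}^1} \cdot |\theta|$. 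A small subtlety: the indicator $\mathbbm{1}_{\pi^{-1}(\mathcal{K})}$ is itself not continuous, but since $\pi^{-1}(\mathcal{K})$ is $r_\theta$-invariant, $((\psi\circ[\Re])\cdot\mathbbm{1}_{\pi^{-1}(\mathcal{K})})\circ r_\theta = ((\psi\circ[\Re])\circ r_\theta)\cdot\mathbbm{1}_{\pi^{-1}(\mathcal{K})}$, so the indicator causes no trouble. Finally, as $\mathcal{K}$ was arbitrary, this shows $\psi\circ[\Re]\in\mathcal{R}_{\mathrm{loc}}(\qut,\mu)$, and combining with the $L^2$ bound gives the claimed estimate on $\|(\psi\circ[\Re])\cdot\mathbbm{1}_{\pi^{-1}(\mathcal{K})}\|_{\mathcal{R}(\mu)}$.

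\textbf{Main obstacle.} The delicate part is the uniform Lipschitz control of $\theta \mapsto [\Re(r_\theta q)]$ in train-track coordinates, uniformly over $q \in \pi^{-1}(\mathcal{K})$. Two issues must be handled with care. First, the combinatorics of which train track carries $\Re(r_\theta q)$ can jump as $\theta$ varies or as $q$ moves — one must arrange the finite cover of $\pi^{-1}(\mathcal{K})$ and the finite subdivision of $[0,2\pi]$ so that on each piece a single train track is valid, and check consistency of the coordinate comparisons across the boundaries between pieces (here the piecewise-integral-linear nature of the transition maps, together with the Lipschitz class of norms $\|\cdot\|_{\mathcal{PC}^1}$, is what saves the estimate). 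Second, one must confirm that the holonomy functions $q \mapsto |\Re(\mathrm{hol}_\gamma(q))|$ have derivatives bounded uniformly over the fiber directions as well as the base — but this is exactly the content of Lemma \ref{lem:euc_fund_prop} relating period coordinates to the Euclidean metric on $\pi^{-1}(\mathcal{K})$, combined with compactness, so it reduces to the machinery already in place in \S7.
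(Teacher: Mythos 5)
Your plan is correct and essentially the paper's own argument: reduce via Lemma \ref{lem:finite_tt} to the finitely many Delaunay-dual train tracks, bound the speed of $\theta \mapsto [\Re(r_\theta q)]$ in train-track coordinates uniformly over $\pi^{-1}(\mathcal{K})$ (this is Proposition \ref{prop:derivative bound}, where the projectivization step is licensed by the normalization $\mathrm{Ext}_{\pi(q)}(\Re(q))=1$ giving uniform $L^1$ bounds on the counting measures), apply the fundamental theorem of calculus to get the pointwise Lipschitz bound $\preceq_{\mathcal{K}} \|\psi\|_{\mathcal{PC}^1}$ on $\theta \mapsto \psi([\Re(r_\theta q)])$ (Corollary \ref{cor:orbit_functions}), and then integrate over the finite-measure, $\mathrm{SO}(2)$-invariant set $\pi^{-1}(\mathcal{K})$, using that the principal stratum has full measure. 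No gaps beyond routine details.
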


To prove Proposition \ref{prop:ratner_observable_new} we study the image through the map $[\Re] \colon \qut \to \pmf$ of the $\text{SO}(2)$-orbits of $\mathcal{Q}^1\mathcal{T}_g(\mathbf{1})$. The restriction $\Re|_{\qutp} \colon \qutp \to \mf$ is a piecewise $\mathcal{C}^1$ map \cite[Lemma 4.3]{Mir08a}. In particular, for every $q \in \qutp$, the path $\theta \in \mathbf{S}^1 \mapsto \Re(r_\theta q) \in \mf$ is piecewise $\mathcal{C}^1$. The following proposition bounds the norm of the tangent vectors of  paths $\theta \in \mathbf{S}^1 \mapsto [\Re(r_\theta q)] \in \pmf$ with $q \in \qutp$.

\begin{proposition}
	\label{prop:derivative bound}
	Let $\mathcal{K} \subseteq \mathcal{T}_g$ be a compact subset and $\mathcal{\tau}$ be a maximal train track on $S_g$. Suppose that $q \in \pi^{-1}(\mathcal{K}) \cap \qutp$ is such that 
	$
	(d/d\theta) \vert_{\theta =0^+} \thinspace \Re(r_\theta q) \in TU(\tau). 
	$
	Then,
	\[
	\bigg\| \frac{d}{d\theta} \bigg\vert_{\theta =0^+} [\Re(r_\theta q)]\bigg\|_{\tau} \preceq_{\mathcal{K},\tau} 1.
	\]	
\end{proposition}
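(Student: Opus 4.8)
The statement is local and scale-invariant in nature, so the plan is to work in period coordinates on a suitable open set and track how the unit-circle action $\theta \mapsto r_\theta q$ moves the point $[\Re(r_\theta q)]$ inside a train track chart. First I would fix the compact set $\mathcal{K} \subseteq \tt$ and the maximal train track $\tau$, and recall from Lemma \ref{lem:finite_tt} and the Delaunay-triangulation construction that there is a finite collection of period coordinate charts $\Phi \colon U \to \mathbf{C}^{6g-6}$ covering $\pi^{-1}(\mathrm{Nbhd}_1(\mathcal{K})) \cap \qutp$, in each of which the holonomies of a Delaunay triangulation serve as coordinates and the edge weights of $\tau$ are the absolute values of the real parts of these holonomies. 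In such a chart, writing $q$ in period coordinates as $z = (z_1,\dots,z_{6g-6}) = (x_j + i y_j)_j$, the $\mathrm{SL}(2,\mathbf{R})$-action (and in particular the $\mathrm{SO}(2)$-action) is the standard linear action on $(\mathbf{R}^2)^{6g-6}$, so $\Re(r_\theta q)$ has edge-coordinates $x_j(\theta) = \cos\theta \cdot x_j - \sin\theta \cdot y_j$ (up to the sign conventions that enter the identification $U(\tau) = V(\tau)$). Hence $(d/d\theta)|_{\theta=0^+}\,\Re(r_\theta q)$ has coordinates of the form $\pm y_j$, i.e. the imaginary parts of the holonomies.

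The key estimate is then purely about norms: by Lemma \ref{lem:euc_fund_prop}, the norm $\|\cdot\|_\tau$ (pulled back from the Euclidean norm on $V(\tau)$) is comparable, on $\pi^{-1}(\mathcal{K})$, to the Euclidean norm on $\mathbf{C}^{6g-6}$ through $d\Phi$, and the latter is comparable to the Hodge/Teichmüller norm on a compact set. Since $q$ has unit area, the holonomy vectors $x_j, y_j$ are bounded in terms of $\mathcal{K}$ — in fact the Delaunay edges have length $\preceq_g \mathrm{diam}(q)$ by Lemma \ref{lem:delaunay}(1), and $\mathrm{diam}(q)$ is bounded on $\pi^{-1}(\mathcal{K})$. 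So the ambient derivative $(d/d\theta)|_{\theta=0^+}\,\Re(r_\theta q)$, which lives in $T U(\tau) = V(\tau)$, has $\|\cdot\|_\tau$-norm $\preceq_{\mathcal{K},\tau} 1$. To get the projective statement I would use the description of $PU(\tau) = \overline{V}(\tau)$ as the unit $L^1$-sphere in $V(\tau)$: the projection $v \mapsto v/\|v\|_{L^1}$ from $V(\tau)\setminus\{0\}$ to $\overline V(\tau)$ is smooth, and its derivative at a point $v$ has operator norm $\preceq 1/\|v\|_{L^1}$. Since $\Re(q)$ itself, viewed in $V(\tau)$, has $L^1$-norm bounded below on $\pi^{-1}(\mathcal{K})$ — because the total mass of $\Re(q)$ (a sum of $|x_j|$ over Delaunay edges) is comparable to $\mathrm{diam}(q)$, hence bounded below on a compact set, using that at least one edge is long — the chain rule gives
\[
\bigg\| \frac{d}{d\theta} \bigg\vert_{\theta =0^+} [\Re(r_\theta q)]\bigg\|_{\tau} \preceq_{\mathcal{K},\tau} \frac{1}{\|\Re(q)\|_{L^1}} \cdot \bigg\| \frac{d}{d\theta} \bigg\vert_{\theta =0^+} \Re(r_\theta q)\bigg\|_{\tau} \preceq_{\mathcal{K},\tau} 1.
\]

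\textbf{Main obstacle.} The delicate point is not the size of the ambient derivative but controlling the lower bound on $\|\Re(q)\|_{L^1}$ uniformly over $\pi^{-1}(\mathcal{K}) \cap \qutp$, and, relatedly, making sure the finitely many charts from Lemma \ref{lem:finite_tt} actually suffice — i.e. that whenever the hypothesis $(d/d\theta)|_{\theta=0^+}\,\Re(r_\theta q) \in TU(\tau)$ holds, one of our Delaunay-adapted charts is compatible with $\tau$ near $q$. I expect to handle the lower bound by noting that $\Re(q)$ is a nonzero measured foliation whose intersection pairing with a fixed transversal is controlled below on compact sets (or, equivalently, that $\mathrm{Ext}_{\Re(q)}(X) = \mathrm{Area}(q) = 1$ forces $\Re(q)$ away from $0$ in any train track chart by continuity and projective compactness of $PU(\tau)$, exactly as in the proof of Proposition \ref{prop:small_HM_integral_sum} where $\|\eta\| \preceq_{\tau} \mathrm{Ext}_\eta(X)$ was used). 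The chart-compatibility issue is handled by the finiteness in Lemma \ref{lem:finite_tt} together with the piecewise-linear nature of the transition maps between train track charts, so that passing from the Delaunay-adapted chart to $\tau$'s chart only costs a constant depending on $\tau$ and the finite collection.
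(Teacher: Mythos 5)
Your proposal follows essentially the same route as the paper: bound the unprojectivized derivative $(d/d\theta)\vert_{\theta=0^+}\Re(r_\theta q)$ in a Delaunay-adapted chart from Lemma \ref{lem:finite_tt} using Lemma \ref{lem:delaunay}, transfer the bound to $\tau$ by piecewise linearity of transition maps, and control the projectivization via the normalization $\mathrm{Ext}_{\pi(q)}(\Re(q)) = \mathrm{Area}(q) = 1$, which bounds $\|\Re(q)\|$ above and below on $\pi^{-1}(\mathcal{K})$ so that the scaling map onto $\overline{V}(\tau)$ has uniformly bounded derivative. One caveat: your first justification of the $L^1$ lower bound (total mass of $\Re(q)$ comparable to $\mathrm{diam}(q)$ because some Delaunay edge is long) is not correct as stated, since a long edge can be nearly vertical and contribute a tiny real holonomy; your alternative extremal-length/compactness argument is the right one and is exactly what the paper uses.
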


\begin{proof}
	Fix $\mathcal{K} \subseteq \mathcal{T}_g$ compact and $\tau$ a maximal train track on $S_g$. Let $q \in \pi^{-1}(\mathcal{K}) \cap \qutp$ such that $(d/d\theta) \vert_{\theta =0^+} \thinspace \Re(r_\theta q) \in TU(\tau)$. Consider the finite collection of maximal train tracks $\{\tau_i\}_{i=1}^n$ on $S_g$ provided by Lemma \ref{lem:finite_tt}. The explicit construction of these train tracks ensures
	$(d/d\theta) \vert_{\theta =0^+} \thinspace \Re(r_\theta q) \in TU(\tau_i)$ for some $i \in \{1,\dots,n\}$. Moreover, as the counting measures on the edges of $\tau_i$ correspond to the absolute value of the real part of the holonomy of the edges the corresponding Delaunay triangulation, Lemma \ref{lem:delaunay} ensures, 
	\[
	\bigg\| \frac{d}{d\theta} \bigg\vert_{\theta =0^+}\thinspace \Re(r_\theta q) \bigg\|_{\tau_i} \preceq_{\mathcal{K}} 1.
	\]
	As the transition maps between train track coordinate charts are piecewise linear,
	\[
	\bigg\| \frac{d}{d\theta} \bigg\vert_{\theta =0^+} \thinspace \Re(r_\theta q) \bigg\|_{\tau} \preceq_{\mathcal{K}} 1.
	\]
	Recall that, for every $q \in \qut$, $\mathrm{Ext}_{\pi(q)}(\Re(q)) = 1$. This condition provides uniform lower and upper bounds depending only on $\mathcal{K}$ on the $L^1$-norm of points $\Re(q) \in U(\tau) = V(\tau)$ for $q \in\pi^{-1}(\mathcal{K})$. The map which scales points in $V(\tau)$ to $\overline{V}(\tau)$ has derivatives bounded uniformly in terms of $\mathcal{K}$ under these conditions. Projecting a vector on $\overline{V}(\tau)$ to $T\overline{V}(\tau)$ can only reduce its norm. The proposition follows.
\end{proof}

A direct application of the fundamental theorem of calculus and Proposition \ref{prop:derivative bound} over a finite collection $\{\tau_i\}_{i=1}^n$ of maximal train tracks on $S_g$ such that $T\mf = \bigcup_{i=1}^n TU(\tau_i)$ yields the following corollary. 

\begin{corollary}
	\label{cor:orbit_functions}
	Let $\mathcal{K} \subseteq \mathcal{T}_g$ compact and $\psi \in \mathcal{PC}^1(\pmf)$. Then, for every $q \in \pi^{-1}(\mathcal{K})\cap \qutp$, the function $\theta \in \mathbf{S}^1 \mapsto \psi([\Re(r_\theta q)]) \in \mathbf{R}$ is Lipschitz with Lipschitz constant $\preceq_\mathcal{K} \| \psi \|_{\mathcal{PC}^1}$.
\end{corollary}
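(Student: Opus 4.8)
The plan is to deduce Corollary \ref{cor:orbit_functions} directly from Proposition \ref{prop:derivative bound} via the fundamental theorem of calculus, after organizing the $\mathrm{SO}(2)$-orbit into finitely many pieces on which a single train track chart controls everything. First I would fix a finite collection $\{\tau_i\}_{i=1}^n$ of maximal train tracks on $S_g$ with $T\mf = \bigcup_{i=1}^n TU(\tau_i)$, so that at every point of $\mf$ and in every tangent direction at least one $\tau_i$ provides a smooth chart. Next, fixing a compact $\mathcal{K} \subseteq \tt$, a function $\psi \in \mathcal{PC}^1(\pmf)$, and a quadratic differential $q \in \pi^{-1}(\mathcal{K}) \cap \qutp$, I would observe that since $\Re|_{\qutp}$ is piecewise $\mathcal{C}^1$, the loop $\theta \in \mathbf{S}^1 \mapsto [\Re(r_\theta q)] \in \pmf$ is piecewise $\mathcal{C}^1$: there is a partition of $\mathbf{S}^1$ into finitely many closed subintervals $0 = \theta_0 < \theta_1 < \cdots < \theta_m = 2\pi$ such that on each $[\theta_{k-1},\theta_k]$ the path has a one-sided derivative landing in some $TU(\tau_{i(k)})$.

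On each such subinterval, $\pi(r_\theta q) = \pi(q) \in \mathcal{K}$ for all $\theta$ (since $\mathrm{SO}(2)$ preserves the fibers of $\pi$), so Proposition \ref{prop:derivative bound} applies uniformly: the tangent vectors $(d/d\theta)[\Re(r_\theta q)]$ have $\|\cdot\|_{\tau_{i(k)}}$-norm $\preceq_{\mathcal{K},\tau_{i(k)}} 1$, and taking the maximum over the finitely many $\tau_i$ makes this $\preceq_\mathcal{K} 1$. Then $|d\psi|$ evaluated on such a vector is at most $\|d\psi_{[\eta]}\|_{\tau_{i(k)}} \cdot \|(d/d\theta)[\Re(r_\theta q)]\|_{\tau_{i(k)}} \preceq_\mathcal{K} \|\psi\|_{\mathcal{PC}^1}$, by the very definition of $\|\psi\|_{\mathcal{PC}^1}$ as $\|\psi\|_\infty + \sup_i \sup_{[\eta]} \|d\psi_{[\eta]}\|_{\tau_i}$. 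Hence the one-sided derivative of $\theta \mapsto \psi([\Re(r_\theta q)])$ is bounded in absolute value by a constant times $\|\psi\|_{\mathcal{PC}^1}$ wherever it exists, and the fundamental theorem of calculus on each subinterval, then summing telescopically, gives that $\theta \mapsto \psi([\Re(r_\theta q)])$ is Lipschitz on $\mathbf{S}^1$ with constant $\preceq_\mathcal{K} \|\psi\|_{\mathcal{PC}^1}$. One subtlety to address: a priori the partition $\{\theta_k\}$ and the indices $i(k)$ depend on $q$, but the Lipschitz bound obtained is independent of these choices (it only uses the uniform bound over the finite set of $\tau_i$ and the definition of $\|\psi\|_{\mathcal{PC}^1}$), so there is no loss of uniformity.

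The main obstacle, such as it is, lies in handling the measure-zero set of $\theta$ where the path $\theta \mapsto [\Re(r_\theta q)]$ is not differentiable (i.e.\ crosses from one train track cone to another) and in confirming that the one-sided derivative genuinely lands in some $TU(\tau_i)$ at every point so that Proposition \ref{prop:derivative bound} is applicable there; this is precisely why the collection $\{\tau_i\}$ is chosen to cover all of $T\mf$, and piecewise $\mathcal{C}^1$-ness of $\Re|_{\qutp}$ guarantees finiteness of the partition. Beyond that, the argument is a routine concatenation of the fundamental theorem of calculus across finitely many pieces, so I would present it briefly. Since the statement of Corollary \ref{cor:orbit_functions} is exactly what emerges, no further bookkeeping is needed.
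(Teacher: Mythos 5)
Your proposal is correct and follows essentially the same route as the paper, which deduces Corollary \ref{cor:orbit_functions} precisely by applying Proposition \ref{prop:derivative bound} together with the fundamental theorem of calculus over a finite collection $\{\tau_i\}_{i=1}^n$ of maximal train tracks with $T\mf = \bigcup_{i=1}^n TU(\tau_i)$. Your elaboration of the subdivision of $\mathbf{S}^1$, the chain-rule bound via the definition of $\|\psi\|_{\mathcal{PC}^1}$, and the uniformity over the finitely many charts is exactly the intended argument.
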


We are now ready to prove Proposition \ref{prop:ratner_observable_new}. 

\begin{proof}[Proof of Proposition \ref{prop:ratner_observable_new}]
	Fix a compact subset $\mathcal{K} \subseteq \tt$ and a piecewise $\mathcal{C}^1$ function $\psi \colon \pmf \to \mathbf{R}$. We show the map
	$
	\theta \in \mathbf{S}^1 \mapsto ((\psi \circ [\Re]) \cdot \mathbbm{1}_{\pi^{-1}(\mathcal{K})}) \circ r_\theta = (\psi \circ [\Re] \circ r_\theta) \cdot \mathbbm{1}_{\pi^{-1}(\mathcal{K})} \in L^2(\qut,\mu)
	$
	is Lipschitz. As $\qutp \subseteq \qut$ is a full measure subset with respect to $\mu$, Corollary \ref{cor:orbit_functions} ensures that, for every $\theta_1,\theta_2 \in \mathbf{S}^1$, 
	\begin{align*}
	&\|(\psi \circ [\Re] \circ r_{\theta_1}) \cdot \mathbbm{1}_{\pi^{-1}(\mathcal{K})} - (\psi \circ [\Re] \circ r_{\theta_2}) \cdot \mathbbm{1}_{\pi^{-1}(\mathcal{K})}\|_{L^2(\mu)} \\
	&= \left(\int_{\pi^{-1}(\mathcal{K})} \left(\psi ([\Re(r_{\theta_1} q)]) - \psi([\Re(r_{\theta_2} q)])\right)^2 d\mu(q) \right)^{1/2} \\
	&\preceq_{\mathcal{K}} \| \psi \|_{\mathcal{PC}^1} \cdot  |\theta_1-\theta_2|.
	\end{align*}
	The proposition follows directly from this computation.
\end{proof}

Theorem \ref{theo:sector_equidistribution} now follows directly from Theorem \ref{theo:sector_equidistribution_general} and Proposition \ref{prop:ratner_observable_new}.

\subsection*{Dehn-Thurston coordinates.} We now consider sectors of $\mathcal{T}_g$ cut out by cubes of $\pmf$. Dehn-Thurston coordinates  parametrize $\mf$ in terms of the intersection and twisting numbers of singular measured foliations with respect to the components of a pair of pants decomposition of $S_g$; see \cite[\S2.6]{PH92} for details. Any set of Dehn-Thurston coordinates provides a continuous, piecewise linear identification of $\mf$ with $\mathbf{R}^{6g-6}$ which conjugates the natural $\mathbf{R}_{>0}$ scaling actions and maps the Thurston measure $\nu$ to a constant multiple of the Lebesgue measure. This identification induces a continuous, projectively piecewise linear identification of $\pmf$ with $\mathbf{S}^{6g-7} \subseteq \mathbf{R}^{6g-6}$, the set of points in $\mathbf{R}^{6g-6}$ with unit Euclidean norm. For the rest of this section, we fix a set of Dehn-Thurston coordinates, consider the corresponding identifications $\mf = \mathbf{R}^{6g-6}$ and $\pmf = \mathbf{S}^{6g-7}$, and endow $\mathbf{S}^{6g-7}$ with the restriction of the Riemannian Euclidean metric. When an implicit constant depends on the choice of Dehn-Thurston coodinates we add the subscript $\mathrm{DT}$. We consider cubes $\mathcal{B} \subseteq \pmf = \mathbf{S}^{6g-7}$ with closed and/or open facets.

\subsection*{Effective mean equidistribution of sectors of $\boldsymbol{\tt}$ cut out by cubes of $\boldsymbol{\pmf}$.} For every $X \in \tt$, every measurable subset $\mathcal{U} \subseteq \pmf$, and every $R > 0$, denote $\mathbf{m}_{X,\mathcal{U}}^R := \mathbf{m}_{X,\mathbbm{1}_{\mathcal{U}}}^R$ and $\widehat{\mathbf{m}}_{X,\mathcal{U}}^R := \widehat{\mathbf{m}}_{X,\mathbbm{1}_{\mathcal{U}}}^R$. Notice  $\mathbf{m}_{X,\mathcal{U}}^R$ is precisely the restriction of the measure $\mathbf{m}$ to the intersection of $B_R(X) \subseteq \tt$ with the sector
\[
\mathrm{Sect}_\mathcal{U}(X) := \{Y \in \tt \ | \ [\Re(q_s(X,Y))] \in \mathcal{U} \}.
\]
The following result generalizes Theorem \ref{theo:ball_equidistribution} to sectors of $\tt$ cut out by cubes of $\pmf$.

\newconk{k:sect_equid_box} 
\begin{theorem}
	\label{theo:sector_equidistribution_box}
	Let $\mathcal{K} \subseteq \mathcal{T}_g$ be a compact subset, $\phi_1 \in L^\infty(\tt,\mathbf{m})$ be an essentially bounded function with $\esupp(\phi_1) \subseteq \mathcal{K}$, and $\mathcal{B} \subseteq \pmf$ be a cube. Then, for every essentially bounded function $\phi_2 \in L^\infty(\mm,\widehat{\m})$ with $\esupp(\phi_2) \subseteq \up(\mathcal{K})$ and every $R > 0$,
	\begin{gather*}
	\int_{\tt} \phi_1(X) \left( \int_{\mm} \phi_2(Y) \thinspace d\widehat{\mathbf{m}}_{X,\mathcal{B}}^R(Y) \right) d\mathbf{m}(X) \\
	=  \frac{\Lambda_g}{h \cdot \widehat{\mathbf{m}}(\mathcal{M}_g)} \cdot \left(\int_{\qut} \mathbbm{1}_{\mathcal{B}}([\Re(q)]) \thinspace \phi_1(\pi(q)) \thinspace \lambda(q) \thinspace d\mu(q)\right) \cdot \left(\int_{\mm} \phi_2(Y)  \thinspace d\widehat{\mathbf{m}}(Y) \right) \cdot e^{hR} \\
	+ O_{\mathcal{K},\mathrm{DT}}\left( \|\phi_1\|_\infty \cdot \|\phi_2\|_\infty \cdot e^{(h-\useconk{k:sect_equid_box})R}\right),
	\end{gather*}
	where $\useconk{k:sect_equid_box} = \useconk{k:sect_equid_box}(g) > 0$ is a constant depending only on $g$.
\end{theorem}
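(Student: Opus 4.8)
\textbf{Proof proposal for Theorem \ref{theo:sector_equidistribution_box}.}

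The plan is to deduce Theorem \ref{theo:sector_equidistribution_box} from Theorem \ref{theo:sector_equidistribution} by a sandwiching argument, approximating the indicator function $\mathbbm{1}_{\mathcal{B}}$ of the cube $\mathcal{B} \subseteq \pmf = \mathbf{S}^{6g-7}$ from above and below by functions in $\mathcal{PC}^1(\pmf)$ whose $\mathcal{PC}^1$-norm is controlled in terms of how sharply they transition, and then optimizing the resulting error term. Concretely, for a parameter $0 < \rho < 1$ to be fixed later, I would construct piecewise $\mathcal{C}^1$ functions $\psi_\rho^-, \psi_\rho^+ \colon \pmf \to [0,1]$ such that $\psi_\rho^- \leq \mathbbm{1}_{\mathcal{B}} \leq \psi_\rho^+$ pointwise, such that $\psi_\rho^\pm$ agrees with $\mathbbm{1}_{\mathcal{B}}$ outside the $\rho$-neighborhood (in the fixed Euclidean metric on $\mathbf{S}^{6g-7}$ coming from Dehn-Thurston coordinates) of $\partial \mathcal{B}$, and such that $\|\psi_\rho^\pm\|_{\mathcal{PC}^1} \preceq_{\mathrm{DT}} \rho^{-1}$. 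Since the facets of $\mathcal{B}$ are pieces of coordinate hyperplanes in $\mathbf{S}^{6g-7}$, one can build such functions by composing a one-dimensional $\mathcal{C}^1$ bump/ramp of width $\rho$ with the coordinate functions; the piecewise-linearity of the transition maps between train-track charts and Dehn-Thurston charts means the $\mathcal{PC}^1$-norm, measured against the train-track norms $\|\cdot\|_{\tau_i}$, is comparable (with constants depending on the choice of Dehn-Thurston coordinates, hence the subscript $\mathrm{DT}$) to the Euclidean $\mathcal{C}^1$-norm on $\mathbf{S}^{6g-7}$, which is $O(\rho^{-1})$.

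Next I would apply Theorem \ref{theo:sector_equidistribution} to $\psi_1 = \psi_\rho^-$ and to $\psi_1 = \psi_\rho^+$. This gives two estimates for $\int_{\tt}\phi_1(X)\big(\int_{\mm}\phi_2(Y)\,d\widehat{\mathbf{m}}_{X,\psi_\rho^\pm}^R(Y)\big)\,d\mathbf{m}(X)$, each with main term $\frac{\Lambda_g}{h\cdot\widehat{\mathbf{m}}(\mathcal{M}_g)}\cdot\big(\int_{\qut}\psi_\rho^\pm([\Re(q)])\,\phi_1(\pi(q))\,\lambda(q)\,d\mu(q)\big)\cdot\big(\int_{\mm}\phi_2\,d\widehat{\mathbf{m}}\big)\cdot e^{hR}$ and error $O_{\mathcal{K}}\big(\rho^{-1}\cdot\|\phi_1\|_\infty\|\phi_2\|_\infty\, e^{(h-\useconk{k:sect_equid})R}\big)$. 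Since $\psi_\rho^- \leq \mathbbm{1}_{\mathcal{B}} \leq \psi_\rho^+$ and $\Delta$, $\phi_2$ need not be sign-definite, I would first reduce to the case $\phi_1, \phi_2 \geq 0$ by splitting into positive and negative parts (this only affects constants), so that the quantity for $\mathcal{B}$ is trapped between the quantities for $\psi_\rho^-$ and $\psi_\rho^+$. The difference between the two main terms is $\frac{\Lambda_g}{h\cdot\widehat{\mathbf{m}}(\mathcal{M}_g)}\cdot\big(\int_{\qut}(\psi_\rho^+-\psi_\rho^-)([\Re(q)])\,\phi_1(\pi(q))\,\lambda(q)\,d\mu(q)\big)\cdot\big(\int_{\mm}\phi_2\,d\widehat{\mathbf{m}}\big)\cdot e^{hR}$, and $\psi_\rho^+-\psi_\rho^-$ is supported on the $\rho$-neighborhood of $\partial\mathcal{B}$ and bounded by $1$; the key point is therefore to bound $\int_{\qut\cap\pi^{-1}(\mathcal{K})} \mathbbm{1}_{\mathrm{Nbhd}_\rho(\partial\mathcal{B})}([\Re(q)])\,\lambda(q)\,d\mu(q)$ by $O_{\mathcal{K},\mathrm{DT}}(\rho)$. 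Using the disintegration $d\mu(X,q) = ds_X(q)\,d\mathbf{m}(X)$ and definition (\ref{eq:hm_3}) of $\lambda$, this reduces to bounding $\nu(\{\eta \in \mf : \mathrm{Ext}_\eta(X)\leq 1,\ [\eta]\in\mathrm{Nbhd}_\rho(\partial\mathcal{B})\})$ uniformly for $X \in \mathcal{K}$; since $\partial\mathcal{B}$ is a finite union of pieces of coordinate hyperplanes in Dehn-Thurston coordinates, this is an estimate for the Lebesgue measure of a thin slab intersected with a bounded region, exactly analogous to the computation (\ref{eq:st3}) in the proof of Proposition \ref{prop:small_HM_integral_sum}, and gives the bound $\preceq_{\mathcal{K},\mathrm{DT}}\rho$.

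Combining these, the quantity for $\mathcal{B}$ equals its expected main term up to an error $O_{\mathcal{K},\mathrm{DT}}\big(\|\phi_1\|_\infty\|\phi_2\|_\infty\,(\rho\cdot e^{hR} + \rho^{-1}\cdot e^{(h-\useconk{k:sect_equid})R})\big)$. Choosing $\rho = e^{-\eta R}$ for a small $\eta = \eta(g) > 0$ with $\eta < \useconk{k:sect_equid}/2$ and setting $\useconk{k:sect_equid_box} := \min\{\eta,\ \useconk{k:sect_equid}-\eta\} > 0$ balances the two error contributions and yields the claimed estimate; as usual the bound extends from large $R$ to all $R > 0$ by enlarging the implicit constant. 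I expect the main obstacle to be the bookkeeping in the approximation step: verifying that the explicit $\mathcal{PC}^1$ bump functions built from Dehn-Thurston coordinate functions genuinely have $\mathcal{PC}^1$-norm $O_{\mathrm{DT}}(\rho^{-1})$ when the norm is defined via the train-track charts $\{\tau_i\}$ (one must use that the two piecewise-linear structures are Lipschitz-compatible, so derivatives transform boundedly), and checking that the boundary-slab Thurston-measure estimate is uniform over the compact set $\mathcal{K}$ — both of which, while routine in spirit, require care to state cleanly. Everything else is a direct application of Theorem \ref{theo:sector_equidistribution} and the machinery already developed in \S5.
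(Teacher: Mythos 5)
Your proposal is correct and follows essentially the same route as the paper: sandwich $\mathbbm{1}_{\mathcal{B}}$ between $\mathcal{PC}^1$ approximations supported-modified near $\partial\mathcal{B}$ (the paper packages this as Proposition \ref{prop:approx}, with the weaker but equally sufficient norm bound $\delta^{-h}$ in place of your $\rho^{-1}$), apply Theorem \ref{theo:sector_equidistribution} to each, control the discrepancy of the main terms via the disintegration (\ref{eq:fiberwise_measures}), definition (\ref{eq:hm_3}) of $\lambda$, and a thin-slab Thurston/Lebesgue measure estimate in Dehn--Thurston coordinates, and then choose the width $e^{-\eta R}$. Your explicit reduction to $\phi_1,\phi_2\geq 0$ by splitting into positive and negative parts is a minor extra care the paper leaves implicit, and does not change the argument.
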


To prove Theorem \ref{theo:sector_equidistribution_box} we use Theorem \ref{theo:sector_equidistribution} and an approximation argument. Denote by $\mathrm{vol}$ the standard volume form on $\pmf = \mathbf{S}^{6g-7}$. The following proposition is the approximation tool needed we will need.

\begin{proposition}
	\label{prop:approx}
	Let $\mathcal{B} \subseteq \pmf$ be a cube. For every $0 < \delta < 1$ there exists a pair of piecewise $\mathcal{C}^1$ functions $\psi_{\mathcal{B},\delta}^{\mathrm{in}}, \psi_{\mathcal{B},\delta}^{\mathrm{out}} \colon \pmf \to [0,1]$  with the following properties:
	\begin{enumerate}
		\item $\psi_{\mathcal{B},\delta}^{\mathrm{in}} \leq \mathbbm{1}_{\mathcal{B}} \leq \psi_{\mathcal{B},\delta}^{\mathrm{out}}$,
		\item $\|\psi_{\mathcal{B},\delta}^{\mathrm{in}}\|_{\mathcal{PC}^1}, \|\psi_{\mathcal{B},\delta}^{\mathrm{out}}\|_{\mathcal{PC}^1} \preceq_{\mathrm{DT}} \delta^{-h}$,
		\item 	$
		\int_{\pmf} (\psi_{\mathcal{B},\delta}^{\mathrm{out}} - \psi_{\mathcal{B},\delta}^{\mathrm{in}}) \thinspace d\mathrm{vol} \preceq_{\mathrm{DT}} \delta.
		$
	\end{enumerate}
\end{proposition}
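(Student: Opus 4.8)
The plan is to construct $\psi_{\mathcal{B},\delta}^{\mathrm{in}}$ and $\psi_{\mathcal{B},\delta}^{\mathrm{out}}$ by hand in the fixed Dehn-Thurston coordinates, where everything is literally Euclidean. Recall that in these coordinates $\pmf$ is identified with $\mathbf{S}^{6g-7} \subseteq \mathbf{R}^{6g-6}$ and the cube $\mathcal{B}$ corresponds to (the projectivization of) a product of intervals; concretely $\mathcal{B}$ is the radial projection to $\mathbf{S}^{6g-7}$ of a box $\widetilde{\mathcal{B}} = \prod_{j} I_j \subseteq \mathbf{R}^{6g-6}$ with each $I_j$ an interval with open and/or closed endpoints. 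First I would work on the ambient box $\widetilde{\mathcal{B}}$: for each coordinate interval $I_j = [a_j,b_j]$ (say), pick a piecewise-linear, hence piecewise $\mathcal{C}^1$, ``bump'' $\chi_j^{\mathrm{in}} \colon \mathbf{R} \to [0,1]$ that equals $1$ on $[a_j+\delta, b_j-\delta]$, equals $0$ outside $[a_j,b_j]$, and interpolates linearly in between, so $\|\chi_j^{\mathrm{in}}\|_{\mathcal{C}^1} \preceq \delta^{-1}$; similarly pick $\chi_j^{\mathrm{out}}$ equal to $1$ on $[a_j,b_j]$, supported in $[a_j-\delta,b_j+\delta]$, again with derivative $\preceq \delta^{-1}$. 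Set $\widetilde{\psi}^{\mathrm{in}} := \prod_j \chi_j^{\mathrm{in}}$ and $\widetilde{\psi}^{\mathrm{out}} := \prod_j \chi_j^{\mathrm{out}}$ on a neighborhood of the radius-one sphere, then define $\psi_{\mathcal{B},\delta}^{\mathrm{in}}$, $\psi_{\mathcal{B},\delta}^{\mathrm{out}}$ as their restrictions to $\pmf = \mathbf{S}^{6g-7}$. Property (1), the sandwich $\psi_{\mathcal{B},\delta}^{\mathrm{in}} \leq \mathbbm{1}_{\mathcal{B}} \leq \psi_{\mathcal{B},\delta}^{\mathrm{out}}$, is immediate from the containments of supports and the places where the bumps equal $1$, once one checks that radial projection sends the shrunk/grown boxes to subsets/supersets of $\mathcal{B}$ — here one uses that the box is bounded away from $0$ (it meets $\mathbf{S}^{6g-7}$) so radial projection is bi-Lipschitz on the relevant region with constants depending only on the Dehn-Thurston chart.

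For property (2) I would estimate $\|\psi_{\mathcal{B},\delta}^{\mathrm{in/out}}\|_{\mathcal{PC}^1} = \|\psi\|_\infty + \sup_i \sup_{[\eta]} \|d\psi_{[\eta]}\|_{\tau_i}$. The sup norm is $\leq 1$ trivially. For the derivative term, the chain rule applied to the product $\prod_j \chi_j^{\mathrm{in}}$ gives a sum of $6g-6 = h$ terms, each bounded by (product of $\|\chi_k\|_\infty \leq 1$ over $k \neq j$) times $\|(\chi_j)'\|_\infty \preceq \delta^{-1}$, so the Euclidean gradient of $\widetilde\psi$ on $\mathbf{R}^{6g-6}$ has norm $\preceq h \cdot \delta^{-1} \preceq_{\mathrm{DT}} \delta^{-1}$ — which is even better than the claimed $\delta^{-h}$; the slack is there precisely because the $\mathcal{PC}^1$ norm measures derivatives in the \emph{train-track} coordinates, and the transition maps between Dehn-Thurston coordinates and any given train-track chart $\tau_i$, together with the passage from $\widetilde V(\tau_i)$-type affine charts to $PU(\tau_i)$, are piecewise linear with constants depending on the chart but \emph{not} on $\delta$; restricting to the sphere and projecting tangent vectors only decreases norms. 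So $\|d\psi_{[\eta]}\|_{\tau_i} \preceq_{\mathrm{DT}} \delta^{-1} \leq \delta^{-h}$ since $0 < \delta < 1$ and $h \geq 6$. This yields (2) with room to spare; I would state it as $\preceq_{\mathrm{DT}} \delta^{-h}$ to match the form used later and not worry about optimizing.

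For property (3) I would bound $\int_{\pmf}(\psi_{\mathcal{B},\delta}^{\mathrm{out}} - \psi_{\mathcal{B},\delta}^{\mathrm{in}}) \, d\mathrm{vol}$ by $\mathrm{vol}$ of the set where the two functions differ, which is contained in the radial projection of the symmetric difference of the grown box $\prod_j [a_j-\delta,b_j+\delta]$ and the shrunk box $\prod_j [a_j+\delta,b_j-\delta]$. That symmetric difference is a union of $h$ ``slabs'' of the form $\{|x_j - a_j| \leq \delta\} \cup \{|x_j - b_j| \leq \delta\}$ intersected with a bounded box, each of Lebesgue measure $\preceq \delta$ (one thin direction of width $O(\delta)$, the rest of bounded extent). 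Pushing forward to the sphere via the bi-Lipschitz radial projection changes this by a bounded factor, so the total is $\preceq_{\mathrm{DT}} h\,\delta \preceq_{\mathrm{DT}} \delta$, giving (3).

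The main obstacle — really the only thing that needs care rather than bookkeeping — is keeping straight the three coordinate systems (Dehn-Thurston coordinates on $\pmf$, in which I build the bumps; the train-track charts $\tau_i$, in which $\|\cdot\|_{\mathcal{PC}^1}$ is computed; and the affine sphere-slices $\widetilde V(\tau_i)$) and verifying that each change of coordinates is piecewise linear with constants independent of $\delta$, so that all the $\delta$-dependence is exactly the $\delta^{-1}$ (a fortiori $\delta^{-h}$) from differentiating a bump of width $\delta$ and the $\delta$ from the volume of a slab of width $\delta$. Once that is set up, properties (1)–(3) fall out as indicated, and the piecewise-$\mathcal{C}^1$ regularity of $\psi_{\mathcal{B},\delta}^{\mathrm{in/out}}$ is clear since products and restrictions of piecewise-linear functions are piecewise $\mathcal{C}^1$ and the finitely many train-track charts refine into finitely many pieces on which everything is smooth.
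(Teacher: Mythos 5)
The paper states Proposition \ref{prop:approx} without proof, so your explicit bump construction is the right kind of argument; the problem is a concrete gap in property (1) under the interpretation of ``cube'' that you yourself adopt. You take $\mathcal{B}$ to be the radial projection to $\mathbf{S}^{6g-7}$ of a Euclidean box $\widetilde{\mathcal{B}} = \prod_j I_j \subseteq \mathbf{R}^{6g-6}$, but you then define $\psi^{\mathrm{in}},\psi^{\mathrm{out}}$ by restricting to the sphere a product of coordinate bumps adapted to $\widetilde{\mathcal{B}}$. The set where $\psi^{\mathrm{out}}=1$ is (the closure of) $\widetilde{\mathcal{B}}$ intersected with the sphere, and this is in general strictly smaller than $\mathcal{B}$: a point $y \in \widetilde{\mathcal{B}}$ with $|y| \neq 1$ projects to $y/|y|$, which can lie at distance bounded below \emph{independently of $\delta$} from $\widetilde{\mathcal{B}}$. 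For instance with $\widetilde{\mathcal{B}} = [0.9,1]\times[0,0.1]\times[-0.01,0.01]^{h-2}$, the normalized corner $(0.9,0.1,0,\dots)/|(0.9,0.1,0,\dots)|$ has second coordinate $\approx 0.11 > 0.1$, so it belongs to $\mathcal{B}$ but not to $\widetilde{\mathcal{B}}$, and for $\delta$ small your $\psi^{\mathrm{out}}$ vanishes there while $\mathbbm{1}_{\mathcal{B}}=1$; the outer inequality in (1) fails. The check you flag (that radial projection sends the grown and shrunk boxes to supersets and subsets of $\mathcal{B}$) is true but irrelevant, because radial projection never enters the definition of your functions: restriction to the sphere sandwiches $\mathbbm{1}_{\widetilde{\mathcal{B}} \cap \mathbf{S}^{6g-7}}$, not $\mathbbm{1}$ of the projectivized box. (The inner inequality is fine, since $\widetilde{\mathcal{B}} \cap \mathbf{S}^{6g-7} \subseteq \mathcal{B}$.)

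The repair is short, and you should make it explicitly. Either declare that ``cube'' means $\widetilde{\mathcal{B}} \cap \mathbf{S}^{6g-7}$, in which case your construction works verbatim and the remaining estimates go through essentially as you wrote them (Lipschitz constant $\preceq \delta^{-1}$ in the Dehn--Thurston chart, transported to the finitely many train-track charts by piecewise linear transition maps with $\delta$-independent constants; for (3), bound the spherical volume of the at most $2h$ slabs of width $O(\delta)$ by intersecting them with the sphere directly rather than ``pushing forward by radial projection'' --- near-tangent slabs contribute $O(\delta^{(h-1)/2}) \leq O(\delta)$). Or, if cubes are projectivized boxes, make the approximations scale-invariant: the cone $\mathbf{R}_{>0}\cdot\widetilde{\mathcal{B}}$ is a polyhedral cone cut out by at most $2h$ unit-normalized homogeneous linear functionals $\ell_i \geq 0$, so you may set $\psi^{\mathrm{out}} := \prod_i \chi(\ell_i/\delta + 1)$ and $\psi^{\mathrm{in}} := \prod_i \chi(\ell_i/\delta - 1)$ on $\mathbf{S}^{6g-7}$, with $\chi$ your one-dimensional ramp. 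Then (1) is immediate, (2) is the same $O(\delta^{-1}) \leq O(\delta^{-h})$ bound, uniform in $\mathcal{B}$ because the $\ell_i$ are unit vectors and there are at most $2h$ of them, and (3) follows because $\psi^{\mathrm{out}} - \psi^{\mathrm{in}}$ is supported where $|\ell_i(x)| \preceq \delta$ for some $i$, and each such band meets the sphere in volume $\preceq \delta$. In either version, state explicitly that all implicit constants are independent of $\mathcal{B}$ --- this uniformity is what the partition argument in the proof of Theorem \ref{theo:sector_count_smooth} actually uses, and your construction does deliver it once the sandwich issue is fixed.
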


We now prove Theorem \ref{theo:sector_equidistribution_box}.

\begin{proof}[Proof of Theorem \ref{theo:sector_equidistribution_box}]
	Fix $\mathcal{K} \subseteq \mathcal{T}_g$ compact, $\phi_1 \in L^\infty(\tt,\mathbf{m})$ with $\esupp(\phi_1) \subseteq \mathcal{K}$, $\phi_2 \in L^\infty(\mm,\widehat{\m})$ with $\esupp(\phi_2) \subseteq \up(\mathcal{K})$, and $\mathcal{B} \subseteq \pmf$ a cube. Let $R > 0$ and $0 < \delta = \delta(R) < 1$, to be fixed later. Consider the piecewise $\mathcal{C}^1$ approximations $\psi_{\mathcal{B},\delta}^{\mathrm{in}}, \psi_{\mathcal{B},\delta}^{\mathrm{out}} \colon \pmf \to [0,1]$ of the indicator function $\mathbbm{1}_{\mathcal{B}}$ provided by Proposition \ref{prop:approx}. Denote $\psi_1 := \psi_{\mathcal{B},\delta}^{\mathrm{in}}$ and $\psi_2 := \psi_{\mathcal{B},\delta}^{\mathrm{out}}$.  Notice that, by the first property of Proposition \ref{prop:approx},
	\begin{gather}
		\int_{\tt} \phi_1(X) \left( \int_{\mm} \phi_2(Y) \thinspace d\widehat{\mathbf{m}}_{X,\psi_1}^R(Y) \right) d\mathbf{m}(X) \leq 	\int_{\tt} \phi_1(X) \left( \int_{\mm} \phi_2(Y) \thinspace d\widehat{\mathbf{m}}_{X,\mathcal{B}}^R(Y) \right) d\mathbf{m}(X),   \label{eq:cube_1} \\
		\int_{\tt} \phi_1(X) \left( \int_{\mm} \phi_2(Y) \thinspace d\widehat{\mathbf{m}}_{X,\mathcal{B}}^R(Y) \right) d\mathbf{m}(X) \leq 	\int_{\tt} \phi_1(X) \left( \int_{\mm} \phi_2(Y) \thinspace d\widehat{\mathbf{m}}_{X,\psi_2}^R(Y) \right) d\mathbf{m}(X). \label{eq:cube_2}
	\end{gather}
	By Theorem \ref{theo:sector_equidistribution} and the second property of Proposition \ref{prop:approx}, 
	\begin{gather}
		 \int_{\tt} \phi_1(X) \left( \int_{\mm} \phi_2(Y) \thinspace d\widehat{\mathbf{m}}_{X,\psi_1 \circ [\Re]}^R(Y) \right) d\mathbf{m}(X) \label{eq:cube_3} \\
		=  \frac{\Lambda_g}{h \cdot \widehat{\mathbf{m}}(\mathcal{M}_g)} \cdot \left(\int_{\qut} \psi_1([\Re(q)]) \thinspace \phi_1(\pi(q)) \thinspace \lambda(q) \thinspace d\mu(q)\right) \cdot \left(\int_{\mm} \phi_2(Y)  \thinspace d\widehat{\mathbf{m}}(Y) \right) \cdot e^{hR}  \nonumber\\
		+ O_{\mathcal{K},\mathrm{DT}}\left(\|\phi_1\|_\infty \cdot  \|\phi_2\|_\infty \cdot \delta^{-h} \cdot e^{(h-\useconk{k:sect_equid})R}\right).  \nonumber
	\end{gather}
	Analogously,
	\begin{gather}
		\int_{\tt} \phi_1(X) \left( \int_{\mm} \phi_2(Y) \thinspace d\widehat{\mathbf{m}}_{X,\psi_2 \circ [\Re]}^R(Y) \right) d\mathbf{m}(X) \label{eq:cube_4} \\
		=  \frac{\Lambda_g}{h \cdot \widehat{\mathbf{m}}(\mathcal{M}_g)} \cdot \left(\int_{\qut} \psi_2([\Re(q)]) \thinspace \phi_1(\pi(q)) \thinspace \lambda(q) \thinspace d\mu(q)\right) \cdot \left(\int_{\mm} \phi_2(Y)  \thinspace d\widehat{\mathbf{m}}(Y) \right) \cdot e^{hR} \nonumber\\
		+ O_{\mathcal{K},\mathrm{DT}}\left(\|\phi_1\|_\infty \cdot \|\phi_2\|_\infty \cdot \delta^{-h} \cdot e^{(h-\useconk{k:sect_equid})R}\right). \nonumber
	\end{gather}
	Putting together (\ref{eq:cube_1}), (\ref{eq:cube_2}), (\ref{eq:cube_3}), and (\ref{eq:cube_4}) we deduce
	\begin{gather}
		\int_{\tt} \phi_1(X) \left( \int_{\mm} \phi_2(Y) \thinspace d\widehat{\mathbf{m}}_{X,\mathcal{B}}^R(Y) \right) d\mathbf{m}(X) \label{eq:cube_5}\\
		=  \frac{\Lambda_g}{h \cdot \widehat{\mathbf{m}}(\mathcal{M}_g)} \cdot \left(\int_{\qut} \mathbbm{1}_{\mathcal{B}}([\Re(q)]) \thinspace \phi_1(\pi(q)) \thinspace \lambda(q) \thinspace d\mu(q)\right) \cdot \left(\int_{\mm} \phi_2(Y)  \thinspace d\widehat{\mathbf{m}}(Y) \right) \cdot e^{hR} \nonumber \\
		+ O_g\left(\|\phi_1\|_\infty \cdot \|\phi_2\|_{\infty} \cdot \left(\int_{\pi^{-1}(\mathcal{K})} (\psi_2([\Re(q)] - \psi_1([\Re(q)]) \thinspace \lambda(q) \thinspace d\mu(q)\right)  \cdot e^{hR}\right) \nonumber \\
		+ O_{\mathcal{K},DT}\left(\|\phi_1\|_\infty \cdot \|\phi_2\|_\infty \cdot \delta^{-h} \cdot e^{(h-\useconk{k:sect_equid})R}\right). \nonumber
	\end{gather}
	Using (\ref{eq:fiberwise_measures}) we write
	\begin{gather}
	\int_{\pi^{-1}(\mathcal{K})} (\psi_2([\Re(q)]) - \psi_1([\Re(q)])) \thinspace \lambda(q) \thinspace d\mu(q) \label{eq:cube_6}\\
	 = \int_{\mathcal{K}} \left( \int_{S(X)} (\psi_2([\Re(q)]) - \psi_1([\Re(q)])) \thinspace \lambda(q) \thinspace ds_X(q) \right) d\mathbf{m}(X). \nonumber
	\end{gather}
	Let $X \in \mathcal{K}$. Recall $E_X := \{\eta \in \mf \ | \ \mathrm{Ext}_\eta(X) = 1\}$. Identify $\pmf$ with $E_X$ and consider the measure $\overline{\nu}_X$ on $E_X$ introduced in \S 2 as a measure on $\pmf$. By definition (\ref{eq:hm_3}) of the Hubbard-Masur function $\lambda$,
	\begin{equation}
	\label{eq:cube_7}
	\int_{S(X)} (\psi_2([\Re(q)]) - \psi_1([\Re(q)])) \thinspace \lambda(q) \thinspace ds_X(q) = \int_{\pmf} (\psi_2([\eta]) - \psi_1([\eta])) \thinspace d\overline{\nu}_X([\eta]).
	\end{equation}
	Recall that the Thurston measure $\nu$ on $\mf = \mathbf{R}^{6g-6}$ is equal, up to a multiplicative constant, to the Lebesgue measure in Dehn-Thurston coordinates. As $\pmf$ is compact and as the function $\mathrm{Ext} \colon \mf \times \tt \to \mathbf{R}_{>0}$ given by $\mathrm{Ext}(\eta,X) := \mathrm{Ext}_\eta(X)$ is positive and continuous, the measure $\overline{\nu}_X$ on $\pmf = \mathbf{S}^{6g-7}$ is bounded by $\mathrm{vol}$ up to a multiplicative constant depending only on $\mathcal{K}$ and the choice of Dehn-Thurston coordinates. This fact together with the third property of Proposition \ref{prop:approx} implies
	\begin{equation}
	\label{eq:cube_8}
	\int_{\pmf} \psi_2([\eta]) - \psi_1([\eta]) \thinspace d\overline{\nu}_X([\eta]) \preceq_{\mathcal{K},\mathrm{DT}} \int_{\pmf} \psi_2([\eta]) - \psi_1([\eta]) \thinspace d\mathrm{vol}([\eta]) \preceq_{\mathrm{DT}} \delta.
	\end{equation}
	Putting together (\ref{eq:cube_6}), (\ref{eq:cube_7}), and (\ref{eq:cube_8}) we deduce
	\begin{equation}
	\label{eq:cube_9}
	\int_{\pi^{-1}(\mathcal{K})} (\psi_2([\Re(q)]) - \psi_1([\Re(q)])) \thinspace \lambda(q) \thinspace d\mu(q) \preceq_{\mathcal{K},\mathrm{DT}} \delta.
	\end{equation}
	From (\ref{eq:cube_5}) and (\ref{eq:cube_9}) we deduce
	\begin{gather}
		\int_{\tt} \phi_1(X) \left( \int_{\mm} \phi_2(Y) \thinspace d\widehat{\mathbf{m}}_{X,\mathcal{B}}^R(Y) \right) d\mathbf{m}(X) \label{eq:ending}\\
		=  \frac{\Lambda_g}{h \cdot \widehat{\mathbf{m}}(\mathcal{M}_g)} \cdot \left(\int_{\qut} \mathbbm{1}_{\mathcal{B}}([\Re(q)]) \thinspace \phi_1(\pi(q)) \thinspace \lambda(q) \thinspace d\mu(q)\right) \cdot \left(\int_{\mm} \phi_2(Y)  \thinspace d\widehat{\mathbf{m}}(Y) \right) \cdot e^{hR} \nonumber \\
		+ O_{\mathcal{K},DT}\left(\|\phi_1\|_\infty \cdot \|\phi_2\|_\infty \cdot \left(\delta \cdot e^{hR} + \delta^{-h} \cdot e^{(h-\useconk{k:sect_equid})R}\right)\right). \nonumber
	\end{gather}
	Let $\delta = \delta(R) := e^{-\eta R}$ with $\eta = \eta(g) >0$ small enough so that $h\eta < \useconk{k:sect_equid}$. Denote $\useconk{k:sect_equid_box} = \useconk{k:sect_equid_box}(g) := \min\{\eta, \useconk{k:sect_equid}- h\eta\} > 0$. From (\ref{eq:ending}) we conclude that, for every $R > 0$,
	\begin{gather*}
	\int_{\tt} \phi_1(X) \left( \int_{\mm} \phi_2(Y) \thinspace d\widehat{\mathbf{m}}_{X,\mathcal{B}}^R(Y) \right) d\mathbf{m}(X) \\
	=  \frac{\Lambda_g}{h \cdot \widehat{\mathbf{m}}(\mathcal{M}_g)} \cdot \left(\int_{\qut} \mathbbm{1}_{\mathcal{B}}([\Re(q)]) \thinspace \phi_1(\pi(q)) \thinspace \lambda(q) \thinspace d\mu(q)\right) \cdot \left(\int_{\mm} \phi_2(Y)  \thinspace d\widehat{\mathbf{m}}(Y) \right) \cdot e^{hR} \\
	+ O_{\mathcal{K},\mathrm{DT}}\left( \|\phi_1\|_\infty \cdot \|\phi_2\|_\infty \cdot e^{(h-\useconk{k:sect_equid_box})R}\right). \qedhere
	\end{gather*}
\end{proof}

Let $\mathcal{D}_g \subseteq \mathcal{T}_g$ be a measurable fundamental domain for the $\mcg$ action on $\tt$. Denote by $s \colon \mathcal{D}_g \to \mathcal{M}_g$ the restriction to $\mathcal{D}_g$ of the quotient map $\underline{\smash{p}} \colon \tt \to \mm$. The following result is an immediate consequence of Theorem \ref{theo:sector_equidistribution_box} and the proper discontinuity of the $\mcg$ action on $\tt$; compare to Theorem \ref{theo:ball_equidistribution_extra}.

\newconk{k:sebe} 
\begin{theorem}
	\label{theo:sector_equidistribution_box_extra}
	Let $\mathcal{K} \subseteq \mathcal{T}_g$ be a compact subset, $\phi_1,\phi_2 \in L^\infty(\tt,\mathbf{m})$ be essentially bounded functions with $\esupp(\phi_1), \esupp(\phi_2) \subseteq \mathcal{K}$, and $\mathcal{B} \subseteq \pmf$ be a cube. Then, for every $R > 0$,
	\begin{gather*}
	\sum_{\mc \in \mcg} \int_{\tt} \phi_1(X) \left( \int_{\mm} \phi_2(\mc.s^{-1}(Y)) \thinspace d\widehat{\mathbf{m}}_{X,\mathcal{B}}^R(Y) \right) d\mathbf{m}(X) \\
	=  \frac{\Lambda_g}{h \cdot \widehat{\mathbf{m}}(\mathcal{M}_g)} \cdot \left(\int_{\qut} \mathbbm{1}_{\mathcal{B}}([\Re(q)]) \thinspace \phi_1(\pi(q)) \thinspace \lambda(q) \thinspace d\mu(q)\right) \cdot \left(\int_{\tt} \phi_2(Y)  \thinspace d\mathbf{m}(Y) \right) \cdot e^{hR} \\
	+ O_{\mathcal{K},\mathrm{DT}}\left( \|\phi_1\|_\infty \cdot \|\phi_2\|_\infty \cdot e^{(h-\useconk{k:sebe})R}\right),
	\end{gather*}
	where $\useconk{k:sebe} = \useconk{k:sebe}(g) > 0$ is a constant depending only on $g$.
\end{theorem}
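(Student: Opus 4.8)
The plan is to deduce Theorem \ref{theo:sector_equidistribution_box_extra} from Theorem \ref{theo:sector_equidistribution_box} by an unfolding argument, exactly parallel to the way Theorem \ref{theo:ball_equidistribution_extra} was deduced from Theorem \ref{theo:ball_equidistribution}. The only genuinely new ingredient over the ball case is that the cutting data $\mathcal{B} \subseteq \pmf$ enters the leading term through the quantity $\int_{\qut} \mathbbm{1}_{\mathcal{B}}([\Re(q)]) \thinspace \phi_1(\pi(q)) \thinspace \lambda(q) \thinspace d\mu(q)$, which is already $\mcg$-invariant (since $\lambda$ and $[\Re]$ descend to $\qum$ and $\phi_1$ is a lift of a function on $\mm$, or more precisely since we may unfold the $X$-integral without touching this factor); so the cube data passes through the argument untouched, and the error term picks up the same $\mathrm{DT}$-dependence as in Theorem \ref{theo:sector_equidistribution_box}.

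Concretely, first I would observe that since $\esupp(\phi_1),\esupp(\phi_2) \subseteq \mathcal{K}$ with $\mathcal{K} \subseteq \tt$ compact, the proper discontinuity of the $\mcg$ action on $\tt$ implies that only finitely many $\mc \in \mcg$ contribute to the sum: the integrand for a given $\mc$ is supported, in the $X$ variable, on $\mathcal{K}$, and the measure $\widehat{\mathbf{m}}_{X,\mathcal{B}}^R$ pushed to $\mm$ then pulled back against $\phi_2(\mc.s^{-1}(\cdot))$ forces $\mc.s^{-1}(\esupp(\phi_2))$ to meet $B_R(X) \cap \mathrm{Mod}_g\cdot\mathcal{K}$; unfolding $s^{-1}$ and using $\esupp(\phi_2) \subseteq \up(\mathcal{K})$ reduces this to the condition that $\mc.\mathcal{K}$ meets $B_R(X)$ for some $X \in \mathcal{K}$, and for fixed $R$ this holds for only finitely many $\mc$. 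Thus the sum is finite and we may interchange summation and integration freely. Next I would perform the unfolding: writing $\phi_2$ as a function on $\tt$ (its lift), the sum $\sum_{\mc \in \mcg} \int_{\tt} \phi_1(X)\left(\int_{\mm}\phi_2(\mc.s^{-1}(Y))\, d\widehat{\mathbf{m}}_{X,\mathcal{B}}^R(Y)\right) d\mathbf{m}(X)$ unfolds, via $\int_{\mm} \Psi(\mc.s^{-1}(Y))\, d\widehat{\mathbf{m}}_{X,\mathcal{B}}^R(Y) = \sum_{\mathbf{h}}\int_{\tt}\mathbbm{1}_{\mathcal{D}_g}(\mathbf{h}.Y)\,\Psi(\mc.\mathbf{h}.Y)\, d\mathbf{m}_{X,\mathcal{B}}^R(Y)$ and reindexing $\mc \mathbf{h} \mapsto \mathbf{g}'$, to $\int_{\tt}\phi_1(X)\left(\int_{\tt}\phi_2(Y)\, d\mathbf{m}_{X,\mathcal{B}}^R(Y)\right) d\mathbf{m}(X)$ — here one uses that $\mathbf{m}_{X,\mathcal{B}}^R$ is $\mcg$-invariantly defined (the cube $\mathcal{B}$ is cut out in $\pmf$ and $q_s$ is equivariant) so the reindexing is legitimate. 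This is precisely the left-hand side that appears, after pushing forward to $\mm$ in the second variable, in Theorem \ref{theo:sector_equidistribution_box}.

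Finally, I would apply Theorem \ref{theo:sector_equidistribution_box} directly: with $\phi_1, \phi_2 \in L^\infty(\tt,\mathbf{m})$ both supported in $\mathcal{K}$, and noting that the $Y$-integral $\int_{\tt}\phi_2(Y)\, d\mathbf{m}_{X,\mathcal{B}}^R(Y)$ equals $\int_{\mm}\phi_2(Y)\, d\widehat{\mathbf{m}}_{X,\mathcal{B}}^R(Y)$ when $\phi_2$ descends, but more honestly we should invoke the version of Theorem \ref{theo:sector_equidistribution_box} with $\phi_2$ a lift; the conclusion is
\[
\sum_{\mc \in \mcg} \int_{\tt} \phi_1(X) \left( \int_{\mm} \phi_2(\mc.s^{-1}(Y)) \thinspace d\widehat{\mathbf{m}}_{X,\mathcal{B}}^R(Y) \right) d\mathbf{m}(X)
= \frac{\Lambda_g}{h \cdot \widehat{\mathbf{m}}(\mathcal{M}_g)} \cdot \left(\int_{\qut} \mathbbm{1}_{\mathcal{B}}([\Re(q)]) \thinspace \phi_1(\pi(q)) \thinspace \lambda(q) \thinspace d\mu(q)\right) \cdot \left(\int_{\tt} \phi_2(Y)  \thinspace d\mathbf{m}(Y) \right) \cdot e^{hR} + O_{\mathcal{K},\mathrm{DT}}\left( \|\phi_1\|_\infty \cdot \|\phi_2\|_\infty \cdot e^{(h-\useconk{k:sect_equid_box})R}\right),
\]
and we set $\useconk{k:sebe} := \useconk{k:sect_equid_box}$. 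The main subtlety — really the only point requiring care — is the bookkeeping in the unfolding step: one must verify that the double sum over $\mc$ and over $\mathbf{h}$ (the fundamental-domain tiling) collapses correctly and that the finiteness from proper discontinuity justifies all the interchanges; this is exactly the same mechanism as in the proof of Theorem \ref{theo:ball_equidistribution_extra} and Theorem \ref{theo:count}, and I do not anticipate any new obstacle beyond transcribing that argument with the extra inert factor $\mathbbm{1}_{\mathcal{B}}([\Re(q)])$ carried along.
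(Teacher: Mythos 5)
Your high-level plan (unfold the sum over $\mcg$ and apply Theorem \ref{theo:sector_equidistribution_box} once) is the same as the paper's, which indeed records this statement as an immediate consequence of Theorem \ref{theo:sector_equidistribution_box} and proper discontinuity, in parallel with Theorem \ref{theo:ball_equidistribution_extra}. However, the central unfolding identity as you state it is false. Carrying out the reindexing $\mc\mathbf{h}\mapsto\mathbf{g}'$ in $\sum_{\mc}\sum_{\mathbf{h}}\mathbbm{1}_{\mathcal{D}_g}(\mathbf{h}.Y)\,\phi_2(\mc.\mathbf{h}.Y)$ and using $\sum_{\mathbf{h}}\mathbbm{1}_{\mathcal{D}_g}(\mathbf{h}.Y)=1$ a.e.\ produces the $\mcg$-periodization $\Phi_2(Y):=\sum_{\mathbf{g}'\in\mcg}\phi_2(\mathbf{g}'.Y)$ in the integrand, not $\phi_2(Y)$; so the left-hand side unfolds to $\int_{\tt}\phi_1(X)\bigl(\int_{\tt}\Phi_2(Y)\,d\mathbf{m}_{X,\mathcal{B}}^R(Y)\bigr)d\mathbf{m}(X)$, not to $\int_{\tt}\phi_1(X)\bigl(\int_{\tt}\phi_2(Y)\,d\mathbf{m}_{X,\mathcal{B}}^R(Y)\bigr)d\mathbf{m}(X)$. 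The latter cannot be the right object: since $\mathbf{m}_{X,\mathcal{B}}^R$ is a restriction of $\mathbf{m}$ and $\esupp(\phi_2)\subseteq\mathcal{K}$, one has $\bigl|\int_{\tt}\phi_2\,d\mathbf{m}_{X,\mathcal{B}}^R\bigr|\leq\|\phi_2\|_\infty\cdot\mathbf{m}(\mathcal{K})$ uniformly in $R$, so it is bounded and cannot match the $e^{hR}$ growth of the theorem's leading term. Relatedly, your justification that ``$\mathbf{m}_{X,\mathcal{B}}^R$ is $\mcg$-invariantly defined'' is not correct: the cube $\mathcal{B}$ is a fixed subset of $\pmf$ and is not $\mcg$-invariant (one has $\mathrm{Sect}_{\mathcal{B}}(\mc.X)=\mc.\mathrm{Sect}_{\mc^{-1}.\mathcal{B}}(X)$), although no such invariance is actually needed for the reindexing.

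The missing step, and the actual point where proper discontinuity enters, is the periodization: set $\Phi_2:=\sum_{\mc\in\mcg}\phi_2\circ\mc$, which is $\mcg$-invariant, descends to $\widehat{\phi}_2\in L^\infty(\mm,\widehat{\m})$ with $\esupp(\widehat{\phi}_2)\subseteq\up(\mathcal{K})$, and satisfies $\|\widehat{\phi}_2\|_\infty\preceq_{\mathcal{K}}\|\phi_2\|_\infty$ because only boundedly many translates $\mc.\mathcal{K}$ meet $\mathcal{K}$. This multiplicity bound, uniform in $R$, is what keeps the error of the form $\|\phi_1\|_\infty\cdot\|\phi_2\|_\infty\cdot e^{(h-\kappa)R}$; the finiteness of the sum for fixed $R$ that you invoke is not enough, since the number of contributing $\mc$ grows like $e^{hR}$, and applying Theorem \ref{theo:sector_equidistribution_box} term by term would destroy the power saving. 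With $\Phi_2$ in hand, the left-hand side equals $\int_{\tt}\phi_1(X)\bigl(\int_{\mm}\widehat{\phi}_2(Y)\,d\widehat{\mathbf{m}}_{X,\mathcal{B}}^R(Y)\bigr)d\mathbf{m}(X)$; a single application of Theorem \ref{theo:sector_equidistribution_box} to the pair $(\phi_1,\widehat{\phi}_2)$, together with $\int_{\mm}\widehat{\phi}_2\,d\widehat{\mathbf{m}}=\int_{\tt}\phi_2\,d\mathbf{m}$, gives exactly the stated conclusion with $\useconk{k:sebe}:=\useconk{k:sect_equid_box}$. So the route is the intended one, but your write-up needs the periodization inserted and the false identity removed.
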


\section{Effective lattice point count in sectors of Teichmüller space}

\subsection*{Outline of this section.} In this section we state and prove effective lattice point count theorems for sectors of Teichmüller space. As in \S 8, we interpret $\pmf$ as the boundary at infinity of $\mathcal{T}_g$ and consider two cases depending on what type of objects on $\pmf$ are used to cut out the sectors. We first prove an effective lattice point count theorem for sectors of Teichmüller space cut out by cubes of $\pmf$ in Dehn-Thurston coordinates and then, using an approximation argument, deduce an analogous theorem for sectors cut out by piecewise $\mathcal{C}^1$ functions on $\pmf$. These results correspond to Theorems \ref{theo:sector_count_box} and \ref{theo:sector_count_smooth}, respectively. 

\subsection*{Statement of the main theorems.} As in \S 8, we fix a set of Dehn-Thurston coordinates of $\mf$, consider the corresponding identifications $\mf = \mathbf{R}^{6g-6}$ and $\pmf = \mathbf{S}^{6g-7}$, and endow $\mathbf{S}^{6g-7}$ with the restriction of the Riemannian Euclidean metric. We remind the reader that when an implicit constant depends on the choice of Dehn-Thurston coodinates we add the subscript $\mathrm{DT}$. As in \S 8, we consider cubes $\mathcal{B} \subseteq \pmf = \mathbf{S}^{6g-7}$ with closed and/or open facets.

Recall that $q_s \colon \mathcal{T}_g \times \mathcal{T}_g \to \mathcal{Q}^1\mathcal{T}_g$ denotes the map which to every pair $X,Y \in \mathcal{T}_g$ assigns the quadratic differential $q_s(X,Y) \in S(X)$ corresponding to the tangent direction at $X$ of the unique Teichmüller geodesic segment from $X$ to $Y$ and that, for every $X \in \mathcal{T}_g$ and every measurable subset $\mathcal{U} \subseteq \mathcal{PMF}$,
\[
\mathrm{Sect}_\mathcal{U}(X) := \{Y \in \mathcal{T}_g \ | \ [\Re(q_s(X,Y))] \in \mathcal{U}\}.
\]
Let $X,Y \in \tt$ and $\mathcal{U} \subseteq \pmf$ measurable. For every $R > 0$ consider the counting function
\begin{align*}
F_R(X,Y,\mathcal{U}) &:= \# \{\mc \in \mcg \ | \ \mc.Y \in B_R(X) \cap \mathrm{Sect}_{\mathcal{U}}(X)\}\\
&\phantom{:}= \sum_{\mc \in \mcg} \mathbbm{1}_{B_R(X)}(\mathbf{g}.Y) \cdot \mathbbm{1}_{\mathcal{U}}([\Re(q_s(X,\mc.Y))]).
\end{align*}
Recall that $\mathbf{m} := \pi_* \mu$ denotes the pushforward to $\tt$ of the Masur-Veech measure $\mu$ on $\qut$ under the projection $\pi \colon \qut \to \tt$ and that $\widehat{\mathbf{m}}$ denotes the local pushforward of $\mathbf{m}$ to $\mm$. Recall that $h := 6g-6$ and that $\Lambda_g > 0$ denotes the Hubbard-Masur constant introduced in \S 2. The following result generalizes Theorem \ref{theo:count} to sectors of $\tt$ cut out by cubes of $\pmf$.

\newconk{sect_count_box} 
\begin{theorem}
	\label{theo:sector_count_box}
	Let $\mathcal{K} \subseteq \tt$ compact, $X, Y \in \mathcal{K}$, and $\mathcal{B} \subseteq \pmf$ a cube. Then, for every $R > 0$,
	\begin{align*}
	F_R(X,Y,\mathcal{B}) = \frac{\Lambda_g}{h \cdot \widehat{\mathbf{m}}(\mathcal{M}_g)} \cdot \left(\int_{S(X)} \mathbbm{1}_{\mathcal{B}}([\Re(q)]) \thinspace \lambda(q) \thinspace ds_X(q)\right) \cdot e^{hR} 
	+ O_\mathcal{K,\mathrm{DT}}\left(e^{(h-\useconk{sect_count_box})R}\right),
	\end{align*}
	where $\useconk{sect_count_box} = \useconk{sect_count_box}(g) > 0$ is a constant depending only on $g$.
\end{theorem}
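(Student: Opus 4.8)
The plan is to follow exactly the averaging-and-unfolding scheme used in the proof of Theorem \ref{theo:count}, but now with the sector-refined equidistribution result Theorem \ref{theo:sector_equidistribution_box_extra} in place of Theorem \ref{theo:ball_equidistribution_extra}. The one genuinely new subtlety is that the sector-cutting datum $[\Re(q_s(X,Y))] \in \mathcal{B}$ is \emph{not} stable under the $\delta$-perturbations of $X$ and $Y$ that the comparison argument requires, so we will have to sandwich $\mathcal{B}$ between a slightly larger and a slightly smaller cube (or, more precisely, absorb the boundary effect). Concretely, I would first record, as in \eqref{eq:count_comparison}, that if $d_\mathcal{T}(X,X') < \delta$ and $d_\mathcal{T}(Y,Y') < \delta$ then $F_{R-2\delta}(X',Y',\mathcal{B}^{-\delta}) \leq F_R(X,Y,\mathcal{B}) \leq F_{R+2\delta}(X',Y',\mathcal{B}^{+\delta})$, where $\mathcal{B}^{\pm\delta}$ denote the cubes obtained by enlarging/shrinking each facet of $\mathcal{B}$ by a Euclidean amount $c(\mathcal{K})\cdot\delta$ in $\mathbf{S}^{6g-7}$; the point is that the map $Y' \mapsto [\Re(q_s(X',Y'))]$ is uniformly continuous (in the Teichmüller metric on a bounded region) with a modulus depending only on $\mathcal{K}$, which follows from the continuity of $q_s$ and of $[\Re]$ together with Masur's compactness criterion. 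The cubes $\mathcal{B}^{\pm\delta}$ are again cubes with closed/open facets, so Theorem \ref{theo:sector_equidistribution_box_extra} applies verbatim to them.

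Next, exactly mirroring the steps \eqref{eq:coun_ineq_1}--\eqref{eq:prog_9} of the proof of Theorem \ref{theo:count}, I would multiply these inequalities by $\mathbbm{1}_{B_\delta(X)}(X')\cdot\mathbbm{1}_{B_\delta(Y)}(Y')$, integrate against $d\mathbf{m}(Y')\,d\mathbf{m}(X')$, and run the unfolding computation (the analogues of \eqref{eq:pro_1}--\eqref{eq:pro_4}) to rewrite the integral of $F_{R\pm2\delta}(X',Y',\mathcal{B}^{\pm\delta})$ over $B_\delta(X)\times B_\delta(Y)$ in terms of $\sum_{\mc}\int \mathbbm{1}_{B_\delta(X)}(X')\left(\int_{\mm}\mathbbm{1}_{B_\delta(Y)}(\mc.s^{-1}(Y'))\,d\widehat{\mathbf{m}}_{X',\mathcal{B}^{\pm\delta}}^{R\pm2\delta}(Y')\right)d\mathbf{m}(X')$. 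Applying Theorem \ref{theo:sector_equidistribution_box_extra} with $\phi_1 = \mathbbm{1}_{B_\delta(X)}$, $\phi_2 = \mathbbm{1}_{B_\delta(Y)}$, together with $e^{h\delta} = 1 + O_g(\delta)$ and Lemma \ref{lem:teich_ball_bound} to compute $\mathbf{m}(B_\delta(X)),\mathbf{m}(B_\delta(Y)) \asymp_{\mathcal{K}} \delta^h$, yields
\[
F_R(X,Y,\mathcal{B}) = \frac{\Lambda_g}{h\cdot\widehat{\mathbf{m}}(\mathcal{M}_g)}\cdot\left(\int_{\qut}\mathbbm{1}_{\mathcal{B}^{\pm\delta}}([\Re(q)])\,\lambda(q)\,ds_X(q)\right)\cdot e^{hR} + O_{\mathcal{K},\mathrm{DT}}\!\left(\delta\cdot e^{hR} + \delta^{-2h}\cdot e^{(h-\useconk{k:sebe})R}\right),
\]
after dividing by $\mathbf{m}(B_\delta(X))\cdot\mathbf{m}(B_\delta(Y))$ (here I am writing $\int_{\qut}\mathbbm{1}_{\mathcal{B}^{\pm\delta}}([\Re(q)])\phi_1(\pi(q))\lambda(q)\,d\mu(q)$ and then unfolding the fiber integral over $B_\delta(X)$ divided by its mass back to $\int_{S(X)}\mathbbm{1}_{\mathcal{B}^{\pm\delta}}([\Re(q)])\lambda(q)\,ds_X(q)$, using smoothness of $\mathbf{m}$ and $s_X$ and shrinking $\delta$; this is the sector analogue of passing from Theorem \ref{theo:ball_equidistribution_extra} to a statement about a single point).

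To finish, I would replace the leading coefficient $\int_{S(X)}\mathbbm{1}_{\mathcal{B}^{\pm\delta}}([\Re(q)])\lambda(q)\,ds_X(q)$ by $\int_{S(X)}\mathbbm{1}_{\mathcal{B}}([\Re(q)])\lambda(q)\,ds_X(q)$ at the cost of an error
\[
\int_{S(X)}\big|\mathbbm{1}_{\mathcal{B}^{+\delta}}([\Re(q)]) - \mathbbm{1}_{\mathcal{B}^{-\delta}}([\Re(q)])\big|\,\lambda(q)\,ds_X(q) = \int_{\pmf}\big(\mathbbm{1}_{\mathcal{B}^{+\delta}}-\mathbbm{1}_{\mathcal{B}^{-\delta}}\big)([\eta])\,d\overline{\nu}_X([\eta]) \preceq_{\mathcal{K},\mathrm{DT}} \delta,
\]
where the first equality is definition \eqref{eq:hm_3} of the Hubbard-Masur function (identifying $\pmf$ with $E_X$), and the final bound uses that $\overline{\nu}_X$ is comparable to $\mathrm{vol}$ on $\mathbf{S}^{6g-7}$ (by continuity and positivity of $\mathrm{Ext}$ and compactness of $\pmf$) together with $\mathrm{vol}(\mathcal{B}^{+\delta}\setminus\mathcal{B}^{-\delta}) \preceq_{\mathrm{DT}} \delta$, which is the same estimate used in the proof of Theorem \ref{theo:sector_equidistribution_box} via Proposition \ref{prop:approx}. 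Optimizing by setting $\delta = \delta(\mathcal{K},R) := \delta_0 e^{-\eta R}$ with $\eta = \eta(g) > 0$ chosen so that $2h\eta < \useconk{k:sebe}$, and putting $\useconk{sect_count_box} := \min\{\eta,\useconk{k:sebe}-2h\eta\} > 0$, gives the claimed estimate for $R$ large, and then for all $R > 0$ by inflating the implied constant. The main obstacle—and really the only place where the argument differs structurally from Theorem \ref{theo:count}—is handling the boundary of $\mathcal{B}$ under perturbation; this is dispatched by the two ingredients above, namely uniform continuity of the sector-assignment map on bounded sets and the linear-in-$\delta$ bound on $\overline{\nu}_X$ of a $\delta$-collar of $\partial\mathcal{B}$.
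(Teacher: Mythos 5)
Your overall scheme (average over $B_\delta(X)\times B_\delta(Y)$, unfold, apply Theorem \ref{theo:sector_equidistribution_box_extra}, compare leading terms via a $\delta$-collar bound on $\partial\mathcal{B}$ using the comparability of $\overline{\nu}_X$ with $\mathrm{vol}$, then optimize $\delta = \delta_0 e^{-\eta R}$) is the same as the paper's, and the collar estimate and the optimization are fine. The genuine gap is in the very first step, the sandwich
$F_{R-2\delta}(X',Y',\mathcal{B}^{-\delta}) \leq F_R(X,Y,\mathcal{B}) \leq F_{R+2\delta}(X',Y',\mathcal{B}^{+\delta})$,
which you justify by claiming that $(X',Y') \mapsto [\Re(q_s(X',\mc.Y'))]$ is uniformly continuous with a linear modulus $c(\mathcal{K})\delta$ ``on a bounded region.'' The relevant points $\mc.Y$ range over all of $B_R(X)$, which is not a bounded region as $R \to \infty$, so no compactness argument gives a modulus uniform in $R$ and in $\mc$, let alone a linear one depending only on $\mathcal{K}$. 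Worse, such uniform direction-stability is exactly what fails in Teichmüller geometry: for geodesic segments that spend a large fraction of their time in the thin part, a $\delta$-perturbation of the endpoint is not reflected by an $O(\delta)$ change of the initial direction (the contraction estimates along $\mathcal{F}^{ss}$/$\mathcal{F}^{uu}$, Theorems \ref{theo:hodge_distance_decay_stable} and \ref{theo:hodge_distance_decay_unstable}, are only available under the $K_\epsilon(t)$ hypothesis), and this sensitivity is precisely why a clean containment of counts is not available.

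This is why the paper does not prove your sandwich but instead the weaker comparison Proposition \ref{prop:sector_comparison_0}: with the \emph{same} cube $\mathcal{B}$, one has $F_R(X,Y,\mathcal{B}) - F_{R+2\delta}(X',Y',\mathcal{B}) \preceq_{\mathcal{K},\mathrm{DT}} e^{(h-\useconk{k:sect_comp_0})R}$, i.e.\ the lattice points that change status are not absorbed into a slightly larger cube but are counted separately and shown to be few. Establishing this occupies Propositions \ref{prop:sector_comparison_1}--\ref{prop:sector_comparison_2}: one shows that any orbit point counted by one side but not the other lies in a bounded neighborhood of $B_{R+O(1)}(X') \cap \mathrm{Sect}_V(X')$ with $V = [\Re]^{-1}(\partial\mathcal{B}) \cap S(X')$, bounds the measure of such neighborhoods by Theorem \ref{theo:thin_sector_general} (whose proof already splits off thin-spending trajectories via Theorem \ref{theo:thin_trajectories}), and then controls $\overline{\nu}(\Re(V(\useconn{n:thin_sect_gen}e^{-\useconk{k:thin_sect_gen}R})))$ by Proposition \ref{prop:cube_thickening}, which in turn needs the Lipschitz property of $[\Re]$ (Proposition \ref{prop:Lipschitz_metric}), Proposition \ref{prop:systole_euclidean}, and Proposition \ref{prop:small_thu_measure}; the origin-perturbation case additionally needs Proposition \ref{prop:uniform_direction_bounds} and the stable-leaf non-expansion of $d_H$. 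None of this machinery is replaceable by the soft continuity argument you invoke, so as written your proof has a missing (and essential) ingredient at the comparison step; the resulting error in Proposition \ref{prop:sector_comparison_0} is additive of size $e^{(h-\kappa)R}$, which then has to be carried through the averaging inequalities (as in (\ref{eq:sector_comp_3})--(\ref{eq:sector_comp_4})) rather than disappearing into an enlarged cube. A further minor point: to compare the leading coefficient at $X'$ with the one at $X$ you should use the extremal-length/Kerckhoff bound as in Proposition \ref{prop:lead} rather than an appeal to smoothness of $\mathbf{m}$ and $s_X$, since the integrand involves the set $\{\mathrm{Ext}_\eta(X') \le 1\}$ and not just the measures.
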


Let $X,Y \in \tt$ and $\psi \colon \pmf \to \mathbf{R}_{\geq 0}$ measurable. For every $R > 0$ consider the counting function
\[
F_R(X,Y,\psi) := \sum_{\mc \in \mcg} \mathbbm{1}_{B_R(X)}(\mathbf{g}.Y) \cdot \psi([\Re(q_s(X,\mc.Y))]).
\]
The following result generalizes Theorem \ref{theo:count} to sectors of $\tt$ cut out by functions in $\mathcal{PC}^1(\pmf)$.

\newconk{k:sect_count_smooth} 
\begin{theorem}
	\label{theo:sector_count_smooth}
	Let $\mathcal{K} \subseteq \tt$ compact, $X, Y \in \mathcal{K}$, and $\psi \in \mathcal{PC}^1(\pmf)$ non-negative. Then, for every $R > 0$,
	\begin{align*}
	F_R(X,Y,\psi) = \frac{\Lambda_g}{h \cdot \widehat{\mathbf{m}}(\mathcal{M}_g)} \cdot \left(\int_{S(X)} \psi([\Re(q)]) \thinspace \lambda(q) \thinspace ds_X(q)\right) \cdot e^{hR} 
	+ O_\mathcal{K}\left(\|\psi\|_{\mathcal{PC}^1} \cdot e^{(h-\useconk{k:sect_count_smooth})R}\right),
	\end{align*}
	where $\useconk{k:sect_count_smooth}  = \useconk{k:sect_count_smooth}(g) > 0$ is a constant depending only on $g$.
\end{theorem}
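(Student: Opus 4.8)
\textbf{Proof proposal for Theorem \ref{theo:sector_count_smooth}.}
The plan is to deduce Theorem \ref{theo:sector_count_smooth} from Theorem \ref{theo:sector_count_box} by an approximation argument entirely analogous to the one used to derive Theorem \ref{theo:sector_equidistribution_box} from Theorem \ref{theo:sector_equidistribution}. Since a general $\psi \in \mathcal{PC}^1(\pmf)$ is bounded and piecewise $\mathcal{C}^1$, it can be sandwiched between step functions built from cubes in Dehn-Thurston coordinates; applying Theorem \ref{theo:sector_count_box} to each cube and summing, the main term reconstructs the integral $\int_{S(X)} \psi([\Re(q)]) \thinspace \lambda(q) \thinspace ds_X(q)$ and the error terms stay under control, provided we choose the mesh of the cube decomposition to decay like a suitable negative power of $e^{R}$.

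Concretely, first I would fix a set of Dehn-Thurston coordinates identifying $\pmf = \mathbf{S}^{6g-7}$. Given $0 < \delta < 1$, I would partition $\mathbf{S}^{6g-7}$ into $\preceq_{\mathrm{DT}} \delta^{-(h-1)}$ cubes $\mathcal{B}_1,\dots,\mathcal{B}_N$ of side length $\asymp \delta$ (with facets assigned to a single cube so the $\mathcal{B}_j$ are disjoint and cover $\pmf$). Using that $\psi$ is piecewise $\mathcal{C}^1$ with $\|\psi\|_{\mathcal{PC}^1} < \infty$, on each cube $\mathcal{B}_j$ the oscillation of $\psi$ is $\preceq_{\mathrm{DT}} \|\psi\|_{\mathcal{PC}^1} \cdot \delta$; letting $m_j$ and $M_j$ be the infimum and supremum of $\psi$ on $\mathcal{B}_j$, the step functions $\psi^{\mathrm{in}} := \sum_j m_j \mathbbm{1}_{\mathcal{B}_j}$ and $\psi^{\mathrm{out}} := \sum_j M_j \mathbbm{1}_{\mathcal{B}_j}$ satisfy $\psi^{\mathrm{in}} \leq \psi \leq \psi^{\mathrm{out}}$ and $\psi^{\mathrm{out}} - \psi^{\mathrm{in}} \preceq_{\mathrm{DT}} \|\psi\|_{\mathcal{PC}^1} \cdot \delta$ pointwise. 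Since $\mathbbm{1}_{B_R(X)}(\mc.Y) \cdot \psi([\Re(q_s(X,\mc.Y))])$ is monotone in $\psi$, this yields $F_R(X,Y,\psi^{\mathrm{in}}) \leq F_R(X,Y,\psi) \leq F_R(X,Y,\psi^{\mathrm{out}})$.

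Next I would apply Theorem \ref{theo:sector_count_box} to each cube $\mathcal{B}_j$, noting $F_R(X,Y,\psi^{\mathrm{out}}) = \sum_j M_j F_R(X,Y,\mathcal{B}_j)$ and similarly for $\psi^{\mathrm{in}}$. The main term of $F_R(X,Y,\psi^{\mathrm{out}})$ becomes $\frac{\Lambda_g}{h \cdot \widehat{\mathbf{m}}(\mathcal{M}_g)} \left(\sum_j M_j \int_{S(X)} \mathbbm{1}_{\mathcal{B}_j}([\Re(q)]) \lambda(q) \thinspace ds_X(q)\right) e^{hR} = \frac{\Lambda_g}{h \cdot \widehat{\mathbf{m}}(\mathcal{M}_g)} \left(\int_{S(X)} \psi^{\mathrm{out}}([\Re(q)]) \lambda(q) \thinspace ds_X(q)\right) e^{hR}$, which differs from the desired main term by at most $\preceq_{\mathcal{K},\mathrm{DT}} \|\psi\|_{\mathcal{PC}^1} \cdot \delta \cdot e^{hR}$ because the pushforward of $\lambda(q) \thinspace ds_X(q)$ under $[\Re|_{S(X)}]$ is (a conned-off Thurston measure, hence) comparable to volume on $\pmf$ with constant depending only on $\mathcal{K}$ and the Dehn-Thurston coordinates. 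The total error from the $N \preceq_{\mathrm{DT}} \delta^{-(h-1)}$ cubes is $\preceq_{\mathcal{K},\mathrm{DT}} \|\psi\|_\infty \cdot \delta^{-(h-1)} \cdot e^{(h - \useconk{sect_count_box})R}$; the same bounds hold for $\psi^{\mathrm{in}}$. Combining, $F_R(X,Y,\psi) = \frac{\Lambda_g}{h \cdot \widehat{\mathbf{m}}(\mathcal{M}_g)} \left(\int_{S(X)} \psi([\Re(q)]) \lambda(q) \thinspace ds_X(q)\right) e^{hR} + O_{\mathcal{K},\mathrm{DT}}\!\left(\|\psi\|_{\mathcal{PC}^1}\left(\delta \cdot e^{hR} + \delta^{-(h-1)} e^{(h - \useconk{sect_count_box})R}\right)\right)$. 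Finally I would optimize: set $\delta = e^{-\eta R}$ with $\eta = \eta(g) > 0$ small enough that $(h-1)\eta < \useconk{sect_count_box}$, and take $\useconk{k:sect_count_smooth} := \min\{\eta, \useconk{sect_count_box} - (h-1)\eta\} > 0$; the $\mathrm{DT}$-dependence of the final constant is absorbed since the Dehn-Thurston coordinates were fixed once and for all, so the implicit constant depends only on $\mathcal{K}$.

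The one point requiring care — the main (though not severe) obstacle — is verifying that the error accumulated over the $\asymp \delta^{-(h-1)}$ cubes is genuinely controlled \emph{uniformly}, i.e.\ that the implicit constant in Theorem \ref{theo:sector_count_box} does not degenerate as the cubes shrink; this is fine because that constant depends only on $g$ (and on the fixed Dehn-Thurston coordinates through the $\mathrm{DT}$ subscript), not on the geometry of the individual cube. A secondary point is confirming that $([\Re|_{S(X)}])_*(\lambda(q)\thinspace ds_X(q))$ is absolutely continuous with bounded density with respect to $\mathrm{vol}$ on $\pmf$, uniformly for $X \in \mathcal{K}$; this follows from the identification of this pushforward with the conned-off measure $\overline{\nu}_X$ via definition (\ref{eq:hm_3}) together with the positivity and continuity of extremal length on the compact set $\pmf \times \mathcal{K}$, exactly as in equations (\ref{eq:cube_7}) and (\ref{eq:cube_8}) of the previous section.
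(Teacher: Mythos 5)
Your proposal is correct and follows essentially the same route as the paper's own proof: sandwich $\psi$ between step functions $\psi_\delta^{\min} \leq \psi \leq \psi_\delta^{\max}$ built from a cube partition of $\pmf = \mathbf{S}^{6g-7}$ of mesh $\delta$, apply Theorem \ref{theo:sector_count_box} cube by cube, control the leading-term discrepancy by the oscillation bound $\preceq_{\mathrm{DT}} \|\psi\|_{\mathcal{PC}^1}\delta$, and optimize $\delta = e^{-\eta R}$. The only (harmless) deviations are cosmetic: you count $\preceq \delta^{-(h-1)}$ cubes where the paper uses the cruder bound $\delta^{-h}$, and your appeal to comparability of $([\Re|_{S(X)}])_*(\lambda \, ds_X)$ with volume is unnecessary since the pointwise oscillation bound together with the finite total mass $\Lambda_g$ of $\lambda(q)\, ds_X(q)$ already gives the main-term comparison, exactly as in the paper's estimate (\ref{eq:p_4}).
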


\subsection*{Outline of the proofs of the main theorems.} We first prove Theorem \ref{theo:sector_count_box}  and then deduce Theorem \ref{theo:sector_count_smooth} using approximation arguments. To prove Theorem \ref{theo:sector_count_box} we follow the same general structure of the proof of Theorem \ref{theo:count}. In particular, averaging and unfolding arguments will reduce the proof to an application of Theorem \ref{theo:sector_equidistribution_box_extra}. Comparing the counting functions $F_R(X,Y,\mathcal{B})$ as $X$ and $Y$ vary in small neighborhoods of $\mathcal{T}_g$ will be much more complicated in this case; this is an essential ingredient in the averaging step of the proof. Recall that $d_\mathcal{T}$ denotes the Teichmüller metric on $\tt$. We devote the first half of this section to the proof of the following bound.

\newcond{d:sect_comp_0} \newconk{k:sect_comp_0}  
\begin{proposition}
	\label{prop:sector_comparison_0}
	For every compact subset $\mathcal{K} \subseteq \mathcal{T}_g$ there exists a constant $\usecond{d:sect_comp_0} = \usecond{d:sect_comp_0}(\mathcal{K}) > 0$ with the following property. Let $\mathcal{B} \subseteq \pmf$ a cube, $0 < \delta < \delta_3$, $X,Y \in \mathcal{K}$, and $X',Y' \in \mathcal{T}_g$ with $d_\mathcal{T}(X,X') \leq \usecond{d:sect_comp_0}$ and $d_\mathcal{T}(Y,Y') \leq \delta$. Then, for every $R > 0$,
	\begin{align*}
	F_R(X,Y,\mathcal{B}) - F_{R+2\delta}(X',Y',\mathcal{B}) 
	\preceq_{\mathcal{K},\mathrm{DT}} e^{(h-\useconk{k:sect_comp_0})R},
	\end{align*}
	where $\useconk{k:sect_comp_0}  = \useconk{k:sect_comp_0} (g) > 0$ is a constant depending only on $g$.
\end{proposition}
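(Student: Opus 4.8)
Here is my plan for proving Proposition~\ref{prop:sector_comparison_0}.

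\textbf{Setup and the two sources of discrepancy.} Fix $\mathcal{K} \subseteq \mathcal{T}_g$ compact. The quantity $F_R(X,Y,\mathcal{B})$ counts mapping classes $\mathbf{g}$ with $\mathbf{g}.Y \in B_R(X)$ \emph{and} $[\Re(q_s(X,\mathbf{g}.Y))] \in \mathcal{B}$, so when we pass from $(X,Y)$ to nearby $(X',Y')$ two things can go wrong. First, a mapping class counted by $F_R(X,Y,\mathcal{B})$ might fail to have $\mathbf{g}.Y' \in B_{R+2\delta}(X')$ --- but this does not happen, by the triangle inequality and $\mathrm{Mod}_g$-invariance of $d_\mathcal{T}$, exactly as in~(\ref{eq:count_comparison}). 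Second, and this is the real issue, even when $\mathbf{g}.Y' \in B_{R+2\delta}(X')$, the projective class $[\Re(q_s(X',\mathbf{g}.Y'))]$ may differ from $[\Re(q_s(X,\mathbf{g}.Y))]$ by enough to leave the cube $\mathcal{B}$. So the plan is to write
\[
F_R(X,Y,\mathcal{B}) - F_{R+2\delta}(X',Y',\mathcal{B}) \leq \#\{\mathbf{g} \in \mathrm{Mod}_g \ | \ \mathbf{g}.Y \in B_R(X), \ [\Re(q_s(X,\mathbf{g}.Y))] \in \mathcal{B}, \ [\Re(q_s(X',\mathbf{g}.Y'))] \notin \mathcal{B}\},
\]
and bound the right-hand side by the number of lattice points landing in a \emph{thin annular neighborhood of $\partial\mathcal{B}$}.

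\textbf{Controlling the direction change.} The key geometric input is that the direction $q_s(X,\cdot)$ of the Teichm\"uller geodesic from $X$ varies continuously, and more precisely in a controlled way when the endpoint is far and both endpoints move a bounded amount. I would argue: if $\mathbf{g}.Y \in B_R(X)$ with $R$ large, $d_\mathcal{T}(X,X') \leq \usecond{d:sect_comp_0}$, $d_\mathcal{T}(Y,Y') \leq \delta < 1$, then there is a constant $\usecone{thin_traj_sum}'$-type bound giving
\[
d_{\pmf}\big([\Re(q_s(X,\mathbf{g}.Y))], \, [\Re(q_s(X',\mathbf{g}.Y'))]\big) \preceq_{\mathcal{K},\mathrm{DT}} e^{-cR}
\]
for the two directions that are not in the thin part of the stratum; this should follow from the contraction estimates along strongly stable/unstable leaves (Theorems~\ref{theo:hodge_distance_decay_stable} and~\ref{theo:hodge_distance_decay_unstable}) together with Lemma~\ref{lem:euclidean_teichmuller_1} relating the Teichm\"uller and Euclidean metrics, after passing through the Hubbard--Masur correspondence that realizes $q_s(X,Z)$ as the unstable leaf through the appropriate point. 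For the directions whose geodesics \emph{do} spend half their time thin, I would simply discard them: by Theorem~\ref{theo:thin_trajectories_sum} (in the form of Theorem~\ref{theo:EMR}) their number is $\preceq_\mathcal{K} e^{(h-\kappa)R}$, which is already of the desired order. So, up to an error $e^{(h-\kappa)R}$, the offending mapping classes are those with $[\Re(q_s(X,\mathbf{g}.Y))]$ in the $C e^{-cR}$-neighborhood of $\partial\mathcal{B}$ in $\pmf$.

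\textbf{Counting near the boundary of the cube.} It then remains to bound $\#\{\mathbf{g} \ | \ \mathbf{g}.Y \in B_R(X), \ [\Re(q_s(X,\mathbf{g}.Y))] \in \mathrm{Nbhd}_{Ce^{-cR}}(\partial \mathcal{B})\}$. Here I would invoke an effective count for sectors cut out by (a thin shell around) a cube --- but to avoid circularity with Theorem~\ref{theo:sector_count_box}, whose proof uses this proposition, I would instead bound this directly via the \emph{measure} estimate: the set $\mathrm{Nbhd}_{Ce^{-cR}}(\partial\mathcal{B})$ has $\mathrm{vol}$-measure $\preceq_{\mathrm{DT}} e^{-cR}$ (a cube in $\mathbf{S}^{6g-7}$ has $(6g-8)$-dimensional boundary), hence $\overline\nu_X$-measure $\preceq_{\mathcal{K},\mathrm{DT}} e^{-cR}$ as in~(\ref{eq:cube_8}), and then combine Theorem~\ref{theo:thin_sector_general} with an approximation of $\mathbbm{1}_{\mathrm{Nbhd}_{Ce^{-cR}}(\partial\mathcal{B})}$ from above by a $\mathcal{PC}^1$ function (Proposition~\ref{prop:approx}) together with Theorem~\ref{theo:sector_count_smooth}'s \emph{upper bound half} --- or, cleaner, carry out the lattice-point count against $\overline\nu$ using the unfolding/averaging machinery restricted to this thin set, where the main term is $\preceq \overline\nu_X(\mathrm{Nbhd})\cdot e^{hR} \preceq e^{(h-c)R}$. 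Choosing $\usecond{d:sect_comp_0}$ and $\useconk{k:sect_comp_0}$ in terms of $c$, $\kappa$, and the exponents appearing above finishes the proof.

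\textbf{Main obstacle.} The delicate point is the quantitative direction-change estimate $d_{\pmf}([\Re(q_s(X,\mathbf{g}.Y))],[\Re(q_s(X',\mathbf{g}.Y'))]) \preceq e^{-cR}$ away from the thin part: one must carefully track how moving $X \mapsto X'$ and $Y \mapsto Y'$ by bounded Teichm\"uller distance translates, under the Hubbard--Masur parametrization and after flowing for time $\approx R$, into an exponentially small change of the projectivized horizontal foliation, and one must handle the loss of these estimates near the multiple-zero locus by the large-deviation/thin-trajectory bounds. This is where the bulk of the work lies; the boundary-of-cube counting is then routine given the tools already assembled in \S7 and \S8.
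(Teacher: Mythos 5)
Your reduction --- the ball condition survives the perturbation, so the whole difference is the number of orbit points whose direction crosses $\partial\mathcal{B}$ --- is the same starting point as the paper's, and discarding trajectories that are thin in the time-average sense via Theorem \ref{theo:EMR} is legitimate, since that theorem bounds a count of mapping classes directly. The gap lies in the two steps that follow. First, your key estimate $d_{\pmf}\bigl([\Re(q_s(X,\mc.Y))],[\Re(q_s(X',\mc.Y'))]\bigr) \preceq e^{-cR}$ is asserted rather than proved, and it is not uniformly available: translating Hodge/Euclidean contraction into a distance estimate in the piecewise linear structure of $\pmf$ requires the Lipschitz comparison of Proposition \ref{prop:Lipschitz_metric}, which degenerates near the multiple zero locus, and an orbit point can have $\ell_{\min}(q_s(X,\mc.Y))$ arbitrarily small even when its geodesic spends most of its time in the thick part, so Theorem \ref{theo:EMR} does not dispose of those directions. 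Moreover, perturbing the basepoint $X$ changes the fiber $S(X)$ over which the sector is defined, which is why the paper treats the two moves separately (Propositions \ref{prop:sector_comparison_1} and \ref{prop:sector_comparison_2}, the latter requiring the maps $q_1,t_0,q_2$ and stable-leaf contraction via Proposition \ref{prop:uniform_direction_bounds}); your single all-at-once estimate conflates two different mechanisms. Second, your final step --- counting orbit points whose direction lies in an $e^{-cR}$-shell around $\partial\mathcal{B}$ --- is either circular (Theorem \ref{theo:sector_count_smooth} sits downstream of this proposition) or left vague: ``unfolding/averaging restricted to this thin set'' is itself a count-versus-perturbed-count comparison of exactly the kind being proved.

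The paper's route avoids both problems and needs no quantitative direction-change estimate: if $\mc.Y\in B_R(X)\cap\mathrm{Sect}_{\mathcal{B}}(X)$ but the perturbed point leaves the enlarged sector, then by continuity it lies in a bounded neighborhood of $B_{R+r}(X)\cap\mathrm{Sect}_V(X)\cap\mcg\cdot\mathcal{K}$ with $V=[\Re]^{-1}(\partial\mathcal{B})\cap S(X)$; the count of such points is converted into a measure by packing disjoint Teichmüller balls around them (Lemma \ref{lem:embedded_balls}), and that measure is bounded by Theorem \ref{theo:thin_sector_general}, which already contains the exponential contraction internally, combined with Proposition \ref{prop:cube_thickening}, which handles the multiple-zero-locus contribution via Propositions \ref{prop:systole_euclidean} and \ref{prop:small_thu_measure}. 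If you insert the embedded-ball packing step and route your shell count through Theorem \ref{theo:thin_sector_general} and Proposition \ref{prop:cube_thickening}, your plan collapses onto the proofs of Propositions \ref{prop:sector_diff_prelim_3} and \ref{prop:sector_diff_prelim_4}, and the delicate $e^{-cR}$ estimate you identify as the main obstacle becomes unnecessary.
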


\subsection*{Varying the endpoint of sector counts.} We begin by studying how the counting functions $F_R(X,Y,\mathcal{\mathcal{B}})$ vary as we move $Y$ in a small neighborhood of $\mathcal{T}_g$. More specifically, we prove the following bound.

\newconr{r:sect_comp_1}  \newconk{k:sect_comp_1}  
\begin{proposition}
	\label{prop:sector_comparison_1}
	There exists a constant $\useconr{r:sect_comp_1} = \useconr{r:sect_comp_1}(g) > 0$ with the following property. Let $\mathcal{K} \subseteq \mathcal{T}_g$ compact, $\mathcal{B} \subseteq \pmf$ a cube, $0 < \delta < \useconr{r:sect_comp_1}$, $X,Y \in \mathcal{K}$, and $Y' \in \mathcal{T}_g$ with $d_\mathcal{T}(Y,Y') < \delta$. Then, for every $R > 0$,
	\begin{align*}
	F_R(X,Y,\mathcal{B}) - F_{R+\delta}(X,Y',\mathcal{B}) 
	\preceq_{\mathcal{K},\mathrm{DT}} e^{(h-\useconk{k:sect_comp_1} )R},
	\end{align*}
	where $\useconk{k:sect_comp_1}  = \useconk{k:sect_comp_1} (g) > 0$ is a constant depending only on $g$.
\end{proposition}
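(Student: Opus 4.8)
\textbf{Proof proposal for Proposition \ref{prop:sector_comparison_1}.}

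The plan is to isolate the mapping classes $\mc \in \mcg$ that are counted by one of the two functions but not the other, and to show there are at most $O_{\mathcal{K},\mathrm{DT}}(e^{(h-\useconk{k:sect_comp_1})R})$ of them. Fix $\mathcal{K}$, $\mathcal{B}$, $X,Y,Y'$ as in the statement, with $d_\mathcal{T}(Y,Y')<\delta$. For each $\mc$ counted by $F_R(X,Y,\mathcal{B})$ but not by $F_{R+\delta}(X,Y',\mathcal{B})$, we have $\mc.Y \in B_R(X)$, so by the triangle inequality and $\mcg$-invariance of $d_\mathcal{T}$ we get $\mc.Y' \in B_{R+\delta}(X)$; hence the only way $\mc$ fails to be counted by $F_{R+\delta}(X,Y',\mathcal{B})$ is that $[\Re(q_s(X,\mc.Y))] \in \mathcal{B}$ but $[\Re(q_s(X,\mc.Y'))] \notin \mathcal{B}$. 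So the bad mapping classes are exactly those whose geodesic directions from $X$ to $\mc.Y$ and to $\mc.Y'$ straddle the boundary $\partial\mathcal{B}$ in $\pmf$.

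The key geometric input is a modulus-of-continuity statement: if $Z,Z' \in \mathcal{T}_g$ both lie in $B_{R+1}(X) \setminus B_1(X)$ and $d_\mathcal{T}(Z,Z') < \delta$, then the projective foliations $[\Re(q_s(X,Z))]$ and $[\Re(q_s(X,Z'))]$ are within distance $O_\mathcal{K}(e^{-\useconr{r:sect_comp_1}' R}\cdot\text{(something)})$ of each other in the Euclidean metric on $\pmf = \mathbf{S}^{6g-7}$ — more precisely, writing $Z = \pi(a_t q)$ and $Z' = \pi(a_{t'}q')$ with $q,q' \in S(X)$, the contraction of the modified Hodge metric along strongly stable leaves (Theorem \ref{theo:hodge_distance_decay_stable}, run along the geodesic from $X$) forces $q$ and $q'$ to be exponentially close in $d_H$ once one discards the contribution of geodesics that spend half their time in the thin part. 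Combined with Lemmas \ref{lem:euclidean_teichmuller_1}, the relation between $d_E$ and $d_H$ (Theorem \ref{theo:euclidean_hodge_1}), and the Lipschitz comparison between the smooth structure on $S(X)$ and the train-track/Dehn-Thurston structure on $\pmf$ furnished by Proposition \ref{prop:derivative bound} and Lemma \ref{lem:finite_tt}, this shows $[\Re(q_s(X,Z))]$ and $[\Re(q_s(X,Z'))]$ lie in a common $\rho(R)$-ball of $\pmf$ with $\rho(R) \preceq_{\mathcal{K},\mathrm{DT}} e^{-\useconk{}R}$ for a suitable $\useconk{}>0$ (after excising the thin-trajectory contribution, handled by Theorem \ref{theo:thin_trajectories_sum}). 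Therefore every bad mapping class $\mc$ (outside the thin exceptional set) has $[\Re(q_s(X,\mc.Y))] \in \mathrm{Nbhd}_{\rho(R)}(\partial\mathcal{B})$.

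It then remains to count lattice points $\mc.Y \in B_R(X)$ with $[\Re(q_s(X,\mc.Y))]$ in an $\rho(R)$-neighborhood of $\partial\mathcal{B}$. Since $\partial\mathcal{B}$ is a finite union of coordinate faces of a cube in $\mathbf{S}^{6g-7}$, this neighborhood is a union of $O_{\mathrm{DT}}(1)$ cubes each of one smaller dimension thickened by $\rho(R)$; covering each by $O(\rho(R)^{-(h-1)})$ cubes $\mathcal{B}_i$ of side $\rho(R)$ and applying Theorem \ref{theo:sector_count_box} to each $\mathcal{B}_i$ (whose error term is $O_{\mathcal{K},\mathrm{DT}}(e^{(h-\useconk{sect_count_box})R})$ and whose main term is $\preceq_{\mathcal{K},\mathrm{DT}} \overline{\nu}_X(\mathcal{B}_i)\cdot e^{hR} \preceq \rho(R)^{h-1}\cdot\rho(R)\cdot e^{hR}\cdot(\text{count})$ — here one must be a little careful and instead sum the main terms, getting $\preceq \mathrm{vol}(\mathrm{Nbhd}_{\rho(R)}(\partial\mathcal{B}))\cdot e^{hR} \preceq \rho(R)\cdot e^{hR}$) yields a bound of the form $\rho(R)e^{hR} + \rho(R)^{-(h-1)}e^{(h-\useconk{sect_count_box})R}$. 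Choosing the decay rate of $\rho(R)$ appropriately (balancing against $\useconk{sect_count_box}$) gives a net saving $e^{(h-\useconk{k:sect_comp_1})R}$. Adding the thin-trajectory contribution from Theorem \ref{theo:thin_trajectories_sum}, which is already $\preceq_\mathcal{K} e^{(h-\useconk{thin_traj_sum_k})R}$, and taking $\useconk{k:sect_comp_1}$ to be the minimum of the exponents involved, completes the argument. The main obstacle I expect is making the modulus-of-continuity estimate for $Z \mapsto [\Re(q_s(X,Z))]$ uniform and quantitative all the way out to radius $R$: this is where one genuinely needs the exponential contraction of Theorem \ref{theo:hodge_distance_decay_stable} together with the thin-part control, rather than a soft compactness argument, and where bookkeeping of which compact sets the relevant quadratic differentials stay in (via $\mathrm{Mod}_g\cdot\mathcal{K}$ and $\mathrm{Nbhd}_1(\mathcal{K})$) has to be done carefully.
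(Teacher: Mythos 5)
There is a genuine gap: your final counting step is circular. To bound the number of lattice points $\mc.Y \in B_R(X)$ whose direction lands in the $\rho(R)$-neighborhood of $\partial\mathcal{B}$ you invoke Theorem \ref{theo:sector_count_box}, but that theorem is proved (later in the same section) from Proposition \ref{prop:sector_comparison_0}, which is exactly the combination of the present statement, Proposition \ref{prop:sector_comparison_1}, with Proposition \ref{prop:sector_comparison_2}. So the tool you reach for at the decisive moment is downstream of what you are trying to prove. The non-circular way to bound a pointwise lattice count in a thin boundary sector is the one the paper uses: convert the count into a measure estimate by placing balls of a fixed small radius at the bad orbit points and using proper discontinuity (Lemma \ref{lem:embedded_balls}), so that $F_R(X,Y,\mathcal{B})-F_{R+\delta}(X,Y',\mathcal{B}) \preceq_{\mathcal{K}} \mathbf{m}\bigl(\mathrm{Nbhd}_{2r}(B_{R+r}(X)\cap\mathrm{Sect}_V(X)\cap\mcg\cdot\mathrm{Nbhd}_r(\mathcal{K}))\bigr)$ with $V=[\Re]^{-1}(\partial\mathcal{B})\cap S(X)$, and then apply the \S 7 machinery, namely Theorem \ref{theo:thin_sector_general}, which already packages the exponential contraction and the thin-part control and yields the bound $\overline{\nu}(\Re(V(N e^{-\kappa R})))\cdot e^{hR}+e^{(h-\kappa)R}$. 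This also shows that your quantitative modulus-of-continuity step is not needed: a soft continuity/intermediate-value argument suffices to place the bad orbit points within a fixed distance $r$ of $\mathrm{Sect}_V(X)$, since the exponential thinning in $R$ is performed inside Theorem \ref{theo:thin_sector_general}. Relatedly, your appeal to Theorem \ref{theo:thin_trajectories_sum} for the thin-trajectory exceptional set is again a measure bound, not a count; the same count-to-measure conversion via Lemma \ref{lem:embedded_balls} (and the stronger Theorem \ref{theo:thin_trajectories} with the $\mathrm{Nbhd}_r$) is needed there too, and it never appears in your write-up.

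A second, smaller issue is in the boundary-thickening estimate itself. You bound the mass of $\mathrm{Nbhd}_{\rho}(\partial\mathcal{B})$ by a purely Euclidean covering in Dehn--Thurston coordinates, but the relevant thickening is of $V$ in $\qut$ along strongly unstable leaves in the modified Hodge metric, and the map $[\Re]$ is only Lipschitz on the principal stratum (Proposition \ref{prop:Lipschitz_metric}); paths in the thickened set that approach the multiple zero locus are not controlled by that Lipschitz bound. The paper's Proposition \ref{prop:cube_thickening} handles this by splitting into the contribution near the multiple zero locus (Propositions \ref{prop:systole_euclidean} and \ref{prop:small_thu_measure}) and the contribution where the Lipschitz estimate applies, and only then does the Lebesgue-in-Dehn--Thurston computation give $\overline{\nu}(\Re(V(\rho)))\preceq_{\mathcal{K},\mathrm{DT}}\rho$. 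With these two repairs, the exponent bookkeeping you sketch does close, but as written the argument does not stand on its own.
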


The main techincal tool that will be used in the proof of Proposition \ref{prop:sector_comparison_1} is Theorem \ref{theo:thin_sector_general}. We will also need the following lemma, which is a direct consequence of the proper discontinuity of the action of $\mcg$ on $\tt$. This lemma covers for the some of the orbifold issues that arise when working on $\mathcal{M}_g$. 

\newcond{d:embed}
\begin{lemma}
	\label{lem:embedded_balls}
	Let $\mathcal{K} \subseteq \mathcal{T}_g$ be a compact subset. There exists a constant $\usecond{d:embed} = \usecond{d:embed}(\mathcal{K}) > 0$ such that for every $X \in \mathcal{K}$ and every $0 < \delta < \usecond{d:embed}$,
	\[
	\# \{\mc \in \mcg \ | \ B_{\delta}(\mc.X) \cap B_{\delta}(X) \neq \emptyset\} \preceq_{\mathcal{K}} 1.
	\]
\end{lemma}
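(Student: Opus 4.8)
\textbf{Proof plan for Lemma \ref{lem:embedded_balls}.}

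The plan is to deduce the statement from the proper discontinuity of the $\mcg$-action on $\tt$ together with a compactness argument. First I would recall the precise form of proper discontinuity we need: for every compact subset $L \subseteq \tt$, the set $\{\mc \in \mcg \ | \ \mc.L \cap L \neq \emptyset\}$ is finite. This is standard for the $\mcg$-action on Teichmüller space (see, e.g., the references on Teichmüller theory cited in \S 2). Now fix a compact subset $\mathcal{K} \subseteq \tt$ and enlarge it slightly: let $\mathcal{K}_1 := \mathrm{Nbhd}_1(\mathcal{K})$, which is again compact since $d_\mathcal{T}$ is a complete, proper metric. Set $N := \#\{\mc \in \mcg \ | \ \mc.\mathcal{K}_1 \cap \mathcal{K}_1 \neq \emptyset\}$, which is finite by proper discontinuity, and depends only on $\mathcal{K}$.

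The key observation is the following: if $X \in \mathcal{K}$, $0 < \delta < 1$, and $\mc \in \mcg$ satisfies $B_\delta(\mc.X) \cap B_\delta(X) \neq \emptyset$, then by the $\mcg$-invariance of the Teichmüller metric we have $B_\delta(\mc.X) = \mc.B_\delta(X)$, so $\mc.B_\delta(X) \cap B_\delta(X) \neq \emptyset$. Since $B_\delta(X) \subseteq B_1(X) \subseteq \mathcal{K}_1$ (as $X \in \mathcal{K}$ and $\delta < 1$), this forces $\mc.\mathcal{K}_1 \cap \mathcal{K}_1 \neq \emptyset$. Therefore every such $\mc$ lies in the finite set counted by $N$, and we conclude
\[
\#\{\mc \in \mcg \ | \ B_\delta(\mc.X) \cap B_\delta(X) \neq \emptyset\} \leq N \preceq_{\mathcal{K}} 1.
\]
Taking $\usecond{d:embed} = \usecond{d:embed}(\mathcal{K}) := 1$ (or any positive constant $\leq 1$) then gives the claim for every $X \in \mathcal{K}$ and every $0 < \delta < \usecond{d:embed}$.

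There is essentially no serious obstacle here; the only point requiring a little care is ensuring that the enlarged set $\mathcal{K}_1$ is compact, which follows since the Teichmüller metric is complete and proper (closed balls are compact), so a bounded neighborhood of a compact set is relatively compact. One should also note that the constant $\usecond{d:embed}$ is allowed to depend on $\mathcal{K}$, which is harmless, and that the bound is uniform over $X \in \mathcal{K}$ precisely because we used the single enlarged compact set $\mathcal{K}_1$ rather than a neighborhood depending on the individual point $X$.
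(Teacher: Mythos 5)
Your argument is correct and is exactly the route the paper intends: the paper states Lemma \ref{lem:embedded_balls} as a direct consequence of the proper discontinuity of the $\mcg$-action on $\tt$ and gives no further detail, and your proof simply spells out that deduction (enlarging $\mathcal{K}$ to a compact $1$-neighborhood, using $\mcg$-invariance of $d_\mathcal{T}$ to rewrite $B_\delta(\mc.X) = \mc.B_\delta(X)$, and invoking finiteness of $\{\mc \ | \ \mc.\mathcal{K}_1 \cap \mathcal{K}_1 \neq \emptyset\}$). The only point needing care, compactness of the enlarged set via properness of the Teichmüller metric, is handled correctly.
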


Recall that, for every $W \subseteq \mathcal{Q}^1\mathcal{T}_g$ and every $s > 0$, $W(s) \subseteq \mathcal{Q}^1\mathcal{T}_g$ denotes the set of $q_1 \in \mathcal{Q}^1\mathcal{T}_g$ such that there exists $q_2 \in W$ on the same leaf of $\mathcal{F}^{uu}$ as $q_1$ satisfying $d_H(q_1,q_2)<s$. Consider the map $[\Re] \colon \qut \to \pmf$ given by $[\Re](q) = [\Re(q)]$. Recall that $\nu$ denotes the Thurston measure on $\mf$ and that $\overline{\nu}$ denotes the function which to every measurable subset $A \subseteq \mf$ assigns the value $\overline{\nu}(A) = \nu([0,1] \cdot A)$. Recall that, for any $X \in \mathcal{T}_g$ and any $V \subseteq S(X)$,
\[
\mathrm{Sect}_V(X) := \{Y \in \mathcal{T}_g \ | \ q_s(X,Y) \in V\}.
\]
 As a first step towards proving Proposition \ref{prop:sector_comparison_1}, we prove the following bound. 

\newconr{r:sect_prelim} \newconn{n:sect_prelin} \newconk{k:sect_prelim}
\begin{proposition}
	\label{prop:sector_diff_prelim_3}
	There exists a constant $\useconr{r:sect_prelim}  = \useconr{r:sect_prelim} (g) > 0$ with the following property. Let $\mathcal{K} \subseteq \mathcal{T}_g$ compact, $\mathcal{U} \subseteq \pmf$ measurable, $0 < \delta < \useconr{r:sect_prelim} $, $X,Y \in \mathcal{K}$, and $Y' \in \mathcal{T}_g$ with $d_\mathcal{T}(Y,Y') < \delta$. Denote $V := [\Re]^{-1}(\partial \mathcal{U}) \cap S(X)$. Then, for every $R > 0$,
	\begin{align*}
	F_R(X,Y,\mathcal{U}) - F_{R+\delta}(X,Y',\mathcal{U}) 
	\preceq_{\mathcal{K}} \overline{\nu}(\Re(V(\useconn{n:sect_prelin} e^{- \useconk{k:sect_prelim}  R}))) \cdot e^{hR} + e^{(h- \useconk{k:sect_prelim})R},
	\end{align*}
	where $\useconn{n:sect_prelin} = \useconn{n:sect_prelin}(g) > 0$ and $ \useconk{k:sect_prelim} =  \useconk{k:sect_prelim}(g) > 0$ are constants depending only on $g$.
\end{proposition}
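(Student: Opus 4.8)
The plan is to reduce the difference of counting functions to a ``thin sector'' count near the preimage of the boundary $\partial\mathcal{U}$, which can then be controlled by Theorem \ref{theo:thin_sector_general}. First I would observe that if $\mathbf{g}\in\mcg$ contributes to $F_R(X,Y,\mathcal{U})$ but not to $F_{R+\delta}(X,Y',\mathcal{U})$, then either $\mathbf{g}.Y\in B_R(X)$ but $\mathbf{g}.Y'\notin B_{R+\delta}(X)$ — which is impossible since $d_\mathcal{T}(Y,Y')<\delta$ and $d_\mathcal{T}$ is $\mcg$-invariant — or else both $\mathbf{g}.Y$ and $\mathbf{g}.Y'$ lie in the relevant ball but $[\Re(q_s(X,\mathbf{g}.Y))]\in\mathcal{U}$ while $[\Re(q_s(X,\mathbf{g}.Y'))]\notin\mathcal{U}$ (or, symmetrically, an element contributing to the second count but with its $Y$-direction just outside $\mathcal{U}$). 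So the difference $F_R(X,Y,\mathcal{U})-F_{R+\delta}(X,Y',\mathcal{U})$ is bounded by (a constant multiple of, accounting for the orbifold multiplicity via Lemma \ref{lem:embedded_balls}) the number of $\mathbf{g}\in\mcg$ with $\mathbf{g}.Y\in B_{R+\delta}(X)$ such that the Teichmüller geodesic segments from $X$ to $\mathbf{g}.Y$ and from $X$ to $\mathbf{g}.Y'$ fall on opposite sides of $\partial\mathcal{U}$ under the map $[\Re]\circ q_s(X,\cdot)$.

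Next I would show that such a ``straddling'' geodesic direction must be close, in a suitable sense, to $V=[\Re]^{-1}(\partial\mathcal{U})\cap S(X)$. Concretely, since $d_\mathcal{T}(\mathbf{g}.Y,\mathbf{g}.Y')<\delta$, the endpoints of the two geodesics are within Teichmüller distance $\delta$; a continuity/compactness estimate (analogous to Lemma \ref{lem:euclidean_teichmuller_1} and its corollaries, applied on the compact set $\mathcal{K}$ and using that the initial quadratic differentials $q_s(X,\cdot)$ vary continuously) shows that the initial directions $q_s(X,\mathbf{g}.Y)$ and $q_s(X,\mathbf{g}.Y')$ are close in the Euclidean (hence modified Hodge) metric, with closeness controlled by $\delta$. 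If one direction maps into $\mathcal{U}$ and the other into the complement, then by the intermediate value theorem along the short path joining them, some direction on that path maps to $\partial\mathcal{U}$, so $q_s(X,\mathbf{g}.Y)$ lies in a modified-Hodge neighborhood of $V$ of radius $\preceq_\mathcal{K}\delta$ along its $\mathcal{F}^{uu}$-leaf — i.e., $q_s(X,\mathbf{g}.Y)\in V(C_\mathcal{K}\delta)$ in the notation of the theorem. Hence every such $\mathbf{g}$ has $\mathbf{g}.Y\in B_{R+\delta}(X)\cap\mathrm{Sect}_{V(C_\mathcal{K}\delta)}(X)\cap\mcg\cdot\mathcal{K}$.

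Then I would convert this orbit count into an $\mathbf{m}$-volume. By a standard packing argument using Lemma \ref{lem:embedded_balls} (choose $\delta$ smaller than the injectivity-type radius $\delta_{\ref{d:embed}}(\mathcal{K})$ so that the balls $B_{r_0}(\mathbf{g}.Y)$ are boundedly overlapping), the number of contributing $\mathbf{g}$ is $\preceq_\mathcal{K}\mathbf{m}\big(\mathrm{Nbhd}_{r_0}(B_{R+\delta}(X)\cap\mathrm{Sect}_{V(C_\mathcal{K}\delta)}(X)\cap\mcg\cdot\mathcal{K}')\big)$ for a suitable fixed small $r_0$ and a slightly enlarged compact set $\mathcal{K}'$ — using Lemma \ref{lem:teich_ball_bound} to say each ball has $\mathbf{m}$-mass $\asymp_\mathcal{K} r_0^h$. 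Now Theorem \ref{theo:thin_sector_general} applied with the sector $V'=V(C_\mathcal{K}\delta)$ (noting $V'(\,\cdot\,)$ is again a modified-Hodge thickening of $V$, so that $\overline{\nu}(\Re(V'(N e^{-\kappa R})))$ is bounded, up to enlarging the thickening constant, by $\overline{\nu}(\Re(V(N' e^{-\kappa R})))$) gives the bound $\preceq_\mathcal{K}\overline{\nu}(\Re(V(N' e^{-\kappa R})))\cdot e^{hR}+e^{(h-\kappa)R}$, which is exactly the claimed form once one absorbs $\delta$ into $R$ via $R+\delta\le R+1$ and sets $\useconn{n:sect_prelin}$, $\useconk{k:sect_prelim}$ in terms of the constants $\useconn{n:thin_sect_gen}$, $\useconk{k:thin_sect_gen}$ from Theorem \ref{theo:thin_sector_general}. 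The main obstacle I anticipate is the geometric step of the second paragraph: showing rigorously that a straddling pair of geodesic directions lies in a modified-Hodge thickening of $V$ of radius $O_\mathcal{K}(\delta)$ along the strong unstable foliation — this requires combining the continuity of $q_s(X,\cdot)$ on compacta, the comparison between Teichmüller, Euclidean, and modified Hodge metrics (Lemma \ref{lem:euclidean_teichmuller_1}, Theorem \ref{theo:euclidean_hodge_1}), and an intermediate-value argument along a path that may need to be taken within a single $\mathcal{F}^{uu}$-leaf, which is where the bookkeeping is most delicate.
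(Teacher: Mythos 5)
Your first and third paragraphs follow the paper's route (straddling directions, packing via Lemma \ref{lem:embedded_balls} and Lemma \ref{lem:teich_ball_bound}, then Theorem \ref{theo:thin_sector_general}), but the geometric step in your second paragraph — the one you yourself flag as delicate — does not work as stated. The thickening $V(s)$ is taken along leaves of $\mathcal{F}^{uu}$ in the modified Hodge metric, and this thickening does not spread inside the fiber $S(X)$ at all: by the Hubbard--Masur theorem, for fixed $X$ each vertical foliation is realized by exactly one element of $S(X)$, so two distinct unit-area differentials over the same $X$ are never on the same leaf of $\mathcal{F}^{uu}$. Consequently $V(C_\mathcal{K}\delta)\cap S(X)=V$, and since $\mathrm{Sect}_{W}(X)$ only sees $W\cap S(X)$, your "thickened" sector $\mathrm{Sect}_{V(C_\mathcal{K}\delta)}(X)$ is just $\mathrm{Sect}_V(X)$; the claim that every straddling orbit point lies in it would force its direction to lie exactly on $\partial\mathcal{U}$, which is false in general. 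Two further problems: Theorem \ref{theo:thin_sector_general} takes as input a measurable subset of $S(X)$, so it cannot be fed $V(C_\mathcal{K}\delta)$ in any case; and the quantitative assertion that $q_s(X,\mathbf{g}.Y)$ and $q_s(X,\mathbf{g}.Y')$ are at distance $O_\mathcal{K}(\delta)$ uniformly over endpoints anywhere in $B_R(X)$ is not supplied by Lemma \ref{lem:euclidean_teichmuller_1} or Theorem \ref{theo:euclidean_hodge_1}, which are statements about nearby quadratic differentials over a compact set, not about the endpoint-to-direction map at arbitrary distance.

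The paper's proof avoids all quantitative control of directions. If $\mathbf{g}.Y\in B_R(X)\cap\mathrm{Sect}_\mathcal{U}(X)$ but $\mathbf{g}.Y'\notin B_{R+\delta}(X)\cap\mathrm{Sect}_\mathcal{U}(X)$, then (the ball condition being automatic) the direction must leave $\mathcal{U}$; running the continuous map $[\Re]\circ q_s(X,\cdot)$ along the short path in $\tt$ from $\mathbf{g}.Y$ to $\mathbf{g}.Y'$ and using the intermediate value argument produces a point $Z$ on that path whose direction lies in $\partial\mathcal{U}$, i.e. $Z\in B_{R+r}(X)\cap\mathrm{Sect}_V(X)\cap\mcg\cdot\mathrm{Nbhd}_r(\mathcal{K})$ with $d_\mathcal{T}(\mathbf{g}.Y',Z)<r$, where $r$ is a fixed constant of size $\useconr{r:thin_sect_gen}/4$. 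Thus the thickening happens downstream in Teichmüller space by a fixed radius, not by thickening $V$ by $O(\delta)$: the straddling points all lie in $\mathrm{Nbhd}_{r}(B_{R+r}(X)\cap\mathrm{Sect}_V(X)\cap\mcg\cdot\mathrm{Nbhd}_r(\mathcal{K}))$, the packing argument bounds their number by $\mathbf{m}$ of the $2r$-neighborhood of this set, and Theorem \ref{theo:thin_sector_general} is applied to $V$ itself; the leafwise thickening $V(\useconn{n:thin_sect_gen}e^{-\useconk{k:thin_sect_gen}R})$ in the final bound comes from the conclusion of that theorem, with radius decaying in $R$, and has nothing to do with $\delta$. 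You would need to replace your second paragraph by this (or an equivalent) argument for the proof to go through.
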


\begin{proof}
	Let $\useconr{r:thin_sect_gen} = \useconr{r:thin_sect_gen}(g) > 0$ be as in Theorem \ref{theo:thin_sector_general} and $r = r(g) := \useconr{r:thin_sect_gen}/4 > 0$. Consider $\useconr{r:sect_prelim}  =  \useconr{r:sect_prelim} (g) := r > 0 $. Fix $\mathcal{K} \subseteq \mathcal{T}_g$ compact, $\mathcal{U} \subseteq \pmf$ measurable, $0 < \delta < \useconr{r:sect_prelim} $, $X,Y \in \mathcal{K}$, and $Y' \in \mathcal{T}_g$ with $d_\mathcal{T}(Y,Y') <\delta$. Denote $V :=   [\Re]^{-1}(\partial \mathcal{B}) \cap S(X)$. By the triangle inequality and the $\mcg$ invariance of $d_{\mathcal{T}}$, for every $\mc \in \mcg$ and every $R > 0$, if $\mc.Y \in B_R(X) \cap \mathrm{Sect}_{\mathcal{U}}(X)$ but $\mc.Y' \notin B_{R+\delta}(X) \cap \mathrm{Sect}_{\mathcal{U}}(X)$, then
	\begin{equation}
	\mc.Y' \in \mathrm{Nbhd}_{r}(B_{R+r}(X) \cap \mathrm{Sect}_{V}(X) \cap \mcg \cdot \mathrm{Nbhd}_{r}(\mathcal{K})). \label{eq:pic}
	\end{equation}
	See Figure \ref{fig:bd_0}. Let $\usecond{d:embed} = \usecond{d:embed}(\mathcal{K}) > 0$ be as in Lemma \ref{lem:embedded_balls} and $r' = r'(\mathcal{K}):= \min\{\usecond{d:embed}/2,r\} > 0$. Consider Teichmüller metric balls of radius $r'$ centered at every point $\mathbf{g}.Y'$ satisfying (\ref{eq:pic}). It follows from (\ref{eq:pic}) and Lemma \ref{lem:embedded_balls} that
	\begin{equation}
	F_R(X,Y,\mathcal{U}) - F_{R+\delta}(X,Y',\mathcal{U}) \preceq_{\mathcal{K}} \mathbf{m}(\mathrm{Nbhd}_{2r}(B_{R+r}(X) \cap \text{Sect}_{V}(X) \cap \mathrm{Mod}_g \cdot \mathrm{Nbhd}_{r}(\mathcal{K}))). \label{eq:w1}
	\end{equation}
	As $0 < 2r < \useconr{r:thin_sect_gen}$, Theorem \ref{theo:thin_sector_general} ensures
	\begin{equation}
	\mathbf{m}(\mathrm{Nbhd}_{2r}(B_{R+r}(X) \cap \text{Sect}_{V}(X) \cap \mathrm{Mod}_g \cdot \mathrm{Nbhd}_{r}(\mathcal{K}))) \preceq_{\mathcal{K}} \overline{\nu}(\Re(V(\useconn{n:thin_sect_gen} e^{-\useconk{k:thin_sect_gen} R}))) \cdot e^{hR} + e^{(h-\useconk{k:thin_sect_gen})R}. \label{eq:w2}
	\end{equation}
	Putting together (\ref{eq:w1}) and (\ref{eq:w2}) we conclude
	\[
	F_R(X,Y,\mathcal{U}) - F_{R+\delta}(X,Y',\mathcal{U}) \preceq_{\mathcal{K}} \overline{\nu}(\Re(V(\useconn{n:thin_sect_gen} e^{-\useconk{k:thin_sect_gen} R}))) \cdot e^{hR} + e^{(h-\useconk{k:thin_sect_gen})R}. \qedhere
	\]
\end{proof}

\begin{figure}[h!]
	\centering
	\includegraphics[width=.25\textwidth]{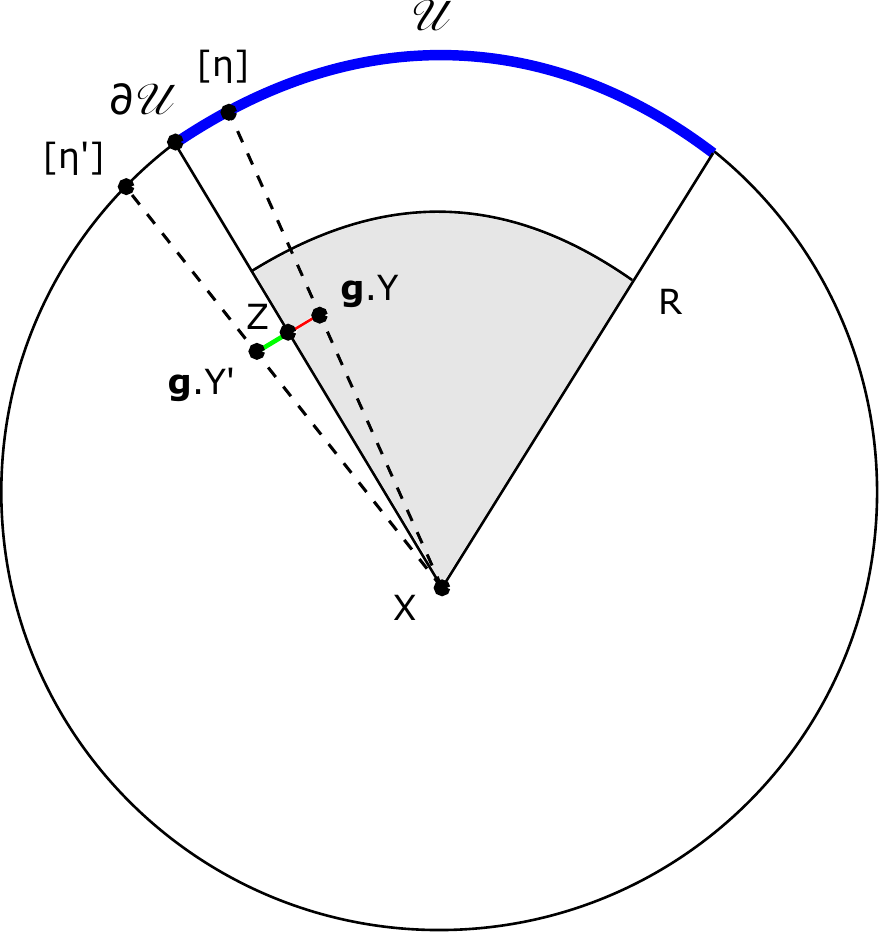}
	\caption{Picture of (\ref{eq:pic}). Straight lines represent Teichmüller geodesics. The green segment represents a piecewise smooth path of Teichmüller length $< r$ joining $\mc.Y'$ to a point $Z \in B_{R+r}(X) \cap \mathrm{Sect}_{V}(X) \cap \mcg \cdot \mathrm{Nbhd}_{r}(\mathcal{K})$.} \label{fig:bd_0}
\end{figure}

Let $X \in \mathcal{T}_g$, $\mathcal{B} \subseteq \pmf$ a cube, and $V := [\Re]^{-1}(\partial \mathcal{B}) \cap S(X)$. To finish the proof of Proposition \ref{prop:sector_comparison_1} we need to estimate $\overline{\nu}(\Re(V(\delta)))$ for $\delta > 0$ sufficiently small. Recall that $\|\cdot \|_E$ denotes the family of fiberwise norms on $T\qut$ inducing the Euclidean metric. The following result is the main tool needed to prove the desired bound on $\overline{\nu}(\Re(V(\delta)))$; the proof uses the notation introduced in \S 8.

\begin{proposition}
	\label{prop:Lipschitz_metric}
	The map $[\Re] \colon \qut(\mathbf{1}) \to \pmf$ is locally Lipschitz in the following sense. 	Let $\mathcal{K} \subseteq \mathcal{T}_g$ be a compact subset and $\mathcal{\tau}$ be a maximal train track on $S_g$. Suppose $q \in \pi^{-1}(\mathcal{K}) \cap \qutp$ and $v \in T_q \qut(\mathbf{1})$ are such that $d \Re_q v \in TU(\tau)$. Then,
	\[
	\| d[\Re]_q v \|_{\tau} \preceq_{\mathcal{K},\tau} \|v\|_E.
	\]	 
\end{proposition}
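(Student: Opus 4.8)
The plan is to reduce the local Lipschitz estimate for $[\Re]$ to the already-established local Lipschitz estimate for $\Re$ itself, controlling the extra error coming from passing to the projectivization. The map $[\Re] \colon \qutp \to \pmf$ factors as $q \mapsto \Re(q) \mapsto [\Re(q)]$, where $\Re|_{\qutp} \colon \qutp \to \mf$ is piecewise $\mathcal{C}^1$ \cite[Lemma 4.3]{Mir08a} and the projectivization map $\mf \setminus \{0\} \to \pmf$ is smooth away from the origin. So $d[\Re]_q = dp_{\Re(q)} \circ d\Re_q$, where $p \colon \mf \setminus\{0\} \to \pmf$ is the projectivization. The goal is to bound $\|d[\Re]_q v\|_\tau$ in terms of $\|v\|_E$; I will do this by first bounding $\|d\Re_q v\|_{\tau'}$ in a suitable train track chart $\tau'$ adapted to $q$, and then bounding the norm of $dp$ between the relevant fibered norms.

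\textbf{Key steps.} First I would invoke Lemma \ref{lem:finite_tt}: for the compact set $\pi^{-1}(\mathcal{K})$ there is a finite collection $\{\tau_i\}_{i=1}^n$ of maximal train tracks so that for every $q \in \pi^{-1}(\mathcal{K}) \cap \qutp$, the foliation $\Re(q)$ is carried by some $\tau_i$ with counting measures on edges equal to absolute values of real parts of holonomies of saddle connections of a Delaunay triangulation of $q$. Working in the train track coordinate chart $\Phi_{\tau_i} \colon U(\tau_i) \to V(\tau_i) \subseteq (\mathbf{R}_{\geq 0})^{18g-18}$, each coordinate of $\Re(q)$ is $|\Re(e')|$ for an edge $e'$ of the Delaunay triangulation, which depends on $q$ in a manner that, by Lemma \ref{lem:euc_fund_prop} and the definition of the Euclidean metric, has derivative (in $q$) comparable to $\|v\|_E$ up to a constant depending on $\mathcal{K}$ and $\tau_i$; this is essentially the computation already carried out in the proof of Proposition \ref{prop:derivative bound}, now phrased with $\|v\|_E$ in place of $1$. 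This gives $\|d\Re_q v\|_{\tau_i} \preceq_{\mathcal{K},\tau_i} \|v\|_E$, and since transition maps between train track charts are piecewise linear, $\|d\Re_q v\|_{\tau} \preceq_{\mathcal{K},\tau} \|v\|_E$ for the given $\tau$ whenever $d\Re_q v \in TU(\tau)$.

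Next I would handle the projectivization. As in the end of the proof of Proposition \ref{prop:derivative bound}: for $q \in \pi^{-1}(\mathcal{K})$, the extremal length condition $\mathrm{Ext}_{\pi(q)}(\Re(q)) = 1$, together with positivity and continuity of $\mathrm{Ext}$ on the projectively compact set $U(\tau) = V(\tau)$, gives uniform two-sided bounds (depending only on $\mathcal{K}$ and $\tau$) on the $L^1$-norm of $\Re(q) \in V(\tau)$. The radial rescaling map $V(\tau) \setminus\{0\} \to \VV(\tau)$ sending a point to its $L^1$-normalization therefore has derivative bounded uniformly in $\mathcal{K}$ and $\tau$ on this region, and post-composing with the orthogonal projection $T\VV(\tau) \to TP U(\tau)$ (which I identify with $\VV(\tau)$ and its tangent spaces) only decreases norms. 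Composing, $\|d[\Re]_q v\|_\tau \preceq_{\mathcal{K},\tau} \|d\Re_q v\|_\tau \preceq_{\mathcal{K},\tau} \|v\|_E$, which is exactly the claimed estimate.

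\textbf{Main obstacle.} The delicate point is step one: making precise that the coordinates of $\Re(q)$ in train track charts — i.e., $|\Re(e')|$ for Delaunay edges $e'$ — are Lipschitz in $q$ with respect to the Euclidean metric $d_E$, uniformly over $\pi^{-1}(\mathcal{K})$. The subtlety is that the Delaunay triangulation, and hence which edges $e'$ index the coordinates, jumps as $q$ varies, and an edge can degenerate to a vertical saddle connection where $|\Re(e')|$ is not differentiable. This is handled exactly as in the proof of Proposition \ref{prop:systole_euclidean}: locally near $q$ one has a finite set of saddle connections (the short Delaunay edges, by Lemma \ref{lem:delaunay}) whose real parts of holonomy are given by integer-linear functionals in period coordinates, so each is $\mathcal{C}^1$ except at its own vanishing locus, the absolute value is Lipschitz with the same constant, and the minimum/maximum over finitely many of them is still Lipschitz. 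Lemma \ref{lem:euc_fund_prop} then converts the period-coordinate bound into a $\|\cdot\|_E$ bound, and finiteness of the collection $\{\tau_i\}$ makes the implicit constant uniform over $\mathcal{K}$. Since this machinery is entirely in place from Sections 5 and 7, I expect the proof to be short.
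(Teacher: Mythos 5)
Your proposal is correct and follows essentially the same route as the paper's proof: bound $\|d\Re_q v\|_{\tau_i}$ via the train tracks of Lemma \ref{lem:finite_tt}, whose counting measures are absolute values of real parts of holonomies of Delaunay edges, combined with Lemma \ref{lem:euc_fund_prop}, pass to $\tau$ by piecewise linearity of transition maps, and then control the projectivization using the normalization $\mathrm{Ext}_{\pi(q)}(\Re(q)) = 1$ to bound the rescaling to $\VV(\tau)$ and the norm-decreasing projection to $T\VV(\tau)$. Your extra discussion of the jumping Delaunay triangulation and the non-smoothness of $|\Re(e')|$ at vertical edges is a more explicit treatment of a point the paper handles tersely, but it is not a different argument.
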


\begin{proof}
	Fix $\mathcal{K} \subseteq \mathcal{T}_g$ compact and $\tau$ a maximal train track on $S_g$. Let $q \in \pi^{-1}(\mathcal{K}) \cap \qutp$ and $v \in T_q \qut(\mathbf{1})$ be such that $d\Re_q v \in TU(\tau)$. Consider the finite collection of maximal train tracks $\{\tau_i\}_{i=1}^n$ on $S_g$ provided by Lemma \ref{lem:finite_tt}. The explicit construction of these train tracks ensures that $d_q \Re(v) \in TU(\tau_i)$ for some $i \in \{1,\dots,n\}$. Moreover, as the counting measures on the edges of $\tau_i$ correspond to the absolute value of the real part of the holonomy of the edges the corresponding  Delaunay triangulation, Lemma \ref{lem:euc_fund_prop} ensures
	\[
	\| d\Re_q v\|_{\tau_i} \preceq_{\mathcal{K}} \|v\|_E.
	\]	 
	As transition maps between train track coordinate charts are piecewise linear,
	\[
	\| d\Re_q v \|_{\tau} \preceq_{\mathcal{K},\tau} \|v\|_E. 
	\]	
	Recall that, for every $q \in \qut$, $\mathrm{Ext}_{\pi(q)}(\Re(q)) = 1$. This condition provides uniform lower and upper bounds depending only on $\mathcal{K}$ on the $L^1$-norm of points $\Re(q) \in U(\tau) = V(\tau)$ for $q \in\pi^{-1}(\mathcal{K})$. The map which scales points in $V(\tau)$ to $\overline{V}(\tau)$ has derivatives bounded uniformly in terms of $\mathcal{K}$ under these conditions. Projecting a vector on $\overline{V}(\tau)$ to $T\overline{V}(\tau)$ can only reduce its norm. The proposition follows.
\end{proof}

Using Proposition \ref{prop:Lipschitz_metric} we prove the following bound.

\newcond{d:cube_thick}
\begin{proposition}
	\label{prop:cube_thickening}
	For every $\mathcal{K} \subseteq \mathcal{T}_g$ compact there exists a constant $\usecond{d:cube_thick} =  \usecond{d:cube_thick}(\mathcal{K}) > 0$ with the following property. Let $X \in \mathcal{K}$, $\mathcal{B} \subseteq \pmf$ a cube, and $V :=   [\Re]^{-1}(\partial \mathcal{B}) \cap S(X)$. Then, for every $0 < \delta < \usecond{d:cube_thick}$,
	\[
	\overline{\nu}(\Re(V(\delta)) \preceq_{\mathcal{K},\mathrm{DT}} \delta.
	\]
\end{proposition}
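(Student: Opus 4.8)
The plan is to reduce the bound on $\overline{\nu}(\Re(V(\delta)))$ to a Lebesgue volume estimate for a neighborhood of the boundary $\partial\mathcal{B}$ of a cube inside $\pmf = \mathbf{S}^{6g-7}$, carried out in Dehn-Thurston coordinates. First I would record the key geometric input: $V := [\Re]^{-1}(\partial\mathcal{B})\cap S(X)$, and if $q_1 \in V(\delta)$ then there is $q_2 \in V$ on the same strongly unstable leaf as $q_1$ with $d_H(q_1,q_2) < \delta$. I would like to conclude that $[\Re(q_1)]$ is within Euclidean distance $\asymp_{\mathcal{K}}\delta$ of $\partial\mathcal{B}$; this is where the Lipschitz estimate of Proposition \ref{prop:Lipschitz_metric} enters, after converting $d_H$ to $d_E$ via Theorem \ref{theo:euclidean_hodge_1} (using $\delta < 1$ and absorbing the harmless $|\log\delta|^{1/2}$ factor into a slightly larger power of $\delta$, or more simply by first shrinking $\usecond{d:cube_thick}$ and noting $d_E \preceq d_H$ in one direction suffices once we argue along the leaf). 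Concretely: a short path from $q_2$ to $q_1$ of $d_H$-length $<\delta$ has $d_E$-length $\preceq_{\mathcal{K}}\delta$ once we stay in $\pi^{-1}(\mathrm{Nbhd}_1(X))$ (Corollary \ref{cor:euclidean_hodge_teichmuller_1}), and pushing this path forward under $[\Re]$ and applying Proposition \ref{prop:Lipschitz_metric} edge by edge over a finite train-track cover shows $[\Re(q_1)]$ lies within $\preceq_{\mathcal{K},\mathrm{DT}}\delta$ of $[\Re(q_2)]\in\partial\mathcal{B}$ in the Riemannian metric on $\mathbf{S}^{6g-7}$.

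Next I would translate the measure $\overline{\nu}$ of $\Re(V(\delta))$ into a volume estimate. Since $V(\delta)\subseteq S(X')$-type sets need not all lie over $X$, I would instead bound $\Re(V(\delta))$ as a subset of $\mathcal{MF}_g$ contained in $[0,C]\cdot \mathcal{N}$, where $C = C(\mathcal{K})$ controls the transverse-measure scale of foliations arising from quadratic differentials over $\mathrm{Nbhd}_1(\mathcal{K})$ (using $\mathrm{Ext}_{\pi(q)}(\Re(q))=1$ and continuity/positivity of extremal length on the projectively compact cube), and $\mathcal{N}\subseteq\pmf$ is the $\preceq_{\mathcal{K},\mathrm{DT}}\delta$-neighborhood of $\partial\mathcal{B}$ established in the previous paragraph. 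Then the scaling property of the Thurston measure ($\nu(t\cdot A) = t^{6g-6}\nu(A)$, so $\overline{\nu}([0,C]\cdot\mathcal{N}) \preceq_{\mathcal{K}} \nu([0,1]\cdot\mathcal{N})$, recalling $h = 6g-6$) gives $\overline{\nu}(\Re(V(\delta))) \preceq_{\mathcal{K},\mathrm{DT}} \nu([0,1]\cdot\mathcal{N})$. Since in Dehn-Thurston coordinates $\nu$ is a constant multiple of Lebesgue measure on $\mathbf{R}^{6g-6}$, and $[0,1]\cdot\mathcal{N}$ is the truncated Euclidean cone over the $O_{\mathrm{DT}}(\delta)$-neighborhood of $\partial\mathcal{B}$ in $\mathbf{S}^{6g-7}$, its Lebesgue volume is $\preceq_{\mathrm{DT}} \mathrm{vol}_{\mathbf{S}^{6g-7}}(\mathcal{N}) \preceq_{\mathrm{DT}} \delta$, because $\partial\mathcal{B}$ is a finite union of codimension-one pieces of bounded geometry (boundary facets of a cube), so an $\epsilon$-neighborhood of it has surface volume $\preceq_{\mathrm{DT}}\epsilon$. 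Combining these inequalities yields $\overline{\nu}(\Re(V(\delta))) \preceq_{\mathcal{K},\mathrm{DT}} \delta$ for all $0 < \delta < \usecond{d:cube_thick}$, where $\usecond{d:cube_thick}$ is chosen smaller than all the thresholds used ($\usecond{d:hodge_teich}$, $\usecond{euc_teich}$, and whatever is needed to keep $|\log\delta|^{1/2}$ under control), as required.

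The main obstacle I anticipate is the bookkeeping in the first paragraph: carefully justifying that the Lipschitz bound of Proposition \ref{prop:Lipschitz_metric}, which is stated pointwise for tangent vectors with $d\Re_q v \in TU(\tau)$, integrates along a short $\mathcal{F}^{uu}$-leaf path to a genuine Lipschitz-distance bound on $[\Re]$, while simultaneously handling the train-track chart transitions (piecewise linear, hence locally bi-Lipschitz with $\mathcal{K},\mathrm{DT}$-controlled constants) and the fact that the path may cross finitely many chart boundaries. This is essentially the same mechanism as in the proof of Proposition \ref{prop:derivative bound} / Corollary \ref{cor:orbit_functions}, applied to unstable-leaf directions instead of $\mathrm{SO}(2)$-orbit directions, so I would invoke the finite train-track cover from Lemma \ref{lem:finite_tt} and argue along a subdivision of the path into chart-contained sub-segments. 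Everything else — the extremal-length scale bound, the Thurston-measure scaling, and the elementary surface-volume estimate for a neighborhood of $\partial\mathcal{B}$ — is routine once this Lipschitz-along-leaves statement is in hand.
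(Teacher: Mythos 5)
There is a genuine gap: your argument implicitly assumes that the short $\mathcal{F}^{uu}$-leaf path joining $q_1\in V(\delta)$ to a point $q_2\in V$ stays inside the principal stratum, because Proposition \ref{prop:Lipschitz_metric} (and the whole Delaunay/train-track mechanism behind it, Lemmas \ref{lem:delaunay}, \ref{lem:finite_tt}, \ref{lem:euc_fund_prop}) is only available for quadratic differentials in $\qutp$. But $V=[\Re]^{-1}(\partial\mathcal{B})\cap S(X)$ contains points in and arbitrarily close to the multiple zero locus, and the connecting path may cross that locus; there the map $[\Re]$ is merely continuous, not piecewise $\mathcal{C}^1$, and no Lipschitz control of the form $\| d[\Re]_q v\|_\tau \preceq \|v\|_E$ is claimed or true in the paper. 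So your conclusion that every $q_1\in V(\delta)$ satisfies $[\Re(q_1)]\in \mathrm{Nbhd}_{C\delta}(\partial\mathcal{B})$ is unjustified, and the offending set cannot be dismissed as negligible: its $\Re$-image has $\overline{\nu}$-measure of order $\delta$, i.e.\ the same order as the bound you are proving, so it must be estimated, not ignored.

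The paper's proof fixes exactly this point by a dichotomy on the connecting path $\rho$ with $\ell_E(\rho)\le 2\delta$: either $\rho$ stays in $\qutp$, in which case your Lipschitz argument (this part of your proposal is essentially the paper's) places $[\Re(q_1)]$ in $\mathrm{Nbhd}_{C\delta}(\partial\mathcal{B})$; or $\rho$ meets the multiple zero locus, in which case Proposition \ref{prop:systole_euclidean} forces $\ell_{\min}(q_1)\preceq_{\mathcal K}\delta$, so $q_1\notin p^{-1}(K_{C\delta}(\mathbf{1}))$, and that thin-part contribution is then bounded by $\preceq_{\mathcal K}\delta$ via Proposition \ref{prop:small_thu_measure}. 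You would need to add this second branch (or some substitute for it) for the proof to go through. Your remaining steps are fine and match the paper: $V(\delta)\subseteq\pi^{-1}(\mathrm{Nbhd}_1(\mathcal{K}))$ via Corollary \ref{cor:euclidean_hodge_teichmuller_1}, the extremal-length bound confining $\Re(V(\delta))$ to a bounded region of $\mf=\mathbf{R}^{6g-6}$, and the Lebesgue estimate for a $\delta$-neighborhood of $\partial\mathcal{B}$ in Dehn--Thurston coordinates; also note that only the inequality $d_E\preceq_{\mathcal K} d_H$ from Theorem \ref{theo:euclidean_hodge_1} is needed, so the $|\log\delta|^{1/2}$ factor you worry about never appears.
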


\begin{proof}
	Fix $\mathcal{K} \subseteq \mathcal{T}_g$ compact. Let $\usecond{d:hodge_teich} = \usecond{d:hodge_teich}(\mathcal{K}) >0$ be as in Corollary \ref{cor:euclidean_hodge_teichmuller_1}. Fix $X \in \mathcal{K}$, $\mathcal{B} \subseteq \pmf$ a cube, and $0 < \delta < \usecond{d:hodge_teich}$. Denote $V :=   [\Re]^{-1}(\partial \mathcal{B}) \cap S(X)$. By Corollary \ref{cor:euclidean_hodge_teichmuller_1},
	\begin{equation}
	\label{eq:bound_1}
	V(\delta) \subseteq \pi^{-1}(\mathrm{Nbhd}_1(\mathcal{K})).
	\end{equation}   
	Recall that we endow $\pmf := \mathbf{S}^{6g-7}$ with the restriction of the Riemannian Euclidean metric. Denote by $\mathrm{Nbhd}_{\delta}(\partial \mathcal{B}) \subseteq \pmf$ the set of points at distance at most $\delta$ from $\partial \mathcal{B}$. We claim that, for some constant $C = C(\mathcal{K}) > 0$ depending only on $\mathcal{K}$,
	\begin{equation}
	\label{eq:bound_2}
	V(\delta) \subseteq (\qut(\mathbf{1}) \backslash p^{-1}(K_{C \delta}(\mathbf{1}))) \cup [\Re]^{-1}(\mathrm{Nbhd}_{C\delta}(\partial \mathcal{B})).
	\end{equation}
	
	Indeed, let $q_1 \in V$ and $q_2 \in \qut$ on the same leaf of $\mathcal{F}^{uu}$ as $q_1$ with $d_H(q_1,q_2) \leq \delta$. Consider a piecewise smooth path $\rho \colon [0,1] \to \qut$ such that $\rho(0) = q_1$, $\rho(1) = q_2$, and $\ell_E(\rho) \leq 2\delta$, where, we recall, $\ell_E(\rho) > 0$ denotes the length of $\rho$ with respect to the Euclidean metric. Two things can happen: either $\rho$ intersects the multiple zero locus at some point or $\rho$ remains in the principal stratum at all times. In the first case, Proposition \ref{prop:systole_euclidean} ensures $q_2 \in p^{-1}(K_{C'\delta}(\mathbf{1}))$ for some constant $C' = C'(\mathcal{K}) > 0$ depending only on $\mathcal{K}$. In the second case, Proposition \ref{prop:Lipschitz_metric} ensures $q_2 \in [\Re]^{-1}(\mathrm{Nbhd}_{C'' \delta}(\partial \mathcal{B}))$ for some constant $C'' = C''(\mathcal{K}) > 0$ depending only on $\mathcal{K}$. Letting $C = C(\mathcal{K}) := \max \{C',C''\} > 0$ proves the claim.
	
	From (\ref{eq:bound_1}) and (\ref{eq:bound_2}) we deduce
	\begin{equation*}
	\label{eq:bound_3}
	V(\delta) \subseteq (\pi^{-1}(\mathrm{Nbhd}_1(\mathcal{K}))\backslash p^{-1}(K_{C \delta}(\mathbf{1}))) \cup (\pi^{-1}(\mathrm{Nbhd}_1(\mathcal{K})) \cap[\Re]^{-1}(\mathrm{Nbhd}_{C\delta}(\partial \mathcal{B}))).
	\end{equation*}
	In particular,
	\begin{equation}
	\label{eq:bound_4}
	\overline{\nu}(\Re(V(\delta))) \leq \overline{\nu}(\Re(\pi^{-1}(\mathrm{Nbhd}_1(\mathcal{K})) \backslash p^{-1}(K_{C\delta}(\mathbf{1}))) + \overline{\nu}(\Re( \pi^{-1}(\mathrm{Nbhd}_1(\mathcal{K})) \cap [\Re]^{-1}(\mathrm{Nbhd}_{C\delta}(\partial \mathcal{B}))).
	\end{equation}
	By Proposition \ref{prop:small_thu_measure},
	\begin{equation}
	\label{eq:bound_5}
	\overline{\nu}(\Re(\pi^{-1}(\mathrm{Nbhd}_1(\mathcal{K})) \backslash p^{-1}(K_{C\delta}(\mathbf{1}))) \preceq_{\mathcal{K}} \delta.
	\end{equation}
	Denote by $\| \cdot \|$ the Euclidean norm on $\mf = \mathbf{R}^{6g-6}$. Recall that, for every $q \in \qut$, $\mathrm{Ext}_{\pi(q)}(\Re(q)) = 1$. As $\pmf$ is compact and as the function $\mathrm{Ext} \colon \mf \times \tt \to \mathbf{R}_{>0}$ given by $\mathrm{Ext}(\eta,X) := \mathrm{Ext}_\eta(X)$ is positive and continuous, there exists a constant $D = D(\mathcal{K},\mathrm{DT}) > 0$ depending only on $\mathcal{K}$ and the choice of Dehn-Thurston coordinates such that 
	\begin{gather}
	\Re(\pi^{-1}(\mathrm{Nbhd}_1(\mathcal{K})) \cap [\Re]^{-1}(\mathrm{Nbhd}_{C\delta}(\partial \mathcal{B})))
	 \subseteq \{\eta \in \mf \ | \ \|\eta \| \leq D,  \ [\eta] \in \mathrm{Nbhd}_{C\delta}(\partial \mathcal{B}) \}. \label{eq:quick}
	\end{gather}
	As the Thurston measure $\nu$ on $\mf = \mathbf{R}^{6g-6}$ is equal, up to a multiplicative constant, to the Lebesgue measure in Dehn-Thurston coordinates, (\ref{eq:quick}) implies
	\begin{equation}
	\label{eq:bound_6}
	\overline{\nu}(\Re(\pi^{-1}(\mathrm{Nbhd}_1(\mathcal{K})) \cap [\Re]^{-1}(\mathrm{Nbhd}_{C\delta}(\partial \mathcal{B})))) \preceq_{\mathcal{K},\mathrm{DT}} \delta.
	\end{equation}
	Putting together (\ref{eq:bound_4}), (\ref{eq:bound_5}), and (\ref{eq:bound_6}) we conclude
	\[
	\overline{\nu}(\Re(V(\delta)) \preceq_{\mathcal{K},\mathrm{DT}} \delta. \qedhere
	\]
\end{proof}

We are now ready to prove Proposition \ref{prop:sector_comparison_1}.

\begin{proof}[Proof of Proposition \ref{prop:sector_comparison_1}]
	Let $\useconr{r:sect_prelim}  =  \useconr{r:sect_prelim} (g)  > 0$, $\useconn{n:sect_prelin} = \useconn{n:sect_prelin}(g) > 0$, and $\useconk{k:sect_prelim} = \useconk{k:sect_prelim}(g) > 0$ be as in Proposition \ref{prop:sector_diff_prelim_3}. Fix  $\mathcal{K} \subseteq \mathcal{T}_g$ compact.  Let $\usecond{d:cube_thick} = \usecond{d:cube_thick}(\mathcal{K}) > 0$ be as in Proposition \ref{prop:cube_thickening}. Denote $R_0 = R_0(\mathcal{K}) := \max\{(-1/\useconk{k:sect_prelim}) \log(\usecond{d:cube_thick}/\useconn{n:sect_prelin}),0\} \geq 0$ so that $\useconn{n:sect_prelin} e^{-\useconk{k:sect_prelim}R} < \usecond{d:cube_thick}$ for every $R > R_0$. Fix $\mathcal{B} \subseteq \pmf$ a cube, $0 < \delta < \useconr{r:sect_prelim} $, $X,Y \in \mathcal{K}$, and $Y' \in \mathcal{T}_g$ with $d_\mathcal{T}(Y,Y') < \delta$. By Proposition \ref{prop:sector_diff_prelim_3},
	\begin{equation}
	F_R(X,Y,\mathcal{B}) - F_{R+\delta}(X,Y',\mathcal{B}) 
	\preceq_{\mathcal{K}} \overline{\nu}(\Re(V(\useconn{n:sect_prelin} e^{-\useconk{k:sect_prelim}R}))) \cdot e^{hR} + e^{(h-\useconk{k:sect_prelim})R}, \label{eq:h1}
	\end{equation}
	For every $R > R_0$, Proposition \ref{prop:cube_thickening} ensures
	\begin{equation}
	\overline{\nu}(\Re(V(\useconn{n:sect_prelin} e^{-\useconk{k:sect_prelim}R})) \preceq_{\mathcal{K},\mathrm{DT}} e^{-\useconk{k:sect_prelim}R}. \label{eq:h2}
	\end{equation}
	Putting together (\ref{eq:h1}) and (\ref{eq:h2})  we deduce that, for every $R > R_0$,
	\[
	F_R(X,Y,\mathcal{B}) - F_{R+\delta}(X,Y',\mathcal{B}) 
	\preceq_{\mathcal{K}}  e^{(h-\useconk{k:sect_prelim})R}.
	\]
	The same bound holds for every $R > 0$ by increasing the implicit constant.
\end{proof}

\subsection*{Varying the origin of sector counts.} We now study how the counting functions $F_R(X,Y,\mathcal{\mathcal{B}})$ vary as we move $X$ in a small neighborhood of $\mathcal{T}_g$. More specifically, we prove the following bound.

\newcond{d:sect_comp_2} \newconk{k:sect_comp_2}  
\begin{proposition}
	\label{prop:sector_comparison_2}
	For every compact subset $\mathcal{K} \subseteq \mathcal{T}_g$ there exists a constant $\usecond{d:sect_comp_2} = \usecond{d:sect_comp_2}(\mathcal{K}) > 0$ with the following property. Let $\mathcal{B} \subseteq \pmf$ a cube, $0 < \delta < \usecond{d:sect_comp_2}$, $X,Y \in \mathcal{K}$, and $X' \in \mathcal{T}_g$ with $d_\mathcal{T}(X,X') < \delta$. Then, for every $R > 0$,
	\begin{align*}
	F_R(X,Y,\mathcal{B}) - F_{R+\delta}(X',Y,\mathcal{B}) 
	\preceq_{\mathcal{K},\mathrm{DT}} e^{(h-\useconk{k:sect_comp_2})R},
	\end{align*}
	where $\useconk{k:sect_comp_2}  = \useconk{k:sect_comp_2}(g) > 0$ is a constant depending only on $g$.
\end{proposition}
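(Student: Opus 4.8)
\textbf{Proof proposal for Proposition \ref{prop:sector_comparison_2}.} The plan is to reduce this statement, which concerns varying the \emph{origin} $X$ of the sector count, to the already-established bound on varying the \emph{endpoint}, namely Proposition \ref{prop:sector_comparison_1}, together with a thin-sector estimate in the spirit of Proposition \ref{prop:sector_diff_prelim_3}. The key difficulty, and the reason this case is genuinely harder than Proposition \ref{prop:sector_comparison_1}, is that moving the basepoint $X$ changes \emph{both} the Teichm\"uller ball $B_R(X)$ \emph{and} the sector direction map $Y\mapsto [\Re(q_s(X,Y))]$ in a coupled way: a mapping class $\mc$ with $\mc.Y\in B_R(X)\cap\mathrm{Sect}_\mathcal{B}(X)$ may fail to satisfy $\mc.Y\in B_{R+\delta}(X')\cap\mathrm{Sect}_\mathcal{B}(X')$ either because $\mc.Y$ escaped the slightly larger ball (impossible, since $d_\mathcal{T}(X,X')<\delta$ and the triangle inequality gives $\mc.Y\in B_{R+\delta}(X')$ automatically) or because the geodesic direction $q_s(X',\mc.Y)$, viewed from the new basepoint, crossed the boundary $\partial\mathcal{B}$. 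So the only real loss is through the second mechanism, and we need to show the set of such $\mc.Y$ has exponentially small mass.

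First I would set up the counting: fix $\mathcal{K}\subseteq\mathcal{T}_g$ compact, let $\usecond{d:sect_comp_2}=\usecond{d:sect_comp_2}(\mathcal{K})>0$ be chosen below, fix a cube $\mathcal{B}$, $0<\delta<\usecond{d:sect_comp_2}$, $X,Y\in\mathcal{K}$, and $X'$ with $d_\mathcal{T}(X,X')<\delta$. For each $\mc\in\mcg$ with $\mc.Y\in B_R(X)\cap\mathrm{Sect}_\mathcal{B}(X)$ but $\mc.Y\notin B_{R+\delta}(X')\cap\mathrm{Sect}_\mathcal{B}(X')$, the triangle inequality forces $[\Re(q_s(X',\mc.Y))]\notin\mathcal{B}$ while $[\Re(q_s(X,\mc.Y))]\in\mathcal{B}$. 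The next step is to show that when $X$ and $X'$ are $\delta$-close and $\mc.Y$ lies at Teichm\"uller distance $\asymp R$ from both, the two projectivized foliation directions $[\Re(q_s(X,\mc.Y))]$ and $[\Re(q_s(X',\mc.Y))]$ are within $\preceq_\mathcal{K} e^{-c R}\cdot(\text{something})$ of each other in the $\pmf$ metric, \emph{provided} the connecting geodesics stay in a large compact part of $\qum$; this is exactly the content that can be extracted from the exponential contraction of the modified Hodge metric along stable/unstable leaves (Theorems \ref{theo:euclidean_hodge_1}, \ref{theo:hodge_distance_decay_stable}, \ref{theo:hodge_distance_decay_unstable}) combined with the local Lipschitz property of $[\Re]$ (Proposition \ref{prop:Lipschitz_metric}): perturbing the basepoint by $\delta$ perturbs the initial unit tangent direction by $O(\delta)$ in the Euclidean metric, and this perturbation is \emph{contracted} as one flows forward toward the far endpoint, so after time $\asymp R$ the effect on the direction seen from the far end is $O(\delta e^{-cR})$, hence $O(e^{-cR})$. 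Consequently every offending $\mc.Y$ lies in a sector, centered at $X$, cut out by the thin neighborhood $[\Re]^{-1}(\mathrm{Nbhd}_{C e^{-cR}}(\partial\mathcal{B}))\cap S(X)$, intersected with $B_{R+O(\delta)}(X)\cap\mcg\cdot\mathrm{Nbhd}_1(\mathcal{K})$ --- exactly the kind of set controlled by Theorem \ref{theo:thin_sector_general}.

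With that geometric reduction in place, the endgame mirrors the proof of Proposition \ref{prop:sector_diff_prelim_3}: pass to Teichm\"uller balls of small radius $r'=r'(\mathcal{K})$ around the points $\mc.Y'$ (here $\mc.Y$) and invoke Lemma \ref{lem:embedded_balls} to convert the count into an $\mathbf{m}$-measure, getting
\[
F_R(X,Y,\mathcal{B})-F_{R+\delta}(X',Y,\mathcal{B})\preceq_{\mathcal{K}}\mathbf{m}\bigl(\mathrm{Nbhd}_{2r}(B_{R+O(1)}(X)\cap\mathrm{Sect}_{V_R}(X)\cap\mcg\cdot\mathrm{Nbhd}_r(\mathcal{K}))\bigr),
\]
where $V_R:=[\Re]^{-1}(\mathrm{Nbhd}_{Ce^{-cR}}(\partial\mathcal{B}))\cap S(X)$. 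Applying Theorem \ref{theo:thin_sector_general} bounds this by $\overline{\nu}(\Re(V_R(\useconn{n:thin_sect_gen}e^{-\useconk{k:thin_sect_gen}R})))\cdot e^{hR}+e^{(h-\useconk{k:thin_sect_gen})R}$, and then Proposition \ref{prop:cube_thickening} (for $R$ large enough that $Ce^{-cR}+\useconn{n:thin_sect_gen}e^{-\useconk{k:thin_sect_gen}R}<\usecond{d:cube_thick}$, absorbing small $R$ into the implicit constant) gives $\overline{\nu}(\Re(V_R(\cdots)))\preceq_{\mathcal{K},\mathrm{DT}}e^{-c''R}$, whence the whole difference is $\preceq_{\mathcal{K},\mathrm{DT}}e^{(h-\useconk{k:sect_comp_2})R}$ with $\useconk{k:sect_comp_2}:=\min\{c'',\useconk{k:thin_sect_gen}\}>0$. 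One also needs a symmetric lower bound $F_{R+\delta}(X',Y,\mathcal{B})-F_R(X,Y,\mathcal{B})\preceq_{\mathcal{K},\mathrm{DT}}e^{(h-\useconk{k:sect_comp_2})R}$, which follows by the same argument with the roles of $X$ and $X'$ interchanged (noting $d_\mathcal{T}(X,X')<\delta$ is symmetric and $X'$ lies in the compact set $\mathrm{Nbhd}_1(\mathcal{K})$). The main obstacle, as flagged, is making precise and quantitative the claim that the direction seen from the far endpoint depends on the basepoint with an \emph{exponentially contracted} modulus; everything else is bookkeeping already carried out in the proofs of Theorem \ref{theo:thin_sector_general} and Proposition \ref{prop:sector_comparison_1}.
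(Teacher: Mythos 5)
Your global reduction---convert the count of offending mapping classes into an $\mathbf{m}$-measure via small balls and Lemma \ref{lem:embedded_balls}, bound that measure by Theorem \ref{theo:thin_sector_general}, and finish with Proposition \ref{prop:cube_thickening}---is the same skeleton the paper uses (routed there through Proposition \ref{prop:sector_diff_prelim_4}). The gap is in your pivotal geometric step: the claim that $[\Re(q_s(X,\mc.Y))]$ and $[\Re(q_s(X',\mc.Y))]$ are $O(e^{-cR})$-close in $\pmf$, so that offending points lie in the sector over an exponentially thin neighborhood of $\partial\mathcal{B}$. You flag the proviso ``provided the connecting geodesics stay in a large compact part of $\qum$,'' and that proviso is exactly where the argument breaks: every quantitative contraction statement available (Corollary \ref{cor:good_trajectory}, the second halves of Theorems \ref{theo:hodge_distance_decay_stable} and \ref{theo:hodge_distance_decay_unstable}) requires the trajectory to spend half its time in some $K_\epsilon$, and for directions where this recurrence fails one only has the unconditional bounded non-expansion, so no exponential closeness. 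You never discharge this case, and these $\mc$ cannot be absorbed into your thin set $V_R$, so the containment on which your application of Theorem \ref{theo:thin_sector_general} rests is unproved. Moreover, even on recurrent directions the claim does not follow directly from the results you cite: $q_s(X,\mc.Y)$ and $q_s(X',\mc.Y)$ do not lie on a common stable or strongly unstable leaf, so before any contraction can be invoked one must relate them via the stable-leaf comparison maps ($q_1$, $t_0$, $q_2$ and Proposition \ref{prop:uniform_direction_bounds}), which is precisely the apparatus the paper builds for this purpose. Finally, Proposition \ref{prop:cube_thickening} as stated concerns $V=[\Re]^{-1}(\partial\mathcal{B})\cap S(X)$, not preimages of metrically thickened boundaries, so your last step would also need a (mild) extension.

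The paper's proof avoids any exponential-closeness claim: using only the unconditional non-expansion part of Theorem \ref{theo:hodge_distance_decay_stable}, Proposition \ref{prop:uniform_direction_bounds}, and Corollary \ref{cor:euclidean_hodge_teichmuller_2}, it places every offending $\mc.Y$ within a \emph{bounded} distance of $B_{R+O(1)}(X')\cap\mathrm{Sect}_{V}(X')$ with $V$ the preimage of the exact boundary $\partial\mathcal{B}$, and all exponential savings are then delegated to Theorem \ref{theo:thin_sector_general} (whose proof internally separates recurrent from non-recurrent trajectories using Theorem \ref{theo:thin_trajectories}, i.e.\ the Eskin--Mirzakhani--Rafi input) and to Proposition \ref{prop:cube_thickening}. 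If you want to keep your route you would have to split the offending $\mc$ according to recurrence of the connecting geodesic, treat the non-recurrent ones by Theorem \ref{theo:thin_trajectories} plus Lemma \ref{lem:embedded_balls}, and rebuild the stable-projection comparison for the recurrent ones---at which point you have essentially reconstructed the paper's argument. A side remark: the ``symmetric lower bound'' you add at the end is not needed for the one-sided statement and is false for fixed $\delta$, since enlarging the radius by $\delta$ alone typically adds on the order of $\delta e^{hR}$ lattice points (compare Proposition \ref{prop:lead}); in the application the two-sided control comes from applying the one-sided statement twice with the roles of the two points and the radii shifted.
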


To prove Proposition \ref{prop:sector_comparison_2} we introduce some notation. Consider the map $q_1 \colon \mathcal{T}_g \times \pmf \to \mathcal{Q}^1\mathcal{T}_g$ which to every $X \in \tt$ and every $[\eta] \in \pmf$ assigns the unique quadratic differential $q_1 := q_1(X,[\eta]) \in S(X)$ such that $[\Re(q_1)] = [\eta]$. Consider also the map $t_0 \colon \tt \times \tt \times \pmf \to \mathbf{R}$ which to every pair $X,X' \in \tt$ and every $[\eta] \in \pmf$ assigns the unique $t_0 := t_0(X,X',[\eta]) \in \mathbf{R}$ such that  $a_{t_0} \thinspace q_1(X',[\lambda])$ is in the same leaf of $\mathcal{F}^{ss}$ as $q_1(X,[\lambda])$. Finally, consider the map $q_2 \colon \tt \times \tt \times \pmf \to \mathbf{R}$ given by $q_2(X,X',[\eta]) := a_{t_0(X,X',[\eta])} \thinspace q_1(X',[\eta])$. See Figure \ref{fig:maps}. These maps are continuous. Recall that $d_H$ denotes the modified Hodge metric along the leaves of $\mathcal{F}^{ss}$ and $\mathcal{F}^{uu}$. A compactness argument using Lemma \ref{lem:euclidean_teichmuller_1} and Theorem \ref{theo:euclidean_hodge_1} yields the following result.

\begin{figure}[h!]
	\centering
	\includegraphics[width=.25\textwidth]{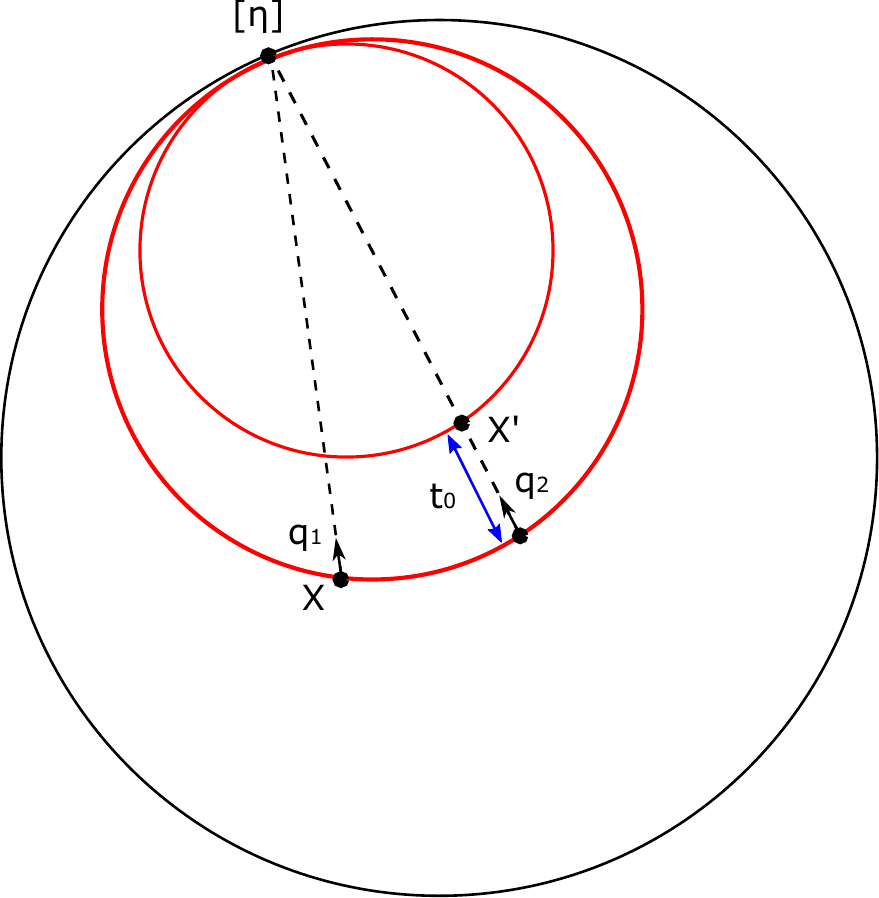}
	\caption{The maps $q_1(X,[\eta])$, $t_0(X,X',[\eta])$, and $q_2(X,X',[\eta])$. Straight lines represent Teichmüller geodesics and circles tangent to the boundary represent leaves of $\mathcal{F}^{ss}$.} \label{fig:maps} 
\end{figure}

\newcond{d:dir_bd} 
\begin{proposition}
	\label{prop:uniform_direction_bounds}
	Let $\mathcal{K} \subseteq \mathcal{T}_g$ be a compact subset and $\epsilon > 0$. There exists a constant $\usecond{d:dir_bd} = \usecond{d:dir_bd}(\mathcal{K},\epsilon) > 0$ such that for every $X\in \mathcal{K}$, every $X' \in \tt$ with $d_\mathcal{T}(X,X') < \usecond{d:dir_bd}$, and every  $[\eta] \in \pmf$, 
	\begin{gather*}
	t_0(X,X',[\eta]) < \epsilon,\\
	d_H(q_1(X,[\eta]), q_2(X,X',[\eta]) < \epsilon.
	\end{gather*}
\end{proposition}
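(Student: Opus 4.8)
\textbf{Proof strategy for Proposition \ref{prop:uniform_direction_bounds}.}
The plan is to prove this by a straightforward compactness argument, exploiting the continuity of the three maps $q_1$, $t_0$, $q_2$ together with the fact that $t_0(X,X,[\eta]) = 0$ and $q_2(X,X,[\eta]) = q_1(X,[\eta])$ for every $X \in \tt$ and every $[\eta] \in \pmf$. First I would observe that $q_1 \colon \tt \times \pmf \to \qut$ is continuous: indeed, by the work of Hubbard and Masur (recalled in \S 2), for fixed $X$ the map $\Re|_{S(X)} \colon S(X) \to \mf$ is a homeomorphism, and this depends continuously on $X$, so $q_1(X,[\eta])$, the unique unit-area differential in $S(X)$ with $[\Re(q_1)] = [\eta]$, varies continuously in both arguments. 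Next, $t_0(X,X',[\eta])$ is the unique time for which $a_{t_0} q_1(X',[\eta])$ lands on the strongly stable leaf $\alpha^{ss}(q_1(X,[\eta]))$; since $q_1(X,[\eta])$ and $q_1(X',[\eta])$ have projectively equal real parts they lie on the same stable leaf $\mathcal{F}^s$, and the parametrization of a stable leaf by $a_t$ acting on a strongly stable leaf is continuous and proper, so $t_0$ is well-defined and continuous, with $t_0(X,X,[\eta]) = 0$. Then $q_2(X,X',[\eta]) := a_{t_0(X,X',[\eta])} q_1(X',[\eta])$ is continuous as a composition, and $q_2(X,X,[\eta]) = q_1(X,[\eta])$.

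The key step is then to upgrade pointwise vanishing to a uniform bound. Fix $\mathcal{K} \subseteq \tt$ compact and $\epsilon > 0$. The function
\[
(X,X',[\eta]) \longmapsto |t_0(X,X',[\eta])| + d_H(q_1(X,[\eta]),q_2(X,X',[\eta]))
\]
is continuous on the locus where $q_1(X,[\eta])$ and $q_2(X,X',[\eta])$ lie on a common leaf of $\mathcal{F}^{ss}$, and I would need to check that $d_H$ is continuous there — this follows from Theorem \ref{theo:euclidean_hodge_1}, which compares $d_H$ to the Euclidean metric $d_E$, together with Lemma \ref{lem:euclidean_teichmuller_1} relating $d_E$ on common unstable/stable leaves to the Teichmüller metric, so on compact subsets of $\tt$ the modified Hodge distance between nearby points on a common strongly stable leaf is controlled and tends to $0$ as the points converge. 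Since $\mathcal{K} \times \pmf$ is compact (recall $\pmf$ is compact) and the above function vanishes identically on the diagonal slice $\{(X,X,[\eta]) : X \in \mathcal{K}, [\eta] \in \pmf\}$, a standard tube-lemma / uniform-continuity argument produces a $\usecond{d:dir_bd} = \usecond{d:dir_bd}(\mathcal{K},\epsilon) > 0$ such that $d_\mathcal{T}(X,X') < \usecond{d:dir_bd}$ forces both quantities below $\epsilon$, uniformly in $[\eta] \in \pmf$ and $X \in \mathcal{K}$.

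The main obstacle I anticipate is not the compactness packaging but making precise the claim that $q_2(X,X',[\eta])$ genuinely lies on the strongly stable leaf of $q_1(X,[\eta])$ for all $X'$ close enough to $X$, and that $t_0$ is uniformly bounded on such a neighborhood — a priori the time needed to flow from the stable leaf back onto a given strongly stable leaf could blow up. This is handled by noting that, for $X'$ in a compact neighborhood of $\mathcal{K}$, both $q_1(X,[\eta])$ and $q_1(X',[\eta])$ project under $\pi$ into a fixed compact subset of $\tt$, and the identification of the stable leaf $\alpha^s(q)$ with $\tt$ via $\pi$ (Hubbard--Masur) shows that $t_0(X,X',[\eta])$ is comparable to the Teichmüller distance between $\pi(q_1(X,[\eta]))$ and $\pi$ of the point where the strongly stable leaf of $q_1(X,[\eta])$ meets the $a_t$-orbit of $q_1(X',[\eta])$, which is $O(d_\mathcal{T}(X,X'))$ uniformly on compacta — again by a compactness argument using Lemma \ref{lem:euclidean_teichmuller_1}. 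Once this uniform control of $t_0$ is in place, the bound on $d_H(q_1(X,[\eta]), q_2(X,X',[\eta]))$ follows from Theorem \ref{theo:euclidean_hodge_1} and Lemma \ref{lem:euclidean_teichmuller_1} as above, completing the proof.
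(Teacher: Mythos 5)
Your proposal is correct and follows essentially the same route as the paper, which only indicates ``a compactness argument using Lemma \ref{lem:euclidean_teichmuller_1} and Theorem \ref{theo:euclidean_hodge_1}'': you combine continuity of $q_1$, $t_0$, $q_2$, vanishing on the diagonal, compactness of $\mathcal{K}\times\pmf$, and the Euclidean--Hodge--Teichm\"uller comparisons to get the uniform bound. (As a minor simplification, note $t_0(X,X',[\eta]) = \tfrac12\log\bigl(\mathrm{Ext}_\eta(X')/\mathrm{Ext}_\eta(X)\bigr)$, so Kerckhoff's formula (\ref{eq:ker}) already gives $|t_0|\leq d_\mathcal{T}(X,X')$ directly, and then Lemma \ref{lem:euclidean_teichmuller_1} and Theorem \ref{theo:euclidean_hodge_1} give the $d_H$ bound uniformly on compacta without needing the tube lemma.)
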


We will also need the following slightly stronger version of Corollary \ref{cor:euclidean_hodge_teichmuller_1}.

\newcond{d:bdd} 
\begin{corollary}
	\label{cor:euclidean_hodge_teichmuller_2}
	Let $\mathcal{K} \subseteq \mathcal{T}_g$ be a compact subset and $r > 0$. There exists a constant $\usecond{d:bdd} = \usecond{d:bdd} (\mathcal{K},r)>0$ such that if $q,q' \in \mathcal{Q}^1\mathcal{T}_g$ are on the same leaf of $\mathcal{F}^{uu}$ or $\mathcal{F}^{ss}$ and satisfy $q \in \pi^{-1}(\mathcal{K})$ and $d_H(q,q') \leq \usecond{d:bdd} $, then $d_\mathcal{T}(\pi(q),\pi(q')) \leq r$. In particular, $q' \in \pi^{-1}(\mathrm{Nbhd}_r(\mathcal{K}))$.
\end{corollary}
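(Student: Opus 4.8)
The plan is to deduce this from Lemma \ref{lem:euclidean_teichmuller_1} and Theorem \ref{theo:euclidean_hodge_1} by essentially the same argument that proved Corollary \ref{cor:euclidean_hodge_teichmuller_1}, the only difference being that we keep track of an arbitrary target radius $r$ rather than fixing $r = 1$. First I would invoke Theorem \ref{theo:euclidean_hodge_1}: there is a constant, depending only on $\mathcal{K}$, controlling how the Euclidean and modified Hodge metrics compare for pairs on a common leaf of $\mathcal{F}^{uu}$ or $\mathcal{F}^{ss}$; in particular, if $d_H(q,q')$ is small enough (smaller than the threshold implicit in the second assertion of Theorem \ref{theo:euclidean_hodge_1}, which forces $d_E(q,q') < 1$), then $d_E(q,q') \preceq_{\mathcal{K}} d_H(q,q')$. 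Since $d_H(q,q') \leq \usecond{d:bdd}$ will be chosen small, this gives an explicit bound $d_E(q,q') \leq c(\mathcal{K}) \cdot \usecond{d:bdd}$.

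Next I would apply the first part of Lemma \ref{lem:euclidean_teichmuller_1}: there is a continuous function $f_1 \colon \mathbf{R}_{\geq 0} \to \mathbf{R}_{\geq 0}$, depending on $\mathcal{K}$, with $f_1(\rho) \to 0$ as $\rho \to 0$, such that $d_\mathcal{T}(\pi(q),\pi(q')) \leq f_1(d_E(q,q'))$ whenever $q, q' \in \pi^{-1}(\mathcal{K})$. A subtlety here: Lemma \ref{lem:euclidean_teichmuller_1} as stated requires \emph{both} $q$ and $q'$ to lie in $\pi^{-1}(\mathcal{K})$, whereas our hypothesis only gives $q \in \pi^{-1}(\mathcal{K})$. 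This is handled exactly as in the proof of Corollary \ref{cor:euclidean_hodge_teichmuller_1}: first apply Corollary \ref{cor:euclidean_teichmuller_1} (or a bootstrapping version of the same reasoning) to conclude that if $\usecond{d:bdd}$ is small enough then $d_E(q,q') \leq \usecond{euc_teich}(\mathcal{K})$, hence $q' \in \pi^{-1}(\mathrm{Nbhd}_1(\mathcal{K}))$, which is itself compact; then apply Lemma \ref{lem:euclidean_teichmuller_1} with the larger compact set $\mathrm{Nbhd}_1(\mathcal{K})$ in place of $\mathcal{K}$, so that both endpoints lie in the relevant compact set and $f_1 = f_{1,\mathrm{Nbhd}_1(\mathcal{K})}$ applies.

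Finally, using continuity of $f_1$ at $0$ and $f_1(0) = 0$, choose $\usecond{d:bdd} = \usecond{d:bdd}(\mathcal{K}, r) > 0$ small enough that $f_1\bigl(c(\mathcal{K}) \cdot \usecond{d:bdd}\bigr) \leq r$ and small enough that all the smallness conditions invoked above (the threshold of Theorem \ref{theo:euclidean_hodge_1}, and $c(\mathcal{K})\cdot\usecond{d:bdd} \leq \usecond{euc_teich}(\mathcal{K})$) hold; this is possible precisely because $f_1(\rho) \to 0$ as $\rho \to 0$. Then $d_H(q,q') \leq \usecond{d:bdd}$ forces $d_\mathcal{T}(\pi(q),\pi(q')) \leq r$, and in particular $\pi(q') \in \mathrm{Nbhd}_r(\pi(q)) \subseteq \mathrm{Nbhd}_r(\mathcal{K})$, i.e.\ $q' \in \pi^{-1}(\mathrm{Nbhd}_r(\mathcal{K}))$. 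The only mildly delicate point is the bookkeeping needed to upgrade from ``$q \in \pi^{-1}(\mathcal{K})$'' to ``both endpoints in a fixed compact set,'' but this is routine and is already dealt with in the proof of Corollary \ref{cor:euclidean_hodge_teichmuller_1}; there is no genuine obstacle here.
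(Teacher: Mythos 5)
Your argument is correct and matches the paper's intent: the paper gives no written proof of Corollary \ref{cor:euclidean_hodge_teichmuller_2}, treating it (like Corollary \ref{cor:euclidean_hodge_teichmuller_1}) as a direct consequence of the comparison $d_E \preceq_{\mathcal{K}} d_H$ from Theorem \ref{theo:euclidean_hodge_1} and the modulus-of-continuity bound $d_\mathcal{T} \leq f_1(d_E)$ from Lemma \ref{lem:euclidean_teichmuller_1}, which is exactly the chain you use, with the constant chosen so that $f_1$ of the resulting Euclidean bound is at most $r$. Your handling of the compactness bookkeeping (passing to $\mathrm{Nbhd}_1(\mathcal{K})$ so both endpoints lie in a fixed compact set) is the routine bootstrap the paper leaves implicit, and the only superfluous step is invoking the smallness threshold from the second assertion of Theorem \ref{theo:euclidean_hodge_1}, which is not needed since the inequality $d_E \preceq_{\mathcal{K}} d_H$ holds without it.
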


As a first step towards proving Proposition \ref{prop:sector_comparison_2}, we prove the following bound.

\newcond{d:sect_dif_4} \newconn{n:sect_dif_4} \newconk{k:sect_dif_4} 
\begin{proposition}
	\label{prop:sector_diff_prelim_4}
 	For every compact subset $\mathcal{K} \subseteq \mathcal{T}_g$ there exists a constant $\usecond{d:sect_dif_4} = \usecond{d:sect_dif_4}(\mathcal{K}) > 0$ with the following property. Let $\mathcal{U} \subseteq \pmf$ measurable, $0 < \delta < \usecond{d:sect_dif_4}$, $X,Y \in \mathcal{K}$, and $X' \in \mathcal{T}_g$ with $d_\mathcal{T}(X,X') \leq \delta$. Denote $V := [\Re]^{-1}(\partial \mathcal{U}) \cap S(X)$. Then, for every $R > 0$,
	\begin{align*}
	F_R(X,Y,\mathcal{U}) - F_{R+\delta}(X',Y,\mathcal{U}) 
	\preceq_{\mathcal{K}} \overline{\nu}(\Re(V(\useconn{n:sect_dif_4}e^{-\useconk{k:sect_dif_4}   R}))) \cdot e^{hR} + e^{(h-\useconk{k:sect_dif_4})R},
	\end{align*}
	where $\useconn{n:sect_dif_4} = \useconn{n:sect_dif_4}(g) > 0$ and $\useconk{k:sect_dif_4}  = \useconk{k:sect_dif_4} (g) > 0$ are constants depending only on $g$.
\end{proposition}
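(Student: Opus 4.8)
The plan is to mirror the structure of the proof of Proposition \ref{prop:sector_comparison_1}: reduce the difference $F_R(X,Y,\mathcal{U}) - F_{R+\delta}(X',Y,\mathcal{U})$ to the $\mathbf{m}$-measure of a thin neighborhood of a sector bordered by $V = [\Re]^{-1}(\partial\mathcal{U}) \cap S(X)$, and then invoke Theorem \ref{theo:thin_sector_general}. The key geometric point, which is new here because we are moving the \emph{origin} rather than the endpoint, is captured by Proposition \ref{prop:uniform_direction_bounds} and Corollary \ref{cor:euclidean_hodge_teichmuller_2}: if $\mc.Y \in B_R(X)\cap\mathrm{Sect}_{\mathcal{U}}(X)$ but $\mc.Y \notin B_{R+\delta}(X')\cap\mathrm{Sect}_{\mathcal{U}}(X')$, then either the Teichmüller distance condition fails (handled by the triangle inequality and $d_\mathcal{T}(X,X')\le\delta$, which costs only a bounded enlargement of the radius) or the sector condition fails, i.e. the geodesic from $X'$ to $\mc.Y$ points in a direction $[\eta']\notin\mathcal{U}$ while the geodesic from $X$ to $\mc.Y$ points in $[\eta]\in\mathcal{U}$.

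\textbf{The core comparison.} To control the sector-condition failure I would argue as follows. Fix the boundary direction $[\eta]\in\pmf$; the quadratic differential $q_1(X,[\eta])\in S(X)$ and $q_2(X,X',[\eta]) = a_{t_0}q_1(X',[\eta])\in S(X')$ lie on the same leaf of $\mathcal{F}^{ss}$, and by Proposition \ref{prop:uniform_direction_bounds} (with $\epsilon$ chosen in terms of $\mathcal{K}$ so that $\usecond{d:sect_dif_4}$ exists) both $|t_0(X,X',[\eta])| < \epsilon$ and $d_H(q_1(X,[\eta]),q_2(X,X',[\eta])) < \epsilon$ whenever $d_\mathcal{T}(X,X')<\usecond{d:sect_dif_4}$. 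Along a stable leaf the modified Hodge distance does not expand under forward Teichmüller flow (Theorem \ref{theo:hodge_distance_decay_stable}, first assertion), so the image of the geodesic segment from $X'$ through $\mc.Y$, when compared to the geodesic segment from $X$ through $\mc.Y$, stays within bounded modified Hodge distance of a segment emanating from $X$ in a direction in $V$; concretely, the orbit point $\mc.Y$ must lie in $\mathrm{Nbhd}_{r}(B_{R+r}(X)\cap\mathrm{Sect}_{V'}(X)\cap\mcg\cdot\mathrm{Nbhd}_r(\mathcal{K}))$ for a slightly fattened boundary set $V' := [\Re]^{-1}(\mathrm{Nbhd}_{c\delta}(\partial\mathcal{U}))\cap S(X)$, where $c = c(\mathcal{K})>0$. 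Placing disjoint Teichmüller balls of a fixed small radius around such orbit points and using Lemma \ref{lem:embedded_balls} converts the count into an $\mathbf{m}$-volume estimate, exactly as in the proof of Proposition \ref{prop:sector_diff_prelim_3}. One then applies Theorem \ref{theo:thin_sector_general} to $V'$; since $V' \subseteq V(\useconcc{cc:sys_euc}'\delta)$ for a suitable constant (fattening the boundary by $c\delta$ in $\pmf$ is absorbed by the definition of $V(s)$ up to adjusting constants, using Proposition \ref{prop:Lipschitz_metric} and Proposition \ref{prop:systole_euclidean} as in the proof of Proposition \ref{prop:cube_thickening}), we obtain the bound $\overline{\nu}(\Re(V(\useconn{n:sect_dif_4}e^{-\useconk{k:sect_dif_4}R})))\cdot e^{hR} + e^{(h-\useconk{k:sect_dif_4})R}$ with $\useconn{n:sect_dif_4},\useconk{k:sect_dif_4}$ depending only on $g$.

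\textbf{Main obstacle.} The delicate point is the interaction between moving the basepoint $X\to X'$ and the direction bookkeeping: unlike the endpoint case, here the \emph{starting fiber} $S(X)$ changes, so one cannot directly compare $q_s(X,\mc.Y)$ and $q_s(X',\mc.Y)$ as points of a single space. The maps $q_1, t_0, q_2$ and Proposition \ref{prop:uniform_direction_bounds} are precisely designed to transport the $S(X')$-direction back to $S(X)$ along a stable leaf with controlled $d_H$-displacement, and the non-expansion of $d_H$ along $\mathcal{F}^{ss}$ under $a_t$ for $t\ge0$ (Theorem \ref{theo:hodge_distance_decay_stable}) is what keeps this displacement from growing as $R\to\infty$. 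Making the chain ``$\delta$-perturbation of origin $\Rightarrow$ $O(\delta)$-fattening of the boundary direction set in $\pmf$ $\Rightarrow$ containment in $V(O(\delta))$'' fully rigorous — in particular checking that the flow-time $t_0$ stays uniformly bounded along the \emph{entire} geodesic segment of length up to $R+\delta$, not just near the origin — is the step I expect to require the most care, though it follows the template already established in \S7 and the earlier part of this section. Once Proposition \ref{prop:sector_diff_prelim_4} is in hand, Proposition \ref{prop:sector_comparison_2} follows by the same argument as Proposition \ref{prop:sector_comparison_1}: invoke Proposition \ref{prop:cube_thickening} to bound $\overline{\nu}(\Re(V(\cdot)))$ linearly in its argument for $R$ past a threshold $R_0(\mathcal{K})$, and absorb the finitely many small $R$ into the implicit constant. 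Finally Proposition \ref{prop:sector_comparison_0} is obtained by composing Propositions \ref{prop:sector_comparison_1} and \ref{prop:sector_comparison_2} (perturb $Y$ then perturb $X$), taking $\useconk{k:sect_comp_0} := \min\{\useconk{k:sect_comp_1},\useconk{k:sect_comp_2}\}$ and $\usecond{d:sect_comp_0}$ the minimum of the relevant radii.
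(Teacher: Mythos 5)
Your overall skeleton is the paper's: reduce the defect count to an $\mathbf{m}$-volume of a fixed-radius neighborhood of a sector over the boundary set via Lemma \ref{lem:embedded_balls}, control the transport between the fibers $S(X)$ and $S(X')$ with Proposition \ref{prop:uniform_direction_bounds}, Corollary \ref{cor:euclidean_hodge_teichmuller_2} and the non-expansion of $d_H$ along $\mathcal{F}^{ss}$ from Theorem \ref{theo:hodge_distance_decay_stable}, and finish with Theorem \ref{theo:thin_sector_general}. But the specific route you take through the projectivized boundary has a genuine gap. You replace the exact boundary set by the $\pmf$-fattened set $V' := [\Re]^{-1}(\mathrm{Nbhd}_{c\delta}(\partial\mathcal{U}))\cap S(X)$ and then claim $V' \subseteq V(C\delta)$, citing Propositions \ref{prop:Lipschitz_metric} and \ref{prop:systole_euclidean} ``as in the proof of Proposition \ref{prop:cube_thickening}''. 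Those results give the \emph{opposite} inclusion: they show that a Hodge-thickening $V(\delta)$ is contained in (thin part) $\cup$ ($\pmf$-fattened boundary), because $[\Re]$ is \emph{upper}-Lipschitz. To get $V' \subseteq V(C\delta)$ you would need a co-Lipschitz (lower) bound on $d[\Re]$ along unstable leaves, i.e.\ that a quadratic differential whose horizontal foliation is projectively $c\delta$-close to $\partial\mathcal{U}$ can be moved along its $\mathcal{F}^{uu}$-leaf onto $[\Re]^{-1}(\partial\mathcal{U})$ at Hodge cost $O(\delta)$; this degenerates near the multiple zero locus and is established nowhere in the paper. The preliminary step feeding into $V'$ — that a $\delta$-move of the basepoint moves $[\Re(q_s(\cdot,\mc.Y))]$ by only $O_\mathcal{K}(\delta)$ in $\pmf$, uniformly in $R$ and over orbit points whose connecting geodesics enter the thin part — is likewise asserted but not proved, and is of the same co-Lipschitz flavor.

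Moreover, even granting the inclusion, your bound would come out as $\overline{\nu}\bigl(\Re(V(C\delta + \useconn{n:sect_dif_4}e^{-\useconk{k:sect_dif_4}R}))\bigr)\cdot e^{hR} + e^{(h-\useconk{k:sect_dif_4})R}$, since $(V(C\delta))(s) \subseteq V(C\delta + s)$; the $\delta$-dependent thickening is strictly weaker than the stated conclusion, in which the thickening radius is $\useconn{n:sect_dif_4}e^{-\useconk{k:sect_dif_4}R}$ only (this matters because Proposition \ref{prop:sector_comparison_2} applies Proposition \ref{prop:cube_thickening} with a radius that must go to $0$ as $R \to \infty$ for fixed $\delta$). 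The paper avoids both problems by never fattening in $\pmf$: it keeps the exact boundary directions, bases the comparison sector at $X'$, and runs a three-case analysis of the relative positions of $X$, $X'$, and $\mc.Y$ along geodesics toward a boundary direction (this is what the maps $q_1$, $t_0$, $q_2$ and Figure \ref{fig:boundary_2} are for), using (\ref{eq:small}) and (\ref{eq:non_exp}) to place $\mc.Y$ in $\mathrm{Nbhd}_{2r}(B_{R+2r}(X')\cap\mathrm{Sect}_{V}(X')\cap\mcg\cdot\mathrm{Nbhd}_{2r}(\mathcal{K}))$ for a radius $r$ depending only on $g$, after which Lemma \ref{lem:embedded_balls} and Theorem \ref{theo:thin_sector_general} give the bound directly, with no $\delta$-loss. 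You should redo the core comparison along these lines rather than through the $\pmf$-neighborhood of $\partial\mathcal{U}$.
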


\begin{proof}
	Let $\useconr{r:thin_sect_gen} = \useconr{r:thin_sect_gen}(g) >0$ be as in Theorem \ref{theo:thin_sector_general} and $r = r(g) := \useconr{r:thin_sect_gen}/4 > 0$. Fix $\mathcal{K} \subseteq \mathcal{T}_g$ compact. Let $\usecond{d:bdd}  =\usecond{d:bdd}(\mathcal{K}, r) > 0$ be as in Corollary \ref{cor:euclidean_hodge_teichmuller_2}. By Corollary \ref{cor:euclidean_hodge_teichmuller_2} and the $\mcg$ invariance of $d_H$ and $d_\mathcal{T}$, if $q \in \pi^{-1}(\text{Mod}_g \cdot \mathcal{K})$ and $q' \in \mathcal{Q}^1\mathcal{T}_g$ are on the same leaf of $\mathcal{F}^{ss}$ with $d_H(q,q') < \usecond{d:bdd}$, then 
	\begin{equation}
	d_\mathcal{T}(\pi(q),\pi(q')) < r. \label{eq:small}
	\end{equation}
	Let $0 < \epsilon_{\mathrm{sb}} < \epsilon_{\mathrm{m}}$ be the parameter used to define short bases and $\useconcc{cc:hodge_decay_stable} = \useconcc{cc:hodge_decay_stable}(g,\epsilon_{\mathrm{sb}}) > 0$ be as in Theorem \ref{theo:hodge_distance_decay_stable}. By Theorem \ref{theo:hodge_distance_decay_stable}, for every $q,q' \in \qut$ on the same leaf of $\mathcal{F}^{ss}$ and every $t > 0$,
	\begin{equation}
	\label{eq:non_exp}
	d_H(a_{t} q, a_{t} q') \leq \useconcc{cc:hodge_decay_stable} \cdot d_H(q,q'). 
	\end{equation}
	 Let $\epsilon = \epsilon(\mathcal{K},\epsilon_{\mathrm{sb}}) := \min\{\usecond{d:bdd}/2\useconcc{cc:hodge_decay_stable},r\} > 0$, $\usecond{d:dir_bd} = \usecond{d:dir_bd}(\mathcal{K},\epsilon) > 0$ be as in Proposition \ref{prop:uniform_direction_bounds}, and $\usecond{d:sect_dif_4} = \usecond{d:sect_dif_4}(\mathcal{K},\epsilon)  := \min\{r,\usecond{d:dir_bd}\} > 0$. 
	
	Fix $\mathcal{U} \subseteq \pmf$ measurable, $0 < \delta < \usecond{d:sect_dif_4}$, $X,Y \in \mathcal{K}$, and $X' \in \mathcal{T}_g$ with $d_\mathcal{T}(X,X') < \delta$. Denote $V := [\Re]^{-1}(\partial \mathcal{U}) \cap S(X)$. By the triangle inequality, the $\mcg$ invariance of $d_\mathcal{T}$, (\ref{eq:small}), and (\ref{eq:non_exp}), for every $\mc \in \mcg$ and every $R > 0$, if $\mc.Y \in B_R(X) \cap \mathrm{Sect}_{\mathcal{U}}(X)$ but $\mc.Y \notin B_{R+\delta}(X') \cap \mathrm{Sect}_{\mathcal{U}}(X')$, then
	\begin{equation}
	\label{eq:boundary_point_2}
	\mc.Y \in \text{Nbhd}_{2r}(B_{R+ 2r}(X') \cap \text{Sect}_{V}(X') \cap \text{Mod}_g \cdot \text{Nbhd}_{2r}(\mathcal{K})).
	\end{equation}
	See Figure \ref{fig:boundary_2}. Let $\usecond{d:embed} = \usecond{d:embed}(\mathcal{K}) > 0$ be as in Lemma \ref{lem:embedded_balls} and $r' = r'(\mathcal{K}):= \min\{\usecond{d:embed}/2,r\} > 0$. Consider Teichmüller metric balls of radius $r'$ centered at every point $\mathbf{g}.Y$ satisfying (\ref{eq:boundary_point_2}). It follows from (\ref{eq:boundary_point_2}) and Lemma \ref{lem:embedded_balls} that
	\begin{equation}
	F_R(X,Y,\mathcal{U}) - F_{R+\delta}(X',Y,\mathcal{U}) \preceq_{\mathcal{K}} \mathbf{m}(\mathrm{Nbhd}_{3r}(B_{R+2r}(X') \cap \text{Sect}_{V}(X') \cap \mathrm{Mod}_g \cdot \mathrm{Nbhd}_{2r}(\mathcal{K}))). \label{eq:ff1}
	\end{equation}
	As $0 < 3r < \useconr{r:thin_sect_gen}$, Theorem \ref{theo:thin_sector_general} ensures
	\begin{equation}
	\mathbf{m}(\mathrm{Nbhd}_{3r}(B_{R+2r}(X') \cap \text{Sect}_{V}(X') \cap \mathrm{Mod}_g \cdot \mathrm{Nbhd}_{2r}(\mathcal{K}))) \preceq_{\mathcal{K}} \overline{\nu}(\Re(V(\useconn{n:thin_sect_gen} e^{-\useconk{k:thin_sect_gen} R}))) \cdot e^{hR} + e^{(h-\useconk{k:thin_sect_gen})R}. \label{eq:ff2}
	\end{equation}
	Putting together (\ref{eq:ff1}) and (\ref{eq:ff2}) we conclude 
	\[
	F_R(X,Y,\mathcal{U}) - F_{R+\delta}(X',Y,\mathcal{U}) \preceq_{\mathcal{K}} \overline{\nu}(\Re(V(\useconn{n:thin_sect_gen} e^{-\useconk{k:thin_sect_gen} R}))) \cdot e^{hR} + e^{(h-\useconk{k:thin_sect_gen})R}. \qedhere
	\]
\end{proof}

\begin{figure}[h!]
	\centering
	\begin{subfigure}[b]{0.25\textwidth}
		\centering
		\includegraphics[width=1\textwidth]{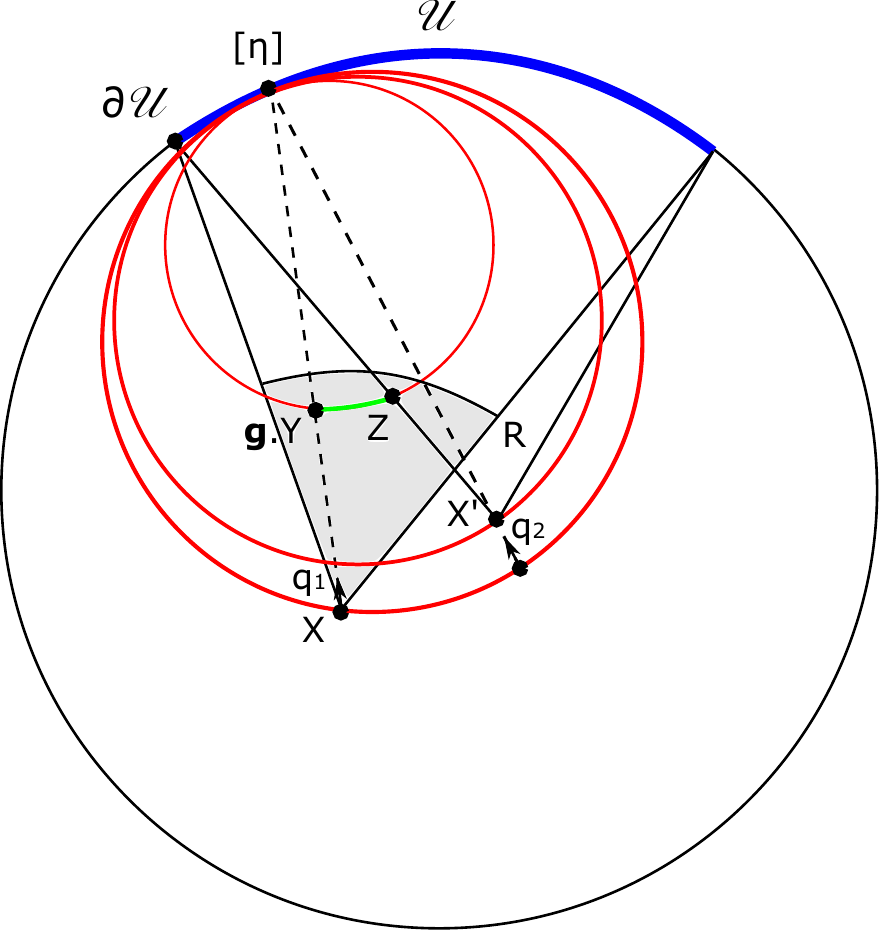}
		\caption{Case 1: $X < X' < \mc.Y$.}
		\label{fig:case_1}
	\end{subfigure}
	\quad \quad \quad
	~ 
	\begin{subfigure}[b]{0.25\textwidth}
		\centering
		\includegraphics[width=1\textwidth]{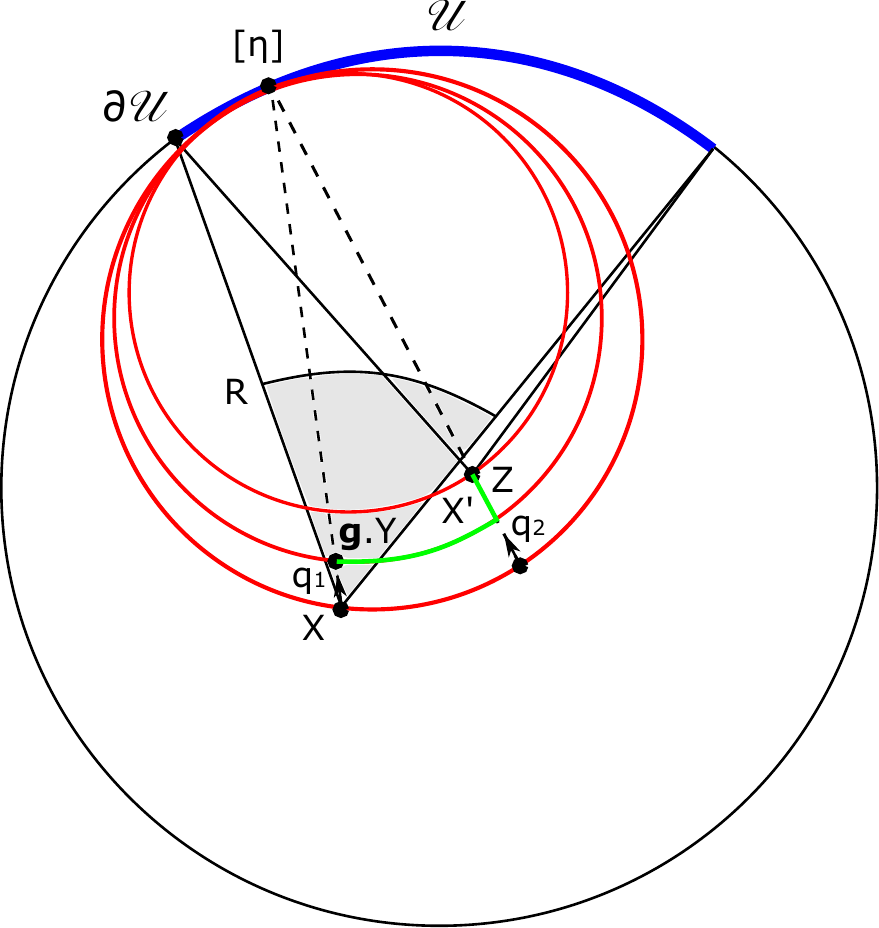}
		\caption{Case 2: $X < \mc.Y < X'$.}
		\label{fig:case_2}
	\end{subfigure}
	\quad \quad \quad
	~ 
	\begin{subfigure}[b]{0.25\textwidth}
		\centering
		\includegraphics[width=1\textwidth]{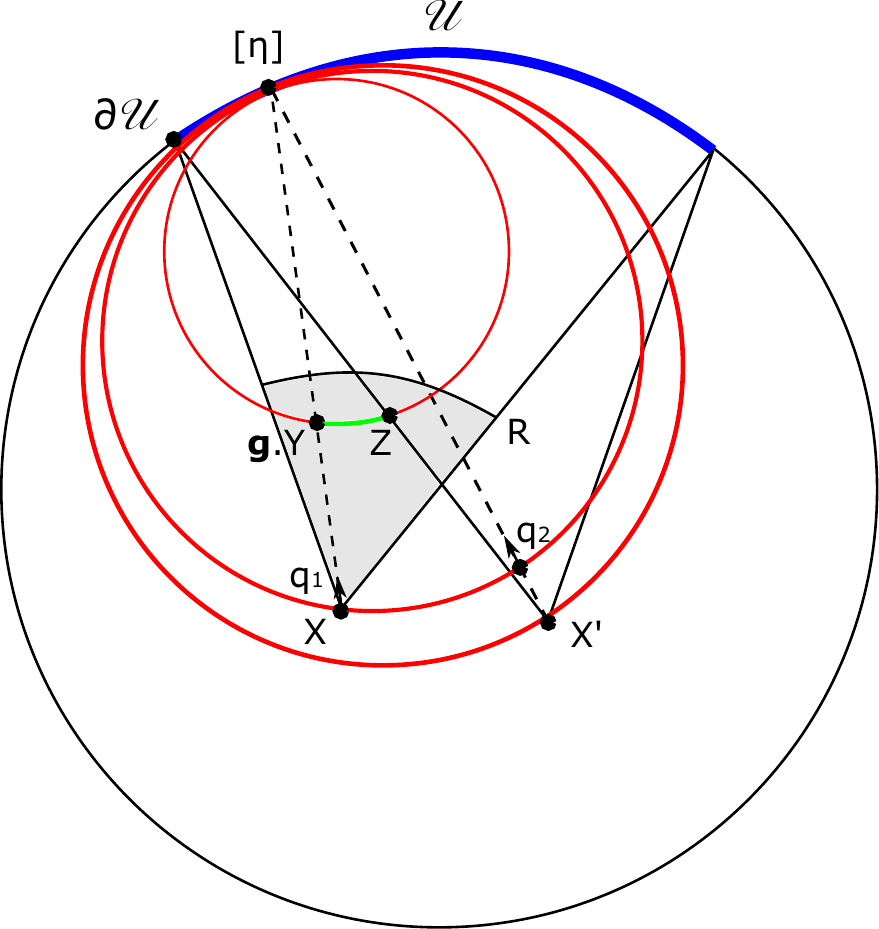}
		\caption{Case 3: $X' < X < \mc.Y$.}
		\label{fig:case_3}
	\end{subfigure}
	\caption{Picture of (\ref{eq:boundary_point_2}). Straight lines represent Teichmüller geodesics and circles tangent to the boundary represent leaves of $\mathcal{F}^{ss}$. Three cases need to be considered depending on the relative order $<$ of the points $X$, $X'$, and $\mc.Y$ with respect to $[\eta] \in \pmf$. The green segments represent piecewise smooth paths of Teichmüller length $< 2r$ joining $\mc.Y$ to a point $Z \in B_{R+ 2r}(X') \cap \text{Sect}_{V}(X') \cap \text{Mod}_g \cdot \text{Nbhd}_{2r}(\mathcal{K})$.} \label{fig:boundary_2} 
\end{figure}

We are now ready to prove Proposition \ref{prop:sector_comparison_2}.

\begin{proof}[Proof of Proposition \ref{prop:sector_comparison_2}]
	Fix $\mathcal{K} \subseteq \mathcal{T}_g$ compact. Let $\usecond{d:cube_thick} = \usecond{d:cube_thick}(\mathcal{K}) > 0$ be as in Proposition \ref{prop:cube_thickening} and $\usecond{d:sect_dif_4} = \usecond{d:sect_dif_4}(\mathcal{K}) > 0$, $\useconn{n:sect_dif_4} = \useconn{n:sect_dif_4}(\mathcal{K}) > 0$, and  $\useconk{k:sect_dif_4} = \useconk{k:sect_dif_4}(\mathcal{K}) > 0$ be as in Proposition \ref{prop:sector_diff_prelim_4}. Denote $R_0 = R_0(\mathcal{K}) := \max \{(-1/\useconk{k:sect_dif_4})\log(\usecond{d:cube_thick}/\useconn{n:sect_dif_4}),0\} \geq 0$ so that $\useconn{n:sect_dif_4} e^{-\useconk{k:sect_dif_4}R} < \usecond{d:cube_thick}$ for every $R > R_0$.  Fix $\mathcal{B} \subseteq \pmf$ a cube, $0 < \delta < \usecond{d:sect_dif_4}$, $X,Y \in \mathcal{K}$, and $X' \in \mathcal{T}_g$ with $d_\mathcal{T}(X,X') < \delta$.  By Proposition \ref{prop:sector_diff_prelim_4},
	\begin{equation}
	F_R(X,Y,\mathcal{B}) - F_{R+\delta}(X',Y,\mathcal{B}) 
	\preceq_{\mathcal{K}} \overline{\nu}(\Re(V(\useconn{n:sect_dif_4}e^{-\useconk{k:sect_dif_4}   R}))) \cdot e^{hR} + e^{(h-\useconk{k:sect_dif_4})R}. \label{eq:ll1}
	\end{equation}
	For every $R > R_0$, Proposition \ref{prop:cube_thickening} ensures
	\begin{equation}
	\overline{\nu}(\Re(V(\useconn{n:sect_dif_4}e^{-\useconk{k:sect_dif_4} R}))) \preceq_{\mathcal{K},\mathrm{DT}} e^{-\useconk{k:sect_dif_4}R}. \label{eq:ll2} 
	\end{equation}
	Putting together (\ref{eq:ll1}) and (\ref{eq:ll2}) we deduce that, for every $R > R_0$,
	\[
	F_R(X,Y,\mathcal{B}) - F_{R+\delta}(X',Y,\mathcal{B})\preceq_{\mathcal{K},\mathrm{DT}} e^{(h-\useconk{k:sect_dif_4})R}.
	\]	
	The same bound holds for every $R > 0$ by increasing the implicit constant.
\end{proof}

Proposition \ref{prop:sector_comparison_0} follows directly from Propositions \ref{prop:sector_comparison_1} and \ref{prop:sector_comparison_2}.

\subsection*{Comparison of leading terms.} For every $X \in \mathcal{T}_g$ and every $\mathcal{U} \subseteq \pmf$ measurable consider the integral
\begin{align*}
I(X,\mathcal{U}) := \int_{S(X)} \mathbbm{1}_{\mathcal{U}}([\Re(q)]) \thinspace \lambda(q) \thinspace ds_X(q).
\end{align*}
The following bounds will be used to compare leading terms in the proof of Theorem \ref{theo:sector_count_box}.

\begin{proposition}
	\label{prop:lead}
	Let $\delta > 0$ and $X,X' \in \mathcal{T}_g$ with $d_\mathcal{T}(X,X') < \delta$. Then, for every $\mathcal{U} \subseteq \pmf$ measurable,
	\[
	e^{-h\delta} \cdot \thinspace I(X,\mathcal{U}) \leq I(X',\mathcal{U}) \leq e^{h\delta} \cdot \thinspace I(X,\mathcal{U}).
	\]
\end{proposition}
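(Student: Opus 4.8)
The statement is a comparison between the quantities $I(X,\mathcal{U})$ and $I(X',\mathcal{U})$ when $X$ and $X'$ are $\delta$-close in the Teichmüller metric. My plan is to pass through the interpretation of $I(X,\mathcal{U})$ as a conned-off Thurston measure and exploit Kerckhoff's formula for the Teichmüller distance, which controls how extremal lengths change under a bounded Teichmüller displacement. First I would recall from definition (\ref{eq:hm_3}) of the Hubbard-Masur function that for any $X\in\tt$ and any measurable $\mathcal{U}\subseteq\pmf$,
\[
I(X,\mathcal{U}) = \int_{S(X)} \mathbbm{1}_{\mathcal{U}}([\Re(q)])\thinspace\lambda(q)\thinspace ds_X(q) = \overline{\nu}_X\bigl(\{[\eta]\in\pmf : [\eta]\in\mathcal{U}\}\bigr),
\]
where $\overline{\nu}_X$ is the conned-off Thurston measure supported on $E_X = \{\eta\in\mf : \mathrm{Ext}_\eta(X) = 1\}$; more precisely, using that $\Re|_{S(X)}$ is a homeomorphism onto $\mf$ sending $s_X$-a.e.\ point into the principal stratum and that it carries $\lambda(q)\,ds_X(q)$ to $\overline{\nu}_X$, $I(X,\mathcal{U})$ equals $\nu$ of the cone $\{t\eta : 0\le t\le 1,\ \eta\in E_X,\ [\eta]\in\mathcal{U}\}$, i.e.\ $\nu(\{\eta\in\mf : \mathrm{Ext}_\eta(X)\le 1,\ [\eta]\in\mathcal{U}\})$.

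The key step is then to observe that Kerckhoff's formula (\ref{eq:ker}) gives, for $d_\mathcal{T}(X,X') < \delta$, the pointwise bound $e^{-2\delta} \le \mathrm{Ext}_\eta(X')/\mathrm{Ext}_\eta(X) \le e^{2\delta}$ for every $\eta\in\mf$ (taking the $\eta$-independent sup in (\ref{eq:ker}) and using symmetry in $X,X'$). Hence the sublevel set defining $I(X',\mathcal{U})$ is sandwiched between scalings of the one defining $I(X,\mathcal{U})$: writing $\mathcal{C}(X,\mathcal{U}) := \{\eta\in\mf : \mathrm{Ext}_\eta(X)\le 1,\ [\eta]\in\mathcal{U}\}$ (a cone, since $[\eta]$ depends only on the projective class and $\mathrm{Ext}$ scales quadratically, so $\mathcal{C}(X,\mathcal{U}) = [0,1]\cdot(\mathcal{C}(X,\mathcal{U})\cap E_X)$), the bound on the extremal length ratio yields
\[
e^{-\delta}\cdot\mathcal{C}(X,\mathcal{U}) \ \subseteq\ \mathcal{C}(X',\mathcal{U}) \ \subseteq\ e^{\delta}\cdot\mathcal{C}(X,\mathcal{U}),
\]
because $\mathrm{Ext}_\eta$ is homogeneous of degree $2$ in the transverse measure, so $\mathrm{Ext}_{s\eta}(X) = s^2\,\mathrm{Ext}_\eta(X)$, and the factor $e^{2\delta}$ on extremal lengths becomes a factor $e^{\delta}$ on the measures $\eta$. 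Applying the scaling property of the Thurston measure, $\nu(t\cdot A) = t^{h}\nu(A)$ with $h = 6g-6$, to these inclusions gives
\[
e^{-h\delta}\cdot I(X,\mathcal{U}) \ \le\ I(X',\mathcal{U}) \ \le\ e^{h\delta}\cdot I(X,\mathcal{U}),
\]
which is exactly the claim.

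\textbf{Main obstacle.} The only delicate point is bookkeeping the exponents: one must be careful that Kerckhoff's formula involves the square root of the extremal length ratio, so $d_\mathcal{T}(X,X')<\delta$ gives a factor $e^{2\delta}$ on $\mathrm{Ext}$, which in turn — via the degree-$2$ homogeneity of extremal length in the transverse measure — corresponds to dilating the cone $\mathcal{C}(X,\mathcal{U})$ by only $e^{\delta}$ in the $\mf$-variable, and finally the degree-$h$ homogeneity of $\nu$ produces the stated $e^{\pm h\delta}$. I would also note that the argument does not require $\mathcal{U}$ to be open or to have any regularity: the inclusions of cones hold for arbitrary measurable $\mathcal{U}$ because projective classes are preserved under the scalings involved, so no approximation is needed. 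Everything else is a direct substitution of the definitions and the two scaling properties already recorded in the excerpt.
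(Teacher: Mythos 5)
Your proposal is correct and follows essentially the same route as the paper: rewrite $I(X,\mathcal{U})$ via definition (\ref{eq:hm_3}) as $\nu(\{\eta \in \mf \mid \mathrm{Ext}_\eta(X)\le 1,\ [\eta]\in\mathcal{U}\})$, compare extremal lengths using Kerckhoff's formula, and convert the factor $e^{\pm 2\delta}$ on extremal length into $e^{\pm\delta}$ dilations of the cone and hence $e^{\pm h\delta}$ on the Thurston measure. Your phrasing via the sandwich of cones $e^{-\delta}\cdot\mathcal{C}(X,\mathcal{U})\subseteq\mathcal{C}(X',\mathcal{U})\subseteq e^{\delta}\cdot\mathcal{C}(X,\mathcal{U})$ is just a mild repackaging of the paper's computation (and note $\Re|_{S(X)}$ maps onto $E_X$, not all of $\mf$, a harmless slip in your write-up).
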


\begin{proof}
	By definition (\ref{eq:hm_3}) of the Hubbard-Masur function $\lambda$, for every $X \in \mathcal{T}_g$,
	\begin{equation}
	I(X,\mathcal{U}) = \nu(\{\eta \in \mf \ |  \ \mathrm{Ext}_\eta(X) \leq 1, \thinspace [\eta] \in \mathcal{U} \}). \label{eq:e1}
	\end{equation}
	Fix $\delta > 0$  and $X,X' \in \mathcal{T}_g$ such that $d_{\mathcal{T}}(X,X') < \delta$. By (\ref{eq:ker}), for every $\eta \in \mf$,
	\begin{equation}
	e^{-2\delta} \cdot \mathrm{Ext}_\eta(X) \leq \mathrm{Ext}_\eta(X')  \leq e^{2\delta} \cdot \mathrm{Ext}_\eta(X). \label{eq:e2}
	\end{equation}
	Using (\ref{eq:e1}), (\ref{eq:e2}), and the scaling properties of extremal lengths and the Thurston measure, we bound
	\begin{align*}
	I(X',\mathcal{U}) &=\nu(\{\eta \in \mf \ | \ \mathrm{Ext}_\eta(X') \leq 1, \thinspace [\eta] \in \mathcal{U} \})\\
	&\leq \nu(\{\eta \in \mf \ | \ \mathrm{Ext}_\eta(X) \leq e^{2\delta}, \thinspace [\eta] \in \mathcal{U} \})\\
	&= e^{h\delta} \cdot I(X,\mathcal{U}).
	\end{align*}
	An analogous argument proves the other inequality.
\end{proof}

\subsection*{Proofs of main theorems} We are now ready to prove Theorem \ref{theo:sector_count_box}.

\begin{proof}[Proof of Theorem \ref{theo:sector_count_box}]
	Fix $\mathcal{K} \subseteq \tt$ compact. Let $\usecond{d:vol} = \usecond{d:vol}(\mathcal{K}) > 0$ be as in Lemma \ref{lem:teich_ball_bound},  $\usecond{d:sect_comp_0} = \usecond{d:sect_comp_0}(\mathcal{K}) > 0$ be as in Proposition \ref{prop:sector_comparison_0}, and $\delta_0 = \delta_0(\mathcal{K}) := \min\{\usecond{d:vol}, \usecond{d:sect_comp_0}, 1\} >0$. Fix $X, Y \in \mathcal{K}$ and $\mathcal{B} \subseteq \pmf$ a cube. Let $R > 2$ and $0 < \delta = \delta(\mathcal{K},R) < \delta_0$, to be fixed later. As $0 < \delta < \usecond{d:sect_comp_0}$, Proposition \ref{prop:sector_comparison_0} ensures that, for every $X',Y' \in \tt$ such that $d_\mathcal{T}(X,X') < \delta$ and $d_\mathcal{T}(Y,Y') < \delta$,  
	\begin{gather}
		F_{R-2\delta}(X',Y',\mathcal{B}) - F_R(X,Y,\mathcal{B}) \preceq_{\mathcal{K},\mathrm{DT}} e^{(h-\useconk{k:sect_comp_0} )R}, \label{eq:sector_comp_1}\\
		F_R(X,Y,\mathcal{B}) - F_{R+2\delta}(X',Y',\mathcal{B}) \preceq_{\mathcal{K},\mathrm{DT}} e^{(h-\useconk{k:sect_comp_0} )R}.\label{eq:sector_comp_2}
	\end{gather}
	Multiplying (\ref{eq:sector_comp_1}) and (\ref{eq:sector_comp_2}) by $\mathbbm{1}_{B_\delta(X)}(X') \cdot \mathbbm{1}_{B_\delta(Y)}(Y')$ we obtain the following inequalities, valid for every $X',Y' \in \mathcal{T}_g$,
	\begin{gather}
	\mathbbm{1}_{B_\delta(X)}(X') \cdot \mathbbm{1}_{B_\delta(Y)}(Y') \cdot F_{R-2\delta}(X',Y'\mathcal{B}) - \mathbbm{1}_{B_\delta(X)}(X') \cdot \mathbbm{1}_{B_\delta(Y)}(Y') \cdot F_R(X,Y,\mathcal{B}) \label{eq:sector_comp_3}\\ \preceq_{\mathcal{K},\mathrm{DT}} \mathbbm{1}_{B_\delta(X)}(X') \cdot \mathbbm{1}_{B_\delta(Y)}(Y') \cdot e^{(h-\useconk{k:sect_comp_0} )R},  \nonumber\\
	\mathbbm{1}_{B_\delta(X)}(X') \cdot \mathbbm{1}_{B_\delta(Y)}(Y') \cdot F_R(X,Y,\mathcal{B}) - \mathbbm{1}_{B_\delta(X)}(X') \cdot \mathbbm{1}_{B_\delta(Y)}(Y') \cdot F_{R+2\delta}(X',Y',\mathcal{B}) \label{eq:sector_comp_4}\\ \preceq_{\mathcal{K},\mathrm{DT}} \mathbbm{1}_{B_\delta(X)}(X') \cdot \mathbbm{1}_{B_\delta(Y)}(Y') \cdot e^{(h-\useconk{k:sect_comp_0} )R}. \nonumber
	\end{gather}
	We use (\ref{eq:sector_comp_4}) together with Theorem 	\ref{theo:sector_equidistribution_box_extra} and Lemma \ref{lem:teich_ball_bound} to show that
	\begin{gather}
	F_R(X,Y,\mathcal{B})
	-  \frac{\Lambda_g}{h \cdot \widehat{\mathbf{m}}(\mathcal{M}_g)} \cdot \left( \int_{S(X)} \mathbbm{1}_{\mathcal{B}}([\Re(q)]) \thinspace \lambda(q) \thinspace d\mu(q) \right)  \cdot e^{hR} \label{eq:goal} \\ \preceq_{\mathcal{K},\mathrm{DT}} \delta \cdot e^{hR} + \delta^{-2h} \cdot e^{(h-\useconk{k:sebe})R} + e^{(h-\useconk{k:sect_comp_0})R}. \nonumber
	\end{gather}
	Integrating (\ref{eq:sector_comp_4})  with respect to $d\mathbf{m}(Y') \thinspace d\mathbf{m}(X')$ we deduce
	\begin{gather}
	\mathbf{m}(B_\delta(X))\cdot \mathbf{m}(B_\delta(Y)) \cdot F_R(X,Y,\mathcal{B})	\label{eq:sector_comp_5}\\
	-  \int_{\tt} \mathbbm{1}_{B_\delta(X)}(X') \left(\int_{\tt} \mathbbm{1}_{B_\delta(Y)}(Y') \thinspace F_{R+2\delta}(X',Y',\mathcal{B}) \thinspace d\mathbf{m}(Y') \right) d\mathbf{m}(X') \nonumber \\ \preceq_{\mathcal{K},\mathrm{DT}} \mathbf{m}(B_\delta(X))\cdot \mathbf{m}(B_\delta(Y)) \cdot e^{(h-\useconk{k:sect_comp_0})R} \nonumber.
	\end{gather}
	Fix a measurable fundamental domain $\mathcal{D}_g \subseteq \mathcal{T}_g$ for the action of $\text{Mod}_g$ on $\mathcal{T}_g$. Denote by $s\colon \mathcal{D}_g \to \mathcal{M}_g$ the restriction to $\mathcal{D}_g$ of the quotient map $\up \colon \mathcal{T}_g \to \mathcal{M}_g$. Using this map we can write
	\begin{gather}
	 \int_{\tt} \mathbbm{1}_{B_\delta(X)}(X') \left(\int_{\tt} \mathbbm{1}_{B_\delta(Y)}(Y') \thinspace F_{R+2\delta}(X',Y',\mathcal{B}) \thinspace d\mathbf{m}(Y') \right) d\mathbf{m}(X') \label{eq:progress_1} \\
	 = \sum_{\mc \in \mcg} \int_{\tt} \mathbbm{1}_{B_\delta(X)}(X') \left(\int_{\mm} \mathbbm{1}_{B_\delta(Y)}(\mc.s^{-1}(Y')) \thinspace F_{R+2\delta}(X',\mc.s^{-1}(Y'),\mathcal{B}) \thinspace d\widehat{\mathbf{m}}(Y') \right) d\mathbf{m}(X') \nonumber
	\end{gather}
	Fix $\mc \in \mcg$. An unfolding argument shows that, for every $X' \in \mathcal{M}_g$,
	\begin{gather}
	\int_{\mm} \mathbbm{1}_{B_\delta(Y)}(\mc.s^{-1}(Y')) \thinspace F_{R+2\delta}(X',\mc.s^{-1}(Y'),\mathcal{B}) \thinspace d\widehat{\mathbf{m}}(Y') \label{eq:progress_2}\\
	= \int_{\tt} \sum_{\mathbf{h} \in \mcg} \mathbbm{1}_{\mathcal{D}_g}(\mathbf{h}.Y') \thinspace \mathbbm{1}_{B_\delta(Y)}(\mc.\mathbf{h}.Y') \thinspace \thinspace \mathbbm{1}_{B_R(X') \cap\mathrm{Sect}_\mathcal{B}(X')}(Y')\thinspace d\mathbf{m}(Y') \nonumber\\
	= \int_{\tt} \sum_{\mathbf{h} \in \mcg} \mathbbm{1}_{\mathcal{D}_g}(\mathbf{h}.Y') \thinspace \mathbbm{1}_{B_\delta(Y)}(\mc.\mathbf{h}.Y') \thinspace d\mathbf{m}_{X,\mathcal{B}}^{R+2\delta}(Y') \nonumber.
	\end{gather}
	As $\widehat{\mathbf{m}}_{X,\mathcal{B}}^{R+2\delta}$ is the pushforward to $\mathcal{M}_g$ of the measure $\mathbf{m}_{X,\mathcal{B}}^{R+2\delta}$ on $\tt$,
	\begin{equation}
	\int_{\tt} \sum_{\mathbf{h} \in \mcg} \mathbbm{1}_{\mathcal{D}_g}(\mathbf{h}.Y') \thinspace \mathbbm{1}_{B_\delta(Y)}(\mc.\mathbf{h}.Y') \thinspace d\mathbf{m}_{X,\mathcal{B}}^{R+2\delta}(Y') = \int_{\mathcal{M}_g} \mathbbm{1}_{B_\delta(Y)}(\mathbf{g}. s^{-1}(Y')) \thinspace d \widehat{\mathbf{m}}_{X,\mathcal{B}}^{R+2\delta}. \label{eq:progress_3}
	\end{equation}
	Putting together (\ref{eq:progress_1}), (\ref{eq:progress_2}), and (\ref{eq:progress_3}) we deduce
	\begin{gather}
	\int_{\tt} \mathbbm{1}_{B_\delta(X)}(X') \left(\int_{\tt} \mathbbm{1}_{B_\delta(Y)}(Y') \thinspace F_{R+2\delta}(X',Y',\mathcal{B}) \thinspace d\mathbf{m}(Y') \right) d\mathbf{m}(X') \label{eq:progress_4}  \\
	= \sum_{\mc \in \mcg} \int_{\tt} \mathbbm{1}_{B_\delta(X)}(X') \left(\int_{\mathcal{M}_g} \mathbbm{1}_{B_\delta(Y)}(\mathbf{g}. s^{-1}(Y')) \thinspace d \widehat{\mathbf{m}}_{X,\mathcal{B}}^{R+2\delta}\right) d\mathbf{m}(X') \nonumber.
	\end{gather}
	By Theorem \ref{theo:sector_equidistribution_box_extra},
	\begin{gather}
	\sum_{\mc \in \mcg} \int_{\tt} \mathbbm{1}_{B_\delta(X)}(X') \left(\int_{\mathcal{M}_g} \mathbbm{1}_{B_\delta(Y)}(\mathbf{g}. s^{-1}(Y')) \thinspace d \widehat{\mathbf{m}}_{X,\mathcal{B}}^{R+2\delta}\right) d\mathbf{m}(X') \label{eq:progress_44}\\
	= \frac{\Lambda_g}{h \cdot \widehat{\mathbf{m}}(\mathcal{M}_g)} \cdot \left(\int_{\qut} \mathbbm{1}_{\mathcal{B}}([\Re(q)]) \thinspace \mathbbm{1}_{B_\delta(X)}(\pi(q)) \thinspace \lambda(q) \thinspace d\mu(q)\right) \cdot \left(\int_{\tt} \mathbbm{1}_{B_\delta(Y)}(Y')  \thinspace d\mathbf{m}(Y') \right) \cdot e^{h(R+2\delta)} \nonumber \\
	+ O_{\mathcal{K},\mathrm{DT}}\left(e^{(h-\useconk{k:sebe})R}\right). \nonumber
	\end{gather}
	Using (\ref{eq:fiberwise_measures}) and Proposition \ref{prop:lead} we deduce
	\begin{gather}
	\mathbf{m}(B_\delta(X)) \cdot \thinspace e^{-h\delta} \cdot \left( \int_{S(X)} \mathbbm{1}_{\mathcal{B}}([\Re(q)]) \thinspace \lambda(q) \thinspace d\mu(q) \right)
	\leq \int_{\qut} \mathbbm{1}_{\mathcal{B}}([\Re(q)]) \thinspace \mathbbm{1}_{B_\delta(X)}(\pi(q)) \thinspace \lambda(q) \thinspace d\mu(q), \label{eq:progress_5}\\
	\int_{\qut} \mathbbm{1}_{\mathcal{B}}([\Re(q)]) \thinspace \mathbbm{1}_{B_\delta(X)}(\pi(q)) \thinspace \lambda(q) \thinspace d\mu(q)  \leq \mathbf{m}(B_\delta(X)) \cdot \thinspace e^{h\delta} \cdot \left( \int_{S(X)} \mathbbm{1}_{\mathcal{B}}([\Re(q)]) \thinspace \lambda(q) \thinspace d\mu(q) \right). \label{eq:progress_6}
	\end{gather}
	As $0 < \delta < 1$, the mean value theorem ensures
	\begin{equation}
	\label{eq:exponential_approx}
	e^{h\delta} = 1 + O_g(\delta).
	\end{equation}
	Putting together (\ref{eq:progress_5}), (\ref{eq:progress_6}), and (\ref{eq:exponential_approx}) we deduce
	\begin{gather}
	\int_{\qut} \mathbbm{1}_{\mathcal{B}}([\Re(q)]) \thinspace \mathbbm{1}_{B_\delta(X)}(\pi(q)) \thinspace \lambda(q) \thinspace d\mu(q) \label{eq:progress_7}\\
	= \mathbf{m}(B_\delta(X)) \cdot  \left( \int_{S(X)} \mathbbm{1}_{\mathcal{B}}([\Re(q)]) \thinspace \lambda(q) \thinspace d\mu(q) \right) + O_{g}(\mathbf{m}(B_\delta(X)) \cdot \delta). \nonumber
	\end{gather}
	Combining (\ref{eq:progress_44}), (\ref{eq:exponential_approx}), and (\ref{eq:progress_7})  we deduce
	\begin{gather}
	\sum_{\mc \in \mcg} \int_{\tt} \mathbbm{1}_{B_\delta(X)}(X') \left(\int_{\mathcal{M}_g} \mathbbm{1}_{B_\delta(Y)}(\mathbf{g}. s^{-1}(Y')) \thinspace d \widehat{\mathbf{m}}_{X,\mathcal{B}}^{R+2\delta}\right) d\mathbf{m}(X') \label{eq:progress_8}\\
	= \frac{\Lambda_g}{h \cdot \widehat{\mathbf{m}}(\mathcal{M}_g)} \cdot \mathbf{m}(B_\delta(X)) \cdot \mathbf{m}(B_\delta(Y)) \cdot \left( \int_{S(X)} \mathbbm{1}_{\mathcal{B}}([\Re(q)]) \thinspace \lambda(q) \thinspace d\mu(q) \right)  \cdot e^{hR} \nonumber \\
	+ O_{\mathcal{K},\mathrm{DT}}\left( \mathbf{m}(B_\delta(X)) \cdot \mathbf{m}(B_\delta(Y)) \cdot \delta \cdot e^{hR} + e^{(h-\useconk{k:sebe})R}\right). \nonumber
	\end{gather}
	Putting together (\ref{eq:sector_comp_5}), (\ref{eq:progress_4}), and (\ref{eq:progress_8}), and dividing by $\mathbf{m}(B_\delta(X)) \cdot \mathbf{m}(B_\delta(Y))$ we deduce
	\begin{gather}
	F_R(X,Y,\mathcal{B})
	-  \frac{\Lambda_g}{h \cdot \widehat{\mathbf{m}}(\mathcal{M}_g)} \cdot \left( \int_{S(X)} \mathbbm{1}_{\mathcal{B}}([\Re(q)]) \thinspace\lambda(q) \thinspace d\mu(q) \right)  \cdot e^{hR} \label{eq:almost_0}\\ \preceq_{\mathcal{K},\mathrm{DT}} \delta \cdot e^{hR} + \mathbf{m}(B_\delta(X))^{-1} \cdot \mathbf{m}(B_\delta(Y))^{-1} \cdot e^{(h-\useconk{k:sebe})R} + e^{(h-\useconk{k:sect_comp_0})R}. \nonumber
	\end{gather}
	As $0 < \delta < \usecond{d:vol}$, (\ref{eq:almost_0}) and Lemma \ref{lem:teich_ball_bound} imply (\ref{eq:goal}) holds, that is,
	\begin{gather}
	F_R(X,Y,\mathcal{B})
	-  \frac{\Lambda_g}{h \cdot \widehat{\mathbf{m}}(\mathcal{M}_g)} \cdot \left( \int_{S(X)} \mathbbm{1}_{\mathcal{B}}([\Re(q)]) \thinspace\lambda(q) \thinspace d\mu(q) \right)  \cdot e^{hR} \label{eq:almost_1} \\ \preceq_{\mathcal{K},\mathrm{DT}} \delta \cdot e^{hR} + \delta^{-2h} \cdot e^{(h-\useconk{k:sebe})R} + e^{(h-\useconk{k:sect_comp_0})R}. \nonumber
	\end{gather}
	Analogous arguments using (\ref{eq:sector_comp_3}) instead of (\ref{eq:sector_comp_4}) give the lower bound
	\begin{gather}
	F_R(X,Y,\mathcal{B})
	-  \frac{\Lambda_g}{h \cdot \widehat{\mathbf{m}}(\mathcal{M}_g)} \cdot \left( \int_{S(X)} \mathbbm{1}_{\mathcal{B}}([\Re(q)]) \thinspace \lambda(q) \thinspace d\mu(q) \right)  \cdot e^{hR} \label{eq:almost_2} \\ \succeq_{\mathcal{K},\mathrm{DT}} \delta \cdot e^{hR} + \delta^{-2h} \cdot e^{(h-\useconk{k:sebe})R} + e^{(h-\useconk{k:sect_comp_0})R}. \nonumber
	\end{gather}
	Combining (\ref{eq:almost_1}) with (\ref{eq:almost_2}) we deduce
	\begin{align}
	F_R(X,Y,\mathcal{B})
	&=\frac{\Lambda_g}{h \cdot \widehat{\mathbf{m}}(\mathcal{M}_g)} \cdot \left( \int_{S(X)} \mathbbm{1}_{\mathcal{B}}([\Re(q)]) \thinspace \lambda(q) \thinspace d\mu(q) \right)  \cdot e^{hR} \label{eq:almost_3} \\
	&\phantom{=} +O_{\mathcal{K},\mathrm{DT}}\left(\delta \cdot e^{hR} + \delta^{-2h} \cdot e^{(h-\useconk{k:sebe})R} + e^{(h-\useconk{k:sect_comp_0})R}\right). \nonumber
	\end{align}
	Let $\delta = \delta(\mathcal{K},R) := \delta_0 e^{-\eta R}$ with $\eta > 0$ small enough so that $2h\delta < \useconk{k:sebe}$. Consider $\useconk{sect_count_box} = \useconk{sect_count_box}(g) := \min\{\eta, \useconk{k:sebe}- 2h\eta,\useconk{k:sect_comp_0}\} > 0$. It follows from (\ref{eq:almost_3}) that, for every $R > 2$,
	\begin{align}
	F_R(X,Y,\mathcal{B})
	=\frac{\Lambda_g}{h \cdot \widehat{\mathbf{m}}(\mathcal{M}_g)} \cdot \left( \int_{S(X)} \mathbbm{1}_{\mathcal{B}}([\Re(q)]) \thinspace \lambda(q) \thinspace d\mu(q) \right)  \cdot e^{hR}
	+ O_{\mathcal{K},\mathrm{DT}}\left(e^{(h-\useconk{sect_count_box})R}\right). \nonumber
	\end{align}
	The same equality holds for every $R > 0$ by increasing the implicit constant in the error term.
\end{proof}

We now use Theorem \ref{theo:sector_count_box} and an approximation argument to prove Theorem \ref{theo:sector_count_smooth}.

\begin{proof}[Proof of Theorem \ref{theo:sector_count_smooth}]
	Fix $\mathcal{K} \subseteq \tt$ compact, $X, Y \in \mathcal{K}$, $\psi \in \mathcal{PC}^1(\pmf)$, and $R > 0$. Let $0 < \delta = \delta(R) < 1$, to be fixed later. Consider a partition 
	$
	\pmf = \bigcup_{i=1}^{n(\delta)} \mathcal{B}_i
	$
	of $\pmf = \mathbf{S}^{6g-7}$ into $n(\delta) \preceq_g \delta^{-h}$ disjoint cubes $\mathcal{B}_i \subseteq \pmf$ of diameter $\mathrm{diam}(\mathcal{B}_i) \leq \delta$. For every $i \in \{1,\dots,n(\delta)\}$ denote
	\[
	m_i := \min_{[\eta] \in \mathcal{B}_i} \psi([\eta]), \quad M_i := \max_{[\eta] \in \mathcal{B}_i} \psi([\eta]).
	\]
	As $\psi \in \mathcal{PC}^1(\pmf)$, for every $i \in \{1,\dots,n(\delta)\}$,
	\begin{equation}
	\label{eq:difference}
	|M_i - m_i| \preceq_{\mathrm{DT}} \|\psi\|_{\mathcal{PC}^1} \cdot \delta.
	\end{equation}
	Consider the approximations $\psi_\delta^{\min} \leq \psi \leq \psi_{\delta}^{\max}$ given by
	\[
	\psi^{\min}_\delta([\eta]) := \sum_{i=1}^{n(\delta)} m_i \cdot \mathbbm{1}_{\mathcal{B}_i}([\eta]), \quad \psi^{\max}_\delta([\eta]) := \sum_{i=1}^{n(\delta)}  M_i \cdot \mathbbm{1}_{\mathcal{B}_i}([\eta]).
	\]
	Notice that
	\begin{equation}
	F_R(X,Y,\psi_\delta^{\min}) \leq F_R(X,Y,\psi)  \leq F_R(X,Y,\psi_\delta^{\max}). \label{eq:p_1}
	\end{equation}
	By Theorem \ref{theo:sector_count_box},
	\begin{align}
	F_R(X,Y,\psi_\delta^{\min}) &= \frac{\Lambda_g}{h \cdot \widehat{\mathbf{m}}(\mathcal{M}_g)} \cdot \left(\int_{S(X)} \psi_{\delta}^{\min}([\Re(q)]) \thinspace \lambda(q) \thinspace ds_X(q)\right) \cdot e^{hR} \label{eq:p_2}\\
	&\phantom{=} + O_\mathcal{K,\mathrm{DT}}\left(\|\psi\|_\infty \cdot n(\delta) \cdot e^{(h-\useconk{sect_count_box})R}\right), \nonumber\\
	F_R(X,Y,\psi_\delta^{\max}) &= \frac{\Lambda_g}{h \cdot \widehat{\mathbf{m}}(\mathcal{M}_g)} \cdot \left(\int_{S(X)} \psi_{\delta}^{\max}([\Re(q)]) \thinspace \lambda(q) \thinspace ds_X(q)\right) \cdot e^{hR} \label{eq:p_3} \\
	&\phantom{=} + O_\mathcal{K,\mathrm{DT}}\left(\|\psi\|_\infty \cdot n(\delta) \cdot e^{(h-\useconk{sect_count_box} )R}\right). \nonumber
	\end{align}
	By (\ref{eq:difference}),
	\begin{equation}
	\int_{S(X)} (\psi_{\delta}^{\max}([\Re(q)]) - \psi_{\delta}^{\min}([\Re(q)])) \thinspace \lambda(q) \thinspace ds_X(q) \preceq_{\mathrm{DT}} \|\psi\|_{\mathcal{PC}^1} \cdot \delta. \label{eq:p_4}
	\end{equation}
	Using (\ref{eq:p_1}), (\ref{eq:p_2}), (\ref{eq:p_3}), (\ref{eq:p_4}), and the inequality $n(\delta) \preceq_g \delta^{-h}$ we deduce
	\begin{align}
	F_R(X,Y,\psi) &= \frac{\Lambda_g}{h \cdot \widehat{\mathbf{m}}(\mathcal{M}_g)} \cdot \left(\int_{S(X)} \psi([\Re(q)]) \lambda(q) \thinspace ds_X(q)\right) \cdot e^{hR}  \label{eq:u1}\\
	&\phantom{=} + O_\mathcal{K,\mathrm{DT}}\left(\|\psi\|_{\mathcal{PC}^1} \cdot \left(\delta \cdot e^{hR} + \delta^{-h} \cdot e^{(h-\useconk{sect_count_box})R}\right)\right). \nonumber
	\end{align}
	Let $\delta = \delta(R) := e^{-\eta R}$ with $\eta > 0$ small enough so that $h\eta < \useconk{sect_count_box}$. Consider $\useconk{k:sect_count_smooth} = \useconk{k:sect_count_smooth}(g) := \min\{\eta, \useconk{sect_count_box}- h\eta\} > 0$. From (\ref{eq:u1}) we conclude that, for every $R > 0$,
	\[
	F_R(X,Y,\psi) = \frac{\Lambda_g}{h \cdot \widehat{\mathbf{m}}(\mathcal{M}_g)} \cdot \left(\int_{S(X)} \psi([\Re(q)]) \thinspace \lambda(q) \thinspace ds_X(q)\right) \cdot e^{hR} 
	+ O_\mathcal{K}\left(\|\psi\|_{\mathcal{PC}^1} \cdot e^{(h-\useconk{k:sect_count_smooth})R}\right). \qedhere
	\]
\end{proof}

\section{Effective lattice point count in bisectors of Teichmüller space}

\subsection*{Outline of this section.} The techniques used to prove the main results of \S8 and \S9 can also be used to prove effective mean equistribution theorems for sectors of $\qut$ and effective lattice point count theorems for bisectors of $\mathcal{T}_g$. In this section we state theorems of this kind and briefly outline their proofs. 

\subsection*{Equidistribution of sectors in $\boldsymbol{\qut}$.} Recall that $\mathbf{m} := \pi_* \mu$ denotes the pushforward to $\tt$ of the Masur-Veech measure $\mu$ on $\qut$ under the projection $\pi \colon \qut \to \tt$. Recall that $B_R(X) \subseteq \tt$ denotes the ball of radius $R > 0$ centered at $X \in \tt$ with respect to the Teichmüller metric. Recall that $q_s \colon \mathcal{T}_g \times \mathcal{T}_g \to \mathcal{Q}^1\mathcal{T}_g$ denotes the map which to every pair $X,Y \in \mathcal{T}_g$ assigns the quadratic differential $q_s(X,Y) \in S(X)$ corresponding to the tangent direction at $X$ of the unique Teichmüller geodesic segment from $X$ to $Y$. Fix $X \in \tt$ and $V \subseteq S(X)$ measurable. Recall that
\[
\mathrm{Sect}_V(X) := \{Y \in \tt \ | \ [\Re(q_s(X,Y))] \in V \}.
\]
Recall that $A_X \colon S(X) \times \mathbf{R}_{>0} \to \qut$ denotes the map $A_X(q,t) := a_t q$. Denote by $\mathcal{R}_X \subseteq \qut$ the image of this map. The projection $\pi \colon \qut \to \tt$ restricts to a homeomorphism $\pi|_{\mathcal{R}_X} \colon \mathcal{R}_X \to \tt \backslash \{X\}$. Recall that, for every $R > 0$,  $\mathbf{m}_{X,V}^R$ denotes the restriction of the measure $\mathbf{m}$ to the set $B_R(X) \cap \mathrm{Sect}_V(X) \subseteq \tt$. On $\qut$ consider the measure $\mu_{X,V}^R$ given by
\[
\mu_{X,V}^R := (\pi|_{\mathcal{R}_X})^*(\mathbf{m}_{X,V}^R).
\]
Recall that $\Phi_X \colon S(X) \times \mathbf{R}_{>0} \to \mathcal{T}_g$ denotes the map $\Phi_X(q,t) := \pi(a_tq)$ and that the volume form $\mathbf{m}$ on $\mathcal{T}_g$ can be described in polar coordinates as
\begin{equation*}
|\Phi_X^*(\mathbf{m})(q,t)| := \Delta(q,t) \cdot |s_X(q) \wedge dt|,
\end{equation*}
where $\Delta \colon S(X) \cap \mathcal{Q}^1\mathcal{T}_g(\mathbf{1}) \times \mathbf{R}_{>0} \to \mathbf{R}_{>0}$ is a positive, smooth function. In terms of this description,
\[
\mu_{X,V}^R = (A_X)_*(\Delta(q,t) \cdot \mathbbm{1}_V(q) \cdot \mathbbm{1}_{(0,R)}(t) \cdot |s_X(q) \wedge dt|).
\]
More generally, given an arbitrary non-negative, measurable function $\varphi \colon \mathcal{Q}^1\mathcal{T}_g \to \mathbf{R}_{\geq 0}$ and $R> 0$, consider the measure $\mu_{X,\varphi}^R$ on $\qut$ given by
\[
\mu_{X,\varphi}^R := (A_X)_*(\Delta(q,t) \cdot \varphi(q) \cdot \mathbbm{1}_{(0,R)}(t) \cdot |s_X(q) \wedge dt|).
\]
Denote by $\widehat{\mu}_{X,\varphi}^R$ the pushforward of $\mu_{X,\varphi}^R$ to $\qum$. The measures $\widehat{\mu}_{X,\varphi}^R$ keep track of how the sector centered at $X$ and cut out by $\varphi$ wraps around $\qum$. 

As in \S 8, we will be particularly interested in the case $\varphi = \psi \circ [\Re]$ for $\psi \colon \pmf \to \mathbf{R}_{\geq 0}$ a measurable function in a suitable class and $[\Re] \colon \qut \to \mf$ the map $[\Re](q) := [\Re(q)]$. Given $X \in \mathcal{T}_g$, $\psi \colon \pmf \to \mathbf{R}_{\geq 0}$ measurable, and $R > 0$, denote $\mu_{X,\psi}^R := \mu_{X,\psi \circ [\Re]}^R$ and $\widehat{\mu}_{X,\psi}^R := \widehat{\mu}_{X,\psi \circ [\Re]}^R$.  

Recall that $\widehat{\mu}$ denotes the Masur-Veech measure on $\qum$ and that $\widehat{\mathbf{m}}$ denotes the local pushforward to $\mm$ of the measure $\mathbf{m}$. Recall that $\mathcal{PC}^1(\pmf)$ denotes the class of piecewise $\mathcal{C}^1$ functions $\psi \colon \pmf \to \mathbf{R}$. Recall the definition of the Ratner class of observables $\mathcal{R}(\qum,\widehat{\mu}) \subseteq L^2(\qum,\widehat{\mu})$ introduced in \S 3. Recall that $\up \colon \tt \to \mm$ denotes the quotient map and that $\underline{\pi} \colon \qum \to \mm$ denote the standard projection. Recall that $h := 6g-6$. The techniques used in the proof of Theorem \ref{theo:sector_equidistribution} can also be used to prove the following result, which generalizes Theorem \ref{theo:ball_equidistribution} to sectors of $\qut$ cut out by functions in $\mathcal{PC}^1(\pmf)$. 

\newconk{k:bisect_count_1}
\begin{theorem}
	\label{theo:bisector_equidistribution}
	Let $\mathcal{K} \subseteq \mathcal{T}_g$ be a compact subset, $\phi_1 \in L^\infty(\tt,\mathbf{m})$ be an essentially bounded function with $\esupp(\phi_1) \subseteq \mathcal{K}$, and $\psi_1  \in \mathcal{PC}^1(\pmf)$ non-negative. Then, for every function $\varphi_2 \in L^\infty(\qum,\widehat{\mu}) \cap \mathcal{R}(\qum,\widehat{\mu})$ with $\esupp(\varphi_2) \subseteq \underline{\pi}^{-1}(\up(\mathcal{K}))$ and every $R > 0$,
	\begin{gather*}
	\int_{\tt} \phi_1(X) \left( \int_{\mm} \varphi_2(q) \thinspace d\widehat{\mu}_{X,\psi_1}^R(q) \right) d\mathbf{m}(X) \\
	=  \frac{1}{h \cdot \widehat{\mathbf{m}}(\mathcal{M}_g)} \cdot \left(\int_{\qut} \psi_1([\Re(q)]) \thinspace \phi_1(\pi(q)) \thinspace \lambda(q) \thinspace d\mu(q)\right) \cdot \left(\int_{\qum} \varphi_2(q) \thinspace \lambda(q)  \thinspace d\widehat{\mu}(q) \right) \cdot e^{hR} \\
	+ O_\mathcal{K}\left( \|\psi_1 \|_{\mathcal{PC}^1} \cdot \|\phi_1\|_\infty \cdot \left(\|\varphi_2\|_\infty + \|\varphi_2\|_{\mathcal{R}(\widehat{\mu})}\right) \cdot e^{(h-\useconk{k:bisect_count_1})R}\right),
	\end{gather*}
	where $\useconk{k:bisect_count_1} = \useconk{k:bisect_count_1}(g) > 0$ is a constant depending only on $g$.
\end{theorem}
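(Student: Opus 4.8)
The plan is to follow the same three-layer structure used for Theorem~\ref{theo:sector_equidistribution} and Theorem~\ref{theo:ball_equidistribution}, but carrying the second observable as a function $\varphi_2$ on $\qum$ rather than a function $\phi_2$ on $\mm$. First I would state and prove a ``general'' version of the theorem, Theorem~\ref{theo:bisector_equidistribution_general} (analogous to Theorem~\ref{theo:sector_equidistribution_general}), in which $\psi_1\circ[\Re]$ is replaced by an arbitrary non-negative observable $\varphi_1\in\mathcal{R}_{\mathrm{loc}}(\qut,\mu)$. The starting point is the polar-coordinate identity for $\widehat{\mu}_{X,\varphi_1}^R$: unwinding the definitions,
\[
\int_{\tt}\phi_1(X)\left(\int_{\qum}\varphi_2(q)\,d\widehat{\mu}_{X,\varphi_1}^R(q)\right)d\mathbf{m}(X)
=\int_{\mathcal{M}_g}\widetilde{\phi}_1(X)\left(\int_0^R\int_{S(X)}\varphi_2(p(a_tq))\,\varphi_1(q)\,\Delta(q,t)\,ds_X(q)\,dt\right)d\widehat{\mathbf{m}}(X),
\]
where $\widetilde\phi_1$ is the $\mcg$-average of $\phi_1$ and the only change from the proof of Theorem~\ref{theo:ball_equidistribution} is that the test function downstairs is $\varphi_2\circ p$ evaluated on $a_tq\in\qum$ rather than $\phi_2$ evaluated on $\pi(a_tq)\in\mm$. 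Exactly the same reductions apply verbatim: Theorem~\ref{theo:thin_sector_sum} (in its strong form Theorem~\ref{theo:thin_sector_strong}) removes the contribution of $q\notin p^{-1}(K_\delta(\mathbf{1}))$ and, by Fubini and a symmetric argument, of $a_tq\notin p^{-1}(K_\delta(\mathbf{1}))$; Theorem~\ref{theo:thin_trajectories_sum} removes $q\notin p^{-1}(K_\epsilon(t))$; Theorem~\ref{theo:polar_coordinates_estimate} then replaces $\Delta(q,t)$ by $\lambda(a_tq)\lambda(q)e^{ht}$ at the cost of an error $\preceq \delta^{-2(h-1)}e^{(h-\useconc{polar})t}$.

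After these reductions the main term is, up to negligible truncations,
\[
\int_0^R e^{ht}\left(\int_{\qum}\varphi_2(a_tq)\,\mathbbm{1}_{K_\delta(\mathbf{1})}(a_tq)\,\lambda(a_tq)\cdot\big((\varphi_1\,\widetilde{\phi_1\circ\pi})\cdot\mathbbm{1}_{K_\delta(\mathbf{1})}\cdot\lambda\big)(q)\,d\widehat{\mu}(q)\right)dt,
\]
using the disintegration (\ref{eq:disintegrate}) to push the $S(X)$-integral against $d\widehat{\mathbf{m}}$ up to an integral against $d\widehat{\mu}$ on $\qum$. Now Theorem~\ref{theo:exp_mixing} applies with the two Ratner observables $\varphi_2\cdot\mathbbm{1}_{K_\delta(\mathbf{1})}\cdot\lambda$ and $(\varphi_1\cdot(\widetilde{\phi_1}\circ\pi))\cdot\mathbbm{1}_{K_\delta(\mathbf{1})}\cdot\lambda$. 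The first factor is controlled by $\|\varphi_2\|_{\mathcal{R}(\widehat\mu)}$, $\|\varphi_2\|_\infty$, Proposition~\ref{prop:hm_inv_sum} (so that $\lambda$ is $\mathrm{SO}(2)$-invariant and does not hurt the Lipschitz-in-$\theta$ norm), Proposition~\ref{prop:HM_bound_sum} (so that $\mathbbm{1}_{K_\delta(\mathbf{1})}\lambda\preceq\delta^{-(h-1)}$), and the product rule for the $\mathcal{R}$-norm; the second factor is controlled by $\|\varphi_1\cdot\mathbbm{1}_{\pi^{-1}(\mathcal{K})}\|_{\mathcal{R}(\mu)}$ and $\|\phi_1\|_\infty$ just as in the proof of Theorem~\ref{theo:sector_equidistribution_general}, since $\widetilde\phi_1\circ\pi$ is $\mathrm{SO}(2)$-invariant and essentially bounded. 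The resulting leading term is
\[
\frac{1}{h\cdot\widehat{\mathbf m}(\mathcal M_g)}\Big(\int_{\qum}\varphi_2\,\mathbbm{1}_{K_\delta(\mathbf 1)}\lambda\,d\widehat\mu\Big)\Big(\int_{\qum}(\varphi_1\cdot(\phi_1\circ\pi))\,\mathbbm{1}_{K_\delta(\mathbf 1)}\lambda\,d\widehat\mu\Big)e^{hR},
\]
after integrating $e^{ht}$ from $0$ to $R$ and absorbing the $e^{(h-\kappa)R}$ terms; the error coming from exponential mixing is $\preceq\delta^{-2(h-1)}e^{(h-\useconk{exp_mix})R}$. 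Finally one reinstates the $\mathbbm{1}_{K_\delta(\mathbf 1)}$-truncations at the cost of $O(\delta)$: for the $\varphi_1\cdot(\phi_1\circ\pi)$ factor this is Proposition~\ref{prop:small_HM_integral_sum} (after unfolding to $\pi^{-1}(\mathcal K)$), and for the $\varphi_2$ factor it is the analogous bound $\int_{\qum\setminus K_\delta(\mathbf 1)}\lambda\,d\widehat\mu\preceq\delta$ obtained by the same train-track computation on a compact set containing $\esupp(\varphi_2)$, using Proposition~\ref{prop:small_thu_measure}. Choosing $\delta=e^{-\eta R}$ with $\eta$ small enough that $2(h-1)\eta$ is less than $\min\{\useconk{exp_mix},\useconc{polar},\useconk{thin_sect_sum},\useconk{thin_traj_sum_k},\useconk{large_dev_k}\}$ balances all errors and yields Theorem~\ref{theo:bisector_equidistribution_general}.

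To deduce Theorem~\ref{theo:bisector_equidistribution} from the general version, I would apply Proposition~\ref{prop:ratner_observable_new} exactly as in the deduction of Theorem~\ref{theo:sector_equidistribution} from Theorem~\ref{theo:sector_equidistribution_general}: for $\psi_1\in\mathcal{PC}^1(\pmf)$ we have $\psi_1\circ[\Re]\in\mathcal{R}_{\mathrm{loc}}(\qut,\mu)$ with $\|(\psi_1\circ[\Re])\cdot\mathbbm{1}_{\pi^{-1}(\mathcal K)}\|_{\mathcal{R}(\mu)}\preceq_{\mathcal K}\|\psi_1\|_{\mathcal{PC}^1}$, so setting $\varphi_1:=\psi_1\circ[\Re]$ turns the error term of Theorem~\ref{theo:bisector_equidistribution_general} into the stated one and the main term into the stated one (noting $\int_{\qum}(\psi_1\circ[\Re])\cdot(\phi_1\circ\pi)\cdot\lambda\,d\widehat\mu$ unfolds to $\int_{\qut}\psi_1([\Re(q)])\phi_1(\pi(q))\lambda(q)\,d\mu(q)$ since $\psi_1\circ[\Re]$, $\phi_1\circ\pi$ and $\lambda$ are all $\mcg$-invariant). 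The main obstacle I anticipate is bookkeeping rather than conceptual: one must check carefully that the ``symmetric'' truncation step (removing $a_tq\notin p^{-1}(K_\delta(\mathbf 1))$) still works when the downstairs test function is $\varphi_2\circ p$ on $\qum$ instead of a pullback from $\mm$ — this requires that $\esupp(\varphi_2)\subseteq\underline\pi^{-1}(\up(\mathcal K))$ so that the relevant part of the integral lives over $\mcg\cdot\pi^{-1}(\mathcal K)$ and Theorem~\ref{theo:thin_sector_sum} is applicable — and that the new factor $\lambda$ multiplying $\varphi_2$ in the leading term does not spoil the $\mathcal{R}(\widehat\mu)$-norm estimate, which is precisely where Proposition~\ref{prop:hm_inv_sum} and Proposition~\ref{prop:HM_bound_sum} (the latter on a compact set) are essential. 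Everything else is a faithful transcription of the arguments already carried out in \S3 and \S8.
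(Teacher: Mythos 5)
Your overall plan coincides with the paper's: Theorem~\ref{theo:bisector_equidistribution} is obtained there by exactly the route you describe, namely a general version for non-negative $\varphi_1\in\mathcal{R}_{\mathrm{loc}}(\qut,\mu)$ proved by rerunning the proof of Theorem~\ref{theo:ball_equidistribution} with the second test function living on $\qum$ (so the factor $\lambda$ stays attached to $\varphi_2$ and the constant is $1/(h\cdot\widehat{\mathbf{m}}(\mathcal{M}_g))$ with no $\Lambda_g$ on that side), followed by Proposition~\ref{prop:ratner_observable_new} to specialize to $\varphi_1=\psi_1\circ[\Re]$. Your error bookkeeping ($\|\varphi_2\|_\infty$ for the thin-part and truncation-removal steps, $\|\varphi_2\|_{\mathcal{R}(\widehat{\mu})}$ for mixing, using the $\mathrm{SO}(2)$-invariance of $\lambda$ and the bound $\mathbbm{1}_{K_\delta(\mathbf{1})}\lambda\preceq_{\mathcal{K}}\delta^{-(h-1)}$ over the compact set supporting $\varphi_2$) is also the intended one.

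One step is misstated and would fail as written: neither $\phi_1\circ\pi$ nor $\psi_1\circ[\Re]$ is $\mcg$-invariant, so your first displayed identity (moving the outer integral to $\mm$ against the $\mcg$-average $\widetilde{\phi}_1$ while keeping $\varphi_1$ upstairs) is false, and likewise the expression $\int_{\qum}(\psi_1\circ[\Re])\cdot(\phi_1\circ\pi)\cdot\lambda\,d\widehat{\mu}$ and the observable $(\varphi_1\cdot\widetilde{\phi_1\circ\pi})\cdot\mathbbm{1}_{K_\delta(\mathbf{1})}\cdot\lambda$ that you feed into Theorem~\ref{theo:exp_mixing} are not defined on $\qum$. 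The correct device, the same one that produces Theorems~\ref{theo:ball_equidistribution_extra} and \ref{theo:sector_equidistribution_box_extra}, is to unfold the whole product: set $F:=(\phi_1\circ\pi)\cdot\varphi_1\cdot\lambda\cdot\mathbbm{1}_{K_\delta(\mathbf{1})}$ (together with the $\mathbbm{1}_{K_\epsilon(t)}$ truncation, removed beforehand via Theorem~\ref{theo:large_deviations_sum}), which is supported in $\pi^{-1}(\mathcal{K})$, and apply exponential mixing to the $\mcg$-sum $\widetilde{F}(p(q)):=\sum_{\mc\in\mcg}F(\mc.q)$, which does descend to $\qum$, satisfies $\int_{\qum}\widetilde{F}\,d\widehat{\mu}=\int_{\qut}F\,d\mu$ (giving exactly the stated leading factor after removing the truncation via Proposition~\ref{prop:small_HM_integral_sum}), and has $\|\widetilde{F}\|_{\mathcal{R}(\widehat{\mu})}\preceq_{\mathcal{K}}\|F\|_{\mathcal{R}(\mu)}\preceq_{\mathcal{K}}\delta^{-(h-1)}\cdot\|\phi_1\|_\infty\cdot\|\varphi_1\cdot\mathbbm{1}_{\pi^{-1}(\mathcal{K})}\|_{\mathcal{R}(\mu)}$ by proper discontinuity of the $\mcg$-action. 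With this correction the rest of your argument goes through as you describe.
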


As in \S 8, we fix a set of Dehn-Thurston coordinates of $\mf$, consider the corresponding identifications $\mf = \mathbf{R}^{6g-6}$ and $\pmf = \mathbf{S}^{6g-7}$, and endow $\mathbf{S}^{6g-7}$ with the restriction of the Riemannian Euclidean metric. Recall that when an implicit constant depends on the choice of Dehn-Thurston coodinates we add the subscript $\mathrm{DT}$. As in \S 8, we consider cubes $\mathcal{B} \subseteq \pmf = \mathbf{S}^{6g-7}$ with closed and/or open facets. 

Given $X \in \mathcal{T}_g$, $\mathcal{B} \subseteq \pmf$ a cube, and $R > 0$, denote $\mu_{X,\mathcal{B}}^R := \mu_{X,\mathbbm{1}_\mathcal{B}}^R$ and $\widehat{\mu}_{X,\mathcal{B}}^R := \widehat{\mu}_{X,\mathbbm{1}_\mathcal{B}}^R$. The approximation arguments used to prove Theorem \ref{theo:sector_equidistribution_box}, in particular, Proposition \ref{prop:approx}, can be used to deduce the following result from Theorem \ref{theo:bisector_equidistribution}. This result generalizes Theorem \ref{theo:ball_equidistribution} to sectors of $\qut$ cut out by cubes of $\pmf$.

\newconk{k:bisec_equid_2}
\begin{theorem}
	\label{theo:bisector_equidistribution_box}
	Let $\mathcal{K} \subseteq \mathcal{T}_g$ be a compact subset, $\phi_1 \in L^\infty(\tt,\mathbf{m})$ be an essentially bounded function with $\esupp(\phi_1) \subseteq \mathcal{K}$, and $\mathcal{B}_1 \subseteq \pmf$ be a cube. Then, for every  function $\varphi_2 \in L^\infty(\qum,\widehat{\mu}) \cap \mathcal{R}(\qum,\widehat{\mu})$ with $\esupp(\varphi_2) \subseteq \underline{\pi}^{-1}(\up(\mathcal{K}))$ and every $R > 0$,
	\begin{gather*}
	\int_{\tt} \phi_1(X) \left( \int_{\mm} \varphi_2(q) \thinspace d\widehat{\mu}_{X,\mathcal{B}_1}^R(q) \right) d\mathbf{m}(X) \\
	=  \frac{1}{h \cdot \widehat{\mathbf{m}}(\mathcal{M}_g)} \cdot \left(\int_{\qut} \mathbbm{1}_{\mathcal{B}_1}([\Re(q)]) \thinspace \phi_1(\pi(q)) \thinspace \lambda(q) \thinspace d\mu(q)\right) \cdot \left(\int_{\qum} \varphi_2(q) \thinspace \lambda(q)  \thinspace d\widehat{\mu}(q) \right) \cdot e^{hR} \\
	+ O_{\mathcal{K},\mathrm{DT}}\left( \|\phi_1\|_\infty \cdot \left(\|\varphi_2\|_\infty + \|\varphi_2\|_{\mathcal{R}(\widehat{\mu})}\right) \cdot e^{(h-\useconk{k:bisec_equid_2})R}\right),
	\end{gather*}
	where $\useconk{k:bisec_equid_2} = \useconk{k:bisec_equid_2}(g) > 0$ is a constant depending only on $g$.
\end{theorem}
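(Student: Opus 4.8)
\textbf{Proof proposal for Theorem \ref{theo:bisector_equidistribution_box}.}

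The plan is to deduce Theorem \ref{theo:bisector_equidistribution_box} from Theorem \ref{theo:bisector_equidistribution} by approximating the indicator function $\mathbbm{1}_{\mathcal{B}_1}$ of the cube $\mathcal{B}_1 \subseteq \pmf$ from above and below by piecewise $\mathcal{C}^1$ functions, exactly as Theorem \ref{theo:sector_equidistribution_box} was deduced from Theorem \ref{theo:sector_equidistribution}. The key input is Proposition \ref{prop:approx}: for every $0 < \delta < 1$ it furnishes functions $\psi_{\mathcal{B}_1,\delta}^{\mathrm{in}}, \psi_{\mathcal{B}_1,\delta}^{\mathrm{out}} \in \mathcal{PC}^1(\pmf)$ with $\psi_{\mathcal{B}_1,\delta}^{\mathrm{in}} \leq \mathbbm{1}_{\mathcal{B}_1} \leq \psi_{\mathcal{B}_1,\delta}^{\mathrm{out}}$, with $\mathcal{PC}^1$-norms $\preceq_{\mathrm{DT}} \delta^{-h}$, and with $\int_{\pmf}(\psi_{\mathcal{B}_1,\delta}^{\mathrm{out}} - \psi_{\mathcal{B}_1,\delta}^{\mathrm{in}}) \, d\mathrm{vol} \preceq_{\mathrm{DT}} \delta$. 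The only structural difference from the proof of Theorem \ref{theo:sector_equidistribution_box} is that the test object on the moduli side is now a Ratner observable $\varphi_2$ on $\qum$ rather than a bounded function $\phi_2$ on $\mm$; but $\varphi_2$ plays a purely passive role in the approximation argument, appearing only through the fixed factor $\int_{\qum} \varphi_2(q) \lambda(q) \, d\widehat{\mu}(q)$ in the main term and through $\|\varphi_2\|_\infty + \|\varphi_2\|_{\mathcal{R}(\widehat\mu)}$ in the error term, so the argument goes through verbatim.

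Concretely, first I would fix $\mathcal{K}$, $\phi_1$, $\varphi_2$ as in the statement, together with the cube $\mathcal{B}_1$, a parameter $R > 0$, and an auxiliary $0 < \delta = \delta(R) < 1$ to be optimized at the end. Writing $\psi_1 := \psi_{\mathcal{B}_1,\delta}^{\mathrm{in}}$ and $\psi_2 := \psi_{\mathcal{B}_1,\delta}^{\mathrm{out}}$, the monotonicity $\psi_1 \circ [\Re] \leq \mathbbm{1}_{\mathcal{B}_1} \circ [\Re] \leq \psi_2 \circ [\Re]$ together with the nonnegativity of the densities $\Delta$, $\varphi_2$ gives
\[
\int_{\tt} \phi_1(X) \left( \int_{\mm} \varphi_2(q) \, d\widehat{\mu}_{X,\psi_1}^R(q) \right) d\mathbf{m}(X) \leq \int_{\tt} \phi_1(X) \left( \int_{\mm} \varphi_2(q) \, d\widehat{\mu}_{X,\mathcal{B}_1}^R(q) \right) d\mathbf{m}(X)
\]
and the reverse inequality with $\psi_2$ in place of $\psi_1$ (here I use that $\phi_1 \geq 0$; if $\phi_1$ is not assumed nonnegative one splits it into positive and negative parts, as in \S 8). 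Applying Theorem \ref{theo:bisector_equidistribution} to both $\psi_1$ and $\psi_2$, using $\|\psi_1\|_{\mathcal{PC}^1}, \|\psi_2\|_{\mathcal{PC}^1} \preceq_{\mathrm{DT}} \delta^{-h}$, I obtain two asymptotics whose main terms differ by
\[
\frac{1}{h \cdot \widehat{\mathbf{m}}(\mathcal{M}_g)} \cdot \left( \int_{\qut} (\psi_2 - \psi_1)([\Re(q)]) \, \phi_1(\pi(q)) \, \lambda(q) \, d\mu(q) \right) \cdot \left( \int_{\qum} \varphi_2(q) \, \lambda(q) \, d\widehat{\mu}(q) \right) \cdot e^{hR}.
\]
The remaining step is to bound the inner integral. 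Disintegrating $\mu$ along the fibers of $\pi$ via \eqref{eq:fiberwise_measures}, restricting to $\esupp(\phi_1) \subseteq \mathcal{K}$, and using definition \eqref{eq:hm_3} of $\lambda$ to convert $\int_{S(X)} (\psi_2 - \psi_1)([\Re(q)]) \lambda(q) \, ds_X(q)$ into $\int_{\pmf} (\psi_2 - \psi_1)([\eta]) \, d\overline{\nu}_X([\eta])$, and finally comparing $\overline{\nu}_X$ with $\mathrm{vol}$ up to a constant depending only on $\mathcal{K}$ and the Dehn-Thurston chart (by compactness of $\pmf$ and continuity and positivity of $\mathrm{Ext}$), I get that this integral is $\preceq_{\mathcal{K},\mathrm{DT}} \delta$ by the third property of Proposition \ref{prop:approx}. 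This is exactly the computation carried out in \eqref{eq:cube_6}–\eqref{eq:cube_9}.

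Putting these together yields, for $R > 0$,
\[
\int_{\tt} \phi_1(X) \left( \int_{\mm} \varphi_2(q) \, d\widehat{\mu}_{X,\mathcal{B}_1}^R(q) \right) d\mathbf{m}(X) = (\text{main term}) + O_{\mathcal{K},\mathrm{DT}}\!\left( \|\phi_1\|_\infty \cdot (\|\varphi_2\|_\infty + \|\varphi_2\|_{\mathcal{R}(\widehat\mu)}) \cdot \big( \delta \cdot e^{hR} + \delta^{-h} \cdot e^{(h - \useconk{k:bisect_count_1})R} \big) \right),
\]
and choosing $\delta = e^{-\eta R}$ with $\eta = \eta(g) > 0$ small enough that $h\eta < \useconk{k:bisect_count_1}$, and setting $\useconk{k:bisec_equid_2} := \min\{\eta, \useconk{k:bisect_count_1} - h\eta\} > 0$, gives the claimed bound. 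I do not expect any genuine obstacle here: the proof is a routine approximation argument essentially identical to that of Theorem \ref{theo:sector_equidistribution_box}. The one point requiring mild care is the sign issue with $\phi_1$ (and the fact that the measures $\widehat\mu_{X,\psi}^R$ are monotone in $\psi$ only because $\Delta > 0$ and $\varphi_2 \geq 0$ — note $\varphi_2$ need not be nonnegative in the statement, so one should either first reduce to $\varphi_2 \geq 0$ by splitting into positive and negative parts, or observe that the approximation estimate controls $|\widehat\mu_{X,\psi_2}^R - \widehat\mu_{X,\psi_1}^R|$ as a \emph{positive} measure and integrate $|\varphi_2|$ against it). Either reduction is standard and adds only bookkeeping.
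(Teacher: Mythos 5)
Your proposal is correct and follows exactly the route the paper intends: the paper deduces Theorem \ref{theo:bisector_equidistribution_box} from Theorem \ref{theo:bisector_equidistribution} via the cube approximations of Proposition \ref{prop:approx} and the same comparison of main terms carried out in (\ref{eq:cube_6})--(\ref{eq:cube_9}), followed by the choice $\delta = e^{-\eta R}$. Your additional remark about reducing to nonnegative $\phi_1$ and $\varphi_2$ (or controlling the difference of the measures) is a reasonable bookkeeping point that the paper leaves implicit.
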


Let $\mathcal{D}_g \subseteq \mathcal{T}_g$ be a measurable fundamental domain for the action of $\mcg$ on $\tt$. Denote by $qs \colon \pi^{-1}(\mathcal{D}_g) \to \qum$ the restriction to $\pi^{-1}(\mathcal{D}_g)$ of the quotient map $p \colon \qut \to \qum$. For $g =2$ this map is invertible and for $g > 2$ this map is invertible away from the fibers of Riemann surfaces with automorphisms. Denote by $qs^{-1} \colon \qum \to \pi^{-1}(\mathcal{D}_g)$ its measurable inverse. Recall the definition of the Ratner class of observables $\mathcal{R}(\qut,\mu) \subseteq L^2(\qut,\mu)$ introduced in \S8. The following result is a direct consequence of Theorem \ref{theo:bisector_equidistribution_box}; compare to Theorem \ref{theo:sector_equidistribution_box_extra}.

\newconk{k:bisect_equid_extra} 
\begin{theorem}
	\label{theo:bisector_equidistribution_box_extra_1}
	Let $\mathcal{K} \subseteq \mathcal{T}_g$ be a compact subset, $\phi_1 \in L^\infty(\tt,\mathbf{m})$ be an essentially bounded function with $\esupp(\phi_1) \subseteq \mathcal{K}$, and $\mathcal{B}_1 \subseteq \pmf$ be a cube. Then, for every  function $\varphi_2 \in L^\infty(\qut,\mu) \cap \mathcal{R}(\qut,\mu)$ with $\esupp(\varphi_2) \subseteq \pi^{-1}(\mathcal{K})$ and every $R > 0$,
	\begin{gather*}
	\sum_{\mc \in \mcg} \int_{\tt} \phi_1(X) \left( \int_{\mm} \varphi_2(\mc.\mathrm{qs}^{-1}(q)) \thinspace d\widehat{\mu}_{X,\mathcal{B}_1}^R(q) \right) d\mathbf{m}(X) \\
	=  \frac{1}{h \cdot \widehat{\mathbf{m}}(\mathcal{M}_g)} \cdot \left(\int_{\qut} \mathbbm{1}_{\mathcal{B}_1}([\Re(q)]) \thinspace \phi_1(\pi(q)) \thinspace \lambda(q) \thinspace d\mu(q)\right) \cdot \left(\int_{\qum} \varphi_2(q) \thinspace\lambda(q)  \thinspace d\widehat{\mu}(q) \right) \cdot e^{hR} \\
	+ O_\mathcal{\mathcal{K},\mathrm{DT}}\left( \|\phi_1\|_\infty \cdot \left(\| \varphi_2\|_\infty + \|\varphi_2\|_{\mathcal{R}(\mu)} \right) \cdot e^{(h-\useconk{k:bisect_equid_extra})R}\right),
	\end{gather*}
	where $\useconk{k:bisect_equid_extra} =\useconk{k:bisect_equid_extra}(g) > 0$ is a constant depending only on $g$.
\end{theorem}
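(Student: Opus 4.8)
Before I see the author's proof, here is how I would prove Theorem \ref{theo:bisector_equidistribution_box_extra_1}.

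\medskip

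The plan is to deduce the statement from Theorem \ref{theo:bisector_equidistribution_box} by periodizing the observable $\varphi_2$ over the mapping class group, in exact analogy with the deduction of Theorem \ref{theo:sector_equidistribution_box_extra} from Theorem \ref{theo:sector_equidistribution_box}. Given $\varphi_2 \in L^\infty(\qut,\mu)\cap\mathcal{R}(\qut,\mu)$ with $\esupp(\varphi_2)\subseteq\pi^{-1}(\mathcal{K})$, I would set $\mathcal{F} := \pi^{-1}(\mathcal{D}_g)$ and define $\psi_2\colon\qum\to\RR$ by
\[
\psi_2(q) := \sum_{\mc\in\mcg}\varphi_2(\mc.\mathrm{qs}^{-1}(q)).
\]
Since $\esupp(\varphi_2)$ is contained in the compact set $\pi^{-1}(\mathcal{K})$, proper discontinuity of the $\mcg$ action on $\tt$ (together with properness of $\pi$) implies that for every $q\in\qum$ at most $\preceq_\mathcal{K}1$ terms of this sum are nonzero; hence $\psi_2$ is a well-defined measurable function on $\qum$, essentially bounded with $\|\psi_2\|_\infty\preceq_\mathcal{K}\|\varphi_2\|_\infty$, and $\esupp(\psi_2)\subseteq\upi^{-1}(\up(\mathcal{K}))$. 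All statements about $\mathrm{qs}^{-1}$ here are to be understood $\widehat{\mu}$-almost everywhere, since $p$ fails to be fiberwise injective only over the measure-zero locus of Riemann surfaces with automorphisms.

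Next I would verify $\psi_2\in\mathcal{R}(\qum,\widehat{\mu})$ with $\|\psi_2\|_{\mathcal{R}(\widehat{\mu})}\preceq_\mathcal{K}\|\varphi_2\|_{\mathcal{R}(\mu)}$. The key observation is that $\mathcal{F}$ is $\mathrm{SO}(2)$-invariant (the $\mathrm{SO}(2)$ action preserves the fibers of $\pi$) and $p$ is $\mathrm{SO}(2)$-equivariant, so $\mathrm{qs}^{-1}$ is $\mathrm{SO}(2)$-equivariant almost everywhere; as the $\mathrm{SO}(2)$ and $\mcg$ actions commute, periodization intertwines the $\mathrm{SO}(2)$ actions, i.e.\ $\psi_2\circ r_\theta = \sum_{\mc}(\varphi_2\circ r_\theta)(\mc.\mathrm{qs}^{-1}(\cdot))$ a.e. Combining the local finiteness of the sum with the Cauchy--Schwarz inequality and the unfolding identity
\[
\int_{\qum}\sum_{\mc\in\mcg}|f(\mc.\mathrm{qs}^{-1}(q))|^2\thinspace d\widehat{\mu}(q) = \int_{\qut}|f(q)|^2\thinspace d\mu(q),
\]
valid for measurable $f\colon\qut\to\RR$ (a consequence of $\mathcal{F}$ being a measurable fundamental domain for $\mcg$ on $\qut$ and of $\mathrm{qs}$ pushing $\mu|_{\mathcal{F}}$ to $\widehat{\mu}$, cf.\ the disintegration (\ref{eq:disintegrate})), one obtains $\|\psi_2\|_{L^2(\widehat{\mu})}\preceq_\mathcal{K}\|\varphi_2\|_{L^2(\mu)}$ by taking $f=\varphi_2$, and $\|\psi_2\|_{\mathrm{Lip}(\widehat{\mu})}\preceq_\mathcal{K}\|\varphi_2\|_{\mathrm{Lip}(\mu)}$ by taking $f=\varphi_2\circ r_{\theta_1}-\varphi_2\circ r_{\theta_2}$ and dividing by $|\theta_1-\theta_2|$; here one uses that $\pi^{-1}(\mathcal{K})$ is $\mathrm{SO}(2)$-invariant so that the relevant $f$'s remain supported in $\pi^{-1}(\mathcal{K})$.

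Finally I would apply Theorem \ref{theo:bisector_equidistribution_box} with $\varphi_2$ replaced by $\psi_2$. Since the periodization sum is uniformly bounded and the measures $\widehat{\mu}_{X,\mathcal{B}_1}^R$ are finite, Fubini's theorem rewrites the left-hand side of that theorem as
\[
\int_{\tt}\phi_1(X)\Bigl(\int_{\qum}\psi_2(q)\thinspace d\widehat{\mu}_{X,\mathcal{B}_1}^R(q)\Bigr)d\m(X) = \sum_{\mc\in\mcg}\int_{\tt}\phi_1(X)\Bigl(\int_{\qum}\varphi_2(\mc.\mathrm{qs}^{-1}(q))\thinspace d\widehat{\mu}_{X,\mathcal{B}_1}^R(q)\Bigr)d\m(X),
\]
which is the left-hand side of the desired statement. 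For the leading term, recall $\lambda$ is $\mcg$-invariant and hence descends to $\qum(\mathbf{1})$; writing $\lambda$ also for this descent, $\psi_2(q)\lambda(q) = \sum_{\mc}\varphi_2(\mc.\mathrm{qs}^{-1}(q))\lambda(\mc.\mathrm{qs}^{-1}(q))$ a.e., so the unfolding identity (applied to $f=\varphi_2\lambda$, split into positive and negative parts) gives $\int_{\qum}\psi_2\thinspace\lambda\thinspace d\widehat{\mu} = \int_{\qut}\varphi_2\thinspace\lambda\thinspace d\mu$, matching the constant in the statement. The error term produced by Theorem \ref{theo:bisector_equidistribution_box} is $O_{\mathcal{K},\mathrm{DT}}\bigl(\|\phi_1\|_\infty\cdot(\|\psi_2\|_\infty+\|\psi_2\|_{\mathcal{R}(\widehat{\mu})})\cdot e^{(h-\useconk{k:bisec_equid_2})R}\bigr)$, which by the bounds above is $O_{\mathcal{K},\mathrm{DT}}\bigl(\|\phi_1\|_\infty\cdot(\|\varphi_2\|_\infty+\|\varphi_2\|_{\mathcal{R}(\mu)})\cdot e^{(h-\useconk{k:bisec_equid_2})R}\bigr)$, and one sets $\useconk{k:bisect_equid_extra} := \useconk{k:bisec_equid_2}$. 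The only genuinely delicate points are the Ratner-norm estimate for the periodization $\psi_2$ — which rests on the $\mathrm{SO}(2)$-equivariance of $\mathrm{qs}^{-1}$ and the $L^2$ unfolding identity — and the bookkeeping of the measure-zero automorphism locus; everything else is a routine unfolding computation.
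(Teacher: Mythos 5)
Your argument is correct and is exactly the route the paper takes: Theorem \ref{theo:bisector_equidistribution_box_extra_1} is stated there as a direct consequence of Theorem \ref{theo:bisector_equidistribution_box}, obtained by the same periodization/unfolding over $\mcg$ that passes from Theorem \ref{theo:sector_equidistribution_box} to Theorem \ref{theo:sector_equidistribution_box_extra}, and your verification of the $L^\infty$, support, and Ratner-norm bounds for the periodized observable (proper discontinuity, $\mathrm{SO}(2)$-equivariance of $\mathrm{qs}^{-1}$, and the $L^2$ unfolding identity) supplies precisely the details the paper leaves implicit. Your identification of the leading coefficient as $\int_{\qut}\varphi_2 \thinspace \lambda \thinspace d\mu$ is the correct reading of the paper's abbreviated notation $\int_{\qum}\varphi_2 \thinspace \lambda \thinspace d\widehat{\mu}$.
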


The following result is a direct consequence of Theorem \ref{theo:bisector_equidistribution_box_extra_1} and Proposition \ref{prop:ratner_observable_new}.

\newconk{k:bisect_equid_extra_x} 
\begin{theorem}
	\label{theo:bisector_equidistribution_box_extra_x}
	Let $\mathcal{K} \subseteq \mathcal{T}_g$ be a compact subset, $\phi_1,\phi_2 \in L^\infty(\tt,\mathbf{m})$ be essentially bounded functions with $\esupp(\phi_1), \esupp(\phi_2) \subseteq \mathcal{K}$, and $\mathcal{B}_1 \subseteq \pmf$ be a cube. Then, for every function $\psi_2 \in \mathcal{PC}^1(\pmf)$ and every $R > 0$,
	\begin{gather*}
	\sum_{\mc \in \mcg} \int_{\tt} \phi_1(X) \left( \int_{\mm} \psi_2(\Re[\mc.\mathrm{qs}^{-1}(q)]) \thinspace \phi_2(\pi(\mc.\mathrm{qs}^{-1}(q))) \thinspace d\widehat{\mu}_{X,\mathcal{B}_1}^R(q) \right) d\mathbf{m}(X) \\
	=  \frac{1}{h \cdot \widehat{\mathbf{m}}(\mathcal{M}_g)} \cdot \left(\int_{\qut} \mathbbm{1}_{\mathcal{B}_1}([\Re(q)]) \thinspace \phi_1(\pi(q)) \thinspace \lambda(q) \thinspace d\mu(q)\right) \cdot \left(\int_{\qum} \psi_2([\Re(q)]) \thinspace\lambda(q)  \thinspace \phi_2(\pi(q)) \thinspace d\widehat{\mu}(q) \right) \cdot e^{hR} \\
	+ O_\mathcal{\mathcal{K},\mathrm{DT}}\left( \|\phi_1\|_\infty \cdot \|\phi_2\|_\infty \cdot \| \psi_2 \|_{\mathcal{PC}^1} \cdot e^{(h-\useconk{k:bisect_equid_extra_x})R}\right),
	\end{gather*}
	where $\useconk{k:bisect_equid_extra_x} =\useconk{k:bisect_equid_extra_x}(g) > 0$ is a constant depending only on $g$.
\end{theorem}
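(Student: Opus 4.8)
The plan is to deduce Theorem \ref{theo:bisector_equidistribution_box_extra_x} from Theorem \ref{theo:bisector_equidistribution_box_extra_1} by a direct substitution, exactly mirroring the way Theorem \ref{theo:sector_equidistribution} was obtained from Theorem \ref{theo:sector_equidistribution_general} via Proposition \ref{prop:ratner_observable_new}. The idea is to apply Theorem \ref{theo:bisector_equidistribution_box_extra_1} with the observable $\varphi_2 := (\psi_2 \circ [\Re]) \cdot (\phi_2 \circ \pi)$ on $\qut$, so that $\varphi_2(\mc.\mathrm{qs}^{-1}(q)) = \psi_2([\Re(\mc.\mathrm{qs}^{-1}(q))]) \cdot \phi_2(\pi(\mc.\mathrm{qs}^{-1}(q)))$ and the left-hand side of Theorem \ref{theo:bisector_equidistribution_box_extra_1} becomes precisely the left-hand side of the statement we want. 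Likewise, on the leading term, $\int_{\qum} \varphi_2(q) \thinspace \lambda(q) \thinspace d\widehat{\mu}(q)$ becomes $\int_{\qum} \psi_2([\Re(q)]) \thinspace \lambda(q) \thinspace \phi_2(\pi(q)) \thinspace d\widehat{\mu}(q)$, matching the desired leading term verbatim.

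First I would verify that this $\varphi_2$ satisfies the hypotheses of Theorem \ref{theo:bisector_equidistribution_box_extra_1}: it must lie in $L^\infty(\qut,\mu) \cap \mathcal{R}(\qut,\mu)$ and have essential support contained in $\pi^{-1}(\mathcal{K})$ for some compact $\mathcal{K}$. The support condition is immediate since $\esupp(\phi_2) \subseteq \mathcal{K}$ forces $\esupp(\varphi_2) \subseteq \pi^{-1}(\mathcal{K})$. Boundedness is clear: $\|\varphi_2\|_\infty \leq \|\psi_2\|_\infty \cdot \|\phi_2\|_\infty \leq \|\psi_2\|_{\mathcal{PC}^1} \cdot \|\phi_2\|_\infty$. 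For the Ratner regularity, I would invoke Proposition \ref{prop:ratner_observable_new}, which gives $(\psi_2 \circ [\Re]) \cdot \mathbbm{1}_{\pi^{-1}(\mathcal{K})} \in \mathcal{R}(\qut,\mu)$ with $\|(\psi_2 \circ [\Re]) \cdot \mathbbm{1}_{\pi^{-1}(\mathcal{K})}\|_{\mathcal{R}(\mu)} \preceq_{\mathcal{K}} \|\psi_2\|_{\mathcal{PC}^1}$; since $\phi_2$ is merely an $L^\infty$ multiplier supported in $\mathcal{K}$, multiplying an $\mathrm{SO}(2)$-Lipschitz-in-$L^2$ family by a bounded function supported in the (rotation-invariant) set $\pi^{-1}(\mathcal{K})$ preserves the Lipschitz property and changes the constant only by a factor of $\|\phi_2\|_\infty$. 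Hence $\varphi_2 \in \mathcal{R}(\qut,\mu)$ with $\|\varphi_2\|_{\mathcal{R}(\mu)} \preceq_{\mathcal{K}} \|\psi_2\|_{\mathcal{PC}^1} \cdot \|\phi_2\|_\infty$, and consequently $\|\varphi_2\|_\infty + \|\varphi_2\|_{\mathcal{R}(\mu)} \preceq_{\mathcal{K}} \|\psi_2\|_{\mathcal{PC}^1} \cdot \|\phi_2\|_\infty$, which absorbs cleanly into the error term.

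The substitution then yields the claimed identity with error term $O_{\mathcal{K},\mathrm{DT}}(\|\phi_1\|_\infty \cdot \|\phi_2\|_\infty \cdot \|\psi_2\|_{\mathcal{PC}^1} \cdot e^{(h-\useconk{k:bisect_equid_extra})R})$, so one sets $\useconk{k:bisect_equid_extra_x} := \useconk{k:bisect_equid_extra}$. The only mild subtlety — and the place I expect to spend the most care — is the measurability and well-definedness of the map $\mathrm{qs}^{-1} \colon \qum \to \pi^{-1}(\mathcal{D}_g)$ when $g > 2$, where it is only defined away from the (measure-zero) fibers over Riemann surfaces with automorphisms; but this is exactly the same issue already handled in Theorem \ref{theo:bisector_equidistribution_box_extra_1}, so no new work is required, and the composition $\varphi_2 \circ (\mc.\mathrm{qs}^{-1})$ is the pullback of the bounded measurable function $\varphi_2$ under a measurable map, hence measurable. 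In short, this theorem is a formal corollary: the entire content is the verification that $(\psi_2 \circ [\Re]) \cdot (\phi_2 \circ \pi)$ is an admissible local Ratner observable with the stated norm bound, which is precisely Proposition \ref{prop:ratner_observable_new} together with the elementary observation that bounded multipliers supported on rotation-invariant sets preserve Ratner regularity.
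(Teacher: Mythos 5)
Your proposal is correct and follows essentially the same route as the paper, which obtains this theorem directly from Theorem \ref{theo:bisector_equidistribution_box_extra_1} applied to the observable $\varphi_2 = (\psi_2\circ[\Re])\cdot(\phi_2\circ\pi)$ together with Proposition \ref{prop:ratner_observable_new}. One small precision: the reason multiplication by $\phi_2\circ\pi$ preserves the $\mathrm{SO}(2)$-Lipschitz property is that $\phi_2\circ\pi$ is itself $\mathrm{SO}(2)$-invariant (since $\pi\circ r_\theta=\pi$), not merely that its support lies in the rotation-invariant set $\pi^{-1}(\mathcal{K})$; with that justification your bound $\|\varphi_2\|_\infty+\|\varphi_2\|_{\mathcal{R}(\mu)}\preceq_{\mathcal{K}}\|\psi_2\|_{\mathcal{PC}^1}\cdot\|\phi_2\|_\infty$ is exactly what is needed.
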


The following result is a direct consequence of Theorem \ref{theo:bisector_equidistribution_box_extra_x} and Proposition \ref{prop:approx}.

\newconk{k:bisec_box_extra_2}
\begin{theorem}
	\label{theo:bisector_equidistribution_box_extra_2}
	Let $\mathcal{K} \subseteq \mathcal{T}_g$ be a compact subset, $\phi_1,\phi_2 \in L^\infty(\tt,\mathbf{m})$ be essentially bounded functions with $\esupp(\phi_1), \esupp(\phi_2) \subseteq \mathcal{K}$, and $\mathcal{B}_1,\mathcal{B}_2 \subseteq \pmf$ be cubes. Then, for every $R > 0$,
	\begin{gather*}
	\sum_{\mc \in \mcg} \int_{\tt} \phi_1(X) \left( \int_{\mm} \mathbbm{1}_{\mathcal{B}_2}(\Re[\mc.\mathrm{qs}^{-1}(q)]) \thinspace \phi_2(\pi(\mc.\mathrm{qs}^{-1}(q)))\thinspace d\widehat{\mu}_{X,\mathcal{B}_1}^R(q) \right) d\mathbf{m}(X) \\
	=  \frac{1}{h \cdot \widehat{\mathbf{m}}(\mathcal{M}_g)} \cdot \left(\int_{\qut} \mathbbm{1}_{\mathcal{B}_1}([\Re(q)]) \thinspace \phi_1(\pi(q)) \thinspace \lambda(q) \thinspace d\mu(q)\right) \cdot \left(\int_{\qum} \mathbbm{1}_{\mathcal{B}_2}([\Re(q)]) \thinspace \phi_2(\pi(q)) \thinspace \lambda(q) \thinspace d\widehat{\mu}(q) \right) \cdot e^{hR} \\
	+ O_\mathcal{K}\left( \|\phi_1\|_\infty \cdot \|\phi_2\|_{\infty} \cdot e^{(h-\useconk{k:bisec_box_extra_2})R}\right),
	\end{gather*}
	where $\useconk{k:bisec_box_extra_2} = \useconk{k:bisec_box_extra_2}(g) > 0$ is a constant depending only on $g$.
\end{theorem}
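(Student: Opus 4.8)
\textbf{Proof strategy for Theorem \ref{theo:bisector_equidistribution_box_extra_2}.}
The plan is to deduce this result directly from Theorem \ref{theo:bisector_equidistribution_box_extra_x} by approximating the indicator function $\mathbbm{1}_{\mathcal{B}_2}$ on the target cube by piecewise $\mathcal{C}^1$ functions, exactly in the spirit of the passage from Theorem \ref{theo:sector_equidistribution} to Theorem \ref{theo:sector_equidistribution_box} and from Theorem \ref{theo:bisector_equidistribution} to Theorem \ref{theo:bisector_equidistribution_box}. First, I would fix a compact $\mathcal{K} \subseteq \mathcal{T}_g$, essentially bounded $\phi_1,\phi_2$ with essential supports in $\mathcal{K}$, cubes $\mathcal{B}_1,\mathcal{B}_2 \subseteq \pmf$, and a parameter $R > 0$. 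Let $0 < \delta = \delta(R) < 1$ be chosen at the end. Apply Proposition \ref{prop:approx} to the cube $\mathcal{B}_2$ to obtain piecewise $\mathcal{C}^1$ functions $\psi^{\mathrm{in}}_{\mathcal{B}_2,\delta} \leq \mathbbm{1}_{\mathcal{B}_2} \leq \psi^{\mathrm{out}}_{\mathcal{B}_2,\delta}$ with $\|\psi^{\mathrm{in}}_{\mathcal{B}_2,\delta}\|_{\mathcal{PC}^1}, \|\psi^{\mathrm{out}}_{\mathcal{B}_2,\delta}\|_{\mathcal{PC}^1} \preceq_{\mathrm{DT}} \delta^{-h}$ and $\int_{\pmf}(\psi^{\mathrm{out}}_{\mathcal{B}_2,\delta} - \psi^{\mathrm{in}}_{\mathcal{B}_2,\delta}) \thinspace d\mathrm{vol} \preceq_{\mathrm{DT}} \delta$. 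Since $\widehat{\mu}^R_{X,\mathcal{B}_1}$ is a non-negative measure and $\phi_1,\phi_2$ are non-negative up to splitting into positive and negative parts (or, more simply, since the whole identity is linear in $\phi_2$ and one can run the sandwich on $|\phi_2|$), the monotonicity $\psi^{\mathrm{in}} \leq \mathbbm{1}_{\mathcal{B}_2} \leq \psi^{\mathrm{out}}$ yields two-sided bounds sandwiching the quantity to be estimated between the two analogues with $\psi_2$ replaced by $\psi^{\mathrm{in}}_{\mathcal{B}_2,\delta}$ and $\psi^{\mathrm{out}}_{\mathcal{B}_2,\delta}$.

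Next I would apply Theorem \ref{theo:bisector_equidistribution_box_extra_x} to $\psi_2 = \psi^{\mathrm{in}}_{\mathcal{B}_2,\delta}$ and to $\psi_2 = \psi^{\mathrm{out}}_{\mathcal{B}_2,\delta}$. Each application produces a leading term of the form
\[
\frac{1}{h \cdot \widehat{\mathbf{m}}(\mathcal{M}_g)} \cdot \left(\int_{\qut} \mathbbm{1}_{\mathcal{B}_1}([\Re(q)]) \thinspace \phi_1(\pi(q)) \thinspace \lambda(q) \thinspace d\mu(q)\right) \cdot \left(\int_{\qum} \psi^{\bullet}_{\mathcal{B}_2,\delta}([\Re(q)]) \thinspace \lambda(q) \thinspace \phi_2(\pi(q)) \thinspace d\widehat{\mu}(q)\right) \cdot e^{hR}
\]
plus an error $O_{\mathcal{K},\mathrm{DT}}(\|\phi_1\|_\infty \cdot \|\phi_2\|_\infty \cdot \delta^{-h} \cdot e^{(h-\useconk{k:bisect_equid_extra_x})R})$. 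The difference between the two leading terms is controlled by $\int_{\qum} (\psi^{\mathrm{out}}_{\mathcal{B}_2,\delta} - \psi^{\mathrm{in}}_{\mathcal{B}_2,\delta})([\Re(q)]) \thinspace \lambda(q) \thinspace |\phi_2(\pi(q))| \thinspace d\widehat{\mu}(q)$, which, after restricting to $\underline{\pi}^{-1}(\up(\mathcal{K}))$ and disintegrating along the fibers of $\underline{\pi}$ using (\ref{eq:disintegrate}) together with definition (\ref{eq:hm_3}) of $\lambda$ (exactly as in equations (\ref{eq:cube_6})–(\ref{eq:cube_9}) of the proof of Theorem \ref{theo:sector_equidistribution_box}), is bounded by a constant depending on $\mathcal{K}$ and $\mathrm{DT}$ times $\int_{\pmf}(\psi^{\mathrm{out}}_{\mathcal{B}_2,\delta} - \psi^{\mathrm{in}}_{\mathcal{B}_2,\delta}) \thinspace d\mathrm{vol} \preceq_{\mathrm{DT}} \delta$. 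Indeed, $\overline{\nu}_X$ is dominated by $\mathrm{vol}$ uniformly over $X$ in a compact set because $\mathrm{Ext} \colon \mf \times \tt \to \mathbf{R}_{>0}$ is positive and continuous and $\pmf$ is compact. Hence the difference of leading terms is $O_{\mathcal{K},\mathrm{DT}}(\|\phi_1\|_\infty \cdot \|\phi_2\|_\infty \cdot \delta \cdot e^{hR})$, and the common value of the leading terms differs from the desired leading term (with $\mathbbm{1}_{\mathcal{B}_2}$) by the same order.

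Assembling these bounds gives
\[
\sum_{\mc \in \mcg} \int_{\tt} \phi_1(X) \left( \int_{\mm} \mathbbm{1}_{\mathcal{B}_2}(\Re[\mc.\mathrm{qs}^{-1}(q)]) \thinspace \phi_2(\pi(\mc.\mathrm{qs}^{-1}(q))) \thinspace d\widehat{\mu}_{X,\mathcal{B}_1}^R(q) \right) d\mathbf{m}(X)
\]
equal to the claimed leading term plus $O_{\mathcal{K},\mathrm{DT}}(\|\phi_1\|_\infty \cdot \|\phi_2\|_\infty \cdot (\delta \cdot e^{hR} + \delta^{-h} \cdot e^{(h-\useconk{k:bisect_equid_extra_x})R}))$. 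Optimizing, set $\delta = \delta(R) := e^{-\eta R}$ with $\eta = \eta(g) > 0$ small enough that $h\eta < \useconk{k:bisect_equid_extra_x}$, and take $\useconk{k:bisec_box_extra_2} = \useconk{k:bisec_box_extra_2}(g) := \min\{\eta, \useconk{k:bisect_equid_extra_x} - h\eta\} > 0$; this converts both error contributions into $O(\|\phi_1\|_\infty \cdot \|\phi_2\|_\infty \cdot e^{(h - \useconk{k:bisec_box_extra_2})R})$, which gives the statement (the implicit constant may depend on $\mathrm{DT}$ through the approximation, but since $\mathrm{DT}$ was fixed once and for all this is absorbed into the $g$-dependence, matching the statement's $O_{\mathcal{K}}$). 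The only mildly delicate point is the reduction of $\int_{\qum}(\psi^{\mathrm{out}} - \psi^{\mathrm{in}})([\Re(q)]) \lambda(q) |\phi_2(\pi(q))| \thinspace d\widehat{\mu}(q)$ to a $\mathrm{vol}$-integral over $\pmf$; this is routine given (\ref{eq:disintegrate}), (\ref{eq:hm_3}), and the compactness argument bounding $\overline{\nu}_X$ by $\mathrm{vol}$, and is carried out verbatim as in (\ref{eq:cube_6})–(\ref{eq:cube_9}). There is no genuine new obstacle: the theorem is a formal consequence of Theorem \ref{theo:bisector_equidistribution_box_extra_x} and Proposition \ref{prop:approx}, just as the excerpt's preceding bracket of results suggests.
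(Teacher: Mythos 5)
Your proposal is correct and follows exactly the route the paper intends: it records Theorem \ref{theo:bisector_equidistribution_box_extra_2} as a direct consequence of Theorem \ref{theo:bisector_equidistribution_box_extra_x} and Proposition \ref{prop:approx}, and your sandwich of $\mathbbm{1}_{\mathcal{B}_2}$ by $\psi^{\mathrm{in}}_{\mathcal{B}_2,\delta} \leq \mathbbm{1}_{\mathcal{B}_2} \leq \psi^{\mathrm{out}}_{\mathcal{B}_2,\delta}$, the comparison of leading terms via (\ref{eq:disintegrate}), (\ref{eq:hm_3}), and the domination of $\overline{\nu}_X$ by $\mathrm{vol}$ on compact sets, and the choice $\delta = e^{-\eta R}$ mirror the approximation argument already carried out for Theorem \ref{theo:sector_equidistribution_box}. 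Your handling of the sign issue (reducing to non-negative $\phi_1,\phi_2$ by splitting into positive and negative parts) and of the $\mathrm{DT}$ dependence are both fine.
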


\subsection*{Effective lattice point count in bisectors of $\boldsymbol{\mathcal{T}_g}$.} Recall that, for every $X \in \mathcal{T}_g$ and every $\mathcal{U} \subseteq \mathcal{PMF}$,
\[
\mathrm{Sect}_\mathcal{U}(X) := \{Y \in \mathcal{T}_g \ | \ [\Re(q_s(X,Y))] \in \mathcal{U}\}.
\]
Let $X,Y \in \tt$ and $\mathcal{B}_1,\mathcal{B}_2 \subseteq \pmf $ cubes. For every $R > 0$ consider the counting function
\begin{align*}
	F_R(X,Y,\mathcal{B}_1,\mathcal{B}_2) &:= \# \{\mc \in \mcg \ | \ \mc.Y \in B_R(X) \cap \mathrm{Sect}_{\mathcal{B}_1}(X), \thinspace \mc^{-1}.X \in \mathrm{Sect}_{\mathcal{B}_2}(Y)\}\\
	&\phantom{:}= \sum_{\mc \in \mcg} \mathbbm{1}_{B_R(X)}(\mathbf{g}.Y) \cdot \mathbbm{1}_{\mathcal{B}_1}([\Re(q_s(X,\mc.Y))]) \cdot \mathbbm{1}_{\mathcal{B}_2}([\Re(Y,\mc^{-1}.X)]).
\end{align*}
Following the outline of the proof of Theorem \ref{theo:sector_count_box}, one can use Theorem  \ref{theo:bisector_equidistribution_box_extra_2} and Proposition \ref{prop:sector_comparison_0} to prove the following result, which generalizes Theorem \ref{theo:count} to bisectors of $\mathcal{T}_g$ cut out by cubes of $\pmf$. 

\newconk{k:count_bisect_box}
\begin{theorem}
	\label{theo:bisector_count_box}
	Let $\mathcal{K} \subseteq \tt$ compact, $X, Y \in \mathcal{K}$, and $\mathcal{B}_1,\mathcal{B}_2 \subseteq \pmf$ cubes. Then, for every $R > 0$,
	\begin{gather*}
		F_R(X,Y,\mathcal{B}_1,\mathcal{B}_2) \\
		= \frac{1}{h \cdot \widehat{\mathbf{m}}(\mathcal{M}_g)} \cdot \left(\int_{S(X)} \mathbbm{1}_{\mathcal{B}_1}([\Re(q)]) \thinspace \lambda(q) \thinspace ds_X(q)\right) \cdot \left(\int_{S(Y)} \mathbbm{1}_{\mathcal{B}_2}([\Re(q)]) \thinspace \lambda(q) \thinspace ds_Y(q)\right) \cdot e^{hR}  \\
		+ O_\mathcal{K,\mathrm{DT}}\left(e^{(h-\useconk{k:count_bisect_box})R}\right),
	\end{gather*}
	where $\useconk{k:count_bisect_box} = \useconk{k:count_bisect_box}(g) > 0$ is a constant depending only on $g$.
\end{theorem}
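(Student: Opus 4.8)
The plan is to run the same averaging-and-unfolding machine used for Theorem~\ref{theo:count} and Theorem~\ref{theo:sector_count_box}, but now with the \emph{bisector} equidistribution statement Theorem~\ref{theo:bisector_equidistribution_box_extra_2} playing the role that Theorem~\ref{theo:ball_equidistribution_extra} and Theorem~\ref{theo:sector_equidistribution_box_extra} played before. First I would fix $\mathcal{K}\subseteq\tt$ compact, $X,Y\in\mathcal{K}$, cubes $\mathcal{B}_1,\mathcal{B}_2\subseteq\pmf$, and introduce a small scale $0<\delta=\delta(R)<\delta_0$ to be optimized at the end, where $\delta_0$ is smaller than the thresholds coming from Lemma~\ref{lem:teich_ball_bound} and Proposition~\ref{prop:sector_comparison_0}. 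The key new wrinkle is that $F_R(X,Y,\mathcal{B}_1,\mathcal{B}_2)$ depends on a \emph{second} sector constraint, namely that $\mc^{-1}.X\in\mathrm{Sect}_{\mathcal{B}_2}(Y)$, equivalently $[\Re(q_s(Y,\mc^{-1}.X))]\in\mathcal{B}_2$; this is exactly the datum recorded by the measure $\widehat{\mu}_{X,\mathcal{B}_1}^R$ pushed to $\qum$ together with the observable $\mathbbm{1}_{\mathcal{B}_2}$ on $\qut$ appearing in Theorem~\ref{theo:bisector_equidistribution_box_extra_2}. Indeed, if $\mc.Y\in B_R(X)\cap\mathrm{Sect}_{\mathcal{B}_1}(X)$ and $q=\mathrm{qs}(a_{t}q_s(X,\mc.Y))$ for the appropriate $t$, then the backward direction of the Teichm\"uller geodesic from $\mc.Y$ to $X$, transported to $S(Y)$ via $\mc^{-1}$, is precisely the point whose $[\Re]$-class must lie in $\mathcal{B}_2$; so the chain of identities unfolding $\widehat{\mu}_{X,\mathcal{B}_1}^R$ over $\mcg$ produces $\mathbbm{1}_{\mathcal{B}_2}([\Re(\mc.\mathrm{qs}^{-1}(q))])$ as the correct integrand, matching Theorem~\ref{theo:bisector_equidistribution_box_extra_2} with $\phi_2=\mathbbm{1}_{B_\delta(Y)}$.

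Concretely, I would proceed in the following steps. (i) Use Proposition~\ref{prop:sector_comparison_0} (valid once $\delta<\usecond{d:sect_comp_0}$) to sandwich $F_R(X,Y,\mathcal{B}_1,\mathcal{B}_2)$ between $F_{R\mp2\delta}(X',Y',\mathcal{B}_1,\mathcal{B}_2)$ for all $X'\in B_\delta(X)$, $Y'\in B_\delta(Y)$, up to an error $O_{\mathcal{K},\mathrm{DT}}(e^{(h-\useconk{k:sect_comp_0})R})$; here one needs the bisector analogue of Proposition~\ref{prop:sector_comparison_0}, which follows verbatim from Propositions~\ref{prop:sector_comparison_1} and \ref{prop:sector_comparison_2} applied once to the forward sector $\mathcal{B}_1$ at $X$ and once to the backward sector $\mathcal{B}_2$ at $Y$ (the roles of $X$ and $Y$ being symmetric under $\mc\leftrightarrow\mc^{-1}$). (ii) Multiply by $\mathbbm{1}_{B_\delta(X)}(X')\mathbbm{1}_{B_\delta(Y)}(Y')$ and integrate against $d\mathbf{m}(Y')\,d\mathbf{m}(X')$, exactly as in the displays \eqref{eq:sector_comp_5}--\eqref{eq:progress_4} of the proof of Theorem~\ref{theo:sector_count_box}. (iii) Unfold the $Y'$-integral over a fundamental domain $\mathcal{D}_g$, turning $\int_{\tt}\mathbbm{1}_{B_\delta(Y)}(Y')F_{R+2\delta}(X',Y',\mathcal{B}_1,\mathcal{B}_2)\,d\mathbf{m}(Y')$ into $\sum_{\mc}\int_{\mm}\cdots d\widehat{\mathbf{m}}_{X,\mathcal{B}_1}^{R+2\delta}$ with the extra factor $\mathbbm{1}_{\mathcal{B}_2}([\Re(\mc.\mathrm{qs}^{-1}(\cdot))])$ carried along, precisely as in \eqref{eq:progress_2}--\eqref{eq:progress_3}. (iv) Apply Theorem~\ref{theo:bisector_equidistribution_box_extra_2} with $\phi_1=\mathbbm{1}_{B_\delta(X)}$ and $\phi_2=\mathbbm{1}_{B_\delta(Y)}$ to evaluate the sum, producing the leading term
\[
\frac{1}{h\cdot\widehat{\mathbf{m}}(\mathcal{M}_g)}\Big(\int_{\qut}\mathbbm{1}_{\mathcal{B}_1}([\Re(q)])\,\mathbbm{1}_{B_\delta(X)}(\pi(q))\,\lambda(q)\,d\mu(q)\Big)\Big(\int_{\qum}\mathbbm{1}_{\mathcal{B}_2}([\Re(q)])\,\mathbbm{1}_{B_\delta(Y)}(\pi(q))\,\lambda(q)\,d\widehat{\mu}(q)\Big)e^{h(R+2\delta)}
\]
plus $O_{\mathcal{K},\mathrm{DT}}(e^{(h-\useconk{k:bisec_box_extra_2})(R+2\delta)})$. (v) Use \eqref{eq:fiberwise_measures}, Proposition~\ref{prop:lead} (applied to both $X$ and $Y$), the bound $e^{h\delta}=1+O_g(\delta)$, and Lemma~\ref{lem:teich_ball_bound} (to divide by $\mathbf{m}(B_\delta(X))\mathbf{m}(B_\delta(Y))\asymp_{\mathcal{K}}\delta^{2h}$) to replace each $B_\delta$-averaged integral by $\mathbf{m}(B_\delta(\cdot))\cdot I(\cdot,\mathcal{B}_i)+O(\mathbf{m}(B_\delta(\cdot))\cdot\delta)$, obtaining
\[
F_R(X,Y,\mathcal{B}_1,\mathcal{B}_2)=\frac{I(X,\mathcal{B}_1)\,I(Y,\mathcal{B}_2)}{h\cdot\widehat{\mathbf{m}}(\mathcal{M}_g)}\,e^{hR}+O_{\mathcal{K},\mathrm{DT}}\big(\delta\,e^{hR}+\delta^{-2h}e^{(h-\kappa')R}+e^{(h-\useconk{k:sect_comp_0})R}\big),
\]
where $\kappa'=\useconk{k:bisec_box_extra_2}$ and $I(\cdot,\mathcal{B}_i)=\int_{S(\cdot)}\mathbbm{1}_{\mathcal{B}_i}([\Re(q)])\lambda(q)\,ds_{(\cdot)}(q)$. (vi) Optimize: set $\delta=\delta_0 e^{-\eta R}$ with $\eta=\eta(g)>0$ small enough that $2h\eta<\kappa'$, and take $\useconk{k:count_bisect_box}:=\min\{\eta,\kappa'-2h\eta,\useconk{k:sect_comp_0}\}>0$; the claim for all $R>0$ follows by inflating the implicit constant for small $R$.

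I expect the main obstacle to be \emph{step (i)}: verifying that the counting function $F_R(X,Y,\mathcal{B}_1,\mathcal{B}_2)$ behaves continuously as $X$ and $Y$ vary in small balls, i.e. that the bisector analogue of Proposition~\ref{prop:sector_comparison_0} holds. The forward-sector perturbation is handled by Propositions~\ref{prop:sector_comparison_1} and \ref{prop:sector_comparison_2} already; the genuinely new point is that perturbing $Y$ also perturbs the \emph{backward} sector at $Y$, and perturbing $X$ perturbs the backward direction landing near $Y$. One must check that these contributions are again controlled by $\mathbf{m}$ of a thin sector, so that Theorem~\ref{theo:thin_sector_general} applies; the symmetry $F_R(X,Y,\mathcal{B}_1,\mathcal{B}_2)$ versus $F_R(Y,X,\mathcal{B}_2,\mathcal{B}_1)$ under $\mc\mapsto\mc^{-1}$ (using the $\mcg$-invariance of $d_{\mathcal{T}}$ and that $B_R(X)\ni\mc.Y\iff B_R(Y)\ni\mc^{-1}.X$) reduces the backward-sector perturbation at $Y$ to a forward-sector perturbation in the swapped problem, so Propositions~\ref{prop:sector_comparison_1} and \ref{prop:sector_comparison_2} cover it after one applies them twice and adds the errors. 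Everything else (the unfolding in step (iii), the leading-term bookkeeping in step (v)) is a routine adaptation of the displays already carried out in the proofs of Theorems~\ref{theo:count} and \ref{theo:sector_count_box}, with the single structural change that two sector indicators travel through the computation instead of one, one of them ending up as the observable $\varphi_2$ on $\qut$ in Theorem~\ref{theo:bisector_equidistribution_box_extra_2} rather than as a factor in the measure.
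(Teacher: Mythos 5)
Your proposal is correct and follows essentially the same route as the paper, which itself only sketches this argument: run the averaging--unfolding scheme of Theorem \ref{theo:sector_count_box} with Theorem \ref{theo:bisector_equidistribution_box_extra_2} in place of Theorem \ref{theo:sector_equidistribution_box_extra}, together with the comparison estimates of Proposition \ref{prop:sector_comparison_0}. Your treatment of the one genuinely new point --- reducing the backward-sector perturbation to a forward-sector perturbation via the symmetry $F_R(X,Y,\mathcal{B}_1,\mathcal{B}_2)=F_R(Y,X,\mathcal{B}_2,\mathcal{B}_1)$ under $\mc\mapsto\mc^{-1}$ and then invoking Propositions \ref{prop:sector_comparison_1} and \ref{prop:sector_comparison_2} twice --- is exactly the kind of bisector analogue of Proposition \ref{prop:sector_comparison_0} the paper's outline presupposes.
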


Let $X,Y \in \tt$ and $\psi_1,\psi_2 \in \mathcal{PC}^1(\pmf)$. For every $R > 0$ consider the counting function
\[
F_R(X,Y,\psi_1,\psi_2) := \sum_{\mc \in \mcg} \mathbbm{1}_{B_R(X)}(\mathbf{g}.Y) \cdot \psi_1([\Re(q_s(X,\mc.Y))]) \cdot \psi_2([\Re(q_s(Y,\mc^{-1}.X))]).
\]
The approximation arguments used to prove Theorem \ref{theo:bisector_count_smooth} can be used to deduce the following result from Theorem \ref{theo:bisector_count_box}. This result generalizes Theorem \ref{theo:count} to bisectors of $\tt$ cut out by functions in $\mathcal{PC}^1(\pmf)$.

\newconk{k:bisect_count_smooth} 
\begin{theorem}
	\label{theo:bisector_count_smooth}
	Let $\mathcal{K} \subseteq \tt$ compact, $X, Y \in \mathcal{K}$, and $\psi_1,\psi_2 \in \mathcal{PC}^1(\pmf)$. Then, for every $R > 0$,
	\begin{gather*}
		F_R(X,Y,\psi_1,\psi_2) \\
		= \frac{1}{h \cdot \widehat{\mathbf{m}}(\mathcal{M}_g)} \cdot \left(\int_{S(X)} \psi_1([\Re(q)]) \thinspace \lambda(q) \thinspace ds_X(q)\right) \cdot \left(\int_{S(Y)} \psi_2([\Re(q)]) \thinspace \lambda(q) \thinspace ds_Y(q)\right) \cdot e^{hR} \\
		+ O_\mathcal{K}\left(\|\psi_1\|_{\mathcal{PC}^1} \cdot \|\psi_2\|_{\mathcal{PC}^1} \cdot e^{(h-\useconk{k:bisect_count_smooth})R}\right),
	\end{gather*}
	where $\useconk{k:bisect_count_smooth} = \useconk{k:bisect_count_smooth}(g) > 0$ is a constant depending only on $g$.
\end{theorem}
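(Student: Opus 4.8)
\textbf{Proof proposal for Theorem \ref{theo:bisector_count_smooth}.}

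The plan is to mimic exactly the deduction of Theorem \ref{theo:sector_count_smooth} from Theorem \ref{theo:sector_count_box}, replacing the one-sided cube $\mathcal{B}$ by the pair of one-sided cubes coming from a product partition of $\pmf \times \pmf$, and using Theorem \ref{theo:bisector_count_box} as the box-level input. First I would fix $\mathcal{K} \subseteq \tt$ compact, $X,Y \in \mathcal{K}$, $\psi_1,\psi_2 \in \mathcal{PC}^1(\pmf)$, and $R > 0$, and introduce a parameter $0 < \delta = \delta(R) < 1$ to be optimized at the end. Partition $\pmf = \mathbf{S}^{6g-7}$ into $n(\delta) \preceq_g \delta^{-h}$ disjoint cubes $\mathcal{B}_i$ of diameter $\leq \delta$; then $\{\mathcal{B}_i \times \mathcal{B}_j\}_{i,j}$ is a partition of $\pmf \times \pmf$ into $n(\delta)^2 \preceq_g \delta^{-2h}$ product cubes. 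Writing $m_i^{(1)} := \min_{\mathcal{B}_i}\psi_1$, $M_i^{(1)} := \max_{\mathcal{B}_i}\psi_1$ and similarly $m_j^{(2)}, M_j^{(2)}$ for $\psi_2$, the $\mathcal{PC}^1$ bound gives $|M_i^{(1)} - m_i^{(1)}| \preceq_{\mathrm{DT}} \|\psi_1\|_{\mathcal{PC}^1}\cdot\delta$ and likewise for $\psi_2$. Then I would form the sandwiching step functions
\[
\psi_{1,\delta}^{\min} := \sum_i m_i^{(1)} \mathbbm{1}_{\mathcal{B}_i}, \quad \psi_{1,\delta}^{\max} := \sum_i M_i^{(1)} \mathbbm{1}_{\mathcal{B}_i},
\]
and analogously $\psi_{2,\delta}^{\min} \leq \psi_2 \leq \psi_{2,\delta}^{\max}$, so that (since all functions are non-negative) the bilinear counting function is monotone:
\[
F_R(X,Y,\psi_{1,\delta}^{\min},\psi_{2,\delta}^{\min}) \leq F_R(X,Y,\psi_1,\psi_2) \leq F_R(X,Y,\psi_{1,\delta}^{\max},\psi_{2,\delta}^{\max}).
\]

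Next, expanding $F_R(X,Y,\psi_{1,\delta}^{\min},\psi_{2,\delta}^{\min}) = \sum_{i,j} m_i^{(1)} m_j^{(2)} F_R(X,Y,\mathcal{B}_i,\mathcal{B}_j)$ and similarly for the max, I would apply Theorem \ref{theo:bisector_count_box} to each of the $n(\delta)^2$ pairs $(\mathcal{B}_i,\mathcal{B}_j)$. Summing the leading terms reconstitutes the product integral $\big(\int_{S(X)}\psi_{1,\delta}^{\min}([\Re(q)])\lambda(q)\,ds_X(q)\big)\big(\int_{S(Y)}\psi_{2,\delta}^{\min}([\Re(q)])\lambda(q)\,ds_Y(q)\big)$, and the accumulated error is $O_{\mathcal{K},\mathrm{DT}}(\|\psi_1\|_\infty\|\psi_2\|_\infty \cdot n(\delta)^2 \cdot e^{(h-\useconk{k:count_bisect_box})R}) \preceq_g \|\psi_1\|_\infty\|\psi_2\|_\infty \cdot \delta^{-2h} e^{(h-\useconk{k:count_bisect_box})R}$. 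To pass from the step-function integrals to the genuine ones I would use $0 \le \psi_i - \psi_{i,\delta}^{\min} \le M - m$ pointwise together with the fact (as in the proof of Theorem \ref{theo:sector_count_smooth}) that $\int_{S(X)}(\psi_{1,\delta}^{\max} - \psi_{1,\delta}^{\min})([\Re(q)])\lambda(q)\,ds_X(q) \preceq_{\mathrm{DT}} \|\psi_1\|_{\mathcal{PC}^1}\cdot\delta$, which in turn follows from definition (\ref{eq:hm_3}), the equality $\int_{S(X)}\psi([\Re(q)])\lambda(q)\,ds_X(q) = \int_{\pmf}\psi\, d\overline{\nu}_X$, and the fact that $\overline{\nu}_X$ is bounded by a constant multiple of $\mathrm{vol}$ on $\pmf$ uniformly for $X \in \mathcal{K}$. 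Since the difference of the two products is controlled by $\psi_{1,\delta}^{\max}\psi_{2,\delta}^{\max} - \psi_{1,\delta}^{\min}\psi_{2,\delta}^{\min} \le (\psi_{1,\delta}^{\max}-\psi_{1,\delta}^{\min})\psi_{2,\delta}^{\max} + \psi_{1,\delta}^{\min}(\psi_{2,\delta}^{\max}-\psi_{2,\delta}^{\min})$, a triangle-inequality split bounds the leading-term discrepancy by $O_{\mathrm{DT}}((\|\psi_1\|_{\mathcal{PC}^1}\|\psi_2\|_\infty + \|\psi_1\|_\infty\|\psi_2\|_{\mathcal{PC}^1})\cdot\delta)\cdot e^{hR}$.

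Collecting everything yields
\[
F_R(X,Y,\psi_1,\psi_2) = \frac{1}{h\cdot\widehat{\mathbf{m}}(\mathcal{M}_g)}\Big(\int_{S(X)}\psi_1([\Re(q)])\lambda(q)\,ds_X(q)\Big)\Big(\int_{S(Y)}\psi_2([\Re(q)])\lambda(q)\,ds_Y(q)\Big) e^{hR} + O_{\mathcal{K},\mathrm{DT}}\big(\|\psi_1\|_{\mathcal{PC}^1}\|\psi_2\|_{\mathcal{PC}^1}(\delta\cdot e^{hR} + \delta^{-2h}e^{(h-\useconk{k:count_bisect_box})R})\big).
\]
Finally I would choose $\delta = \delta(R) := e^{-\eta R}$ with $\eta = \eta(g) > 0$ small enough that $2h\eta < \useconk{k:count_bisect_box}$, set $\useconk{k:bisect_count_smooth} = \useconk{k:bisect_count_smooth}(g) := \min\{\eta,\ \useconk{k:count_bisect_box} - 2h\eta\} > 0$, and absorb the dependence on $\mathrm{DT}$ into the fixed choice of Dehn-Thurston coordinates to get the stated error $O_\mathcal{K}(\|\psi_1\|_{\mathcal{PC}^1}\|\psi_2\|_{\mathcal{PC}^1}\cdot e^{(h-\useconk{k:bisect_count_smooth})R})$, valid for all $R > 0$ after possibly enlarging the implicit constant. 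The only genuinely new bookkeeping compared with Theorem \ref{theo:sector_count_smooth} is the bilinear error accounting — tracking that the number of product cubes is $\delta^{-2h}$ rather than $\delta^{-h}$ and that the leading-term approximation error now has two terms from the product rule — but this is routine; the substantive content is entirely in Theorem \ref{theo:bisector_count_box}, which is assumed. I do not anticipate a real obstacle here; the main point requiring care is simply to ensure the exponent $2h$ (not $h$) appears in the $\delta^{-2h}$ error term so that the choice of $\eta$ is made accordingly.
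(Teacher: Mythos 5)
Your proposal is correct and is essentially the paper's intended argument: the paper only sketches this proof, saying that the approximation scheme of Theorem \ref{theo:sector_count_smooth} applied to Theorem \ref{theo:bisector_count_box} yields the result, and your bilinear bookkeeping (product partition with $n(\delta)^2 \preceq_g \delta^{-2h}$ cubes, sandwiching, reconstitution of the two integrals, choice $\delta = e^{-\eta R}$ with $2h\eta < \useconk{k:count_bisect_box}$) is exactly that deduction carried out correctly. The only minor point is that your sandwiching step uses non-negativity of $\psi_1,\psi_2$, which matches the convention of Theorem \ref{theo:sector_count_smooth} and is harmless in general since $F_R$ is bilinear, so one may reduce to the non-negative case by adding constants.
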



\bibliographystyle{amsalpha}


\bibliography{bibliography}

\end{document}